\documentclass[11pt]{amsart}

\usepackage{amsmath, amsthm, amssymb}
\numberwithin{equation}{section}

\usepackage{amsthm}
\usepackage{hyperref}
\usepackage[all]{xy}
\usepackage[usenames,dvipsnames]{xcolor}
\usepackage{mleftright}

\usepackage{overpic}

\newtheorem{theo}{Theorem}[section]
\newtheorem{lemma}[theo]{Lemma}
\newtheorem{prop}[theo]{Proposition}

\newtheorem{coro}[theo]{Corollary}

\theoremstyle{definition}
\newtheorem{defn}[theo]{Definition}
\newtheorem{warn}[theo]{Warning}
\newtheorem{eg}[theo]{Example}

\newtheorem{rmk}[theo]{Remark}

\usepackage[left=2.5cm,right=2.5cm,top=3.5cm]{geometry}

\pdfoutput=1
\usepackage{amsmath}
\usepackage{amssymb}
\usepackage{wrapfig}
\usepackage{bbm}
\usepackage{arydshln}
\usepackage{multirow}
\usepackage{mathtools}
\usepackage{blkarray}
\usepackage{setspace}
\usepackage{dsfont}
\usepackage{float}
\usepackage{subcaption}
\usepackage{braket}
\usepackage[utf8]{inputenc}
\usepackage[english]{babel}
\usepackage[all]{xy}
\usepackage{tikz-cd}
\usepackage{mathtools}
\usepackage{tikz}
\usepackage{enumerate}

\usepackage{ytableau}

\makeatletter
\newcommand\mathcircled[1]{%
	\mathpalette\@mathcircled{#1}%
}
\newcommand\@mathcircled[2]{%
	\tikz[baseline=(math.base)] \node[draw,circle,inner sep=3pt] (math) {$\m@th#1#2$};%
}
\makeatother

\newcommand{\bb}{\mathbb}
\newcommand{\mc}{\mathcal}

\newcommand{\mf}{\mathfrak}
\newcommand{\Rees}{{\textup{Rees}}}
\newcommand{\Def}{{\textup{Def}}}
\newcommand{\fDef}{{\mathfrak{Def}}}

\newcommand{\spe}{{\textup{sp}}}

\newcommand{\DDD}{{{\bb D}}}

\newcommand{\Har}{{\textup{Har}}}

\newcommand{\AQ}{{\textup{AQ}}}

\newcommand{\cEnd}{{\textup{End}}}
\newcommand{\OEnd}{{\textup{End}}}
\newcommand{\iEnd}{{\mathcal{E}nd}}

\newcommand{\bL}{{\mathbb{L}}}

\newcommand{\Ph}{{\mathcal{P}_\hbar}}
\newcommand{\Qh}{{\mathcal{Q}_\hbar}}
\newcommand{\gPQ}{{\mathfrak{g}_{\mc P,\mc Q}}}
\newcommand{\gPQh}{{\mathfrak{g}_{\mc P_\hbar,\mc Q_\hbar}}}
\newcommand{\gPQvh}{{\mathfrak{g}^{\varphi_\hbar}_{\mc P_\hbar,\mc Q_\hbar}}}
\newcommand{\fph}{{[\![\hbar]\!]}}
\newcommand{\Kh}{{\K[\hbar]}}
\newcommand{\Kfph}{{\mathbb{K}[\![\hbar]\!]}}
\newcommand{\gd}{{\textup{gr}}}
\newcommand{\ob}{{\textup{ob}}}

\newcommand{\Phm}{{\mathcal{P}_{\hbar /\hbar^{m+1}}}}
\newcommand{\Qhm}{{\mathcal{Q}_{\hbar /\hbar^{m+1}}}}

\newcommand{\N}{{\mathbb{N}}}

\newcommand{\D}{{\mathcal{D}}}

\newcommand{\SH}{{\textup{Sh}}}

\DeclareMathOperator{\Hom}{Hom}
\DeclareMathOperator{\Vect}{Vect}

\DeclareMathOperator{\Ext}{Ext}
\newcommand{\Mod}{{\textup{-Mod}}}

\newcommand{\K}{{\bb K}}
\newcommand{\C}{{\bb C}}
\newcommand{\ik}{{\bb K}}

\newcommand{\im}{{\textup{im}}}

\newcommand{\End}{{\textup{End}}}
\newcommand{\colim}{{\textup{colim}}}
\newcommand{\coker}{{\textup{coker}}}
\newcommand{\into}{{\ \hookrightarrow\ }}
\newcommand{\onto}{{\ \twoheadrightarrow\ }}

\newcommand{\Obs}{{\textup{Obs}}}

\newcommand{\Diff}{{\textup{Diff}}}

\newcommand{\CE}{{\textup{CE}}}
\newcommand{\Der}{{\textup{Der}}}
\newcommand{\Spec}{{\textup{Spec\ }}}

\newcommand{\BDh}{{\textup{BD}_\hbar}}

\newcommand{\BDhat}{{\textup{BD}_{\hat{\hbar}}}}

\newcommand{\Quant}{{\textup{Quant}}}
\newcommand{\fQuant}{{\mathfrak{Quant}}}

\newcommand{\dR}{{\textup{dR}}}

\newcommand{\triv}{{\textup{triv}}}

\newcommand{\Maps}{{\textup{Maps}}}

\newcommand{\Ind}{{\textup{Ind}}}

\newcommand{\Sh}{{\textup{Sh}}}

\newcommand{\Alg}{{\textup{Alg}}}

\newcommand{\Pois}{{\textup{Pois}}}
\newcommand{\Comm}{{\textup{Comm}}}
\newcommand{\Lie}{{\textup{Lie}}}

\newcommand{\ch}{{\textup{ch}}}
\newcommand{\boxt}{{{\boxtimes }}}

\newcommand{\fset}{{\textup{fSet}}}

\newcommand{\un}{{\textup{un}}}

\newcommand{\pt}{{\textup{pt}}}

\newcommand{\q}{{\textup{q}}}

\newcommand{\col}{{{\textup{col }}}}

\newcommand{\Homi}{{\mc H\textup{om}}}
\newcommand{\coh}{{\textup{coh}}}

\newcommand{\ash}{{\textup{\mbox{!`}}}}

\newcommand{\Op}{{\textup{Op}}}

\newcommand{\Ass}{{\textup{Ass}}}

\newcommand{\Ss}{{{\mf{S}}}}

\newcommand{\MC}{{{\textup{MC}}}}
\newcommand{\fMC}{{{\mathfrak{MC}}}}

\newcommand{\ev}{{\textup{ev}}}

\newcommand{\z}{{\textup{z}}}

\newcommand{\CC}{{\textup{CC}}}

\newcommand{\Vir}{{\textup{Vir}}}

\newcommand{\PV}{{{\textup{PV}}}}

\newcommand{\DG}{{\textup{DG}}}

\newcommand{\Sch}{{\textup{Sch}}}

\newcommand{\cl}{{\text{cl}}}

\DeclareMathOperator{\QC}{QCoh}

\newcommand{\Dd}{{\textup{D}}}
\newcommand{\DD}{{D}}

\newcommand{\MM}{{\textbf{M}}}

\newcommand{\Sym}{\textup{Sym}}
\newcommand{\id}{{\textup{id}}}

\newcommand{\gr}{\textup{gr}}

\newcommand{\Z}{\bb Z}
\newcommand{\DGVect}{\textup{DGVect}}

\newcommand{\Homii}{{\underline{\textup{Hom}}}}

\newcommand{\g}{{\mf{g}}}

\newcommand{\del}{{{\partial}}}

\newcommand{\e}{{\varepsilon}}

\def\IA{\mathbb{A}}

\def\IN{\mathbb{N}}

\def\IZ{\mathbb{Z}}
\def\IG{\mathbb{G}}

\def\CC{{\mc C}}

\def\FF{{\mc F}}

\def\II{{\mc I}}
\def\JJ{{\mc J}}

\def\MM{{\mc M}}

\def\OO{{\mc O}}
\def\PP{{\mc P}}
\def\QQ{{\mc Q}}

\def\VV{{\mc V}}

\def\YY{{\mc Y}}

\def\dim{{\rm dim}}

\def\ch{{\rm ch}}

\def\gf{\mathfrak{g}}

\def\hf{\mathfrak{h}}

\def \mf{\mathfrak}

\def \id{\mathbbm{1}}

\def \Op{\mathrm{Op}}

\def \ie{{\it i.e.}}

\newcommand{\fibre}[1]{\underset{#1}{\times}}
\newcommand{\tensor}[1]{\underset{#1}{\otimes}}

\def \vph{\varphi_{\hat \hbar}}
\def \hhb{{\hat \hbar}}

\title{On the deformation theory of chiral quantizations}
\author{Dylan Butson and Sujay Nair}
\date{}

\begin{document} 

\begin{abstract}
	We give an operadic approach to deformation quantization of vertex Poisson algebras, a chiral analogue of the traditional problem of deformation quantization of Poisson algebras.
	Our main result is an order-by-order deformation-obstruction theory for such quantizations, controlled by the chiral analogue of Poisson cohomology. In the special case of chiral quantizations of affine symplectic varieties, quantizations of the vertex Poisson algebras of functions on their arc spaces, we prove that this deformation-obstruction theory is controlled by their de Rham cohomology. As another application, we prove that the boundary Virasoro minimal models are rigid under deformations.
\end{abstract}
\maketitle

\begingroup
\hypersetup{hidelinks}
\tableofcontents
\endgroup


\section{Introduction}

Let $A$ be a Poisson algebra, and recall that a \emph{quantization} of $A$ is an $\hbar$-adic associative algebra $A_\hbar$ over the ring of formal power series $\ik\fph$ such that the central fibre $A_0=A_\hbar\otimes_{\ik\fph}\ik_0$ is commutative, and thus canonically Poisson, with an isomorphism of as Poisson algebras $A_0\cong A$. Similarly, for $R$ a vertex Poisson algebra, a \emph{quantization} of $R$ is an $\hbar$-adic vertex algebra $R_\hbar$ such that the central fibre $R_0 = R_\hbar\otimes_{\ik\fph}\ik_0$ is a commutative vertex algebra, and thus canonically a vertex Poisson algebra, with an isomorphism of vertex Poisson algebras $R_0\cong R$.

In this work we view the quantization problem through the lens of deformation theory. In his ICM address \cite{Tamarkin03}, Tamarkin introduced the deformation-obstruction theory for chiral algebras, in the sense of Beilinson--Drinfeld \cite{BD1}, a generalization of vertex algebras defined over smooth curves $X$ which reduces to the traditional definition in the case $X=\bb A^1$. More recently, in \cite{BDSHK19}, Bakalov--de Sole--Heluani--Kac developed this theory in detail in the case of vertex algebras.

In fact, Tamarkin conjectures an analogue of the celebrated Kontsevich formality theorem \cite{Kontsevich1997}, and states that the importance of this conjecture can be seen from the corollary that deformations of chiral algebras are equivalent to those of coisson algebras, the analogous generalization of vertex Poisson algebras to other curves. The main theorem of this work achieves a somewhat similar goal, parameterizing the space of quantizations of a coisson algebra $R$, or in particular a vertex Poisson algebra, in terms of the cohomology $H^\bullet_{c}(R)$ of its operadic deformation-obstruction complex:

\begin{theo}\label{introthm:order by order}
Let $R$ be a vertex Poisson algebra and let $R_{\hbar/\hbar^m}$ be a quantization of $R$ defined modulo $\hbar^m$ for $m\geq 1$. Then there exists a quantization of $R$ defined modulo $\hbar^{m+1}$, agreeing with $R_{\hbar/\hbar^m}$ modulo $\hbar^m$, if and only if a certain obstruction class determined by $R_{\hbar/\hbar^m}$ vanishes, that is,
\[
[\Obs(R_{\hbar/\hbar^m})]=0 \ \in H^2_{c}(R)~.
\]
Further, if the obstruction class vanishes, the moduli space of such extensions, up to isomorphism, is a torsor for $H^1_{c}(R)$.
\end{theo}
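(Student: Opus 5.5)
We set up the quantization problem operadically, in direct analogy with the classical Poisson case. Consider the family of chiral operads over $\K\fph$. Its central fibre is the coisson (vertex Poisson) operad, and its generic fibre is the chiral (vertex) operad. Concretely, we use a Beilinson--Drinfeld type operad $\BDh$: a graded Hopf/Rees-type degeneration of the chiral operad whose associated graded is the coisson operad $\mc P$. A quantization of $R$ modulo $\hbar^{m}$ is then an algebra over $\BDh/\hbar^{m}$ whose reduction modulo $\hbar$ is $R$.

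Because $\BDh$ is flat over $\K\fph$ and Koszul-type, we can resolve it. We choose a cofibrant (minimal) resolution, compatible with the $\hbar$-filtration, whose underlying graded object is the bar–cobar resolution of $\mc P$ tensored with $\K\fph$. This turns $\BDh$-algebra structures on $R\fph/\hbar^{m}$ into Maurer--Cartan elements of a filtered dg Lie algebra
\[
\g_m = \Hom_{\Ss}\big(\mc P^{\ash}, \iEnd_R\big)\otimes \K[\hbar]/\hbar^{m}
\]
with differential twisted by $\hbar$-corrections, in the style of a convolution Lie algebra $\gPQh$. Reducing modulo $\hbar$ recovers the operadic deformation complex of $R$ as a coisson algebra, whose cohomology is by definition $H^\bullet_c(R)$.

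With this setup, the order-by-order statement is the standard Maurer--Cartan argument for the small extension
\[
0\to \hbar^{m}\g_{1}\to \g_{m+1}\to \g_{m}\to 0 .
\]
Given an MC element $\mu_m$ in $\g_m$, we lift it arbitrarily to $\tilde\mu$ in $\g_{m+1}$. The curvature
\[
\Obs = d\tilde\mu+\tfrac12[\tilde\mu,\tilde\mu]
\]
lies in $\hbar^{m}\g_1\cong \g_1$. The Bianchi identity shows that $\Obs$ is closed for the central-fibre differential, because $[\hbar^m\g,\hbar^m\g]=0$ and brackets with $\tilde\mu$ reduce to brackets with $\mu_0$, i.e.\ the coisson structure of $R$. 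Changing the lift by $\hbar^m x$ changes $\Obs$ by $d_{\mu_0}x$, so the class $[\Obs(R_{\hbar/\hbar^m})]\in H^2_c(R)$ is well defined, and it vanishes exactly when a lift exists. When lifts exist, any two differ by a $1$-cocycle $\hbar^m x$. Gauge equivalences by $\exp(\hbar^m y)$, i.e.\ isomorphisms of quantizations that are the identity modulo $\hbar^m$, change $x$ by $d_{\mu_0}y$. This yields the torsor for $H^1_c(R)$.

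The main obstacle is the first step: producing the resolution of $\BDh$ with the flatness and compatibility properties needed, so that MC elements of $\g_m$ really are quantizations modulo $\hbar^{m}$ and the degree-$0$ gauge action really realizes isomorphisms. In the chiral setting $\BDh$ lives in a category of $\mathcal D$-modules on powers of the curve, or equivalently Lie-conformal data, rather than vector spaces. I would first try to establish a PBW-type flatness statement, $\gr\BDh\cong \mc P\otimes\K\fph$ as $\Ss$-modules, and then lift the Koszul resolution of $\mc P$ by homological perturbation along the $\hbar$-adic filtration. A secondary point is checking that the central-fibre complex obtained agrees with the complex defining $H^\bullet_c(R)$, including the grading convention, so that the obstruction lands in $H^2$ and the torsor is for $H^1$.
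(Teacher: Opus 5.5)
Your small-extension argument is the mechanism of the paper's Theorem~\ref{DefObsThm}: lift, curvature in $\hbar^m\g_1$, Bianchi for closedness, shift by $d_{\mu_0}x$, gauge by $\exp(\hbar^m y)$ with $y$ unary. That part is fine. The gap is the setup, which you flag as the main obstacle, and as framed it does not go through. There is no single-coloured ``chiral operad'' $\BDh$ that degenerates to a ``coisson operad'' $\mc P$ and could serve as a source. Chiral and coisson algebras are $\Lie$-algebras internal to the pseudo-tensor categories $\DD(X)^\ch$ and $\DD(X)^c$. Their higher operations (chiral and $*$-operations) do not come from a monoidal structure on $\DD(X)$. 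So $\Hom_{\Ss}(\mc P^{\ash},\iEnd_R)$ with a ``coisson'' $\mc P$ in the source has no meaning that would produce $H^\bullet_c(R)$, which is defined via $\g^{\varphi}_{\Lie,P^c(R)}$. You also only assert that your $\BDh$ is flat and Koszul-type. Even in the finite-type analogue the paper notes that $\BDh=\Rees(\Ass)$ is not quadratic, hence not Koszul, and would need a genuine DG resolution. That is exactly the construction your plan leaves open.

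The missing idea is to put the $\hbar$-family in the target rather than the source. The paper keeps the constant Koszul operad $\Lie\otimes\K[\hbar]$ as source. The target is the Rees operad $P^\ch_\hbar(R)=\Rees_\hbar^{F^{\spe}}P^\ch(R)$ of the chiral endomorphism operad along the Beilinson--Drinfeld special filtration. A quantization mod $\hbar^m$ is then a Maurer--Cartan element of $\g_{\Lie_\hbar,P^\ch_\hbar(R)}\otimes\K[\hbar]/\hbar^m$, and no resolution is needed. Your PBW/perturbation step is replaced by two concrete inputs.

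\begin{enumerate}
\item[(i)] $\gr_{F^{\spe}}P^\ch(R)\cong P^c(R)$ as operads (Theorem~\ref{BDgrclthm}). This needs $R$ unital-projective and is proved by an $\Ext^1$-vanishing argument on the graded pieces of $j_*j^*R^{\boxtimes I}$. It makes the central fibre exactly the classical operad, so the central-fibre complex is by definition the one computing $H^\bullet_c(R)$.
\item[(ii)] A splitting of the filtration (Proposition~\ref{prop:joint split}). This gives an $\Ss$-module trivialization $P^\ch_\hbar(R)\cong P^c(R)\otimes\K[\hbar]$, under which the convolution bracket expands as $\sum_j\hbar^j[\cdot,\cdot]_j$.
\end{enumerate}

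Because the family is not constant, these corrections enter the obstruction cochain (Equation~\ref{Obshbareqn}); it is not just $-\tfrac12\sum[\varphi_k,\varphi_{m-k}]$. Without (i) and (ii), your claim that Maurer--Cartan elements of your $\g_m$ are exactly quantizations mod $\hbar^m$ of the given coisson structure is unsupported.
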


The space of functions $\mc O(\mc J Y)$ on the arc space $\mc J Y$ of an affine Poisson variety $Y$ is naturally a vertex Poisson algebra, and this constitutes a rich source of examples of such. We call a quantization of the vertex Poisson algebra $\mc O(\mc J Y)$ a \emph{chiral quantization} of $Y$, following \cite{Ar}, though our definition is closer to that of strict chiral quantization in \textit{loc.\ cit.} Chiral quantization has become increasingly relevant in the physics literature; see for example \cite{Beem:2017ooy}.

In the case that $Y$ is symplectic, we show that the order-by-order deformation-obstruction theory for chiral quantizations of $Y$ is controlled by its de Rham cohomology:
\begin{theo}\label{introthm:symplectic order by order}
	Let $Y$ be an affine symplectic variety, and let $R_{\hbar/\hbar^m}$ be a quantization of $\mc O(\mc J Y)$ defined modulo $\hbar^m$ for $m\geq 1$.
Then there exists a quantization of $\mc O(\mc J Y)$ defined modulo $\hbar^{m+1}$, agreeing with $R_{\hbar/\hbar^m}$ modulo $\hbar^m$, if and only if the obstruction class vanishes, that is,
	\[
	[\Obs(R_{\hbar/\hbar^m}) ]=0\in H^4_{\dR}(Y)~.
	\]
	Further, if the obstruction class vanishes, the moduli space of such extensions, up to isomorphism, is a torsor for $H^3_{\dR}(Y)$.
\end{theo}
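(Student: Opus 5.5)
The plan is to deduce this from Theorem \ref{introthm:order by order} applied to $R=\mc O(\mc J Y)$. The only remaining task is then to compute the chiral Poisson cohomology $H^\bullet_c(\mc O(\mc J Y))$ when $Y$ is affine symplectic, and to show that
\[
H^{i}_c(\mc O(\mc J Y))\cong H^{i+2}_{\dR}(Y)
\]
for $i=1,2$. The degree shift should come from the curve direction. In the chiral setting the deformation complex involves de Rham cohomology along $X=\bb A^1$ and a $\lambda$-bracket of conformal weight one, so the natural identification is with the de Rham complex of $Y$ shifted by two. One shift comes from the chiral versus classical Poisson cohomology comparison, and one from $H^\bullet_{\dR}(\bb A^1)$ together with the chirally central elements.

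\textbf{Step 1: reduce to coisson cohomology of a jet algebra.} I would describe the complex computing $H^\bullet_c(R)$ as a Chevalley--Eilenberg-type complex for the $\lambda$-bracket (the variational Poisson complex of de Sole--Kac). For $R=\mc O(\mc J Y)$ with $Y$ affine, I would express it in terms of polyvector fields on $\mc J Y$ that are translation-equivariant. This amounts to the variational complex: polydifferential operators in the jet variables, modulo total derivatives $\partial$. The differential is $[\pi,-]$, where $\pi$ is the Poisson $\lambda$-bracket induced from the Poisson bivector on $Y$.

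\textbf{Step 2: use the symplectic form to pass to de Rham.} When $Y$ is symplectic, $\pi^\sharp$ gives an isomorphism from the classical Poisson complex $(\PV^\bullet(Y),[\pi,-])$ to $(\Omega^\bullet(Y),d)$. I would upgrade this to the jet level: the Poisson vertex complex of $\mc J Y$ is identified with the variational de Rham complex of $\mc J Y$, with the isomorphism again induced by $\omega$. The variational bicomplex theory (Takens, Anderson, Dickey) then says that the variational complex on jets of a trivial bundle is quasi-isomorphic to the de Rham complex of the fibre $Y$, shifted by the base dimension. Here the base is $\bb A^1$, which has dimension one. In the chiral normalization, where operations have $n$ inputs placed in degree $n-1$, I expect this to produce the total shift of two. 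To handle the nonlinearity of $Y$, I would first work locally using an étale cover or Darboux-type coordinates and then glue. Since $Y$ is affine and the complexes are sheaves of $\OO_Y$-modules with acyclic higher cohomology, the computation globalizes to $H^\bullet_{\dR}(Y)$ via the hypercohomology of the algebraic de Rham complex.

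\textbf{Step 3: match the obstruction.} Finally, I would check that the identification is compatible with the obstruction class and the torsor action of Theorem \ref{introthm:order by order}. This is formal, because those structures are defined entirely in terms of $H^2_c$ and $H^1_c$.

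I expect the main obstacle to be Step 2, specifically the precise degree bookkeeping and the "algebraic Poincaré lemma" for the variational complex in the chiral operadic grading. That is where the acyclicity of the horizontal (total-derivative) direction must be proved, by a homotopy operator in the jet variables, and where one must confirm that the result is exactly $H^{\bullet+2}_{\dR}(Y)$ and not a version containing extra constant or $\partial$-exact terms. I would first try filtering by jet order and computing the associated graded, where the differential becomes Koszul-type in the higher jet variables. This should leave only the zeroth-jet de Rham complex, and a spectral sequence argument would then finish the comparison.
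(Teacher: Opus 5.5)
Your skeleton matches the paper's: Theorem \ref{introthm:order by order} plus an identification $H^k_c\cong H^{k+2}_\dR(Y)$, obtained by passing to variational Poisson cochains, using $\omega$ to turn polyvectors into relative forms modulo total derivatives, and comparing with de Rham cohomology. However, Step 2 fails for the complex you actually set up.

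You take translation-equivariant cochains, i.e.\ polydifferential operators modulo $\partial$ with no dependence on the coordinate $x$ of $\bb A^1$. After the symplectic identification this is $(\Omega^{\bullet\geq 1}(\mc J_0 Y)/\partial)[1]$, where $\mc J_0Y$ is the fibre of $\mc JY$ over $0$. The Takens--Anderson theorem you invoke concerns forms with arbitrary base dependence, whose total complex is the de Rham complex of $\mc JY\cong \mc J_0Y\times\bb A^1$. Translation-invariant forms instead give $\Omega^\bullet(\mc J_0Y)\otimes\Lambda[dx]$ with the plain differential, whose cohomology is $H^\bullet_\dR(Y)\oplus H^{\bullet-1}_\dR(Y)$. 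Since $L_\partial$ is injective on $\Omega^{p}$ for $p\geq 1$, your route would output $H^{k+2}_\dR(Y)\oplus H^{k+1}_\dR(Y)$, not $H^{k+2}_\dR(Y)$.

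The extra summand is genuinely there. Take a closed $2$-form $\beta$ on $Y$ with $[\beta]\neq 0$. Its pullback to $\mc J_0Y$ is a degree-$2$ cocycle of $\Omega^{\bullet\geq1}(\mc J_0Y)/\partial$, hence a class in $H^1$; it is the first-order deformation $\omega\mapsto\omega+\epsilon\beta$. It is not a coboundary. Pullback along the constant-jet section $c:Y\to\mc J_0Y$ commutes with $d$ and kills $L_\partial\gamma=d\iota_\partial\gamma+\iota_\partial d\gamma$, because the vector field $\partial$ vanishes on constant jets. Meanwhile $c^*\beta=\beta$ is not exact. So the ``extra terms'' you were worried about do not disappear under any homotopy in the jet variables.

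The paper gets exactly $H^{k+2}_\dR(Y)$ because Theorem \ref{mainchtheo} is proved in $\DD(\bb A^1)$ without translation invariance. There cochains may depend on $x$, and $h$ is the global middle de Rham cohomology over $\bb A^1$, i.e.\ one quotients by $D_x=\partial_x+\partial$. In that setting the class above dies: $\beta=D_x(x\beta)-d(x\,\iota_\partial\beta)$. The paper then compares $h(\Omega^{\bullet\geq 1}_{\mc Y/X})[-1]$ with $\Omega^\bullet_{\mc Y}$ in degrees $\geq 3$ via the triangle $\dR_X(\Omega^{\bullet\geq1}_{\mc Y/X})\to\Omega^\bullet_{\mc Y}\to\dR_X(\mc O_{\mc Y})$ (Proposition \ref{jetsdRprop}). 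It uses explicit $\bb A^1$-homotopies for $H_\dR(\mc JY)\cong H_\dR(Y)$ (Proposition \ref{prop:dR of arc is dR}), working globally on $\mc Y$, so no \'etale gluing or Darboux coordinates are needed.

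If you redo Step 2 with base-dependent cochains, your variational-bicomplex argument becomes a legitimate classical rephrasing of this. Functional forms agree with the Euler--Lagrange complex, hence with $H_\dR(Y\times\bb A^1)$, in those degrees. You must then state that this non-invariant deformation problem is the one being solved; the paper's deduction of Theorem \ref{symptheo} rests on the same computation.

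Separately, Step 1 silently replaces the full classical complex $C^\bullet_c$, which also deforms the $\Comm^!$ product, by the variational Poisson complex. That replacement needs $\mc O(\mc JY)$ to be very smooth, so that the higher Andr\'e--Quillen rows of the bicomplex of Proposition \ref{Coisbicxprop} contribute nothing (Corollary \ref{GOequivchcoro}).
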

This result is evocative of the classical work \cite{LecomteWilde83, Fedosov1994, Deligne, BeK} on the deformation quantization of symplectic manifolds or varieties, which provide a full parametrization of the space of quantizations of $Y$ in terms of $H^2_\dR(Y)$. It also provides a partial generalization of the results of \cite{GMS1, GMS2} and \cite{BD1} on vertex algebras of chiral differential operators, which implicitly classify the chiral quantizations of $Y$ in the case $Y=T^*W$ is a global cotangent bundle.

Although our result is abstractly weaker, in the sense that it fails to provide a trivialization of the iterated torsor describing formal quantizations, it is often still strong enough to draw the necessary conclusions about full spaces of quantizations in many applications. 



Finally, we discuss the application of our results to boundary Virasoro minimal models: it has long been folklore that rational conformal field theories are isolated, and so in particular their vertex algebras are rigid under deformations. The boundary Virasoro minimal models are rational vertex algebras that are quantizations of a certain vertex Poisson algebra structure on the arc space of the thickened point $\Spec \ik[T]/T^n$. Using the technical machinery developed in this paper, we prove the following result on the rigidity of these vertex algebras under deformations:
\begin{theo}\label{introthm:minimal model}
Let $\MM_n$ be the boundary Virasoro minimal model for $(p,q) = (2,2n+1)$, the simple quotient of the Virasoro algebra $\Vir^c$ at $c = c_{p,q} = 1- \frac{6pq}{(p-q)^2}$. Then $\MM_n$ is rigid, that is, it does not admit non-trivial deformations.
\end{theo}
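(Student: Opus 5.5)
Since $\MM_n$ is rational, I would not argue via vanishing of the whole vertex algebra cohomology. Instead I would use Theorem \ref{introthm:order by order} together with the identification of $\MM_n$ as a quantization of $R_n := \mc O(\mc J \Spec \ik[T]/T^n)$ with its natural vertex Poisson structure. Here $T$ is the image of the conformal vector $L$, and the $\lambda$-bracket is $\{T_\lambda T\} = (\partial + 2\lambda) T$; it descends to the quotient by the ideal generated by $T^n$ and its derivatives. A deformation of $\MM_n$ over $\ik[\![\epsilon]\!]$, after Rees-type rescaling, becomes a two-parameter family. The structure it must preserve is then controlled order by order by $H^1_c(R_n)$ for first-order deformations, and by $H^2_c(R_n)$ for obstructions that we need not even control. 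It therefore suffices to show that first-order deformations of the vertex Poisson algebra $R_n$, compatible with the relevant filtration or grading, vanish. More precisely, I would show that the graded piece of $H^1_c(R_n)$ that could support a nontrivial deformation of $\MM_n$ is zero.

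\textbf{Step 1.} Identify $\gr^F \MM_n \cong R_n$ for the Li filtration, i.e.\ show that $\MM_n$ is a (filtered) quantization of $R_n$. This is the known result that $R_{\MM_n}$ is $\ik[T]/T^n$ and the associated graded is classically free (van Ekeren--Heluani, Andrews--van Ekeren--Heluani). I take it as input.

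\textbf{Step 2.} Reduce rigidity of $\MM_n$ to vanishing of the weight-graded deformation cohomology. Deformations of $\MM_n$ respecting conformal grading induce, via the Li filtration, deformations of the $\hbar$-adic quantization. By the torsor statement, iterated over $m$, their isomorphism classes are controlled by $H^1_c(R_n)$ in the relevant conformal weight, which I expect to be weight zero relative to the $\hbar$-grading.

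\textbf{Step 3.} Compute that cohomology. $R_n$ is the quotient of the free differential algebra $\ik[T,\partial T,\dots]$ by the differential ideal of $T^n$. I would resolve it by the Koszul-type cofibrant model: add a generator $\xi$ of degree $-1$ with $d\xi = T^n$. The classical freeness results show this is a resolution, with no higher syzygies. The deformation complex then becomes a chiral Poisson (variational) complex of this quasi-free object with differential $\{T_\lambda -\}$ plus $d$. Using the $\bb G_m$-grading by conformal weight, where $T$ has weight $2$ and $\xi$ weight $2n$, I would show that cochains of the required weight are finite-dimensional. I would then compute them via the variational Poisson cohomology of the Virasoro Poisson vertex algebra, known by De Sole--Kac to be small: essentially generated by the central charge class. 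Finally I would check that the central charge class is killed by the constraint $d\xi = T^n$. Deforming $c$ does not preserve the existence of the singular vector, so that class is not a deformation of the quotient, and it is absent in the quotient's cohomology.

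The main obstacle will be Step 3. Specifically, I expect difficulty in showing that the single Virasoro class, the $c$-deformation, becomes exact or is obstructed for the quotient. I also need to show that no new classes appear from $\xi$ in degree $1$. I would attack this with a spectral sequence filtered by $\xi$-degree, computing the $E_1$ page from the De Sole--Kac answer and a weight count.
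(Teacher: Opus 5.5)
Your overall strategy matches the paper's in spirit: pass from $\MM_n$ to the coisson cohomology of $M_n=\JJ(\ik[T]/T^n)$, and show that the Virasoro central-charge class does not descend to the quotient. However, Step 3 rests on a false claim. The model $\ik[\partial^kT]\otimes\Lambda[\partial^k\xi]$ with $d\xi=T^n$ is \emph{not} a resolution of $M_n$ for $n\geq 2$. The element $z=T\,\partial\xi-n(\partial T)\,\xi$ satisfies $dz=nT^n\partial T-nT^n\partial T=0$. It is not a boundary, because $d$ of anything quadratic in the $\partial^k\xi$ has coefficients in the ideal generated by the $\partial^k(T^n)$, all of polynomial degree $n>1$ in the variables $\partial^kT$. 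Equivalently, $T^n,\partial(T^n),\partial^2(T^n),\dots$ is not a regular sequence: the $m$-jet scheme of the fat point has dimension $m+1-\lceil (m+1)/n\rceil>0$. Classical freeness $\gr\MM_n\cong\JJ(\ik[T]/T^n)$ says nothing about this. So you would be computing the deformation complex of a different object. Already its degree-one Andr\'e--Quillen term, $\xi\mapsto g$ with $g$ arbitrary modulo $T^{n-1}M_n$, is strictly larger than the true $H^1_{\AQ}(M_n,M_n)\cong\Hom_V(I_n,M_n)/\Der_\partial(V,M_n)$, in which $g=f(T^n)$ must respect syzygies such as $T\,\partial g=n(\partial T)\,g$.

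The paper needs no resolution at all. In total degree one, only $E^{0,1}=H^1_{\AQ}(M_n,M_n)$ and $E^{1,0}=H^1_{\PV}(M_n,M_n)$ of the bicomplex of Proposition \ref{Coisbicxprop} contribute, and $H^1_{\AQ}$ only sees the naive cotangent complex $I_n/I_n^2\to\Omega_V\otimes M_n$. Since $M_n$ is singular there is no a priori reason for $H^1_{\AQ}(M_n,M_n)$ to vanish. The paper shows by hand that the $E_1$-differential is injective on it (Lemma \ref{lem:E2 Harrison vanishes for min model}); this is a step your proposal does not account for. It then proves $H^1_{\PV}(M_n,M_n)=0$ (Proposition \ref{prop:var of min model vanishes}) via the long exact sequence for $C^\bullet_{\PV}(M_n,M_n)\subset C^\bullet_{\PV}(V,M_n)$. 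This uses that $H^1_{\PV}(V,M_n)$ is spanned by $C_\lambda(T,T)=\lambda^3$, which does not vanish on the ideal ($C_\lambda(T,T^n)=nT^{n-1}\lambda^3$), and that $H^0$ of the cokernel vanishes. That is your ``central charge is killed'' intuition, implemented by restriction to $I_n$ rather than through $\xi$.

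Separately, Step 2 does not prove the statement as posed. Rigidity means $H^1_\ch(\MM_n)=0$, i.e.\ no nontrivial first-order deformations of any kind. Restricting to deformations respecting the conformal grading, and to a single graded piece of $H^1_c$, does not give this. Nor does the torsor statement of Theorem \ref{introthm:order by order} directly govern deformations of $\MM_n$ in a new parameter, since it concerns extending quantizations in $\hbar$. The paper's reduction is simpler. The split filtration on $\MM_n$ ($T$ of $\IG_\hbar$-weight one, Lemma \ref{lem:filtered version of Virasoro}) induces a filtration on the chiral deformation complex with associated graded $C^\bullet_c(M_n)$ (Proposition \ref{prop:filt on Lie alg}). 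The resulting spectral sequence gives $\dim H^1_\ch(\MM_n)\le\dim H^1_c(M_n)$ (Proposition \ref{prop:bound on cohomology}). So what you need is the vanishing of all of $H^1_c(M_n)$, which is what the paper proves.
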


\subsection{Relation to previous work}

The problem of quantizing vertex Poisson algebras was first introduced by Li in \cite{Li04}, following the analogy with deformation quantization of Poisson algebras.

In recent years, chiral quantization has become increasingly relevant in the physics literature. In the SCFT/VOA correspondence of \cite{Beem4}, the resulting vertex algebras are widely expected to be chiral quantizations of the Higgs branch of the four-dimensional theory. Similarly, in the boundary vertex algebra construction of \cite{CosG} the resulting vertex algebras are chiral quantizations---albeit in a looser sense---of the Higgs and Coulomb branches.

Quantizations of vertex Poisson algebras also play an important role in the (micro)localization theory of vertex algebras; see for example \cite{Arakawa2015:local,Arakawa:2023cki}. Following \cite{KashiwaraRouquier,DoddKremnizer}, the microlocalization of a vertex algebra is defined by considering it as a (sheafy) quantization of the sheaf of functions on the arc space of its associated scheme. Localization has been used to great effect, recently, to produce free-field realizations of a whole host of vertex algebras.

The deformation theory of chiral algebras was first introduced by Tamarkin in his ICM address \cite{Tamarkin03}, as mentioned above. In a series of works \cite{BDSHK19, BDSHK20,BDHKV21} (subsets of) Bakalov, de Sole, Heluani, Kac, and Vignoli give a very concrete account of the deformation-obstruction complex for vertex algebras and vertex Poisson algebras.

A different cohomology theory for (graded) vertex algebras was developed by Huang in \cite{Huang:2010ud,Huang:2010yv}, with the property that its second cohomology group parameterizes first order deformations, up to isomorphism. This cohomology theory has been studied by \cite{Kovalchuk:2024rwd} in the case of freely generated vertex algebras to parameterize their first order deformations. Recently, \cite{Linshaw26} used Huang's cohomology theory to show that simple affine vertex algebras with positive integral levels were rigid. Like our results on the minimal models, this gives further evidence, in the affirmative, for the belief that strongly rational vertex algebras are rigid.

As mentioned, our Theorem \ref{introthm:symplectic order by order} is reminiscent of the classification of quantizations of symplectic varieties of \cite{BeK}, following the approach of Fedosov \cite{Fedosov1994} for symplectic manifolds.
Optimistically, one could hope for an analogous construction for chiral quantizations, with an analogue of the noncommutative period map of \cite{BeK} giving an explicit description of the moduli space of chiral quantizations. While this is not pursued in the current text, we believe that this could be a fruitful direction in future.

There have also been several approaches to the deformation theory of vertex algebras, and the quantization of vertex Poisson algebras, in terms of the BV formalism in mathematical physics. In \cite{Li2016}, Si Li proves that the quantum master equation for deformations of two-dimensional holomorphic field theories in the BV formalism is equivalent to the Maurer-Cartan equations for deformations of vertex algebras.

More recently, in \cite{Khan:2025rah}, the authors define a three-dimensional classical field theory associated to a vertex Poisson algebra, in analogy with the construction of the two dimensional Poisson sigma model associated to a Poisson algebra. Both these classical field theories can also be defined more tersely in terms of the universal bulk theory construction of \cite{ButsonYoo}. Optimistically, an analysis of the bulk-boundary Feynman diagrams in a quantization of this theory could give a quantization of the input vertex Poisson algebra structure, in analogy with the approach of Kontsevich \cite{Kontsevich1997}. 

Relatedly, and again optimistically, we hope that an analogue of the Kontsevich formality theorem could be proved in the chiral setting. In particular, we hope that an analogue of Tamarkin's proof of Kontsevich formality \cite{Tamarkin, Hinich} could also apply to the chiral setting.



\subsection{Outline}

We begin in Section \ref{sec:operad prelim} with some recollections on operads, which form the backbone of many of our arguments. We also recall the classical examples of the operads governing familiar algebraic structures, such as associative, commutative, Lie, and Poisson algebras.

In Section \ref{sec:dmodules}, we recall some preliminaries regarding D-modules on smooth algebraic varieties, with a particular focus on D-modules over an algebraic curve. In \ref{ssec:chiral algebras} we recall the various pseudo tensor structures on the category of D-modules on an algebraic curve $X$, and define chiral and coisson algebras.

In Section \ref{sec:vertex algebras}, we recall the, perhaps more familiar, notions of vertex algebras and vertex Poisson algebras. Following \cite{BD1,BDSHK19}, we also provide an operadic definition of these and relate them to the weakly translation invariant D-modules on $\IA^1$. To conclude this section we present an initial definition of the quantization of a vertex Poisson algebra, following \cite{Arakawa2015:local}.

In Section \ref{defsec} we introduce the DG Lie algebra $\gf_{\PP,\QQ}$ associated to a pair of $\PP$ and $\QQ$,  whose Maurer--Cartan elements parameterize the $\PP$-algebra structures internal to $\QQ$, and introduce the deformation-obstruction complex associated to a fixed $\PP$-algebra internal to $\QQ$.

We then review the deformation obstruction-complexes for familiar algebras and link them to well-known cohomology theories such as Hochschild and Chevalley--Eilenberg. We also relate the deformation-obstruction complex of a commutative algebra with its Andr\'e--Quillen cohomology, and exposit some properties of the natural bicomplex underlying the deformation-obstruction complex of a Poisson algebra.

In Section \ref{ssec:def chiral algebras}, we review the deformation-obstruction complexes for chiral and coisson algebras. As in the case of Poisson algebras, the deformation-obstruction complex for coisson algebras comes from a bicomplex, and shares many analogous properties.

In Section \ref{chsec}, we review the role of the deformation-obstruction complex in controlling the higher order deformation-obstruction theory of algebras. In particular, we prove our main theorem on the order-by-order deformation theory of algebras, which we later adapt to the setting of quantizations.

In Section \ref{quantsec}, we prove Theorem \ref{introthm:order by order}, which is Theorem \ref{DefObsQuantChThm} in the text. We first review the traditional case of deformation quantization of Poisson algebras and compare our order-by-order result, Theorem \ref{DefObsQuantThm}, to that of Fedosov quantization.

We then proceed to the quantization of coisson algebras. We introduce a filtration on the chiral operad whose corresponding Rees object is used to describe quantizations of a vertex Poisson algebras. After some technical digressions on the splitting of this filtration, we provide the proof of Theorem \ref{DefObsQuantChThm}.

In Section \ref{varsec}, we relate the variational cohomology of the arc space of a symplectic variety to its de Rham cohomology. This requires a digression on internal $\Hom$ objects in the category of D-modules, as well as introducing internal versions of the classical operad and the variational cohomology.

We conclude in Section \ref{sec:examples} with various the applications of our results. In particular, in Section \ref{sympsec} we prove Theorem \ref{introthm:order by order}, which is Theorem \ref{symptheo} in the text, and in Section \ref{ssec:minimal models}, we prove Theorem \ref{introthm:minimal model}, which is Theorem \ref{thm:rigidity of minimal model} in the text.

\subsection*{Acknowledgements}

The authors would like to thank Christopher Beem, whose advice during the early stages of this work was invaluable. We would also like to thank Tomoyuki Arakawa and Ivan Losev for useful discussions.



\section{Operadic preliminaries}\label{sec:operad prelim}

Let $\mc C$ be a symmetric monoidal category with monoidal structure $\otimes:\mc C\times \mc C\to \mc C$ and unit ${\bf 1}_\CC\in \CC$. In applications, we will essentially always have $\mc C=\K\Mod$ or $\mc C = (S\Mod, \otimes_S)$ for $S$ a commutative $\K$-algebra, with $\K$ a field of characteristic zero, or their differential graded (DG) variants.

\subsection{Plethystics}

Let $\frak S_n$ denote the symmetric group on $n$ letters. An $\frak S$-module (internal to $\CC$) also known as a collection, $M$, is a sequence $M=(M(0),M(1),\dots,M(n),\dots)$, where, for each $n\in\IN$, $M(n)$ is a representation of $\frak S_n$ internal to $\CC$. The $\frak S$-modules form a category, $\frak S\Mod$ with morphisms,  
\[
\Hom_{\Ss\Mod}(M,N) = \bigoplus_n \Hom_\CC(M(n),N(n))^{\Ss_n}~,
\]
for $M,N\in \Ss\Mod$. There are three notions of monoidal structure that one may be define on $\Ss\Mod$.

Given two $\frak S$-modules $M,N$, their \emph{tensor} product is the $\frak S$-module
\[
(M\tensor{\frak S} N)(n) \coloneq \bigoplus_{i+j=n}\Ind_{\frak S_i \times \frak S_j}^{\frak S_n} M(i)\otimes N(j)
\]
Then, $(\frak S\Mod, \tensor{\frak S})$ is symmetric monoidal with tensor unit $({\bf 1}_\CC,0\dots)$, the trivial $\frak S$-module.

Given two $\frak S$-modules $M,N$ their \emph{Hadamard} tensor product is the $\frak S$-module
\[
(M\otimes^H N)(n) \coloneqq M(n)\otimes N(n)~,
\]
with $\frak S_n$ acting diagonally. Then, $(\frak S\Mod, \otimes^H)$ is symmetric monoidal with tensor unit given by the collection $(\bf 1_\CC,1_\CC,\dots)$.

Given two $\frak S$-modules $M,N$ their \emph{composite} product is the $\frak S$-module
\[
M\circ N \coloneq \bigoplus_{k\in\IN} \bigg(M(k)\otimes N^{\tensor{\Ss} k}\bigg)_{\frak S_k}~,
\]
where $\frak S_k$ acts diagonally and acts on $N^k$ by permuting factors. Let $\id_\Ss = (0,{\bf 1}_\CC,0,\dots)\in \Ss\Mod$, then for any $\frak S$-module $M$,
\[
\id_{\frak S}\circ M \cong M\circ \id_\Ss\cong M~,
\]
and $(\frak S\Mod,\circ, \id_{\frak S})$ is a unital, monoidal (but not symmetric) category.

\subsection{Operads}\label{Opsec}

Let $\fset$ denote the category whose objects are finite sets with the obvious notion of morphisms between them. For a morphism $\pi:I\rightarrow J$, we write $I_j=\pi^{-1}(j)$ for the fibre of $\pi$ above $j\in J$.

\begin{defn}\label{opdefn} A (symmetric, coloured) \emph{operad} $\mc O$ in $\mc C$ is:
	\begin{enumerate}[(i)]
		\item A collection $\col\mc O$, elements of which are called \emph{colours} or \emph{objects} of $\mc O$,
		\item For each finite set $I\in\fset$ and indexed collection of objects $\{c_i\}_{i\in I}$ and $d$ of $\mc O$, an object $\mc O(\{c_i\}_{i\in I},d)\in \mc C$ called the \emph{multilinear operations} in $\mc O$.
		\item For each map of finite sets $I\to J$, and indexed collections of objects $\{c_i\}_{i\in I}$, $\{d_j\}_{j\in J}$ and $e$, a morphism
		$$ \bigotimes_{j\in J} \mc O(\{c_i\}_{i\in I_j}, d_j)\otimes \mc O(\{d_j\}_{j\in J},e) \to \mc O(\{c_I\}_{i\in I},e)$$
		of objects of $\mc C$ called the \emph{composition law} in $\mc O$.
		\item For each object $c\in \mc C$, a morphism $\id_c\in \mc O(c,c)$ called the \emph{identity map} on $c$, which is both a left and right unit for the composition law.
		\item For each sequence of maps $I\rightarrow J \rightarrow K$, and indexed collections of objects $\{c_i\}_{i\in I}$, $\{d_j\}_{j\in J}$, $\{e_k\}_{k\in K}$ and $f$, compatible commutativity data for the diagram
		\[
		\begin{tikzcd}
		\tensor{j\in J} \OO(\{c_i\}_{i\in J},d_j)\otimes \tensor{k\in K}\OO(\{d_j\}_{j\in J_k},e_k)	\otimes \OO(\{e_k\}_{k\in K},f) \ar[r] \ar[d] & \tensor{k\in K} \OO(\{c_i\}_{i\in I_k},e_k)\otimes \OO(\{e_k\}_{k\in K},f) \ar[d] \\
		\tensor{j\in J} \OO(\{c_i\}_{i\in I_j},d_j)\otimes \OO(\{d_j\},f) \ar[r] & \OO(\{c_i\}_{i\in I},f)
		\end{tikzcd}
		\]
		and similarly for higher arity compositions.
	\end{enumerate}
	A map $\varphi: \mc O\to \mc O'$ of operads in $\mc C$ is:
	\begin{enumerate}[(i)]
		\item A map $\col\mc O \to \col \mc O'$
		\item For each map of finite sets $I\to J$, and indexed collections of objects $\{c_i\}_{i\in I}$ and $d$ of $\mc O$, a morphism
		$$ \mc O(\{c_i\}_{i\in I_j}, d) \to \mc O'(\{\varphi(c_i)\}_{i\in I_j}, \varphi(d))$$ 
		such that $\id_c$ maps to $\id_{\varphi(c)}$ for each $c\in \col \mc O$.
		\item For each map of finite sets $I\to J$, and indexed collections of objects $\{c_i\}_{i\in I}$, $\{d_j\}_{j\in J}$ and $e$, the commutativity of the diagram
		\[
			\begin{tikzcd}
				\tensor{j\in J} \mc O(\{c_i\}_{i\in I_j}, d_j)\otimes \mc O(\{d_j\}_{j\in J},e) \ar[r] \ar[d] & \mc O(\{c_i\}_{i\in I},e) \ar[d] \\
				\tensor{j\in J} \mc O'(\{\varphi(c_i)\}_{i\in I_j}, \varphi(d_j))\otimes \mc O(\{\varphi(d_j)\}_{j\in J},\varphi(e)) \ar[r] & \mc O'(\{\varphi(c_i)\}_{i\in I},\varphi(e)) 
			\end{tikzcd}
		\]
	\end{enumerate}
	
	The collection of operads in $\mc C$ thus defines a category, denoted $\Op(\mc C)$. Let $\Alg_{\mc O}(\mc O')$ denote the space $\Hom_{\Op(\mc C)}(\mc O,\mc O')$ of maps of operads $\mc O\to \mc O'$.
	
\end{defn}

For an operad $\mc O$ with a single colour $\col \mc O=\{c\}$, we use the notation 
\[
\mc O(n)=\mc O(\{c\}_{i\in\{1,\dots,n\}},c)~.
\]
Note that $\mc O(n)$ is an $\Ss_n$-module internal to $\mc C$, and so the sequence, $\OO = (\OO(1),\OO(2),\dots)$, is an $\Ss$-module. 

\begin{rmk}\label{opsingcolrmk} 
Indeed, operads on a single colour admit a slicker definition: a single coloured operad is equivalent to a unital, monoidal object in the monoidal category $(\Ss\Mod,\circ)$. Namely, a single coloured operad is a tuple $(\OO, \mu, u)$, where $\OO \in \Ss\Mod$ and $\mu:\OO\circ \OO\rightarrow \OO$ is an associative product with unit $u:\id_\Ss\rightarrow \OO$.
\end{rmk}

A natural source of coloured operads are enriched symmetric monoidal categories.
\begin{eg}\label{symmonoidcatopeg} Let $\D$ be a symmetric monoidal category enriched over $\mc C$. Then $\D$ defines an operad $\mc O_{\D}\in \Op(\mc C)$ in $\mc C$, with objects given by objects of $\D$ and multilinear operations given by
	$$ \mc O_{\D} (\{C_i\}_{i\in I},D) =\Hom_{\D}(\otimes_{i\in I} C_i,D) \ .$$
	In this case, we abbreviate $\Alg_{\mc O}(\mc O_{\D})=\Alg_{\mc O}(\D)$. In particular, for any object $D \in \mc D$ we let $\cEnd_{\mc D}(D)$ denote the full suboperad on the single colour $D$, with space of $n$-ary operations given by
	\[ \cEnd_{\mc D}(D)(n) = \Hom_{\mc D} (D^{\otimes n}, D)\ .   \]
\end{eg}


\begin{eg}Following Example \ref{symmonoidcatopeg} above, if $\mc C$ is a closed monoidal category, then it defines an operad in itself and thus for any operad $\mc O$ in $\mc C$ we have a canonical category $\Alg_{\mc O}(\mc C):=\Alg_{\mc O}(\mc O_{\mc C})$ of algebras over $\mc O$ in $\mc C$.
	
	Furthermore, if $\mc O$ is an operad internal to $\mc C$ on a single colour, then an object $A\in \Alg_{\mc O}(\mc C)$ is equivalent to an object $A \in \mc C$ together with maps in $\mc C$
\begin{equation}\label{algopconceqn}
		 \mc O(n) \to \Homi_{\mc C}( A^{\otimes n} , A)  
\end{equation}
	such that the composition of operations in $\mc O$ is mapped to the internal composition of multi-linear maps in the closed symmetric monoidal category $\mc C$.
\end{eg}

\begin{rmk}
	We think of the multilinear maps from $A$ to itself induced by the image of the morphism in Equation \ref{algopconceqn} as equipping the object $A$ with an algebraic structure---hence the notation.
	These multilinear maps specify all possible $n$-ary operations on $A$ that are built up from some set of elementary operations, modulo any relations imposed on them.
\end{rmk}

%


\subsubsection{Generators and relations}
Since every single coloured operad has an underlying $\Ss$-module, we have a forgetful functor
\[
\Op(\CC)^\textup{s.\ col} \rightarrow \Ss\Mod~.
\]
The left adjoint to this functor is the free-operad construction:
\begin{defn}\label{freeopeg} 
The free operad on $E\in \Ss\Mod$, $\mc \FF(E)\in\Op(\CC)$, is a single coloured operad characterized by the property that for any single coloured operad, $\OO\in\Op(\CC)$,
\[
\Alg_{\mc F(E)}(\mc O) = \Hom_{\Ss\Mod}(E,\OO)~.
\]
The free operad admits an inductive construction, with $\FF(E) = \colim_{m\in \IN} \FF_m(E)$, where 
\[
\FF_0(E) = \id_\Ss~, \text{ and } \FF_m(E) = \id_\Ss\oplus (E\circ \FF_{m-1}(E))  \text{ for } m>0~.
\]
\end{defn}

We say a single coloured operad $\mc O$ is generated over a collection, $E$, if there exists some $\Ss$-submodule $R\hookrightarrow \FF(E)$, generating an operadic ideal $(R)\subset \FF(E)$, such that
\[
\OO = \FF(E)/(R)~.
\]
In particular, an operad is \emph{binary} if it is generated by a collection of the form, $E=(0,0,E(2),0\dots)$, \ie, it is generated by binary operations.

\subsubsection{Pseudo-tensor categories}

Let $\D$ be a symmetric monoidal category enriched over $\CC$. From Example \ref{symmonoidcatopeg} we know that $\D$ defines an operad, $\OO_\D$ in $\CC$. Now let $\D'\subset \D$ be a subcategory that is not necessarily closed under the monoidal structure of $\D$. While $\D'$ is not a monoidal subcategory, it still defines a suboperad $\OO_{\D'}\into \OO_\D$ with spaces of operations
\[
\mc O_{\D'}(\{c_i\}_{i\in I},d)=\Hom_{\D}(\otimes_{i\in I} c_i,d) \ .
\]
Again, we abbreviate $\Alg_{\mc O}(\mc O_{\D'})=\Alg_{\mc O}(\D')$.

One should therefore think of a general (coloured, symmetric) operad as a natural generalization of a symmetric monoidal category, \ie, the correct gadget internal to which we can define various kinds of algebras. For this reason coloured symmetric operads are also called pseudo-tensor categories, for example in \cite{BD1}.

Our primary objects of consideration will be algebraic structures of the form $A\in \Alg_{ \PP}(\QQ)$ where $\PP$ is an operad with a single colour encoding a familiar algebraic structure (such as Lie or Poisson algebras), and $\QQ$ is a coloured operad enhancing some familiar category of interest, such as $\Vect_\ik$ or D-modules on an algebraic curve. Thus, we make the following formal definition:

\begin{defn}\label{pseudotensordefn} Let $\D$ be a category enriched over $\CC$. A pseudo-tensor structure on $\D$ is a coloured operad $\mc O_\D\in \Op(\CC)$ such that
	\begin{enumerate}[(i)]
		\item the collection of colours $\col \mc O_{\mc D}$ is equal to the collection of objects of $\mc D$, 
		\item the spaces of 1-ary operations $\mc O_{\mc D}(M,N)$ and their compositions are given by the spaces of morphisms in $\D$ and their categorical composition, respectively.
	\end{enumerate}
	
In particular, given any object $M\in \mc D$, we let $\cEnd_{\mc D}(M)$ denote the full suboperad on the single colour $M$, whose space of $n$-ary operations is given by
\begin{equation}\label{pseudotensendopeqn}
	\cEnd_{\mc D}(M)(n) = \mc O_{\mc D}( \{ M\}_{i=1}^n, M) \ .  
\end{equation}
\end{defn}
Evidently, every symmetric monoidal category defines a pseudo-tensor category by Example \ref{symmonoidcatopeg}.

\subsection{Algebras of the classical kind}\label{ssec:operad examples}

We introduce some examples of binary operads and their respective algebras. These operads will be introduced over $\CC=\Vect_{\ik}$ with its usual tensor product and unit. Throughout, we let $\D$ denote a $\ik$-linear symmetric monoidal category.

\subsubsection{}

The commutative operad $\Comm\in \Op(\Vect_\ik)$ is generated by 
\[
E = (0,0,\ik_m,0,\dots)~,
\]
where $\ik_m$ is the trivial representation of $\Ss_2$, subject to the relation generated by
\[
\ik[\Ss_3]\cdot\Ass(m)\subset \FF(E)(3) \qquad \text{ defined by } \qquad \Ass(m) =  m\circ(m\otimes \id) - m\circ (\id\otimes m)\in \FF(E)(3)~.
\]
The category of $\Comm$ algebras $\Comm(\mc D):=\Alg_{\Comm }(\mc D)$ in $\mc D$ is the usual category of (non-unital) commutative algebra objects in $\mc D$.

As $\Ss$-modules
\[
\Comm = (\ik)_{n\in\IN}~,
\]
where $\ik$ is the trivial representation of each $\Ss_n$.

\subsubsection{}\label{Lieop}

The Lie operad $\Lie\in \Op(\Vect_\K)$ is generated by 
\[
E=(0,0,\K_b,0,\dots)~,
\]
where $\ik_b$ is the sign representation of $\Ss_2$, subject to the relation generated by
\[
\ik[\Ss_3]\cdot\text{Jac}(b) \subset \mc F(E)(3) \qquad\text{ defined by } \qquad  \text{Jac}(b)= b\circ( \id \otimes b) - b\circ( b\otimes \id) - b \circ (\id \otimes b)\circ \sigma_{12}\in \FF(E)(3) ~.
\]
The category $\Lie(\D) \coloneqq \Alg_\Lie(\D)$, is the usual category of Lie algebra objects in $\D$.

\subsubsection{}\label{Asseg} 

The associative operad $\Ass\in \Op(\Vect_\K)$ is generated by 
\[
E = (0,0,\ik[\Ss_2]\cdot m,0,\dots)~,
\]
subject to the relation generated by
\[
  \ik[\Ss_3]\cdot \Ass(m)\subset \FF(E)(3) \quad\quad \text{ defined by }\quad\quad \text{Ass}(m)=m\circ (m\otimes \id) - m \circ (\id \otimes m) \in \mc F(E)(3)~.
\]
The category $\Ass(\mc D):=\Alg_{\Ass }(\mc D)$ in $\mc D$ is the usual category of associative algebra objects in $\mc D$. 

As $\Ss$-modules,
\[
\Ass = \big(\ik[\Ss_n]\big)_{n\in \IN}~,
\]
\ie, the regular representation of $\Ss_n$ in arity $n$.

We have a sequence of operads over $\ik$,
\[
\Lie\hookrightarrow\Ass \twoheadrightarrow \Comm~.
\]
The surjection $\Ass \twoheadrightarrow \Comm$ induces a fully faithful embedding
\[
\Comm(\D) \hookrightarrow \Ass(\D)~,
\]
by forgetting the commutativity property.

Meanwhile, the inclusion $\Lie\hookrightarrow \Ass$ induces a forgetful functor
\[
\Ass(\D)\rightarrow \Lie(\D)~,
\]
sending an associative algebra $A$ to the Lie algebra $(A,[\cdot,\cdot])$, where $[\cdot,\cdot]$ is the commutator of the associative product on $A$. This forgetful functor admits a left adjoint given by the universal enveloping algebra construction
\begin{align*}
\Lie(\D) &\rightarrow \Ass(\D)~,\\
\hf &\mapsto U(\hf)~
\end{align*}

\subsubsection{}\label{Poisopdefn}

The Poisson operad $\Pois\in\Op(\Vect_\K)$, is generated by
\[
E = (0,0, \K_m\oplus \K_\pi,0\dots)
\]
where $\K_m$ is the trivial representation and $\K_\pi$ is the sign representation, subject to the relations generated by
\[
\ik[\Ss_3]\cdot\big(\text{Jac}(\pi)\oplus\Ass(m)\oplus\text{Dist}(m,\pi)\big)\subset \FF(E)(3)~,
\]
defined by
\begin{align*}
 \text{Jac}(\pi) & = \pi\circ( \id \otimes \pi) - \pi\circ( \pi\otimes \id) - \pi \circ (\id \otimes \pi)\circ \sigma_{12} \ , \\
  \text{Ass}(m) & =m\circ (m\otimes \id) - m \circ (\id \otimes m)~, \\
 \text{Dist}(m,\pi) & = \pi\circ(\id\otimes m) - m\circ(\pi\otimes \id)  - m \circ ( \id \otimes \pi) \ .
\end{align*}
The Poisson operad admits an auxiliary grading
\[
\Pois = \bigoplus_{n\in\IN}\bigoplus_{p=0}^n \Pois(n)^{\langle p\rangle}~,
\]
defined by assigning the grading $E = E^{\langle 0\rangle}\oplus E^{\langle -1\rangle}$ to the generators, with
\[
E^{\langle 0 \rangle} = (0,0,\ik_m,0,\dots)~,\qquad \text{and} \qquad E^{\langle -1 \rangle}=(0,0,\ik_\pi,0\dots)~.
\]

The category $\Pois(\D)\coloneqq \Alg_{\Pois}(\D)$ is the usual category of Poisson algebra objects in $\D$. In particular, the category $\Alg_{\Pois}(\Vect_\K)$ is evidently given by the usual category of Poisson algebras, that is, commutative algebras $A\in \Alg_\Comm(\Vect_\K)$ together with a Lie bracket $\pi: A^{\otimes 2}\to A$ such that $\pi$ is a derivation of the product $m$. 

Similarly, the category $\Alg^\gr_{\Pois}(\Vect_\ik^\gr)$ of graded Poisson algebras is the usual category of graded commutative algebras equipped with a Lie bracket $\pi: A^{\otimes 2}\to A$ of graded degree $-1$ which is a derivation of the commutative product. For later, we note that this is the natural graded degree of the Poisson bracket when the Poisson algebra is the associative graded of a filtered associative algebra.


The Poisson operad fits into the following sequence of operads over $\ik$,
\begin{align*}
	\Lie \hookrightarrow \Pois \twoheadrightarrow \Comm
\end{align*}
The surjection induces the fully faithful embedding
\[
\Comm(\D)\hookrightarrow \Pois(\D)~,
\]
by regarding a commutative algebra as a Poisson algebra with trivial bracket.

The inclusion induces a forgetful functor
\[
\Pois(\D) \rightarrow \Lie(\D)~,
\]
by forgetting the commutative multiplication. As with associative algebras, this functor admits a left adjoint 
\begin{align*}
	\Lie(\D)\rightarrow \Pois(\D)~,
	\hf\mapsto \Sym(\hf)
\end{align*}
and equipping $\Sym(\hf)$ with the Kirillov--Kostant--Souriau Poisson bracket.


\section{Algebras of the chiral kind}\label{sec:dmodules}

\subsection{The category of D-modules}\label{DModsec}

Let $X$ be a smooth variety of dimension $d_X$ over $\bb K=\C$ or a field of characteristic $0$. We write $\mc O_X$ for the sheaf of regular functions, $\mc D_X$ for the sheaf of differential operators, $\Theta_X$ for the tangent sheaf, $\Omega^1_X$ for the sheaf of K\"ahler differentials, $\Omega^{d_X}_X$ for the sheaf of sections of the canonical bundle, and $\omega_X=\Omega^{d_X}_X[d_X]$ for the dualizing sheaf on $X$. Let $\Dd(\mc O_X)$ be the derived category of complexes of $\mc O_X$-modules, and $\Sh_\z(X)$ the category of complexes of sheaves of $\bb K$-modules on $X$ in the Zariski topology. We will typically take $X$ to be a smooth algebraic curve, though we will also need to consider D-modules on more general varieties and thus we will recall the basic conventions to fix notation in general.

\begin{defn}\label{cohshvfun}
	Let $f:X\to Y$ a map of schemes. The inverse and direct image functors are
	\[ \bb L f^\bullet :\Dd(\mc O_Y) \to \Dd(\mc O_X) \qquad f^\bullet \mc F = f^{-1}F\otimes_{f^{-1}\mc O_Y} \mc O_X  \qquad\text{and}\qquad \bb R f_\bullet:\Dd(\mc O_X)\to \Dd(\mc O_Y) \qquad f_\bullet \mc F =  f_\bullet\mc F \ , \]
	where $f_\bullet:\Sh_\z(X)\to \Sh_\z(Y)$ and $f^{-1}:\Sh_\z(Y) \to \Sh_\z(X)$ are the usual direct and inverse image functors on sheaves of $\bb K$-modules.
\end{defn}

Let $\DD^\ell(X)$ (respectively, $\Dd^\ell(X)$) denote the (derived) category of (complexes of) left D-modules, that is, (complexes of) sheaves of left modules on $X$ over the sheaf of algebras $\mc D_X$, such that the underlying (cohomology) sheaves of $\mc O_X$-modules are quasi-coherent. Similarly, we let $\DD^r(X)$ (respectively, $\Dd^r(X)$) denote the (derived) category of (complexes of) right D-modules.

There is an equivalence of categories $\Dd^\ell(X)\xrightarrow{\cong} \Dd^r(X)$ sending a complex of left D-modules $M^\ell\in \Dd^\ell(X)$ to the complex $M^r=M^\ell\otimes_{\OO_X} \omega_X\in \Dd^r(X)$, and inducing an identification of the respective hearts up to a shift $\DD^\ell(X)[-d_X]\xrightarrow{\cong} \DD^r(X)$. We write $\Dd(X)$ for the common value of the derived category under this identification, though we will reference the specific models when defining functors between these categories.

We also use an alternative equivalence identifying the left and right hearts $\DD^\ell(X) \xrightarrow{\cong} \DD^r(X)$, sending a left $D$-module $M^\ell\in \DD^\ell(X)$ to $M^{r,\heartsuit}=M^\ell\otimes_{\mc O_X} \Omega^{d_X}_X=M^r[-d_X] \in \DD^r(X)$. We write $\DD(X)=\DD^r(X)$ for the abelian category given by the right heart, though we will also use the inverse identification
\[ (\cdot )^{\ell,\heartsuit}:\DD(X) \xrightarrow{\cong} \DD^\ell(X) \qquad\text{where}\qquad M^{\ell,\heartsuit} = M^\ell[d_X] \]
to define various structures on $\DD(X)$.
One should view this identification as less canonical than the derived analogue, but it will nonetheless be necessary to establish the theory of plain (non-DG) algebras of the chiral kind.

\begin{defn}\label{Dmodinvim}
	Let $f:X\to Y$ be a map of smooth varieties. The inverse image $f^!:\Dd(Y)\to \Dd(X)$ is defined by
	\[ f^!=\bb L f^{!,\heartsuit}:\Dd^\ell(Y)\to \Dd^\ell(X)   \qquad\text{where}\qquad f^{!,\heartsuit}(M)=f^\bullet(M) \quad\text{equipped with the pullback flat connection.} \]
\end{defn}

Evidently this functor is right exact with respect to the left t-structure, and we recall that for $f:X\to Y$ smooth it is exact.

\begin{defn}
	The exterior product is defined by
	\[ \boxtimes:\Dd(X)\times \Dd(Y)\to \Dd(X\times Y) \qquad\text{where}\qquad M\boxtimes N = \pi_X^!M \otimes \pi_Y^!N \ , \]
	for $\pi_X:X\times Y\to X, \pi_Y:X\times Y\to Y$. Note that for $X$ and $Y$ smooth, the maps $\pi_X$ and $\pi_Y$ are smooth and thus the relevant pullback functors are exact, and by abuse of notation we also define
		\[ \boxtimes:\DD(X)\times \DD(Y)\to \DD(X\times Y) \qquad\text{where}\qquad M\boxtimes N = \pi_X^!M \otimes \pi_Y^!N \ . \]
\end{defn}

The derived category $\Dd(X)$ is symmetric monoidal with respect to the derived tensor product
$$M\otimes^{!, \bb L} N= \Delta^!(M\boxtimes N) $$
for $M,N\in \Dd(X)$ and $\Delta:X\to X\times X$ the diagonal embedding.

The abelian category $\DD(X)$ is also symmetric monoidal with respect to the non-derived tensor product
$$M\otimes^{!} N= \Delta^{!,\heartsuit}(M^{\ell,\heartsuit}\boxtimes N^{\ell,\heartsuit})  \ .$$
Concretely, for $L,M\in \DD^\ell(X)$, we have $L\otimes^!M = L\tensor{\OO_X}M$
as $\OO_X$-modules and with $\tau\in \Theta_X\subset \D_X$ acting via the usual Leibniz rule, $\tau\otimes1+1\otimes\tau$.

Note that for $M\in \DD(X)$ locally projective we have $M\otimes^! N\cong M\otimes^{\bb L,!} N[d_X]$.

\begin{defn}\label{Dmoddirim}
	Let $f:X\to Y$ be a map of smooth varieties. The direct image $f_*:\Dd(X)\to \Dd(Y)$ is defined by
	\[ f_*:\Dd^r(X)\to \Dd^r(Y)  \quad\quad  f_*(M)= \bb R f_\bullet( M\otimes^{\bb L}_{\mc D_X} \mc D_{X\to Y}) \qquad\text{for}\qquad \mc D_{X\to Y}:= f^{!,\heartsuit} \mc D_Y \in (\mc D_X,f^{-1} \mc D_Y)\text{-Mod}\, \]
	where $\mc D_{X\to Y}= f^{!,\heartsuit} \mc D_Y \in \DD(X)$ is defined in terms of $\mc D_Y\in \DD^\ell(Y)$ as a left module, so that the $(\mc D_Y, \mc D_Y)$-bimodule structure on $\mc D_Y$ equips $\mc D_{X\to Y}$ with the structure of a $(\mc D_X,f^{-1} \mc D_Y)$-bimodule.
\end{defn}

Recall that for $f$ quasi-finite the functor $f_*$ is left exact, and for $f$ affine it is right exact, with respect to the right t-structure. Further, recall that for a closed embedding the functor $f_*$ is exact with respect to the right t-structure, and defines an equivalence $f_*:\DD(X)\xrightarrow{\cong} \DD(Y)_X$ of abelian categories between $\DD(X)$ and the full subcategory $\DD(Y)_X\subset \DD(Y)$ of right D-modules on $Y$ supported on $X$, with inverse equivalence $f^!_X=f^!|_{\DD(Y)_X}:\DD(Y)_X\xrightarrow{\cong} \DD(X)$.

\begin{defn}
For $\pi:X\to \pt$ and $M\in \Dd(X)$ we define the de Rham cochains of $M$ by
\[C^\bullet_\dR(X,M)=\pi_*M \in \Dd(\pt)=\Dd(\K\Mod)  \ .\]
\end{defn}

Concretely, for $M\in \Dd^r(X)$ we have
\[ \pi_*M =\bb R\pi_\bullet( M^r\otimes^{\bb L}_{\mc D_X} \mc O_X) \cong \bb R\pi_\bullet( M^\ell \otimes_{\mc O_X} \Omega^\bullet_X )[2d_X] \]
where $ M^\ell \otimes_{\mc O_X} \Omega^\bullet_X\in \Dd(\Sh_\z(X))$ denotes the algebraic de Rham complex with coefficients in the complex $M^\ell$, considered as a complex of Zariski sheaves on $X$, and the latter isomorphism follows from using the Spencer--Koszul resolution,
\[ \mc K^\bullet=\left[ \mc D_X \otimes_{\mc O_X} \wedge^{d_X}_{\mc O_X} \theta_X[d_X] \to \cdots \to  \mc D_X \otimes_{\mc O_X} \wedge^{2}_{\mc O_X} \theta_X[2]\to  \mc D_X \otimes_{\mc O_X} \theta_X[1]\to \mc D_X \right]\xrightarrow{\cong} \mc O_X \ ,\]
to compute the derived tensor product as
\[  M\otimes^{\bb L}_{\mc D_X} \mc O_X \cong M\otimes_{\mc D_X}\mc K^\bullet \cong M^\ell\otimes_{\mc O_X} \omega_X\otimes_{\mc D_X}\mc K^\bullet \cong  M^\ell \otimes_{\mc O_X} \Omega^\bullet_X[2d_X] \ . \]

We also define the middle de Rham cohomology as the functor of abelian categories
\[ h:\DD(X)\to \DD(\pt)=\K\Mod \qquad \text{by}\qquad h(M)=\pi_\bullet(M^r\otimes_{\mc D_X} \mc O_X) \cong \pi_\bullet( M^r/(M^r\cdot \theta_X)) \]
for $M=M^r\in \DD(X)\cong \DD^r(X)$, noting $\mc O_X \cong \mc D_X / (\mc D_X \cdot \theta_X)$. This functor computes the true de Rham cohomology in the following setting:

\begin{prop}\label{locprojaffprop} Let $X$ be a smooth, affine variety and $M\in \DD(X)$ be locally projective. Then there is a natural quasi-isomorphism
\[ \pi_*M \xrightarrow{\cong} h(M) \ . \]
\end{prop}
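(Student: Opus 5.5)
The plan is to compute $\pi_*M$ using the Spencer--Koszul resolution already recalled above. First, because $X$ is affine, $\bb R\pi_\bullet$ is just global sections, since quasi-coherent sheaves on an affine scheme have no higher cohomology. We therefore work with the modules of global sections and reduce to showing that
\[ M\otimes^{\bb L}_{\mc D_X}\mc O_X \longrightarrow M\otimes_{\mc D_X}\mc O_X = M/(M\cdot\theta_X) \]
is a quasi-isomorphism. Equivalently, the complex $M\otimes_{\mc D_X}\mc K^\bullet$ should have cohomology concentrated in the top degree, the one corresponding to $M\otimes_{\mc D_X}\mc D_X = M$. Under the identification with $M^\ell\otimes_{\mc O_X}\Omega^\bullet_X[2d_X]$, this says the de Rham complex of $M^\ell$ has cohomology only in degree $d_X$. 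Any shift discrepancy between this degree and the degree in which $h$ is placed is absorbed by the convention $M=M^r$ versus $M^\ell[d_X]$.

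The key step is the vanishing of $\Tor^{\mc D_X}_i(M,\mc O_X)$ for $i>0$. I would argue it as follows.

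1. Interpret "locally projective" as: $M$ is locally a projective right $\mc D_X$-module. Then, $X$ being affine, $\Gamma(X,M)$ is a direct summand of a free module over $D=\Gamma(X,\mc D_X)$.
2. For the free module, $D\otimes_D\mc K^\bullet=\mc K^\bullet$, which is a resolution of $\mc O_X$ and hence exact away from the end term.
3. Summands and direct sums preserve this, so $M\otimes_D\mc K^\bullet$ is quasi-isomorphic to $M\otimes_D \mc O(X)$ placed in a single degree.

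Naturality then follows because the comparison map is the augmentation $\mc K^\bullet\to\mc O_X$ tensored with $M$, which is functorial in $M$.

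The main obstacle is pinning down what "locally projective" means. If it means projective as a $\mc D_X$-module, the argument above is immediate. If instead it means locally projective as an $\mc O_X$-module (a vector bundle with flat connection), the Tor vanishing generally fails; for instance, $\mc O_X$ itself on $\bb A^1$ has de Rham cohomology in two degrees. In that case I would check the authors' intended setting, most likely $\mc D_X$-projective, or free $\mc D$-modules of the form $V\otimes\mc D_X$ as for chiral envelopes. Under that reading I would proceed exactly as above, making sure Zariski-local projectivity globalizes to projectivity of global sections on affine $X$. This last point follows because $\Ext^1_{D}(\Gamma M,-)$ can be computed locally and quasi-coherent cohomology on affines vanishes.
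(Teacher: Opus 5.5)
The paper states this proposition without proof, so there is nothing to compare against. Your overall strategy is the right one: compute $\pi_*M$ with the Spencer complex, kill the higher $\mathrm{Tor}$ by projectivity, and use affineness to drop the $\mathbb{R}$. Your reading of ``locally projective'' as ``locally a direct summand of a free $\mathcal{D}_X$-module'' is also how the paper uses it (cf.\ the proof of Proposition~\ref{prop:deRham of internal PV}).

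\textbf{Step~1 does not work as justified.} After the local-to-global spectral sequence collapses you are left with $\Ext^1_{\mathcal{D}_X}(M,N)\cong H^1(X,\mathcal{H}om_{\mathcal{D}_X}(M,N))$. But $\mathcal{H}om_{\mathcal{D}_X}(M,N)$ is only a sheaf of $\mathbb{K}$-vector spaces, not a quasi-coherent $\mathcal{O}_X$-module, so vanishing of quasi-coherent cohomology on affines says nothing about it. For finitely generated $M$ the claim can be rescued: the left $D$-modules $\Ext^i_D(\Gamma M,D)$ are quasi-coherent and vanish locally, and $D$ has finite global dimension. However, the paper applies the proposition to $\Theta_R^{\otimes^{!}_{R}p}$, which is only known to be locally a summand of a free $\mathcal{D}_X$-module of infinite rank.

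\textbf{A related point is passed over.} Even granting Step~1, you obtain $\pi_*M\simeq \Gamma(M)\otimes_D\mathcal{O}(X)$. Whereas $h(M)$ is $\Gamma(X,\underline h(M))$ for the sheaf $\underline h(M)=M/M\Theta_X$, which is not quasi-coherent. So it is not automatic that global sections commute with this quotient, and your ``quasi-coherent sheaves have no higher cohomology'' does not cover it.

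\textbf{Both issues disappear if you stay at the sheaf level.} Exactness of the complex of sheaves $M\otimes_{\mathcal{D}_X}\mathcal{K}^\bullet$ in negative degrees is a local question. On each $U_i$ this complex is a direct summand of $(\mathcal{K}^\bullet|_{U_i})^{\oplus I}$. Hence it is a resolution of $\underline h(M)$ by the quasi-coherent sheaves $M\otimes_{\mathcal{O}_X}\wedge^k\Theta_X$ in degrees $[-d_X,0]$, with no restriction on rank.

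On affine $X$ these terms are $\Gamma$-acyclic. So $\pi_*M\simeq\mathbb{R}\Gamma(X,\underline h(M))\simeq\Gamma(X,M\otimes_{\mathcal{D}_X}\mathcal{K}^\bullet)$ has cohomology only in degrees $\le 0$. On the other hand, $\mathbb{R}\Gamma$ of a single sheaf has cohomology only in degrees $\ge 0$. Therefore $\pi_*M\simeq\Gamma(X,\underline h(M))[0]=h(M)$, naturally in $M$. This also shows a posteriori that $\Gamma(X,\underline h(M))=\Gamma(M)\otimes_D\mathcal{O}(X)$.

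A minor slip: $\mathcal{O}_{\mathbb{A}^1}$ has de Rham cohomology only in degree $0$, not in two degrees. It still fails the statement, because that is the bottom rather than the top degree.
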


\subsection{Chiral, \texorpdfstring{$\Comm^!$}{Comm!}, and \texorpdfstring{$\Lie^*$}{Lie*} algebras}\label{ssec:chiral algebras}
\subsubsection{\texorpdfstring{$\text{Comm}^!$}{Comm!} algebras}
Let $\DD(X)^!$ denote the pseudo-tensor structure on $\DD(X)$ induced by the symmetric monoidal structure $\otimes^!$. Concretely, the spaces of multilinear maps are given by
\[
P^!_I(\{L_i\}, M)\equiv\Hom_{\DD(X)^!}(\{L_i\}, M) = \Hom_{\DD(X)}(\otimes^!_{i\in I} L_i , M) \ . 
\]

\begin{defn}
	A $\Comm^!$ algebra on $X$ is a commutative algebra object in the symmetric monoidal category $(\DD(X),\otimes^!)$. We write $\Comm^!(X)=\Alg_{\Comm}(\DD(X)^!)$ for the category of $\Comm^!$ algebras.
\end{defn}

The endomorphism operad of an object $M\in \DD(X)^!$ is given concretely by
\[ \OEnd_{\DD(X)^!}(M)(n)=P^!_n(\{M\}_{i=1}^n, M) = \Hom_{\DD(X)}(M^{\otimes^! n} , M) \ .  \]

\subsubsection{Lie* algebras}

Unlike the $!$-tensor structure, the $*$-tensor structure does not arise from a monoidal structure on $\DD(X)$.

\begin{defn} Let $I\in \fset$ and write $\Delta^{(I)}:X\rightarrow X^I$ for the diagonal embedding. For any $I$-family $L_i\in\DD(X)$, $i\in I$, and $M\in\DD(X)$, the multilinear operations in $\DD(X)^*$ are given by
\[
 P^*(\{L_i\}, M)\equiv\Hom_{\DD(X)^*}(\{L_i\}, M) = \Hom_{\DD^r(X^I)}(\boxt_{i\in I} L_i^r , \Delta^I_* M^r) \ , 
\]
with composition defined as in 2.2.3 of \cite{BD1}, noting that $\Delta^I_*$ is exact with respect to the right $t$-structures on $\Dd(X)$ and $\Dd(X^I)$.
\end{defn}

\begin{defn}
	A $\Lie^*$ algebra is a Lie algebra, $L\in \Alg_{\Lie}(\DD(X)^*)$ internal to the operad $\DD(X)^*$.
\end{defn}

Concretely, for $b\in \Hom_{\DD(X)^*}(\{L,L\},L)$, the composition $b\circ (b\otimes \id) \in \Hom_{\DD(X)^*}(\{L,L,L\},L)$ is defined by
\[
L\boxt L\boxt L  \xrightarrow{ b\,\boxt \id} \Delta_*L\boxt L \cong \Delta^{12,3}_*(L\boxt L) \xrightarrow{ \Delta^{12,3}_*(b)} \Delta^{12,3}_X\Delta^{(2)}_*L=\Delta^{(3)}_* L~,
\]
where $\Delta^{12,3}:X^2\to X^3$ is defined by $(x,y)\mapsto (x,x,y)$.

The endomorphism operad of an object $M\in \DD(X)^*$ is given concretely by
\[ \OEnd_{\DD(X)^*}(M)(n) =P_n^*(\{ M\}_{i=1}^n,M)= \Hom_{\DD(X^n)}(M^{\boxtimes n} , \Delta^{(n)}_* M) \ .  \]

\subsubsection{Chiral algebras}

For any finite set $I\in \fset$, let $\Delta^{(I)}:X\to X^I$ once again denote the diagonal embedding and let
\[ 
j^{(I)}:U^{(I)} \to X^I \qquad \text{for}\qquad U^{(I)}=\{ (x_i) \in X^I \ | \ x_i \neq x_j  \text{ for all } i\neq j \} 
\]
denote the inclusion of the complement of the union of all diagonals:

\begin{defn} The \emph{chiral} pseudo-tensor structure on $\DD(X)$, denoted as $\DD(X)^\ch$, is defined as having multilinear operations
	\[ 
	P^\ch_I(\{L_i\}, M) =\Hom_{\DD(X)^\ch}(\{L_i\}, M) = \Hom_{\DD^r(X^I)}(j^{(I)}_*j^{(I),*} (\boxt_{i\in I} L_i^r) , \Delta^{(I)}_* M^r) ~,
	\]
	for $L_i,M\in \DD(X)$, noting that $\Delta^{(I)}_*$, $j^{(I),*}$ and $j^{(I)}_*$ are all exact with respect to the right $t$-structures on $\Dd(X)$, $\Dd(X^I)$ and $\Dd(U^{(I)})$.
\end{defn}

There are natural composition maps on the spaces of chiral operations defined in 1.3.2 of \cite{BD1}.
For example, for $A\in\DD(X)$ and $\mu\in \Hom_{\DD(X)^\ch}(\{A,A\},A)$, the composition $\mu \circ (\mu \otimes \id) \in \Hom_{\DD(X)^\ch}(\{A,A,A\},A)$ is defined by
\begin{align*}
	j^{(3)}_*(j^{(3)})^*( A\boxt A \boxt A)  & =  j^{12,3}_*j^{12,3 *}( j_*j^*(A\boxt A) \boxt A) \xrightarrow{j^{12,3}_*j^{12,3 *}( \mu\boxt \id)} j^{12,3}_*j^{12,3 *}(\Delta_*(A)\boxt A) \\
	& \quad\quad = \Delta^{12,3}_*(j_*j^*(A\boxt A)) \xrightarrow{ \Delta^{12,3}_*(\mu)} \Delta^{12,3}_* \Delta^{(2)}_*A=\Delta^{(3)}_*A 
\end{align*}
where we let $\Delta^{12,3}:X^2\to X^3$ defined by $(x,y)\mapsto (x,x,y)$, $U^{12,3}=\{(x,y,z)| x,y\neq  z\}$ and $j^{12,3}:U^{12,3}\into X^3$.

\begin{defn}
	 A chiral algebra is a Lie algebra $A\in \Alg_{\Lie}( \DD(X)^\ch)$ internal to the operad $\DD(X)^\ch$.
	\end{defn}
	
	Evidently a non-unital chiral algebra structure is equivalent to specifying a map
	$$ \mu \in P_2^\ch( \{ A,A\},A)=\Hom_{\DD(X^2) }( j^*j_*(A\boxt A) , \Delta_*A)$$
	which is skew-symmetric and satisfies the appropriately defined Jacobi identity.
	
	\begin{defn}
	 A unital chiral algebra is a chiral algebra $A$ together with a map $u:\omega_X\to A$ such that the diagram
	 \begin{equation}\label{unchdiagram} \xymatrix{ j_*j^*(A\boxtimes \omega_X) \ar[r]\ar[d] & \Delta_* A \ar@{=}[d] \\ j_*j^*(A\boxtimes A) \ar[r] & \Delta_*A } 
	 \end{equation}
	 and its higher arity analogues commute, where the left vertical map is that induced by the map $u$, and the bottom horizontal map is given by the chiral multiplication, and the top horizontal map is given by the boundary map in the exact triangle for a complimentary open and closed embedding:
	 \[j_*j^*(A\boxtimes \omega_X)  \to \Delta_* \Delta^!(A\boxtimes \omega_X) =\Delta_* (A \otimes^! \omega_X) = \Delta_* A \ .  \]

\end{defn}

%
%
%
%
%
%
%
\begin{defn}\label{commchalg} A chiral algebra $A \in \Alg^\ch(X)$ is called commutative if the composition
	$$ A \boxt A \to j_*j^*(A \boxt A) \xrightarrow{\mu} \Delta_*A $$
	vanishes, where the first map is the unit of the $(j^*,j_*)$-adjunction and the second is the chiral algebra structure map $\mu$. A unital chiral algebra $A \in \Alg_\un^\ch(X)$ is called commutative if it is commutative as a non-unital chiral algebra.
\end{defn}

\begin{prop} There is a natural equivalence of categories between the full subcategory of commutative chiral algebras and $\Comm^!$ algebras on $X$,
\[\Alg^\ch(X)^\Comm \xrightarrow{\cong} \Alg_\Comm(\DD(X)^!) \]
identifying the respective forgetful functors to $\DD(X)$.
\end{prop}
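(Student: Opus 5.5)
The plan is to use the short exact sequence for the complementary open and closed embeddings $j:U\into X^2$ and $\Delta:X\to X^2$, applied to $A\boxtimes B$ with $A,B\in\DD(X)$:
\[
0\to A\boxt B \to j_*j^*(A\boxt B) \to \Delta_*\Delta^{!}(A\boxt B)[1]\to 0 ~.
\]
Here $\Delta^{!}(A\boxt B)[1]$ is identified, in the heart, with $A\otimes^! B$ (up to the shift conventions fixed in Section \ref{DModsec}). Left exactness of $j_*j^*$ and injectivity of the first map follow because $A\boxt B$ has no sections supported on the diagonal when $A,B$ are, say, $\mc O$-flat. In general the kernel is $\Delta_*\Delta^{!}$ in degree $0$, which vanishes for D-modules on a curve by Kashiwara's theorem and the shift conventions.

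Applying $\Hom_{\DD(X^2)}(-,\Delta_*A)$ and using $\Hom(\Delta_*(A\otimes^!B),\Delta_*C)=\Hom_{\DD(X)}(A\otimes^!B,C)$ (full faithfulness of $\Delta_*$ from Kashiwara), I get that a chiral operation $\mu$ vanishing on $A\boxt B$ is the same as a $!$-operation $m:A\otimes^! A\to A$. Concretely, $\mu$ is the composite of the boundary map with $\Delta_*(m)$, exactly as in the unit diagram \eqref{unchdiagram}. The same argument in arity $I$, using $j^{(I)}$ and the iterated boundary maps, identifies the chiral operations that vanish on all partial diagonal complements with $P^!_I$. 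This gives a map of operads $\DD(X)^!\to\DD(X)^\ch$, compatible with composition by naturality of the boundary maps. Skew-symmetry of $\mu$ corresponds to commutativity of $m$, because the boundary map $j_*j^*(A\boxt A)\to\Delta_*(A\otimes^!A)$ is anti-invariant under the swap, since the residue changes sign.

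Next I check that, under this correspondence, the Jacobi identity for $\mu$ is equivalent to associativity of $m$. I would compute both sides of $\mu\circ(\mu\otimes\id)$ and its permutations on $j^{(3)}_*j^{(3)*}(A^{\boxt 3})$. Each of these factors through the iterated boundary to $\Delta^{(3)}_*(A^{\otimes^!3})$, and the three cyclic terms of Jacobi become $m\circ(m\otimes\id)$ times signs coming from the three ways of colliding points. The key input is the exact sequence (Cousin complex) for the stratification of $X^3$ by diagonals, which says the relevant boundary map from $j^{(3)}_*j^{(3)*}$ to $\Delta^{(3)}_*$ kills the sum of the images of the $j^{12,3}_*$-type terms. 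From this, Jacobi reduces to the single relation $\Ass(m)$.

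The main obstacle is this arity-three sign and boundary bookkeeping. The cleanest route is to invoke the fact, as in \cite{BD1} 3.3.3, that a commutative chiral bracket factors uniquely through $\Delta_*(A\otimes^!A)$, and then compare compositions in $\DD(X)^!$ and $\DD(X)^\ch$ at arity $3$ using naturality of the residue maps. Functoriality and compatibility with the forgetful functors to $\DD(X)$ are then immediate, since the underlying object is unchanged and morphisms on both sides are maps in $\DD(X)$ intertwining $\mu$, respectively $m$, which correspond under the fully faithful map. The inverse functor sends $(A,m)$ to $\mu=\Delta_*(m)\circ\partial$.
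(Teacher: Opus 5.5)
Your arity-two step (the cokernel of $A\boxt A\to j_*j^*(A\boxt A)$ is $\Delta_*(A\otimes^!A)$, $\mu=\Delta_*(m)\circ\partial$, skew-symmetry of $\mu$ versus commutativity of $m$) and your treatment of morphisms are fine. There is, however, a genuine gap in arity three, which is where the content lies, and your claim about arity $I$ is wrong.

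The chiral operations killing $F^\spe_{-2}$, i.e.\ those factoring through the deepest graded piece $\gr_{-1}^{F^\spe}$, form $P^!_I(\{A\},A)\otimes\Lie(I)$, not $P^!_I$. So there is no map of operads $\DD(X)^!\to\DD(X)^\ch$. Your own remark that $m\mapsto\Delta_*(m)\circ\partial$ turns symmetric into skew operations is the twist by $\Lie(2)=\mathrm{sgn}$. In arity three the twist is by the two-dimensional $\Lie(3)$, and no sign can absorb it.

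Concretely, with suitable orientations the three iterated residues $r_{12,3},r_{23,1},r_{31,2}\colon j^{(3)}_*j^{(3)*}A^{\boxt 3}\to\Delta^{(3)}_*(A^{\otimes^!3})$ satisfy the Cousin (Arnold) relation $r_{12,3}+r_{23,1}+r_{31,2}=0$. This contradicts your picture of all three Jacobi terms as $m\circ(m\otimes\id)$ times signs through a single iterated boundary: three signs cannot sum to zero. The relation by itself gives only associativity $\Rightarrow$ Jacobi. For Jacobi $\Rightarrow$ associativity you need an independence statement: $(r_{12,3},r_{23,1})$ maps onto $\Delta^{(3)}_*(A^{\otimes^!3})^{\oplus 2}$, i.e.\ $\gr_{-1}^{F^\spe}(j^{(3)}_*j^{(3)*}A^{\boxt 3})\cong\Delta^{(3)}_*(A^{\otimes^!3})\otimes\Lie(3)$. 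Eliminating $r_{31,2}$, Jacobi then splits into two equations saying the three cyclic permutations of $m\circ(m\otimes\id)$ coincide. Together with commutativity, this is $\Ss_3\cdot\Ass(m)$. The arity-two factorization you cite from \cite{BD1} does not supply the independence.

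The paper states this proposition without proof, as a recollection from \cite{BD1}. It does, however, record the two ingredients that give a clean argument:
\begin{enumerate}[(i)]
\item A chiral structure is commutative exactly when $\mu\in F^\spe_0P^\ch(A)(2)$.
\item The suboperad $F^\spe_0P^\ch(A)$ is identified with $P^!(A)\otimes^H\Lie$. This follows from the $\gr_0$ computation in the paper's proof for $\gr^{F^\spe}P^\ch(M)$ (that part uses only right exactness of $\otimes^!$, not projectivity), with Theorem 3.1.5 of \cite{BD1} for compatibility of compositions.
\end{enumerate}
Since $\Lie$ is generated in arity two, the whole map $\Lie\to P^\ch(A)$ factors through $F^\spe_0P^\ch(A)$. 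The lemma $\Alg_\Lie(\mc P\otimes^H\Lie)\cong\Alg_\Comm(\mc P)$ (1.1.10 of \cite{BD1}) then finishes. Once corrected, your residue bookkeeping is the arity-three case of these two facts made explicit.

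Finally, your claim that $A\boxt B\to j_*j^*(A\boxt B)$ is always injective is false. For $A=B$ the $\delta$-module at a point, $j^*(A\boxt B)=0$, and in general the kernel is the diagonally supported part, governed by $\mathrm{Tor}_1^{\mc O_X}$. The claim is also unnecessary: only the cokernel $\Delta_*(A\otimes^!B)$ enters, and that identification holds for all $A,B$ by right exactness.
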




\begin{rmk}\label{chiraltostar} There is a natural map of operads $\DD(X)^\ch\to \DD(X)^*$ which is the identity on objects and arity 1 morphisms, and defined on higher arity morphisms by the maps
	$$  \Hom_{\DD(X^I)}(j^{(I)}_*j^{(I),*} (\boxt_{i\in I} M_i) , \Delta^{(I)}_* L) \to \Hom_{\DD(X^I)}(\boxt_{i\in I} M_i , \Delta^{(I)}_* L)$$
	induced by
	$$\quad\quad \boxtimes_{i\in I} M_i \to  j^{(I)}_*j^{(I),*} (\boxt_{i\in I} M_i) \ , $$
	where the latter are given by the unit of the $(j^{(I),*}, j^{(I)}_*)$ adjunction. This in turn gives a forgetful functor $\Alg_{\Lie}(\DD(X)^\ch)\rightarrow \Alg_{\Lie}(\DD(X)^*)$
\end{rmk}

\begin{prop}[\cite{BD1}]\label{prop:chiral envelope}
	The forgetful functor $\Alg_{\Lie}(\DD(X)^!)\rightarrow \Alg_{\Lie}(\DD(X)^*)$ has a left adjoint
	\[
	\Alg_{\Lie}(\DD(X)^*)\xrightarrow{\VV} \Alg_{\Lie}(\DD(X)^\ch)~.
	\]
	called the chiral envelope.
\end{prop}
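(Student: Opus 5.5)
The plan is to construct $\VV$ abstractly, as a quotient of a free chiral algebra, rather than by an explicit formula. (The statement should be read with $\Alg_{\Lie}(\DD(X)^\ch)$ as the source of the forgetful functor, the one from Remark \ref{chiraltostar}.)

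\emph{Step 1: a free chiral algebra.} For $M\in\DD(X)$ I would build a free chiral algebra $\mc F^\ch(M)$, left adjoint to $\Alg_{\Lie}(\DD(X)^\ch)\to \DD(X)$. The key input is that the chiral operations
\[
P^\ch_I(\{L_i\},N)=\Hom_{\DD(X^I)}\bigl(j^{(I)}_*j^{(I),*}(\boxtimes_{i\in I} L_i),\Delta^{(I)}_*N\bigr)
\]
have two properties. They send limits in $N$ to limits, since $\Delta^{(I)}_*$ is exact and a right adjoint on the supported subcategory. And the functor $(L_i)\mapsto j^{(I)}_*j^{(I),*}(\boxtimes L_i)$ commutes with filtered colimits in each variable, since $\boxtimes$, $j^*$ and the affine pushforward $j_*$ do.

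From this I would deduce that $\Alg_{\Lie}(\DD(X)^\ch)$ is closed under limits and filtered colimits computed in $\DD(X)$. For a filtered colimit $\colim A_\alpha$, the structure maps $j_*j^*(A_\alpha\boxtimes A_\alpha)\to\Delta_*A_\alpha$ assemble to a map $j_*j^*(\colim A_\alpha\boxtimes \colim A_\alpha)\to \Delta_*\colim A_\alpha$. Skew-symmetry and the Jacobi identity, being equalities of morphisms out of $j^{(3)}_*j^{(3),*}(A^{\boxtimes 3})$, pass to the colimit. Since $\DD(X)$ is Grothendieck abelian, hence locally presentable, and the category of chiral Lie algebras is accessible (algebras for a finitary structure), the adjoint functor theorem yields $\mc F^\ch$.

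As a more concrete alternative, one could follow \cite{BD1}: form the free Lie algebra in the pseudo-tensor category $\DD(X)^\ch$ inductively, as in Definition \ref{freeopeg}, using compositions of chiral operations. I would use this only as a cross-check.

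\emph{Step 2: the envelope as a coequalizer.} Given a $\Lie^*$ algebra $(L,b)$ with $b\colon L\boxtimes L\to \Delta_*L$, I would set $\VV(L)$ to be the quotient of $\mc F^\ch(L)$ by the chiral ideal generated by the discrepancy between two $*$-operations. The first is the image under $\DD(X)^\ch\to\DD(X)^*$ of the chiral bracket $\mu$ of $\mc F^\ch(L)$ restricted to $L$, that is, $L\boxtimes L\to j_*j^*(L\boxtimes L)\to\Delta_*\mc F^\ch(L)$. The second is $\Delta_*(\iota)\circ b$, where $\iota\colon L\to\mc F^\ch(L)$.

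Concretely, I would take the coequalizer in $\Alg_{\Lie}(\DD(X)^\ch)$ of the two maps $\mc F^\ch(K)\rightrightarrows\mc F^\ch(L)$, where $K$ is the D-module representing the $*$-operations out of $L\boxtimes L$. Equivalently, $K$ is obtained from $\Delta^!$ of $L\boxtimes L$ via the $h$-type functor of \cite{BD1}. Coequalizers exist because the category is cocomplete: it is locally presentable by Step 1.

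The universal property then follows formally. A chiral map $\VV(L)\to A$ is the same as a D-module map $f\colon L\to A$ whose composite with $\mu_A$ on $L\boxtimes L$ equals $\Delta_*(f)\circ b$. This is exactly a morphism of $\Lie^*$ algebras $L\to A$, by the definition of the forgetful functor in Remark \ref{chiraltostar}.

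\emph{Expected obstacle.} The main difficulty is the representability needed in Step 2. One needs the relation "two $*$-operations $L\boxtimes L\to\Delta_*A$ agree" to be corepresented by a single D-module $K$, so that it can be imposed by a chiral ideal. If this fails, I would instead run the adjoint functor theorem directly on the composite forgetful functor $\Alg_{\Lie}(\DD(X)^\ch)\to\Alg_{\Lie}(\DD(X)^*)$. This requires checking that the composite preserves limits and filtered colimits, which follows from Step 1, and that the target is accessible. That route avoids $K$ entirely, at the cost of a less explicit $\VV$.
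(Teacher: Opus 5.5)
The paper gives no proof of this statement; it only cites \cite{BD1}. You are right that the source of the forgetful functor should be $\Alg_{\Lie}(\DD(X)^\ch)$. Your argument, however, breaks at Step 1, and everything after it depends on that step.

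The functor $N\mapsto P^\ch_I(\{L_i\},N)$ does not preserve infinite products. $\Delta_*$ is an equivalence onto the modules supported on the diagonal, but products there are not products in $\DD(X^I)$. Concretely, $\Delta_*\prod_\alpha N_\alpha$ is a proper subobject of $\prod_\alpha \Delta_*N_\alpha$, since the latter contains families of unbounded order along the diagonal. Because $j_*j^*(\boxtimes_i L_i)$ is not supported on the diagonal, the componentwise bracket on $\prod_\alpha A_\alpha$ need not land in $\Delta_*\prod_\alpha A_\alpha$.

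In fact the free chiral algebra $\mc F^\ch(M)$ does not exist in general. Let $M$ be the free D-module on one generator $e$. Then $\mu(e\boxtimes e)\in \Delta_*\mc F^\ch(M)$ has some finite order $N$ along the diagonal, and a chiral map cannot raise this order. So every section $a$ of every chiral algebra would have $\mu(a\boxtimes a)$ of order at most $N$; on $\IA^1$, this says $[a_\lambda a]$ has $\lambda$-degree at most $N$. This already fails for the Heisenberg vertex algebra: for $a=\partial^k h$ one gets $[a_\lambda a]=(-1)^k\lambda^{2k+1}|0\rangle$. This is the chiral form of the known fact that free vertex and Lie conformal algebras exist only once locality bounds are prescribed.

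The same phenomenon shows that infinite products of chiral algebras need not exist, so the category is not locally presentable. Consequently three parts of your argument fail: the adjoint functor theorem in Step 1, the appeal to cocompleteness in Step 2, and your fallback, which reuses the limit claim of Step 1. The representability of $K$ is not the real obstacle.

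The missing idea is that pole orders must be bounded by the $\Lie^*$ bracket $b$ of $L$ itself. So $\VV(L)$ has to be built from $(L,b)$, not cut out of a free object. The standard construction (cf.\ \cite{BD1}) does this: $\VV(L)$ is the vacuum module induced from the Lie algebra of $L$-valued sections on formal discs to the Lie algebra of sections on punctured discs, organized over $X$, with the chiral bracket coming from locality. On $\IA^1$ this is the vertex envelope $U(\mathrm{Lie}\,L)\otimes_{U(\mathrm{Lie}\,L_+)}\K$.

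Alternatively, you could replace $\mc F^\ch(L)$ by a free object whose locality bounds are dictated by $b$, in the style of Borcherds and Roitman, and then impose your relation. With either construction in hand, your verification of the universal property at the end of Step 2 is the right check.
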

One should compare this to the free-forgetful adjunction $\Lie(\Vect_\ik)\leftrightarrows \Ass(\Vect_\ik)$ recalled in \ref{Asseg}.

\subsection{Quasicoherent sheaves on $\mc D_X$-schemes}

\begin{defn}
A $\mc D_X$-scheme is a scheme $\mc Y$ over $X$ together with a flat Ehresmann connection on $\mc Y$ over $X$.
\end{defn}

The forgetful functor $\DD(X)\to \QC(X)$ is symmetric monoidal, so every $\Comm^!$ algebra $R$ on $X$ determines a commutative algebra object $R$ in $\QC(X)$, or equivalently a scheme $\mc Y = \Spec R$ that is affine over $X$. The additional data of the $\DD$-module structure on $R$ determines a flat connection on $\mc Y$ over $X$. Thus, every $\Comm^!$ algebra on $X$ determines a $\mc D_X$-scheme, and such $\mc D_X$-schemes are called affine.

For $\underline{Y} \to X$ a scheme over $X$, we define the $\mc D_X$-scheme of jets $\mc J(\underline{Y})$ of sections of $\underline{Y}$ by the property that for any $\mc D_X$-scheme $\mc Z$, we have
\[ \Hom_{\mc D_X-\Sch}(\mc Z, \mc J(\underline{Y})) = \Hom_{\Sch_{/X}}(\mc Z, \underline{Y}) \ .\]
In particular, taking $\mc Z=X$ we obtain the familiar property $\Gamma_\nabla(X,\mc J(\underline{Y}))=\Gamma(X,\underline{Y})$. In the case $\underline{Y}=Y\times X$ for $Y$ a $\bb K$-scheme, we write simply $\mc J(Y)$ in place of $\mc J(\underline{Y})$.

In particular, if $\underline{Y}=\Spec B$ for $B$ a commutative algebra object in $\QC(X)$, so that $\underline{Y}$ is affine over $X$, then we can equivalently define the jet algebra $\mc J(B)\in \Comm^!(X)$ by the property
\begin{equation}\label{affinejetadjeqn}
	 \Hom_{\Comm^!(X)}( \mc J(B),R) = \Hom_{\Comm(\QC(X))}(B,R) \ .  
\end{equation}
 for any $R\in \Comm^!(X)$ and we have that $\mc J(\underline{Y})=\Spec \mc J(B)$.

\subsubsection{}
Let $R$ be a commutative algebra object in $\DD(X)^!$ and let $\DD(R)=R\Mod(\DD(X)^!)$ (respectively, $\Dd(R)=\Dd(R\Mod(\DD(X)^!))$) denote the (derived) category of (complexes of) $R$-module objects internal to $\DD(X)$. Equivalently, we let $R[\mc D_X]= R^{l,\heartsuit}\otimes_{\mc O_X} \mc D_X$ denote the sheaf of algebras given by differential operators with coefficients in the $\Comm^!$ algebra $R$. We recall that $\DD(R)$ (respectively, $\Dd(R)$) is equivalent to the (derived) category of (complexes of) sheaves of modules over $R[\mc D_X]$ which (have cohomology sheaves that) are quasicoherent as $\mc O_X$ modules.

The forgetful functor $\DD(X)\to \QC(X)$ is symmetric monoidal, so it induces a natural forgetful functor
\[ \DD(R)=R\Mod(\DD(X)^!) \to R\Mod(\QC(X)) \ , \]
where in the latter case we view $R$ as an algebra object in $\QC(X)$, and we let $\mc Y=\Spec_{\mc D_X}R$ denote the affine D-scheme corresponding to $R$, noting $R\Mod(\QC(X))=\QC(\mc Y)$ the category of quasicoherent sheaves on the underlying scheme of $\mc Y$. Thus, geometrically we can view $\DD(R)$ as the category of quasicoherent sheaves on $\mc Y$ equipped with a compatible flat connection along $X$, and $\Dd(R)$ its derived category.

\subsubsection{} Let $\alpha:P\to R$ be a map of $\Comm^!$ algebras on $X$, $\mc X=\Spec_{\mc D_X} R$, $\mc Y=\Spec_{\mc D_X} P$, and denote the corresponding map of D-schemes by $f:\mc X\to \mc Y$. We obtain functors
\[ \bb L f^\bullet:\Dd(P) \to \Dd(R) \qquad f^\bullet M= R\otimes^!_P M \qquad \text{and}\qquad \bb R f_\bullet:\Dd(R) \to \Dd(P) \qquad f_\bullet(M)=\ob_R^P(M)\]
where $\ob_R^P(M)\in \Dd(P)$ denotes the functor induced by precomposition with the map $\alpha$, preserving the forgetful functors to $\DD(X)$.

Let $P,R$ be $\Comm^!$ algebras on $X$, and note that $P\boxtimes R$ is naturally a $\Comm^!$ algebra on $X^2$. Further, given $M\in \DD(P)$ and $N\in \DD(R)$ (or $M\in \Dd(P)$ and $N\in\Dd(R)$), we obtain objects $ M\boxtimes N \in \DD(P\boxtimes R)$ (or  $ M\boxtimes N \in \Dd(P\boxtimes R)$). Similarly, $P\otimes^! R$ defines a $\Comm^!$ algebra on $X$, and the functors $\Delta^{!,\heartsuit}:\DD(X^2)\to \DD(X)$ and $\Delta^!:\Dd(X^2)\to \DD(X)$ determine canonical functors
\[\Delta^{!,\heartsuit} :\DD(P\boxtimes R) \to \DD(P\otimes^! R) \qquad \text{and} \qquad \Delta^!:\Dd(P\boxtimes R) \to \Dd(P\otimes^! R) \ . \]

\subsubsection{}\label{Rshotimessec} The multiplication structure map $m:R\otimes^!R\to R$ is a map of $\Comm^!$ algebras, and we denote by $\Delta_R:\mc Y \to \mc Y^{\times 2}$ the corresponding diagonal embedding for $\mc X=\Spec_{\mc D_X}R$.

The derived category $\Dd(R)$ is symmetric monoidal with respect to the derived tensor product
\[ M\otimes_R^{\bb L,!} N= \bb L\Delta_R^\bullet  \Delta^!(M\boxtimes N)  =  \bb L\Delta_R^\bullet (M\otimes^{\bb L,!}N) \ . \]
Geometrically, this is simply the usual derived tensor product of quasicoherent sheaves on $\mc Y$, with its natural tensor product D-module structure.

The abelian category $\DD(R)$ is also symmetric monoidal with respect to the non-derived tensor product
\[ M\otimes^!_R N =  \Delta_R^\bullet \Delta^{!,\heartsuit}(M\boxtimes N) = \Delta_R^\bullet( M\otimes^! N)  \ . \]
Note that for $M\in \DD(R)$ projective we have $M\otimes^!_R N\cong M\otimes_R^{\bb L,!} N[d_X]$.

\subsubsection{}

The symmetric monoidal structure $\otimes_R^{!}$ on $\DD(R)$ defined in \ref{Rshotimessec} above determines a coloured operad structure on $\DD(R)$ as usual:
\[ P^!_{I,R}(\{L_i\},M) = \Hom_{\DD(R)}( \otimes_{R,i\in I}^{!} L_i, M)  \ . \]

\subsubsection{} Next, we define the simplest analogue of the $*$-operations on $\DD(R)$:

\begin{defn}\label{Rlindefn}  The spaces of $R$-linear $*$-operations are defined by
\[ P_{I,R}^*(\{L_i\},M)=\Hom_{\DD(R^{\boxtimes I})}(\boxtimes_{i\in I} L_i, \Delta_*M)   \subset  P_{I}^*(\{L_i\},M)=\Hom_{\DD(X^I)}( \boxtimes_{i\in I} L_i,\Delta_* M)  \ , \]
the subspace of $I$-ary $*$-operations in $\DD(X)$ which are $R$-linear in the argument $L_i$ for each $i\in I$, where we note that the object $\Delta_*M \in \DD(X^I)$ inherits an $R^{\boxtimes I}$ module structure via\[ R^{\boxtimes I} \otimes^! \Delta_* M \cong \Delta_* \Delta^!(R^{\boxtimes I} \otimes^! \Delta_* M) \cong R^{\otimes^! I} \otimes^! M \xrightarrow{m} M \ . \]

\end{defn}

Concretely, a map $\varphi: \boxtimes_{i\in I} L_i \to \Delta_* M$ is $R$-linear in the $i_0^{th}$ argument if the rightmost square of the diagram
\[  \begin{tikzcd} \pi_{i_0}^! R \otimes (\boxtimes_{i\in I} L_i ) \ar[r,"\cong"] \ar[d,swap,"\id_{\pi_{i_0}^!R} \otimes \varphi"] & (R\otimes L_{i_0})\boxtimes  (\boxtimes_{i\neq i_0} L_i) \ar[r,"m_{L_0}\boxtimes  \id "] \ar[d,swap,"\id_R \otimes_{i_0} \varphi"] & \boxtimes_{i\in I} L_i \ar[d,"\varphi"] \\ \pi_{i_0}^!R\otimes  \Delta_*(M) \ar[r,"\cong"] & \Delta_*(R\otimes M)  \ar[r,"\Delta_* m_M"] & \Delta_*M
\end{tikzcd} \ \text{commutes,} \]  
where we define $\id_R \otimes_{i_0} \varphi: (R\otimes L_{i_0})\boxtimes  (\boxtimes_{i\neq i_0} L_i) \to \Delta_*(R\otimes M)$ so that the left square commutes, and $m_M:R\otimes M \to M$ and $m_{L_0}:R\otimes L_0 \to L_0$ are the $R$-module structure maps.

\begin{prop}\label{Rlinstarprop} The spaces of $R$-linear $*$-operations define a coloured operad structure on $\DD(R)$.
\end{prop}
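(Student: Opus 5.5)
The plan is to show that $\DD(R)^*$ is a suboperad of the coloured operad $\DD(X)^*$, restricted along the forgetful functor $\DD(R)\to\DD(X)$. Concretely, the colours are the objects of $\DD(R)$ and the operation spaces are $P^*_{I,R}(\{L_i\},M)\subset P^*_I(\{L_i\},M)$. Once the suboperad claim holds, the associativity and equivariance data of Definition \ref{opdefn}(v) come for free, since they are inherited from the operad structure on $\DD(X)^*$ defined as in 2.2.3 of \cite{BD1}. So three things remain to check:
\begin{enumerate}[(i)]
\item the $\Ss_I$-actions preserve $R$-linearity;
\item the identities $\id_M\in P^*_{\{1\}}(M,M)=\Hom_{\DD(X)}(M,M)$ are $R$-linear, i.e.\ lie in $\Hom_{\DD(R)}(M,M)$;
\item compositions of $R$-linear operations are again $R$-linear.
\end{enumerate}
Points (i) and (ii) are immediate. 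Permuting the factors of $\boxtimes_{i\in I}L_i$ and of $R^{\boxtimes I}$ together is compatible with the $R^{\boxtimes I}$-module structure on $\Delta_*M$ described in Definition \ref{Rlindefn}. In arity one, $\Delta=\id$, so $R$-linearity is exactly the condition of being a morphism in $\DD(R)$.

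For (iii), fix $\pi:I\to J$, operations $\varphi_j\in P^*_{I_j,R}(\{L_i\}_{i\in I_j},N_j)$ and $\psi\in P^*_{J,R}(\{N_j\},M)$. The composite is
\[
\boxtimes_{i\in I}L_i=\boxtimes_{j}\big(\boxtimes_{i\in I_j}L_i\big)\xrightarrow{\boxtimes_j\varphi_j}\boxtimes_j \Delta^{(I_j)}_*N_j\cong \Delta^{(\pi)}_*\big(\boxtimes_j N_j\big)\xrightarrow{\Delta^{(\pi)}_*\psi}\Delta^{(\pi)}_*\Delta^{(J)}_*M=\Delta^{(I)}_*M,
\]
where $\Delta^{(\pi)}:X^J\to X^I$ is the diagonal associated to $\pi$. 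Pick $i_0\in I$ and set $j_0=\pi(i_0)$. I will show the composite is $R$-linear in the $i_0$th argument by checking that each arrow intertwines the $\pi_{i_0}^!R$-actions, the action on each intermediate object being defined appropriately.
\begin{enumerate}[(a)]
\item The first arrow. On $\boxtimes_j\Delta^{(I_j)}_*N_j$ the action of $\pi_{i_0}^!R$ is through the $j_0$ factor, via the module structure on $\Delta^{(I_j)}_*N_j$ from Definition \ref{Rlindefn}. With this action, $\boxtimes_j\varphi_j$ is linear because $\varphi_{j_0}$ is $R$-linear in $i_0$; here one uses that $\boxtimes$ is a bifunctor, so the other factors play no role.
\item The middle isomorphism. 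Under $\boxtimes_j \Delta^{(I_j)}_*N_j\cong\Delta^{(\pi)}_*(\boxtimes_jN_j)$, the $\pi_{i_0}^!R$-action corresponds to the $\pi_{j_0}^!R$-action on $\boxtimes_j N_j$ pushed forward. This uses the projection formula $\pi_{i_0}^!R\otimes^!\Delta^{(\pi)}_*(-)\cong\Delta^{(\pi)}_*\big((\Delta^{(\pi)})^!\pi_{i_0}^!R\otimes^!-\big)$ together with $\pi_{i_0}\circ\Delta^{(\pi)}=\pi_{j_0}$.
\item The last arrow. $\Delta^{(\pi)}_*\psi$ is linear because $\psi$ is $R$-linear in $j_0$ and $\Delta^{(\pi)}_*$ is a functor.
\item Compatibility of the target actions. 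The action on the target $\Delta^{(\pi)}_*\Delta^{(J)}_*M$ induced from $\Delta^{(J)}_*M$ must agree with the action on $\Delta^{(I)}_*M$ from Definition \ref{Rlindefn}. Both come from the canonical isomorphism $(\Delta^{(I)})^!\pi_{i_0}^!R\cong R$ followed by $m_M$, and the projection formula isomorphisms compose correctly along $\Delta^{(I)}=\Delta^{(\pi)}\circ\Delta^{(J)}$.
\end{enumerate}

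I expect (b) and (d) to be the main obstacle: they require verifying that the projection-formula and base-change isomorphisms for closed embeddings are compatible with the $R^{\boxtimes I}$-module structures. My first approach would be to reduce to the equivalence $\Delta_*:\DD(X)\xrightarrow{\cong}\DD(X^I)_X$ recalled after Definition \ref{Dmoddirim}. Since all objects involved are supported on the relevant diagonals, each compatibility can then be checked after applying $(\Delta^{(I)})^!$ or $(\Delta^{(\pi)})^!$, where it becomes the evident statement about $R$-module structures under $\otimes^!$ on $X$. Alternatively, the check can be done locally on $X$, where everything is explicit in terms of $\mc O_X$-modules with connection and the actions are given by multiplication in each tensor factor.
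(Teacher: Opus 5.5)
The paper states Proposition \ref{Rlinstarprop} without proof and treats it as part of the standard formalism of \cite{BD1}, so there is no argument of the authors to compare yours against. On its own terms, your proposal is correct. Pulling $\DD(X)^*$ back along the forgetful functor and passing to the sub-collection $P^*_{I,R}\subset P^*_I$ reduces the claim to three checks: stability under the symmetric group actions, containment of the identities, and closure under the composite of 2.2.3 of \cite{BD1}. Your observation that arity-one $R$-linear operations are exactly $\Hom_{\DD(R)}$ settles the identities, and it also gives condition (ii) of Definition \ref{pseudotensordefn}. Your steps (a)--(d) verify closure under composition correctly. The only nontrivial input, which you isolate in (b) and (d), is the coherence of the projection formula $\Delta_*(\Delta^!A\otimes^!B)\cong A\otimes^!\Delta_*B$ with $\boxtimes$ and with the factorization $\Delta^{(I)}=\Delta^{(\pi)}\circ\Delta^{(J)}$. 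This is precisely what makes the module structure of Definition \ref{Rlindefn} well behaved.

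A slightly cleaner packaging avoids working one argument $i_0$ at a time by using the formulation $P^*_{I,R}=\Hom_{\DD(R^{\boxtimes I})}(\boxtimes_i L_i,\Delta_*M)$ directly:
\begin{enumerate}[(i)]
\item $\boxtimes$ lifts to a functor $\prod_j\DD(R^{\boxtimes I_j})\to\DD(R^{\boxtimes I})$;
\item $\Delta^{(\pi)}_*$ lifts to a functor $\DD(R^{\boxtimes J})\to\DD(R^{\boxtimes I})$, via the projection formula followed by the multiplication maps $R^{\otimes^! I_j}\to R$. Geometrically, this is pushforward along the diagonal $\mc Y^J\to\mc Y^I$ of D-schemes.
\end{enumerate}
Once the two canonical isomorphisms appearing in the composite are known to be isomorphisms in $\DD(R^{\boxtimes I})$, which is the content of your (b) and (d), the composite is a morphism of $R^{\boxtimes I}$-modules by functoriality. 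The mathematical content is the same as yours, but the coherence checks are stated once, as properties of these lifted functors, rather than argument by argument.
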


It will sometimes be necessary to consider the following more general notion of $R$-polydifferential $*$-operations on $\DD(R)$:

\begin{defn}\label{Rpolydiffopsdefn} We define the pseudo-tensor structure of $R$-polydifferential $*$-operations on $\DD(R)$,
\[ P_{I,R-\Diff}^*(\{L_i\},M)  \subset  P_{I}^*(\{L_i\},M) \ ,\]
as in Section 1.4.8 of \cite{BD1}.
\end{defn}

Note that we have
\[ P_{I,R}^*(\{L_i\},M) \subset P_{I,R-\Diff}^*(\{L_i\},M)\]
as the subspace of all $R$-polydifferential $*$-operations that are supported strictly on the diagonal scheme-theoretically.

Let $M\in \DD(R)$ with structure map $m_M:R\otimes M \to M$.
\begin{defn}\label{Derchdefn} The space of derivations $\Der(R,M)\subset \Hom_{\DD(X)}(R,M)$ is the subspace of maps of D-modules $f:R\to M$ such that
\[  \begin{tikzcd} R\otimes R \ar[r,"m"] \ar[d,swap,"\id_R \otimes f+\sigma(f\otimes \id_R)"] & R \ar[d,"f"] \\ R\otimes M \ar[r,"m_M"] & M
	\end{tikzcd} \quad\quad \text{commutes.} \]
\end{defn}

In fact, the functor $\Der(R,\cdot):\DD(R)\to \Vect$ is representable by an object $\Omega_R\in \DD(R)$ equipped with a universal derivation $d_R:R\to \Omega_R$, called the cotangent sheaf and de Rham differential for $R$, respectively; see 1.4.16 in \cite{BD1}.

We define the tangent sheaf of $R$ (when it exists, for example if $R$ is smooth in the sense of 2.3.15 in \cite{BD1}) to be the dual object
\[ \Theta_R = \Omega_R^{\circ_R} =  \Homi_{\DD(R)}(\Omega_R,R) \ \in \DD(R) \ , \]
in the sense of Definition \ref{circRdefn}, which we postpone until Section \ref{varsec} where these results are primarily used.

\begin{prop} Suppose $R$ is a very smooth $\Comm^!$ algebra and $X$ is affine. Then
	\[ h( \Theta_R) =\Hom_{\DD(R)}(\Omega_R,R)=\Der(R,R)  \ .\]
\end{prop}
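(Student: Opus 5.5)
The statement has two equalities, and I would prove them separately. The second, $\Hom_{\DD(R)}(\Omega_R,R)=\Der(R,R)$, is the universal property of $(\Omega_R,d_R)$ recalled above. Precomposition with $d_R$ gives a bijection $\Hom_{\DD(R)}(\Omega_R,M)\cong \Der(R,M)$ for every $M\in\DD(R)$, and we take $M=R$. So all the work is in the first equality, $h(\Theta_R)\cong \Hom_{\DD(R)}(\Omega_R,R)$. This says that the middle de Rham cohomology of the internal Hom object $\Theta_R=\Homi_{\DD(R)}(\Omega_R,R)$ recovers the external Hom.

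For the first equality I would argue as follows.

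\begin{enumerate}[(i)]
\item I would use very smoothness, in the sense of \cite{BD1}, to reduce to a free situation. Very smoothness means that, locally on $X$, $\Omega_R$ is a projective $R[\mc D_X]$-module, and in fact a direct summand of a free module $R\otimes^! (V\otimes \mc D_X)$ for some vector space $V$. Since $X$ is affine, and $h$ and $\Hom_{\DD(R)}$ are additive and compatible with direct summands, it suffices to treat $\Omega_R = R[\mc D_X]\otimes V$. One gets either the free case literally, or a summand of it, with the comparison map natural in $\Omega_R$.

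\item In the free case I would compute both sides. The left-hand side is
\[
\Hom_{\DD(R)}(R[\mc D_X]\otimes V,R)=\Hom_\K(V,\Gamma(X,R^\ell)).
\]
For the right-hand side, the internal Hom should be constructed so that
\[
\Homi_{\DD(R)}(R[\mc D_X]\otimes V,R)\cong \Hom_\K(V,\ldots)\otimes(\text{a copy of }R[\mc D_X]\text{ twisted by }\omega_X).
\]
Concretely, $\Homi(\mc D_X,\mc D_X)$ is identified with $\mc D_X$ as a right module via the transposition $\mc D_X\cong \mc D_X^{op}$ twisted by $\omega_X$, in the spirit of the $\circ$-dual of Definition \ref{circRdefn}. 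Applying $h$ then gives
\[
h(R^\ell\otimes\mc D_X\otimes\Omega^{d_X}_X)=\Gamma\bigl((R^\ell\otimes\mc D_X\otimes\Omega_X^{d_X})\otimes_{\mc D_X}\mc O_X\bigr)=\Gamma(X,R^\ell).
\]
The final step uses that $X$ is affine, so $\pi_\bullet$ is just global sections and is exact. Proposition \ref{locprojaffprop} guarantees that this non-derived $h$ is the honest de Rham pushforward for the locally projective object $\Theta_R$, so no higher terms are lost.

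\item I would check that the two identifications agree. There is a canonical evaluation/trace map $h(\Homi_{\DD(R)}(L,M))\to \Hom_{\DD(R)}(L,M)$, the unit of the adjunction defining $\Homi$ composed with $h$. The free computation in (ii) shows that this map is an isomorphism, and naturality carries that to direct summands.
\end{enumerate}

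I expect the main obstacle to be (ii)–(iii). The difficulty is in pinning down the internal Hom $\Homi_{\DD(R)}$, whose definition is postponed to Section \ref{varsec}, and in verifying that the left/right conventions, including the shift $M^{\ell,\heartsuit}=M^\ell[d_X]$, make the canonical map to $\Hom_{\DD(R)}$ an isomorphism in the free case rather than an isomorphism up to a shift. My first attempt would be to define everything via the side-changing transposition on $\mc D_X$. I would then check the case $R=\mc O_X$, $\Omega_R=\mc D_X$ by hand, and reduce the general case to it by base change along $\mc O_X\to R$.
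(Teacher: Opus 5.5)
Your argument is correct, but it takes a different route from the one the paper relies on.

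\textbf{The paper's route.} The paper states this proposition without proof. The intended justification is Proposition~\ref{inthommatchabprop} with $L=\Omega_R$ and $M=R$, unpacked as the case $p=1$ of the proof of Proposition~\ref{prop:deRham of internal PV}. Projectivity of $\Omega_R$ (Proposition~\ref{verysmoothprop}) gives $\Hom_{\DD(R)}(\Omega_R,R)\cong\Hom_{\Dd(R)}(\Omega_R,R)\cong\pi_*\Theta_R$. This uses the identity $\pi_*\Homi_{\Dd(R)}\simeq R\Hom_{\Dd(R)}$ of Proposition~\ref{Rinthommatchprop} together with the first half of Proposition~\ref{inthommatchabprop}. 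Then $\pi_*\Theta_R\cong h(\Theta_R)$ follows from Proposition~\ref{locprojaffprop}. That last step needs $\Theta_R$ to be locally projective in $\DD(X)$, which the paper obtains only from the additional assumption, made in Proposition~\ref{prop:deRham of internal PV}, that $R$ is locally free over $\mc O_X$.

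\textbf{Your route, and what each buys.} You reduce by additivity in the first variable to a free $R[\mc D_X]$-module of finite rank, and compute both sides by hand. The rank is finite because $\Omega_R$ must be coherent for $\Theta_R$ to be defined. Your route does not need the local freeness hypothesis. In the free case $\Theta_R$ is a sum of copies of the module induced from the quasicoherent sheaf $R^r$, and $\underline h(F\otimes_{\mc O_X}\mc D_X)=F$ for any quasicoherent $F$. So your appeal to Proposition~\ref{locprojaffprop} in step (ii) is unnecessary for this underived statement. As written it is also unjustified, since it would require $R$ to be locally projective over $\mc O_X$. The paper's route, in exchange, yields the derived statement $\pi_*\Theta_R\simeq h(\Theta_R)$, which is what Section~\ref{varsec} actually uses.

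\textbf{Bookkeeping your route needs.} $M^r\otimes_{\mc O_X}\mc D_X$ carries two commuting right $\mc D_X$-structures. One is the tensor-product structure, with respect to which $\Homii$ is formed. The other is right multiplication, the residual structure that $h$ sees. You should write the comparison map explicitly rather than call it the unit of an adjunction, since the paper defines $\Homi$ directly. Post-compose with $m\otimes P\mapsto m\cdot P(1)$. This map is equivariant for the tensor-product structure and kills vector fields acting by right multiplication, so it descends through $\underline h$ and then to global sections. Finally, use the global projectivity of $\Omega_R$ on affine $X$, or sheafify this map, rather than a merely local summand decomposition.
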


\subsection{$\Lie^*$ algebroids on $\mc D_X$-schemes}

Following 1.4.9 in \cite{BD1}. Let $L$ be a $\Lie^*$ algebra:

\begin{defn} \label{defn:Lie action by derivations}
An action of $L$ on $R$ by derivations is an $L$-module structure $\tau:L\boxtimes R \to \Delta_* R$ on the underlying $D$-module of $R$, such that the structure maps $m:R\otimes R \to R$ and $u:\omega_X \to R$ are maps of $L$-modules, that is
\[\begin{tikzcd} L \boxtimes (R\otimes R) \ar[r,"\id_L\boxtimes m "] \ar[d,swap,"\id_R \otimes \tau+\sigma(\tau \otimes \id_R)"] & L \boxtimes R \ar[d,"\tau"] \\ \Delta_*(R\otimes R)  \ar[r,"\Delta_* m"] & \Delta_*R
\end{tikzcd} \quad\quad \text{and}\qquad \begin{tikzcd} L\boxtimes \omega_X \ar[r,"\id_L\boxtimes u "] \ar[d,swap,"\tau_\triv"] & L\boxtimes R \ar[d,"\tau"] \\ \Delta_*(\omega_X)  \ar[r,"\Delta_* u "] & \Delta_* R
\end{tikzcd} \quad\quad \text{commute.}   \]
\end{defn}

\begin{eg}(\cite{BD1} 1.4.16) There is a canonical $\Lie^*$ structure on $\Theta_R$ determined by the condition that $\Theta_R$ admits an action on $R$ by derivations $\tau_\textup{univ}: \Theta_R \boxtimes R \to \Delta_* R$ defined by the element
	\[ \id_{\Omega_R}\in \End_R(\Omega_R) = \Der(R, \Omega_R) \subset \Hom_{\DD(X)}(R,\Omega_R) \cong P_2^{\circ !}(R,\{\Omega_R,R\}) \cong P_2^*(\{\Theta_R,R\},R)  \ , \]
or equivalently the universal derivation $d\in \Der(R, \Omega_R)$, under the above identifications.
\end{eg}

\begin{prop}\label{DeractLieprop} An action of $L$ on $R$ by derivations is equivalent to a $\Lie^*$ map $L\to \Theta_R$.
\end{prop}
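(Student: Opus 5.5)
We prove Proposition \ref{DeractLieprop} by building maps in both directions between the space of actions of $L$ on $R$ by derivations and $\Hom_{\Lie^*}(L,\Theta_R)$. Both directions rest on the universal property of $\Omega_R$ (1.4.16 in \cite{BD1}) and the duality $\Theta_R=\Omega_R^{\circ_R}$. The key identification, which extends the chain of isomorphisms used in the preceding example, is
\[
\Hom_{\DD(X)}(L,\Theta_R)\;\cong\; P^*_{2}(\{L,R\},R)_{\text{$R$-lin. in 2nd slot up to derivation}} \ .
\]
More precisely, a $*$-operation $\tau:L\boxtimes R\to\Delta_*R$ satisfying the first commuting square of Definition \ref{defn:Lie action by derivations} is equivalent to a map of D-modules $L\to\Theta_R$.

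\textbf{Step 1: adjunction for $\Omega_R$.} We first rewrite $\tau$ through the $(\boxtimes,\Delta_*)$ adjunction, as in the example: a $*$-operation $L\boxtimes R\to\Delta_*R$ is a map $R\to \Homi(L,R)$, where $\Homi(L,R)$ carries the $R$-module structure coming from the target. We will check that the first square in Definition \ref{defn:Lie action by derivations} says exactly that this map is a derivation $R\to \Homi(L,R)$ in the sense of Definition \ref{Derchdefn}. Representability of $\Der(R,\cdot)$ then turns the derivation into an $R$-linear map $\Omega_R\to\Homi(L,R)$. By the $R$-linear duality defining $\Omega_R^{\circ_R}$, this is the same as a map $L\to\Theta_R$.

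\textbf{Step 2: the unit condition.} The second square (compatibility with $u:\omega_X\to R$) will follow automatically from the Leibniz rule, since derivations kill the unit. So it imposes nothing extra.

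\textbf{Step 3: matching Lie brackets.} We must show that the $L$-module condition on $\tau$ corresponds exactly to $L\to\Theta_R$ being a $\Lie^*$ map. By construction, the corresponding map $\varphi:L\to\Theta_R$ satisfies $\tau=\tau_{\textup{univ}}\circ(\varphi\boxtimes\id_R)$. The module condition for $\tau$ then reads
\[
\tau_{\textup{univ}}\circ([\varphi,\varphi]\boxtimes\id)=\tau_{\textup{univ}}\circ(\varphi\circ b_L\boxtimes\id),
\]
where both sides are compositions in $\DD(X)^*$. The step that needs real care is faithfulness of the universal action: a $*$-operation $K\boxtimes R\to\Delta_*R$ of the form $\tau_{\textup{univ}}\circ(\psi\boxtimes \id)$ determines $\psi:K\to\Theta_R$ (here with $K=L\boxtimes L\to\Delta_*\Theta_R$, a binary operation). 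We get this by running the Step 1 correspondence once more, now with an extra $L$-argument, i.e.\ for $K$ an arbitrary object of the $*$-operad. This works because $\Omega_R$ corepresents derivations with arbitrary target $M\in\DD(R)$, including $M=\Homi(L^{\boxtimes 2},\Delta_* R)$-type internal Homs.

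\textbf{Main obstacle.} The hardest part is making the internal Hom $\Homi(L,R)$ and its $R$-module structure precise in the $*$-pseudo-tensor setting, which the paper defers to Section \ref{varsec}. In particular, we must check that the two $R$-actions appearing in Definition \ref{Rlindefn} agree with the ones needed for representability. If that is unwieldy, the fallback is to work locally on affine $X$ with $R$ smooth, where $\Omega_R$ is locally projective and the dualities can be checked on explicit $R[\mc D_X]$-modules.
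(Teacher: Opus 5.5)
Your strategy is sound, and it is a more complete version of the paper's argument. The paper's proof is two sentences. The forward direction is composition with $\tau_{\textup{univ}}$. For the converse, it notes that each section $l\in\Gamma(X,L)$ gives $\tau(l,\cdot)\in\Der(R,R)$ and asserts that this ``lifts'' to a $\Lie^*$ map $L\to\Theta_R$. It discusses neither the unit axiom nor compatibility of brackets.

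As stated, the sectionwise construction only sees the induced map $h(L)\to\Der(R,R)=h(\Theta_R)$. A D-module map $L\to\Theta_R$ must encode all of $\tau$: in vertex language every $l_{(n)}$, not only $l_{(0)}$. So the ``lift'' is exactly the corepresentability statement your Step 1 makes explicit. Your Step 2 is correct: Leibniz and $m\circ(u\otimes u)=u$ give $\tau\circ(\mathrm{id}_L\boxtimes u)=2\,\tau\circ(\mathrm{id}_L\boxtimes u)$, hence zero in characteristic $0$. Your Step 3 reduces the module axiom to $\tau_{\textup{univ}}\circ\big((b_{\Theta_R}\circ(\varphi\boxtimes\varphi)-\Delta_*\varphi\circ b_L)\boxtimes\mathrm{id}_R\big)=0$ and then uses faithfulness of $\tau_{\textup{univ}}$. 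Together these supply what the paper leaves implicit.

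The step that does not work as written is Step 1, because you dualize $L$ rather than $\Omega_R$. The paper's $\Homi(L,-)$ is only defined for $L\in\DD_\coh(X)$. Moreover, the abelian adjunction $\Hom(L\boxtimes R,\Delta_*R)\cong\Hom(R,L^{\circ}\otimes^! R)$ needs $L$ locally projective of finite rank. The proposition is for an arbitrary $\Lie^*$ algebra, and the basic case $L=\Theta_R$ (e.g.\ for $R=\mc O_{\mc JY}$) is not $\mc D_X$-coherent. Your fallback, working locally with $\Omega_R$ projective, addresses a different point.

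A clean repair uses naturality. The forward map $\varphi\mapsto\tau_{\textup{univ}}\circ(\varphi\boxtimes\mathrm{id}_R)$ is defined for every $L$ and is natural in $L$. Both $\Hom_{\DD(X)}(L,\Theta_R)$ and the space of $\tau$'s satisfying the Leibniz square turn direct sums into products and right exact sequences into left exact ones. For the latter, the Leibniz condition can be tested after precomposing with a surjection, since $\boxtimes$ is exact. So, locally on $X$, a presentation of $L$ by free $\mc D_X$-modules of finite rank reduces bijectivity to that case, where your internal Hom argument is valid. The faithfulness claim in Step 3 reduces in the same way, by precomposing with $P\boxtimes P\twoheadrightarrow L\boxtimes L$.

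Alternatively, put the duality on $\Omega_R$, which is the object that is actually projective and finitely generated over $R[\mc D_X]$. Then replace $\Homi(L,R)$ by the right adjoint of the exact, colimit-preserving functor $M\mapsto L\boxtimes M$, evaluated at $\Delta_*R$. This object is an $R$-module through the target, and $\Der(R,-)$ is corepresented by $\Omega_R$ on all of $\DD(R)$.
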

\begin{proof} Evidently a $\Lie^*$ map $L\to \Theta_R$ determines an action of $L$ on $R$ by derivations, by composition with the universal action $\tau_\textup{univ}$. Conversely, given an action of $L$ on $R$ by derivations, each $l\in \Gamma(X,L)$ induces an endomorphism $\tau(l,\cdot) \in \Der(R,R) \subset \Hom_{\DD(X)}(R,R)$ and this lifts to a map of $\Lie^*$ algebras $L \to \Theta_R$.
\end{proof}

\begin{prop}\label{deractpolydiffprop} An action of $\mc L$ on $R$ by derivations is $R$-differential in the second argument, in the sense of Definition \ref{Rpolydiffopsdefn}.
\end{prop}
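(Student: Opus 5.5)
We reduce the statement to the universal case and then use the structure of $\Theta_R$. By Proposition \ref{DeractLieprop}, an action $\tau:\mc L\boxtimes R\to \Delta_*R$ by derivations is the same as a $\Lie^*$ map $\alpha:\mc L\to \Theta_R$, and $\tau=\tau_{\textup{univ}}\circ(\alpha\boxtimes \id_R)$. Precomposing with a map in the first variable does not change the differential behaviour in the second variable. This is because the condition of Definition \ref{Rpolydiffopsdefn} in the second argument only involves the $R$-module structure on the second factor, which is untouched by $\alpha\boxtimes\id$. It therefore suffices to treat the universal action $\tau_{\textup{univ}}:\Theta_R\boxtimes R\to\Delta_*R$.

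For the universal action we exploit the factorization coming from the definition of $\tau_{\textup{univ}}$ through the universal derivation $d_R:R\to\Omega_R$. Under the identifications in the example preceding Proposition \ref{DeractLieprop}, $\tau_{\textup{univ}}$ is the composite
\[ \Theta_R\boxtimes R\xrightarrow{\id\boxtimes d_R}\Theta_R\boxtimes\Omega_R\xrightarrow{\langle\cdot,\cdot\rangle}\Delta_*R ,\]
where the second map is the $*$-pairing corresponding to the evaluation $\Theta_R\otimes^!_R\Omega_R\to R$. That pairing is $R$-linear in both arguments, i.e.\ it lies in $P^*_{2,R}$, and hence in $P^*_{2,R-\Diff}$. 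It then remains to check two things: that $d_R$, viewed as a unary operation $R\to\Omega_R$, is an $R$-differential operator of order $\le 1$ in the sense of BD 1.4.8, and that $R$-polydifferential operations are closed under composition. Closure under composition is part of the statement that $P^*_{R-\Diff}$ is a pseudo-tensor structure (Definition \ref{Rpolydiffopsdefn}).

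For the order $\le1$ claim, recall that in \cite{BD1} an operation is $R$-differential in an argument if, after replacing that argument by its induced $R$-module $L\otimes R[\mc D_X]$ (or equivalently, via the filtration by iterated commutators with multiplication by $R$), it is annihilated by a power of the ideal of the diagonal in $R\otimes R$. The Leibniz identity in Definition \ref{Derchdefn} says precisely that the commutator $[d_R,m_r]$ equals multiplication by $d_R r$, which is $R$-linear. Hence the double commutators vanish, which is the order $\le1$ condition.

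I expect the main obstacle to be this last step: matching the $\otimes^!$-style Leibniz diagram of Definition \ref{Derchdefn} with the precise formulation of $R$-differentiality in BD 1.4.8, which is phrased via $R^{\boxtimes I}$-modules on $X^I$ and the ideal of the diagonal in $R\boxtimes R$ rather than pointwise commutators. To make this match I would first compare $\Delta_*R$ against the filtration of $R\boxtimes R$-modules by powers of the kernel $J$ of $R\boxtimes R\to\Delta_*(R\otimes^!R)\to\Delta_*R$. I would then verify that the Leibniz diagram is equivalent to $J^2$ acting trivially on the second-argument action, i.e.\ that $J\cdot\tau$ is $R$-linear. If this comparison proves awkward, the fallback is to argue locally on an affine cover, where $\Omega_R$ is generated by $d_R(R)$ and everything reduces to the classical statement that derivations are first-order differential operators.
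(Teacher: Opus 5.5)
Your argument reaches the right conclusion, but by a longer route than the paper. The step you flag as the main obstacle is in fact the paper's entire proof.

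The paper works directly with $\tau$. The first diagram in Definition \ref{defn:Lie action by derivations} is a Leibniz rule in the second argument. It says that $\tau$ is annihilated by $J^2$, where $J=\ker(R\boxtimes R\to\Delta_*R)$ acts on the second argument and on the target. In other words, $\tau$ is scheme-theoretically supported on the first-order thickening of the diagonal in $\mc Y\times\mc Y$, hence set-theoretically on the diagonal, which is $R$-differentiality in the sense of \cite{BD1}.

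You apply this observation only to $d_R$, and then transport it to $\tau$ via three extra steps: Proposition \ref{DeractLieprop}, the factorization $\tau_{\textup{univ}}=\langle\cdot,\cdot\rangle\circ(\mathrm{id}\boxtimes d_R)$, and closure of $R$-polydifferential operations under composition. This detour buys nothing here, and it has costs:
\begin{itemize}
\item It needs $\Theta_R=\Omega_R^{\circ_R}$ to exist (e.g.\ $R$ very smooth), which the statement does not assume.
\item Writing $\tau=\tau_{\textup{univ}}\circ(\alpha\boxtimes\mathrm{id}_R)$ already presupposes that $\tau$ factors through $\mathrm{id}\boxtimes d_R$ with an $R$-linear remainder. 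That is itself a first-order differentiality statement, so the real work is hidden inside Proposition \ref{DeractLieprop} rather than avoided.
\end{itemize}
Your own formulation of the final step is already stated for $\tau$: checking that the Leibniz diagram means $J^2$ kills the second-argument action, i.e.\ that $J\cdot\tau$ is $R$-linear. That check finishes the proof by itself, making the first two paragraphs of your proposal unnecessary.

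A minor point on the factorization: the pairing $\Theta_R\boxtimes\Omega_R\to\Delta_*R$ is the $*$-pairing coming from the duality $(\cdot)^{\circ_R}$. It is not induced by an evaluation $\Theta_R\otimes^!_R\Omega_R\to R$. By Proposition \ref{Rinthomdescprop}, the object $\Theta_R\otimes^!_R\Omega_R$ is $\Homi_{\DD(R)}(\Omega_R,\Omega_R)$, and there is no such trace map in general.
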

\begin{proof}
By definition a derivation is supported scheme-theoretically on the first-order infinitesimal thickening of the diagonal in $\mc Y \times \mc Y$, and thus supported set-theoretically on the diagonal.
\end{proof}

Fix an action $\tau$ of $L$ on $R$ by derivations and let $M \in \DD(R)$.

\begin{defn}\label{LRactiondefn} An action of $L$ on $M$ compatible with $\tau$ is an $L$-module structure $\rho:L\boxtimes M \to \Delta_* M$ such that the structure map $R\otimes M \to M$ is a map of $L$-modules, where $R$ is equipped with the $L$-module structure determined by $\tau$, that is
	\[ \begin{tikzcd} L \boxtimes (R\otimes M) \ar[r,"\id_L\boxtimes m "] \ar[d,swap,"\id_R \otimes \rho +\tau \otimes \id_M"] & L \boxtimes M \ar[d,"\rho"] \\ \Delta_*(R\otimes M)  \ar[r,"\Delta_* m"] & \Delta_*M
	\end{tikzcd}  \quad \text{commutes.}\]
\end{defn}

\begin{defn}(\cite{BD1} 1.4.11)
	A $\Lie^*$ algebroid on $\Spec R$ is an $R$-module $\mc L \in \DD(R)$, with
\begin{itemize}
	\item a $\Lie^*$ bracket $b: \mc L\boxtimes \mc  L \to \Delta_* \mc L$, and
	\item an action $\tau:\mc L\boxtimes R \to \Delta_* R$ of $\mc L$ on $R$ by derivations,
\end{itemize}
such that 
\begin{itemize}
	\item  $\tau$ is $R$-linear in the first argument $\mc L$, in the sense of Definition \ref{Rlindefn}, and
	\item the adjoint action $b:\mc L\boxtimes \mc L \to \mc L$ is $\tau$-compatible, in the sense of Definition \ref{LRactiondefn}.
\end{itemize}
\end{defn}

For example, the tangent sheaf $\Theta_R$ is naturally a $\Lie^*$-algebroid on $R$, and by Proposition \ref{DeractLieprop}, every

We note that the structure maps $b$ and $\tau$ are not $R$-linear in general, but they are necessarily $R$-polydifferential:

\begin{prop} Let $\mc L$ be a $\Lie^*$ algebroid on $R$. Then the structure maps $b: \mc L\boxtimes \mc  L \to \Delta_* \mc L$ and $\tau:\mc L \boxtimes R \to \Delta_* R$ are $R$-polydifferential in the sense of Definition \ref{Rpolydiffopsdefn}, that is, 
	\[ b \in P_{2,R-\Diff}^*(\{ \mc L, \mc L\}, \mc L) \quad\quad\text{and}\quad\quad  \tau \in P_{2,R-\Diff}^*(\{ \mc L, R\}, \mc R)\ . \]
\end{prop}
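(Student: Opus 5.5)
We prove the claim separately for $\tau$ and for $b$, reducing the case of $b$ to that of $\tau$ through the algebroid compatibility conditions. In each case the target condition is the one from Section 1.4.8 of \cite{BD1}: an operation is $R$-polydifferential when, viewed as a map of $R^{\boxtimes I}$-modules on $\mc Y^{I}$, it is supported set-theoretically on the diagonal and is killed by a power of the diagonal ideal, one argument at a time. Concretely, a $*$-operation is $R$-polydifferential in argument $i$ when the commutator with multiplication by $r\in R$ in that slot, iterated finitely many times, vanishes. That commutator is the failure of the $R$-linearity square of Definition \ref{Rlindefn}, and $R$-linearity is the case where it vanishes already at the first step.

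\emph{The case of $\tau$.} In the first argument, $\tau$ is $R$-linear by the axioms of a $\Lie^*$ algebroid, so it lies in $P^*_{2,R}\subset P^*_{2,R-\Diff}$ in that slot. In the second argument we invoke Proposition \ref{deractpolydiffprop}. Concretely, the derivation square of Definition \ref{defn:Lie action by derivations} says that the commutator of $\tau$ with multiplication by $r$ in the $R$-slot equals $\tau(\cdot,r)\cdot(\,)$. That operation is $R$-linear in the second slot, so a second commutator vanishes. Hence $\tau$ has order at most $1$ in its second argument.

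\emph{The case of $b$.} We first use skew-symmetry of $b$ to reduce to checking the first argument only. The $\tau$-compatibility of the adjoint action, in the sense of Definition \ref{LRactiondefn}, then gives a Leibniz rule in the second slot:
\[ b(l,r\,l')=\tau(l,r)\,l'+r\,b(l,l') \ , \]
understood as an identity of maps into $\Delta_*\mc L$. So the commutator of $b$ with multiplication by $r$ in the second argument is $(l,l')\mapsto \tau(l,r)\,l'$. That map is built from $\tau$ in the pair $(\mc L,R)$ followed by the $R$-module action on $\mc L$. Since $\tau$ is $R$-linear in $l$, this commutator is $R$-linear in both $l$ and $l'$, and so a second commutator vanishes. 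Therefore $b$ is differential of order at most $1$ in the second argument, and by skew-symmetry in the first as well.

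The main obstacle is making this "commutator" formalism rigorous in the D-module setting, where operations are maps $\boxtimes L_i\to\Delta_*M$ and not pointwise formulas. The plan is to phrase everything through the diagrams of Definitions \ref{Rlindefn} and \ref{LRactiondefn}. The first-order condition is expressed as the vanishing of the difference of the two composites on $\pi_{i}^!R\otimes(\boxtimes L_i)$ after restricting to $\pi_i^!R\otimes\pi_i^!R\otimes(\cdots)$, that is, as annihilation by the square of the diagonal ideal of $\mc Y\times\mc Y$. We then check that this matches the BD definition of $R$-polydifferential operations: they are maps that factor through $\boxtimes L_i\otimes_{R^{\boxtimes I}}\mc J$, where $\mc J$ ranges over finite-order neighbourhoods of the diagonal. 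That matching is the step to treat with care; the rest is the formal Leibniz computation above.
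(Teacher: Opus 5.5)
Your proposal is correct and follows the paper's proof: $\tau$ is $R$-linear in its first argument by the algebroid axioms and $R$-differential in its second by Proposition \ref{deractpolydiffprop}. For $b$, the $\tau$-compatibility square gives a Leibniz formula in the second argument, and skew-symmetry transfers this to the first. Your account of the iterated-commutator characterization is more detailed than the paper's, which only says the argument is the same as for ordinary Lie algebroids. One small slip to fix: you say skew-symmetry reduces you to the first argument, but you then verify the second.
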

\begin{proof} The proof is the same as that for Lie algebroids. The map $\tau$ is by definition $R$-linear and thus $R$-differential in the first argument, and is $R$-differential in the second argument by Proposition \ref{deractpolydiffprop}. The map $b$ is $R$-differential in the second argument, and equivalently the first argument by skew-commutativity, by the formula provided by the $\tau$-compatibility of the adjoint action: the latter is equivalent to commutativity of the diagram
\[ \begin{tikzcd} \mc L \boxtimes (R\otimes \mc L) \ar[r,"\id_{\mc L}\boxtimes m "] \ar[d,swap,"\id_R \otimes b +\tau \otimes \id_{\mc L}"] & \mc L \boxtimes \mc L \ar[d,"b"] \\ \Delta_*(R\otimes M)  \ar[r,"\Delta_* m"] & \Delta_*M \end{tikzcd} \ . \]

\end{proof}

%
%
%
%
%
%
%

\subsection{Coisson algebras}\label{ssec:coisson intro}

\begin{defn}
	A \emph{coisson} algebra is a pair $(A,\{\cdot,\cdot\})$, where $A\in \Comm^!(\DD(X))$ and 
	\[
	\{\cdot,\cdot\}\in P_{2}^*(\{A,A\},A)~,
	\]
	is a $\Lie^*$ bracket such that the adjoint action of $A$ on $A$ is an action of a $\Lie^*$-algebra on a $\Comm^!$ algebra by derivations, as in Definition \ref{defn:Lie action by derivations}. Said more prosaically, $\{\cdot,\cdot\}$ must satisfy the Leibniz rule in each slot.
\end{defn}
One should note the similarity to the definition of a Poisson algebra---hence the nomenclature.

Coisson algebras also admit a description as algebras internal to the so-called classical operad. Let $Q(I)$ denote the set of equivalence relations on $I$ and identify each element, $\sim$, with the corresponding pair $(\pi_S,S)$ for $\pi_S:I\rightarrow S= I/\sim $. We use $I_s$ to denote the fibre, $\pi_S^{-1}(s)\subset I$ over $s\in S$. 

We recall the following the definitions from 1.4.27 of \cite{BD1}:

\begin{defn} The \emph{classical} pseudo-tensor structure on $\DD(X)$, denoted as $\DD(X)^c$, is defined by
	\[ 
	P_I^c(\{ L_i\},M) = \bigoplus_{(\pi_S,S)\in Q(I)}  P_I^c(\{L_i\},M)_S ~,
	\]
	where for each $\pi_S:I\to S$ we define
	\[
	P_I^c(\{L_i\},M)_S= P_S^*\big(\{ \otimes!_{i\in I_s} L_i\}_{s\in S},M\big)\otimes \big( \otimes_{s\in S} \Lie(|I_s|)\big)~,
	\]
	for any $L_i,M\in \DD(X)$. The composition maps are defined in $1.4.27$ of \cite{BD1}.
\end{defn}

There is a natural, auxiliary grading on the space of classical operations
\[
	P_I^c(\{ L_i\},M) = \bigoplus_{p=0}^{|I|-1}  P_I^c(\{ L_i\},M)^{\langle p \rangle} ~,
\]
where
\begin{equation}\label{auxgreqn}
	P_I^c(\{ L_i\},M)^{\langle p \rangle}=\bigoplus_{\substack{(\pi_S,S)\in Q(I)\\ |S|=p+1}} P_S^*(\{ \otimes^!_{i\in I_s} L_i\},M)\otimes ( \otimes_{s\in S} \Lie(|I_s|))   \ ,  
\end{equation}
with the direct sum running over quotients, $S$, of $I$ whose cardinality is $p+1$.

%
In particular, we have
\[ P_I^c(\{ L_i\},M)^{\langle 0\rangle} = P^!_I(\{L_i\},M)\otimes \Lie(|I|) \qquad \text{and}\qquad  P_I^c(\{ L_i\},M)^{\langle |I|-1 \rangle}= P^*_I(\{L_i\},M) \ , \]
and so one should, heuristically, think of the auxiliary grading as counting the number of $*$-operations appearing at a given arity. This gives rise to the following maps of coloured operads:
\begin{equation}\label{alphabetaeqn}
	\DD(X)^!\otimes^H \Lie \xrightarrow{\alpha} \DD(X)^c \xrightarrow{\beta} \DD(X)^*  
\end{equation}
extending the identity functor on $\DD(X)$, where $\otimes^H$ denotes the Hadamard tensor product of $\K$-linear operads. Similarly, the following are proved in \cite{BD1}:
\begin{lemma} For any coloured operad $\mc P$, there is a natural isomorphism
	\[ \Alg_{\Lie}(\mc P\otimes^H \Lie) = \Hom_{\Op_\K}( \Lie, \mc P\otimes^H \Lie) \xrightarrow{\cong}    \Hom_{\Op_\K}( \Comm, \mc P)=\Alg_{\Comm}(\mc P)\]
\end{lemma}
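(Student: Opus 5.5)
The plan is to reduce the statement to a purely arity-wise fact about Hadamard products with the Lie operad, namely the classical isomorphism $\Lie(n)\otimes\Lie(n)^\vee\cong\dots$. More concretely, I would use the standard "Koszul sign trick": a map of operads $\Lie\to\mc P\otimes^H\Lie$ amounts to a choice of binary operation $b\in\mc P(2)\otimes\Lie(2)$ satisfying skew-symmetry and Jacobi, since $\Lie$ is generated by the single binary operation $b$ subject to $\ik[\Ss_3]\cdot\mathrm{Jac}(b)$ (Section \ref{Lieop}). Similarly, a map $\Comm\to\mc P$ is a binary operation $m\in\mc P(2)$ that is $\Ss_2$-invariant and satisfies $\ik[\Ss_3]\cdot\Ass(m)=0$. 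Since $\Lie(2)=\ik_b$ is the sign representation and is one-dimensional, every element of $\mc P(2)\otimes\Lie(2)$ is uniquely of the form $m\otimes b$ with $m\in\mc P(2)$. Moreover, $m\otimes b$ is skew-symmetric exactly when $m$ is symmetric, because sign $\otimes$ sign is trivial. This gives the bijection on generators, $m\mapsto m\otimes b$. For colored $\mc P$ the same argument runs colour by colour: the Lie algebra lives on a single colour $c$, and we work in $\cEnd_{\mc P}(c)$.

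The key step is to compare the relations in arity $3$. In $(\mc P\otimes^H\Lie)(3)=\mc P(3)\otimes\Lie(3)$, composition is computed factorwise, so
\[ (m\otimes b)\circ_i(m\otimes b)=(m\circ_i m)\otimes(b\circ_i b). \]
The Jacobi element for $m\otimes b$ is therefore $\sum_{\sigma}\pm\,\sigma\cdot(m\circ_1 m)\otimes\sigma\cdot(b\circ_1 b)$, summed over cyclic permutations. I would use that $\Lie(3)$ is $2$-dimensional with basis $b\circ_1 b$ and $(b\circ_1 b)\sigma_{12}$. Expanding the Jacobi expression in this basis, via the Jacobi relation of $\Lie$, its coefficients become linear combinations of $\sigma\cdot(m\circ_1 m)-m\circ_2 m$ type terms. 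The claim is that the vanishing of both coefficients is equivalent to the associativity relations $\ik[\Ss_3]\cdot\Ass(m)=0$, given commutativity of $m$.

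The main obstacle is this explicit linear algebra in arity $3$. I would carry it out by writing $m\circ_1m$ and its three $\Ss_3$-translates $m(m(x_1,x_2),x_3)$, $m(m(x_2,x_3),x_1)$, $m(m(x_3,x_1),x_2)$; by commutativity these are all the arity-$3$ compositions. I would then check that Jacobi for $m\otimes b$ forces all three to coincide, which is exactly associativity for commutative $m$, and conversely.

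Finally, I would check naturality in $\mc P$ and compatibility with the forgetful functors. Both are clear, since the correspondence is defined on generators by tensoring with the fixed element $b$.

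Alternatively, I would phrase the whole argument conceptually. There is a map of operads $\Lie\to\Comm\otimes^H\Lie$ sending $b\mapsto m\otimes b$; it exists because $\Comm$ is the unit for $\otimes^H$ up to the identification $\Comm(n)=\ik$. Any $\Comm\to\mc P$ then gives $\Lie\to\Comm\otimes^H\Lie\to\mc P\otimes^H\Lie$. The inverse direction uses the argument above, namely extracting the $\Lie(2)$ coefficient together with the arity-$3$ check.
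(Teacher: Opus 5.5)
Your proposal is correct, but it does not follow the paper. The paper gives no argument of its own and simply refers to \cite{BD1}, 1.1.10.

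You instead give a self-contained generators-and-relations check, in three steps:
\begin{enumerate}
\item Maps out of $\Lie$ and $\Comm$ are determined by a binary generator subject to an $\Ss_3$-generated arity-$3$ relation.
\item Since $\Lie(2)$ is the one-dimensional sign representation, skew elements $m\otimes b$ of $\mc P(2)\otimes\Lie(2)$ correspond exactly to symmetric $m\in\mc P(2)$.
\item Since $\Lie(3)$ is free of rank $2$, the Jacobi element of $m\otimes b$ vanishes if and only if its two coefficients do.
\end{enumerate}
Concretely, write $C_1=m(m(x_1,x_2),x_3)$, $C_2=m(m(x_2,x_3),x_1)$, $C_3=m(m(x_3,x_1),x_2)$, and let $B_1,B_2,B_3$ be the corresponding cyclic translates of $b\circ_1 b$, so that $B_1+B_2+B_3=0$. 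The cyclic Jacobi element is then
\[ \textstyle\sum_i C_i\otimes B_i=(C_1-C_3)\otimes B_1+(C_2-C_3)\otimes B_2 . \]
It vanishes if and only if $C_1=C_2=C_3$, which for commutative $m$ is exactly associativity.

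Your route gives an in-place verification that uses nothing beyond $\dim\Lie(2)=1$ and $\dim\Lie(3)=2$, and it runs unchanged colour by colour. The paper's citation gains only brevity.

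There is one slip to correct. The elements $b\circ_1 b$ and $(b\circ_1 b)\sigma_{12}$ are \emph{not} a basis of $\Lie(3)$, because $(b\circ_1 b)\sigma_{12}=-\,b\circ_1 b$ by antisymmetry of the inner bracket. Take instead $b\circ_2 b$ and $(b\circ_2 b)\sigma_{12}$, which are the two independent terms of the paper's $\text{Jac}(b)$, or take $B_1,B_2$ as above.

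Finally, your opening reference to an isomorphism $\Lie(n)\otimes\Lie(n)^\vee\cong\dots$ is left unfinished, and the argument does not need it.
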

\begin{proof} This is proved in section 1.1.10 in \cite{BD1}.
\end{proof}

\begin{prop}\label{CoisLieprop} The category $\Alg_{\Lie}(\DD(X)^c)$ is equivalent to that of (non-unital) coisson algebras, such that the induced functors
\[ \Alg_{\Lie}(\DD(X)^! \otimes \Lie) \to  \Alg_{\Lie}(\DD(X)^c) \to  \Alg_{\Lie}(\DD(X)^*) \]
correspond to the inclusion of $\Comm^!$ algebras as coisson algebras with trivial $\Lie^*$ bracket, and the forgetful functor on coisson algebras remembering only the underlying $\Lie^*$ bracket, respectively.
\end{prop}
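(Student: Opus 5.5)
We unpack a $\Lie$-algebra structure in $\DD(X)^c$ on an object $A$ by its arity-two operation and the relations it must satisfy. By definition, the binary classical operations on $A$ decompose along the two equivalence relations on $\{1,2\}$:
\[
P^c_2(\{A,A\},A)=P^!_2(\{A,A\},A)\otimes \Lie(2)\ \oplus\ P^*_2(\{A,A\},A) .
\]
A skew-symmetric element $\mu$ of this space is therefore a pair $(m,\{\cdot,\cdot\})$. Skew-symmetry of $\mu$ means the following. The first component is $m\otimes b$ with $b$ the sign-representation generator of $\Lie(2)$, so $m$ is a commutative map $A\otimes^! A\to A$. The second component $\{\cdot,\cdot\}$ is a skew-symmetric $*$-operation. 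Since $\Lie$ is the binary quadratic operad of \ref{Lieop}, a $\Lie$-algebra in $\DD(X)^c$ is exactly such a $\mu$ with $\mathrm{Jac}(\mu)=0$ in $P^c_3(\{A,A,A\},A)$.

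The key step is to decompose $\mathrm{Jac}(\mu)$ using the auxiliary grading \eqref{auxgreqn}. Here $m$ has degree $\langle 0\rangle$ and $\{\cdot,\cdot\}$ has degree $\langle 1\rangle$. The composition laws of 1.4.27 in \cite{BD1} are additive in this grading, since composing along a quotient $S$ glues partitions. Hence $\mathrm{Jac}(\mu)$ splits into three components:
\begin{itemize}
\item the $\langle 0\rangle$ part $\mathrm{Jac}(m\otimes b)\in P^!_3\otimes \Lie(3)$;
\item the $\langle 1\rangle$ part, the cross terms of $m$ and $\{\cdot,\cdot\}$, lying in $\bigoplus_{|S|=2} P^*_S(\{A\otimes^! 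A, A\},A)\otimes\Lie(2)$;
\item the $\langle 2\rangle$ part $\mathrm{Jac}(\{\cdot,\cdot\})\in P^*_3$, which is compatible with $\beta$.
\end{itemize}

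The three components give the three coisson axioms.
\begin{itemize}
\item By the preceding Lemma, applied to $\mc P=\DD(X)^!$ and arity-$3$ components, the vanishing of the $\langle 0\rangle$ part is equivalent to associativity of $m$, so $A$ is a $\Comm^!$ algebra.
\item The vanishing of the $\langle 2\rangle$ part says that $\{\cdot,\cdot\}$ is a $\Lie^*$ bracket, because the map $\beta$ of \eqref{alphabetaeqn} restricted to degree $\langle |I|-1\rangle$ is the identity onto $P^*_I$.
\item For the $\langle 1\rangle$ part, the $\Ss_3$-symmetry reduces us to a single quotient $\{1,2,3\}\to\{\{1,2\},\{3\}\}$. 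Its component is $\{m(a_1,a_2),a_3\}-m(a_1,\{a_2,a_3\})-m(\{a_1,a_3\},a_2)$, read as a map $(A\otimes^! A)\boxtimes A\to \Delta_*A$. Its vanishing is exactly the derivation square of Definition \ref{defn:Lie action by derivations} for the adjoint action (in the first slot of $L\boxtimes R$, after using skew-symmetry).
\end{itemize}
Combining the three, $\Lie$-algebras in $\DD(X)^c$ correspond to non-unital coisson algebras. Morphisms match because arity-one operations in all three operads are just $\Hom_{\DD(X)}$.

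The compatibility with the functors is then formal. Composition with $\alpha$ sends $(A,m)$ to $\mu=(m\otimes b,0)$, which is the inclusion of $\Comm^!$ algebras with trivial bracket. Composition with $\beta$ kills all components with $|S|<|I|$ and keeps $\{\cdot,\cdot\}$, which is the forgetful functor to the underlying $\Lie^*$ algebra.

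The main obstacle is the explicit computation of the $\langle 1\rangle$ cross terms. One has to unwind the composition maps of 1.4.27 of \cite{BD1}, including the $\Lie(|I_s|)$ factors and their signs, and check that the resulting element of $P^*_2(\{A\otimes^!A,A\},A)$ is precisely the Leibniz expression. I would do this by writing the composite $\{\cdot,\cdot\}\circ(m\otimes\id)$ via $\Delta^{12,3}$ as in the $\Lie^*$ composition formula, and computing $m\circ(\{\cdot,\cdot\}\otimes\id)$ using $A\boxtimes A\to\Delta_*A$ tensored with $\pi^!A$. The sign bookkeeping coming from $b\in\Lie(2)$ must then produce the standard Leibniz signs.
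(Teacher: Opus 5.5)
Your proposal is correct and follows the same route as the paper. The paper likewise reads off the binary classical bracket via the decomposition \eqref{cl2arydecompeqn} as a commutative $!$-product plus a skew $*$-bracket, and defers the relations to Lemma 1.4.28 of \cite{BD1}; your splitting of the Jacobiator by the auxiliary grading into associativity, Leibniz and $\Lie^*$-Jacobi components makes that cited content explicit.

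One small correction to your justification. Additivity of the grading holds because the composition laws of $\DD(X)^c$ respect the grading (as reflected in Theorem \ref{BDgrclthm}), not because composition simply glues partitions. For instance, $m\circ(\mathrm{id}\otimes\{\cdot,\cdot\})$ has components on both $\{\{1,2\},\{3\}\}$ and $\{\{1,3\},\{2\}\}$. This is exactly why your $\{\{1,2\},\{3\}\}$-component correctly contains both $m(a_1,\{a_2,a_3\})$ and $m(\{a_1,a_3\},a_2)$.
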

\begin{proof}
This is precisely Lemma 1.4.28 of \cite{BD1}, with some aspects of the proof explicitly stated. 
\end{proof}
Concretely, at the level of binary operations, the preceding Proposition \ref{CoisLieprop} follows from the fact that
\begin{equation}\label{cl2arydecompeqn}
	 P_2^c(\{L,L\},L) =\left( P_2^!(\{L,L\},L)\otimes \Lie(2)\right)  \oplus P_2^*(\{L,L\},L) ~,
\end{equation}
where the two summands correspond to the two equivalence relations on the set $\{1,2\}$, which have $p=0$ and $p=1$ in the above grading, respectively, and thus the 2-ary Lie bracket structure map internal to $P_2^c(\{L,L\},L)$ consists of precisely a commutative 2-ary operation internal to $\DD(X)^!$ and an alternating 2-ary operation internal to $\DD(X)^*$.

From the definition, we have a forgetful functor
\[
\Alg_{\Lie}(\DD(X)^c) \rightarrow \Alg_{\Lie}(\DD(X)^*)
\]
from coisson algebras to $\Lie^*$ algebras. If $L$ is a $\Lie^*$ algebra, then the $\Comm^!$ algebra $\Sym(L)$ has a unique coisson bracket such that the inclusion $L\hookrightarrow \Sym(L)$ is a morphism of $\Lie^*$ algebra---this is the Kirillov--Kostant--Souriau bracket. Indeed,
\[
\Sym: \Alg_{\Lie}(\DD(X)^*)\rightarrow \Alg_{\Lie}(\DD(X)^c)~,
\]
is left adjoint to the forgetful functor.

The endomorphism operad of an object $M\in \DD(X)^c$ can be described concretely by the following:

\begin{prop}\label{Oendcprop}

The spaces of operations of $\OEnd_{\DD(X)^c}(M)\in \Op_\K$ are given by
\[ 
\OEnd_{\DD(X)^c}(M)(n) = \bigoplus_{p=0}^{n-1}  \left( \bigoplus_{n_1+...+n_{p+1}=n} \Ind_{\Ss_{n_1}\times ... \times \Ss_{n_{p+1}}}^{\Ss_n}\left(P_{p+1}^*(\{ M^{\otimes^! n_i}\}_{i=1}^{p+1}  ,M)\otimes \bigotimes_{i=1}^{p+1} \Lie(n_i)\right) \right)_{\Ss_{p+1}} \ .  
\]
\end{prop}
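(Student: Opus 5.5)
We unwind the definition of the classical pseudo-tensor structure in the single-colour case $L_i=M$ for all $i\in I=\{1,\dots,n\}$, and then reorganize the sum over equivalence relations by $\Ss_n$-orbits. By definition,
\[
\OEnd_{\DD(X)^c}(M)(n)=P^c_n(\{M\}_{i=1}^n,M)=\bigoplus_{p=0}^{n-1}\ \bigoplus_{\substack{(\pi_S,S)\in Q(n)\\ |S|=p+1}} P_S^*\big(\{M^{\otimes^! |I_s|}\}_{s\in S},M\big)\otimes \bigotimes_{s\in S}\Lie(|I_s|),
\]
using the auxiliary grading \eqref{auxgreqn}, and noting that $\otimes^!_{i\in I_s}M\cong M^{\otimes^! |I_s|}$ canonically, up to the action of $\Ss_{I_s}$. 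Since $\Ss_n$ acts on $Q(n)$ by permuting equivalence relations and preserves $|S|$, it suffices to fix $p$ and identify the summand indexed by equivalence relations with $p+1$ classes as an $\Ss_n$-module.

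For fixed $p$, we first replace unordered quotients by ordered ones. An equivalence relation with $p+1$ classes together with a labelling $S\cong\{1,\dots,p+1\}$ is the same as a surjection $\pi:\{1,\dots,n\}\to\{1,\dots,p+1\}$. The forgetful map from surjections to equivalence relations is a free $\Ss_{p+1}$-torsor. Hence the summand is the $\Ss_{p+1}$-coinvariants of
\[
\bigoplus_{\pi\ \text{surj}} P_{p+1}^*\big(\{M^{\otimes^! |\pi^{-1}(k)|}\}_{k},M\big)\otimes\bigotimes_k\Lie(|\pi^{-1}(k)|),
\]
where $\Ss_{p+1}$ acts by relabelling. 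Here we use the symmetric-group equivariance of $P^*$, which is part of its operad structure. Because the $\Ss_{p+1}$-action on the indexing set is free, taking coinvariants simply picks one representative per orbit; this is why coinvariants rather than invariants appear.

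Next we decompose the set of surjections $\pi$ by the fibre sizes $(n_1,\dots,n_{p+1})$ with $\sum n_k=n$. For a fixed composition, $\Ss_n$ acts transitively on surjections with $|\pi^{-1}(k)|=n_k$. The stabilizer of the standard one, whose fibres are consecutive blocks, is $\Ss_{n_1}\times\dots\times\Ss_{n_{p+1}}$. This stabilizer acts on the summand attached to the standard surjection through its natural action on $M^{\otimes^! n_k}$ inside $P^*_{p+1}$ and on $\Lie(n_k)$. The standard identification of a direct sum over a transitive $G$-set $G/H$ with $\Ind_H^G$ of the fibre then gives
\[
\bigoplus_{n_1+\dots+n_{p+1}=n}\Ind^{\Ss_n}_{\Ss_{n_1}\times\dots\times\Ss_{n_{p+1}}}\Big(P^*_{p+1}(\{M^{\otimes^! n_i}\},M)\otimes\bigotimes_i\Lie(n_i)\Big).
\]
Finally we check that the $\Ss_{p+1}$-action commutes with the $\Ss_n$-action, and that it permutes the compositions, so that taking coinvariants is compatible and yields the stated formula.

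The main point requiring care is the equivariance bookkeeping. We must verify that the $\Ss_n$-action on $P^c_n$ defined via the composition structure of 1.4.27 in \cite{BD1} agrees, under these identifications, with the induced-module action. In particular, permutations within a block must act diagonally on $M^{\otimes^! n_k}$ and on $\Lie(n_k)$, including any signs coming from the symmetric monoidal structure on $\otimes^!$. We would check this on transpositions, using the description of $\OO_{\D}$ for the symmetric monoidal category $(\DD(X),\otimes^!)$ in Example \ref{symmonoidcatopeg}.
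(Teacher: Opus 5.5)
Your proposal is correct and follows essentially the same route as the paper: split by the auxiliary grading $p$, replace quotients of cardinality $p+1$ by surjections onto $\{1,\dots,p+1\}$ at the cost of taking $\Ss_{p+1}$-coinvariants, and then decompose the surjections by fibre sizes $(n_1,\dots,n_{p+1})$ via orbit--stabilizer to obtain modules induced from $\Ss_{n_1}\times\dots\times\Ss_{n_{p+1}}$. Your extra remarks — that the $\Ss_{p+1}$-action on surjections is free, and that the identifications must be checked to be compatible with the $\Ss_n$- and $\Ss_{p+1}$-actions — go somewhat beyond the paper's proof, which leaves that bookkeeping implicit.
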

\begin{proof}
Recall the auxiliary grading on $\DD(X)^c$,
\[ 
\OEnd_{\DD(X)^c}(M)(n) =P_n^c(\{ M\}_{i=1}^n,M)= \bigoplus_{p=0}^{n-1}P^c_n(\{M\}_{i=1}^n,M)^{\langle p\rangle} \ ,
\]
where $P^c_n(\{M\}^n_{i=1},M)^{\langle p\rangle}$ denotes the sum over quotients of cardinality $p+1$. Fix an ordering so that each quotient of cardinality $p+1$ may be identified with $\{1,\dots,p+1\}$. Summing over all quotients of cardinality $p+1$ is then almost the same as summing over all surjections $\{1,\dots,n\}\onto \{1,\dots,p+1\}$, except we should take $\Ss_{p+1}$-coinvariants to account for the choice of ordering.

The group $\Ss_n$ acts on the space of such surjections with equivalence classes labelled by the set of decompositions $n_1+\dots+n_p=n$. The stabilizer of the standard representative of such an equivalence class is $\Ss_{n_1}\times \dots \times \Ss_{n_{p+1}}$. Summing over all surjections is equivalent to summing over all decompositions but accounting for the size of the orbit and so we obtain
\[
P^c_n(\{M\},M)^{\langle p\rangle} = \left( \bigoplus_{n_1+...+n_{p+1}=n} \Ind_{\Ss_{n_1}\times ... \times \Ss_{n_{p+1}}}^{\Ss_n} \bigg( P_{p+1}^*(\{ M^{\otimes^! n_i}\}_{i=1}^{p+1}  ,M)\otimes \bigotimes_{i=1}^{p+1} \Lie(n_i)\bigg)  \right)_{\Ss_{p+1}} \ . 
\]

\end{proof}


\section{Algebras of the vertex kind}\label{sec:vertex algebras}

Fix $X=\IA^1$, as our algebraic curve. It is known, see Section $0.15$ of \cite{BD1}, that weakly translation invariant chiral, coisson or $\Lie^*$ algebras on $\IA^1$ recover the more traditional notions of vertex algebra, vertex Poisson algebra, and vertex Lie algebra, respectively. In this section, we review the definitions of the three  aforementioned algebras and spell out this equivalence, following the exposition in Appendix A of \cite{BDSHK19}.

The additive group $\IG_a$ acts on $\IA^1$ acts on $\IA^1$ by translations and we have an equivalence of categories,
\[
\DD(\IA^1)^{\IG_a,w} \simeq \ik[\partial]\textup{-Mod}
\]
where $\ik[\partial]$ is the ring of polynomials in the indeterminate $\del$. The functor $\ik[\partial]\textup{-Mod}\rightarrow D(\IA^1)^{\IG_a,w}$ is given by
\[
V \mapsto V\otimes \OO_{\IA^1}~,
\]
with D-module structure defined by letting the section $z\in \OO_{\IA^1}$ act by multiplication on $\OO_{\IA^1}$ and letting $\partial_z\in \Theta_{\IA^1}$ act as $\partial - \partial_z$. The inverse functor is given by taking the fibre at zero.

\subsection{Preliminaries and conventions}


\subsubsection{Commutative algebras with derivation}

The category $\ik[\partial]$-Mod is symmetric monoidal with the tensor product $\otimes = \tensor{\ik}$ and the action of $\partial$ given by the coproduct $\partial\mapsto \partial\otimes1 +1\otimes \partial$. We write $\ik[\partial]\Mod^!$ for the pseudo-tensor structure induced by this tensor product.
\begin{prop}
	A commutative algebra internal to $\ik[\partial]\Mod^!$ is a commutative algebra over $\ik$ equipped with an action of $\partial$ by derivations.
\end{prop}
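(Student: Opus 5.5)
The statement is essentially a matter of unwinding definitions in the symmetric monoidal category $(\ik[\partial]\Mod,\otimes_\ik)$, so I will argue directly rather than through any operadic machinery. A commutative algebra internal to $\ik[\partial]\Mod^!$ consists of an object $A\in\ik[\partial]\Mod$ together with a morphism $m:A\otimes A\to A$ in $\ik[\partial]\Mod$ satisfying the associativity and commutativity relations of the operad $\Comm$ recalled in Section \ref{ssec:operad examples}. Since the pseudo-tensor structure is induced by a genuine monoidal structure (Example \ref{symmonoidcatopeg}), the operations are simply $\Hom_{\ik[\partial]\Mod}(A^{\otimes n},A)$, and compositions are ordinary compositions of maps. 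This means we only need to identify which data and which conditions are being imposed.

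\textbf{Step 1: the underlying algebra.} The forgetful functor $\ik[\partial]\Mod\to\Vect_\ik$ is strict symmetric monoidal, because the underlying vector space of $A\otimes B$ is $A\otimes_\ik B$ with the usual symmetry. Applying it to the data $(A,m)$ therefore gives a commutative $\ik$-algebra $(A,m)$. The associativity and commutativity equations are equalities of maps, so they can be checked after forgetting the $\partial$-action.

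\textbf{Step 2: the only remaining condition.} The single extra requirement is that $m$ be a morphism of $\ik[\partial]$-modules, that is, $\partial\circ m=m\circ\Delta(\partial)$, where $\Delta(\partial)=\partial\otimes1+1\otimes\partial$ is the coproduct action on $A\otimes A$. Evaluating this on $a\otimes b$ gives exactly $\partial(ab)=(\partial a)b+a(\partial b)$, which is the Leibniz rule. So the equation says precisely that $\partial$ acts by a derivation.

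\textbf{Step 3: the converse and morphisms.} Conversely, suppose we are given a commutative $\ik$-algebra with a derivation $\partial$. Then $A$ is a $\ik[\partial]$-module, the Leibniz rule says $m$ is $\ik[\partial]$-linear, and so $(A,m)$ is a commutative algebra internal to $\ik[\partial]\Mod^!$. The same reasoning applies to morphisms: a map of algebras internal to $\ik[\partial]\Mod^!$ is an algebra map commuting with $\partial$. Hence the two categories agree.

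In the unital case, the unit $u:\ik\to A$ must also be $\ik[\partial]$-linear. Since $\ik$ carries the trivial action $\partial=0$, this condition reads $\partial(1)=0$, and that already follows from the Leibniz rule. There is no real obstacle in this argument; the only point needing care is that the action of $\partial$ on the tensor product is the coproduct action, which is exactly what converts equivariance of $m$ into the Leibniz rule.
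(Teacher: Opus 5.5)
Your proposal is correct. The paper states this proposition without proof, treating it as an immediate unwinding of the definitions, and your argument is exactly that unwinding: $\ik[\partial]$-linearity of $m$ with respect to the coproduct action $\partial\otimes 1+1\otimes\partial$ is the Leibniz rule.
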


The forgetful functor $\Comm(\ik[\partial]\Mod^!)\rightarrow \Comm(\Vect_\ik)$ admits a left adjoint which sends a commutative $\ik$-algebra $A$ to the jet algebra $\JJ A\in \Comm(\ik[\partial]\Mod^!)$.
Indeed, the corresponding schemes $\Spec \JJ A$ are an interesting class of weakly translation invariant $\D_{\IA^1}$-schemes.

\subsubsection{Vertex Lie algebras}

We recall the definition of a vertex Lie algebra, also known as a Lie conformal algebra or (rarely) simply a conformal algebra.

\begin{defn}\label{defn:vertex Lie algebra}
	A vertex Lie algebra is a pair $(L,[\cdot_\lambda\cdot])$ consisting of an $L\in \ik[\partial]$-Mod and a $\lambda$-bracket
	\[
	[\cdot_\lambda \cdot]: L\otimes L \rightarrow L[\lambda]~,
	\]
	satisfying
	\begin{enumerate}[(i)]
		\item  $[{a}_\lambda b] = - [{a}_{-\lambda-\del} b]$ \emph{(skewsymmetry)}
		\item $[\partial {a}_\lambda b] = -\lambda [{a}_\lambda b]$ and $[{a}_\lambda \partial b] = (\lambda+\partial)[{a}_\lambda b]$ \emph{(sesquilinearity)}
		\item $[a_\lambda [b_\mu c]] - [c_\mu[a_\lambda b]]= [[a_\lambda b]_{\lambda+\mu} c]$ \emph{(Jacobi identity)}
	\end{enumerate}
	for $a,b,c\in L$.
\end{defn}
\subsubsection{Vertex algebras}

We adopt the following, somewhat unconventional, definition of a vertex algebra, first appearing in the work of  \cite{KacDeSole06}.
\begin{defn}
A (non-unital) vertex algebra is a pair $(V,I_\lambda)$ consisting of an $L\in \ik[\partial]$-Mod and a so-called integral $\lambda$-bracket
	\[
	I_\lambda: V\otimes V\rightarrow V[\lambda]~,
	\]
	satisfying
	\begin{enumerate}[(i)]
		\item $\partial I_\lambda(a,b) = I_\lambda(\partial a,b)+I_\lambda(a,\partial b)$ and $\tfrac{d}{d\lambda}I_\lambda(\partial a,b) = \lambda \tfrac{d}{d\lambda}I_\lambda(a,b)$ \emph{(sesquilinearity)}
		\item $I_\lambda(a,b) = I_{-\lambda-\partial}(b,a)$ \emph{(skewsymmetry)}
		\item $I_\lambda(a, I_\mu(b,c)) - I_\mu(b,I_\lambda(a,c)) - I_{\lambda+\mu}(I_\lambda(a,b),c) = 0$ \emph{(Jacobi identity)}
	\end{enumerate}
A \emph{unital} vertex algebra is a vertex algebra $(V,I_\lambda)$ with a distinguished vector $\ket{0}\in \ker\partial\subset V$ such that, for any $a\in V$
\[
I_\lambda(a,\ket{0})= a~.
\]
\end{defn}

\begin{rmk}
Often, one splits the integral $\lambda$-bracket into two operations
\[
:\,\,: \,\,\, = I_\lambda(\cdot,\cdot)|_{\lambda=0} ~, \text{ and } [\cdot_\lambda \cdot] = \frac{d}{d\lambda}I_\lambda(\cdot,\cdot)
\]
where $[\cdot_\lambda\cdot]$ is an honest $\lambda$-bracket on $V$, making it a vertex Lie algebra, and $:\,\,:$, the normally ordered product, is a noncommutative, nonassociative binary operation.
	
\end{rmk}

\begin{rmk}
	The usual definition of a vertex algebra structure on $L\in\ik[\partial]$-Mod consists of an integer family of products $\{_{(n)},n\in\IZ\}$
	\[
	_{(n)} : L\otimes L \rightarrow L ~,
	\]
	satisfying some conditions. One can recover the products from the integral $\lambda$-bracket as 
	\[
	{a}_{(n)} b = 
	\begin{cases}
	\tfrac{1}{n!}\tfrac{d^{n+1}}{d\lambda^{n+1}} I_\lambda(a,b)|_{\lambda=0}~, \text{ for }n\ge0\\
	\partial^{1+n} I_\lambda(a,b)|_{\lambda=0} ~, \text{ for } n <0
	\end{cases} 
	~.
	\]
	The usual conditions on the products are equivalent to the conditions $(i)$--$(iii)$ for $I_\lambda$, see \cite{KacDeSole06}.
\end{rmk}

\begin{rmk}
We say that a subspace $W\subset V$ \emph{strongly generates} $V$ if any vector in $V$ can be written as a linear combination of normally ordered monomials
\[
:\partial^{i_1}w_1 : \partial^{i_2}w_2:_{\dots\dots}:\partial^{i_{k-1}}w_{k-1}\partial{i_k} w_k:_{\dots\dots} :
\]
for $w_i\in W$. In the literature, one usually says that a \emph{set} strongly generates $V$ if the $w_i$ above can be chosen amongst this set---clearly any basis of $W$ gives a strongly generating set of $V$.
\end{rmk}


\subsubsection{Vertex Poisson algebras}

\begin{defn}
	Let $V\in\Comm(\ik[\partial]\Mod^!)$, \ie $V$ is a commutative $\ik$-algebra with derivation $\del$. A vertex Poisson algebra structure on $V$ is a $\lambda$-bracket 
	\[
	[\cdot_\lambda\cdot]:V\otimes V\rightarrow V[\lambda]~,
	\]
	such that 
	\begin{enumerate}[(i)]
		\item $[\cdot_\lambda\cdot]$ defines a vertex Lie algebra structure on the underlying $\ik[\partial]$-module of $V$,
		\item $[\cdot_\lambda \cdot]$ distributes over the commutative product on $V$ via
		\begin{itemize}
			\item $[{a}_\lambda b c] = [{a}_\lambda,b]c + [{a}_\lambda c]b$ \emph{(right Leibniz rule)}
			\item $[a{b}_\lambda c]= [{a}_{\lambda+\partial} c]_{\rightarrow}b + [{b}_{\lambda+\partial} c]_{\rightarrow} a$ \emph{(left Leibniz rule)}
		\end{itemize}
		where the arrow subscript means that $\del$ acts to the right of the bracket.
		\end{enumerate}
\end{defn}

A rich source of vertex Poisson algebras comes from ordinary Poisson algebras and vertex Lie algebras as explained in the propositions below.

\begin{prop}
	Let $Y$ be a Poisson variety over $\ik$, then $\ik[\JJ Y]$ has a unique vertex Poisson algebra structure such that 
	\[
	[a_\lambda b] = \{a,b\}~,
	\]
	where $a,b\in \ik[Y]\subset \ik[\JJ Y]$ and $\{\cdot,\cdot\}$ is the Poisson bracket on $\ik[Y]$.
\end{prop}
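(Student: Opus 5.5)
Write $\ik[\JJ Y]=\ik[Y][\partial^k a]$ as a differential algebra, generated by $\ik[Y]$ under $\partial$. I will define the bracket by the master formula for $\lambda$-brackets on differential algebras, prove uniqueness, and then check the axioms.

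\textbf{Uniqueness.} Sesquilinearity forces
\[
[\partial^m a_\lambda \partial^n b]=(-\lambda)^m(\lambda+\partial)^n\{a,b\}
\]
for $a,b\in\ik[Y]$. The left and right Leibniz rules then determine the bracket of arbitrary differential monomials from these values. Hence any vertex Poisson structure restricting to $\{\cdot,\cdot\}$ on $\ik[Y]$ is unique.

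\textbf{Existence.} Take local (or global, since $Y$ is affine) generators $x_1,\dots,x_r$ of $\ik[Y]$ with relations $f_\alpha$. Then
\[
\ik[\JJ Y]=\ik[x_i^{(k)}]/(\partial^k f_\alpha).
\]
On the free differential algebra $\ik[x_i^{(k)}]$, define $[\cdot_\lambda\cdot]$ by the master formula
\[
[f_\lambda g]=\sum_{i,j,m,n}\frac{\partial g}{\partial x_j^{(n)}}(\lambda+\partial)^n\{x_i{}_{\lambda+\partial}x_j\}_{\to}(-\lambda-\partial)^m\frac{\partial f}{\partial x_i^{(m)}},
\]
with $\{x_i{}_\lambda x_j\}=P_{ij}$, where $P_{ij}$ is any lift of $\{x_i,x_j\}$. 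By the theorem of De Sole--Kac, this bracket automatically satisfies sesquilinearity and the Leibniz rules. Skewsymmetry and the Jacobi identity then only need to be checked on generators. There they reduce to skewsymmetry of $\{x_i,x_j\}$, which holds modulo the ideal, and to the Jacobi identity of $\{\cdot,\cdot\}$ on the $x_i$, since $P_{ij}$ has no $\lambda$-dependence.

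\textbf{Descent to the quotient.} It remains to check that the differential ideal $J=(\partial^kf_\alpha)$ is a $\lambda$-bracket ideal. By the Leibniz rules and sesquilinearity it suffices to show $[x_{i\,\lambda} f_\alpha]\in J[\lambda]$. Since the $P_{ij}$ are $\lambda$-independent, this bracket equals $\sum_j \frac{\partial f_\alpha}{\partial x_j}P_{ij}$, a lift of $\{x_i,f_\alpha\}$. This lies in $J$ because $\{x_i,\cdot\}$ is a derivation of $\ik[x]$ preserving the ideal of $Y$, and we may choose the lifts $P_{ij}$ so that this holds, namely by lifting a derivation of $\ik[x]$ that preserves $(f_\alpha)$. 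In general a lift of the Poisson bivector to affine space need not satisfy Jacobi on the nose. So I will carry out the Jacobi verification on the quotient, where everything is evaluated modulo $J$, after first establishing that $J$ is an ideal. Skewsymmetry and Jacobi modulo $J$ follow as above. The same descent argument also shows that the structure is independent of the choice of generators.

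\textbf{Main obstacle.} I expect the main difficulty to be justifying that the master formula gives a well-defined bracket that is compatible with the relations. An alternative is to invoke Proposition~\ref{CoisLieprop} together with the jet adjunction \eqref{affinejetadjeqn}, which would give the analogous result of \cite{BD1} 2.6 for coisson structures on jet schemes.
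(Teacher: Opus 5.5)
The paper states this proposition without proof and treats it as standard, so there is no argument of the paper to compare with. Your route is correct and is the standard proof: uniqueness from sesquilinearity and the two Leibniz rules, then existence via the master formula on $F=\ik[x_i^{(k)}]$ with arbitrary lifts $P_{ij}$, descent along the differential ideal $J$, and checking skewsymmetry and Jacobi on the quotient. Three steps need to be stated properly, though none of them fails.

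First, your justification that $[x_{i\,\lambda}f_\alpha]=\sum_j P_{ij}\,\partial_{x_j}f_\alpha$ lies in $I=(f_\alpha)$ is garbled. There is no derivation $\{x_i,\cdot\}$ of $\ik[x]$ a priori, and no special choice of lift is needed. Take an arbitrary lift, set $D_i=\sum_j P_{ij}\partial_{x_j}$, and let $\pi:\ik[x]\to\ik[Y]$ be the projection. Then $\pi\circ D_i$ and $\{\bar x_i,\cdot\}\circ\pi$ are both derivations $\ik[x]\to\ik[Y]$ along $\pi$. They agree on the generators $x_j$, so they are equal, and applying this to $f_\alpha$ gives $D_i(f_\alpha)\in I$.

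Second, you only treat one side of the ideal condition. To descend you also need $[J_\lambda F]\subset J[\lambda]$. By the left Leibniz rule and sesquilinearity this reduces to $[f_{\alpha\,\lambda}x_i]=\sum_l P_{li}\,\partial_{x_l}f_\alpha$. This differs from $-D_i(f_\alpha)$ by $\sum_l(P_{li}+P_{il})\partial_{x_l}f_\alpha\in I$.

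Third, the De Sole--Kac reduction of skewsymmetry and Jacobi to generators is proved for algebras of differential functions, whereas you apply it on the non-free quotient $F/J$. The reduction still holds for any differential algebra with a sesquilinear $\lambda$-bracket obeying both Leibniz rules, and you should say so. The reason is that the skewsymmetry defect $[a_\lambda b]+[b_{-\lambda-\partial}a]$ satisfies Leibniz rules in each argument. Once skewsymmetry holds, the same is true of the Jacobiator, so each vanishes everywhere once it vanishes on generators.

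On generators the Jacobiator is $\sum_l\bigl(P_{il}\partial_{x_l}P_{jk}-P_{jl}\partial_{x_l}P_{ik}-P_{lk}\partial_{x_l}P_{ij}\bigr)$. By the first point, together with $P_{lk}\equiv -P_{kl}$ modulo $I$, this is a lift of the Jacobiator of $\{\cdot,\cdot\}$ on $\ik[Y]$, so it lies in $I$.

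Finally, independence of the presentation follows directly from your uniqueness argument, so no separate descent argument is needed for it.
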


\begin{prop}
	Let $(L,[\cdot_\lambda\cdot])$ be a vertex Lie algebra then $\Sym(L)$ has a unique vertex Poisson algebra structure given by the bracket $\{\cdot_\lambda\cdot\}$ such that
	\[
	\{a_\lambda b\} = [a_\lambda b]~,
	\]
	for $a,b\in L= \Sym^1(L)\subset \Sym(L)$.
\end{prop}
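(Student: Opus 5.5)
We extend the bracket from generators using the Leibniz rules, which force it on all of $\Sym(L)$, and then check the axioms.

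\textbf{Uniqueness.} $\Sym(L)$ is generated as a commutative algebra by $L=\Sym^1(L)$. Any vertex Poisson bracket restricting to $[\cdot_\lambda\cdot]$ on $L$ is therefore determined on all monomials. Apply the right Leibniz rule repeatedly in the second slot. Then apply the left Leibniz rule, in the form $[ab_\lambda c]=[a_{\lambda+\partial}c]_\to b+[b_{\lambda+\partial}c]_\to a$, in the first slot. This expresses the bracket of any two monomials through brackets of generators.

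\textbf{Existence: defining the bracket.} Define the bracket on monomials by
\[
\{a_1\cdots a_m{}_\lambda\, b_1\cdots b_n\}=\sum_{i,j}\Big([a_{i\,\lambda+\partial}\, b_j]_\to \prod_{k\neq i}a_k\Big)\prod_{l\neq j}b_l ,
\]
where $\partial$ acts on the factors $\prod_{k\ne i}a_k$. This formula is symmetric in the $a$'s and in the $b$'s, so it is well defined on $\Sym(L)$. By construction it satisfies both Leibniz rules and restricts to $[\cdot_\lambda\cdot]$ on $L$.

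\textbf{Existence: the vertex Lie axioms.} Sesquilinearity on generators holds by hypothesis. It propagates to monomials because $\partial$ acts as a derivation of the product and by the Leibniz rules. For skewsymmetry and Jacobi, use the following induction principle. Suppose an identity is multilinear in its arguments and is compatible with both Leibniz rules. If it holds when one argument is a generator, it holds when that argument is a product $bc$. Induct on total polynomial degree, reducing to the case where all arguments lie in $L$, where the identities are the vertex Lie axioms of $L$.

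For the Jacobi identity $\{a_\lambda\{b_\mu c\}\}-\{b_\mu\{a_\lambda c\}\}=\{\{a_\lambda b\}_{\lambda+\mu}c\}$, proceed slot by slot.
\begin{itemize}
\item \emph{Third slot.} Replacing $c$ by $cd$, both sides behave as derivations of the product, so the extra terms cancel.
\item \emph{First and second slots.} Use skewsymmetry, proved first, to transport the statement to the third slot.
\end{itemize}

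An alternative route avoids the bookkeeping. Transport to the operadic setting: $L$ is a $\Lie^*$ algebra on $\IA^1$, and $\Sym(L)$ is its coisson envelope. That is, $\Sym$ is left adjoint to the forgetful functor $\Alg_{\Lie}(\DD(X)^c)\to\Alg_{\Lie}(\DD(X)^*)$, as noted after Proposition \ref{CoisLieprop}. Then translate back via the equivalence of weakly translation invariant coisson algebras with vertex Poisson algebras.

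\textbf{Main obstacle.} I expect the Jacobi identity on products with shifted spectral parameters to be the hardest step. There one must check carefully that the $\partial$-shifts $\lambda+\partial$ in the left Leibniz rule combine consistently. I would first verify the case where the first argument is a product $ab$ and the other two are generators, using sesquilinearity to move the $\partial$'s; the general case then follows by the induction above.
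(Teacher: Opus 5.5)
The paper gives no proof of this proposition. It is stated as a standard fact, and the only supporting remark anywhere is the one-line assertion after Proposition \ref{CoisLieprop} that $\Sym$, with its Kirillov--Kostant--Souriau bracket, is left adjoint to the forgetful functor $\Alg_{\Lie}(\DD(X)^c)\to\Alg_{\Lie}(\DD(X)^*)$.

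Your direct argument is correct and is the standard one: the master-formula argument of Barakat--De Sole--Kac. Some points in its favour:
\begin{itemize}
\item Defining the bracket multilinearly on $L^{\otimes m}\otimes L^{\otimes n}$ and checking $\Ss_m\times\Ss_n$-symmetry makes it well defined on $\Sym(L)$. This holds even when $L$ is not free over $\ik[\partial]$, e.g.\ for central elements killed by $\partial$.
\item The right Leibniz rule is immediate, and the left one follows from the binomial theorem.
\item Uniqueness via generation plus the two Leibniz rules is fine.
\end{itemize}

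Two steps deserve an explicit line rather than the phrase ``compatible with both Leibniz rules''.
\begin{itemize}
\item For skewsymmetry, the identity you need is $\{bc_{-\lambda-\partial}A\}=\{b_{-\lambda-\partial}A\}c+\{c_{-\lambda-\partial}A\}b$. It follows from the left Leibniz rule by the binomial theorem, and gives the derivation property of $\{A_\lambda B\}+\{B_{-\lambda-\partial}A\}$ in $B$.
\item For Jacobi, the transport of the first two slots to the third is the cyclically symmetric form of the Jacobi identity. That form is valid only once skewsymmetry and sesquilinearity are known on all of $\Sym(L)$, so your ordering (sesquilinearity, then skewsymmetry, then Jacobi) is essential.
\end{itemize}

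Compared with the paper's adjunction remark, your route is the one that actually carries the content. The existence of a left adjoint given by $\Sym$ with this bracket is exactly your existence statement, transported to D-modules. So the ``alternative route'' you mention does not avoid the bookkeeping; it presupposes it. Likewise, uniqueness of the bracket on the fixed commutative algebra $\Sym(L)$ comes from generation and the Leibniz rules, not from the universal property. What the operadic formulation buys is conceptual packaging and validity for coisson algebras on an arbitrary curve $X$, not a shortcut.
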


\subsection{Chiral algebras on the line and vertex algebras}\label{ssec:weakly trans inv chiral alg}
Note that the equivalence $\DD(\IA^1)^{\IG_a,w} \simeq \ik[\partial]\textup{-Mod}$ lifts to an equivalence of symmetric monoidal categories
\[
\DD(\IA^1)^!\simeq \ik[\partial]\textup{-Mod}^!
\]
The following proposition follows immediately from unravelling the definitions.
\begin{prop}
A weakly translation invariant $\Comm^!$ algebra on $\IA^1$, \ie, a commutative algebra internal to $\DD(\IA^1)^{\IG_a,w,!}$ is equivalent to a commutative algebra in $\ik$-Mod, equipped with a derivation.
\end{prop}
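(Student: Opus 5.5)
The plan is to transport the statement through the symmetric monoidal equivalence $\DD(\IA^1)^{\IG_a,w}\simeq \ik[\partial]\Mod$ recalled above. We must check that it is compatible with the $!$-pseudo-tensor structures, and then compute commutative algebra objects on the $\ik[\partial]\Mod^!$ side using the earlier proposition characterising commutative algebras internal to $\ik[\partial]\Mod^!$.

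First, I would make precise what a weakly translation invariant $\Comm^!$ algebra is. It is an object $A\in \DD(\IA^1)^{\IG_a,w}$ with multiplication $m:A\otimes^! A\to A$ (and unit, if unital) that is $\IG_a$-weakly equivariant and satisfies associativity and commutativity. Equivalently, it is a commutative algebra in the symmetric monoidal category $(\DD(\IA^1)^{\IG_a,w},\otimes^!)$.

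Second, I would check that the functor $V\mapsto V\otimes \OO_{\IA^1}$ is symmetric monoidal. For $V,W\in\ik[\partial]\Mod$, the left $\otimes^!$ is $\otimes_{\OO}$ with Leibniz action. This gives
\[(V\otimes\OO)\otimes_{\OO}(W\otimes\OO)\cong (V\otimes W)\otimes \OO ,\]
and $\partial_z$ acts as $(\partial-\partial_z)\otimes 1+1\otimes(\partial-\partial_z)$. This equals $\Delta(\partial)-\partial_z$, where $\Delta(\partial)=\partial\otimes1+1\otimes\partial$ is exactly the coproduct used on $\ik[\partial]\Mod$. The unit $\OO_{\IA^1}$ corresponds to $\ik$ with $\partial=0$. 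The symmetry isomorphisms obviously match. One has to be careful that we are using the left heart $\DD^\ell$ via $(\cdot)^{\ell,\heartsuit}$, so that the $\otimes^!$ in the heart is the naive $\otimes_\OO$; this is where the shift conventions enter, but only up to a canonical identification.

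Third, a symmetric monoidal equivalence induces an equivalence of categories of commutative algebra objects. Combined with the earlier proposition, a commutative algebra in $\ik[\partial]\Mod^!$ is a commutative $\ik$-algebra $V$ with $\partial$ acting by derivations. The derivation property is precisely the statement that $m:V\otimes V\to V$ intertwines $\Delta(\partial)$ with $\partial$. The inverse functor takes the fibre at $0$ and sends $A$ to $A_0$, with product induced by restricting $m$ to the fibre.

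The main obstacle is the second step. There we must verify that the equivalence is genuinely monoidal on the weakly equivariant category. In particular, morphisms in $\DD(\IA^1)^{\IG_a,w}$ must correspond exactly to $\ik[\partial]$-linear maps, so that multiplication maps correspond bijectively. I would handle this by noting that a weakly equivariant D-module map $V\otimes\OO\to W\otimes\OO$ is determined by its restriction to the fibre at $0$. Compatibility with $\partial_z$ and with the $\IG_a$-action then forces that restriction to commute with $\partial$.
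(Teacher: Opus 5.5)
Your proposal is correct and follows the paper's argument. The paper notes that the equivalence $\DD(\IA^1)^{\IG_a,w}\simeq \ik[\partial]\Mod$ is symmetric monoidal for $\otimes^!$, and then says the result follows from unravelling definitions, via the earlier characterization of commutative algebras in $\ik[\partial]\Mod^!$ as commutative algebras with $\partial$ acting by derivations. Your explicit check that $\partial_z$ acts on the tensor product as $\Delta(\partial)-\partial_z$ is the monoidality verification the paper leaves implicit.
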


For the rest of this subsection, we fix a finite set $I\in \fset$ and an $I$-family $L_i\in \ik[\partial]\Mod$, for $i\in I$, with corresponding $I$-family of weakly translation invariant D-modules $\widetilde{L_i}\in\DD(X)^{\IG_a,w}$. Fix also a $M\in \ik[\del]\Mod$ with corresponding D-module $\widetilde{M}\in \DD(X)^{\IG_a,w}$.

Let $\Delta^{(I)}:\IA^1\rightarrow \IA^I$ denote the closed embedding of the smallest diagonal and let $j^{(I)}:U^{(I)} \rightarrow \IA^I$ denote the open embedding of the complement of the union of all diagonals.

\subsubsection{The star pseudotensor structure} 
We start with defining the star pseudo-tensor structure. The global sections of $\Delta^{(I)}_*\widetilde{M}$ can be computed in terms of $M$ as
\[
\Gamma(\IA^I, \Delta^{(I)}_* \widetilde{M}) \cong M\tensor{\ik[\partial]} \ik[z,\lambda_i|i\in I]\in \mc{D}(\IA^I)\textup{-Mod}~,
\]
where $\partial_{z_i}$ acts as $-\lambda_i$ and $z_i$ acts as $z - \partial_{\lambda_i}$. For $L_i\in \ik[\partial]$-Mod with associated D-modules $\widetilde{L}_i\in \DD(X)^{\IG_a,w}$, we have the isomorphisms
\begin{align*}
	\Hom_{\D(\IA^I)}\big(\Gamma(\IA^I, \boxt_{i\in I}\widetilde{L}_i),\Gamma(\IA^I,\Delta_* \widetilde{M}))\big) 
	&\cong \Hom_{\D(\IA^I)}\big(\tensor{i\in I} L_i[z_i],\, M\tensor{\ik[\partial]} \ik[z,\lambda_i|i\in I]\big)~,\\
	&\cong \Hom_{\ik[\partial_i|i\in I]}\big(\tensor{i\in I} L_i, M\tensor{\ik[\partial]} \ik[z,\lambda_i|i\in I]\big)~.
\end{align*}
where $\partial_i$ acts on $L_i$ and $\partial_i$ acts as $\partial_{z_i}$ on the codomain. An element $t\in\IG_a$ acts on the codomain by $z\mapsto z+t$. Thus, the translation-invariant morphisms are precisely those whose images land in the subspace $M\otimes_{\ik[\partial]}\ik[\lambda_i|i\in I]\subset M\otimes_{\ik[\del]}\ik[z,\lambda_i |i\in I]$.

\begin{defn}
	The star pseudo-tensor structure on $\ik[\partial]$-Mod, denoted as $\ik[\partial]\textup{-Mod}^*$, is defined by having multilinear operations
	\[
	P^{*,\IG_a}(\{L_i\}_{i\in I}, M) = \Hom_{\ik[\partial_i|i\in I]}(\tensor{i\in I} L_i, M\tensor{\ik[\partial]}\ik[\lambda_i| i\in I])~.
	\]
	We forgo defining the composition of operations but note that in the case of a single colour the compositions are defined in \cite{BDSHK19}.
\end{defn}

\begin{prop}[\cite{BD1,BDSHK19}]\label{prop:Lie star is vertex Lie}
	A vertex Lie algebra is a Lie algebra internal to $\ik[\partial]\textup{-Mod}^*$, or equivalently a weakly translation invariant $\Lie^*$ algebra on $\IA^1$.
\end{prop}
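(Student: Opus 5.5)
The proof unpacks what a binary operation in $\ik[\partial]\Mod^*$ is and matches the Lie operad relations with the axioms of Definition \ref{defn:vertex Lie algebra}. There are two equivalences to establish. The first is that a Lie algebra internal to $\ik[\partial]\Mod^*$ is the same thing as a vertex Lie algebra. The second is that Lie algebras internal to $\ik[\partial]\Mod^*$ are the same as weakly translation invariant $\Lie^*$ algebras on $\IA^1$. For the second, we use the computation just given: $\Gamma(\IA^I,\Delta^{(I)}_*\widetilde M)\cong M\otimes_{\ik[\partial]}\ik[z,\lambda_i]$, and the translation-invariant morphisms are exactly those landing in $M\otimes_{\ik[\partial]}\ik[\lambda_i]$. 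Hence the weakly translation invariant part of $P^*_I(\{\widetilde L_i\},\widetilde M)$ is identified with $P^{*,\IG_a}(\{L_i\},M)$. We then check that this identification respects the composition defined via $\Delta^{12,3}_*$, as in \cite{BD1}, so that it is an isomorphism of coloured operads. Algebras over $\Lie$ then correspond automatically.

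For the first equivalence, a binary operation $b\in P^{*,\IG_a}(\{L,L\},L)$ is a map $L\otimes L\to L\otimes_{\ik[\partial]}\ik[\lambda_1,\lambda_2]$ that is $\ik[\partial_1,\partial_2]$-linear, with $\partial_i$ acting on the target as $-\lambda_i$ (coming from $\partial_{z_i}\mapsto -\lambda_i$). Since $\partial$ acts on $M\otimes_{\ik[\partial]}\ik[\lambda_1,\lambda_2]$ through the total $-(\lambda_1+\lambda_2)$, the identification $M\otimes_{\ik[\partial]}\ik[\lambda_1,\lambda_2]\cong M[\lambda_1]$ given by setting $\lambda_2=-\lambda_1-\partial$ turns $b$ into a $\lambda$-bracket $[a_\lambda b]$ with $\lambda=\lambda_1$, up to a sign convention. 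The two linearity conditions then become the sesquilinearity relations $[\partial a_\lambda b]=-\lambda[a_\lambda b]$ and $[a_\lambda\partial b]=(\lambda+\partial)[a_\lambda b]$. Next, the $\Ss_2$-action swaps $\lambda_1\leftrightarrow\lambda_2$, so the sign-representation condition becomes skewsymmetry $[a_\lambda b]=-[b_{-\lambda-\partial}a]$.

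The remaining, and I expect hardest, step is the Jacobi identity. We must compute the ternary compositions $b\circ(b\otimes\id)$ and $b\circ(\id\otimes b)$ in $P^{*,\IG_a}(\{L,L,L\},L)$, following the composition formulas of \cite{BDSHK19}, and show that under $L\otimes_{\ik[\partial]}\ik[\lambda_1,\lambda_2,\lambda_3]\cong L[\lambda,\mu]$ they become $[a_\lambda[b_\mu c]]$, $[[a_\lambda b]_{\lambda+\mu}c]$, and the $\sigma_{12}$-twisted term $[b_\mu[a_\lambda c]]$. The delicate point is substituting the inner output variable into the outer operation: the inner bracket's output carries the variable $\lambda_1+\lambda_2$, which is where the shift $\lambda+\mu$ comes from. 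Once that is done, the relation $\mathrm{Jac}(b)=0$ from \ref{Lieop} is literally the vertex Lie Jacobi identity. Conversely, given a $\lambda$-bracket, the same dictionary produces the operation $b$, so the correspondence is bijective and functorial, which completes the proof.
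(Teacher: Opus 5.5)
Your proposal is correct and follows essentially the paper's route: you identify binary $*$-operations with sesquilinear $\lambda$-brackets via $\lambda_1=\lambda$, $\lambda_2=-\lambda-\partial$, obtain skewsymmetry from the sign representation, and match the operadic Jacobi relation with the vertex Lie Jacobi identity. The only differences are that the paper cites Proposition 5.1 of \cite{BDSHK19} for the Jacobi step instead of computing the ternary compositions by hand, and treats the equivalence with weakly translation invariant $\Lie^*$ algebras as immediate from the preceding identification of translation-invariant morphisms.
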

\begin{proof}
 The subspace of binary operations on a single colour $L$ is
 \[
 P_2^*(\{L,L\},L) = \Hom_{\ik[\partial]\otimes\ik[\partial]}\big(L\otimes L, L[\lambda]\big)~,
 \]
 where we identify $L\tensor{\ik[\partial]}\ik[\lambda_1,\lambda_2]$ with $L[\lambda]$ by $\lambda_1=\lambda$ and $\lambda_2= -\partial-\lambda$. A Lie algebra structure on $L$ is fixed by choosing some $b\in P_2^*(\{L,L\},L)$ satisfying the Jacobi relation.

 One recognises $P_2^*(\{L,L\},L)$ as the space of $\lambda$-brackets satisfying conditions $(i)$ and $(ii)$ of Definition \ref{defn:vertex Lie algebra}, \ie, skewsymmetry and sesquilinearity. Condition $(iii)$ being equivalent to $b\in P_2^*(\{L,L\},L)$ satisfying the Jacobi relation is the content of Proposition 5.1 of \cite{BDSHK19}.
\end{proof}

\subsubsection{The chiral pseudotensor structure} 
Let $\OO_I^\star= \OO(U^{(I)})$ be the co-ordinate ring of the complement of the union of all diagonals in $\IA^I$, then $\OO_I^\star\in \DD(\IA^I)$.

Then,
\[
\Gamma(\IA^I,j^{(I)}_*j^{(I),*}(\boxtimes_{i\in I} \widetilde{L}_i)) \cong\tensor{i\in I} L_i\otimes \OO_I^{\star}\in \mc{D}(\IA^I)\textup{-Mod}~,
\]
with $z_i$ acting on the right on $\OO_n^\star$ by multiplication and $\partial_{z_i}$ acting as $\partial^{(i)} + \partial_{z_i}$, where $\partial^{(i)}$ acts on $L_i$. Therefore, we have an isomorphism
\[
 \Hom_{D(\IA^I)}(j^{(I)}_*j^{(I)*}(\boxtimes_{i\in I} L),\Delta^{(I)}_*M) \cong \Hom_{{\mc D}(\IA^n)}\big(\tensor{i\in I} L_i\otimes \OO_I^\star, M\tensor{\ik[\partial]}\ik[z,\lambda_i|i\in I]\big)~.
\]

\begin{prop}[\cite{BDSHK19}]
The subspace of translation invariant morphisms can be identified with
\[
 \Hom_{{\mc D}(\IA^I)^{\IG_a}}\big(\tensor{i\in I} L_i\otimes \OO_I^{\star,\IG_a}, M\tensor{\ik[\partial]}\ik[\lambda_i|i\in I]\big)~,
\]
\end{prop}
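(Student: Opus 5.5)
The proof will mirror the identification of translation invariant $*$-operations carried out just above for the star pseudo-tensor structure. First I would record the $\IG_a$-action explicitly on both sides of
\[
\Hom_{\mc D(\IA^I)}\big(\tensor{i\in I} L_i\otimes \OO_I^\star,\; M\tensor{\ik[\partial]}\ik[z,\lambda_i|i\in I]\big).
\]
An element $t\in\IG_a$ acts on the source through $\OO_I^\star$, by $z_i\mapsto z_i+t$ for all $i$ simultaneously. It acts on the target by $z\mapsto z+t$, exactly as in the star case. A morphism is translation invariant precisely when it is $\IG_a$-equivariant for these actions.

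Next I would decompose the source. Since $\IG_a$ acts freely on $U^{(I)}$ by simultaneous translation, we have $U^{(I)}\cong \IA^1\times (U^{(I)}/\IG_a)$. Choosing a base index $i_0\in I$, the quotient coordinate is $z_{i_0}$ and the slice is cut out by the differences $z_i-z_{i_0}$. This gives
\[
\OO_I^\star\cong \ik[z_{i_0}]\otimes \OO_I^{\star,\IG_a},
\]
where $\OO_I^{\star,\IG_a}$ is generated by the differences $z_i-z_j$ and their inverses for $i\neq j$. In particular the translation invariants form a $\mc D(\IA^I)^{\IG_a}$-module: the operators $\partial_{z_i}$ commute with translation and preserve the invariants, while multiplication by $z_i-z_j$ does the same.

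For the main step, I would show that an equivariant $\mc D$-module map is determined by its restriction to $\tensor{i\in I} L_i\otimes \OO_I^{\star,\IG_a}$. The restriction necessarily lands in the invariant part $M\otimes_{\ik[\partial]}\ik[\lambda_i|i\in I]$ of the target. Conversely, any $\mc D(\IA^I)^{\IG_a}$-linear map on the invariant parts extends uniquely to a $\mc D(\IA^I)$-linear map. To extend, write a general section as a polynomial in $z_{i_0}$ with invariant coefficients. Then impose linearity in $z_{i_0}$, using that $z_{i_0}$ acts on the target as $z-\partial_{\lambda_{i_0}}$. Equivariance pins down the coefficient of $z$, which makes the extension unique.

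The main obstacle is verifying that this extension is well defined and genuinely $\mc D(\IA^I)$-linear. One must check that the $\partial_{z_i}$ actions, which on the source mix the $\partial^{(i)}$ acting on $L_i$ with differentiation of $\OO_I^\star$, commute correctly with the chosen extension in $z_{i_0}$. I would handle this by computing the commutator $[\partial_{z_i},z_{i_0}]=\delta_{i,i_0}$ on both sides. On the target, this amounts to the identity $[-\lambda_i,\,z-\partial_{\lambda_{i_0}}]=\delta_{i,i_0}$. Agreement of these commutators reduces the check to the generators, following the argument of \cite{BDSHK19}.
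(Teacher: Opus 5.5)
Your strategy (split $\OO_I^\star\cong\ik[z_{i_0}]\otimes\OO_I^{\star,\IG_a}$, restrict an equivariant map to the invariant part, and recover it by $z_{i_0}$-linearity) is a legitimate direct proof of what the paper only cites as Lemma A.1 of \cite{BDSHK19}. However, the check you single out as the main obstacle is wrong as written. In fact $[-\lambda_i,\,z-\partial_{\lambda_{i_0}}]=[\lambda_i,\partial_{\lambda_{i_0}}]=-\delta_{i,i_0}$, not $+\delta_{i,i_0}$.

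Take the source action as stated in the text, $\partial_{z_i}\mapsto\partial^{(i)}+\partial_{z_i}$ with $z_i$ acting by multiplication. Then $[\partial_{z_i},z_{i_0}]=+\delta_{i,i_0}$ on the source, so the two sides disagree and your extension is not $\partial_{z_{i_0}}$-linear. Worse, any map intertwining both $\partial_{z_{i_0}}$ and $z_{i_0}$ would satisfy $\Phi=-\Phi$, so the Hom space would be zero. The cause is that the source formula is a left-module formula, while the target formulas ($\partial_{z_i}\mapsto-\lambda_i$, $z_i\mapsto z-\partial_{\lambda_i}$) are right-module ones. You must fix conventions before running the computation. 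Give $\bigotimes_{i\in I}L_i\otimes\OO_I^\star$ the right-module structure $\partial_{z_i}\mapsto\partial^{(i)}-\partial_{z_i}$, consistent with the functor $V\mapsto V\otimes\OO_{\IA^1}$ (with $\partial_z$ acting as $\partial-\partial_z$) at the start of Section~\ref{sec:vertex algebras}. Then both commutators equal $-\delta_{i,i_0}$ and your argument goes through.

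There are also smaller points to tidy up. Uniqueness of the extension comes from $z_{i_0}$-linearity alone; what equivariance buys is that the restriction lands in $M\otimes_{\ik[\partial]}\ik[\lambda_i|i\in I]$. For surjectivity you must also show the extended map is translation equivariant, not merely $\mc D(\IA^I)$-linear. This holds because translation acts by $\partial/\partial z_{i_0}$ on the $\ik[z_{i_0}]$ factor and by $\partial_z$ on the target, and $[\partial_z,\,z-\partial_{\lambda_{i_0}}]=1$. Linearity in $z_i$ for $i\neq i_0$ should be written out as well. Use $z_i=z_{i_0}+(z_i-z_{i_0})$, where $z_i-z_{i_0}$ acts on the invariant target as $\partial_{\lambda_{i_0}}-\partial_{\lambda_i}$; this commutes with $z-\partial_{\lambda_{i_0}}$ and sums with it to $z-\partial_{\lambda_i}$.
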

\begin{proof}
	This is Lemma A.1 of \cite{BDSHK19}.
\end{proof}

\begin{defn}
	The chiral pseudo-tensor structure on $\ik[\partial]$-Mod, denoted by $\ik[\partial]\textup{-Mod}^\ch$, is defined by having space of multilinear operations
	\[
	P^{ch}_I(\{L_i\}_{i\in I}, M) = \Hom_{{\mc D}(\IA^I)^{\IG_a}}\big(\tensor{i\in I} L_i\otimes \OO_I^{\star,\IG_a}, M\tensor{\ik[\partial]}\ik[\lambda_i|i\in I]\big)~.
	\]
\end{defn}

\begin{prop}[\cite{BD1,BDSHK19}]
	A vertex algebra is a Lie algebra internal to the operad $\ik[\partial]\textup{-Mod}^{\rm ch}$.
\end{prop}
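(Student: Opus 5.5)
The plan is to imitate the proof of Proposition \ref{prop:Lie star is vertex Lie}, using the translation-invariant description of $P^{\ch}_I$ established just above (Lemma A.1 of \cite{BDSHK19}). I work one arity at a time.

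\textbf{Arity one and two.} Arity one is immediate, since $\OO_1^{\star,\IG_a}=\ik$ and the operations are $\ik[\partial]$-module maps. For arity two, I identify $\OO_2^{\star,\IG_a}=\ik[(z_1-z_2)^{\pm1}]$. A $\mathcal D(\IA^2)^{\IG_a}$-linear map
\[ \mu: L\otimes L\otimes \ik[(z_1-z_2)^{\pm 1}] \to L\otimes_{\ik[\partial]}\ik[\lambda_1,\lambda_2]\cong L[\lambda] \]
is determined by its values on $a\otimes b\otimes (z_1-z_2)^{-1}$. Indeed, multiplication by $z_1-z_2$ acts on the target as $-\partial_{\lambda_1}+\partial_{\lambda_2}$, which after setting $\lambda_1=\lambda$ and $\lambda_2=-\lambda-\partial$ is essentially $-\frac{d}{d\lambda}$. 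So the value on $(z_1-z_2)^{-1}$ is an integral of the values on the polynomial part. I therefore set
\[ I_\lambda(a,b)=\mu\big(a\otimes b\otimes (z_1-z_2)^{-1}\big). \]
Under this definition, equivariance for $\partial_{z_i}$ becomes sesquilinearity (i). In particular, the condition that $\frac{d}{d\lambda}I_\lambda$ kills the non-polynomial part encodes $\frac{d}{d\lambda}I_\lambda(\partial a,b)=\lambda\frac{d}{d\lambda}I_\lambda(a,b)$. Skew-symmetry of $\mu$ under $\Ss_2$, which sends $(z_1-z_2)^{-1}$ to minus itself and swaps $\lambda_1$ and $\lambda_2$, becomes the skew-symmetry (ii) $I_\lambda(a,b)=I_{-\lambda-\partial}(b,a)$. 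This is exactly the comparison of \cite{KacDeSole06} and \cite{BDSHK19}: binary chiral operations correspond to integral $\lambda$-brackets satisfying (i) and (ii).

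\textbf{Arity three.} The remaining step, and the main obstacle, is to show that the Jacobi identity for $\mu$ in $P_3^{\ch}$ is equivalent to (iii). I compute $\mu\circ(\mu\otimes \id)$ using the composition of \S\ref{ssec:chiral algebras}, transported through the translation-invariant description. I evaluate it on the generator
\[ a\otimes b\otimes c\otimes (z_1-z_2)^{-1}(z_1-z_3)^{-1}(z_2-z_3)^{-1}, \]
together with its $\Ss_3$-translates.

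The difficulty is that $\OO_3^{\star,\IG_a}$ is not generated by these alone. Partial fraction identities relate the three products of pairs of the factors $(z_i-z_j)^{-1}$, and the composition requires expanding, say, $(z_1-z_3)^{-1}$ near $z_1=z_2$. I would handle this by writing the three terms of the Jacobi relation via $j^{12,3}$-type expansions. Each term then becomes an iterated $I$: for example, $I_{\lambda+\mu}(I_\lambda(a,b),c)$, with $\lambda,\mu$ dual to $z_1,z_2$. The identity $\frac{1}{(z_1-z_3)(z_2-z_3)}=\frac{1}{z_1-z_2}\big(\frac{1}{z_2-z_3}-\frac{1}{z_1-z_3}\big)$ should then match the three terms with the three terms of (iii).

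I expect this bookkeeping to coincide with Proposition 5.1 and Appendix A of \cite{BDSHK19}, which I would cite for the detailed verification rather than reproduce. Finally, the unit condition (\ref{unchdiagram}) translates to $I_\lambda(a,\ket{0})=a$ via the residue map $j_*j^*(A\boxtimes\omega)\to\Delta_*A$, which sends $(z_1-z_2)^{-1}$ to $1$.
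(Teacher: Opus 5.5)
Your proposal is essentially the paper's proof. You identify binary chiral operations with $\mathcal D(\IA^2)^{\IG_a}$-maps $L\otimes L\otimes\ik[(z_1-z_2)^{\pm1}]\to L[\lambda]$, set $I_\lambda(a,b)=\mu(a\otimes b\otimes(z_1-z_2)^{-1})$, and defer the equivalence of the Jacobi relation with the integral $\lambda$-bracket axioms to \cite{BDSHK19}; your arity-two treatment of sesquilinearity and skew-symmetry just makes explicit what the paper leaves implicit.

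Two small corrections. First, the determination by the value on $(z_1-z_2)^{-1}$ comes from multiplication by $z_1-z_2$ (nonnegative powers) together with $\partial_{z_i}$-equivariance (higher poles), not from your ``integral'' remark, which runs in the wrong direction. Second, the result to cite is Theorem 6.12 of \cite{BDSHK19} (the chiral case); Proposition 5.1 is the one the paper uses for the $\Lie^*$/vertex Lie statement.
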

\begin{proof}
	The subspace of binary operations on a single colour $L\in\ik[\del]\Mod$ is 
	\[
	P^{\ch}_2(\{L,L\},L) = \Hom_{\mc{D}(\IA^2)^{\IG_a}}(L\otimes L \otimes \ik[(z_1-z_2)^{\pm1}],L[\lambda])~,
	\]
	where we identify $\lambda_1=\lambda$ and $\lambda_2 = - \partial-\lambda$. Any such morphism is fully determined by the image of $L\otimes L \otimes (z_1-z_2)^{-1}$. Let $\mu\in P_2^{\ch}(\{L,L\},L)$; for $l_1,l_2\in L$, define a putative integral $\lambda$-bracket
	\[
	I_\lambda(l_1,l_2) = \mu(l_1,l_2,(z_1-z_2)^{-1})~.
	\]
	That $I_\lambda$ satisfies the axioms of an integral $\lambda$-bracket if and only if $\mu$ satisfies the Jacobi relation is, essentially, the content of Theorem $6.12$ of \cite{BDSHK19}.
\end{proof}

\subsubsection{The classical pseudotensor structure}
Recall that a vertex Poisson algebra is a vertex Lie algebra structure on a commutative algebra with derivation---satisfying a distributivity law. In light of the preceding discussion, we can recast this as a weakly translation invariant $\Lie^*$ structure on a weakly translation invariant $\Comm^!$ algebra on $\IA^1$.

\begin{prop}
	A vertex Poisson algebra is a weakly translation invariant coisson algebra on $\IA^1$.
\end{prop}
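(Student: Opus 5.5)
The plan is to combine Proposition \ref{CoisLieprop}, which identifies coisson algebras on $\IA^1$ with Lie algebras internal to $\DD(\IA^1)^c$, with the translation-invariant dictionaries already established: weakly translation invariant $\Comm^!$ algebras are commutative algebras with a derivation, and Proposition \ref{prop:Lie star is vertex Lie} identifies weakly translation invariant $\Lie^*$ structures with $\lambda$-brackets satisfying skewsymmetry, sesquilinearity and Jacobi. Via \eqref{cl2arydecompeqn}, a coisson structure on $\widetilde V$ consists of a commutative $!$-product together with a $\Lie^*$ bracket $\{\cdot,\cdot\}\in P_2^*(\{\widetilde V,\widetilde V\},\widetilde V)$, such that the adjoint action is by derivations in the sense of Definition \ref{defn:Lie action by derivations}. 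The first two pieces of data translate directly: the $!$-product becomes the commutative product on $V$ with $\partial$ a derivation, and the bracket becomes a vertex Lie algebra structure on $V$.

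It remains to match the derivation condition with the two Leibniz rules. Definition \ref{defn:Lie action by derivations} asks that
\[
\{\cdot,\cdot\}\circ(\id\boxtimes m) = \Delta_*m\circ(\id\otimes\{\cdot,\cdot\}+\sigma(\{\cdot,\cdot\}\otimes\id))
\]
as maps $\widetilde V\boxtimes(\widetilde V\otimes^!\widetilde V)\to\Delta_*\widetilde V$. I will pass to fibres at zero using the description $\Gamma(\IA^2,\Delta_*\widetilde M)\cong M\otimes_{\ik[\partial]}\ik[\lambda_1,\lambda_2]$ with $\lambda_1=\lambda$, $\lambda_2=-\lambda-\partial$. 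Under this identification the diagram becomes exactly $[a_\lambda bc]=[a_\lambda b]c+[a_\lambda c]b$. The key point to verify is that the module action $\widetilde V\otimes^!\Delta_*\widetilde V\to\Delta_*\widetilde V$ used in the bottom row is just multiplication in the $M$-factor. This holds because the $!$-tensor with a module pulled back along the second factor acts in the $M$ slot and does not shift $\lambda$.

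The left Leibniz rule does not come from a separate axiom. I will deduce it from the right Leibniz rule together with skewsymmetry, as in the standard vertex Poisson argument:
\[
[ab_\lambda c]=-[c_{-\lambda-\partial}ab]=-[c_{-\lambda-\partial}a]b-[c_{-\lambda-\partial}b]a,
\]
and then skewsymmetry again rewrites each term as $[a_{\lambda+\partial}c]_\to b$ and $[b_{\lambda+\partial}c]_\to a$. Conversely, a vertex Poisson algebra satisfies the right Leibniz rule, hence the coisson derivation condition. Both directions respect morphisms, which gives the equivalence of categories.

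The main obstacle is the bookkeeping in the second step. One has to track how $\lambda_2=-\lambda-\partial$ interacts with the product when the $\partial$ in $\lambda_2$ lands on a product $bc$ and must be distributed. This is the source of the arrow notation $[\cdot_{\lambda+\partial}\cdot]_\to$. I would first check the identification on elements $a\otimes b\otimes c$ with $\lambda$ kept generic, and rely on sesquilinearity from Proposition \ref{prop:Lie star is vertex Lie} to justify moving $\partial$ past the bracket.
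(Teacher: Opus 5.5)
Your proposal is correct and follows the same route as the paper. The paper justifies the statement only by the remark preceding it: combine the identification of weakly translation invariant $\Comm^!$ algebras with commutative algebras with derivation, and of weakly translation invariant $\Lie^*$ algebras with vertex Lie algebras (Proposition \ref{prop:Lie star is vertex Lie}), and then match the derivation condition with the Leibniz rules. You supply the bookkeeping the paper omits, and both steps are sound: the derivation axiom of Definition \ref{defn:Lie action by derivations} becomes the right Leibniz rule once everything is written in terms of $\lambda=\lambda_1$, and the left Leibniz rule then follows by skewsymmetry.
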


As before let $Q(I)$ denote the set of equivalence relations on $I$ and we enumerate its elements by $(\pi_S,S)$, where $S= I/\sim$ and $\pi_S: I\rightarrow S$ is the projection.
\begin{defn}
	The classical pseudotensor structure on $\ik[\partial]\Mod$, denoted as $\ik[\partial]\Mod^c$ has spaces of multilinear operations
	\[
	P^c_I(\{L_i\}_{i\in I},M) = \bigoplus_{(\pi_S,S)\in Q(I)} P^c_I(\{L_i\}_{i\in I},M)_S
	\]
	where 
	\[
	P^c_I(\{L_i\}_{i\in I},M) = P_S^*(\{\otimes_{i\in I_s}^! L_i\}_{s\in S},M)\otimes \big(\otimes_{s\in S}\Lie(|I_s|)\big)~.
	\]
	with 
	\[
	P_S^*(\{\otimes_{i\in I_s}^! L_i \}_{s\in S},M) = \Hom_{\otimes_{s\in S}\ik[\partial]}\big(\otimes_{s\in S} (\otimes_{i\in I_s}^!L_i), M\tensor{\ik[\partial]}\ik[\lambda_s|s\in S]\big)~.
	\]
\end{defn}
\begin{prop}[\cite{BDSHK19}]
	A vertex Poisson algebra is a Lie algebra internal to the operad $\ik[\partial]\Mod^c$.
\end{prop}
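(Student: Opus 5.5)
The plan is to compare Lie algebra structures in $\ik[\partial]\Mod^c$ with vertex Poisson structures arity by arity, in the same way as the proofs of Proposition \ref{prop:Lie star is vertex Lie} and the chiral analogue. By Remark \ref{opsingcolrmk} together with the presentation of $\Lie$ by generators and relations from \ref{Lieop}, a Lie algebra structure on a single colour $V$ amounts to an element $\mu\in P^c_2(\{V,V\},V)$ in the sign representation of $\Ss_2$ whose Jacobiator in $P^c_3(\{V,V,V\},V)$ vanishes. So we need to analyse $P^c_2$ and $P^c_3$ and identify the composition maps there.

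\emph{Binary operations.} Equation \eqref{cl2arydecompeqn} gives
\[ P^c_2(\{V,V\},V) = \big(P^!_2(\{V,V\},V)\otimes \Lie(2)\big)\oplus P^*_2(\{V,V\},V). \]
Since $\Lie(2)$ is the sign representation, an alternating $\mu$ is a pair $(m,b)$. Here $m\in\Hom_{\ik[\partial]}(V\otimes V,V)$ is symmetric, i.e. a commutative product for which $\partial$ is a derivation, because the coproduct $\partial\mapsto\partial\otimes1+1\otimes\partial$ is used on $\ik[\partial]\Mod^!$. The other component $b$ is an alternating element of $P^*_2$, which by the proof of Proposition \ref{prop:Lie star is vertex Lie} is exactly a $\lambda$-bracket satisfying skewsymmetry and sesquilinearity.

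\emph{Ternary relation.} Next decompose $P^c_3$ by the auxiliary grading \eqref{auxgreqn} into the pieces $p=0,1,2$. These correspond to the partitions of $\{1,2,3\}$ into one block, into a pair plus a singleton, and into three singletons. The composition law of \cite{BD1} 1.4.27 respects the grading, since composition adds the numbers of $*$-operations. Expanding $\mathrm{Jac}(\mu)$ for $\mu=m+b$ therefore gives three homogeneous components, and they must vanish separately:
\begin{itemize}
\item In degree $0$, only compositions of $m$ with $m$ contribute. Here $P^!_3\otimes\Lie(3)$ with the $\Lie$ relation reduces to associativity of $m$, by the lemma identifying $\Alg_\Lie(\mc P\otimes^H\Lie)$ with $\Alg_\Comm(\mc P)$.
\item In degree $2$, only $b\circ b$ contributes. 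This is the Jacobi identity for $b$, by Proposition 5.1 of \cite{BDSHK19}.
\item In degree $1$, the mixed compositions $b\circ(m\otimes\id)$, $m\circ(b\otimes\id)$ and their permutations land in the summands indexed by the three partitions $\{\{i,j\},\{k\}\}$.
\end{itemize}

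The main work is the degree $1$ component. For a fixed partition, say $\{\{1,2\},\{3\}\}$, the summand is $P^*_2(\{V\otimes^!V,V\},V)\otimes\Lie(2)$. One must compute explicitly, in the $\lambda$-bracket model, the two composites that land there. The first, $b\circ(m\otimes\id)$, is $a\otimes b\otimes c\mapsto[ab_\lambda c]$. The second, $m\circ(b\otimes\id)$ with the relevant permutations, gives $[a_{\lambda+\partial}c]_\to b+[b_{\lambda+\partial}c]_\to a$. The shift $\lambda\mapsto\lambda+\partial$ arises from identifying $V\otimes_{\ik[\partial]}\ik[\lambda_1,\lambda_2]$ after pulling back along the partial diagonal, with $\lambda_{\{1,2\}}=\lambda_1+\lambda_2$. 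The vanishing of this component is then the left Leibniz rule. The partitions $\{\{2,3\},\{1\}\}$ and $\{\{1,3\},\{2\}\}$ are related to it by the $\Ss_3$-action and give the right Leibniz rule, which also follows from the left one by skewsymmetry.

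This is the obstacle I expect: carrying out the composition law of the classical operad concretely on $\ik[\partial]$-modules, including the sign from $\Lie(2)$ and the correct $\partial$-shift. I would attempt it by transporting the abstract statement of Proposition \ref{CoisLieprop}, via the equivalence $\DD(\IA^1)^{\IG_a,w}\simeq\ik[\partial]\Mod$ and the preceding proposition that vertex Poisson algebras are weakly translation invariant coisson algebras. If this transport is checked to be compatible with the composition maps, it reduces the explicit check to the computation above, carried out on translation-invariant sections.
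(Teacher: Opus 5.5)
Your proposal is correct and follows the paper's route: an alternating element of $P^c_2(\{V,V\},V)$ splits into a commutative product in $\ik[\partial]\Mod^!$ and a skewsymmetric, sesquilinear $\lambda$-bracket, and the Jacobi relation then gives the remaining vertex Poisson axioms. The paper simply cites Theorem 10.7 of \cite{BDSHK19} for the ternary step, whereas you unpack it via the auxiliary grading on $P^c_3$ (associativity in degree $0$, Leibniz in degree $1$, $\lambda$-Jacobi in degree $2$), leaving only the explicit degree-$1$ composition law to verify directly or transport from Proposition \ref{CoisLieprop} via the weakly translation invariant dictionary.
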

\begin{proof}
The subspace of binary operations on a single colour is 
\[
P^c_2(\{L,L\},L) = \Hom_{\ik[\partial]\otimes \ik[\partial]}(L\otimes L , L[\lambda]) \oplus\Hom_{\ik[\partial]}(L\otimes^!L,L)\otimes\Lie(2)
\]
and so the binary Lie bracket structure map internal to $P^c_2(\{L,L\},L)$ decomposes as a direct sum of a commutative multiplication in $\ik[\partial]\Mod^!$ and a Lie bracket internal to $\ik[\partial]\Mod^*$. Checking that the Jacobi relations imply the remaining conditions is the content of Theorem 10.7 of \cite{BDSHK19}. 
\end{proof}

\subsection{Chiral quantization}\label{ssec:chiral quantization intro}

Before formalizing the notion of chiral quantization in the language of deformation theory, we recall a more hands on definition of chiral quantization and review a few examples. First, however, we need to introduce $\hbar$-adic vertex algebras.


\begin{defn}
	A $\hbar$-adic (non-unital) vertex algebra is a pair $(V_\hhb,I_{\hhb,\lambda})$ where 
	\begin{enumerate}[(i)]
		\item $V_\hhb\in\ik[\partial]\Mod(\ik\fph\Mod)$, is such that the underlying $\ik[\![\hbar]\!]$-module is complete in the $\hbar$-adic topology and trivializable, \ie, there exists an isomorphism $V_\hhb \xrightarrow{\sim} V_0 \widehat{\otimes}\ik\fph$, where $V_0 = V\otimes_{\ik\fph}\ik_0$
		\item $I_{_\hhb,\lambda}$ is an $\hbar$-adic integral $\lambda$-bracket $V_\hhb\otimes_{\ik\fph}V_\hhb\rightarrow V_\hhb[\lambda]$,
	\end{enumerate}
	such that for each $m\in \IN$, the truncations $(V/\hbar^m , I_{\lambda}/\hbar^m)$ are vertex algebras over $\ik[\hbar]/\hbar^m$. 

	A \emph{unital} $\hbar$-adic vertex algebra is a $\hbar$-adic vertex algebra with a distinguished element $\ket{0}\in V_\hhb$ such that each truncation, modulo $\hbar^m$, is a unital vertex algebra over $\ik[\hbar]/\hbar^m$. 
\end{defn}

Consider the action of the multiplicative group on $\ik\fph$ which scales $\hbar$ with unit weight---we denote this distinguished multiplicative group by $\IG_\hbar$. 

\begin{lemma}\label{lem:filtered is a VOA}
	Let $V_\hhb$ be a $\hbar$-adic vertex algebra with an action of $\IG_\hbar$ by automorphisms. If the induced $\IG_\hbar$-grading on $V_\hhb$ is bounded below, the filtered $\ik[\partial]$-module 
	\[
	{\rm Fil}(V_\hhb) \coloneqq \big( V_\hhb\tensor{\ik[\![\hbar]\!]} \ik(\!(\hbar)\!)\big)^{\IG_\hbar}~,
	\]
	is a vertex algebra over $\ik$. 
\end{lemma}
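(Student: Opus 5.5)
The plan is to identify ${\rm Fil}(V_\hhb)$ concretely as a $\ik[\partial]$-module and then transport the integral $\lambda$-bracket to it, checking the axioms through the truncations. Since $\IG_\hbar$ acts by automorphisms, it commutes with $\partial$ and preserves $I_{\hhb,\lambda}$, where $\lambda$ has weight zero. It also acts on $\ik\fph$ with $\hbar$ of weight one.

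First, decompose $V_\hhb\otimes_{\ik\fph}\ik(\!(\hbar)\!)$ into weight spaces. The bounded-below hypothesis says the weights of $V_\hhb$ are $\geq N$ for some $N$. Inverting $\hbar$ shifts weights by $-1$, so the weight-zero part is the union
\[
{\rm Fil}(V_\hhb)=\bigcup_{k\ge 0}\hbar^{-k}V_\hhb^{(k)},
\]
where $V_\hhb^{(k)}$ is the weight-$k$ subspace. The subspaces $F_k=\hbar^{-k}V_\hhb^{(k)}$ give an increasing filtration, since multiplication by $\hbar$ maps $V^{(k)}$ into $V^{(k+1)}$. Each element is a finite expression: bounded-belowness ensures that a weight-zero element of the $\hbar$-adic completion has only finitely many negative powers of $\hbar$, so no convergence issues arise. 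Because $\partial$ commutes with $\IG_\hbar$, each $F_k$ is a $\ik[\partial]$-submodule.

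Second, define $I_\lambda$ on ${\rm Fil}$ by $\ik(\!(\hbar)\!)$-linear extension of $I_{\hhb,\lambda}$, followed by restriction. Take $a=\hbar^{-k}a'\in F_k$ and $b=\hbar^{-l}b'\in F_l$. Then $I_{\hhb,\lambda}(a',b')$ is a polynomial in $\lambda$ with coefficients of weight $k+l$, by equivariance. Hence $\hbar^{-k-l}I_{\hhb,\lambda}(a',b')$ lies in $F_{k+l}[\lambda]$, so $I_\lambda$ is well defined, and it is compatible with the filtration. The same reasoning shows that the vacuum, if present, is $\IG_\hbar$-fixed of weight zero and so lies in $F_0$.

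Third, verify sesquilinearity, skewsymmetry, and the Jacobi identity. Each is a polynomial identity in $\lambda$ (and $\mu$) between finitely many elements. Each holds for $I_{\hhb,\lambda}$ on $V_\hhb$ modulo $\hbar^m$ for every $m$, by hypothesis on the truncations. Completeness and the trivialization $V_\hhb\cong V_0\widehat\otimes\ik\fph$ imply that $V_\hhb$ is $\hbar$-torsion free and separated, so each identity holds exactly in $V_\hhb$. They then hold after $\ik(\!(\hbar)\!)$-linear extension, and hence on the weight-zero subspace after rescaling by $\hbar^{-k}$. Skewsymmetry involves $I_{-\lambda-\partial}$, which is preserved because $\partial$ has weight zero.

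The main obstacle is the first step: making precise that the weight-zero part of the localized completed module is the ascending union above. This is where boundedness below is used. Without it, a weight-zero element could involve infinitely many negative powers of $\hbar$ compensated by weights tending to $-\infty$, and the product of two such elements need not lie in the union. The argument will state this carefully using the trivialization and the fact that the $\IG_\hbar$-action is algebraic, i.e. decomposes $V_0$ into weight spaces.
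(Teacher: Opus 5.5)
Your proposal is correct and follows the paper's argument: bounded-belowness makes ${\rm Fil}(V_\hhb)$ a genuine filtered space of finite expressions, and equivariance of $I_{\hhb,\lambda}$ shows the bracket preserves it (the paper's ``coefficients are polynomial in $\hbar$''); the axioms are then inherited, a step the paper leaves implicit and you spell out via separatedness of $V_\hhb$. One small correction: boundedness below bounds the \emph{positive} powers of $\hbar$ in a weight-zero element (its terms are $v_m\hbar^m$ with $v_m$ of weight $-m$), whereas the negative powers are finite automatically once $\hbar$ is inverted.
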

\begin{proof}
Since the grading on $V_\hhb$ is bounded below, ${\rm Fil}(V_\hhb)$ is a filtered $\ik$-vector space, \ie, it contains no infinite linear combinations. Thus, to show that ${\rm Fil}(V_\hhb)$ is a vertex algebra all that remains is to show that the restriction of $I_{\hhb,\lambda}$ to ${\rm Fil}(V_\hhb)$ preserves it. 

For any homogeneous $v,w\in V_\hhb$, the coefficients of $\lambda$ in $I_{\lambda}(v,w)$ are polynomial in $\hbar$ since the grading is bounded below. Thus, the restriction of $I_{\hhb,\lambda}$ to ${\rm Fil}(V)$ is an integral $\lambda$-bracket.
\end{proof}

\begin{prop}\label{prop:Rees for hadic VOA}
	The filtered construction gives an equivalence of categories between graded $\hbar$-adic vertex algebras with bounded below grading and filtered vertex algebras over $\ik$:
	\begin{align*}
	{\rm Vert}_{\ik\fph}^{\gr,+} &\xrightarrow{\sim} {\rm Vert}_{\ik}^{\rm fil}~,\\
	V_\hhb &\mapsto {\rm Fil}(V)
	\end{align*}
\end{prop}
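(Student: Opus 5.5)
We prove the equivalence by constructing an explicit inverse, the Rees construction, and checking that both composites are naturally isomorphic to the identity. Lemma \ref{lem:filtered is a VOA} already shows that ${\rm Fil}(V_\hhb)$ is a vertex algebra over $\ik$. First we make its filtration explicit. Write $V_\hhb = \widehat{\bigoplus}_n V_\hhb^{(n)}$ for the $\IG_\hbar$-weight decomposition, which is bounded below by hypothesis. An element of $V_\hhb\otimes_{\ik\fph}\ik(\!(\hbar)\!)$ of weight zero is a finite sum $\sum_k \hbar^{-k} v_k$ with $v_k$ of weight $k$. So ${\rm Fil}(V_\hhb)$ is filtered by $F_p = \hbar^{-p} V_\hhb \cap {\rm Fil}(V_\hhb)$. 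This filtration is exhaustive, and it is bounded below because the grading is.

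Next we check compatibility with the structure. Since $I_{\hhb,\lambda}$ is $\ik\fph$-bilinear and $\IG_\hbar$-equivariant, it maps $F_p\otimes F_q$ into $F_{p+q}[\lambda]$. Because $\del$ commutes with the $\IG_\hbar$-action, it preserves each $F_p$. Hence ${\rm Fil}(V_\hhb)$ is a filtered vertex algebra, and the construction is evidently functorial.

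For the inverse, let $W$ be a filtered vertex algebra with filtration $F_pW$, bounded below. Its Rees module is $\Rees(W)=\bigoplus_p \hbar^p F_pW\subset W[\hbar^{\pm1}]$, which we complete $\hbar$-adically to $\widehat{\Rees}(W)$. Grade it by giving $\hbar^pF_pW$ weight $p$, so that $\IG_\hbar$ acts by scaling $\hbar$. The bracket is extended $\hbar$-linearly, $I_\lambda(\hbar^p a,\hbar^q b)=\hbar^{p+q}I_\lambda(a,b)$. The filtered condition guarantees this lands in $\Rees(W)[\lambda]$, and the vertex algebra axioms hold because they hold after inverting $\hbar$. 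We then check the following.
\begin{enumerate}[(a)]
\item Each truncation $\widehat{\Rees}(W)/\hbar^m$ is a vertex algebra over $\ik[\hbar]/\hbar^m$. This is immediate by reduction.
\item $\widehat{\Rees}(W)$ is trivializable, i.e.\ isomorphic to $V_0\widehat\otimes\ik\fph$ with $V_0=\gr W$. We choose graded splittings $F_pW = F_{p-1}W\oplus C_p$ over $\ik$. This gives $\Rees(W)\cong \bigoplus_p C_p\otimes \hbar^{p}\ik[\hbar]$, whose completion is free in the required sense. Only the $\ik$-module structure is used, so no compatibility with $\del$ is needed for the splitting.
\item The weight grading is bounded below, because the filtration is.
\end{enumerate}

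It remains to compare the two composites. For ${\rm Fil}(\widehat{\Rees}(W))\cong W$, the weight-zero part of $\widehat{\Rees}(W)[\hbar^{-1}]$ is $\bigcup_p F_pW = W$ with its original filtration. For $\widehat{\Rees}({\rm Fil}(V_\hhb))\cong V_\hhb$, we use the map $\hbar^p a\mapsto \hbar^p a$. It is an isomorphism degree by degree: the degree-$p$ piece of $V_\hhb$ equals $\hbar^p$ times the weight-zero elements lying in $\hbar^{-p}V_\hhb$. The $\hbar$-adic completeness of $V_\hhb$ then identifies the completed Rees module with $V_\hhb$. Both isomorphisms are natural and intertwine the brackets, since those are defined by the same formula.

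The main obstacle is the interplay between completion and grading. A graded complete $\ik\fph$-module need not be the completion of the direct sum of its graded pieces. The bounded-below hypothesis is exactly what ensures that each weight-$n$ element involves only finitely many $\hbar$-powers times lower weights, so that graded pieces determine the module after completion. We would settle this first, by writing $V_\hhb^{(n)}=\bigoplus_{k\ge0}\hbar^k (V_0)^{(n-k)}$ using trivializability together with the lower bound on weights, and then reduce everything else to this decomposition.
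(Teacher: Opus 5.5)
Your proposal is correct and follows the same approach as the paper. The paper's proof just invokes the Rees correspondence for filtered vertex algebras, and you carry that correspondence out explicitly: on one side the filtration $F_p=\hbar^{-p}V_{\hat\hbar}\cap{\rm Fil}(V_{\hat\hbar})$, on the other the $\hbar$-adically completed Rees module. Your added hypotheses on the filtered side (exhaustive and bounded below), and your check via the bounded-below grading that $V_{\hat\hbar}$ is the completion of the sum of its weight spaces, are exactly what the paper's one-line proof leaves unstated.
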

\begin{proof}
 This is essentially the Rees correspondence for filtered vertex algebras.
\end{proof}

A $\hbar$-adic unital vertex algebra $V_\hhb, I_{\hhb,\lambda}$ is said to be \emph{almost commutative} if the image of
\[
I_{0,\lambda} : V_0 \otimes V_0 \rightarrow V_0[\lambda]
\]
is contained in $V_0$. Then, $I_{0,\lambda}$ actually defines a commutative product on $V/\hbar V$ making it a unital commutative algebra with derivation.
\begin{prop}\label{prop:almost commutative hbar central fibre}
	Suppose $(V_\hhb,I_{\hhb,\lambda})$ is an almost commutative, unital $\hbar$-adic vertex algebra. Then $V_0$ is naturally a vertex Poisson algebra with commutative multiplication $I_{0,\lambda}$ and $\lambda$-bracket
	\[
	[\cdot_\lambda \cdot] = \lim_{\hbar\rightarrow 0} \bigg(\frac{1}{\hbar}\frac{d}{d\lambda}I_{\hhb,\lambda}(\cdot,\cdot) \bigg)~.
	\]
\end{prop}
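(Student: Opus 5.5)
We argue directly with the integral $\lambda$-bracket. Write $I_{\hhb,\lambda}$ and decompose it via the splitting $V_\hhb\cong V_0\widehat\otimes\ik\fph$ as $I_{\hhb,\lambda}=I_{0,\lambda}+\hbar I_{1,\lambda}+O(\hbar^2)$. The almost commutativity hypothesis says $I_{0,\lambda}(a,b)$ is independent of $\lambda$. Hence $\tfrac{d}{d\lambda}I_{\hhb,\lambda}$ is divisible by $\hbar$, and the bracket $[a_\lambda b]:=\tfrac{d}{d\lambda}I_{1,\lambda}(a,b)$ modulo $\hbar$ is well defined on $V_0$. It is independent of the lift of $a,b$, since changing a lift by $\hbar c$ changes $\tfrac{d}{d\lambda}I_{\hhb,\lambda}$ by $\hbar\,\tfrac{d}{d\lambda}I_{\hhb,\lambda}(c,\cdot)\in\hbar^2$. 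We use the reduction of the axioms mod $\hbar$ and mod $\hbar^2$, i.e.\ the truncations $V_\hhb/\hbar^m$, which are vertex algebras by definition.

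\textbf{Step 1: the product.} Reducing the axioms mod $\hbar$, with $I_{0,\lambda}=:m$ constant in $\lambda$:
\begin{itemize}
\item skewsymmetry gives $m(a,b)=m(b,a)$, since the substitution $\lambda\mapsto-\lambda-\partial$ is vacuous on a constant;
\item sesquilinearity gives that $\partial$ is a derivation of $m$;
\item the Jacobi identity reads $m(a,m(b,c))-m(b,m(a,c))-m(m(a,b),c)=0$, and together with commutativity this yields associativity;
\item the vacuum gives the unit, $m(a,\ket{0})=a$.
\end{itemize}
So $V_0$ is a unital commutative algebra with derivation.

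\textbf{Step 2: the bracket.} We use the standard identity $[\cdot_\lambda\cdot]=\tfrac{d}{d\lambda}I_\lambda$, which holds for any vertex algebra by the remark after the definition and in particular over $\ik[\hbar]/\hbar^2$. Its skewsymmetry, sesquilinearity and Jacobi identity (the vertex Lie structure) are satisfied by $\tfrac{d}{d\lambda}I_{\hhb,\lambda}$. Dividing by $\hbar$ and reducing gives a vertex Lie algebra structure on $V_0$. For the Jacobi identity, each term is a product of two $\hbar$-divisible brackets, so we divide by $\hbar^2$. This requires care: the brackets have coefficients in $\ik\fph$, and since $V_\hhb$ is $\hbar$-torsion free, division is legitimate.

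\textbf{Step 3: the Leibniz rules.} These come from differentiating the Jacobi identity for $I$ once in $\lambda$: $\tfrac{d}{d\lambda}$ of $I_\lambda(a,I_\mu(b,c))-I_\mu(b,I_\lambda(a,c))-I_{\lambda+\mu}(I_\lambda(a,b),c)=0$. Reducing mod $\hbar^2$, divided by $\hbar$, and then setting $\mu=0$ and using $I_0=m+O(\hbar)$ gives the right Leibniz rule $[a_\lambda bc]=[a_\lambda b]c+b[a_\lambda c]$. The left Leibniz rule then follows from the right one together with skewsymmetry of the bracket, which converts $[ab_\lambda c]$ into $-[c_{-\lambda-\partial}ab]$; this is the standard argument.

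We expect the main obstacle to be this extraction of the Leibniz rule from the Jacobi identity of the integral $\lambda$-bracket. The correct combination of $\lambda$- and $\mu$-derivatives, and the specialization $\mu=0$, must be chosen so that only first-order terms in $\hbar$ survive. If this becomes messy, the alternative is to use the $n$-th product formulation: via $a_{(-1)}b=m(a,b)+O(\hbar)$ and $a_{(n)}b\in\hbar V_\hhb$ for $n\ge0$, the Borcherds commutator formula mod $\hbar^2$ directly gives $[a_{(n)},b_{(-1)}]=\sum_j\binom{n}{j}(a_{(j)}b)_{(n-1-j)}$. Here only the $j=n$ term, $(a_{(n)}b)_{(-1)}$, survives to first order, which is exactly the right Leibniz rule.
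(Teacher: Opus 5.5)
The paper states this proposition without proof, as a standard fact, so your argument has to stand on its own. Step 2 does. Steps 1 and 3 do not as written.

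\textbf{What goes wrong.} Both steps rest on the Jacobi axiom (iii) exactly as printed in the paper's definition of a vertex algebra, and that identity is misstated. At $\lambda=\mu=0$ with $a=b=c=\ket{0}$ it reads $-\ket{0}=0$, so no nonzero unital vertex algebra satisfies it. The deductions you draw from it are therefore not valid.
\begin{itemize}
\item In Step 1, the relation $m(a,m(b,c))-m(b,m(a,c))=m(m(a,b),c)$ together with commutativity does \emph{not} give associativity. Exchanging $a$ and $b$ and adding gives $2\,m(m(a,b),c)=0$.
\item In Step 3, differentiating $I_{\lambda+\mu}(I_\lambda(a,b),c)$ in $\lambda$ also differentiates the subscript. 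This produces the extra term $[I_\lambda(a,b)_{\lambda+\mu}c]$, whose order-$\hbar$ part at $\mu=0$ is $[(ab)_\lambda c]$. You would obtain $[a_\lambda bc]-[a_\lambda b]c-b[a_\lambda c]=[(ab)_\lambda c]$, which is not the Leibniz rule, since the right side is nonzero in general.
\end{itemize}
The failure in Step 1 is a genuine false deduction, not merely a messy computation.

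\textbf{How to fix it.} Replace the misprinted axiom by correct identities whose correction terms involve only non-negative products, hence are $O(\hbar)$. Your own fallback does exactly this. The commutator formula $[a_{(m)},b_{(k)}]=\sum_{j\ge0}\binom{m}{j}(a_{(j)}b)_{(m+k-j)}$ holds for all $m,k\in\mathbb Z$.
\begin{itemize}
\item With $m=n\ge0$ and $k=-1$ it gives the right Leibniz rule, as you say: for $j<n$ both factors are $O(\hbar)$.
\item With $m=k=-1$ it gives $a_{(-1)}(b_{(-1)}c)-b_{(-1)}(a_{(-1)}c)=\sum_{j\ge0}(-1)^j(a_{(j)}b)_{(-2-j)}c\in\hbar V_{\hat\hbar}$. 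So $a(bc)=b(ac)$ on $V_0$, and with commutativity $(ab)c=c(ab)=a(cb)=a(bc)$.
\end{itemize}
If you prefer to stay with the integral bracket, the identity to differentiate is $[a_\lambda I_\mu(b,c)]-I_\mu(b,[a_\lambda c])=I_{\lambda+\mu}([a_\lambda b],c)$. It follows from the $\lambda$-bracket Jacobi identity combined with the noncommutative Wick formula, and its order-$\hbar$ part at $\mu=0$ is exactly the right Leibniz rule.

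With these replacements the rest of your argument goes through. That covers commutativity and the derivation property of $\partial$ from skewsymmetry and sesquilinearity, and well-definedness of the bracket on $V_0$. It also covers the vertex Lie axioms, obtained by dividing by $\hbar$ and $\hbar^2$ in the torsion-free $V_{\hat\hbar}$, and the left Leibniz rule from the right one plus skewsymmetry.
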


\begin{defn}\label{defn:chiral quantization}
A \emph{quantization} of a vertex Poisson algebra, $V$, is an almost commutative $\hbar$-adic vertex algebra $(V_\hhb,I_{\hhb,\lambda}$ such that $V_0\cong V$ as vertex Poisson algebras, where $V_0$ possesses the vertex Poisson algebra structure from Proposition \ref{prop:almost commutative hbar central fibre}.

A \emph{chiral quantization} of a Poisson variety $Y$ is a quantization of the vertex Poisson algebra $\ik[\JJ Y]$.
\end{defn}

\subsubsection{Current algebras}
Let $\gf$ be a simple Lie algebra and let $\gf^*$ be its linear dual, considered as Poisson variety with the Kirillov--Kostant--Souriau Poisson bracket. While $\gf^*$ is not symplectic, it is smooth and so falls within the purview of our results. The functions on the arc space $\OO(\JJ\gf^*)$, have the structure of a coisson algebra, with $\lambda$-bracket
\[
[x_\lambda y]  = [x,y]~,
\]
for $x,y\in \gf\subset \OO(\JJ \gf^*)$.

For $k\in \ik$, we write $V^k(\gf)$ for the universal affine vertex algebra of $\gf$. This is a vertex algebra strongly generated by $\gf\subset V^k_\hbar(\gf)$ with integral $\lambda$-bracket
\[
I_\lambda(x,y) =\,  :xy: +\lambda  [x,y] + \frac{\lambda^2}{4 h^\vee} \langle x,y\rangle~,
\]
corresponding to the OPE
\[
x(z)y(w) \sim \frac{k}{2h^\vee}\frac{ \langle x, y\rangle}{(z-w)^2} + \frac{[x,y](w)}{z-w}~,
\]
for $x,y\in \gf$, $\langle \cdot,\cdot\rangle$ the Killing form, and $h^\vee$ the dual Coxeter number. Similarly, let $V^k_\hbar(\gf)$ denote the homogenized universal affine vertex algebra of $\gf$ at level $k$. This is a $\hbar$-adic vertex algebra strongly generated by the subspace $\gf\subset V^k_\hbar(\gf)$ with integral $\lambda$-bracket
\[
I_\lambda(x,y) =\,  :xy: +\lambda \hbar [x,y] + \frac{\lambda^2\hbar^2}{4 h^\vee} \langle x,y\rangle~,
\]

\begin{prop}[\cite{Arakawa2015:local}]
	We have an isomorphism of vertex algebras over $\ik$,
	\[
	{\rm Fil}(V^k_{\hbar}(\gf)) = \big(V^k_\hbar(\gf)\otimes_{\ik\fph}\ik(\!(\hbar)\!)\big)^{\IG_\hbar} \cong V^k(\gf)~.
	\]
\end{prop}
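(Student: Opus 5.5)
We construct the isomorphism explicitly and then use the Rees correspondence of Proposition \ref{prop:Rees for hadic VOA}. First we describe ${\rm Fil}(V^k_\hbar(\gf))$. Give $\hbar$ weight $1$ under $\IG_\hbar$. Let the generators $x\in\gf$ and their derivatives $\partial^n x$ have weight $0$, and let normally ordered products be additive in weight. The integral $\lambda$-bracket of $V^k_\hbar(\gf)$ is then homogeneous of weight zero, provided $\lambda$ is declared to have weight $-1$. To make this honest we rescale: put $\tilde x = \hbar^{-1}x \in V^k_\hbar(\gf)\otimes_{\ik\fph}\ik(\!(\hbar)\!)$. Then $\tilde x$ is $\IG_\hbar$-invariant if the generator $x$ is assigned weight $1$. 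This is the grading we actually use, with $\hbar$, $x$ and $\partial$-derivatives all of weight $1$, $0$ and $0$ respectively on the shift. Under this choice the invariants are spanned by the normally ordered monomials $:\partial^{i_1}\tilde x_1\cdots\partial^{i_r}\tilde x_r:$.

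Next we compute the bracket among the rescaled generators. We get
\[ I_\lambda(\tilde x,\tilde y)=\hbar^{-2}\big(:xy:+\lambda\hbar[x,y]+\tfrac{\lambda^2\hbar^2}{4h^\vee}\langle x,y\rangle\big)=\,:\tilde x\tilde y:+\lambda\widetilde{[x,y]}+\tfrac{\lambda^2}{4h^\vee}\langle x,y\rangle~, \]
which is exactly the integral $\lambda$-bracket of $V^k(\gf)$ on its generators, with the level normalization fixed as in the statement. Hence the $\ik$-vertex algebra ${\rm Fil}(V^k_\hbar(\gf))$ is strongly generated by $\{\tilde x\}$ and satisfies these relations. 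Lemma \ref{lem:filtered is a VOA} guarantees it is a vertex algebra, since the grading is bounded below by PBW. The universal property of $V^k(\gf)$ as the universal enveloping vertex algebra of the current vertex Lie algebra $\gf[\partial]\oplus\ik$ then gives a morphism $V^k(\gf)\to {\rm Fil}(V^k_\hbar(\gf))$ with $x\mapsto\tilde x$. The central element is sent to $k$, which works because $\hbar^2$ cancels.

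The remaining step is bijectivity, and it is the main obstacle. Surjectivity follows from strong generation. For injectivity we compare PBW bases. By the PBW theorem, the ordered monomials in $\partial^n x_a$ form a basis of $V^k(\gf)$. We need the analogous fact for $V^k_\hbar(\gf)$: it is topologically free over $\ik\fph$, and its PBW monomials form a topological basis. This comes from its definition as a homogenized enveloping algebra, whose reduction modulo $\hbar$ is $\Sym(\gf[\partial])=\OO(\JJ\gf^*)$.

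With that in hand, the $\IG_\hbar$-invariant part of $V^k_\hbar(\gf)\otimes\ik(\!(\hbar)\!)$ has as a basis the monomials in the $\partial^n\tilde x_a$. The map therefore sends a basis to a basis and is an isomorphism. Where bookkeeping is needed, it should be carried out with the PBW filtration; under Proposition \ref{prop:Rees for hadic VOA} this filtration is precisely the filtration recovered by the Rees correspondence.
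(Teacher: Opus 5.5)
Your argument is correct, but it does not follow the paper's route. The paper gives no argument of its own and simply cites Theorem 3.2.6.1 of \cite{Arakawa2015:local}.

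You give a self-contained proof instead:
\begin{itemize}
\item rescale to $\tilde x=\hbar^{-1}x$ and check that the $\tilde x$ satisfy exactly the $\lambda$-brackets of $V^k(\gf)$;
\item obtain the map $V^k(\gf)\to{\rm Fil}(V^k_\hbar(\gf))$ from the universal property of $V^k(\gf)$;
\item conclude bijectivity by comparing PBW bases. The basis for $V^k_\hbar(\gf)$ comes from topological freeness over $\ik\fph$ together with the identification of the classical limit with $\Sym(\ik[\partial]\gf)$.
\end{itemize}

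This is essentially the inverse of the paper's argument for the Virasoro analogue in Lemma \ref{lem:filtered version of Virasoro}. There one maps $\Vir^c_\hbar\to\Vir^c\fph$ by $T\mapsto\hbar T$, inverts $\hbar$, and takes invariants. Your direction has a small advantage: it uses only the standard universal property of $V^k(\gf)$ over $\ik$, not a universal property of an $\hbar$-adic algebra. As in the Virasoro case, the PBW theorem over $\ik\fph$ is still needed, here for injectivity and for the spanning claim behind surjectivity.

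The two approaches buy different things. The citation gives brevity and the general Rees/Fil correspondence. Your version makes explicit exactly which inputs are used: the $\IG_\hbar$-grading and the PBW basis of the $\hbar$-adic algebra.

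One cleanup is needed. Discard your first grading ($\gf$ in weight $0$, $\lambda$ in weight $-1$) and the garbled sentence that follows it. $\IG_\hbar$ must act by vertex algebra automorphisms with $\lambda$ and $\partial$ inert. So the correct grading puts both $\hbar$ and $\gf$ in weight one. Under it each of $:xy:$, $\lambda\hbar[x,y]$ and $\lambda^2\hbar^2\langle x,y\rangle$ has weight two, and $\tilde x$ has weight zero. This is the grading your later steps actually use.
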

\begin{proof}
	This is a special case of Theorem 3.2.6.1 of \cite{Arakawa2015:local}.
\end{proof}

\begin{prop}
 The $\hbar$-adic vertex algebras $V_\hbar^k(\gf)$ give a one-parameter family of graded chiral quantizations of $\gf^*$.
\end{prop}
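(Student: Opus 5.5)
The proof has three parts: check that each truncation $V^k_\hbar(\gf)/\hbar^m$ is a vertex algebra, check that the central fibre is commutative, and identify the induced vertex Poisson structure with that of $\OO(\JJ\gf^*)$.

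\emph{Existence and completeness.} For each $k$, set $V^k_\hbar(\gf)$ to be the $\hbar$-adic completion of the Rees module of $V^k(\gf)$ with respect to its standard PBW (Li) filtration. Then $\Fil(V^k_\hbar(\gf))\cong V^k(\gf)$ by the preceding Proposition. By Proposition \ref{prop:Rees for hadic VOA}, $V^k_\hbar(\gf)$ is a graded $\hbar$-adic vertex algebra, since the filtration is bounded below and exhaustive. It is topologically free because the PBW theorem for $V^k(\gf)$ gives a basis of ordered monomials in the modes $\del^{n}x_a$. The homogenization is read off directly: assigning $\hbar$-weight $1$ to each generator $x\in\gf$ rescales $I_\lambda(x,y)$ to the displayed formula. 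The term $\lambda[x,y]$ has one fewer generator than $:xy:$, so it acquires a factor $\hbar$; the central term acquires $\hbar^2$. The same reasoning shows that all structure constants are polynomial in $\hbar$.

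\emph{Almost commutativity.} Set $\hbar=0$. On generators this gives $I_{0,\lambda}(x,y)=\,:xy:$, with no $\lambda$-dependence. To pass to all of $V_0$, I would use the non-commutative Wick formula and its $\hbar$-homogenized version. Every $\lambda$-dependent correction in $I_\lambda(a,b)$ for normally ordered monomials $a,b$ arises from iterated $\lambda$-brackets of generators, and each of these carries at least one power of $\hbar$. Hence $I_{0,\lambda}$ lands in $V_0$. By the same count, the commutator of the normally ordered product is $O(\hbar)$, so $V_0$ is commutative. PBW identifies $V_0$, as a differential algebra, with $\Sym(\bigoplus_n\del^n\gf)=\OO(\JJ\gf^*)$.

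\emph{Matching the bracket.} Proposition \ref{prop:almost commutative hbar central fibre} gives
\[
[x_\lambda y]=\lim_{\hbar\to 0}\hbar^{-1}\tfrac{d}{d\lambda}I_{\hbar,\lambda}(x,y)=[x,y]
\]
on generators. The central term contributes $O(\hbar)$ after dividing by $\hbar$, so it disappears in the limit; this is what makes the quantizations independent of $k$ at the classical level. Both this bracket and the given bracket on $\OO(\JJ\gf^*)$ are vertex Poisson structures on the same differential algebra, determined by the Leibniz rules and sesquilinearity. The earlier uniqueness Proposition for arc spaces of Poisson varieties says such a structure is determined by its values on $\gf$. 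The two therefore agree, so $V_0\cong\OO(\JJ\gf^*)$ as vertex Poisson algebras, and $k\mapsto V^k_\hbar(\gf)$ is a one-parameter family of quantizations.

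The main obstacle is the almost-commutativity step, which requires controlling $\hbar$-powers in the Wick expansion on arbitrary monomials. I would handle it by induction on the PBW degree, using the Jacobi identity and the right Leibniz-type (Wick) formula for $I_\lambda$, exactly as in Li's treatment of the associated graded; alternatively, cite \cite{Arakawa2015:local} for the statement that the PBW filtration is a good filtration.
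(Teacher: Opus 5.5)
Your argument is correct, but it is packaged differently from what the paper intends. The paper gives no separate proof. The remark following the statement treats it as a special case of the fact that the homogenized chiral envelope $\VV_\hbar(L)$ of a vertex Lie algebra $L$ quantizes $\Sym(L)$ \cite{Li04,Castellan:2023jnu}, applied here to the (centrally extended) current vertex Lie algebra of $\gf$ with $\hbar$-scaled bracket. The parallel proof the paper writes out for chiral differential operators makes exactly three checks: the classical limit of the envelope is $\Sym$; $\hbar^{-1}$ times the generating $\lambda$-brackets tends to $[x,y]$; and weight one on generators gives the $\IG_\hbar$-grading. In that packaging almost commutativity comes for free, because the envelope is built from $\lambda$-brackets already divisible by $\hbar$, and the same argument covers any vertex Lie algebra.

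You instead start from the ordinary $V^k(\gf)$ and take the Rees object of its PBW filtration. This makes the grading and ${\rm Fil}(V^k_\hbar(\gf))\cong V^k(\gf)$ tautological via Proposition \ref{prop:Rees for hadic VOA}. The real content is then that your Rees object is the paper's $V^k_\hbar(\gf)$, which your weight count plus PBW supplies. The cost is that you must check by hand, via your Wick-formula induction, that $F_p{}_{(n)}F_q\subset F_{p+q-1}$ for $n\geq 0$, which is precisely almost commutativity. In exchange, your route ties the family explicitly to the non-homogenized $V^k(\gf)$.

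One correction: the filtration you need, with every generator in degree one, is the PBW filtration by number of generators, not Li's filtration. Li's filtration is decreasing and indexed by the total number of derivatives, so it puts $x$ in degree $0$ and $\partial^n x$ in degree $n$. For $V^k(\gf)$ the two filtrations determine each other only after reindexing by conformal weight, so drop the parenthetical ``(Li)''.
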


\begin{rmk}
	Indeed, this is a special case of a much larger class of quantizations: given a vertex Lie algebra $L$, the homogenized chiral envelope $\VV_\hbar(L)$ is a quantization of the coisson algebra $\Sym(L)$, see \cite{Li04,Castellan:2023jnu} for more details.
\end{rmk}

\subsubsection{Chiral differential operators}\label{sssec:cdos intro}

Let $Y$ be a smooth affine variety with vanishing second Chern class%
\footnote{Chiral differential operators can actually be defined on more general smooth schemes with vanishing second Chern class but we shall only consider the affine case}
. Let $\alpha\in \Omega^{3,cl}_{\rm dR}(M)$ be a closed three-form viewed as a map
\[
\alpha:\Theta(Y)\otimes \Theta(Y)\rightarrow \Omega^1(Y)\cong \partial\OO(Y)~,
\]
and extended to $\Theta(\JJ Y)$ by $\del$-linearity. Write $L_Y^\alpha$ for the extension%
\footnote{In the language of \cite{BD1}, $L^\alpha_Y$ is a chiral $\Lie^*$ algebroid and is a chiral extension of the tangent $\Lie^*$ algebroid of $\JJ Y$. We shall sweep this detail under the rug.} 
\[
	0\rightarrow \OO(\JJ Y)\hookrightarrow L^\alpha_Y \twoheadrightarrow \Theta(\JJ Y) \rightarrow 0
\]
of the tangent vertex Lie algebroid, $\Theta(\JJ Y)$, of $\JJ Y$. The closed three form defines a $\lambda$-bracket on $L_Y^\alpha$ via
\begin{equation}
\begin{split}
 [x_\lambda y]    &= [x,y]- \alpha(x,y) ~,\\
[x_\lambda a]  &= x(a)~,
\end{split}
\end{equation}
for $x,y\in \Theta(Y)\subset \Theta(\JJ Y)$ and $a\in \OO(Y)\subset \OO(\JJ Y)$. 

\begin{defn}
	A vertex algebra of chiral differential operators (cdos), $\D^{\ch}_{\alpha}(Y)$, is defined as the vertex envelope of $L_Y^\alpha$ for some $\alpha\in \Omega^{c,cl}_{\rm dR}(Y)$.

	Concretely $\D^{\ch}_{\alpha}(Y)$ is strongly generated by the subspace $L_Y^\alpha\subset \D^{\ch}_{\alpha}(Y)$ with non-vanishing $\lambda$-brackets
	\[
	I_{\lambda}(x,y) =  [x,y] - \alpha(x,y) ~, \text{ and } I_\lambda(x,a) = x(a) ~. 
	\]
\end{defn}



One can define the $\hbar$-adic analogue, $L^\alpha_{\hbar,Y}$, by homogenizing the $\lambda$-brackets 
\begin{equation}
\begin{split}
 [x_\lambda y]    &= \hbar [x,y]- \hbar^2 c(x,y) ~,\\
[x_\lambda a]  &=\hbar x(a)~.
\end{split}
\end{equation}
Note that $L_{\hbar,Y}^\alpha$ is graded by $\IG_\hbar$ with $\Theta(\JJ Y)$ having weight one and $\OO(\JJ Y)$ having weight zero. Analogously, we define the $\hbar$-adic chiral differential operators $\D^{\ch}_{\hbar,\alpha}$ as the vertex envelope of $L_{\hbar,Y}^\alpha$.

\begin{prop}\label{prop:hadic cdos are cdos}
	We have an isomorphism of vertex algebras over $\ik$
	\[
	{\rm Fil}(\D^{\ch}_{\hbar,\alpha}(Y)) \cong \D^{\ch}_{\alpha}(Y)
	\]
\end{prop}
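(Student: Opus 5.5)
The proof will be a direct application of the Rees correspondence, Proposition \ref{prop:Rees for hadic VOA}, run in parallel with the argument for the current algebras.

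\textbf{Step 1: check the grading.} First I would confirm that $\D^{\ch}_{\hbar,\alpha}(Y)$ carries a $\IG_\hbar$-action by automorphisms whose grading is bounded below. Give $\OO(\JJ Y)$ weight zero and $\Theta(\JJ Y)$ weight one. The homogenized brackets $\hbar[x,y]-\hbar^2\alpha(x,y)$ and $\hbar x(a)$ are then homogeneous, provided $\hbar$ is also assigned its weight. So the $\IG_\hbar$-action on $L^\alpha_{\hbar,Y}$ preserves the $\lambda$-bracket. By the universal property of the vertex envelope (Proposition \ref{prop:chiral envelope}), it extends to an action on $\D^{\ch}_{\hbar,\alpha}(Y)$ by automorphisms.

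\textbf{Step 2: apply Lemma \ref{lem:filtered is a VOA}.} With respect to the weights above, $\D^{\ch}_{\hbar,\alpha}(Y)$ is spanned by normally ordered monomials in the generators. The weight counts the vector-field factors, so it is nonnegative, and hence bounded below. Lemma \ref{lem:filtered is a VOA} then shows that ${\rm Fil}(\D^{\ch}_{\hbar,\alpha}(Y))$ is a vertex algebra over $\ik$.

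\textbf{Step 3: construct the comparison map.} Inside ${\rm Fil}$, consider the $\IG_\hbar$-invariant elements $a$ for $a\in\OO(\JJ Y)$ and $\hbar^{-1}x$ for $x\in\Theta(\JJ Y)$. Computing their brackets:
\[
I_\lambda(\hbar^{-1}x,\hbar^{-1}y)=[x,y]-\alpha(x,y)\qquad\text{and}\qquad I_\lambda(\hbar^{-1}x,a)=x(a).
\]
These are exactly the brackets defining $L^\alpha_Y$. The remaining bracket $I_\lambda(a,b)$ is trivial in both algebras. So $a\mapsto a$, $x\mapsto \hbar^{-1}x$ is a map of vertex Lie algebras $L^\alpha_Y\to {\rm Fil}(\D^{\ch}_{\hbar,\alpha}(Y))$. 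By the universal property of the envelope, it induces a vertex algebra map $\D^{\ch}_\alpha(Y)\to{\rm Fil}(\D^{\ch}_{\hbar,\alpha}(Y))$.

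\textbf{Step 4: prove bijectivity (main obstacle).} Both algebras carry a PBW-type filtration: on the source by the number of vector-field factors, on the target by the $\IG_\hbar$-weight. I would show the map is a filtered isomorphism by comparing associated gradeds. Each associated graded should be the symmetric algebra over $\OO(\JJ Y)$ on the $\OO(\JJ Y)$-module $\Theta(\JJ Y)$, so the map is an isomorphism on associated gradeds and hence an isomorphism.

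This requires a PBW theorem for vertex envelopes of vertex Lie algebroids, in both the $\ik$-linear and the $\hbar$-adic settings. I would import it as in the proof of Theorem 3.2.6.1 of \cite{Arakawa2015:local}, or from \cite{GMS1}, using that $\Theta(Y)$ is free when $Y$ is affine and parallelizable. Failing that, I would cover the general affine case by an argument that is local on $Y$. In the $\hbar$-adic case there is the additional point that $\D^{\ch}_{\hbar,\alpha}(Y)$ is $\hbar$-torsion free with $\hbar\to0$ fibre $\Sym$. This follows from the $\hbar$-adic PBW theorem for the envelope (cf.\ \cite{Li04}), since the homogenized brackets land in $\hbar\cdot$ and so the envelope modulo $\hbar$ is commutative.
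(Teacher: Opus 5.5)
The paper does not prove this in the text; it cites Appendix A of \cite{BuN2}. The closest in-paper template is Lemma \ref{lem:filtered version of Virasoro}, which rescales in the opposite direction. There one takes an equivariant map from the $\hbar$-adic algebra into the constant family $V\fph$ that multiplies generators by $\hbar$, shows it becomes an isomorphism after inverting $\hbar$, and then takes $\IG_\hbar$-invariants.

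Your route is sound: the map $\Phi\colon \D^{\ch}_{\alpha}(Y)\to {\rm Fil}(\D^{\ch}_{\hbar,\alpha}(Y))$, $a\mapsto a$, $x\mapsto \hbar^{-1}x$, with bijectivity from PBW on both sides. It also proves more than the statement. $\Phi$ is a filtered isomorphism with $\gr$ identified with $\Sym_{\OO(\JJ Y)}\Theta(\JJ Y)$, which supplies the torsion-freeness and central fibre needed for the next proposition, that $\D^{\ch}_{\hbar,\alpha}(Y)$ quantizes $T^*Y$. The price is that all the content sits in imported PBW theorems. Your ``local on $Y$'' fallback is better replaced by the PBW theorem for envelopes of chiral extensions of Lie$^*$ algebroids (cf.\ \cite{BD1}), which needs only projectivity of $\Theta$.

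For the isomorphism alone you can avoid PBW by combining the two rescalings:
\begin{itemize}
\item The rescaling $x\mapsto \hbar x$ gives an equivariant map $\D^{\ch}_{\hbar,\alpha}(Y)\to \D^{\ch}_{\alpha}(Y)\fph$, and hence a map $\Psi\colon {\rm Fil}(\D^{\ch}_{\hbar,\alpha}(Y))\to \D^{\ch}_{\alpha}(Y)$.
\item The composite $\Psi\circ\Phi$ fixes the generators, so it is the identity, and $\Phi$ is injective.
\item $\Phi$ is surjective by your Step 2 bookkeeping. An invariant element $\hbar^{-k}v$, with $v$ of weight $k$, is a finite sum of terms $\hbar^{-(k-j)}m$, where $m$ is a monomial with $k-j$ vector-field factors. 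That is a normally ordered polynomial in the $a$'s and $\hbar^{-1}x$'s.
\end{itemize}

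Two points to tighten.

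First, in Step 3 the bracket is $\hbar^{-1}[x,y]-\alpha(x,y)$. This is the \emph{image} of $[x,y]-\alpha(x,y)$, not that expression itself.

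Second, the envelopes in Steps 3 and 4 must match. The identification $\gr\cong\Sym_{\OO(\JJ Y)}\Theta(\JJ Y)$ holds for the envelope of $L^\alpha_Y$ as a vertex (Lie$^*$) algebroid, where $:ab:=ab$ and $:a\,x:$ is the corrected module action. It does not hold for the bare vertex-Lie envelope of Proposition \ref{prop:chiral envelope}, whose PBW theorem gives $\Sym_\ik(L^\alpha_Y)$ instead. So Step 3 must also check that $\Phi$ respects these extra relations. It does, because the $\hbar$-adic relations are the homogenized ones: each term that lowers the number of vector fields by $r$ carries $\hbar^r$. The paper blurs the same distinction. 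Your argument works with either envelope provided Steps 3 and 4 use the same one.
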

\begin{proof}
	Proven in Appendix A of \cite{BuN2}.
\end{proof}

\begin{prop}
	For every $\alpha\in \Omega^{3,cl}(Y)$, $\D^{\ch}_{\hbar,\alpha}$ is a graded chiral quantization of $T^*Y$.
\end{prop}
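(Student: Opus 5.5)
The plan is to verify directly the conditions of Definition \ref{defn:chiral quantization}, using the Rees correspondence of Proposition \ref{prop:Rees for hadic VOA} to move the structural questions to the filtered vertex algebra $\D^{\ch}_\alpha(Y)$. We need four things:
\begin{enumerate}[(i)]
\item $\D^{\ch}_{\hbar,\alpha}(Y)$ is an $\hbar$-adic unital vertex algebra with a bounded below $\IG_\hbar$-grading;
\item it is almost commutative;
\item its central fibre is isomorphic to $\ik[\JJ T^*Y]$ as a commutative algebra with derivation;
\item this isomorphism intertwines the $\lambda$-bracket of Proposition \ref{prop:almost commutative hbar central fibre} with the vertex Poisson structure induced by the canonical symplectic form on $T^*Y$.
\end{enumerate}

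First I would handle the grading. Give $\Theta(\JJ Y)$ weight one and $\OO(\JJ Y)$ weight zero, with $\hbar$ of weight one. Then the homogenized brackets $\hbar[x,y]-\hbar^2\alpha(x,y)$ and $\hbar x(a)$ are of the correct weight, so the vertex envelope inherits a $\IG_\hbar$-grading. This grading is bounded below because the generators have nonnegative weight.

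By Proposition \ref{prop:hadic cdos are cdos}, ${\rm Fil}(\D^{\ch}_{\hbar,\alpha}(Y))\cong \D^{\ch}_\alpha(Y)$, where the filtration is by the number of vector-field generators. Under the inverse Rees equivalence, $\D^{\ch}_{\hbar,\alpha}(Y)$ is then the completed Rees module of this filtered vertex algebra. In particular it is $\hbar$-torsion free and complete, hence trivializable in the sense required of an $\hbar$-adic vertex algebra, and its central fibre is the associated graded $\gr\,\D^{\ch}_\alpha(Y)$.

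For (ii) and (iii), I would invoke the PBW theorem for vertex envelopes of vertex Lie algebroids, in the form of the PBW property of chiral differential operators from \cite{GMS1} and \cite{BD1}. It states that $\gr\,\D^{\ch}_\alpha(Y)\cong \Sym_{\OO(\JJ Y)}\Theta(\JJ Y)$ as commutative algebras with derivation. Since $Y$ is smooth and affine, $T^*Y=\Spec \Sym_{\OO(Y)}\Theta(Y)$, and the universal property \eqref{affinejetadjeqn} of jets identifies $\Sym_{\OO(\JJ Y)}\Theta(\JJ Y)$ with $\ik[\JJ T^*Y]$.

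Almost commutativity follows directly from the formulas. Every $\lambda$-dependent term of $I_\lambda$ on generators carries at least one power of $\hbar$, so $I_{0,\lambda}$ lands in $V_0$. By the sesquilinearity and Leibniz-type (Wick) rules for integral $\lambda$-brackets, the same holds on all normally ordered monomials.

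Finally, for (iv), I would compute the bracket $\lim_{\hbar\to 0}\hbar^{-1}\tfrac{d}{d\lambda}I_{\hbar,\lambda}$ on generators. It gives $[x_\lambda y]=[x,y]$ and $[x_\lambda a]=x(a)$, while the term $\hbar^2\alpha(x,y)$ does not survive in the limit. These are precisely the canonical Poisson brackets on $\ik[T^*Y]$. By the proposition on the unique vertex Poisson structure on $\ik[\JJ Y']$ extending a Poisson bracket on $\ik[Y']$, applied with $Y'=T^*Y$, the two vertex Poisson structures coincide, since both are determined by their values on $\ik[T^*Y]$.

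The main obstacle is the flatness/PBW step. One must know that homogenizing does not introduce $\hbar$-torsion, equivalently that the filtration on $\D^{\ch}_\alpha(Y)$ has associated graded exactly $\ik[\JJ T^*Y]$ with no extra relations. This is where the hypothesis that $\alpha$ is closed (so that $L^\alpha_{\hbar,Y}$ really satisfies Jacobi) and the vanishing of the Chern class enter. I would take this from the known PBW theorem for cdos rather than reprove it, and then check only that the filtration used there agrees with the $\IG_\hbar$-weight filtration.
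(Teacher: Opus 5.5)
Your proposal is correct and takes the same route as the paper's proof. The paper identifies the central fibre with $\Sym\Theta(\JJ Y)\cong\ik[\JJ T^*Y]$, observes that the $\hbar\to 0$ limit of the homogenized brackets of $L^\alpha_{\hbar,Y}$ is the canonical symplectic $\lambda$-bracket (the $\hbar^2\alpha$ term drops out), and notes that the $\IG_\hbar$-grading is inherited from $L^\alpha_{\hbar,Y}$. Your additional steps (almost commutativity via the Wick formula, uniqueness of the vertex Poisson structure from its values on generators) only spell out what the paper asserts. You are also right that flatness genuinely rests on the PBW theorem for cdos rather than on the Rees correspondence, since ${\rm Fil}$ inverts $\hbar$ and so cannot detect torsion.
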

\begin{proof}
  As a commutative algebra (with derivation) the classical limit of the vertex envelope is just $\Sym(\Theta(\JJ Y))\cong \ik[\JJ T^*Y]$. Furthermore, the $\hbar \rightarrow 0$ limit of the $\lambda$-brackets of $L^\alpha_{\hbar,Y}$ recovers the usual $\lambda$-bracket on $\ik[\JJ T^*Y]$ induced from the symplectic structure on $T^*Y$ and thus $\D^{\ch}_{\hbar,\alpha}$ is a chiral quantization of $T^*Y$.

	The grading on $L_{\hbar,Y}^\alpha$ induces a $\IG_\hbar$-grading on $\D^{\ch}_{\hbar,\alpha}$ making it a graded chiral quantization.
\end{proof}

\subsubsection{The Virasoro algebra and its minimal models}\label{ssec:dfn virasoro}

Let $\Vir^c$ be the Virasoro vertex algebra at central charge $c\in \ik$, \ie, the algebra strongly generated by the field $T$ with integral $\lambda$-bracket
\[
I_\lambda(T,T)= \frac{1}{8}\lambda^4 c + \lambda^2 T + \lambda \del T~,
\]
corresponding to the OPE
\[
T(z)T(w) \sim \frac{c/2}{(z-w)^4} + \frac{2 T(w)}{(z-w)^2} + \frac{\partial T(w)}{z-w}~.
\]

It is well known that for certain values of the central charge, $c = c_{p,q}$, with $p,q$ coprime and 
\[
c_{p,q} = 1 - 6\frac{(p-q)^2}{pq}~,
\]
the Virasoro algebra develops a vertex ideal such that the quotient is a rational vertex algebra known as the $(p,q)$ minimal model. In particular, for $(p,q)=(2,2n+1)$, with $n\in \IN$, the vertex ideal $\II_n\subset \Vir^c$ is generated by a state of the form
\[ 
T^n + \text{ states with conformal weight } 2n~.
\]
We write, 
\[
\MM_n \coloneqq \Vir^{c_{2,2n+1}} /\II_n~,
\]
for the corresponding minimal model. 

We define the homogenized $\hbar$-adic vertex algebra $\Vir^c_{\hbar}$ as being generated by $T$ with integral $\lambda$-bracket
\[
I_\lambda(T,T)= \frac{1}{8}\hbar ^2\lambda^4 c + \lambda^2 \hbar T + \lambda \hbar \del T~.
\]
Clearly, the integral $\lambda$-bracket is invariant with respect to the $\IG_\hbar$-action which assigns unit weight to $T$. Indeed, this grading is related (via the Rees correspondence) to the, so-called, $R$-filtration of \cite{ArabiArdehali:2025fad}.

For $c= c_{2,2n+1}$, $\Vir^c_{\hbar}$ develops a vertex ideal, $\II_{\hbar,n}$, generated by a state of the form $T^n + O(\hbar)$. We denote the quotient as
\[
\MM_{\hbar,n} \coloneqq \Vir^{c_{2.2n+1}}/ \II_{\hbar,n}~.
\]

\begin{lemma}\label{lem:filtered version of Virasoro}
	We have isomorphisms of vertex algebras
	\begin{equation}
		\begin{split}
		\big(\Vir^c_{\hbar}\tensor{\ik[\![\hbar]\!]}\ik(\!(\hbar)\!)\big)^{\IG_\hbar} &\cong \Vir^c~,\\
		\big(\MM_{\hbar,n}\tensor{\ik[\![\hbar]\!]}\ik(\!(\hbar)\!)\big)^{\IG_\hbar} &\cong \MM_n~.
		\end{split}
	\end{equation}
\end{lemma}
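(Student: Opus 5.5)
The plan is to exhibit the inverse of the Rees construction explicitly, using Proposition \ref{prop:Rees for hadic VOA} and Lemma \ref{lem:filtered is a VOA}. The $\IG_\hbar$-action assigns weight $0$ to $\hbar$-free vacuum data and weight $1$ to $T$ and $\hbar$, so that $I_\lambda(T,T)$ is homogeneous. The grading on $\Vir^c_\hbar$ is bounded below, since states are built from $T$ and its derivatives, all of nonnegative weight. Hence Lemma \ref{lem:filtered is a VOA} applies, and ${\rm Fil}(\Vir^c_\hbar)$ is a vertex algebra over $\ik$.

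\textbf{Identifying ${\rm Fil}(\Vir^c_\hbar)$ with $\Vir^c$.} I will consider the elements $\tilde T := \hbar^{-1} T \in \Vir^c_\hbar \otimes_{\ik\fph} \ik(\!(\hbar)\!)$, which are $\IG_\hbar$-invariant. I then compute
\[
I_\lambda(\tilde T,\tilde T) = \hbar^{-2}\Big(\tfrac18 \hbar^2\lambda^4 c + \lambda^2\hbar T + \lambda\hbar\,\partial T\Big) = \tfrac18\lambda^4 c + \lambda^2 \tilde T + \lambda\,\partial\tilde T .
\]
This is exactly the Virasoro integral $\lambda$-bracket. The universal property of $\Vir^c$, as the vertex algebra freely strongly generated by such a $T$ (or equivalently the enveloping vertex algebra of the Virasoro vertex Lie algebra), gives a map $\Vir^c \to {\rm Fil}(\Vir^c_\hbar)$ sending $T \mapsto \tilde T$.

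\textbf{Bijectivity via PBW bases.} $\Vir^c_\hbar$ has a PBW basis of normally ordered monomials in the $\partial^i T$, which is free over $\ik\fph$. The invariants are therefore spanned by $\hbar^{-k}$ times monomials of length $k$, i.e. by normally ordered monomials in $\partial^i\tilde T$. These correspond bijectively to the PBW basis of $\Vir^c$, so the map is an isomorphism.

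\textbf{Minimal model case, and the main obstacle.} Taking ${\rm Fil}$ is exact on bounded-below graded, $\hbar$-torsion-free modules: it is localization followed by taking a graded piece. So ${\rm Fil}(\MM_{\hbar,n}) = {\rm Fil}(\Vir^c_\hbar)/{\rm Fil}(\II_{\hbar,n})$, and it remains to show that ${\rm Fil}(\II_{\hbar,n})$ maps onto $\II_n$. The singular vector generating $\II_{\hbar,n}$ is homogeneous of weight $n$, of the form $T^n + O(\hbar)$. Dividing by $\hbar^n$ gives an invariant state $\tilde T^n + \cdots$, which is a singular vector of $\Vir^{c}$ at $c=c_{2,2n+1}$ of conformal weight $2n$. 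By uniqueness of the singular vector at that weight, it generates $\II_n$. Ideals correspond under the equivalence of Proposition \ref{prop:Rees for hadic VOA}, so ${\rm Fil}(\II_{\hbar,n}) = \II_n$.

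The main obstacle is this last point. One must check that $\II_{\hbar,n}$ is \emph{defined} as the saturated ideal, so that the quotient $\MM_{\hbar,n}$ is $\hbar$-torsion-free and really is the Rees module of $\MM_n$ with respect to the induced filtration. If the paper's $\II_{\hbar,n}$ is only the ideal generated by the singular vector, I would instead define it as the Rees ideal $\bigoplus_k \hbar^k (F_k\cap \II_n)$ completed, and verify that it contains the stated generator.
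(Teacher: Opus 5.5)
Your argument is correct and is essentially the paper's. The paper uses the $\IG_\hbar$-equivariant embedding $\Vir^c_\hbar\to\Vir^c[\![\hbar]\!]$, $T\mapsto \hbar T$, which becomes an isomorphism after inverting $\hbar$. Your element $\tilde T=\hbar^{-1}T$, together with the PBW comparison, is exactly the inverse of that identification. For $\MM_n$ the paper simply notes that this embedding sends $\II_{\hbar,n}$ into $\hbar^n\II_n[\![\hbar]\!]$, which is your rescaled-generator argument.

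Your saturation worry is not needed for this statement. Tensoring with $\ik(\!(\hbar)\!)$ already kills any $\hbar$-torsion in $\MM_{\hbar,n}$, and localization followed by taking $\IG_\hbar$-invariants is exact.
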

\begin{proof}
	Since $\Vir^c_\hbar$ and $\MM_{\hbar,n}$ are positively graded by $\IG_\hbar$, by Lemma \ref{lem:filtered is a VOA} the invariants are a vertex algebra. We have a $\IG_\hbar$-equivariant embedding of $\hbar$-adic vertex algebras
	\begin{equation}
	\begin{split}
		\Vir^c_\hbar &\rightarrow \Vir^c[\![\hbar]\!] \\
		T &\mapsto \hbar T
	\end{split}
	\end{equation}
	which is an isomorphism after inverting $\hbar$. Since $\big(\Vir^c(\!(\hbar)\!)\big)^{\IG_\hbar}\cong \Vir^c$ we have the first desired isomorphism. The result for the minimal model follows since the embedding maps $\II_{\hbar,n}$ to $\hbar^n\II_n[\![\hbar]\!]$.
\end{proof}

We identify the classical limit of $\Vir^c_{\hbar}$ with $V=\JJ\ik[T]$ with vertex Poisson algebra structure
\[
[T_\lambda T] = 2\lambda T +\partial T~.
\]
By construction, $\Vir^c_{\hbar}$ is a graded quantization of $V$ for any $c\in \ik$.

Let $I_n\subset V$ be the differential ideal generated by $T^n$ and note that $\II_{\hbar,n}|_{\hbar=0}\cong I_n$. We write $M_n \coloneqq (\JJ \ik[T])/I_n \cong \JJ(\ik[T]/T^n)$. 

\begin{lemma}
	The vertex algebra $\MM_{\hbar,n}$ is a graded quantization of the vertex Poisson algebra $M_n$.
\end{lemma}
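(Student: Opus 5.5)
To prove that $\MM_{\hbar,n}$ is a graded quantization of $M_n$, I will check the conditions of Definition \ref{defn:chiral quantization} one at a time. The plan has four steps: (a) $\MM_{\hbar,n}$ is an $\hbar$-adic vertex algebra with a bounded-below $\IG_\hbar$-grading; (b) it is almost commutative; (c) its central fibre is isomorphic to $M_n$ as a commutative algebra with derivation; (d) the induced $\lambda$-bracket is the one on $M_n$. The grading comes for free: $\II_{\hbar,n}$ is generated by a homogeneous state ($T$ has weight one and $T^n+O(\hbar)$ is homogeneous of weight $n$), so the ideal is $\IG_\hbar$-stable. Hence the $\IG_\hbar$-action on $\Vir^c_\hbar$ descends to the quotient, and the grading is nonnegative.

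For (b) and (d), I will use that $\Vir^c_\hbar$ is already known to be a graded quantization of $V=\JJ\ik[T]$. Its integral $\lambda$-bracket modulo $\hbar$ is the commutative product, and $\frac1\hbar\frac{d}{d\lambda}I_\lambda$ reduces to $[T_\lambda T]=2\lambda T+\partial T$. Both properties pass to any quotient by an $\hbar$-adic vertex ideal, since the bracket on the quotient is induced from that on $\Vir^c_\hbar$. So the central fibre $\MM_{\hbar,n}/\hbar$ is the quotient vertex Poisson algebra $V/\overline{\II}$, where $\overline{\II}$ is the image of $\II_{\hbar,n}$ in $V=\Vir^c_\hbar/\hbar$.

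The main obstacle is (c): showing $\overline{\II}=I_n$, and that $\MM_{\hbar,n}$ is $\hbar$-torsion free, complete, and trivializable. The paper already records $\II_{\hbar,n}|_{\hbar=0}\cong I_n$, but this has to be interpreted as $(\II_{\hbar,n}+\hbar\Vir^c_\hbar)/\hbar\Vir^c_\hbar=I_n$ together with $\II_{\hbar,n}\cap\hbar\Vir^c_\hbar=\hbar\II_{\hbar,n}$. The second condition says precisely that $\MM_{\hbar,n}$ has no $\hbar$-torsion, so that $\MM_{\hbar,n}/\hbar\cong V/I_n=M_n$. I will argue through the Rees picture, using Lemma \ref{lem:filtered version of Virasoro} and Proposition \ref{prop:Rees for hadic VOA}.

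Via $T\mapsto\hbar T$, the algebra $\Vir^c_\hbar$ is the Rees module of $\Vir^c$ for the filtration $F_p$ spanned by monomials in $\partial^iT$ of length at most $p$ (the $R$-filtration). By the proof of Lemma \ref{lem:filtered version of Virasoro}, $\II_{\hbar,n}$ is identified with the Rees module of the induced filtration $F_p\cap\II_n$. A quotient of a Rees module by the Rees module of a sub-filtration is the Rees module of the quotient filtration, and so is $\hbar$-torsion free; its graded pieces are trivializable, and completeness follows from the bounded-below grading. Its associated graded is $\gr V/\gr\II_n$. This reduces (c) to the classical statement $\gr_F\II_n=I_n$, i.e.\ that the associated variety of $\MM_n$ is $\Spec\ik[T]/T^n$ with reduced-to-arc-space description $\JJ(\ik[T]/T^n)$.

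This classical statement holds because the singular vector has leading term $T^n$, giving $I_n\subset\gr\II_n$. The reverse inclusion follows from a character comparison: the character of $\MM_n$ (Andrews–Gordon) matches the Hilbert series of $\JJ\ik[T]/\langle T^n\rangle$, as in the known result that $\MM_{2,2n+1}$ is classically free. I will cite this (e.g.\ van Ekeren–Heluani / Arakawa) rather than reprove it. This citation is the step I expect to carry the real content.
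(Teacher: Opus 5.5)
Your strategy is sound, and it supplies more than the paper does. The paper gives no proof of this lemma. It relies only on the assertion $\II_{\hbar,n}|_{\hbar=0}\cong I_n$ made just before it, on the containment stated in the proof of Lemma \ref{lem:filtered version of Virasoro}, and on the remark about classical freeness. Your reduction is exactly what that assertion needs:
\begin{enumerate}[(i)]
\item the quotient of the Rees module by the Rees module of $F_\bullet\cap\II_n$ is $\hbar$-torsion free, with central fibre $V/\gr_F\II_n$;
\item $\gr_F\II_n=I_n$, because the symbol of the singular vector is $T^n$ and the characters of $\MM_n$ and $\JJ(\ik[T]/T^n)$ agree.
\end{enumerate}
The character comparison applies directly to your length filtration, since it preserves conformal weight. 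So you do not need to transport classical freeness from Li's filtration.

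One step is misjustified, however. The proof of Lemma \ref{lem:filtered version of Virasoro} only shows that the image of $\II_{\hbar,n}$ lies in $\hbar^n\II_n[\![\hbar]\!]$. Hence it lies in $\widetilde{\II}$, the $\hbar$-adic completion of $\bigoplus_m (F_m\cap\II_n)\hbar^m$. Since $\II_{\hbar,n}$ is defined as the ideal generated by the homogenized singular vector, the equality $\II_{\hbar,n}=\widetilde{\II}$ is exactly the torsion-freeness you flagged as the main obstacle. It cannot be read off from that lemma.

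The equality does follow from what you prove afterwards:
\begin{enumerate}[(i)]
\item The image of $\II_{\hbar,n}$ in $V=\Vir^c_\hbar/\hbar$ is a differential ideal containing $T^n$, so it contains $I_n=\gr_F\II_n$. This is the image of $\widetilde{\II}$, so the two images coincide.
\item $\widetilde{\II}$ is $\hbar$-saturated and the $\IG_\hbar$-grading is bounded below. So any homogeneous $x\in\widetilde{\II}$ of degree $m$ can be written $x=j+\hbar x'$, with $j\in\II_{\hbar,n}$ and $x'\in\widetilde{\II}$ homogeneous of degree $m-1$.
\item Induction on $m$ then gives $\widetilde{\II}=\II_{\hbar,n}$.
\end{enumerate}
With this step added, your argument is complete.
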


\begin{rmk}
	Unlike the chiral differential operators and current algebras of the previous example the Virasoro algebra is not a chiral quantization of a Poisson algebra, though it is a quantization of a vertex Poisson algebra.

	Moreover, the minimal models furnish a nice example of quantizations of vertex Poisson algebras, whose underlying commutative algebras are not smooth. This will lead to some challenges when we address their deformation theory in Section \ref{ssec:minimal models}.
\end{rmk}

\begin{rmk}
Every $(p,q)$-minimal model has a Zhu's $C_2$ algebra of the form $\ik[T]/T^m$ for some $m$. However, the boundary minimal models of form $(2,2n+1)$ are special in that they are classically free, \ie, there is an isomorphism between the associated graded of the Li filtration and the arcs to the Zhu's $C_2$ algebra.
\end{rmk}


\section{The deformation-obstruction complex}\label{defsec}

For the remainder of the text, unless stated otherwise, all operads will be operads on a single colour, internal to the symmetric monoidal categories $\Vect_\K$ or $\DGVect_\K$, which we will call plain or DG operads, respectively. We also identify plain operads with DG operads concentrated in cohomological degree zero, as usual.

\subsection{Koszul preliminaries}

First, let us recall the notion of a single coloured co-operad, dual to the definition of an operad recalled in Remark \ref{opsingcolrmk}.
\begin{defn}
	A single coloured co-operad is a counital, comonoid in the category $(\Ss\Mod,\circ)$, \ie, a tuple $(\CC, \Delta,\eta)$ where $\CC\in \Ss\Mod$, $\Delta: \CC\rightarrow \CC\circ \CC$ is a coassociative comultiplication---called the decomposition---with counit $\eta: \CC\rightarrow \id_\Ss$.


\end{defn}
Let $E\in \Ss\Mod$, similar to the free operad $\FF(E)$, the free cooperad on $E$, $\FF^c(E)$, is a cooperad structure on the same underlying $\Ss$-module as $\FF(E)$. We say that a cooperad $\CC$ is generated by $E$ if there exist some subspace of corelations $R\subset \FF^c(E)$ defining a cooperadic coideal, $(R)$, such that
\[
\CC \hookrightarrow \FF^c(E)\twoheadrightarrow \FF^c(E)/(R)~.
\]

The free operad admits a grading, $\FF(E) = \oplus_{n\in \IN} \FF(E)^{(n)}$, called the \emph{weight grading}, characterized by setting
\[
\FF(E)^{(0)} = (0, \ik,0,\dots)~, \qquad \FF(E)^{(1)} = E\subset \FF(E)~,
\]
and letting the weight be additive under composition. For a binary, free operad, the weight grading corresponds to the arity, up to a shift by one: $\FF(E)^{(n)}=\FF(E)(n+1)$. Similarly, $\FF^c(E)=\oplus_{n\in\IN} \FF^{c,(n)}(E)$ has the same weight grading.

\begin{defn}\label{quadopdefn} 
A quadratic operad, $\PP=\PP(E,R)$ is an operad that is presented as a quotient of the free operad, $\FF(E)$, on $E$ by a relation $R\into \FF(E)$ that factors as $R\into \FF(E)^{(2)}$.

Similarly, a quadratic cooperad $\CC = \CC(E,R)$ is a cooperad presented as a suboperad of the free cooperad $\FF^c (E)$ by a corelation $R\into \FF^c(E)$ that factors as $R\into\FF^{c,(2)}(E)\subset \FF^c(E)$.
\end{defn}
Since the (co)relations of a quadratic (co)operad are homogeneous of weight two, the weight grading descends to the quadratic (co)operad.

%
%
%

The operads $\Comm$, $\Lie$, $\Ass$, and $\Pois$ that we recalled in Section \ref{ssec:operad examples} are all examples of quadratic operads---in fact they are all binary quadratic operads.

Quadratic operads are nice in that they enjoy a kind of Koszul duality:

\begin{defn} 

The Koszul dual cooperad $\mc P^\ash$ of a quadratic operad $\mc P=\mc P(E,R)$ is the DG cooperad
\[ 
\mc P^\ash= \mc C(E[1],R[2]) \qquad \text{with} \qquad R[2] \to \mc F^c(E)^{(2)}[2] \cong \mc F^c(E[1])^{(2)}  ~.
\]

\end{defn}

We finish by recalling the cobar construction. Let $\CC=\CC(E,R)$ be a quadratic cooperad, then the cobar construction is the quasi-free DG operad
\[
\Omega\CC = \FF(\overline{\CC}[-1],d_{\Omega \CC})
\]
on the coideal $\overline{\CC} = \CC/\CC^{(0)}$ with differential $d_{\Omega\mc C}:\mc F( \overline{\mc C}[-1])\to \mc F( \overline{\mc C}[-1])[1]$ defined in terms of the decomposition $\Delta: \CC\rightarrow \CC\circ \CC$ as 

\[
d_{\Omega\mc C}|_{\overline{\mc C}[-1]} : \overline{\mc C}[-1] \xrightarrow{\Delta_{\mc C}[-1]}  \overline{\mc C}[-1] \circ \overline{\mc C}[-1] [1] \subset \mc F( \overline{\mc C}[-1])[1]  \ . 
\]
For the special case where $\CC = \PP^\ash$ for a quadratic operad $\PP=\PP(E,R)$, the cobar construction and Koszul duality are related by
\begin{prop} 
There is a natural projection of operads $\Omega\PP^\ash\rightarrow (\PP^\ash)^\ash\cong\PP$ inducing an isomorphism
\[ 
 H^0(\Omega \mc P^\ash ) \xrightarrow{ \cong} \mc P ~,
\]
on cohomology.
\end{prop}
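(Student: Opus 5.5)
We need to prove that for a quadratic operad $\mc P=\mc P(E,R)$ there is a natural projection $\Omega\mc P^\ash\to\mc P$ which induces an isomorphism $H^0(\Omega\mc P^\ash)\cong\mc P$. The approach is to work with the weight grading and the internal cohomological grading of the quasi-free operad $\Omega\mc P^\ash=\mc F(\overline{\mc P^\ash}[-1],d)$, and to identify its degree zero part together with the image of the differential landing there.

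\textbf{Step 1: gradings.} The weight $n$ component $(\mc P^\ash)^{(n)}$ is a subspace of $\mc F^c(E[1])^{(n)}$, so it sits in cohomological degree $-n$. After the desuspension $[-1]$, the generators $(\mc P^\ash)^{(n)}[-1]$ sit in degree $1-n$. Hence:
\begin{itemize}
\item[(a)] the weight one generators $E[1][-1]=E$ are in degree $0$;
\item[(b)] the weight two generators $R[2][-1]=R[1]$ are in degree $-1$;
\item[(c)] all higher weight generators are in strictly negative degree.
\end{itemize}
Since degrees are additive under composition, a tree monomial in $\mc F(\overline{\mc P^\ash}[-1])$ has degree $\sum_v(1-n_v)\le 0$. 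It follows that:
\begin{itemize}
\item[(a)] the degree $0$ part is exactly $\mc F(E)$, the trees whose vertices all have weight one;
\item[(b)] there is nothing in positive degree;
\item[(c)] the degree $-1$ part consists of trees with exactly one vertex labelled by $R[1]$ and all other vertices labelled by $E$.
\end{itemize}
Consequently $H^0=\mc F(E)/d(\Omega\mc P^\ash)^{-1}$.

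\textbf{Step 2: the differential on degree $-1$.} On a generator $r\in R[1]$, the map $d$ is induced by the infinitesimal part of the decomposition of $\mc P^\ash$ in weight two. This part sends $r$ to its image under the corelation inclusion $R[2]\to\mc F^c(E[1])^{(2)}$, which after the shifts is the original inclusion $R\hookrightarrow\mc F(E)^{(2)}$, up to a sign coming from Koszul conventions. Since $d$ is a derivation of the free operad and vanishes on $E$ (the decomposition of weight one elements is trivial), applying $d$ to a tree with one $R$-vertex substitutes $r$ into the surrounding $E$-tree. Thus the image of $d$ in degree $0$ is precisely the operadic ideal $(R)\subset\mc F(E)$. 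This gives $H^0(\Omega\mc P^\ash)=\mc F(E)/(R)=\mc P$.

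\textbf{Step 3: the projection.} The map $\Omega\mc P^\ash\to\mc P$ is defined on generators by sending $E$ identically to $E\subset\mc P$ and all generators of weight at least two to zero. We must check it is a chain map to $\mc P$, which carries zero differential and is concentrated in degree $0$. This holds because $d$ of a generator of weight two lands in $(R)$, which maps to zero, and $d$ of a generator of weight at least three lands in negative degree, which also maps to zero. By Step 2 the induced map on $H^0$ is the identity of $\mc F(E)/(R)$. The identification with $(\mc P^\ash)^\ash$ follows because the Koszul dual of $\mc C(E[1],R[2])$ recovers $(E,R)$ after undoing the shifts.

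\textbf{Main obstacle.} The step requiring the most care is Step 2: pinning down the infinitesimal decomposition of weight two elements of the cogenerated cooperad, and tracking signs under the shifts $[1]$, $[2]$ and $[-1]$. Signs do not affect the image, however, since $(R)$ is a linear subspace.
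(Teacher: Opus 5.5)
Your proof is correct and follows essentially the same route as the argument the paper cites (the discussion preceding Proposition 7.3.2 in Loday--Vallette). In both, the degree-$0$ part of $\Omega\mc P^\ash$ is identified with $\mc F(E)$, the degree-$(-1)$ part consists of trees with exactly one vertex labelled by $R[1]$, and the cobar differential, induced by the infinitesimal decomposition, maps this onto the ideal $(R)$. The only cosmetic difference is that Loday--Vallette use the syzygy grading $\sum_v(n_v-1)$ instead of the cohomological one; the two agree up to sign because $E$ is concentrated in degree $0$, which is implicit in the statement's use of $H^0$.
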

\begin{proof}
See the discussion preceding Proposition 7.3.2 in \cite{LV}.
\end{proof}

One might hope that $\Omega\PP^\ash$ provides a quasi-free resolution of $\PP$. Though this is not always the case, it is true for a special class of operads:
\begin{defn} 
An operad $\mc P$ is called \emph{Koszul} if it is quadratic and the natural projection $\Omega \mc P^\ash \to \mc P$ is a quasi-isomorphism.
\end{defn}

\begin{rmk}
	The familiar operads $\Comm$, $\Lie$, $\Ass$, and $\Pois$ are all Koszul.
\end{rmk}


\subsection{Maurer--Cartan elements}

Let $\CC$ be a cooperad and $\QQ$ be an operad, then
\[
\Hom_\Ss(\CC,\QQ)= \bigoplus_{n\in \IN} \Hom_{\Ss_n}(\CC(n),\QQ(n))~,
\]
has a natural convolution product $\star: \Hom_\Ss(\CC,\QQ)\otimes \Hom(\CC,\QQ) \rightarrow \Hom_\Ss(\CC,\QQ)$. In fact, this convolution product is naturally pre-Lie, \ie, its commutator defines a Lie bracket
\begin{equation}\label{defliebreqn} [ \cdot , \cdot ] : \Hom(\mc C, \mc Q)^{\otimes 2} \to \Hom(\mc C, \mc Q)  \qquad [f,g] = f\star g - g\star f  \ . 
\end{equation}

In summary:

\begin{prop} 
Let $\mc C$ be a DG cooperad and $\mc Q$ a DG operad. Then 
\[
\g_{\mc C,\mc Q}=\Hom_{\Ss}(\mc C, \mc Q) ~,
\]
is naturally a DG Lie algebra with respect to the convolution Lie bracket defined above, and the natural cohomological grading and differential induced by those on $\CC$ and $\QQ$.
\end{prop}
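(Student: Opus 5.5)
Write the convolution product explicitly first. For $f,g\in \Hom_\Ss(\mc C,\mc Q)$, set $f\star g$ to be the composite
\[
\mc C \xrightarrow{\Delta_{(1)}} \mc C\circ_{(1)}\mc C \xrightarrow{f\circ_{(1)} g} \mc Q\circ_{(1)}\mc Q \xrightarrow{\gamma_{(1)}} \mc Q ,
\]
where $\Delta_{(1)}$ is the infinitesimal (partial) decomposition. It is the component of $\Delta$ with exactly one nontrivial lower vertex, obtained by projecting $\mc C\circ\mc C$ onto the part linear in the second factor, using the counit $\eta$ on all other inputs. Likewise $\gamma_{(1)}$ is the infinitesimal composition of $\mc Q$, which is the restriction of $\mu$ along the unit $u$. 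The differential is the usual one on internal homs, $\partial f = d_{\mc Q}\circ f -(-1)^{|f|} f\circ d_{\mc C}$. Since $\Delta$, $\eta$, $\mu$ and $u$ are $\Ss$-equivariant, $\star$ preserves the subspace of $\Ss$-equivariant maps, so it is well defined on $\g_{\mc C,\mc Q}$.

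The main step is to show that $\star$ is (graded) right pre-Lie. This means the associator $(f\star g)\star h - f\star(g\star h)$ is graded symmetric in $g,h$. I would expand both terms as sums over two-level trees with three vertices labelled $f,g,h$. Coassociativity of $\Delta$ and associativity of $\mu$ show that $(f\star g)\star h$ is the sum over two kinds of trees. In the first kind $h$ is grafted onto an input of $g$, which sits on an input of $f$; these are the \emph{sequential} terms. In the second kind $g$ and $h$ are grafted onto two distinct inputs of $f$. The composite $f\star(g\star h)$ is exactly the sum of the sequential terms, so the associator equals the sum over trees with $g$ and $h$ on disjoint inputs of $f$. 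The symmetric group equivariance of $\mu$ and $\Delta$ makes this parallel sum symmetric under exchanging $g$ and $h$, with the Koszul sign $(-1)^{|g||h|}$ coming from the symmetric monoidal structure on $\DGVect_\K$.

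A standard lemma then says that the graded commutator of a graded pre-Lie product satisfies antisymmetry and the Jacobi identity. One expands the Jacobiator as a signed sum of associators, and the symmetry property makes these cancel in pairs. Finally, $\partial$ is a derivation of $\star$ because $\Delta$ and $\mu$ are chain maps, that is, they commute with the differentials of $\mc C$ and $\mc Q$. Hence $\partial$ is a derivation of the bracket, and $\partial^2=0$ is inherited from $d_{\mc C}^2=d_{\mc Q}^2=0$. This gives a DG Lie algebra.

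The hardest step is the tree bookkeeping and the signs in the pre-Lie identity. Here I would not redo the computation but would cite the treatment of convolution pre-Lie algebras in \cite{LV} (Section 6.4). Alternatively, one can reduce to the statement that $\Hom_\Ss(\mc C,\mc Q)$ is a convolution operad whose total space carries the canonical pre-Lie structure on the sum of the components of any operad.
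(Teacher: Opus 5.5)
Your proposal is correct and is essentially the paper's route: the paper's proof is just a citation of Proposition 6.4.5 of \cite{LV}, and your outline (convolution product via the infinitesimal (de)composition maps with the counit on the remaining inputs, the right pre-Lie identity from splitting trees into sequential and parallel ones, the commutator of a pre-Lie product being Lie, and $\partial$ being a derivation because the structure maps are chain maps) is exactly the argument behind that citation. Taking the \emph{graded} commutator $f\star g-(-1)^{|f||g|}g\star f$, as you do, is the correct reading of the bracket displayed just before the statement, where the Koszul sign is omitted.
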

\begin{proof} 
See Proposition 6.4.5 in \cite{LV}.
\end{proof}

In particular, if $\PP$ is a quadratic operad we write 
\[
\gf_{\PP,\QQ} \coloneqq \gf_{\PP^\ash,\QQ}=\Hom_\Ss(\PP^\ash,\QQ)~,
\]
with cohomological grading,
\[
\gf_{\PP,\QQ}=\bigoplus_{n=1}^\infty \gf_{\PP,\QQ}(n)[1-n] \qquad\text{where}\qquad \gf_{\PP,\QQ}(n) = \Hom_{\Ss_n}(\PP^\ash(n),\QQ(n))~.
\]

\subsubsection{Algebra structures and Maurer--Cartan elements} 



Recall the weight grading on the quadratic operad $\PP$. The DG Lie algebra $\g_{\mc P,\mc Q}$ thus inherits an auxiliary grading, also called the \emph{weight grading}, given by
\[ 
\g_{\mc P,\mc Q}=\bigoplus_{\ell\in \IN}\g_{\mc P,\mc Q}^{(\ell)} \qquad\text{where}\qquad  \g_{\mc P,\mc Q}^{(\ell)}=\Hom_{\Ss}({\mc P^{\ash}}^{ (\ell)} ,\mc Q) \ .
\]
\begin{defn}
	A Maurer--Cartan element, or twisting homomorphism, $\alpha\in \gf^1_{\CC,\QQ}$ is an element satisfying
	\[
	d\alpha + \tfrac12 [\alpha,\alpha]=0~.
	\]
	We let $\MC_\infty(\g_{\mc C,\mc Q})\subset \g_{\mc C,\mc Q}^1$ denote the subvariety of Maurer--Cartan elements. In particular, for the space of Maurer--Cartan elements of pure weight one, we drop the subscript, \ie,
	\[
	\MC(\gf_{\PP,\QQ})\coloneqq \MC_\infty(\gf_{\PP,\QQ})\cap \gf_{\PP,\QQ}^{(1)}~.
	\]
\end{defn}


\begin{prop}\label{MCplainprop}

Let $\PP$ be a quadratic, Koszul, DG operad	and $\QQ$ some arbitrary single coloured DG operad, then
\[
\Alg_\PP(\QQ) \cong \MC\big(\gf_{\PP,\QQ}\big)\subset \gf_{\PP,\QQ}^{1,(1)}~.
\]
\end{prop}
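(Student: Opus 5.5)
The plan is to pass through the chain of bijections
\[
\Alg_\PP(\QQ)=\Hom_{\Op}(\PP,\QQ)\cong \Hom_{\Op}(\Omega\PP^\ash,\QQ)_{\textup{wt}}\cong \MC(\gf_{\PP,\QQ})~,
\]
where the middle term denotes maps out of the cobar construction that factor through the weight-one part in the appropriate sense. This is the standard route of Loday--Vallette: first identify twisting morphisms with maps out of the cobar construction, then use the quadratic presentation to cut down to weight one.

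\textbf{Step 1: maps out of the cobar construction.} Since $\Omega\PP^\ash=\FF(\overline{\PP^\ash}[-1],d_{\Omega\PP^\ash})$ is quasi-free, a map of graded operads $\Omega\PP^\ash\to\QQ$ is, by Definition \ref{freeopeg}, the same as a map of $\Ss$-modules $\overline{\PP^\ash}[-1]\to\QQ$. Equivalently, it is a degree-one element $\alpha\in\gf_{\PP,\QQ}^1$. Compatibility with the differentials only needs to be checked on generators. There it says $d_\QQ\circ\alpha-\alpha\circ d_{\PP^\ash}$ plus the composite of $\alpha\circ\alpha$ with the infinitesimal decomposition of $\PP^\ash$ vanishes. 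Unwinding the definition of the convolution product $\star$, this composite is exactly $\alpha\star\alpha=\tfrac12[\alpha,\alpha]$, so the compatibility condition is the Maurer--Cartan equation $d\alpha+\tfrac12[\alpha,\alpha]=0$. I would cite Loday--Vallette (Theorem 6.5.7 / Proposition 6.5.6 there) for this bijection $\Hom_{\DG\Op}(\Omega\mc C,\QQ)\cong\MC_\infty(\gf_{\mc C,\QQ})$, only keeping track of signs and shifts.

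\textbf{Step 2: restrict to weight one.} Take $\alpha$ concentrated in weight one, so it is a map $E[1]=\PP^{\ash,(1)}\to\QQ$ of degree one, that is, a map $E\to\QQ$ of degree zero. In weight two, the component of $\tfrac12[\alpha,\alpha]$ on $\PP^{\ash,(2)}=R[2]$ is the composite $R\into\FF(E)^{(2)}\to\QQ$ induced by $\alpha$. Both $d\alpha$ and $[\alpha,\alpha]$ in other weights vanish for weight reasons, since the differentials of $\PP$ and $\QQ$ preserve weight and $\alpha$ has weight one. Hence $\alpha$ is Maurer--Cartan exactly when the induced map $\FF(E)\to\QQ$ kills $R$, which by the universal property of $\FF(E)$ means it descends to $\PP=\FF(E)/(R)$ compatibly with differentials. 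This gives the bijection with $\Hom_{\Op}(\PP,\QQ)$ directly. It is also the composite of Step 1 with precomposition along the projection $\Omega\PP^\ash\to\PP$, because that projection sends $\overline{\PP^\ash}^{(\geq2)}[-1]$ to zero.

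\textbf{Main obstacle.} The delicate point is the sign and degree bookkeeping in Step 2. One must check that the suspensions $E[1]$ and $R[2]$ produce no stray signs, so that the weight-two component of $\tfrac12[\alpha,\alpha]$ is precisely the relation map, including the factor $\tfrac12$ and the $\Ss$-equivariance. I would handle this with the explicit formula for the partial decomposition $\PP^{\ash,(2)}\to\PP^{\ash,(1)}\circ_{(1)}\PP^{\ash,(1)}$. Note that Koszulness is not actually needed for the bijection; only quadraticity is used, with Koszulness ensuring the homotopical interpretation.
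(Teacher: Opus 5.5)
Your proposal is correct. Your Step 1 is the same as the paper's first step: the identification $\Hom_{\Op}(\Omega\PP^\ash,\QQ)\cong\MC_\infty(\gf_{\PP,\QQ})$, obtained from the free--forgetful adjunction and the Loday--Vallette twisting-morphism bijection.

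The routes diverge at the key step.
\begin{itemize}
\item The paper invokes Koszulness and a cited result of Loday--Vallette to assert that $\Alg_\PP(\QQ)$ embeds in $\Alg_{\Omega\PP^\ash}(\QQ)$, with image exactly the pure weight-one Maurer--Cartan elements.
\item You prove this directly. You split the Maurer--Cartan equation for a weight-one $\alpha$ into two parts. The weight-one part $d\alpha=0$ says $\alpha\colon E\to\QQ$ is a chain map. The weight-two part, evaluated on $\PP^{\ash,(2)}=R[2]$, says the induced map $\FF(E)\to\QQ$ kills $R$. The universal property of $\FF(E)$ then finishes the argument.
\end{itemize}
Your sentence ``Maurer--Cartan exactly when the induced map kills $R$'' should mention the weight-one condition explicitly. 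You do recover it with ``compatibly with differentials'', so this is a matter of wording, not a gap.

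Your argument is more elementary and self-contained, and it makes Step 1 optional. It also correctly shows that only quadraticity is used. Even in the paper's framing, precomposition with the surjection $\Omega\PP^\ash\to\PP$ is injective for any quadratic $\PP$, so Koszulness plays no role in the bijection itself.

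What the paper's route buys is conceptual: it places $\Alg_\PP(\QQ)$ inside the space of all $\Omega\PP^\ash$-algebras, i.e.\ homotopy $\PP$-algebras. That is where Koszulness matters, since it makes $\Omega\PP^\ash$ a resolution of $\PP$. This justifies treating the full $\MC_\infty(\gf_{\PP,\QQ})$ as the homotopy-invariant object, as in the paper's subsequent remark.
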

\begin{proof}
Recall the cobar construction for $\PP^\ash$, $\Omega \PP^\ash$---since it is quasi-free 
\[
\Alg_{\Omega\PP^\ash}(\QQ)=\Hom_{\Op_\DG}(\Omega\PP^\ash,\QQ) \cong \Hom_{\Ss\Mod_\DG}(\overline{\PP}^\ash,\QQ)[1] \cong \MC(\gf_{\PP,\QQ})~,
\]
where the first isomorphism is from the free-forgetful adjunction and the second isomorphism is precisely the content of Theorem $6.5.10$ of \cite{LV}.

From Proposition 10.1.7 of \emph{loc.\ cit.}, for a Koszul operad $\Alg_{\PP}(\QQ)\hookrightarrow \Alg_{\Omega\PP^\ash}(\QQ)$ and in terms of Maurer-Cartan elements this corresponds to the subvariety of Maurer--Cartan elements that are of pure weight one, \ie,
\[
\Alg_{\PP}(\QQ) \cong \MC(\gf_{\PP,\QQ})\subset \gf_{\PP,\QQ}^{1,(1)}~.
\]

\end{proof}

\begin{rmk}
	The subvariety of all Maurer--Cartan elements $\MC(\gf_{\PP,\QQ})$ corresponds to the space of $\Omega \PP^\ash$ structures, which are homotopy algebras.
\end{rmk}

\subsubsection{Graded Maurer--Cartan elements}

Given two graded operads $\mc P$ and $\mc Q$, an algebra $\varphi \in \Alg_{\mc P}(\mc Q)=\Hom_{\Op_\K}(\mc P, \mc Q)$ is called graded if it corresponds to a map of graded operads, that is, the underlying maps of $\K[\mf S_n]$-modules respect the gradings at each arity. We let $\Alg_{\mc P}^\gd(\mc Q)=\Hom_{\Op_\K^\gd}(\mc P, \mc Q)$ denote the space of graded $\mc P$-algebras internal to $\mc Q$.

Under the preceding hypotheses, the DG Lie algebra $\g_{\mc P, \mc Q}$ inherits an auxiliary grading coming from those on $\mc P$ and $\mc Q$, which we denote by
\[ \g_{\mc P, \mc Q}= \bigoplus_{j\in \Z} \g_{\mc P, \mc Q}^{\langle j \rangle } \ , \]
and with which the DG Lie algebra structure maps are naturally compatible, that is, the Lie bracket and differential have degree $0$. In this setting, we obtain the following identification:

\begin{prop} \label{prop:graded MC elements}
An element $ \Alg_{\mc P}(\mc Q)=\MC(\gPQ) \subset \g_{\mc P,\mc Q}^{1,(1)}$ defines a graded algebra structure, that is, $\varphi \in \Alg_{\mc P}^\gd(\mc Q)$, if and only if the corresponding Maurer--Cartan element is of degree $0$. Equivalently, the canonical identification of Proposition \ref{MCplainprop} defines an identification
	\[ \Alg_{\mc P}^\gd(\mc Q) =\Hom_{\Op_\DG^\gd}(\mc P, \mc Q)  \xrightarrow{\cong } \MC_\gr(\g_{\mc P, \mc Q}) \subset \g_{\mc P, \mc Q}^{1 ,(1),\langle 0 \rangle}  \ .\]
\end{prop}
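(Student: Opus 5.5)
The plan is to reduce everything to the identification of Proposition \ref{MCplainprop} and then track the auxiliary grading through each step of it. Given $\varphi\in\Alg_{\mc P}(\mc Q)=\Hom_{\Op_\K}(\mc P,\mc Q)$, its Maurer--Cartan element $\alpha_\varphi\in\g^{1,(1)}_{\mc P,\mc Q}$ is obtained in two stages. First, the composite $\Omega\mc P^\ash\to\mc P\xrightarrow{\varphi}\mc Q$ is restricted to the generators $\overline{\mc P}^\ash[-1]$. On the weight one part $\mc P^{\ash,(1)}=E[1]$ this restriction is simply $\varphi|_E$, suitably shifted. Since $\mc P$ is quadratic, $\varphi$ is determined by $\varphi|_E\in\Hom_{\Ss}(E,\mc Q)$, and $\alpha_\varphi$ is precisely this element viewed in $\Hom_{\Ss}(\mc P^{\ash,(1)},\mc Q)$.

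The first step is to fix the gradings. I take the grading on $\mc P$ to be induced from a grading on the generators $E=\bigoplus_j E^{\langle j\rangle}$, extended additively under composition, as in the Poisson example. For this to make sense the relations $R$ must be homogeneous; I will assume this, as part of what "graded operad" means for quadratic $\mc P$. Then $\mc F(E)$, $\mc F^c(E[1])$, the Koszul dual $\mc P^\ash=\mc C(E[1],R[2])$ and the cobar construction $\Omega\mc P^\ash$ all inherit gradings. The decomposition map and the cobar differential preserve these gradings, and so does the quasi-isomorphism $\Omega\mc P^\ash\to\mc P$. The grading on $\g_{\mc P,\mc Q}$ assigns to $f:\mc P^\ash(n)^{\langle i\rangle}\to\mc Q(n)^{\langle k\rangle}$ the degree $k-i$. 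With this convention the convolution product $\star$ and the differential preserve degree, because both are built from the homogeneous decomposition of $\mc P^\ash$ and the homogeneous composition of $\mc Q$.

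Next comes the equivalence. Because $\varphi$ is determined by $\varphi|_E$ and the grading on $\mc P$ is generated by $E$, the map $\varphi$ respects the gradings in every arity if and only if $\varphi|_E:E^{\langle j\rangle}\to\mc Q(2)^{\langle j\rangle}$ for all $j$. The forward direction is immediate. For the converse, any element of $\mc P(n)^{\langle j\rangle}$ is a sum of compositions of generators whose degrees add to $j$, and $\varphi$ sends each such composition to a composition in $\mc Q$ of the same total degree. This uses that the composition maps of the graded operad $\mc Q$ are homogeneous of degree $0$. The condition on $\varphi|_E$ is in turn exactly the statement that $\alpha_\varphi$ has degree $0$ in $\g^{\langle\cdot\rangle}_{\mc P,\mc Q}$. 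Combining these gives $\Alg^\gd_{\mc P}(\mc Q)\cong\MC(\g_{\mc P,\mc Q})\cap\g^{1,(1),\langle0\rangle}_{\mc P,\mc Q}=\MC_\gr$.

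The main obstacle I expect is bookkeeping rather than anything conceptual. The degree shift $E\mapsto E[1]$ and the suspensions in $\mc P^\ash$ act only on the cohomological grading and must not be allowed to shift the auxiliary grading. Relatedly, the identification $\mc P^{\ash,(1)}\cong E[1]$ must be checked to be compatible with the $\langle\cdot\rangle$-grading. I would handle both by declaring explicitly that the auxiliary grading is independent of the cohomological one, and by checking that the cobar differential and the map $\Omega\mc P^\ash\to\mc P$ of Proposition 7.3.2 in \cite{LV} preserve it on generators.
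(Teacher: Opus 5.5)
Your proposal is correct and takes the same approach as the paper, whose proof is just ``track the grading through the proof of Proposition \ref{MCplainprop}''. You carry out that tracking explicitly: the weight-one Maurer--Cartan element is $\varphi|_E$, the grading on $\mc P$ is generated by $E$ with homogeneous relations and is independent of the cohomological grading, and so $\varphi$ is graded exactly when $\varphi|_E$ has degree $0$.
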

\begin{proof} 
This follows from tracking the grading in the proof of Proposition \ref{MCplainprop}.

\end{proof}

\subsubsection{The twisted DG Lie algebra} 

Given $\varphi \in \Alg_{\mc P}(\mc Q)$ viewed as a Maurer--Cartan element, the deformations of $\varphi$ are controlled by an analogous DG Lie algebra, defined as follows:

\begin{defn}\label{gvarphidef} 
The twisted DG Lie algebra $\g^\varphi_{\mc P,\mc Q}$ is defined by
	\[  
	\g^\varphi_{\mc P,\mc Q} = \left( \Hom_{\Ss}(\mc P^\ash ,\mc Q) \ , \  [ \cdot , \cdot ]   \ , \  d_\varphi  \right) 
	\]
where $[ \cdot , \cdot ]:\Hom(\mc P^\ash, \mc Q)^{\otimes 2} \to \Hom(\mc P^\ash, \mc Q) $ is the bracket defined in Equation \ref{defliebreqn} and
\begin{equation}\label{dphieqn}
	d_\varphi:\g^\varphi_{\mc P,\mc Q} \to \g^\varphi_{\mc P,\mc Q} [1] \qquad \text{is defined by} \qquad d_\varphi(\cdot) = d(\cdot) + [\varphi , \cdot] \ .
\end{equation}
\end{defn}

The twisted DG Lie algebra controls deformations of $\varphi$ in the following sense.
\begin{prop} 
For any $\mc P$-algebra structure $\varphi\in\Alg_{\mc P}(\mc Q)\cong \MC(\g_{\mc P, \mc Q})^{(1)}$, we have
\[ \alpha \in \MC(\g_{\mc P,\mc Q}^\varphi) \qquad \text{if and only if}\qquad \varphi+\alpha \in \MC(\g_{\mc P,\mc Q})  \ , \]
and the resulting deformed DG Lie algebras are naturally equal, that is, we have $(\g_{\mc P,\mc Q}^\varphi)^\alpha = \g_{\mc P,\mc Q}^{\varphi+\alpha}$.
\end{prop}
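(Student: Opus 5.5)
The statement is the standard twisting lemma, and I would prove it by a direct computation in the DG Lie algebra $\g_{\mc P,\mc Q}$, using only that it is a DG Lie algebra: $d^2=0$, $d$ is a derivation of the bracket, and the bracket is graded skew-symmetric and satisfies the graded Jacobi identity.

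\textbf{Step 1: $d_\varphi$ is a differential.} Since $\varphi\in\g^{1}_{\mc P,\mc Q}$ has odd degree, graded Jacobi gives
\[
[\varphi,[\varphi,x]]=\tfrac12[[\varphi,\varphi],x].
\]
Using this together with the derivation property $d[\varphi,x]=[d\varphi,x]-[\varphi,dx]$, we get
\[
d_\varphi^2 x=[d\varphi+\tfrac12[\varphi,\varphi],x]=0,
\]
because $\varphi$ is Maurer--Cartan. Moreover $d_\varphi$ is a derivation of the bracket, since $d$ is one and $[\varphi,\cdot]$ is one by Jacobi. Hence $\g^\varphi_{\mc P,\mc Q}$ is a DG Lie algebra, so its Maurer--Cartan equation makes sense.

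\textbf{Step 2: the equivalence of Maurer--Cartan equations.} For $\alpha\in\g^1$, bilinearity and the symmetry $[\alpha,\varphi]=[\varphi,\alpha]$ for odd elements give
\[
d(\varphi+\alpha)+\tfrac12[\varphi+\alpha,\varphi+\alpha]
=\big(d\varphi+\tfrac12[\varphi,\varphi]\big)+d\alpha+[\varphi,\alpha]+\tfrac12[\alpha,\alpha].
\]
The first bracket vanishes, and what remains is exactly $d_\varphi\alpha+\tfrac12[\alpha,\alpha]$. So $\alpha$ satisfies the Maurer--Cartan equation in $\g^\varphi_{\mc P,\mc Q}$ if and only if $\varphi+\alpha$ satisfies it in $\g_{\mc P,\mc Q}$.

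One caveat concerns the notation $\MC(\g_{\mc P,\mc Q})$, which the paper reserves for weight-one elements. If $\varphi+\alpha$ is required to be of weight one, this means $\alpha\in\g^{(1)}$, and the computation is unchanged. Otherwise one reads $\MC$ as $\MC_\infty$ throughout. I would note this in a sentence.

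\textbf{Step 3: equality of the twisted DG Lie algebras.} The underlying graded Lie algebras of $(\g^\varphi_{\mc P,\mc Q})^\alpha$ and $\g^{\varphi+\alpha}_{\mc P,\mc Q}$ coincide by definition. For the differentials,
\[
(d_\varphi)_\alpha=d+[\varphi,\cdot]+[\alpha,\cdot]=d+[\varphi+\alpha,\cdot]=d_{\varphi+\alpha}
\]
by linearity of the bracket.

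There is no real obstacle. The only care needed is with Koszul signs in Step 1: the identity $[\varphi,[\varphi,x]]=\tfrac12[[\varphi,\varphi],x]$ for odd $\varphi$ and the sign in the derivation property. Both are standard, for example from Chapter 6 of \cite{LV}.
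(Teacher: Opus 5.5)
Your proposal is correct and takes the same approach as the paper, which proves this only by saying it follows from straightforward calculations (citing Proposition 12.2.4 of \cite{LV} for the first claim); your Steps 1--3 carry out exactly those calculations, and the Koszul-sign identities in them are right. Your caveat about the paper's weight-one convention for $\MC$ is a fair point that the paper glosses over: either take $\alpha\in\g^{(1)}$, or read $\MC$ as $\MC_\infty$ throughout.
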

\begin{proof} These follow from straightforward calculations; the former is Proposition 12.2.4 of \cite{LV}.
\end{proof}
Iterating the preceding Proposition, we deduce:
\begin{coro} 
Let $\varphi\in\Alg_{\mc P}(\mc Q)\cong \MC(\g_{\mc P, \mc Q})$ and $\alpha \in \MC(\g_{\mc P,\mc Q}^\varphi)$. Then we have
\[
\beta \in \MC(\g_{\mc P,\mc Q}^{\varphi+\alpha}) \qquad \text{if and only if}\qquad \alpha+\beta \in \MC(\g_{\mc P,\mc Q}^\varphi) \ .  
\]
\end{coro}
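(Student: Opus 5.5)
The plan is to apply the preceding Proposition twice, first to $\varphi$ and then to $\varphi+\alpha$, and compare the two Maurer--Cartan conditions inside $\g_{\mc P,\mc Q}$.

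First, since $\alpha\in\MC(\g^\varphi_{\mc P,\mc Q})$, the Proposition gives $\varphi+\alpha\in\MC(\g_{\mc P,\mc Q})$. It also identifies the twisted DG Lie algebra $(\g^\varphi_{\mc P,\mc Q})^\alpha$ with $\g^{\varphi+\alpha}_{\mc P,\mc Q}$. One caveat: $\varphi+\alpha$ need not be of pure weight one, so it is not literally an element of $\Alg_{\mc P}(\mc Q)$. The proof of the Proposition is a formal identity in any DG Lie algebra, so it holds for an arbitrary Maurer--Cartan element of $\MC_\infty(\g_{\mc P,\mc Q})$. I would note explicitly that this weight-free version is the one being invoked.

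Second, with the generalized Proposition applied to the Maurer--Cartan element $\varphi+\alpha$, we get
\[
\beta\in\MC(\g^{\varphi+\alpha}_{\mc P,\mc Q}) \iff \varphi+\alpha+\beta\in\MC(\g_{\mc P,\mc Q}).
\]
Applying the Proposition once more, to $\varphi$ with the element $\alpha+\beta$, gives
\[
\varphi+(\alpha+\beta)\in\MC(\g_{\mc P,\mc Q}) \iff \alpha+\beta\in\MC(\g^\varphi_{\mc P,\mc Q}).
\]
Chaining these two equivalences yields the claim.

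As a sanity check, one can also verify the claim directly. Expanding $d_\varphi(\alpha+\beta)+\tfrac12[\alpha+\beta,\alpha+\beta]$ and using bilinearity and graded symmetry of the bracket on degree-one elements gives
\[
\bigl(d_\varphi\alpha+\tfrac12[\alpha,\alpha]\bigr)+\bigl(d_{\varphi+\alpha}\beta+\tfrac12[\beta,\beta]\bigr).
\]
The first summand vanishes by hypothesis, so the whole expression vanishes exactly when the second does. The only point needing care is the weight caveat above, and it is handled by stating the Proposition for arbitrary Maurer--Cartan elements.
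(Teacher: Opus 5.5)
Your proposal is correct and follows the paper, which obtains the corollary simply by iterating the preceding Proposition: first for $\varphi$, then for the Maurer--Cartan element $\varphi+\alpha$. Your remark that the Proposition is a formal identity in any DG Lie algebra, together with your direct expansion check, is sound and covers the weight bookkeeping that the paper leaves implicit.
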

In other words, further deformations of the deformed algebra $\varphi+\alpha$ are equivalent to deformations of the original algebra. Equivalently, given deformations $\alpha_1$ and $\alpha_2$ of $\varphi$, we obtain that $\beta = \alpha_2-\alpha_1$ a deformation of $\varphi+\alpha_1$ to $\varphi+\alpha_2$, and conversely.

We finish by defining the operadic deformation-obstruction complex of $\varphi \in \Alg_{\mc P}(\mc Q)$, a $\mc P$-algebra internal to $\mc Q$:

We now introduce the operadic deformation-obstruction complex of $\varphi\in \Alg_{\PP}(\QQ)$, a $\PP$-algebra internal to $\QQ$. As we shall see in \ref{mapdefsec}, this complex controls the deformation theory of $\varphi$.

\begin{defn}\label{defobscxdefn} The operadic deformation-obstruction complex of $\varphi \in \Alg_{\mc P}(\mc Q)$ is the cochain complex
\[ C^\bullet_{\mc P,\mc Q}(\varphi)=(\g^\varphi_{\mc P,\mc Q} \ , \ d_\varphi ) \qquad\text{where}\qquad  d_\varphi:\g^\varphi_{\mc P,\mc Q} \to \g^\varphi_{\mc P,\mc Q} [1]   \]
is the differential defined in Equation \ref{dphieqn}, and denote by $Z^\bullet_{\mc P,\mc Q}(\varphi)$ and $H^\bullet_{\mc P, \mc Q}(\varphi)$ the subcomplex of cocycles and the cohomology of $C^\bullet_{\mc P,\mc Q}(\varphi)$, respectively.
\end{defn}

Given $\varphi \in \Alg_{\mc P}^\gd(\mc Q)$, the auxiliary grading is also compatible with the DG Lie algebra $\g_{\mc P,\mc Q}^\varphi$, since the modified differential $d_\varphi$ is still of degree $0$ with respect to the auxiliary grading, as the Maurer--Cartan element corresponding to $\varphi$ is itself of degree $0$. In particular, the deformation complex inherits a grading, which is preserved by the differential and thus induces a grading on cohomology,
\[ 
C^\bullet_{\mc P,\mc Q}(\varphi) = \bigoplus_{j\in \Z} C^{\bullet,\langle j\rangle}_{\mc P,\mc Q}(\varphi)
\qquad \text{and} \qquad 
H^\bullet_{\mc P,\mc Q}(\varphi) = \bigoplus_{j\in \Z} H^{\bullet,\langle j\rangle}_{\mc P,\mc Q}(\varphi)~. 
\]

\subsection{Deformation-obstruction complex for algebras of the classical kind}
Throughout this section we fix a vector space $A\in \Vect_\ik$ and set $\QQ= \cEnd_{\Vect_\ik}(A)$ to be the endomorphism operad of $A$. 

\subsubsection{Associative algebras} 

\begin{prop}\label{Hoccxprop}
The deformation-obstruction complex of Definition \ref{defobscxdefn} for $\mc P=\Ass_\K$ and $\mc Q=\End_{ \Vect_\ik}(A)$ is given by the usual complex of (reduced) Hochschild cochains, that is,
\[ C^\bullet_{\mc P,\mc Q}(\varphi) = C^\bullet_{\textup{Hoc}}(A,m_\varphi) \qquad \text{where}\qquad C^\bullet_{\textup{Hoc}}(A,m_\varphi) = \left( \bigoplus_{n=1}^\infty \Hom(A^{\otimes n}, A)[1-n] \ , \ d_\textup{Hoch} \right)  \ , \]
for $m_\varphi:A^{\otimes 2} \to A$ the associative algebra structure map and $d_\text{Hoch}:C^\bullet_{\textup{Hoc}}(A,m_\varphi)\to C^\bullet_{\textup{Hoc}}(A,m_\varphi)[1]$ the usual Hochschild differential determined by $m_\varphi$.
\end{prop}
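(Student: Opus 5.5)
The plan is to compute each ingredient of $\g^\varphi_{\Ass,\End(A)}$ explicitly: the Koszul dual cooperad $\Ass^\ash$, the underlying graded vector space $\Hom_\Ss(\Ass^\ash,\End_{\Vect_\ik}(A))$, the convolution bracket, and the twisted differential $d_\varphi=[\varphi,\cdot]$. Since $\End_{\Vect_\ik}(A)$ has zero differential, $d_\varphi$ reduces to $[\varphi,\cdot]$. In each case we compare with the classical Hochschild (Gerstenhaber) data.

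\textbf{Step 1: the Koszul dual cooperad.} Recall that $\Ass$ is quadratic with generators $E=\ik[\Ss_2]\cdot m$, and that it is self-dual up to suspension. Concretely, $\Ass^\ash(n)$ is one-dimensional as a free $\ik[\Ss_n]$-module, i.e.\ $\Ass^\ash(n)\cong \ik[\Ss_n]$ (twisted by the sign representation, depending on conventions), concentrated in cohomological degree $1-n$. Its generator $\mu_n$ corresponds to the unique planar corolla. The decomposition map sends $\mu_n$ to the signed sum over all planar ways of inserting $\mu_q$ into the $i$-th input of $\mu_p$, with $p+q-1=n$.

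This can be checked from the definition $\Ass^\ash=\mc C(E[1],R[2])$. The point is that the corelations are spanned by the single associator corelation (weight two), whose cofree closure produces exactly the planar trees with all internal vertices contracted.

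\textbf{Step 2: the underlying graded space.} Using $\Ss_n$-equivariance and freeness of $\ik[\Ss_n]$, we get
\[ \Hom_{\Ss_n}(\Ass^\ash(n),\Hom(A^{\otimes n},A))\cong \Hom(A^{\otimes n},A) ,\]
where a map is identified with the value $f(\mu_n)$, placed in degree $n-1$. Taking the direct sum over $n\geq1$ gives the stated graded space. The reduced/normalized version arises because we use the weight-graded coaugmented cooperad with no arity-zero part.

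\textbf{Step 3: bracket and differential.} Under the identification of Step 2, the convolution pre-Lie product $f\star g$ evaluated on $\mu_n$ is the composite of $\mu_n \mapsto \sum \mu_p\circ_i\mu_q$ with $f\circ_i g$. This is exactly the Gerstenhaber pre-Lie product $f\circ g=\sum_i \pm f\circ_i g$. The Maurer--Cartan element $\varphi$ of Proposition \ref{MCplainprop} is the cochain $m_\varphi\in\Hom(A^{\otimes2},A)$. Hence
\[ d_\varphi f=[m_\varphi,f]=m_\varphi\circ f-(-1)^{|f|}f\circ m_\varphi ,\]
and the classical formula of Gerstenhaber identifies this with $d_{\textup{Hoch}}f$, up to an overall sign convention. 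For $n=1$ this gives $d f(a,b)=af(b)-f(ab)+f(a)b$.

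\textbf{Main obstacle.} The expected hardest point is the sign bookkeeping. We must track the suspension $E[1]$ and the resulting sign twist on $\Ass^\ash(n)$, and verify that the Koszul signs in the decomposition of $\mu_n$ reproduce precisely the Gerstenhaber signs $(-1)^{(i-1)(q-1)}$. The approach is to fix the convention of \cite{LV} (operadic suspension $\mc S^{-1}\Ass^\vee$) and cite their identification of $\g_{\Ass,\End(A)}$ with the Hochschild complex rather than recomputing it. Any residual discrepancy is only an overall rescaling of generators by signs, which induces an isomorphism of complexes.
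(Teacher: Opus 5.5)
Your proposal is correct and follows essentially the same route as the paper: you identify $\Ass^\ash(n)$ with a (sign-twisted) copy of the regular representation, so that $\Ss_n$-equivariant maps reduce to $\Hom(A^{\otimes n},A)$, and you then recognize $d_\varphi=[\varphi,\cdot]$ as Gerstenhaber's bracket with $m_\varphi$, i.e.\ the Hochschild differential. You simply supply more of the degree and sign bookkeeping than the paper's brief argument does.
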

\begin{proof}
This follows from the fact that at each arity $n$, we have
\[ \g_{\mc P,\mc Q}(n)= \Hom_{\K[\Ss_n]\Mod}(\Ass^\ash(n), \End(V)(n)) \cong \K[\Ss_n] \otimes_{\K[\Ss_n]} \Hom(V^{\otimes n},V) \cong  \Hom(V^{\otimes n},V) \]
and that the differential $d_\varphi=[\varphi,\cdot]$ given by commutator against the Maurer-Cartan element corresponding to the associative algebra structure map is equivalent to the usual Hochschild formula of cyclic pre- and post-composition, under the infinitesimal composition maps, with the multiplication structure map $m_\varphi$, which is precisely the usual Hochschild differential.
\end{proof}

\subsubsection{Commutative algebras} 
Recall from \ref{Asseg} that there is a canonical map $\Ass \to \Comm$, the pullback along which corresponds to the natural inclusion of commutative into associative algebras $\Alg_\Comm(\mc Q)\to \Alg_\Ass(\mc Q)$. This induces a map on Koszul dual cooperads 
\[
\Ass^\ash \to \Comm^\ash~.
\]
In fact, this implies the following:

\begin{prop}\label{Harcxprop}
	The deformation-obstruction complex of Definition \ref{defobscxdefn} for $\mc P=\Comm_\K$ and $\mc Q=\End_{\mc C}(A)$ is given by the usual complex of (reduced) Harrison cochains, that is,
	\[ 
	C^\bullet_{\mc P,\mc Q}(\varphi) = C^\bullet_{\textup{Har}}(A,m_\varphi) \qquad \text{where}\qquad C^\bullet_{\textup{Har}}(A,m_\varphi) = \left( \bigoplus_{n=1}^\infty \Hom(A^{\otimes n}, A)^{\SH_n}[1-n] \ , \ d_\textup{Har} \right)  \ , 
	\]
	where $\Hom(A^{\otimes n}, A)^{\SH_n}\subset\Hom(A^{\otimes n}, A)$ denotes the subspace of $n$-ary multilinear maps which vanish on sums over $(p,q)$-shuffles for some fixed $p+q=n$ of the correspondingly permuted tensor products 
	, that is,
	\[
	\Hom(A^{\otimes n},A) ^{\SH_n} = \{ m\in \Hom(A^{\otimes n},A)\ | \sum_{\sigma \in \SH_{p,q}} m(v_{\sigma^{-1}(1)}\otimes\dots\otimes v_{\sigma^{-1}(p+q)}) =0 \text{ for all } v\in A^{\otimes p},\  p+q=n \}~.
	\]
 Also $m_\varphi:A^{\otimes 2} \to A$ denotes the generating structure map of the commutative algebra, and $d_\text{Har}$ is the usual Harrison differential.

\end{prop}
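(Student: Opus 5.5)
The plan is to follow the proof of Proposition \ref{Hoccxprop}, now using the map of Koszul dual cooperads $\Ass^\ash \to \Comm^\ash$ induced by $\Ass\to\Comm$. The approach has three steps.

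First, I would identify $\Comm^\ash(n)$ as an $\Ss_n$-module. Since $\Comm^\ash$ is cogenerated by $E[1]$ with $E=\ik_m$ trivial, up to the suspension and sign twist it is the linear dual of $\Lie(n)$. More precisely, $\Comm^\ash \cong \mc S^c\otimes^H \Lie^\vee$ for the operadic suspension. Dually to $\Lie \hookrightarrow \Ass$, which is induced by $\Ass^\ash\cong \mc S^c\otimes^H\Ass^\vee$ and the Koszul duals $\Comm^!=\Lie$, $\Ass^!=\Ass$, the map $\Ass^\ash(n)\to\Comm^\ash(n)$ is a surjection $\ik[\Ss_n]^{\vee}\otimes\mathrm{sgn}\onto \Lie(n)^\vee\otimes\mathrm{sgn}$.

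Second, I would apply $\Hom_{\Ss_n}(-,\Hom(A^{\otimes n},A))$. The surjection becomes an injection
\[
\g_{\Comm,\QQ}(n)\into \g_{\Ass,\QQ}(n)\cong \Hom(A^{\otimes n},A),
\]
where the last isomorphism was established in the proof of Proposition \ref{Hoccxprop}. Its image is the set of multilinear maps annihilating the kernel of $\ik[\Ss_n]\onto\Lie(n)^\vee$ after the sign twist. The key input is Ree's theorem, or equivalently the classical description of $\Lie(n)\subset \Ass(n)$ as the elements orthogonal to all nontrivial shuffle products: the kernel of $\Ass(n)^\vee\to\Lie(n)^\vee$ is spanned by the shuffle sums $\sum_{\sigma\in\SH_{p,q}}\sigma$ with $p,q\geq1$ and $p+q=n$. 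Hence the image is exactly $\Hom(A^{\otimes n},A)^{\SH_n}$. I expect this to be the main obstacle. It requires carefully matching the sign conventions of the suspension, so that we obtain unsigned shuffles acting on cochains and not signed ones, and checking that the kernel is precisely the shuffle span, which is a dimension count: $(n-1)!$ for $\Lie(n)$ against the span of the shuffles.

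Third, I would check the differential. The map $\Ass^\ash\to\Comm^\ash$ is a map of cooperads, so the induced inclusion $\g_{\Comm,\QQ}\into\g_{\Ass,\QQ}$ is a map of DG Lie algebras. It sends the Maurer--Cartan element of $m_\varphi$ as a commutative algebra to its Maurer--Cartan element as an associative algebra; this is compatible with Proposition \ref{MCplainprop} and the embedding $\Comm(\D)\into\Ass(\D)$. The twisted differential $d_\varphi$ is therefore the restriction of the Hochschild differential, which by Proposition \ref{Hoccxprop} is $d_{\textup{Hoch}}$. By definition, its restriction to shuffle-vanishing cochains is the Harrison differential, and this subspace is preserved by construction. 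Together with the cohomological grading $[1-n]$ inherited from $\g_{\PP,\QQ}$, this yields the stated identification.
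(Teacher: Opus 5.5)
Your route is the one behind the paper's proof, which simply cites \cite{LV} (Proposition 13.1.4): pull back along $\Ass^\ash\to\Comm^\ash$, identify $\Comm^\ash$ with the suspension of $\Lie^\vee$, and use Ree's theorem. Your first and third steps are fine. The problem is the step you single out as the main obstacle: it aims at the wrong target, and cannot succeed as you have set it up.

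You cannot make \emph{unsigned} shuffles act on plain multilinear maps $A^{\otimes n}\to A$ here. Already in arity $2$, $\Comm^\ash(2)=E[1]$ is the trivial $\Ss_2$-representation, so $\g_{\Comm,\QQ}(2)$ is the space of \emph{symmetric} bilinear maps. This is as it must be, since a deformation of a commutative product stays commutative. Evaluating on any basis element $1\otimes g$ of the free module $\Ass^\ash(2)$ only replaces $m$ by $\pm g\cdot m$, so no choice of convention turns these into antisymmetric maps. Symmetric maps are cut out by $m(v_1\otimes v_2)-m(v_2\otimes v_1)=0$. The unsigned $(1,1)$-shuffle condition $m(v_1\otimes v_2)+m(v_2\otimes v_1)=0$ instead cuts out the antisymmetric ones.

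What your computation actually produces is the signed condition. An $\Ss_n$-equivariant map $F\colon\mathrm{sgn}\otimes\ik[\Ss_n]\to\Hom(A^{\otimes n},A)$ with $F(1\otimes e)=m$ satisfies $F(1\otimes g)=\mathrm{sgn}(g)\,g\cdot m$. By Ree's theorem, the kernel $\mathrm{sgn}\otimes\Lie(n)^\perp$ is spanned by all shuffle products of complementary words, i.e.\ by the $\Ss_n$-translates of the standard shuffle sums. In particular it is generated by those sums as an $\Ss_n$-module, which is what matters for $\Hom_{\Ss_n}$. Hence $F$ factors through $\Comm^\ash(n)$ if and only if $\sum_{\sigma\in\SH_{p,q}}\mathrm{sgn}(\sigma)\,m(v_{\sigma^{-1}(1)}\otimes\cdots\otimes v_{\sigma^{-1}(n)})=0$ for all $v_i\in A$ and all $p,q\geq 1$ with $p+q=n$. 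This is the classical Harrison condition. It becomes an unsigned shuffle condition only for suspended arguments in $(A[1])^{\otimes n}$ with the Koszul sign rule, which is how \cite{LV} state it. The $\SH_n$ condition in the statement has to be read in that sense. With that target, the rest of your argument goes through.
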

\begin{proof} 
This is essentially Proposition 13.1.4 of \cite{LV}.
\end{proof}

%

We now describe an equivalent model for the deformation theory of commutative algebras, called Andr\'e--Quillen cohomology, and a generalization thereof which will be relevant for describing the deformation theory of Poisson algebras below.

Viewing $A$ as a commutative $\ik$-algebra with commutative multiplication $m_\varphi$, we write $\Omega^1_A\in A\Mod$ for the module of K\"ahler differentials. Given a cofibrant resolution $\tilde A \to A$, the cotangent complex $\bL_A\in\Dd(A\Mod)$ is defined as the image of the object $\Omega^1_{\tilde A}\in \Dd(\tilde A\Mod)$ under the equivalence $\Dd(\tilde A\Mod)\xrightarrow{\sim}\Dd(A\Mod)$. Similarly, we define $\bL^p_A\in \Dd(A \Mod)$ as the image of the object $\Omega^p_{\tilde A} = \bigwedge_{\tilde A}^p \Omega^1_{\tilde A}\in \Dd(\tilde A\Mod)$ under the same equivalence.

\begin{defn}\label{AQDefn} The Andr\'e--Quillen cochains on $A$ are defined by
\[ C^\bullet_\AQ(A)= \Hom_{\Dd(A\Mod)}(\bL_A,A) \ .  \]
Similarly, the degree-$p$ higher Andr\'e--Quillen cochains on $A$ are defined by
\[ C^{\bullet,\langle p \rangle}_\AQ(A)= \Hom_{\Dd(A\Mod)}(\bL^{\wedge_A p+1}_A,A) \ .  \]
\end{defn}
Note that our conventions are shifted by one from the more standard conventions of \cite{Lod}, so that for $p=0$ we have $ C^{\bullet,\langle p \rangle}_\AQ(A)= C^\bullet_\AQ(A)$. Moreover, in this case the relevant cohomology groups are equivalent to the Harrison cohomology defined above:
\begin{theo} There is a natural isomorphism
\[ H^\bullet_\AQ(A)\xrightarrow{\cong} H^\bullet_{\textup{Har}}(A,m_\varphi) \ . \]
\end{theo}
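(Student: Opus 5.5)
Both sides of the asserted isomorphism are computed by cofibrant resolutions, so the plan is to compare them on a single resolution. Since $\K$ has characteristic zero, choose a quasi-free (Sullivan-type) resolution $\tilde A=(\Sym(V),d)\xrightarrow{\sim} A$. Then $\Omega^1_{\tilde A}=\tilde A\otimes V$ is a semi-free $\tilde A$-module. This gives a model
\[ C^\bullet_\AQ(A)\simeq \Hom_{\tilde A}(\Omega^1_{\tilde A},A)\cong \Hom_\K(V,A)\]
whose differential is induced by the linear part of $d$ together with its higher Taylor components, the latter acting through the multiplication on $A$. It is the complex of derivations $\Der(\tilde A,A)$ with the differential coming from $d$.

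The canonical choice is the Koszul resolution $\tilde A=\Omega\Comm^\ash(A)$, where the bar--cobar adjunction gives $\Omega_{\Comm^\ash}B_{\Comm^\ash}A\xrightarrow{\sim}A$. Since $\Comm^\ash$ is the cofree Lie coalgebra up to shift, the generators are $V=\mathcal{L}ie^c(A[1])[-1]$. A derivation of a free commutative algebra is determined by its values on the generators, so
\[ \Der(\tilde A,A)\cong \Hom_\K\big(\mathcal{L}ie^c(A[1])[-1],A\big)\cong \bigoplus_n \Hom_{\Ss_n}\big(\Comm^\ash(n)\otimes A^{\otimes n},A\big).\]
By the standard identification, $\Lie(n)^\vee$ inside $T^c$ is the quotient by shuffle sums, so the right-hand side is exactly $\bigoplus_n\Hom(A^{\otimes n},A)^{\SH_n}[1-n]$. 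This matches, degree by degree, the Harrison cochains of Proposition \ref{Harcxprop}, that is, $\g_{\Comm,\End(A)}$.

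Next I check that the differentials agree. On $\Der(\tilde A,A)$ the differential is $f\mapsto f\circ d_{\tilde A}$. The component of $d_{\tilde A}$ coming from the internal bar differential of $B_{\Comm^\ash}A$ reproduces the terms $[\varphi,\cdot]$ of $d_\varphi$ obtained by inserting $m_\varphi$ into a cochain. The quadratic cobar component of $d_{\tilde A}$, composed with the augmentation $\tilde A\to A$, reproduces the terms obtained by post-composing with $m_\varphi$. Together these give the Harrison differential $d_\varphi$ of Definition \ref{defobscxdefn}, so $C^\bullet_\AQ(A)\cong C^\bullet_{\Har}(A,m_\varphi)$ as complexes, with the grading shift built into Definition \ref{AQDefn}.

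Finally, independence of the resolution follows because $\bL_A$ is well defined in $\Dd(A\Mod)$ and $\Omega^1_{\tilde A}$ is cofibrant, so $\Hom_{\tilde A}(\Omega^1_{\tilde A},A)$ computes $\Hom_{\Dd(A\Mod)}(\bL_A,A)$. This yields a natural isomorphism on cohomology. The main obstacle is the sign and shift bookkeeping when matching the cobar differential with the convolution bracket $[\varphi,\cdot]$, and making sure the paper's shifted conventions place $H^1$ at derivations and $H^2$ at obstructions. To handle this I would track the arity-two part explicitly, where the correspondence is the classical Harrison/AQ match of first-order deformations, and then argue by weight. Alternatively, one can cite Loday's theorem (\cite{Lod}) that Harrison and André–Quillen cohomology coincide in characteristic zero, which rests on the same Lie-coalgebra identification.
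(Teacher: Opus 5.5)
Your proposal is correct and follows essentially the paper's route: the paper simply cites Theorem 12.4.3 of \cite{LV}, whose proof is exactly your computation of the K\"ahler differentials of the bar--cobar resolution $\Omega_\kappa B_\kappa A\xrightarrow{\sim}A$, base-changed to $A$ and dualized using that derivations of a quasi-free algebra are determined by their values on generators. The paper unwinds this explicitly, via the induction adjunction, in the proof of Proposition \ref{AQPoischequivintprop}. One small slip in your last paragraph: in the paper's shifted conventions, with arity-$n$ cochains in degree $n-1$ exactly as in your own degree count, derivations sit in $H^0$, first-order deformations in $H^1$, and obstructions in $H^2$; derivations do not sit in $H^1$.
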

\begin{proof}
	This is a special case of Theorem 12.4.3 of \cite{LV}.
\end{proof}

Furthermore, we have the following standard vanishing results for smooth algebras:

\begin{theo} \label{theo:smooth then kahler isproj}
Suppose $A$ is smooth over $\K$. Then $\bL_A\xrightarrow{\cong} \Omega_A^1\in A\Mod=\Dd(A\Mod)^\heartsuit$, and the object $\Omega_A^1$ is projective.
\end{theo}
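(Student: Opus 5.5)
The statement has two parts. First, for $A$ smooth over $\K$ (finite type, char $0$), the cotangent complex is concentrated in degree zero: $\bL_A \xrightarrow{\cong} \Omega^1_A$. Second, $\Omega^1_A$ is a projective $A$-module.

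\textbf{Projectivity.} I would prove this first, since it is used in the other step. Smoothness gives, locally on $\Spec A$, a presentation $A = P/I$ with $P = \K[x_1,\dots,x_N]$ and $I$ generated by a regular sequence $f_1,\dots,f_c$. The Jacobian criterion says the conormal sequence
\[ 0 \to I/I^2 \to \Omega^1_P\otimes_P A \to \Omega^1_A \to 0 \]
is exact, and that it is split on the left. Since $\Omega^1_P\otimes_P A$ is free, $\Omega^1_A$ is a direct summand of a free module, hence locally free of finite rank. A finitely generated, locally free module over a noetherian ring is projective, which settles projectivity. Equivalently, one can use the formal smoothness characterization: $\Hom_A(\Omega^1_A,-)=\Der(A,-)$ is exact because square-zero extensions lift.

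\textbf{The cotangent complex.} There is always a natural map $\bL_A \to \Omega^1_A$: take the cofibrant resolution $\tilde A \to A$ and use $\Omega^1_{\tilde A}\otimes_{\tilde A}A \to \Omega^1_A$. This map is an isomorphism on $H^0$. To show the higher homology vanishes, I would use the fundamental triangle for $\K \to P \to A$,
\[ \bL_P\otimes_P A \to \bL_A \to \bL_{A/P}. \]
Here $\bL_P \simeq \Omega^1_P$, since $P$ is free and hence its own cofibrant model. For a quotient by a regular sequence, $\bL_{A/P} \simeq I/I^2[1]$. This follows by computing with the Koszul resolution, or by reduction to the single-variable case $\K[t]\to \K$.

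The long exact sequence then identifies $H^{-1}(\bL_A)$ with the kernel of $I/I^2 \to \Omega^1_P\otimes A$, which vanishes by the split injectivity above. All lower cohomology groups vanish automatically. Because both sides are quasi-coherent and the cotangent complex commutes with localization, the local statements glue.

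\textbf{Main obstacle.} I expect the crux to be the identification $\bL_{A/P}\simeq I/I^2[1]$ for regular immersions, together with the compatibility of the paper's definition of $\bL_A$ via cofibrant resolutions with the transitivity triangle. Since this is standard, I would cite it (for instance Loday, or Illusie) rather than reprove it.
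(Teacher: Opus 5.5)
Your proposal is correct. The paper gives no argument of its own here: it just cites the Stacks Project (tags 08R4, 08RB, 00T1). Your sketch is exactly the standard argument behind those citations: split exactness of the conormal sequence gives projectivity of $\Omega^1_A$, and the transitivity triangle with $\bL_{A/P}\simeq I/I^2[1]$ for the local regular-sequence presentation forces $H^{-1}(\bL_A)=\ker(I/I^2\to\Omega^1_P\otimes_P A)=0$ and kills all lower cohomology, with the local statements glued by compatibility of $\bL$ with localization.
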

\begin{proof} This is a standard result in algebraic geometry; see \cite{Stks} tags 08R4, 08RB, and 00T1.
\end{proof}

\begin{coro}\label{smthcoro} Suppose $A$ is smooth over $\K$. Then $C^\bullet_\AQ(A)$ is acyclic away from degree zero and
	\[ H^0_\AQ(A)\cong \Hom_{A\Mod}(\Omega^1_A,A) = \Der(A,A) = :\Theta_A \]
	the module of derivations $ \Der(A,A) $ of $A$, or tangent module $\Theta_A$ of $A$.
	
	More generally, for each $p\geq 1$, $C^{\bullet,\langle p \rangle }_\AQ(A)$ is acyclic in non-zero degree, and
	\[ 
	H^{0,\langle p \rangle}_\AQ(A)\cong \Hom_{A\Mod}(\Omega^{p+1}_A,A) \cong \wedge^{p+1}_A \Theta_A \ , 
	\]
	the module of degree $p+1$ polyvector fields $\wedge^{p+1}_A \Theta_A$.
\end{coro}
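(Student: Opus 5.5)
The plan is to compute the cohomology of $C^{\bullet,\langle p\rangle}_\AQ(A)=\Hom_{\Dd(A\Mod)}(\bL_A^{\wedge_A p+1},A)$ by first identifying the object $\bL_A^{\wedge_A p+1}$ explicitly. After that, the statement reduces to $\Ext$-vanishing out of a projective module.

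By Theorem \ref{theo:smooth then kahler isproj}, the natural map $\bL_A\to\Omega^1_A$ is a quasi-isomorphism, and $\Omega^1_A$ is a finitely generated projective $A$-module. I would first check that the construction $\tilde A\mapsto \Omega^{p+1}_{\tilde A}$ is compatible with this identification, i.e.\ that $\bL^{p+1}_A\cong \Omega^{p+1}_A=\wedge^{p+1}_A\Omega^1_A$ in $\Dd(A\Mod)$. Concretely, $\Omega^{p+1}_{\tilde A}=\wedge^{p+1}_{\tilde A}\Omega^1_{\tilde A}$ computes the derived exterior power of $\bL_A$ after base change $-\otimes_{\tilde A}A$. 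Since $\tilde A$ is cofibrant, $\Omega^1_{\tilde A}$ is cofibrant as a $\tilde A$-module, so the (underived) exterior power is already derived. Derived exterior powers of a projective module concentrated in degree $0$ agree with the ordinary ones, because in characteristic zero $\wedge^{p+1}$ is a direct summand of the $(p+1)$-fold tensor power, and tensor powers of projectives are flat. Hence $\bL^{\wedge_A p+1}_A\simeq \Omega^{p+1}_A$, which is again finitely generated projective, being a direct summand of $(\Omega^1_A)^{\otimes p+1}$.

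With this, $C^{\bullet,\langle p\rangle}_\AQ(A)\simeq \mathbb{R}\Hom_A(\Omega^{p+1}_A,A)$. Projectivity of $\Omega^{p+1}_A$ gives $\Ext^i_A(\Omega^{p+1}_A,A)=0$ for $i\neq0$, so the complex is acyclic away from degree zero, and $H^{0,\langle p\rangle}_\AQ(A)=\Hom_A(\Omega^{p+1}_A,A)$. For $p=0$ this is $\Hom_A(\Omega^1_A,A)=\Der(A,A)=\Theta_A$ by the universal property of Kähler differentials. For general $p$, I would use finite generation and projectivity: duality commutes with exterior powers, so
\[
\Hom_A(\wedge^{p+1}_A\Omega^1_A,A)\cong\wedge^{p+1}_A\Hom_A(\Omega^1_A,A)=\wedge^{p+1}_A\Theta_A .
\]
To see this, check it locally on a Zariski cover where $\Omega^1_A$ is free, or note that a f.g.\ projective is a summand of a free module and both sides are additive in the appropriate sense. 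Characteristic zero is again used to identify alternating forms with duals of exterior powers.

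The main obstacle is the first step: making precise that the homotopy-invariant construction $\bL^{p+1}_A$ matches $\Omega^{p+1}_A$. This needs the equivalence $\Dd(\tilde A\Mod)\simeq\Dd(A\Mod)$ to carry $\wedge^{p+1}_{\tilde A}\Omega^1_{\tilde A}$ to $\wedge^{p+1}_A(\Omega^1_{\tilde A}\otimes_{\tilde A}A)$. I would argue this by base change of exterior powers, which holds for any module, together with cofibrancy to ensure that $\otimes_{\tilde A}A$ is derived. The remaining steps are standard homological algebra.
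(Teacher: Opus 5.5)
Your proposal is correct and follows the route the paper intends. The paper gives no separate argument: the corollary is stated as an immediate consequence of Theorem~\ref{theo:smooth then kahler isproj} ($\bL_A\simeq\Omega^1_A$, projective), so $\Ext$-vanishing out of the projective modules $\Omega^{p+1}_A$ gives concentration in degree zero, followed by the identifications with $\Theta_A$ and $\wedge^{p+1}_A\Theta_A$. Your identification of $\bL^{p+1}_A$ with $\Omega^{p+1}_A$, via characteristic-zero exterior powers of cofibrant modules, supplies a detail the paper leaves implicit. Separately, the final duality $(\wedge^{p+1}_A\Omega^1_A)^\vee\cong\wedge^{p+1}_A\Theta_A$ needs only that $\Omega^1_A$ is finitely generated projective, not characteristic zero.
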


\subsubsection{Lie algebras} 

\begin{prop}\label{CEcxprop}
	The deformation-obstruction complex of Definition \ref{defobscxdefn} for $\mc P=\Lie_\K$ and $\mc Q=\End_{\mc C}(A)$ is given by the usual complex of (reduced) Chevalley--Eilenberg cochains, that is,
	\[ C^\bullet_{\mc P,\mc Q}(\varphi) = C^\bullet_{\textup{CE}}(A,b_\varphi) \qquad \text{where}\qquad C^\bullet_{\textup{CE}}(A,b_\varphi) = \left( \bigoplus_{n=1}^\infty \Hom(A^{\otimes n}, A)_{\Ss_n}[1-n] \ , \ d_\textup{CE} \right)  \ , \]
	where $\Hom(A^{\otimes n}, A)_{\Ss_n}=\K \otimes_{\K[\Ss_n]}\Hom(A^{\otimes n}, A) $ denotes the $S_n$-coinvariants, $b_\varphi:A^{\otimes 2} \to A$ the Lie algebra structure map and $d_\text{CE}:C^\bullet_{\textup{CE}}(A,m_\varphi)\to C^\bullet_{\textup{CE}}(A,m_\varphi)[1]$ the usual Chevalley--Eilenberg differential determined by $b_\varphi$.
\end{prop}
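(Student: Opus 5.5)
The plan is to compute the graded vector space $\g_{\Lie,\QQ}=\Hom_{\Ss}(\Lie^\ash,\End_{\mc C}(A))$ arity by arity, and then to identify the twisted differential $d_\varphi=[\varphi,\cdot]$ with the Chevalley--Eilenberg differential. This mirrors the proofs of Propositions \ref{Hoccxprop} and \ref{Harcxprop}.

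\textbf{Step 1: the cooperad $\Lie^\ash$.} The Lie operad is generated by $E=\K_b$, the sign representation in arity $2$, so $\Lie^\ash$ is cogenerated by the suspension $E[1]$. Suspending the sign representation, with the Koszul sign rule for swapping odd elements, produces the trivial representation. The quadratic corelation is Koszul dual to the Jacobi relation, so $\Lie^\ash$ is the suspended cocommutative cooperad. Concretely, $\Lie^\ash(n)$ is one-dimensional in degree $1-n$, and after untwisting the degree shift $\Ss_n$ acts on it trivially. This is the standard statement $\Lie^! = \Comm$ from \cite{LV}. The decomposition map is the sum over all unshuffle splittings.

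\textbf{Step 2: the underlying graded space.} Since $\Lie^\ash(n)$ is a line carrying the trivial action, we get
\[
\g(n)=\Hom_{\Ss_n}(\Lie^\ash(n),\Hom(A^{\otimes n},A))\cong \Hom(A^{\otimes n},A)^{\Ss_n}.
\]
In characteristic zero this is canonically isomorphic to the coinvariants $\Hom(A^{\otimes n},A)_{\Ss_n}$ via averaging. The action of $\Ss_n$ here includes the Koszul sign from the shift, so these really are alternating multilinear maps. Concretely,
\[
\g(n)\cong \Hom(\wedge^n A,A),
\]
placed in degree $n-1$, which matches the shift $[1-n]$ in the statement.

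\textbf{Step 3: the bracket and the differential.} From the decomposition of $\Lie^\ash$, the convolution pre-Lie product $f\star g$ is the sum over $(q,p-1)$-unshuffles of inserting $g$ into one input of $f$. This is the Nijenhuis--Richardson product, and its commutator is the Nijenhuis--Richardson bracket. The Maurer--Cartan element attached to $\varphi$ is $b_\varphi\in \g(2)$. Expanding $[b_\varphi,f]$ then produces two kinds of terms:
\begin{itemize}
\item terms $b_\varphi(x_i, f(\dots\hat x_i\dots))$, in which $f$ is inserted into $b_\varphi$;
\item terms $f(b_\varphi(x_i,x_j),\dots)$, in which $b_\varphi$ is inserted into $f$.
\end{itemize}
These carry exactly the signs of the Chevalley--Eilenberg differential with coefficients in the adjoint module.

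\textbf{Main obstacle.} The main difficulty is the sign and normalization bookkeeping. One has to check three things: that the $\Ss_n$-action on the suspended line really makes the invariants alternating rather than symmetric; that the factorials from averaging are absorbed consistently; and that the overall sign conventions of $d_\varphi$ agree with $d_{\textup{CE}}$ up to a uniform rescaling of each $\g(n)$. The rescaling is harmless because it gives an isomorphism of complexes. For the sign checks I would cite Section 13.2 of \cite{LV}, where this identification is carried out.
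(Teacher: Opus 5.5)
Your proposal is correct and is essentially the paper's argument. The paper's proof consists only of citing Proposition 13.2.2 of \cite{LV}, and your Steps 1--3 unpack that citation: $\Lie^\ash$ as the suspended cocommutative cooperad, the convolution bracket as the Nijenhuis--Richardson bracket, and $d_\varphi=[b_\varphi,\cdot]$ as $d_{\textup{CE}}$, with the same sign checks deferred to \cite{LV}. One small bookkeeping remark: in the conventions of \cite{LV}, $\Lie^\ash(n)$ itself carries the sign representation of $\Ss_n$, so $\Hom_{\Ss_n}(\Lie^\ash(n),\Hom(A^{\otimes n},A))$ is directly $\Hom(\wedge^n A,A)$, rather than symmetric invariants corrected afterwards by a Koszul twist.
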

\begin{proof}
This is Proposition 13.2.2 of \cite{LV}.
	
\end{proof}

\subsubsection{Poisson algebras}

\begin{defn} Let $\varphi \in \Alg_\Pois(\End(A))$ a Poisson algebra structure on $A$ with corresponding commutative product $m_\varphi:A^{\otimes 2} \to A$ and Lie bracket $b_\varphi:A^{\otimes 2} \to A$ . We define the operadic Poisson cochains on $(A,m_\varphi,b_\varphi)$ by
\[ C_\Pois^\bullet(A,m_\varphi,b_\varphi)=C_{\Pois,\End(A)}^\bullet(\varphi) \ . \]
\end{defn}

%

\subsubsection{A bicomplex structure on Poisson cochains}

Recall that the Poisson operad has an auxiliary grading $\Pois = \bigoplus_{m\in\IN}{\Pois}^{\langle m\rangle}$, with the bracket operation having weight one. This induces a grading on the Koszul dual cooperad
\[
\Pois^\ash = \bigoplus (\Pois^\ash)^{-p} \qquad \text{where} \qquad (\Pois^{\ash})^{-p} = \Lie^\ash(p+1)\circ \Comm^\ash ~,
\]
in turn inducing an auxiliary grading on the Poisson cochains $\gf_{\PP, \QQ} = \bigoplus_{n=1}^\infty\bigoplus_{p=0}^{n-1} \gf_{\PP,\QQ}(n)^{\langle p\rangle}[1-n]$ where
\[
\gf_{\PP,\QQ}(n)^{\langle p \rangle} = \Hom_{\ik[\Ss_n]\Mod}\big((\Pois^\ash)(n)^{\langle-p\rangle}, \End_{\Vect_{\ik}}(A)(n)\big)~.
\]
With respect to this auxiliary grading $d_{m_{\varphi}}$ has degree zero but $d_{b_{\varphi}}$ has degree one, and in fact we may identify this auxiliary grading with the column number of a bicomplex:

\begin{prop}\label{Poisbicxprop} There is a natural bicomplex $C^{\bullet,\bullet}_\Pois(A,m_\varphi,b_\varphi)=\oplus_{p,q=0}^\infty C^{p,q}_\Pois$ defined by
\[
C^{p,q}_\Pois=C^{p,q}_\Pois(A,m_\varphi,b_\varphi)= \g_{\mc P, \mc Q}(p+q+1)^{\langle p \rangle } \qquad  
\]
with horizontal and vertical differentials given, respectively, by
\[
d_h=d_{b_\varphi}:C^{p,q}_\Pois \to C^{p+1,q}_\Pois \qquad \text{ and } \qquad d_v=d_{m_\varphi}:C^{p,q}_\Pois \to C^{p,q+1}_\Pois~,
\]
such that the total complex is naturally isomorphic to the operadic Poisson cochains, that is, \[\textup{Tot } C^{\bullet,\bullet}_\Pois(A,m_\varphi,b_\varphi) \xrightarrow{\cong} C_\Pois^\bullet(A,m_\varphi,b_\varphi) \ .\]
\end{prop}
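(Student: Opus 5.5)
The plan is to split the Maurer--Cartan element $\varphi$ into its product and bracket parts, and to show that each part acts on the auxiliary grading $\langle p\rangle$ by a definite amount.

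Write the element corresponding to the Poisson structure as $\varphi=\varphi_m+\varphi_b$. Here $\varphi_m\in\g_{\mc P,\mc Q}(2)^{\langle 0\rangle}$ is dual to the commutative generator $m$, and $\varphi_b\in\g_{\mc P,\mc Q}(2)^{\langle 1\rangle}$ is dual to the bracket generator $\pi$. The decomposition holds because $\Pois^\ash(2)=\Comm^\ash(2)\oplus\Lie^\ash(2)$, sitting in auxiliary degrees $0$ and $-1$. Then
\[
d_\varphi=[\varphi_m,\cdot]+[\varphi_b,\cdot]=:d_{m_\varphi}+d_{b_\varphi}
\]
(there is no internal differential, since $\mc Q=\End(A)$ is concentrated in degree zero). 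The first key step is to check that the convolution bracket is additive in the auxiliary grading. Concretely, the decomposition map of $\Pois^\ash$ respects the grading $(\Pois^\ash)^{-p}$, and this is where the distributive-law description $\Pois^\ash\cong\Lie^\ash\circ\Comm^\ash$, with its cooperad structure, enters. Granting this, $[\g^{\langle p\rangle},\g^{\langle p'\rangle}]\subset\g^{\langle p+p'\rangle}$. It follows that $d_{m_\varphi}$ preserves $p$ and $d_{b_\varphi}$ raises $p$ by one.

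Next I fix the bigrading. An element of $\g_{\mc P,\mc Q}(n)^{\langle p\rangle}$ has cohomological degree $n-1$, so setting $q=n-1-p$ makes $C^{p,q}_\Pois$ sit in total degree $p+q$. Since both differentials raise arity by one, $d_{m_\varphi}$ sends $(p,q)\mapsto(p,q+1)$ and $d_{b_\varphi}$ sends $(p,q)\mapsto(p+1,q)$. The range conditions $p,q\ge 0$ hold automatically, because $(\Pois^\ash)(n)^{\langle -p\rangle}$ vanishes unless $0\le p\le n-1$: at most $p+1=n$ Lie cogenerators can appear in arity $n$.

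It then remains to show $d_h^2=0$, $d_v^2=0$, and $d_hd_v+d_vd_h=0$. The Maurer--Cartan equation $[\varphi,\varphi]=0$ splits by auxiliary degree into three pieces:
\[
[\varphi_m,\varphi_m]=0,\qquad [\varphi_m,\varphi_b]=0,\qquad [\varphi_b,\varphi_b]=0,
\]
in auxiliary degrees $0$, $1$ and $2$ respectively. These are exactly associativity, the Leibniz rule and the Jacobi identity. Because each $\varphi_\bullet$ has odd degree, the Jacobi identity for the DG Lie bracket gives $d_{m}^2=\tfrac12[[\varphi_m,\varphi_m],\cdot]=0$, $d_b^2=0$, and $d_md_b+d_bd_m=[[\varphi_m,\varphi_b],\cdot]=0$. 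The identification $\mathrm{Tot}\,C^{\bullet,\bullet}_\Pois\cong C^\bullet_\Pois$ is then tautological: the underlying graded spaces agree by the direct sum decomposition over $p$, and the differentials agree since $d_\varphi=d_h+d_v$.

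I expect the main obstacle to be the first key step, namely that the cooperad decomposition on $\Pois^\ash$ is homogeneous for the auxiliary grading. I would argue this by noting that $\Pois$ is graded as an operad, since its relations $\mathrm{Jac}$, $\mathrm{Ass}$ and $\mathrm{Dist}$ are each homogeneous. The Koszul dual cooperad $\mathcal C(E[1],R[2])$ is then a graded sub-cooperad of the graded free cooperad $\FF^c(E[1])$, whose decomposition manifestly preserves the total auxiliary weight. Convolution is therefore additive in weight.
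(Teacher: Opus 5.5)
Your proposal is correct and takes essentially the same route as the paper. The paper gives no separate proof of this proposition and spells the argument out only for the coisson analogue, Proposition~\ref{Coisbicxprop}: split $d_\varphi=d_{m_\varphi}+d_{b_\varphi}$ along the two binary generators, note that these pieces have auxiliary degree $0$ and $1$ and so respect the bigrading, and use the algebra relations to get $d_h^2=d_v^2=0$ and $d_hd_v+d_vd_h=0$. Your version makes explicit what the paper leaves implicit, namely that homogeneity of $\mathrm{Ass}$, $\mathrm{Dist}$, $\mathrm{Jac}$ makes the convolution bracket additive in the auxiliary grading, and that the Maurer--Cartan equation splits into $[\varphi_m,\varphi_m]=[\varphi_m,\varphi_b]=[\varphi_b,\varphi_b]=0$. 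One small fix: the bound $0\le p\le n-1$ is better justified by noting that arity $n$ carries exactly $n-1$ cogenerators, $p$ of them of type $\pi$.
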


We can depict the bicomplex explicitly as follows, such that the graded degree corresponds to column number:
\begin{equation}\label{Poisdecompeqn}
\begin{tikzcd} \vdots & \vdots & \vdots &  \\
	\g_{\mc P,\mc Q}(3)^{\langle 0 \rangle} \ar[r,"d_{b_\varphi} "] \ar[u,"d_{m_\varphi}"] & \g_{\mc P,\mc Q}(4)^{\langle 1 \rangle} \ar[r,"d_{b_\varphi} "] \ar[u,"d_{m_\varphi}"] & \g_{\mc P,\mc Q}(5)^{\langle 2 \rangle} \ar[r,"d_{b_\varphi} "] \ar[u,"d_{m_\varphi}"]  & \hdots  \\
	\g_{\mc P,\mc Q}(2)^{\langle 0 \rangle}\ar[r,"d_{b_\varphi} "] \ar[u,"d_{m_\varphi}"] &  	\g_{\mc P,\mc Q}(3)^{\langle 1 \rangle} \ar[r,"d_{b_\varphi} "] \ar[u,"d_{m_\varphi}"] & 	\g_{\mc P,\mc Q}(4)^{\langle 2 \rangle}\ar[r,"d_{b_\varphi} "] \ar[u,"d_{m_\varphi}"] & \hdots\\
	\g_{\mc P,\mc Q}(1)^{\langle 0 \rangle} \ar[r,"d_{b_\varphi} "] \ar[u,"d_{m_\varphi}"]& 	\g_{\mc P,\mc Q}(2)^{\langle 1 \rangle}\ar[r,"d_{b_\varphi} "] \ar[u,"d_{m_\varphi}"] & 	\g_{\mc P,\mc Q}(3)^{\langle 2 \rangle}\ar[r,"d_{b_\varphi} "] \ar[u,"d_{m_\varphi}"] & \hdots
\end{tikzcd}  \ . 
\end{equation}
The zeroth row and column define quotient complexes
\[ C^\bullet_{\Pois,m}(A,m_\varphi)=(\bigoplus_{n=1}^\infty \g_{\mc P,\mc Q}(n)^{\langle 0 \rangle}[1-n] \ , \ d_{m_\varphi} ) \qquad \text{and}\qquad C^\bullet_{\Pois,b}(A,b_\varphi)=(\bigoplus_{n=1}^\infty \g_{\mc P,\mc Q}(n)^{\langle n-1 \rangle}[1-n] \ , \ d_{b_\varphi} )  \]
which control the induced deformation of commutative algebra and Lie algebra, respectively, coming from a deformation of Poisson algebra, as the following Proposition confirms:

\begin{prop}\label{rowcol1prop}There are natural identifications of complexes
\[C^\bullet_{\Pois,m}(A,m_\varphi) \xrightarrow{\cong} C^\bullet_{\textup{Har}}(A,m_\varphi) \qquad \text{and}\qquad  C^\bullet_{\Pois,b}(A,b_\varphi) \xrightarrow{\cong} C^\bullet_{\textup{CE}}(A,b_\varphi)  \ . \]
\end{prop}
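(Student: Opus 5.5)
The plan is to identify the extreme column and row of the bicomplex at the level of Koszul dual cooperads, and then to check that the restricted differentials match. Using the description $(\Pois^\ash)^{\langle -p\rangle} = \Lie^\ash(p+1)\circ\Comm^\ash$, the relevant pieces in arity $n$ are as follows.

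For the zeroth column ($p=0$) we have
\[ (\Pois^\ash)(n)^{\langle 0\rangle} = \Lie^\ash(1)\circ \Comm^\ash (n) = \Comm^\ash(n) . \]
For the top piece ($p=n-1$), every input must be fed through $\Comm^\ash(1)=\K$, so
\[ (\Pois^\ash)(n)^{\langle n-1\rangle} = \Lie^\ash(n)\otimes \Comm^\ash(1)^{\otimes n} \cong \Lie^\ash(n) . \]
Equivalently, these are the images of $\Pois^\ash$ under the cooperad maps $\Pois^\ash\to\Comm^\ash$ and $\Pois^\ash\to \Lie^\ash$. These maps are Koszul dual to the operad maps $\Comm\to\Pois$ (wait, rather $\Pois\twoheadrightarrow \Comm$ and $\Lie\hookrightarrow\Pois$ give the cooperad inclusions $\Comm^\ash \hookrightarrow \Pois^\ash$ and $\Lie^\ash\hookrightarrow\Pois^\ash$ onto the two weight-graded pieces above). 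Applying $\Hom_{\Ss_n}(-,\End(A)(n))$ then gives, arity by arity, isomorphisms of graded vector spaces
\[ \g_{\Pois,\End(A)}(n)^{\langle 0\rangle}\cong \g_{\Comm,\End(A)}(n), \qquad \g_{\Pois,\End(A)}(n)^{\langle n-1\rangle}\cong \g_{\Lie,\End(A)}(n) . \]
These are precisely restriction along the cooperad inclusions, i.e. the projections $\g_{\Pois,\QQ}\to \g_{\Comm,\QQ}$ and $\g_{\Pois,\QQ}\to\g_{\Lie,\QQ}$. Each such projection is a map of graded Lie algebras, since precomposition with a cooperad morphism respects the convolution pre-Lie product.

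Next we compare differentials. On the zeroth column, $d_\varphi = [m_\varphi+b_\varphi,\cdot]$ has component $[b_\varphi,\cdot]$ raising $p$ by one. This component is exactly what is killed in the quotient complex, so the induced differential is $[m_\varphi,\cdot]$. Under the projection to $\g_{\Comm,\QQ}$, the Maurer--Cartan element $\varphi$ maps to $m_\varphi$, because $b_\varphi$ pairs with the $\Lie^\ash(2)$ component, which lies in weight $\langle 1\rangle$. Hence the quotient complex is identified with $C^\bullet_{\Comm,\End(A)}(m_\varphi)$, which by Proposition \ref{Harcxprop} is $C^\bullet_{\Har}(A,m_\varphi)$.

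The top pieces are handled symmetrically. Here $[m_\varphi,\cdot]$ lowers the arity-minus-weight defect, i.e. it moves off the diagonal $p=n-1$ into $p=n-2$ of arity $n+1$. It therefore vanishes in the quotient $C^\bullet_{\Pois,b}$, and the surviving differential is $[b_\varphi,\cdot]$. Proposition \ref{CEcxprop} then gives $C^\bullet_{\CE}(A,b_\varphi)$.

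The main obstacle is the first step: making precise that the decompositions of $\Pois^\ash$ restrict to the sub-cooperads $\Comm^\ash$ and $\Lie^\ash$ in weights $\langle 0\rangle$ and $\langle n-1\rangle$. Concretely, one must check that the distributive-law isomorphism $\Pois^\ash\cong \Lie^\ash\circ\Comm^\ash$ (with the appropriate suspensions) is compatible with decomposition on these pieces. Given that, one must also verify that "quotient complex" is the correct notion, that is, that the cross terms of $d_\varphi$ map the complement into itself. I would establish this by noting that the complement of each extreme piece is closed under bracketing with $m_\varphi$ and $b_\varphi$, by additivity of the auxiliary grading under convolution. The sign conventions coming from the suspensions $E[1]$ should then match those of Loday--Vallette's Propositions 13.1.4 and 13.2.2.
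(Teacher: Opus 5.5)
Your proposal is correct and is essentially the paper's argument made explicit. The paper just unwinds $(\Pois^\ash)^{\langle -p\rangle}=\Lie^\ash(p+1)\circ\Comm^\ash$ at $p=0$ and $p=n-1$, notes which of $d_{m_\varphi}$, $d_{b_\varphi}$ survives in each quotient, and invokes Propositions \ref{Harcxprop} and \ref{CEcxprop}, exactly as you do.

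Two small slips. First, the cooperad inclusion $\Comm^\ash\hookrightarrow\Pois^\ash$ is induced by the operad map $\Comm\to\Pois$, $m\mapsto m$ (your first guess), whereas $\Pois\twoheadrightarrow\Comm$ induces the projection $\Pois^\ash\twoheadrightarrow\Comm^\ash$. Second, $d_{m_\varphi}$ sends the diagonal piece in arity $n$ to weight $\langle n-1\rangle$ in arity $n+1$, not to $\langle n-2\rangle$.
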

\begin{proof} This follows straightforwardly from the preceding discussion by unwinding the definitions.

\end{proof}

More generally, for each $p\geq 0$ the $p^{th}$ row of the above bicomplex defines a complex
\[ C^{\bullet,\langle p \rangle}_{\Pois,m}(V,m_\varphi)=(\bigoplus_{n=p+1}^\infty \g_{\mc P,\mc Q}(n)^{\langle p \rangle}[1-n] \ , \ d_{m_\varphi} ) \]
and we have the following generalization of the preceding result:

\begin{prop}\label{AQPoisequivprop} For each $p\geq 0$, there are natural identifications of complexes
\[C^{\bullet,\langle p \rangle}_{\Pois,m}(A,m_\varphi)[p] \xrightarrow{\cong} C^{\bullet,\langle p \rangle}_\AQ(A) \ , \]
for $A=(A,m_\varphi)$. In particular, we have $H^{p,\langle p \rangle}_{\Pois,m}(A,m_\varphi) \cong \Hom_{A\Mod}(\Omega^{p+1}_A,A)$.
\end{prop}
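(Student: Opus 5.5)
To identify the $p$-th row of the bicomplex with the higher Andr\'e--Quillen cochains, I would first compute the row at the level of $\Ss$-modules. By the description $(\Pois^\ash)^{\langle -p\rangle} = \Lie^\ash(p+1)\circ \Comm^\ash$, the arity-$n$ component of $\g_{\mc P,\mc Q}(n)^{\langle p \rangle}$ is
\[
\Hom_{\Ss_n}\Big( \big(\Lie^\ash(p+1)\otimes (\Comm^\ash)^{\otimes_{\Ss} (p+1)}\big)_{\Ss_{p+1}}(n), \Hom(A^{\otimes n},A)\Big).
\]
Since $\Lie^\ash(p+1)$ is the sign representation (suitably shifted), this is the space of maps from $(\Comm^\ash)^{\circ}$-words in $p+1$ blocks, graded-antisymmetric in the blocks. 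Here each $\Comm^\ash(k)$ is the cofree Lie coalgebra / Harrison piece in arity $k$.

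Concretely, the row is
\[
\Hom\big(\Lambda^{p+1}(\mathrm{Har}_\bullet(A)) , A\big)
\]
up to the shift $[p]$, where $\mathrm{Har}_\bullet$ denotes the (shuffle-quotient) Harrison chains of $A$. The vertical differential $d_{m_\varphi}$ is the bracket with the weight-zero part of $\varphi$. I would check that it acts by the Harrison differential on each block, together with the term coming from the $A$-module structure on the target, i.e.\ a Leibniz-type term.

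The second step is to match this with $\Hom_{A\Mod}(\bL_A^{\wedge_A (p+1)}, A)$. For this I would use the standard model of the cotangent complex: $\bL_A \simeq A\otimes \mathrm{Har}_\bullet(A)[\cdot]$ with the twisted differential coming from the bar resolution. This model is what underlies the isomorphism $H^\bullet_\AQ\cong H^\bullet_{\textup{Har}}$ quoted from \cite{LV}, and I would use the $p=0$ case, Proposition~\ref{rowcol1prop}, as the template. Taking the exterior power over $A$ of this quasi-free model gives $A\otimes \Lambda^{p+1}\mathrm{Har}_\bullet(A)$, which computes the derived exterior power $\bL_A^{\wedge_A(p+1)}$ because the model is cofibrant; in characteristic zero exterior powers of cofibrant modules are homotopically correct. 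Applying $\Hom_A(-,A)$ then yields exactly $\Hom_\K(\Lambda^{p+1}\mathrm{Har}_\bullet(A),A)$ with the induced differential, and I would compare this term-by-term with $d_{m_\varphi}$.

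The final claim follows once $A$ is smooth. By Theorem~\ref{theo:smooth then kahler isproj} and Corollary~\ref{smthcoro}, the complex has cohomology $\Hom_A(\Omega^{p+1}_A,A)$ concentrated in a single degree, which after the shift $[p]$ sits in degree $p$. The main obstacle is the sign and Koszul bookkeeping: verifying that the $\Ss_{p+1}$-coinvariants against $\Lie^\ash(p+1)$ produce graded exterior (rather than symmetric) powers with the correct shifts, and that the operadic bracket $[m_\varphi,\cdot]$ reproduces exactly the differential on the Hom out of the exterior power, including the module-structure terms. I would do this first for $p=1$ by direct computation and then argue that the general case follows by the same formula.
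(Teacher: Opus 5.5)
Your identification of the $p$-th row with $\Hom_\K(\Lambda^{p+1}\mathrm{Har}_\bullet(A),A)\cong\Hom_{\Dd(A\Mod)}(\bL_A^{\wedge_A(p+1)},A)$, via the Harrison model $A\otimes\mathrm{Har}_\bullet(A)$ of $\bL_A$, is sound. It is the argument the paper writes out for the chiral analogue (Proposition \ref{AQPoischequivintprop}), which uses Theorem 12.4.3, Lemma 12.4.4 and Proposition 12.4.5 of \cite{LV} for the model and for matching the differential with $d_{m_\varphi}$. For the present statement the paper instead cites Proposition 4.5.13 of \cite{Lod} (the Hodge decomposition) together with $\gr^{F^\cl}_\bullet\Ass\cong\Pois$ (Proposition \ref{finclfiltprop}). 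Your sign bookkeeping also works out as you expect: $\Lie^\ash(p+1)$ is the sign representation in degree $p$, and each block $\Comm^\ash(n_i)$ sits in degree $n_i-1$. This gives both the Koszul signs of the graded exterior power and the total shift by $p$.

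The gap is in your last step. The statement asserts $H^{p,\langle p\rangle}_{\Pois,m}(A,m_\varphi)\cong\Hom_{A\Mod}(\Omega^{p+1}_A,A)$ for every commutative $A$, but you only obtain it after adding a smoothness hypothesis. This matters, because Definition \ref{gPCdefn} and Corollary \ref{gPCcoro} use the identification without smoothness; smoothness only enters in Corollary \ref{GOequivcoro}. The chiral analogue is likewise applied to the singular algebra $M_n=\JJ(\ik[T]/T^n)$.

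No smoothness is needed. The row $C^{\bullet,\langle p\rangle}_{\Pois,m}(A,m_\varphi)$ begins in cohomological degree $p$ with $\g_{\mc P,\mc Q}(p+1)^{\langle p\rangle}$, the alternating maps $\Lambda^{p+1}A\to A$ (all $p+1$ blocks of size one), and this term receives no differential. So $H^{p,\langle p\rangle}_{\Pois,m}$ is just the kernel of $d_{m_\varphi}$ on it: the alternating maps that are derivations in each argument. By the universal property of $\Omega^1_A$, this kernel is $\Hom_{A\Mod}(\Omega^{p+1}_A,A)$.

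Equivalently, on the Andr\'e--Quillen side, $\bL_A^{\wedge_A(p+1)}$ is connective with $H_0=\Lambda^{p+1}_A H_0(\bL_A)=\Omega^{p+1}_A$ by right exactness of $\Lambda^{p+1}_A$. Hence the bottom cohomology of $\Hom_{\Dd(A\Mod)}(\bL_A^{\wedge_A(p+1)},A)$ is $\Hom_{A\Mod}(\Omega^{p+1}_A,A)$ for every $A$. Smoothness only adds the vanishing of the remaining cohomology (Corollary \ref{smthcoro}), which is not part of this statement.
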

\begin{proof} This follows from Proposition 4.5.13 of \cite{Lod}, together with Proposition \ref{finclfiltprop} below.
\end{proof}

\subsubsection{} The bicomplex decomposition of Equation \ref{Poisdecompeqn} is a manifestation of the fact that a deformation of a Poisson algebra is equivalent to a deformation of the underlying commutative algebra and Lie algebra in a way that is compatible with the Poisson algebra relations. A common variant that is often considered in the geometric context is the deformations of Poisson structure on a fixed underlying variety, or equivalently in the affine setting the deformations of the underlying Lie bracket which are compatible with the fixed commutative algebra structure. Such deformations are controlled by the following subcomplex:

\begin{defn}\label{gPCdefn} Let $\varphi \in \Alg_\Pois(\End(A))$ a Poisson algebra stucture on $A$ with corresponding commutative product $m_\varphi:A^{\otimes 2} \to A$ and Lie bracket $b_\varphi:A^{\otimes 2} \to A$ . The (reduced) geometric Poisson cochains on $(A,m_\varphi,b_\varphi)$ is the subcomplex of  $C_\Pois^\bullet(A,m_\varphi,b_\varphi)$ given by
	\[ C^\bullet_\PV(A,m_\varphi,b_\varphi)= \left( \bigoplus_{p=0}^{\infty} H^{p,\langle p \rangle}_{\Pois,m}(A,m_\varphi)[-p] \ , \ d_{b_\varphi}  \ \right)  \ .\]
\end{defn}

The following geometric description follows from Proposition \ref{AQPoisequivprop}:

\begin{coro}\label{gPCcoro} The complex of geometric Poisson cochains admits an isomorphism of complexes
	\[ C^\bullet_\PV(A,m_\varphi,b_\varphi) \cong \left( \bigoplus_{p=0}^\infty\Hom_{A\Mod}(\Omega^{p+1}_A,A)[-p] \ , \ \tilde d_{b_\varphi} \ \right) \ , \]
	where $\tilde d_{b}$ is determined uniquely by the fact that the following diagram commutes
	\begin{equation}\label{tildedbeqn}
		\begin{tikzcd}
		\Hom_{A\Mod}(\Omega^{p}_A,A)\arrow[r,"\tilde d_{b_\varphi}"] \arrow[d," (\cdot)\circ (d_A^{\dR})^{\wedge p}"] & \Hom_{A\Mod}(\Omega^{p+1}_A,A)  \arrow[d," (\cdot)\circ (d_A^{\dR})^{\wedge p+1}"]\\
			\Hom(\wedge^p A,A) \arrow[r,"d_{b_\varphi}"]&  	\Hom(\wedge^{p+1}A,A)	\end{tikzcd}  \ .
	\end{equation}
\end{coro}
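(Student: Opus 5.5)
The plan is to read Corollary \ref{gPCcoro} off Proposition \ref{AQPoisequivprop}, one column at a time, and then to identify the induced horizontal differential with the map $\tilde d_{b_\varphi}$ characterised by Diagram \ref{tildedbeqn}.

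\emph{Step 1: the underlying graded object.} For each $p\geq 0$, Proposition \ref{AQPoisequivprop} gives
\[
H^{p,\langle p\rangle}_{\Pois,m}(A,m_\varphi)\cong \Hom_{A\Mod}(\Omega^{p+1}_A,A).
\]
I would make this isomorphism explicit. The class in row-cohomology at the bottom of column $p$ is represented by an element of $\g_{\mc P,\mc Q}(p+1)^{\langle p\rangle}$. Since $(\Pois^\ash)(p+1)^{\langle -p\rangle}=\Lie^\ash(p+1)\circ\Comm^\ash$ in that arity is just $\Lie^\ash(p+1)$, such an element is an alternating map $\wedge^{p+1}A\to A$. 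Being a $d_{m_\varphi}$-cocycle means it is a derivation in each slot, i.e.\ a multiderivation. Multiderivations are exactly the maps of the form $\eta\circ (d^{\dR}_A)^{\wedge p+1}$ with $\eta\in\Hom_{A\Mod}(\Omega^{p+1}_A,A)$. Because nothing sits below this corner of the column, there are no coboundaries, so $H^{p,\langle p\rangle}_{\Pois,m}$ is the cocycle space itself. The isomorphism is therefore the injective map
\[
\eta\mapsto \eta\circ (d_A^\dR)^{\wedge p+1}.
\]
This is precisely the vertical map in Diagram \ref{tildedbeqn}, with the index shifted appropriately.

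\emph{Step 2: the differential.} Next I would check that $d_{b_\varphi}$ maps $H^{p,\langle p\rangle}_{\Pois,m}$ into $H^{p+1,\langle p+1\rangle}_{\Pois,m}$. On these bottom-corner entries, $d_{b_\varphi}$ restricts to the Chevalley--Eilenberg differential on alternating maps; this uses Proposition \ref{rowcol1prop}. Since $d_h$ and $d_v$ anticommute, $d_{b_\varphi}$ carries $d_{m_\varphi}$-cocycles to $d_{m_\varphi}$-cocycles. Equivalently, the CE differential of a multiderivation is again a multiderivation, because $b_\varphi$ is a biderivation of $m_\varphi$. Hence $C^\bullet_\PV$ is a subcomplex, with the grading shifts $[-p]$ matching total degree. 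Transporting $d_{b_\varphi}$ across the injective vertical maps then yields a unique $\tilde d_{b_\varphi}$ that makes Diagram \ref{tildedbeqn} commute. Uniqueness is immediate from injectivity, and existence is exactly the preservation of multiderivations.

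\emph{Main obstacle.} The delicate step is confirming that the abstract identification of Proposition \ref{AQPoisequivprop} really is the naive map $\eta\mapsto\eta\circ (d^\dR_A)^{\wedge p+1}$, with consistent signs and $\Ss_{p+1}$-normalisations. I would handle this by computing directly in the corner entry: there the Koszul dual description reduces to alternating maps, and the row differential $d_{m_\varphi}$ is the Hochschild/Harrison-type differential in each argument. On that explicit description the identification can be checked by hand, without needing to trace it through the cotangent-complex machinery.
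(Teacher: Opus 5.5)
Your proposal is correct and follows the paper's route: the paper simply reads the corollary off Proposition \ref{AQPoisequivprop}, identifying the bottom entry $H^{p,\langle p\rangle}_{\Pois,m}(A,m_\varphi)$ of each column with $\Hom_{A\Mod}(\Omega^{p+1}_A,A)$ and transporting $d_{b_\varphi}$ along this identification. Your explicit description of that entry as the alternating multiderivations $\eta\circ(d^\dR_A)^{\wedge p+1}$ fills in details the paper leaves implicit: uniqueness of $\tilde d_{b_\varphi}$ comes from injectivity, and existence from $d_{b_\varphi}$ preserving $d_{m_\varphi}$-cocycles. It also shows that the comparison with the cotangent complex is not needed in this lowest degree.
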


Finally, by Corollary \ref{smthcoro}, we obtain the following:

\begin{coro}\label{GOequivcoro} Suppose the underlying commutative algebra $(A,m_\varphi)$ is smooth over $\K$. Then the inclusion of the subcomplex of geometric Poisson cochains is a quasi-isomorphism,
\[C^\bullet_\PV(A,m_\varphi,b_\varphi) \xrightarrow{\cong} C_\Pois^\bullet(A,m_\varphi,b_\varphi) \ .\]
\end{coro}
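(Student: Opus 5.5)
We argue with the bicomplex of Proposition \ref{Poisbicxprop} and the spectral sequence obtained by first taking vertical cohomology, that is, cohomology with respect to $d_v=d_{m_\varphi}$. Filtering the total complex by columns (the auxiliary degree $p$) gives an $E_1$ page whose $p$-th column is the cohomology of the row complex $C^{\bullet,\langle p\rangle}_{\Pois,m}(A,m_\varphi)$. Proposition \ref{AQPoisequivprop} identifies each such row, up to the shift $[p]$, with the higher Andr\'e--Quillen cochains $C^{\bullet,\langle p\rangle}_\AQ(A)$.

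\textbf{Step 1: collapse of the $E_1$ page.} Since $(A,m_\varphi)$ is smooth, Corollary \ref{smthcoro} shows that each $C^{\bullet,\langle p\rangle}_\AQ(A)$ is acyclic outside degree zero, with $H^0\cong\Hom_{A\Mod}(\Omega^{p+1}_A,A)$. So in column $p$ the vertical cohomology is concentrated in the single total degree $p$, namely $H^{p,\langle p\rangle}_{\Pois,m}(A,m_\varphi)$. The $E_1$ page is therefore concentrated on one line of total degree. The $d_1$ differential is the map induced by $d_h=d_{b_\varphi}$, which is exactly the differential of $C^\bullet_\PV$ in Definition \ref{gPCdefn}. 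All higher differentials $d_r$ for $r\ge2$ change the quantity (total degree $-$ column) and hence vanish. Consequently $E_2=H^\bullet(C^\bullet_\PV)$ and the spectral sequence degenerates there.

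\textbf{Step 2: identify the degeneration with the inclusion map.} We check that the map $C^\bullet_\PV\to C^\bullet_\Pois$ is well defined as a map of complexes and that it induces the above isomorphism on $E_1$. The subcomplex is realized by choosing representatives of vertical cocycles, for instance via the polyvector realization $(\cdot)\circ(d^\dR_A)^{\wedge p+1}$ of Corollary \ref{gPCcoro}. These representatives land in the bottom entry of the column, where $d_v$-cocycles are automatically honest elements. The horizontal differential preserves these representatives by the commuting square \eqref{tildedbeqn}, so the inclusion is a chain map. Put differently, filtering $C^\bullet_\PV$ by $p$ gives a filtered map whose induced map on $E_1$ is the identity onto the vertical cohomology.

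\textbf{Step 3: convergence, the main obstacle.} Comparison of spectral sequences yields a quasi-isomorphism only if the spectral sequences converge. The column filtration is exhaustive. It is also complete, and bounded in each fixed arity, since arity $n$ only involves $p\le n-1$. In a fixed total degree, however, infinitely many columns contribute, so the total complex must be taken as the product in the appropriate direction. We would handle this by filtering instead by arity $n=p+q+1$, or by noting that in each fixed total degree the relevant piece is a finite-in-each-arity product. The complete Hausdorff filtration then allows the Eilenberg--Moore comparison theorem to apply, giving $E_\infty$-isomorphic spectral sequences and hence the claimed quasi-isomorphism.
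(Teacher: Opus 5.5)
Your argument is correct and is what the paper compresses into ``by Corollary~\ref{smthcoro}'': via Proposition~\ref{AQPoisequivprop}, smoothness makes each fixed-$p$ $d_{m_\varphi}$-complex have cohomology only in its bottom entry $q=0$, so the inclusion of $C^\bullet_\PV$ induces an isomorphism on $E_1$ of the column spectral sequence. The one slip is your Step~3: total degree $k$ involves only arity $k+1$ and columns $0\le p\le k$, so the bicomplex is first-quadrant with finite antidiagonals and convergence is automatic (sum and product totalizations coincide, and the filtration is finite in each degree); your claim that infinitely many columns meet a fixed total degree is false, and refiltering by arity would not have helped anyway.
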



\subsection{Deformation-obstruction complex for algebras of the chiral kind}\label{ssec:def chiral algebras}
\subsubsection{Chiral algebras} In this section, we take $\mc P=\Lie_\K$ the Lie operad over $\K$ and $\mc Q=\End_{\DD(X)^\ch}(R)$ the endomorphism of operad of a D-module $R\in \mc C = \DD(X)^\ch$ with respect to the chiral pseudo tensor structure, so that the space $\Alg_{\mc P}(\mc Q)=\Hom_{\Op_\K}(\mc P,\mc Q)$ of $\mc P$-algebras internal to $\mc Q$ is equal to the space of chiral algebra structures on the D-module $M$.

\begin{prop}\label{chHoccxprop}
	The deformation-obstruction complex of Definition \ref{defobscxdefn} for $\mc P=\Lie_\K$ and $\mc Q=\End_{\DD(X)^\ch}(R)$ is given by the complex of (reduced) chiral cochains, that is,
	\[ \hspace*{-1cm} C^\bullet_{\mc P,\mc Q}(\varphi) = C^\bullet_{\ch}(R,\mu_\varphi) \qquad \text{where}\qquad C^\bullet_{\ch}(R,\mu_\varphi) = \left( \bigoplus_{n=1}^\infty  P_n^\ch(\{R\}_{i=1}^n ,R)_{\Ss_n}[1-n] \ , \ d_\textup{CE} \right)  \ , \]
	for $\mu_\varphi:j_*j^{*}(R^{\boxtimes 2}) \to R$ the chiral Lie bracket structure map and $d_\text{CE}:C^\bullet_{\ch}(R,\mu_\varphi)\to C^\bullet_{\ch}(R,\mu_\varphi)[1]$ the Chevalley--Eilenberg differential determined by $\mu_\varphi$, internal to the pseudo-tensor category $\DD(X)^\ch$.
\end{prop}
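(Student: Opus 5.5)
This is the chiral analogue of Proposition \ref{CEcxprop}, and I would prove it the same way, replacing $\End_{\Vect_\K}(A)$ by the single-coloured operad $\mc Q=\End_{\DD(X)^\ch}(R)$. The plan has three steps.

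First, I identify the underlying graded vector space arity by arity. Recall that $\Lie^\ash(n)$ is the one-dimensional representation $\textup{sgn}_n$, shifted into degree $1-n$; this shift is the source of the $[1-n]$ in the statement. Hence
\[ \g_{\mc P,\mc Q}(n)=\Hom_{\Ss_n}(\Lie^\ash(n),P^\ch_n(\{R\}_{i=1}^n,R)) . \]
This is the subspace of (sign-twisted) $\Ss_n$-equivariant elements, and after the degree shift the twist becomes the trivial one. Since $\K$ has characteristic zero, invariants and coinvariants are canonically isomorphic via the averaging map, so this space is identified with $P_n^\ch(\{R\}_{i=1}^n,R)_{\Ss_n}$. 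Concretely, $P_n^\ch(\{R\},R)=\Hom_{\DD(X^n)}(j_*j^*R^{\boxtimes n},\Delta_*R)$, with $\Ss_n$ acting by permuting the factors of $X^n$ together with the induced action on $j_*j^*R^{\boxtimes n}$ and on $\Delta_*R$. Summing over $n$ gives the graded space of $C^\bullet_\ch(R,\mu_\varphi)$.

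Second, I identify the differential. By Proposition \ref{MCplainprop}, the Maurer--Cartan element $\varphi$ is the arity-two element corresponding to $\mu_\varphi\in P_2^\ch(\{R,R\},R)$. The differential is $d_\varphi=[\varphi,\cdot]$, since $\mc Q$ has zero internal differential. I then compute the convolution pre-Lie product $f\star g$ for $f$ of arity $p$ and $g$ of arity $q$ using the decomposition map of $\Lie^\ash=\Comm^\vee$ shifted. This decomposition sends the generator of arity $p+q-1$ to the sum over unshuffles of partial compositions $\circ_i$, weighted by Koszul signs. Consequently $[\mu_\varphi,f]$ equals the sum over $(2,n-1)$- and $(n,1)$-unshuffles of $\mu_\varphi\circ_i f$ and $f\circ_i\mu_\varphi$, with the usual signs.

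Third, I compare this with the Chevalley--Eilenberg formula. The compositions involved are exactly the chiral composition maps of \cite{BD1} 1.3.2; an example is the composition $\mu\circ(\mu\otimes\id)$ written out in the text. The resulting formula is precisely the Chevalley--Eilenberg differential written inside the pseudo-tensor category, so the comparison reduces to the same combinatorial identity as in the proof of Proposition 13.2.2 of \cite{LV}. The key point is that that proof uses only the operadic structure of $\End(A)$: the partial compositions and the symmetric group actions. It is therefore formal and applies verbatim to any single-coloured operad, in particular to $\mc Q$.

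The main obstacle is the bookkeeping of the $\Ss_n$-actions on the chiral side. The permutation action on $j_*j^*R^{\boxtimes n}$ involves the $\Ss_n$-equivariant structure of $U^{(n)}$ and of $\Delta_*$, and it must be checked that the chiral composition maps are equivariant in the sense required of an operad. This equivariance is part of the statement from \cite{BD1} that $\DD(X)^\ch$ is a pseudo-tensor category, so I would invoke it rather than recheck it. With that in hand, both identifications reduce to the classical Lie case.
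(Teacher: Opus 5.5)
Your proposal is correct and is essentially the paper's argument. The paper simply invokes Proposition \ref{CEcxprop} (i.e.\ Proposition 13.2.2 of \cite{LV}) inside $\DD(X)^\ch$, and your observation that this proof uses only the partial compositions and symmetric group actions of the target operad is what justifies that specialization, with equivariance of the chiral compositions coming from \cite{BD1}. One small imprecision: the degree shift by itself does not remove the sign twist. So $\Hom_{\Ss_n}(\Lie^\ash(n),P^\ch_n(\{R\}_{i=1}^n,R))$ is the sign-isotypic (alternating) part, and the coinvariants in the statement must be read with the sign-twisted action, or equivalently via d\'ecalage, exactly as in Proposition \ref{CEcxprop}.
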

\begin{proof} The is a specialization of Proposition \ref{CEcxprop}, internal to the pseudo-tensor category $\DD(X)^\ch$.
\end{proof}

\subsubsection{Commutative chiral algebras}

In this section, we take $\mc P=\Comm_\K$ the commutative operad over $\K$ and $\mc Q=\End_{\DD(X)^!}(R)$ the endomorphism of operad of a D-module $R\in \mc C = \DD(X)^\ch$ with respect to the $!$ monoidal structure, so that the space $\Alg_{\mc P}(\mc Q)=\Hom_{\Op_\K}(\mc P,\mc Q)$ of $\mc P$-algebras internal to $\mc Q$ is equal to the space of $\Comm^!$ algebra structures on the D-module $R$.

\begin{prop}\label{chHoccxprop}
	The deformation-obstruction complex of Definition \ref{defobscxdefn} for $\mc P=\Comm_\K$ and $\mc Q=\End_{\DD(X)^!}(R)$ is given by the complex of (reduced) chiral Harrison cochains, that is,
	\[ \hspace*{-1cm} C^\bullet_{\mc P,\mc Q}(\varphi) = C^\bullet_{\Har}(R,m_\varphi) \qquad \text{where}\qquad C^\bullet_{\Har}(R,m_\varphi) = \left( \bigoplus_{n=1}^\infty  P_n^!(\{R\}_{i=1}^n , R)^{\Sh_n}[1-n] \ , \ d_\textup{Har} \right)  \ , \]
	for $m_\varphi:R^{\otimes 2} \to R$ is the $\Comm^!$ structure map and $d_\text{Har}:C^\bullet_{\Har}(R,m_\varphi) \to C^\bullet_{\Har}(R,m_\varphi)[1]$ the Harrison differential determined by $m_\varphi$, internal to the monoidal category $\DD(X)^!$.
\end{prop}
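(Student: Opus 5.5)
The plan is to deduce this statement from Proposition \ref{Harcxprop}, in the same way that Proposition \ref{chHoccxprop} for chiral algebras was deduced from Proposition \ref{CEcxprop}. The key observation is that the proof of Proposition \ref{Harcxprop} (Proposition 13.1.4 of \cite{LV}) never uses anything specific to $\End_{\Vect_\K}(A)$. It uses only the following facts about a target operad $\mc Q$:
\begin{enumerate}[(i)]
\item $\Comm^\ash(n)$ is, up to the suspension twist, the Lie cooperad. Its linear dual is a sign-twisted $\Lie(n)$, which is the quotient of the regular representation $\K[\Ss_n]$ by the span of the shuffle sums.
\item $\mc Q(n)$ is a $\K[\Ss_n]$-module, and compositions in $\mc Q$ are equivariant.
\end{enumerate}
Both hold for any single coloured operad, in particular for $\mc Q=\End_{\DD(X)^!}(R)$, whose arity $n$ space is $P^!_n(\{R\}_{i=1}^n,R)=\Hom_{\DD(X)}(R^{\otimes^! n},R)$.

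The key steps, in order, are as follows.
\begin{enumerate}[(1)]
\item Compute $\g_{\Comm,\mc Q}(n)=\Hom_{\Ss_n}(\Comm^\ash(n),\mc Q(n))$. Here $\Comm^\ash(n)\subset \Ass^\ash(n)\cong\K[\Ss_n]$ (with suspension signs) is the sub-$\Ss_n$-module cut out as the annihilator of shuffle sums; this dualizes the surjection $\Ass\twoheadrightarrow\Comm$, whose Koszul dual is $\Lie\hookrightarrow\Ass$. Hence, by the argument of Proposition \ref{Hoccxprop}, we get $\Hom_{\Ss_n}(\Ass^\ash(n),\mc Q(n))\cong \mc Q(n)$. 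Restricting to $\Comm^\ash(n)$ then identifies $\g_{\Comm,\mc Q}(n)$ with the subspace $P^!_n(\{R\},R)^{\Sh_n}$ of operations vanishing on all $(p,q)$-shuffle sums. The identification is exact because $\K[\Ss_n]$ is free and the relevant sub/quotient relation is the same in any $\K$-linear symmetric setting.
\item Check the differential. The Maurer--Cartan element attached to $m_\varphi$ is concentrated in arity two. The bracket $[\varphi,\cdot]$, computed through the infinitesimal decomposition of $\Comm^\ash$, is the restriction of the Hochschild-type formula of Proposition \ref{Hoccxprop}. That formula is built from the partial compositions $\circ_i$ in $\End_{\DD(X)^!}(R)$, i.e. composing $f\otimes^!\id$ and so on with $m_\varphi$. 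This is precisely the Harrison differential, read internally to $(\DD(X),\otimes^!)$.
\item Track the grading shift $[1-n]$ exactly as in Proposition \ref{Harcxprop}.
\end{enumerate}

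The only point requiring care is step (1): making sure the identification of $\Comm^\ash$-equivariant maps with shuffle-vanishing operations uses only the $\Ss_n$-action on $\mc Q(n)$ and not elementwise evaluation on $v\in A^{\otimes p}$. To handle this, I would phrase the shuffle condition as vanishing of the precomposite $f\circ\sum_{\sigma\in\SH_{p,q}}\sigma$, where $\sigma$ acts on $R^{\otimes^! n}$ through the symmetric monoidal structure. With that phrasing, the rest of the argument carries over verbatim.
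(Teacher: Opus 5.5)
Your plan is essentially the paper's proof. The paper just says that the argument of Proposition~\ref{Harcxprop} (i.e.\ Proposition 13.1.4 of \cite{LV}) goes through verbatim internal to $\DD(X)^!$, and your phrasing of the shuffle condition as vanishing of $f\circ\sum_{\sigma\in\SH_{p,q}}\sigma$, with $\sigma$ acting on $R^{\otimes^! n}$ through the symmetric monoidal structure, is the right way to make sense of $P^!_n(\{R\},R)^{\Sh_n}$.

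One correction to step (1): you have swapped sub and quotient. The cooperad map $\Ass^\ash\to\Comm^\ash$ induced by $\Ass\twoheadrightarrow\Comm$ is the (suspended) linear dual of $\Lie\hookrightarrow\Ass$, so it is surjective, and $\Comm^\ash(n)$ is naturally the quotient of $\Ass^\ash(n)$ by the span of the shuffle sums. It is $\Lie(n)$ that sits inside $\K[\Ss_n]$ as the annihilator of shuffles.

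So it is precomposition with this surjection, not restriction, that embeds $\g_{\Comm,\mc Q}(n)$ into $\g_{\Ass,\mc Q}(n)\cong\mc Q(n)$ as the shuffle-vanishing operations. By semisimplicity the swap is harmless for the underlying graded spaces, but it matters for the differential. The fact that this embedding is a map of dg Lie algebras sending $\varphi$ to the Maurer--Cartan element of $m_\varphi$ is exactly what justifies your step (2).
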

\begin{proof} The proof is precisely the same as that of Proposition \ref{Harcxprop}, internal to the monoidal category $\DD(X)^!$, \emph{mutatis mutandis}.
\end{proof}

We now describe the analogue of (higher) Andr\'e--Quillen cohomology in the chiral setting. Consider 
$R$ as $\Comm^!$-algebra with the multiplication $m_\varphi$ and let $\Omega^1_R\in \DD(R)$ be the module of K\"ahler differentials, as defined in the discussion following Definition \ref{Derchdefn}. Given a cofibrant resolution $\tilde R \to R$ of commutative algebras internal to $\Dd(X)^!$, the cotangent complex $\bL_R\in \Dd(R)$ is defined as the image of the object $\Omega^1_{\tilde R} \in \Dd(\tilde R)$ under the equivalence $\Dd(\tilde R) \xrightarrow{\cong} \Dd(R)$. Similarly, we define $\bL^{p}_R\in \Dd(R)$ as the image of the object $\Omega^p_{\tilde R} = (\Omega^1_{\tilde R})^{\wedge_{\tilde R} p} \in \Dd(\tilde R)$ under the same equivalence.

\begin{defn}\label{AQchDefn} 
The chiral Andr\'e--Quillen cochains on $R$ are defined by
	\[ C^\bullet_\AQ(R)= \Hom_{\Dd(R)}(\bL_R,R) \ .  \]
	Similarly, the degree-$p$ higher Andr\'e--Quillen cochains on $R$ are defined by
	\[ C^{\bullet,\langle p \rangle}_\AQ(R)= \Hom_{\Dd(R)^*}(\{\bL_R\}_{k=1}^{p+1};R)_{\Ss_{p+1}} \ ,  \]
	where $\Dd(R)^*$ denotes the derived category of $R$-modules internal to $\Dd(X)^!$, equipped with the $*$ pseudo-tensor structure, recalled in Definition \ref{Rlindefn} and Proposition \ref{Rlinstarprop}.
\end{defn}
Note that for $p=0$ we have $ C^{\bullet,\langle p \rangle}_\AQ(R)= C^\bullet_\AQ(R)$. Moreover, in this case the relevant cohomology groups are equivalent to the chiral Harrison cohomology defined above:
\begin{theo} There is a natural isomorphism
	\[ 
	H^\bullet_\AQ(R)\xrightarrow{\cong} H^\bullet_{\textup{Har}}(R,m_\varphi) \ . 
	\]
\end{theo}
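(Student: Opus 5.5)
The plan is to transport the classical comparison $H^\bullet_\AQ(A)\cong H^\bullet_{\Har}(A)$ into the symmetric monoidal abelian category $(\DD(X),\otimes^!)$. The classical proof (Theorem 12.4.3 of \cite{LV}) uses only three ingredients: the Koszul resolution $\Omega\Comm^\ash\to\Comm$; the existence of quasi-free (cofibrant) resolutions of commutative algebras; and the identification of derivations out of a quasi-free algebra with maps out of its generators. Each of these makes sense in any $\K$-linear symmetric monoidal abelian category of characteristic zero in which $\Sym$ is exact. I will therefore first check that $\Dd(X)^!$, equivalently $\Dd(R)$ with $\otimes^{\bb L,!}_R$, satisfies these properties.

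\textbf{Step 1: a canonical cofibrant resolution.} Take $\tilde R=\Omega\Comm^\ash(R)$, i.e.\ the bar--cobar resolution $\Sym^!\big(\mc L(R)[-1]\big)$ with its twisted differential. Here $\mc L(R)=\Lie^c$-cofree coalgebra on $R[1]$, which is the Harrison (Lie coalgebra) bar construction of $R$ internal to $\DD(X)^!$. Since $\mc P=\Comm$ is Koszul, the map $\tilde R\to R$ is a quasi-isomorphism. Exactness of $\otimes^!$ on the relevant objects, together with the characteristic-zero coinvariants, gives the needed cofibrancy and homotopy invariance, so that $\bL_R$ is computed by $\Omega^1_{\tilde R}\otimes_{\tilde R}R$.

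\textbf{Step 2: compute the cotangent complex.} For a quasi-free $\Comm^!$ algebra $\Sym^!(V)$, the universal property of $\Omega_{\Sym(V)}$ from 1.4.16 of \cite{BD1} gives $\Omega^1_{\Sym(V)}\cong \Sym(V)\otimes^! V$, with the differential induced by $d_{\tilde R}$. Hence
\[
\bL_R\simeq R\otimes^!\mc L(R)[-1]
\]
with a twisted differential, and
\[
\Hom_{\Dd(R)}(\bL_R,R)\cong \Hom_{\DD(X)}\big(\mc L(R)[-1],R\big).
\]
The right-hand side is exactly $\bigoplus_n P^!_n(\{R\},R)^{\Sh_n}[1-n]$, since the Harrison coinvariants are dual to the shuffle-vanishing condition.

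\textbf{Step 3: match the differentials.} The twisting part of $d_{\tilde R}$ restricted to generators comes from the decomposition map of $\Comm^\ash$ composed with $m_\varphi$. This is precisely $[\varphi,\cdot]$ in $\g_{\Comm,\End_{\DD(X)^!}(R)}$, so the isomorphism of Step 2 is a chain map to $C^\bullet_{\Har}(R,m_\varphi)$. Naturality follows from functoriality of bar--cobar.

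The main obstacle is Step 1. I need $\Hom_{\Dd(R)}(\bL_R,R)$ to be computed on the nose by this resolution. Concretely, $\Omega^1_{\tilde R}\otimes_{\tilde R}R$ must be K-projective enough in $\Dd(R)$, because $\DD(X)$ lacks enough projectives with good $\otimes^!$-behaviour. If that fails, I will instead define $\bL_R$ via this specific resolution and verify independence of the choice using a model structure on $\Comm(\Dd(X)^!)$ transferred from $\Dd(X)$, as in \cite{BD1} Section 3.
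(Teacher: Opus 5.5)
Your proposal is correct and is essentially the paper's argument. The paper just cites Theorem 12.4.3 of \cite{LV} internal to $\DD(X)^!$, and when it unpacks this in the proof of Proposition \ref{AQPoischequivintprop} it carries out your Steps 1--3: it represents $\bL_R$ by $(R\otimes^!\Comm^\ash(R), d^{\Har})$ via the Koszul resolution, then uses the induction adjunction and the shuffle/$\Lie(n)^*$ duality (Proposition \ref{Harcxprop}) to identify $\Hom_{\Dd(R)}(\bL_R,R)$ with $C^\bullet_{\Har}(R,m_\varphi)$; the Step 1 issue you flag is settled there only by asserting that this representative is a complex of projective objects of $\DD(R)$ (i.e.\ that the generators $R^{\otimes^! n}$ are projective in $\DD(X)$), so your route needs nothing beyond what the paper uses.
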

\begin{proof}
	This is again a special case of Theorem 12.4.3 of \cite{LV}, internal to the $\K$-linear monoidal category $\DD(X)^!$.
\end{proof}

Finally, we formulate the analogue of the vanishing of higher Andre-Quillen cohomology for smooth algebras in the chiral setting: we will use the notion of very smooth $\Comm^!$ algebra, as in Definition 2.3.15 of \cite{BD1}, which guarantees the following analogue of Theorem \ref{theo:smooth then kahler isproj} in the chiral setting:

\begin{prop}\label{verysmoothprop} Let $R$ be a very smooth $\Comm^!$ algebra. Then $\bL_R\xrightarrow{\cong} \Omega_R^1\in \DD(R)=\Dd(R)^\heartsuit$, and the object $\Omega_R^1$ is projective.
\end{prop}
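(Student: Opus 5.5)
We reduce the statement to its classical counterpart, Theorem \ref{theo:smooth then kahler isproj}, by unwinding what very smoothness in the sense of Definition 2.3.15 of \cite{BD1} provides. Recall that a $\Comm^!$ algebra $R$ is very smooth if, locally on $X$, it is a jet algebra $\mc J(B)$ for some smooth commutative $\mc O_X$-algebra $B$ (more precisely $R$ is a filtered colimit/localization of such, with $\Omega_R$ induced from $\Omega_B$). We work locally on $X$, which is harmless because $\bL_R$ and projectivity are local statements and the hypothesis is local.

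The first step is to compute the cotangent complex of a jet algebra. The adjunction \eqref{affinejetadjeqn} says that $\mc J$ is left adjoint to the forgetful functor $\Comm^!(X)\to \Comm(\QC(X))$. Both functors extend to the derived (DG) setting, and the left adjoint $\mc J$ sends cofibrant (quasi-free) resolutions $\tilde B\to B$ to quasi-free resolutions $\mc J(\tilde B)\to \mc J(B)$. For this we need $\mc J$ to preserve quasi-isomorphisms between cofibrant objects, which holds because on a free algebra $\Sym_{\mc O_X}(V)$ we have $\mc J(\Sym V)=\Sym^!(V\otimes_{\mc O_X}\mc D_X)$ and induction $V\mapsto V\otimes \mc D_X$ is exact. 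By adjunction, derivations of $\mc J(\tilde B)$ into an $R$-module $M$ agree with $\mc O_X$-derivations of $\tilde B$ into $M$, so that
\[ \Omega^1_{\mc J(\tilde B)} \cong \mc J(\tilde B)\otimes_{\tilde B}\Omega^1_{\tilde B}\otimes_{\mc O_X}\mc D_X . \]
Hence $\bL_R\cong R\otimes_B \bL_B\otimes_{\mc O_X}\mc D_X$ is the induced $R[\mc D_X]$-module of $\bL_B$.

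The second step applies Theorem \ref{theo:smooth then kahler isproj} relative to $\mc O_X$, using that $B$ is smooth over $X$. This gives $\bL_B\simeq\Omega^1_B$, a projective $B$-module concentrated in degree zero. Because the induction functor $R\otimes_B(\cdot)\otimes_{\mc O_X}\mc D_X$ is exact on projectives, we get $\bL_R\simeq \Omega^1_R$ in the heart $\DD(R)$. Projectivity follows from the fact that induction is left adjoint to the exact forgetful functor $\DD(R)\to B\Mod$, so it preserves projectives: $\Hom_{\DD(R)}(\Omega^1_R,\cdot)=\Hom_B(\Omega^1_B,\cdot)$ is exact. If very smoothness also permits localizations, we use that localization is flat and so preserves both statements.

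The main obstacle is the derived compatibility in the first step. One must check that the cofibrant resolution can be taken of the form $\mc J(\tilde B)$, i.e. that $\mc J$ is left Quillen and that the resulting object is cofibrant in DG $\Comm^!$ algebras. If it is not, we would instead compute $\bL_R$ through the transitivity triangle for $\mc O_X\to \mc J(\Sym V)\to R$, taking $R$ to be locally a quotient of free jet algebras by a regular sequence of D-ideals, as in the structure results of \cite{BD1}. This reduces the claim to vanishing of the conormal correction, which again holds because $B$ is smooth.
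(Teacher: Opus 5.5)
\textbf{There is a genuine gap in your first step.} The formal parts of your argument are sound: the adjunction formula $\Omega^1_{\mc J(B)}\cong \mc J(B)\otimes_B\Omega^1_B\otimes_{\mc O_X}\mc D_X$ holds on the nose, and induction preserves projectives since it is left adjoint to an exact functor.

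The gap is in the step that carries all the content, namely that $\mc J(\tilde B)\to \mc J(B)$ is a quasi-isomorphism, i.e.\ $\bb L\mc J(B)\simeq \mc J(B)$.
\begin{itemize}
\item Your justification does not reach this. Knowing that $\mc J$ preserves quasi-isomorphisms between cofibrant objects only makes $\bb L\mc J$ well defined. It says nothing here, because $B$ itself is not cofibrant.
\item Cofibrancy is not the real obstacle, as you suggest. $\mc J(\tilde B)$ is quasi-free on induced D-modules. Equivalently, $\mc J$ is left Quillen, because the forgetful functor preserves surjections and quasi-isomorphisms.
\item The required statement cannot follow from formal properties of the adjunction, since it fails for non-smooth $B$. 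Take $B=\K[T]/T^2$ with resolution $\tilde B=\K[T,\epsilon]$, $d\epsilon=T^2$, and write $T_k=\partial^kT$, $\epsilon_k=\partial^k\epsilon$. Then $\mc J(\tilde B)$ is the Koszul complex on $T_0^2,\ 2T_0T_1,\ 2T_0T_2+2T_1^2,\dots$. The element $2T_1\epsilon_0-T_0\epsilon_1$ is a cycle but not a boundary, since its coefficient $T_0$ does not lie in the ideal generated by the $\partial^k(T^2)$. So $\mc J(\tilde B)$ has cohomology in degree $-1$. This is the same phenomenon that makes $\mc J(\K[T]/T^n)$ badly behaved in Section \ref{ssec:minimal models}.
\item Your fallback does not close this. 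It simply asserts the regular-sequence property that is at stake.
\end{itemize}

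\textbf{How to close it.} You must use smoothness of $B$ at exactly this point. Two ways work.
\begin{itemize}
\item \emph{Standard smooth presentation.} Work Zariski locally on $\Spec B$, which is compatible with $\mc J(B_h)=\mc J(B)_h$. Take $B=(\mc O_X[x_1,\dots,x_N]_g)/(f_1,\dots,f_c)$ with $\det(\partial f_j/\partial x_i)_{i,j\leq c}$ invertible. Then the $f_j$ form a regular sequence. For $k\geq 1$ you have $\partial^kf_j=\sum_i(\partial f_j/\partial x_i)\,x_i^{(k)}+(\text{terms in jets of order}<k)$, so the $\partial^kf_j$ merely eliminate the variables $x^{(k)}_1,\dots,x^{(k)}_c$. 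Hence all the $\partial^kf_j$ together form a regular sequence, and the transitivity triangle gives $\bL_R\simeq\Omega^1_R$.
\item \emph{Avoid derived jets entirely.} The underlying object of $\bL_R$ is the ordinary $\bL_{R/\mc O_X}$. This holds because cofibrant $\Comm^!$ algebras are quasi-free on projective, hence $\mc O_X$-flat, D-modules, and K\"ahler differentials of $\Sym_{\mc O_X}M$ agree in the two settings. Moreover $\mc J(B)=\colim_m\mc O(J_m)$ with each finite-order jet scheme $J_m$ smooth over $X$, and $\bL$ commutes with filtered colimits. Your adjunction formula then gives projectivity.
\end{itemize}

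\textbf{Comparison with the paper.} The paper does none of this. It treats the statement as part of what very smoothness in the sense of Definition 2.3.15 of \cite{BD1} provides. It cites 2.3.17 of \emph{loc.\ cit.} separately for the fact that jet algebras of smooth algebras are very smooth. Once repaired, your route in effect proves both propositions at once for $R=\mc J(B)$, which is the only case the paper uses. However, your opening identification of very smooth algebras with localizations or filtered colimits of jet algebras is stated without justification. In any case, filtered colimits would not preserve projectivity of $\Omega^1_R$.
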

\begin{proof} This follows essentially by definition.
\end{proof}

The primary source of examples of very smooth $\Comm^!$ algebras used in the present work comes from the following:

\begin{prop} Let $B$ be a smooth algebra over $\K$ and $R=\mathcal{J}(B)$ be the $\Comm^!$ algebra of jets to $B$, as defined preceding Equation \ref{affinejetadjeqn}. Then $R$ is very smooth.
\end{prop}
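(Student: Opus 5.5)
We need to show that for $B$ smooth over $\K$, the jet algebra $R=\mathcal J(B)$ is very smooth in the sense of Definition 2.3.15 of \cite{BD1}. The plan is to reduce to the local structure of $B$ and then use the explicit description of $\mathcal J(B)$ and of its module of differentials.

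\emph{Step 1: localize.} Very smoothness is local in the Zariski topology on $\Spec B$: the jet construction is compatible with localization, since $\mathcal J(B_f)$ is the localization of $\mathcal J(B)$ at $f$, by the universal property \eqref{affinejetadjeqn}. The defining conditions of \cite{BD1} 2.3.15 are also local. We may therefore assume $B$ is étale over a polynomial ring $\K[x_1,\dots,x_n]$. This is possible because $B$ is smooth, so $\Omega^1_B$ is projective by Theorem \ref{theo:smooth then kahler isproj}, and étale coordinates exist locally.

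\emph{Step 2: the polynomial case.} For $B=\K[x_1,\dots,x_n]$, the universal property gives
\[\mathcal J(B)\cong \mathcal O_X[x_i^{(k)}\mid 1\le i\le n,\ k\ge 0],\]
a polynomial $\Comm^!$ algebra freely generated, as a $\Comm^!$ algebra, by the D-module $\mathcal D_X^{\oplus n}$ (suitably twisted by $\omega_X$ according to the right convention). In this case the cotangent object is free:
\[\Omega_R\cong R\otimes \mathcal D_X^{\oplus n},\]
with basis the $dx_i$. Hence $\Omega_R$ is a projective, finitely generated $R[\mathcal D_X]$-module. Moreover $R$ is the free $\Comm^!$ algebra on a projective D-module, so it is its own cofibrant resolution, and $\bL_R\simeq\Omega_R$ sits in degree zero.

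\emph{Step 3: the étale case.} For $B$ étale over $P=\K[x_i]$, I would use that $\mathcal J$ preserves étale maps: $\mathcal J(B)\cong \mathcal J(P)\otimes_P B$. This holds because the higher jet coordinates of $B$ are uniquely determined by those of $P$ via the formal lifting property of étale maps; concretely, the values of $\partial^k$ on $B$ are solved inductively using invertibility of the Jacobian. It follows that
\[\Omega_{\mathcal J(B)}\cong \mathcal J(B)\otimes_{\mathcal J(P)}\Omega_{\mathcal J(P)}\]
is still free on the $dx_i$, and the cotangent complex is the base change of that of $\mathcal J(P)$, again concentrated in degree zero. The remaining conditions of Definition 2.3.15 are then the finiteness and projectivity just established, together with compatibility with base change; in \cite{BD1} 2.3.15 this is essentially the statement that jets of smooth algebras are formally smooth with locally free differentials.

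\emph{Main obstacle.} I expect Step 3 to be the main difficulty, namely establishing $\mathcal J(B)\cong\mathcal J(P)\otimes_P B$ rigorously and checking that the conditions in \cite{BD1}'s definition are Zariski local, so that the local computations glue. I would first try to verify the étale base change directly through the adjunction \eqref{affinejetadjeqn} and the infinitesimal lifting criterion. If that fails, I would cite \cite{BD1} 2.3.15 directly, where this example is discussed.
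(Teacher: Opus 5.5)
Your route is genuinely different from the paper's, because the paper gives no argument: it simply refers to \cite{BD1}, 2.3.17. (Note that 2.3.15 is where the definition is stated, not where the jet example is treated.) You instead reconstruct the result. You localize Zariski-locally to $B$ \'etale over $P=\K[x_1,\dots,x_n]$, identify $\mathcal J(P)$ with the free $\Comm^!$ algebra on $\mathcal D_X^{\oplus n}$, and transport along $\mathcal J(B)\cong\mathcal J(P)\otimes_P B$. The citation is short but leaves the content opaque. Your sketch has the merit of visibly producing exactly the two properties the paper later extracts from very smoothness in Proposition \ref{verysmoothprop}: $\bL_R\simeq\Omega^1_R$ and projectivity of $\Omega^1_R$. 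Its weak point is that you never state the definition in \cite{BD1} 2.3.15. So ``the remaining conditions \dots\ are then the finiteness and projectivity just established'' is an assertion, not a check. Either write the definition down and verify each clause, or say explicitly that you are proving only the two properties used downstream.

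Two steps can be tightened.

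\textbf{Projectivity needs no localization.} Apply \eqref{affinejetadjeqn} to square-zero extensions $\mathcal J(B)\oplus M$. This identifies horizontal derivations $\mathcal J(B)\to M$ with derivations $B\to M$, so globally $\Omega_{\mathcal J(B)}\cong\mathcal J(B)[\mathcal D_X]\otimes_B\Omega^1_B$ (in left conventions). This is finitely generated projective because $\Omega^1_B$ is. The \'etale reduction is therefore only needed for the cotangent complex.

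\textbf{The cotangent complex step needs a reason.} In Step 3 you use that $\mathcal J(P)\to\mathcal J(B)$ is \'etale as a map of underlying commutative algebras. What you actually need is $\bL_{\mathcal J(B)/\mathcal J(P)}\simeq 0$ computed among $\Comm^!$ algebras. This is true, and you should say why:
\begin{itemize}
\item For a $\Comm^!$ algebra $S$, one has $\Omega_S\cong\Omega_{S/\mathcal O_X}$ equipped with its Lie-derivative $\mathcal D_X$-action.
\item A free $\Comm^!$ algebra on a free $\mathcal D_X$-module is a polynomial $\mathcal O_X$-algebra, since $\mathcal D_X$ is locally free over $\mathcal O_X$.
\end{itemize}
Hence cofibrant resolutions among $\Comm^!$ algebras are polynomial resolutions of the underlying algebras. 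The underlying complex of the $\Comm^!$ cotangent complex is then the ordinary one, and ordinary \'etaleness gives the vanishing.
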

\begin{proof}
	This is explained in 2.3.17 of \cite{BD1}.
\end{proof}

\begin{coro}\label{smthchcoro} Let $R$ be a very smooth $\Comm^!$ algebra. Then $C^\bullet_\AQ(R)$ is acyclic in non-zero degree, and
	\[ H^0_\AQ(R)\cong \Hom_{\DD(R)}(\Omega^1_R,R) = \Der(R,R) = h(\Theta_R) \]
	the module of derivations $ \Der(R,R) $ of $R$, as defined in Definition \ref{Derchdefn}, or equivalently the middle de Rham cohomology of the tangent module $\Theta_R  \in \DD(R)$.
	
	More generally, for each $p\geq 0$, $C^{\bullet,\langle p \rangle }_\AQ(R)$ is acyclic except in cohomological degree $p$, and
	\[ H^{p,\langle p \rangle}_\AQ(R)\cong P^*_{R,p+1}(\{\Omega^1_R\}_{k=1}^{p+1},R)_{\Ss_{p+1}} \cong h(\wedge^{p+1}_R\Theta_R) \ , \]
	the middle de Rham cohomology of the module of degree ${p+1}$ polyvector fields $\wedge^{p+1}_R \Theta_R \in \DD(R)$.
\end{coro}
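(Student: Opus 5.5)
The plan is to reduce everything to Proposition \ref{verysmoothprop}, which says that for $R$ very smooth the cotangent complex $\bL_R$ is quasi-isomorphic to $\Omega^1_R$, concentrated in degree zero, and that $\Omega^1_R$ is a projective object of $\DD(R)$. Then, as in the classical Corollary \ref{smthcoro}, both claims follow from computing derived $\Hom$ out of a projective object.

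\textbf{Case $p=0$.} Replace $\bL_R$ by $\Omega^1_R$ in Definition \ref{AQchDefn}:
\[ C^\bullet_\AQ(R)=\Hom_{\Dd(R)}(\bL_R,R)\simeq \bb R\Hom_{\DD(R)}(\Omega^1_R,R). \]
Projectivity of $\Omega^1_R$ kills all higher $\Ext$ groups, so the complex is acyclic outside degree zero. Its $H^0$ is $\Hom_{\DD(R)}(\Omega^1_R,R)$, which equals $\Der(R,R)$ by the universal property of $(\Omega_R,d_R)$ stated after Definition \ref{Derchdefn}. The last identification, $\Der(R,R)=h(\Theta_R)$, is the proposition preceding this section. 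That proposition assumes $X$ affine; I would either work locally on $X$ or assume $X$ affine, as is implicit for vertex algebras on $\bb A^1$.

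\textbf{Case $p\ge 1$.} Work with the $R$-linear $*$-operations $P^*_{R,p+1}(\{L_i\},R)=\Hom_{\DD(R^{\boxtimes p+1})}(\boxtimes L_i,\Delta_*R)$. The steps, in order:

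\begin{enumerate}[(i)]
\item Since $\Omega^1_R$ is projective in $\DD(R)$, the exterior product $(\Omega^1_R)^{\boxtimes p+1}$ is projective in $\DD(R^{\boxtimes p+1})$. Also $\Delta_*$ is exact for the right $t$-structure. Hence the derived $*$-Hom is computed by the underived one, sitting in a single degree.
\item That degree is $p$. This shift comes from the convention $\DD(X)^\ell[-d_X]\cong\DD(X)^r$ for the curve $X^{p+1}$ versus $X$, i.e.\ $p$ codimensions of the diagonal. Equivalently, it matches the shift $[p]$ in Proposition \ref{AQPoisequivprop}.
\item Taking $\Ss_{p+1}$-coinvariants is exact in characteristic zero. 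This gives acyclicity away from degree $p$ and identifies $H^{p,\langle p\rangle}_\AQ(R)$ with $P^*_{R,p+1}(\{\Omega^1_R\},R)_{\Ss_{p+1}}$.
\end{enumerate}

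\textbf{Final identification.} It remains to show
\[ P^*_{R,p+1}(\{\Omega^1_R\},R)_{\Ss_{p+1}}\cong h(\wedge^{p+1}_R\Theta_R). \]
I would prove an adjunction formula
\[ P^*_{R,n}(\{L_i\},M)\cong \Hom_{\DD(R)}(R,\ \otimes^!_{R,i} L_i^{\circ_R}\otimes^!_R M)=h(\otimes_R L_i^{\circ_R}\otimes_R M) \]
for $L_i$ locally projective of finite type, using the internal duals $(\cdot)^{\circ_R}$ of Section \ref{varsec}. Applying it with $L_i=\Omega^1_R$ and $M=R$, the coinvariants become $h$ of the antisymmetric tensors, i.e.\ $h(\wedge^{p+1}_R\Theta_R)$. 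For this step, $h$ must commute with $\Ss_{p+1}$-coinvariants, which holds in characteristic zero.

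\textbf{Main obstacle.} I expect the hardest part to be this last identification of $R$-linear $*$-operations with $h$ of tensor products of duals, including the careful bookkeeping of the cohomological shift. This is the chiral analogue of $\Hom_A(\Omega^{p+1}_A,A)\cong\wedge^{p+1}\Theta_A$, and I would model it on the unary case $h(\Theta_R)=\Der(R,R)$ already established.
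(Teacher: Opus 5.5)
Your acyclicity argument is what the paper intends: replace $\bL_R$ by the projective $\Omega^1_R$ via Proposition \ref{verysmoothprop}, use projectivity of $(\Omega^1_R)^{\boxtimes p+1}$, right t-exactness of $\Delta_*$, and exactness of $\Ss_{p+1}$-coinvariants. The genuine gap is in the final identification with $h(\wedge^{p+1}_R\Theta_R)$. Your formula $P^*_{R,n}(\{L_i\},M)\cong\Hom_{\DD(R)}(R,\otimes^!_{R,i}L_i^{\circ_R}\otimes^!_R M)=h(\cdots)$ fails at both equalities.

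\emph{Second equality.} Since $R=R\otimes^!\Omega^{d_X}_X$ is the free $R$-module on the unit,
\[
\Hom_{\DD(R)}(R,N)=\Hom_{\DD(X)}(\Omega^{d_X}_X,N)=\Hom_{\mc D_X}(\mc O_X,N^\ell).
\]
This is the space of horizontal sections, i.e.\ the \emph{lowest} de Rham cohomology of $N$, whereas $h(N)=N^r/N^r\Theta_X$ is the \emph{top} one. When $N$ is locally projective in $\DD(X)$ and $X$ is affine, Proposition \ref{locprojaffprop} puts all de Rham cohomology in top degree, so $\Hom_{\DD(R)}(R,N)=0$ while $h(N)\neq 0$ in general.

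\emph{First equality.} Moving $L$ across as $L^{\circ_R}$ fails because the unit is not projective, just as $\mc O_X$ is not $\mc D_X$-projective. Already for $R=\Omega^{d_X}_X$ and $L=M=\mc D_X$ one has $\Hom_{\DD(X)}(\mc D_X,\mc D_X)=\Gamma(X,\mc D_X)\neq0$, while $\mc D_X^{\circ}\otimes^!\mc D_X$ is locally free and has no horizontal sections. In particular, for $p=0$ your recipe returns the horizontal sections of $\Theta_R$ rather than $\Der(R,R)$. The unary identity you want to model on is $\Hom_{\DD(R)}(L,M)\cong h(L^{\circ_R}\otimes^!_R M)$ for $L$ projective (Propositions \ref{Rinthomdescprop} and \ref{inthommatchabprop}), not $\Hom_{\DD(R)}(R,L^{\circ_R}\otimes^!_R M)$.

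The route that works is the one the paper carries out in the proof of Proposition \ref{prop:deRham of internal PV}:
\begin{enumerate}[(i)]
\item By Proposition \ref{Rinthommatchprop}, Kashiwara's lemma and scheme-theoretic support on the diagonal of $\mc Y$, one gets $P^*_{R,p+1}(\{\Omega^1_R\},R)\cong\pi_*\mc P^*_{R,p+1}(\{\Omega^1_R\},R)$.
\item Duality gives $\mc P^*_{R,p+1}(\{\Omega^1_R\},R)\cong\Theta_R^{\otimes^!_R(p+1)}$.
\item To replace $\pi_*$ by $h$ via Proposition \ref{locprojaffprop}, this object must be locally projective in $\DD(X)$ and $X$ must be affine. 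Local projectivity uses that $R$ is locally free over $\mc O_X$, a hypothesis your argument never invokes.
\item Only then pass to $\Ss_{p+1}$-coinvariants.
\end{enumerate}

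Separately, your step (ii) does not produce a shift. $P^*$ is defined entirely in right hearts, $\boxtimes$ preserves them, and $\Delta_*$ is right t-exact. So the derived $*$-Hom out of the projective $(\Omega^1_R)^{\boxtimes p+1}$ lives in degree $0$. The only shift by $p$ is the one in Proposition \ref{AQPoischequivprop}. It is $C^{\bullet,\langle p\rangle}_{c,m}(R)=C^{\bullet,\langle p\rangle}_\AQ(R)[-p]$ that is concentrated in degree $p$, matching the ``in particular'' clause there and Corollary \ref{smthcoro}. The degree in the statement should be read that way, not explained by left/right conventions.
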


\subsubsection{\texorpdfstring{$\Lie^*$}{Lie*} algebras}

Throughout this section, we take $\mc Q=\Lie_\K$ and $\mc P=\End_{\DD(X)^*}(R)$ the endomorphism of operad of a D-module $R\in \mc C = \DD(X)^*$ with respect to the * pseudo-tensor structure, so that the space $\Alg_{\mc P}(\mc Q)=\Hom_{\Op_\K}(\mc P,\mc Q)$ of $\mc P$-algebras internal to $\mc Q$ is equal to the space of $\Lie^*$ algebra structures on the D-module $R$.

\begin{prop}\label{chPoiscxprop}
	The deformation-obstruction complex of Definition \ref{defobscxdefn} for $\mc P=\Lie_\K$ and $\mc Q=\End_{\DD(X)^c}(R)$ is given by the complex of (reduced) Chevalley--Eilenberg cochains in $\DD(X)^*$, that is,
	\[ 
	C^\bullet_{\mc P,\mc Q}(\varphi) = C^\bullet_{\CE}(R,b_\varphi) \qquad \text{where}\qquad C^\bullet_{CE}(R,b_\varphi) = \left( \bigoplus_{n=1}^\infty   P_n^*(\{ R\}_{i=1}^n,R)_{\Ss_n}[1-n] \ , \ d_\text{CE} \right)  \ , 
	\]
for $d_\CE:C^\bullet_{\CE}(R,b_\varphi)\to C^\bullet_{\CE}(R,b_\varphi)[1]$ the usual Chevalley--Eilenberg differential determined by $b_\varphi$, internal to the pseudo-tensor category $\DD(X)^*$.
\end{prop}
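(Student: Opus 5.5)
The plan is to deduce this from Proposition \ref{CEcxprop}, internalized in the pseudo-tensor category $\DD(X)^*$, just as Proposition \ref{chHoccxprop} was deduced for $\DD(X)^\ch$. (The statement says $\End_{\DD(X)^c}(R)$, but the $\Lie^*$ structures are governed by $\End_{\DD(X)^*}(R)$, as set up at the start of the subsection; I work with the latter.) First compute the graded pieces of the convolution Lie algebra. Since $\Lie^\ash(n)\cong \mathrm{sgn}_n[n-1]$ is one-dimensional, at each arity
\[ \g_{\Lie,\End_{\DD(X)^*}(R)}(n)=\Hom_{\K[\Ss_n]}\big(\Lie^\ash(n),P^*_n(\{R\}_{i=1}^n,R)\big). \]
After absorbing the sign twist into the suspension conventions, this is the space of $\Ss_n$-(anti)invariants. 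In characteristic zero it is identified with the coinvariants $P^*_n(\{R\}_{i=1}^n,R)_{\Ss_n}$ via the averaging map. Placed in degree $n-1$, this gives the underlying graded vector space $\bigoplus_n P_n^*(\{R\}_{i=1}^n,R)_{\Ss_n}[1-n]$.

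Next, identify the differential. By Proposition \ref{MCplainprop}, the Maurer--Cartan element attached to $\varphi$ is the arity-two element $b_\varphi\in P_2^*(\{R,R\},R)$. The twisted differential $d_\varphi=[b_\varphi,\cdot]$ is the convolution commutator, which only involves the partial compositions $\circ_i$ in the operad $\End_{\DD(X)^*}(R)$. The pre-Lie product $b_\varphi\star f$ and $f\star b_\varphi$, summed over shuffles through the decomposition maps of $\Lie^\ash$, therefore reproduces term by term the classical Chevalley--Eilenberg formula. There is one term $\sum \pm\, b_\varphi\circ(f\otimes \id)\circ\sigma$ and one term $\sum\pm f\circ(b_\varphi\otimes\id)\circ\sigma$. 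Here the compositions are those of $\DD(X)^*$ described after the definition of $\Lie^*$ algebras, e.g. via $\Delta^{12,3}_*$. The identification is formal, because the proof of Proposition \ref{CEcxprop} in \cite{LV} uses only the operad structure of $\End(A)$, never the fact that it comes from a closed monoidal category. It therefore applies verbatim to any $\K$-linear operad, in particular to the full suboperad $\End_{\DD(X)^*}(R)$ of the pseudo-tensor category in the sense of Definition \ref{pseudotensordefn}.

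The main obstacle is bookkeeping rather than conceptual: checking that the $\Ss_n$-actions on $P_n^*(\{R\},R)=\Hom(R^{\boxtimes n},\Delta^{(n)}_*R)$ combine with the sign representation so that invariants really match coinvariants. One must also check that the resulting signs agree with the standard CE differential. I would handle this by noting that $\End_{\DD(X)^*}(R)$ receives no new sign conventions beyond the permutation action on $X^n$. Hence the Koszul sign computation of \cite[Prop.~13.2.2]{LV} transfers unchanged, and the result follows.
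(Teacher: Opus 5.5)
Your proposal is correct and follows the paper's route. The paper's proof is the one-line observation that this is Proposition \ref{CEcxprop} specialized internally to the pseudo-tensor category $\DD(X)^*$. Your arity-wise identification of $\Hom_{\Ss_n}(\Lie^\ash(n),P^*_n(\{R\}_{i=1}^n,R))$ with sign-twisted coinvariants, together with your remark that the convolution-bracket computation uses only partial compositions, is exactly what that specialization amounts to. You are also right that the intended operad is $\End_{\DD(X)^*}(R)$ rather than $\End_{\DD(X)^c}(R)$, as the subsection's setup and the appearance of $P^*_n$ in the complex make clear.
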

\begin{proof}
This is a specialization of Proposition \ref{CEcxprop}, internal to the pseudo-tensor category $\DD(X)^*$.
\end{proof}

\subsubsection{Coisson algebras}\label{Coisdefsec}

In this section, we take $\mc Q=\Lie_\K$ and $\mc P=\End_{\DD(X)^c}(R)$ the endomorphism of operad of a D-module $R\in \mc C = \DD(X)^c$ with respect to the classical pseudo-tensor structure, so that the space $\Alg_{\mc P}(\mc Q)=\Hom_{\Op_\K}(\mc P,\mc Q)$ of $\mc P$-algebras internal to $\mc Q$ is equal to the space of coisson algebra structures on the D-module $R$.

\begin{prop}\label{chPoiscxprop}
	The deformation-obstruction complex of Definition \ref{defobscxdefn} for $\mc P=\Lie_\K$ and $\mc Q=\End_{\DD(X)^c}(R)$ is given by the complex of (reduced) Chevalley--Eilenberg cochains in $\DD(X)^c$, that is,
	\[ \hspace*{-1cm} C^\bullet_{\mc P,\mc Q}(\varphi) = C^\bullet_{c}(R,\mu_\varphi^c) \qquad \text{where}\qquad C^\bullet_{c}(R,\mu_\varphi^c) = \left( \bigoplus_{n=1}^\infty   P_n^c(\{ R\}_{i=1}^n,R)_{\Ss_n}[1-n] \ , \ d_\text{CE} \right)  \ , \]
	for $\mu_\varphi^c=(m_\varphi,b_\varphi)\in P_2^c(\{R,R\},R)_{\Ss_2}$ the classical Lie bracket structure map corresponding to a Coisson structure under Proposition \ref{CoisLieprop} above, and $d_\text{CE}:C^\bullet_{c}(R,\mu_\varphi)\to C^\bullet_{c}(R,\mu_\varphi)[1]$ the usual Chevalley--Eilenberg differential determined by $\mu_\varphi$, internal to the pseudo-tensor category $\DD(X)^c$.
\end{prop}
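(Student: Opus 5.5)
This is a specialization of Proposition \ref{CEcxprop}, internal to the pseudo-tensor category $\DD(X)^c$. Proposition \ref{CEcxprop} was stated for $\End_{\Vect_\K}(A)$, but its argument is purely formal in the target operad. So the plan is to rerun that argument with $\mc Q=\End_{\DD(X)^c}(R)$ and then match the Maurer--Cartan element with the coisson structure via Proposition \ref{CoisLieprop}.

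First, I would identify the underlying graded space. By definition, $\g_{\Lie,\mc Q}(n)=\Hom_{\Ss_n}(\Lie^\ash(n),\mc Q(n))$. The Koszul dual cooperad $\Lie^\ash$ is, up to the suspension conventions already absorbed in the shift $[1-n]$, the cooperad $\Comm^c$ twisted by the sign representation. Hence $\Lie^\ash(n)$ is one-dimensional, and equivariant maps out of it are identified with the $\Ss_n$-(co)invariants of $\mc Q(n)$ with the appropriate sign twist. In characteristic zero, invariants and coinvariants agree. With the standard identification used for Proposition \ref{CEcxprop}, this gives
\[ \g_{\Lie,\mc Q}(n)\cong P_n^c(\{R\}_{i=1}^n,R)_{\Ss_n}, \]
using $\mc Q(n)=\OEnd_{\DD(X)^c}(R)(n)=P^c_n(\{R\}_{i=1}^n,R)$ from Definition \ref{pseudotensordefn}. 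If one wants an explicit form, Proposition \ref{Oendcprop} describes these spaces concretely, though it is not needed here.

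Second, I would identify the bracket and differential. The convolution pre-Lie product on $\Hom_\Ss(\Lie^\ash,\mc Q)$ is built from the decomposition map of $\Lie^\ash$ (sums over unshuffles) and the partial compositions of $\mc Q$. Here those partial compositions are the compositions in $\DD(X)^c$ from 1.4.27 of \cite{BD1}. The resulting bracket is therefore the Nijenhuis--Richardson type bracket written inside $\DD(X)^c$. Since $\mc Q$ has zero internal differential, $d_\varphi=[\varphi,\cdot]$. By Proposition \ref{MCplainprop}, $\varphi$ corresponds to the weight-one element $\mu^c_\varphi\in P_2^c(\{R,R\},R)_{\Ss_2}$. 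Under the decomposition \eqref{cl2arydecompeqn}, this element is the pair $(m_\varphi,b_\varphi)$, as given by Proposition \ref{CoisLieprop}. Bracketing with $\mu^c_\varphi$ then yields exactly the Chevalley--Eilenberg formula: for $f$ of arity $n$, one sums over $(2,n-1)$ and $(n,1)$ unshuffles, composing $f$ with $\mu^c_\varphi$ on either side.

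The only step needing care is the sign and coinvariant bookkeeping: one must check that the operadic composition in $\DD(X)^c$ is compatible with the $\Ss_n$-actions, so that the classical CE formula descends to coinvariants. This is guaranteed because $\DD(X)^c$ is a genuine symmetric coloured operad, by 1.4.27 of \cite{BD1}. I expect this to be the main, though routine, point. Everything else is verbatim from the linear case, Proposition 13.2.2 of \cite{LV}, transported from $\Vect_\K$ to the pseudo-tensor category $\DD(X)^c$.
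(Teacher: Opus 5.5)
Your proposal is correct and takes the same route as the paper, whose entire proof is the observation that this is Proposition \ref{CEcxprop} (i.e.\ Proposition 13.2.2 of \cite{LV}) specialized to the target operad $\End_{\DD(X)^c}(R)$. Your extra bookkeeping just spells out what that one-line proof leaves implicit: the sign-twisted (co)invariants, $d_\varphi=[\varphi,\cdot]$ for a target concentrated in degree zero, and the identification $\mu^c_\varphi=(m_\varphi,b_\varphi)$ via Proposition \ref{CoisLieprop}.
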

\begin{proof} 
This is a specialization of \ref{CEcxprop}, internal to the pseudo-tensor category $\DD(X)^c$.
\end{proof}

As we will explain, there is a bicomplex structure underlying the complex of classical cochains, analogous to that of Proposition \ref{Poisbicxprop} in the Poisson case. 

%
%

\begin{prop}\label{Coisbicxprop} 
There is a natural bicomplex $C^{\bullet,\bullet}_c(R,m_\varphi,b_\varphi)=\oplus_{p,q=0}^\infty C^{p,q}_c$ defined by
	\[
	C^{p,q}_c\equiv C^{p,q}_c(R,m_\varphi,b_\varphi)= P_{p+q+1}^c(\{ R\}_{i=1}^{p+q+1},R)_{S_{p+q+1}}^{\langle p \rangle }
	\]
	with horizontal and vertical differentials given, respectively, by
	\[
  d_h=d_{b_\varphi}:C^{p,q}_c \to C^{p+1,q}_c \qquad d_v=d_{m_\varphi}:C^{p,q}_c \to C^{p,q+1}_c 
	\]
	such that the total complex is naturally isomorphic to the classical cochains, that is, \[\textup{Tot } C^{\bullet,\bullet}_c(R,m_\varphi,b_\varphi) \xrightarrow{\cong} C_c^\bullet(R,\mu_\varphi^c) \ .\]
\end{prop}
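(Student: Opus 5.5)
The plan is to transport the argument of Proposition \ref{Poisbicxprop} into the classical pseudo-tensor category $\DD(X)^c$. The auxiliary grading of Equation \ref{auxgreqn} is the key structure. It gives a direct sum decomposition
\[ C^\bullet_c(R,\mu^c_\varphi)=\bigoplus_{n\ge1}\bigoplus_{p=0}^{n-1} P^c_n(\{R\}_{i=1}^n,R)^{\langle p\rangle}_{\Ss_n}[1-n] \ . \]
This decomposition is compatible with $\Ss_n$-coinvariants, since $\Ss_n$ acts on the set $Q(\{1,\dots,n\})$ of equivalence relations preserving the cardinality of the quotient. Setting $n=p+q+1$ identifies the summands with the proposed $C^{p,q}_c$, and cohomological degree $n-1=p+q$ is the total degree. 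So the underlying graded spaces of $\textup{Tot}\,C^{\bullet,\bullet}_c$ and $C^\bullet_c$ agree. What remains is to split the differential.

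By Equation \ref{cl2arydecompeqn}, the Maurer--Cartan element $\mu^c_\varphi$ decomposes as $m_\varphi+b_\varphi$. Here $m_\varphi\in P^c_2{}^{\langle0\rangle}$ is the $\Comm^!$ product and $b_\varphi\in P^c_2{}^{\langle1\rangle}$ is the $\Lie^*$ bracket. Hence $d_{\mu^c_\varphi}=[\mu^c_\varphi,\cdot]=[m_\varphi,\cdot]+[b_\varphi,\cdot]=:d_v+d_h$. The plan is to prove that the operadic composition in $\DD(X)^c$ is additive in the auxiliary grading, i.e.\ that the compositions of 1.4.27 of \cite{BD1} send $P^{c,\langle p\rangle}\otimes P^{c,\langle p'\rangle}$ (via an infinitesimal composition $\circ_i$) into $P^{c,\langle p+p'\rangle}$.

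The count works as follows. Compose an operation indexed by a quotient $I\to S$ with one indexed by $J\to T$ inserted at slot $i\in I$. The composite is indexed by the quotient of $I\setminus\{i\}\sqcup J$ obtained by gluing $T$ into the block of $S$ containing $i$. The surviving $*$-points have cardinality $|S|+|T|-1$, so $(p+1)+(p'+1)-1=p+p'+1$, giving grading $p+p'$. This count is where one must actually check the BD composition rule. The $!$-tensor factors inside a block are merged via $\Delta^!$, while the $*$-operations compose as in $\DD(X)^*$, possibly together with a factor of the Lie operad composition on each block. I expect this check to be the main obstacle: the composition of classical operations involves the Lie-factor structure and the identification $\Delta^!\Delta_*$, and one must verify that no term jumps to a different quotient cardinality. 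I will argue it by naturality in the combinatorics of $Q(I)$. Once additivity holds, the convolution bracket is additive in the grading as well. Then $d_v$ has bidegree $(0,1)$, since it adds one arity without raising $p$, and $d_h$ has bidegree $(1,0)$.

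Finally, $d^2=0$ for $d=d_v+d_h$ splits by bidegree into $d_v^2=0$, $d_h^2=0$ and $d_vd_h+d_hd_v=0$. Equivalently, these follow from the Maurer--Cartan equation $[\mu^c_\varphi,\mu^c_\varphi]=0$ decomposed into its $\langle0\rangle,\langle1\rangle,\langle2\rangle$ components: $[m,m]=0$, $[m,b]=0$ and $[b,b]=0$. These encode associativity, the Leibniz rule and Jacobi respectively. The Jacobi identity for the graded Lie bracket then gives the three relations. This yields the bicomplex, whose total complex is $C^\bullet_c(R,\mu^c_\varphi)$ by construction, exactly paralleling Proposition \ref{Poisbicxprop}.
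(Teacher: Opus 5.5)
Your proposal is correct and follows the same route as the paper. You decompose the binary structure map as $\mu^c_\varphi=m_\varphi+b_\varphi$ via Equation \ref{cl2arydecompeqn}, so that $d_{\CE}=d_{m_\varphi}+d_{b_\varphi}$ with pieces of bidegree $(0,1)$ and $(1,0)$, and you get the bicomplex identities from the coisson relations $[m,m]=[m,b]=[b,b]=0$.

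The only difference is that you single out additivity of classical composition in the auxiliary grading as the step to check, whereas the paper takes it as holding by definition. One correction there: the composite is not indexed by a single glued partition but is a sum over the ways of distributing the remaining elements of the outer block among the blocks of $T$ (this is where the Leibniz rule enters). Every such term has $|S|+|T|-1$ blocks, so your count still goes through.
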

\begin{proof}
	The Lie operad is generated by a single binary operation, which thus uniquely determines the Chevalley--Eilenberg differential. The decomposition of the space of binary operations in $P^c$ given in Equation \ref{cl2arydecompeqn} induces the decomposition of the differential as $d_{\CE}=d_{\mu_\varphi}+d_{b_\varphi}$ and these components respect the bigrading essentially by definition, and---by the coisson algebra relations---they commute appropriately.
\end{proof}

The zeroth row and column define quotient complexes
\[ 
C^\bullet_{c,m}(R,m_\varphi)=(\bigoplus_{n=1}^\infty P_{n}^c(\{ R\}_{i=1}^{n},R)_{\Ss_{n}}^{\langle 0 \rangle }[1-n] \ , \ d_{m_\varphi} )  ~,
\]
and
\[
C^\bullet_{c,b}(R,b_\varphi)=(\bigoplus_{n=1}^\infty P_{n}^c(\{ R\}_{i=1}^{n},R)_{\Ss_{n}}^{\langle n-1 \rangle }[1-n] \ , \ d_{b_\varphi} )  ~,
\]
which control the induced deformation of $\Comm^!$ and $\Lie^*$ algebra, respectively, coming from a deformation of Coisson algebra, as the following Proposition confirms:

\begin{prop}\label{firstrcprop}There are natural identifications of complexes
	\[C^\bullet_{c,m}(R,m_\varphi) \xrightarrow{\cong} C^\bullet_{\textup{Har}}(R,m_\varphi) \qquad \text{and}\qquad  C^\bullet_{c,b}(R,b_\varphi) \xrightarrow{\cong} C^\bullet_{\textup{CE}}(R,b_\varphi)  \ . \]
\end{prop}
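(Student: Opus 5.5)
The plan is to compute both quotient complexes arity by arity using the explicit decomposition of $P^c_n$ from Proposition \ref{Oendcprop}, and then to identify the restricted differentials. The argument runs parallel to the proof of Proposition \ref{rowcol1prop} in the Poisson case.

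\emph{Column $p=0$.} Put $p=0$ in Proposition \ref{Oendcprop}. The only quotient of $\{1,\dots,n\}$ of cardinality one is the trivial one, so
\[ P^c_n(\{R\}_{i=1}^n,R)^{\langle 0\rangle} = P^!_n(\{R\}_{i=1}^n,R)\otimes \Lie(n), \]
using $P^*_1(\{R^{\otimes^! n}\},R)=\Hom_{\DD(X)}(R^{\otimes^! n},R)=P^!_n(\{R\},R)$. Taking $\Ss_n$-coinvariants gives $(P^!_n\otimes\Lie(n))_{\Ss_n}$. This is precisely the arity-$n$ piece of $\g_{\Lie,\End_{\DD(X)^!\otimes^H\Lie}(R)}$, which is the pullback along $\alpha$ in \eqref{alphabetaeqn}. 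By the lemma identifying $\Alg_\Lie(\mc P\otimes^H\Lie)$ with $\Alg_\Comm(\mc P)$, it is the same as the arity-$n$ piece of $\g_{\Comm,\End_{\DD(X)^!}(R)}$. Concretely,
\[ \Hom_{\Ss_n}(\Lie^\ash(n),P^!_n\otimes\Lie(n)) \cong \Hom_{\Ss_n}(\Comm^\ash(n),P^!_n), \]
up to the standard operadic suspension twist, and $\Comm^\ash(n)$ is dual to $\Lie(n)$ up to sign. The right-hand side is the Harrison piece $P^!_n(\{R\},R)^{\Sh_n}$ of Proposition \ref{Harcxprop}, transported to $\DD(X)^!$ as in the chiral Harrison proposition.

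For the differential on this column, only the $\langle 0\rangle$ component of $\mu^c_\varphi$, namely $m_\varphi\otimes[\,,]$, survives in the quotient $d_v=d_{m_\varphi}$. Since $\alpha$ is a map of operads, the Chevalley--Eilenberg differential of $m_\varphi\otimes[\,,]$ corresponds to the Harrison differential of $m_\varphi$. This is the same bookkeeping that underlies the left-hand identification in Proposition \ref{rowcol1prop}.

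\emph{Row $q=0$.} Here $p=n-1$. The quotients of cardinality $n$ are the discrete ones, each $n_i=1$, and $\Lie(1)=\K$. So Proposition \ref{Oendcprop} gives
\[ \bigl(\Ind^{\Ss_n}_{\Ss_1^n}P^*_n(\{R\},R)\bigr)_{\Ss_n}=P^*_n(\{R\},R)_{\Ss_n} \]
(the induction from the trivial group cancels against the coinvariants). By Proposition \ref{chPoiscxprop}, this is the arity-$n$ term of $C^\bullet_\CE(R,b_\varphi)$. The map $\beta$ of \eqref{alphabetaeqn} projects classical operations onto their top-grading part compatibly with composition. Consequently the $\langle n-1\rangle$ part of $[\mu^c_\varphi,\cdot]$ only involves $b_\varphi$ composed with top-degree cochains, and this is exactly $d_\CE$ for $b_\varphi$ in $\DD(X)^*$.

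\emph{Main obstacle.} The difficult step is verifying that these two sums are genuine quotient complexes and that the restricted differentials really are $d_\Har$ and $d_\CE$. This requires using the composition law of $\DD(X)^c$ from 1.4.27 of \cite{BD1}. The key point is that composing a $\langle p\rangle$ operation with $m_\varphi$ raises total arity without changing $p$, while composing with $b_\varphi$ raises $p$ by one. I would check this on the generator-level formula for $d_{\mu^c_\varphi}$, namely pre- and post-composition with a binary operation, so that the grading argument already given in Proposition \ref{Coisbicxprop} yields the claim.

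The sign and suspension conventions in the identification $\Lie^\ash\leftrightarrow\Comm$ are the remaining bookkeeping. I would handle them by citing the Poisson case and arguing \emph{mutatis mutandis}.
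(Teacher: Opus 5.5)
Your proposal is correct and is essentially the paper's own argument. The paper simply says both identifications follow by unwinding the definitions, and your arity-by-arity reading of Proposition \ref{Oendcprop} (only the trivial quotient for $p=0$, only the discrete quotient for $p=n-1$), with the differentials matched through the operad maps $\alpha$, $\beta$ and the additivity of the auxiliary grading, is exactly that unwinding. One point to tighten: the lemma $\Alg_{\Lie}(\mc P\otimes^H\Lie)\cong\Alg_{\Comm}(\mc P)$ only matches Maurer--Cartan elements, so to identify the whole $p=0$ column with the Harrison complex, differential included, you should invoke the cooperad isomorphism $\Lie^\ash\otimes^H\Lie^{*}\cong\Comm^\ash$ (with $\Lie^{*}$ the linear dual cooperad), which identifies the convolution Lie algebras $\g_{\Lie,P^!(R)\otimes^H\Lie}$ and $\g_{\Comm,P^!(R)}$.
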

\begin{proof} This follows straightforwardly from the preceding discussion by unwinding the definitions.
\end{proof}

More generally, for each $p\geq 0$ the $p^{th}$ column of the above bicomplex defines a complex
\[ C^{\bullet,\langle p \rangle}_{c,m}(R,m_\varphi)=(\bigoplus_{n=p+1}^\infty P_{n}^c(\{ R\}_{i=1}^{n},R)_{S_{n}}^{\langle p \rangle }[1-n] \ , \ d_{m_\varphi} ) \]
and we have the following analogue of Proposition \ref{AQPoisequivprop}:

\begin{prop}\label{AQPoischequivprop} For each $p\geq 0$, there are natural identifications of complexes
	\[
	C^{\bullet,\langle p \rangle}_{c,m}(R,m_\varphi)[p] \xrightarrow{\cong} C^{\bullet,\langle p \rangle}_\AQ(R) \ .
	\]
	In particular, we have $H^{p,\langle p \rangle}_{c,m}(R,m_\varphi) \cong P_{R,p+1}^*(\{\Omega^1_R\}_{k=1}^{p+1},R)_{\Ss_{p+1}}$.
\end{prop}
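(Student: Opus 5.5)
The plan is to follow the proof of Proposition \ref{AQPoisequivprop} in the classical setting, carried out internally to $\DD(X)$. I will first describe the $p$-th column $C^{\bullet,\langle p\rangle}_{c,m}(R,m_\varphi)$ explicitly as a complex of multilinear operations. By Proposition \ref{Oendcprop}, the summand of $P^c_n(\{R\},R)^{\langle p\rangle}$ is indexed by surjections $\{1,\dots,n\}\onto\{1,\dots,p+1\}$, taken modulo $\Ss_{p+1}$. Its terms are
\[ P^*_{p+1}\big(\{R^{\otimes^! n_i}\}_{i=1}^{p+1},R\big)\otimes \textstyle\bigotimes_i \Lie(n_i). \]
After taking $\Ss_n$-coinvariants, the factors $\Lie(n_i)$ pair with the $\Comm^\ash$ part of $\Pois^\ash$ in the same way as in the row $p=0$. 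This is the chiral analogue of the description $(\Pois^\ash)^{-p}=\Lie^\ash(p+1)\circ\Comm^\ash$.

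The output is the following identification of graded objects:
\[ C^{\bullet,\langle p\rangle}_{c,m}(R,m_\varphi)[p]\;\cong\;\Big(P^*_{p+1}\big(\{C^{\Har}_\bullet\}_{k=1}^{p+1},R\big)\Big)_{\Ss_{p+1}}. \]
Here $C^{\Har}_\bullet$ denotes the (shifted) Harrison chain object $\bigoplus_n \big(R^{\otimes^! n}\big)_{\SH}$ in $\DD(X)$, that is, the cofree Lie coalgebra cogenerated by $R$ in $\DD(X)^!$. The differential $d_{m_\varphi}$ acts on each $\Lie(n_i)$ slot separately by the Harrison differential. This is because composing with $m_\varphi$, which lies in arity $2$ and $\langle 0\rangle$, only merges inputs within a single block $I_s$; it never merges inputs from different blocks. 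I will check this by examining the composition law of $\DD(X)^c$ for a $\langle 0\rangle$ operation, following 1.4.27 of \cite{BD1}.

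Next, I will make the $R$-linearity explicit. I will rewrite the $*$-operations out of $\boxtimes_k C^{\Har}_\bullet$ as $R$-linear $*$-operations, in the sense of Definition \ref{Rlindefn}, out of $\boxtimes_k (R\otimes^! C^{\Har}_\bullet)$. The object $R\otimes^! \overline{C}^{\Har}_\bullet$, carrying the Harrison/bar differential, is a model for the cotangent complex $\bL_R$ in $\Dd(R)$. This is the chiral version of the standard fact that the Harrison complex computes André--Quillen homology (Theorem 12.4.3 of \cite{LV} and 4.5.13 of \cite{Lod}). Internal to $\DD(X)^!$, it holds by the same argument, since $\DD(X)^!$ is a $\K$-linear abelian symmetric monoidal category in characteristic zero. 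With this model, the column becomes $\Hom_{\Dd(R)^*}(\{\bL_R\}_{k=1}^{p+1},R)_{\Ss_{p+1}}$, which is $C^{\bullet,\langle p\rangle}_\AQ(R)$ as in Definition \ref{AQchDefn}.

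The statement about $H^{p,\langle p\rangle}_{c,m}$ is then immediate from Corollary \ref{smthchcoro}, in the very smooth case where $\bL_R\simeq\Omega^1_R$ is projective. Any extra hypothesis it needs should be imported there.

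The main obstacle is the middle step: checking that the $P^*$ and $\Lie(n_i)$ pieces of the classical operad assemble with the $\Comm^!$ differential into $R$-linear $*$-operations out of a resolution of $\bL_R$. There are two points to handle. The first is that $P^*_{R}$ requires $R$-linearity separately in each argument. The second is that coinvariants must commute with the passage to $\Dd(R)$, which uses characteristic zero. For the filtration-by-arity argument, I expect to need a finiteness statement for the column filtration, which is what the later Proposition \ref{finclfiltprop} referenced in the classical proof supplies. My plan is to prove the isomorphism on each arity-filtered piece and pass to the limit using that result.
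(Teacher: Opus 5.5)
Your identification of complexes is the paper's argument run in reverse. The paper starts from the representative $\bL_R\simeq\bigl(R\otimes^!\bigoplus_n\Lie(n)^*\otimes_{\K[\Ss_n]}R^{\otimes^! n}[n-1],\,d^{\Har}\bigr)$ given by Theorem 12.4.3 of \cite{LV} internal to $\DD(X)^!$. It takes the $(p+1)$-fold exterior power and strips off $R^{\boxtimes(p+1)}$ by the induction adjunction. It then reassembles the $\Ind$'s and coinvariants into Equation \ref{Pclinteqn}, matching differentials by Lemma 12.4.4 of \cite{LV}.

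The genuine gap is the \emph{in particular} clause. Routing it through Corollary \ref{smthchcoro} proves it only for very smooth $R$, but the statement has no such hypothesis. The paper also needs it without one: Definition \ref{gPCchdefn} and Corollary \ref{gPCchcoro} feed Section \ref{ssec:minimal models} for the non-smooth $M_n=\JJ(\K[T]/T^n)$, where $E_2^{p,0}$ is the variational complex built from $\Omega^1_{M_n}$. The clause concerns only the bottom of the column and holds in general. $H^{p,\langle p\rangle}_{c,m}$ is the kernel of $d_{m_\varphi}$ on $C^{p,0}_c=P^*_{p+1}(\{R\}_{k=1}^{p+1},R)_{\Ss_{p+1}}$, that is, the $*$-operations obeying the Leibniz rule in each argument. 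These are exactly $P^*_{R,p+1}(\{\Omega^1_R\}_{k=1}^{p+1},R)$, because $\Omega^1_R$ is the cokernel of the degree $-1\to 0$ map in the Harrison model and $\boxtimes$ is right exact. Taking $\Ss_{p+1}$-coinvariants is exact in characteristic zero, so the clause follows; it needs no smoothness and not even the projectivity discussed below.

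Two smaller corrections.
\begin{enumerate}
\item When you pass from the plain Hom column to the derived $\Hom_{\Dd(R)^*}(\{\bL_R\}_{k=1}^{p+1},R)$, the point needing justification is not $R$-linearity in each argument; that is just the free--forgetful adjunction for $R^{\boxtimes(p+1)}$-modules. What must be checked is that higher $\Ext$ from the terms $\boxtimes_k R^{\otimes^! n_k}$ into $\Delta_*R$ vanishes. The paper gets this by asserting that the Harrison representative is a complex of projective objects of $\DD(R)$, which requires the $R^{\otimes^! n}$ to be projective D-modules; you should state and use this explicitly.
\item Proposition \ref{finclfiltprop} is the identification $\gr^{F^\cl}\Ass\cong\Pois$, not a finiteness statement. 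No limit argument is needed: in column $p$ and degree $q$ only arity $n=p+q+1$ and finitely many decompositions $n_1+\cdots+n_{p+1}=n$ occur, so the isomorphism holds degreewise.
\end{enumerate}
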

\begin{proof} The proof is given in detail for the corresponding statement about internal Hom objects in Proposition \ref{AQPoischequivintprop}, and the same proof applies here with fewer complications, \emph{mutatis mutandis}.
\end{proof}

\begin{rmk}
This is proved in Theorem 7.2 of \cite{BDHKV21} for $X=\bb A^1$ in the translation invariant setting
\end{rmk}

\subsubsection{} The analogue of the geometric Poisson cochains of Definition \ref{gPCdefn}, controlling deformations of a Poisson structure on a fixed underlying variety, is given by the following subcomplex:

\begin{defn}\label{gPCchdefn} 
Let $\varphi \in \Alg_\Lie(\End_{\DD(X)^c}(R))$ be a Coisson algebra 
structure on $R$ with corresponding commutative product $m_\varphi\in P_2^!(\{R,R\};R)$ and Lie bracket $b_\varphi \in P_2^*(\{R,R\},R)$. The (reduced) variational Poisson cochains on $(R,m_\varphi,b_\varphi)$ is the subcomplex of $C_c^\bullet(R,m_\varphi,b_\varphi)$ given by
	\[ C^\bullet_\PV(R,m_\varphi,b_\varphi)= \left( \bigoplus_{p=0}^\infty H^{p,\langle p \rangle}_{c,m}(R,m_\varphi)[-p] \ , \ d_{b_\varphi}  \ \right)  \ .\]
\end{defn}

The following concrete description, analogous to Corollary \ref{gPCcoro}, follows from Proposition \ref{AQPoischequivprop}:

\begin{coro}\label{gPCchcoro} The complex of variational Poisson cochains admits an isomorphism of complexes
	\[ 
	C^\bullet_\PV(R,m_\varphi,b_\varphi) \cong \left( \bigoplus_{p=0}^\infty P^*_{R,p+1}(\{\Omega^1_R\}_{k=1}^{p+1},R)_{\Ss_{p+1}}[-p] \ , \ \tilde d_{b_\varphi} \ \right) \ ,  
	\]
		where $\tilde d_{b}$ is determined uniquely by the fact that the following diagram commutes
	\begin{equation}\label{tildedbeqn}
		\begin{tikzcd}
			 P^*_{R,p}(\{\Omega^1_R \}_{k=1}^{p};R)_{\Ss_{p}} \arrow[r,"\tilde d_{b_\varphi}"] \arrow[d," (\cdot)\circ (d_R^{\dR})^{\boxtimes p}"] & \mc P^*_{R,p+1}(\{\Omega^1_R \}_{k=1}^{p+1};R)_{\Ss_{p+1}}  \arrow[d," (\cdot)\circ (d_R^{\dR})^{\boxtimes p+1}"]\\
			\mc P^*_p(\{R\}_{k=1}^p,R)_{\Ss_{p}} \arrow[r,"d_{b_\varphi}"]&  	\mc P^*_{p+1}(\{R\}_{k=1}^{p+1},R)_{\Ss_{p+1}} 	\end{tikzcd}  \ .
	\end{equation}
\end{coro}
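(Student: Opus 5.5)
The plan is to derive Corollary \ref{gPCchcoro} from Proposition \ref{AQPoischequivprop} by transporting the horizontal differential $d_{b_\varphi}$ through the identification on the diagonal terms. There are three steps: identify the graded pieces, show that $d_{b_\varphi}$ descends to them, and compute the induced map explicitly.

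\textbf{Step 1: the graded pieces.} By Proposition \ref{AQPoischequivprop} we have
\[ H^{p,\langle p\rangle}_{c,m}(R,m_\varphi)\cong P^*_{R,p+1}(\{\Omega^1_R\}_{k=1}^{p+1},R)_{\Ss_{p+1}} . \]
This identification comes from the rows $C^{\bullet,\langle p\rangle}_{c,m}$ of the bicomplex of Proposition \ref{Coisbicxprop}. The term $C^{p,0}_c=P^c_{p+1}(\{R\},R)^{\langle p\rangle}_{\Ss_{p+1}}$ is the bottom of the $p$-th row, and it equals $P^*_{p+1}(\{R\},R)_{\Ss_{p+1}}$, since the only quotient of cardinality $p+1$ is the identity equivalence relation. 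In this bottom degree the cohomology is simply $\ker d_{m_\varphi}$. I expect, and will verify by unwinding Definition \ref{Derchdefn} and the $R$-linearity of Definition \ref{Rlindefn}, that $\ker d_{m_\varphi}$ consists exactly of the $*$-operations that are derivations in each argument. By the universal property of $(\Omega_R,d_R)$ applied argumentwise, these correspond bijectively to $R$-linear $*$-operations on $\Omega^1_R$ via precomposition with $(d^{\dR}_R)^{\boxtimes p+1}$. This gives the vertical maps in diagram \eqref{tildedbeqn}, and it shows they are injective.

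\textbf{Step 2: $d_{b_\varphi}$ descends.} The differentials satisfy $d_{b_\varphi}d_{m_\varphi}+d_{m_\varphi}d_{b_\varphi}=0$ by Proposition \ref{Coisbicxprop}, so $d_{b_\varphi}$ maps $\ker d_{m_\varphi}\cap C^{p,0}_c$ into $\ker d_{m_\varphi}\cap C^{p+1,0}_c$. It therefore induces a map on the bottom-row cohomology $H^{p,\langle p\rangle}_{c,m}\to H^{p+1,\langle p+1\rangle}_{c,m}$, and it squares to zero. This makes $C^\bullet_\PV$ a genuine subcomplex of the total complex.

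\textbf{Step 3: the explicit differential.} Transport the induced map along the isomorphisms of Step 1 and call it $\tilde d_{b_\varphi}$. Then the square \eqref{tildedbeqn} commutes by construction. Uniqueness of $\tilde d_{b_\varphi}$ follows from the injectivity of the vertical maps established in Step 1.

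The main obstacle is the claim in Step 1 that $\ker d_{m_\varphi}$ on $P^*_{p+1}$ is exactly the space of polyderivations. This requires writing out the Harrison-type differential of a pure $*$-operation against $m_\varphi$ in the classical operad, using the composition rules of \cite{BD1} 1.4.27, and checking that it vanishes precisely when the Leibniz rule holds in each slot. I would do this first for $p=0$, where it is just Definition \ref{Derchdefn}, and then extend argumentwise using $\Ss_{p+1}$-symmetry.
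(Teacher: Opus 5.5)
Your proposal is correct and follows essentially the paper's route. The paper deduces the corollary directly from Proposition \ref{AQPoischequivprop}: it identifies the bottom-degree cohomology $H^{p,\langle p\rangle}_{c,m}=\ker d_{m_\varphi}\cap C^{p,0}_c$ with $P^*_{R,p+1}(\{\Omega^1_R\},R)_{\Ss_{p+1}}$ and transports $d_{b_\varphi}$ along this identification. Your Steps 2 and 3 make explicit what the paper leaves implicit: that $d_{b_\varphi}$ preserves $\ker d_{m_\varphi}$ because the two differentials anticommute, that the identification is precomposition with $(d^{\dR}_R)^{\boxtimes p+1}$, and that uniqueness of $\tilde d_{b_\varphi}$ follows from injectivity of that precomposition.
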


Finally, by Corollary \ref{smthchcoro}, we obtain the following:

\begin{coro}\label{GOequivchcoro} Suppose the underlying $\Comm^!$ algebra $(R,m_\varphi)$ is very smooth. Then the inclusion of the subcomplex of variational Poisson cochains is a quasi-isomorphism,
	\[C^\bullet_\PV(R,m_\varphi,b_\varphi) \xrightarrow{\cong} C_c^\bullet(R,m_\varphi,b_\varphi) \ .\]
\end{coro}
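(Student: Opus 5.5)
We prove Corollary \ref{GOequivchcoro} by comparing the bicomplex of Proposition \ref{Coisbicxprop} with its column-wise cohomology, using the vertical differential $d_v=d_{m_\varphi}$. The variational Poisson cochains $C^\bullet_\PV(R,m_\varphi,b_\varphi)$ are assembled from the groups $H^{p,\langle p\rangle}_{c,m}(R,m_\varphi)$. By Corollary \ref{smthchcoro} and Proposition \ref{AQPoischequivprop}, each of these is the \emph{only} cohomology of the $p$-th column when $R$ is very smooth. Indeed, Proposition \ref{AQPoischequivprop} identifies the $p$-th column $C^{\bullet,\langle p\rangle}_{c,m}(R,m_\varphi)[p]$ with the higher chiral Andr\'e--Quillen cochains $C^{\bullet,\langle p\rangle}_\AQ(R)$. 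Corollary \ref{smthchcoro} says the latter are acyclic away from the single degree $p$, with cohomology $P^*_{R,p+1}(\{\Omega^1_R\},R)_{\Ss_{p+1}}$. So each column is concentrated, after regrading, in the lowest vertical position $q=0$, i.e.\ at total degree $p$.

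First, I make the inclusion precise. In the lowest row $q=0$ there is nothing below, so $H^{p,\langle p\rangle}_{c,m}$ is the kernel of $d_v$ on $C^{p,0}_c$. It is therefore a genuine subspace of $C^{p,0}_c$, namely the $*$-operations killed by the Harrison-type differential, and these are exactly the $R$-polyderivations. Next, I check that $d_h=d_{b_\varphi}$ preserves these subspaces. Since $d_v$ and $d_h$ anticommute, $d_h$ maps $\ker d_v\cap C^{p,0}_c$ into $\ker d_v\cap C^{p+1,0}_c$. This gives $C^\bullet_\PV=(\bigoplus_p \ker(d_v|_{C^{p,0}_c}),d_h)$ as a subcomplex of $\mathrm{Tot}\, C^{\bullet,\bullet}_c \cong C^\bullet_c(R,\mu^c_\varphi)$, and it agrees with Definition \ref{gPCchdefn}.

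Second, I run the spectral sequence of the filtration by columns, $F^pC=\bigoplus_{p'\ge p}C^{p',\bullet}_c$. This filtration is complete and exhaustive. It is also bounded in each total degree, because $C^{p,q}$ contributes to total degree $p+q$ with $p,q\ge0$, so only finitely many columns meet a given total degree. Its $E_1$ page is the column cohomology, which by the first paragraph is $H^{p,\langle p\rangle}_{c,m}$ placed at $(p,0)$. The inclusion of $C^\bullet_\PV$, filtered by $p$ with zero vertical differential, induces an isomorphism on $E_1$ pages. Since both filtrations are bounded in each degree, the comparison theorem gives a quasi-isomorphism. Equivalently, and avoiding spectral sequences, I can argue by induction on columns with the five lemma applied to the short exact sequences $0\to F^{p+1}\to F^p\to C^{p,\bullet}\to 0$. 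In a fixed total degree only finitely many columns contribute, so the induction terminates.

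The main obstacle is bookkeeping of degrees rather than any new idea. I must confirm that the shift $[p]$ in Proposition \ref{AQPoischequivprop} really places the surviving column cohomology in the row $q=0$, i.e.\ at total degree $p$, so that it is a kernel and not a quotient. I also need the identification of Proposition \ref{AQPoischequivprop} to be compatible with $d_v$, and the acyclicity statement of Corollary \ref{smthchcoro} to hold for the $\Ss_{p+1}$-coinvariant, $R$-linear $*$-operations. That acyclicity uses the projectivity of $\Omega^1_R$ from Proposition \ref{verysmoothprop}, which makes $\bL_R\simeq\Omega^1_R$ and the derived $\Hom$ underived.
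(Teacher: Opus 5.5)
Your proposal is correct and follows the paper's route. The paper obtains this corollary in one line from Corollary~\ref{smthchcoro}, using the bicomplex of Proposition~\ref{Coisbicxprop} and the column identification of Proposition~\ref{AQPoischequivprop}; your column-filtration spectral sequence, which is bounded in each total degree because $p,q\ge 0$, is the standard argument that appeal leaves implicit, and your degree bookkeeping is the right one: with the shift $[p]$, the surviving column cohomology $H^{p,\langle p\rangle}_{c,m}$ sits at $q=0$ and is therefore $\ker d_{m_\varphi}$, which is what makes $C^\bullet_\PV$ a genuine subcomplex.
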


We remark that this was proved for $R$ an algebra of differential polynomials in \cite{BDHKV21}, that is, the algebra of jets to an affine space $\bb A^{d}$. The preceding corollary implies that this generalizes to the algebra of jets on a smooth $\K$-algebra.


%
%
%
%
%
%


\section{Deformation theory}\label{chsec}
Throughout, let $\IA^1_\hbar = \Spec \ik[\hbar]$ and let $\IG_m$ act on $\IA^1_\hbar$ by scaling $\hbar $ with unit weight. We write $\IG_\hbar$ for this distinguished multiplicative group action.

\subsection{Deformation theory for algebras}\label{mapdefsec}

Let $\PP$ be a binary, Koszul operad over $\ik$ and let $\QQ$ be a single coloured operad over $\ik$. We now formalize the moduli problem controlling formal deformations of a $\mc P$-algebra structure internal to $\mc Q$. 

Let $(S,\mf m_S)$ be a local Artinian $\ik$-algebra with a closed embedding of $\ik$-schemes $\Spec S \hookrightarrow \IA^1_{\hbar}$ such that the closed point in $\Spec S$ is mapped to the origin of $\IA^1$. We write $\PP_S = \PP\otimes S$ and $\QQ_S = \QQ\otimes S$ for the constant families of $\PP$ and $\QQ$ over $\Spec S$.

\begin{prop} There is an isomorphism of DG Lie algebras
\[ \g_{\mc P_S,\mc Q_S} \cong \g_{\mc P, \mc Q} \otimes_{\K} S \]
where the latter is equipped with the trivial $S$-linear extension of the differential $d:\g_{\mc P,\mc Q} \to \g_{\mc P,\mc Q}[1]$ and Lie bracket on $ \g_{\mc P,\mc Q}$.
\end{prop}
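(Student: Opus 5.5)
We need to prove $\g_{\mc P_S,\mc Q_S}\cong \g_{\mc P,\mc Q}\otimes_\K S$. The plan is to work arity by arity, with all structures taken $S$-linearly. Here $\mc P_S=\mc P\otimes S$ is viewed as an operad over $S$, and its Koszul dual cooperad over $S$ is $\mc P_S^\ash=\mc P^\ash\otimes S$. This holds because $\mc P_S$ is quadratic with generators $E\otimes S$ and relations $R\otimes S$, and the free (co)operad construction commutes with the base change $-\otimes_\K S$. Since $S$ is flat over $\K$, taking the sub-cooperad cogenerated by $R[2]\otimes S$ also commutes with base change.

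In each arity $n$ we then have
\[ \g_{\mc P_S,\mc Q_S}(n)=\Hom_{S[\Ss_n]}\big(\mc P^\ash(n)\otimes S,\ \mc Q(n)\otimes S\big)\cong \Hom_{\K[\Ss_n]}\big(\mc P^\ash(n),\ \mc Q(n)\otimes S\big). \]
The first step is the extension-of-scalars adjunction. The key point is then to identify the right-hand side with $\Hom_{\K[\Ss_n]}(\mc P^\ash(n),\mc Q(n))\otimes_\K S$. Since $S$ is Artinian it is a finite-dimensional $\K$-vector space, so $S\cong\K^{\oplus r}$ as a vector space (with trivial $\Ss_n$-action). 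The Hom functor commutes with finite direct sums in the second variable, so the natural map
\[ \Hom_{\K[\Ss_n]}(\mc P^\ash(n),\mc Q(n))\otimes S\to \Hom_{\K[\Ss_n]}(\mc P^\ash(n),\mc Q(n)\otimes S) \]
is an isomorphism, and it is $S$-linear. Summing over $n$, with the shifts $[1-n]$, gives the isomorphism of graded $S$-modules. Neither finite-dimensionality of $\mc P^\ash(n)$ nor of $\mc Q(n)$ is needed, only that of $S$; this is the one place where the Artinian hypothesis enters, and it is the only genuinely non-formal step.

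It remains to check compatibility with the differential and the bracket. The differential on $\g_{\mc P_S,\mc Q_S}$ is induced by the differentials of $\mc P^\ash\otimes S$ and $\mc Q\otimes S$, which are the $S$-linear extensions of those on $\mc P^\ash$ and $\mc Q$. The convolution product $f\star g$ is built from the decomposition map $\Delta$ of $\mc P^\ash$ followed by the composition in $\mc Q$, and over $S$ both are the $S$-linear extensions $\Delta\otimes\id_S$ and $\mu_{\mc Q}\otimes \mu_S$. Hence for $f\otimes s$ and $g\otimes t$ we get $(f\otimes s)\star(g\otimes t)=(f\star g)\otimes st$, with the Koszul sign trivial since $S$ sits in degree $0$. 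Taking commutators shows that the Lie bracket matches. The remaining verification is routine bookkeeping of signs and of the infinitesimal compositions; I would carry it out on the generating description of $\star$ given in Proposition 6.4.5 of \cite{LV}.
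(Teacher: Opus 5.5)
Your argument is correct and amounts to the direct verification behind Lemma 12.2.5 of \cite{LV}, which is the only justification the paper gives for this statement. One correction: the Artinian condition alone does not make $S$ finite-dimensional over $\K$ (an infinite field extension such as $\K(t)$ is local Artinian). Finite-dimensionality instead follows from the paper's standing hypothesis that $\Spec S\hookrightarrow \IA^1_\hbar$ is a closed embedding sending the closed point to the origin, which forces $S\cong \K[\hbar]/\hbar^m$.
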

\begin{proof}
This is Lemma 12.2.5 in \cite{LV}.
\end{proof}

There is a natural map of DG Lie algebras given by the evaluation at $\mf m_S$
\[ (\cdot)/\mf m_S : \g_{\mc P_S,\mc Q_S} \to \g_{\mc P_S,\mc Q_S} \otimes_{S} S/\mf m_S \cong \g_{\mc P, \mc Q} \  \]
and thus a map on Maurer--Cartan solutions
\[
\MC(\gf_{\PP_S,\QQ_S}) \rightarrow \MC({\gf_{\PP,\QQ}})~,
\]
taking an $S$-family of $\PP$ algebras, internal to $\QQ$, to its restriction at the closed point $\mf m_S$.


\begin{defn}\label{Defdefn}
 Let $\varphi\in \Alg_{\mc P}(\mc Q)\cong \MC(\g_{\mc P,\mc Q})$ be a $\mc P$-algebra internal to $\mc Q$ considered as a Maurer--Cartan element. The space of deformations of $\varphi$ over $\Spec S$ is
 \[ 
 \Def_\varphi(S) = \MC(\g_{\mc P_S,\mc Q_S}) \fibre{\MC(\g_{\mc P, \mc Q})} \{\varphi\}  \ .
 \]
\end{defn}
Given some $\varphi\in \Alg_\PP(\QQ)=\Hom_{\Op_\ik}(\PP,\QQ)$, we can view this as a map between the fibres of $\PP_S$ and $\QQ_S$ at the closed point $\mf m_S$. A deformation $\varphi_S\in\Def_\varphi(S)$, therefore, can be understood to be an extension of this map to the whole $S$-family, such that the restriction to the central fibres agrees with $\varphi$.

The variety of deformations has a concrete realisation in terms of Maurer--Cartan elements:
\begin{prop}\label{DefMCprop} 
We have an isomorphism of varieties, 
\[
\Def_\varphi(S) \cong \MC( \g^\varphi_{\mc P, \mc Q}\otimes_{\bb K} \mf m_S) \ ,
\] 
where $\g^\varphi_{\PP,\QQ}\otimes \mf m_S$ is the DG Lie algebra with the trivial $\mf m_S$-linear extension of the differential and Lie bracket on $\g^\varphi_{\PP,\QQ}$.
\end{prop}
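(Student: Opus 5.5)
The plan is to combine the base-change isomorphism $\g_{\mc P_S,\mc Q_S}\cong \g_{\mc P,\mc Q}\otimes_\K S$ from the preceding proposition with the twisting result $\alpha\in\MC(\g^\varphi_{\mc P,\mc Q})$ if and only if $\varphi+\alpha\in \MC(\g_{\mc P,\mc Q})$. Since $S$ is local Artinian with residue field $\K$, we have a $\K$-linear splitting $S=\K\cdot 1\oplus \mf m_S$. Hence
\[ \g_{\mc P,\mc Q}\otimes S=\g_{\mc P,\mc Q}\oplus(\g_{\mc P,\mc Q}\otimes \mf m_S), \]
and the reduction map $(\cdot)/\mf m_S$ is the projection onto the first summand. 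An element of the fibre over $\varphi$ is therefore exactly an element of the form $\varphi_S=\varphi\otimes 1+\alpha$ with $\alpha\in(\g_{\mc P,\mc Q}\otimes\mf m_S)^{1}$. Because weight is preserved by $S$-linear extension, $\alpha$ also has pure weight one.

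The next step is to expand the Maurer--Cartan equation for $\varphi_S$ in $\g_{\mc P,\mc Q}\otimes S$. Using $S$-bilinearity of the bracket and $S$-linearity of $d$, we get
\[ d\varphi_S+\tfrac12[\varphi_S,\varphi_S]=\big(d\varphi+\tfrac12[\varphi,\varphi]\big)\otimes 1+\big(d\alpha+[\varphi\otimes1,\alpha]\big)+\tfrac12[\alpha,\alpha]. \]
The first term vanishes because $\varphi$ is Maurer--Cartan. The remaining terms lie in $\g_{\mc P,\mc Q}\otimes\mf m_S$, and they combine to
\[ d_\varphi\alpha+\tfrac12[\alpha,\alpha], \]
where $d_\varphi$ is extended $\mf m_S$-linearly; this is the identity $(d+[\varphi,\cdot])\otimes\id_{\mf m_S}$. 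So $\varphi_S\in \Def_\varphi(S)$ if and only if $\alpha\in\MC(\g^\varphi_{\mc P,\mc Q}\otimes\mf m_S)$. This is the $\mf m_S$-linear version of the twisting proposition, and its proof is the same calculation.

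To obtain an isomorphism of varieties rather than of sets of $\K$-points, I will note that the maps $\alpha\mapsto\varphi\otimes1+\alpha$ and $\varphi_S\mapsto\varphi_S-\varphi\otimes 1$ are given by affine-linear formulas on the underlying vector spaces. These formulas are functorial in test algebras, so they identify the defining equations of the two subvarieties.

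The main subtlety is the weight-one condition in the definition of $\MC$. By Proposition \ref{MCplainprop}, $\MC$ means Maurer--Cartan elements of pure weight one. I need to check that the splitting respects the weight grading, which holds because $S$ carries weight zero, and that $\MC(\g^\varphi\otimes\mf m_S)$ is correspondingly read as weight-one solutions. Once this convention is aligned on both sides, the argument is purely formal.
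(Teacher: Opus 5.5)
Your proposal is correct and follows the same route as the argument the paper invokes (it simply cites Proposition 12.2.6 of \cite{LV}): split $S=\K\oplus\mf m_S$, use the base-change identification $\g_{\mc P_S,\mc Q_S}\cong\g_{\mc P,\mc Q}\otimes_\K S$ to write any element of the fibre as $\varphi_S=\varphi\otimes 1+\alpha$ with $\alpha\in\g_{\mc P,\mc Q}\otimes\mf m_S$, and apply the twisting computation $\mf m_S$-linearly. Your explicit check that the pure-weight-one condition holds on both sides is a worthwhile addition, since the paper leaves that convention implicit at this point.
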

\begin{proof}
This is Proposition 12.2.6 in \cite{LV}.
\end{proof}

\subsubsection{The stack of formal deformations}

The weight $0$ component of the Koszul dual co-operad $\PP^\ash$ is simply a copy of the base field $\ik$ in arity $1$, \ie,
\[
\PP^{\ash,(0)} = (0,\ik,0,0\dots)  \qquad \text{and so} \qquad \gf_{\PP,\QQ}^{(0)} = \QQ(1)
\]

For any single coloured (DG) operad $\QQ$, the space of unary operations $\QQ(1)$ is a unital (DG) associative $\ik$-algebra. Moreover, in the DG setting, the cohomological degree zero component, $\QQ(1)^0$ is an associative subalgebra of $\QQ(1)$. For example if $\QQ $ is the endomorphism operad on the single colour $V\in \Vect_\ik$, then $\QQ(1) = \End(V)$ is just the endomorphism algebra of $V$. 

Two deformations $\varphi_S, \widetilde{\varphi}_S\in \Def_\varphi(S)$ of $\varphi\in \Alg_{\PP}(\QQ)$ are called equivalent if there is an invertible element $g\in \QQ_S(1)^0)^\times$ such that $\varphi_S = g\widetilde{\varphi}g^{-1}$ and $g/\mf m_S =\id \in \QQ(1)^0$.


\begin{defn}\label{fDefdefn}
We define the stacks of formal deformations $\fDef_\varphi$ by
	\begin{equation}\label{fDefeqn}
		\fDef_\varphi(S) = \Def_{\varphi}(S) / \sim  
	\end{equation}
	where the right hand side denotes the quotient groupoid of $\Def_{\varphi}(S)$ by the equivalence relation defined above.
\end{defn}

\subsubsection{The Maurer--Cartan stack}

The weight zero subspace $\g_{\mc P,\mc Q}^{(0)}$ is naturally a Lie subalgebra and thus acts, via the adjoint action, on $\g_{\mc P,\mc Q}$ preserving the weight grading. Similarly, the weight and cohomological degree zero subspace $\g_{\mc P,\mc Q}^{0,(0)}\subset \gf_{\PP, \QQ}^{(0)}$ is a further Lie subalgebra which acts on $\g_{\mc P,\mc Q}$ preserving the weight and cohomological grading.

For a local Artinian ring $(S,\mf m_S)$ as above, one can define the analogous subalgebras 
\[ 
\g_{\mc P,\mc Q,\mf m_S}^{0,(0)} = \g_{\mc P,\mc Q}^{0,(0)} \otimes_{ \K}\mf m_S\  \subset \ \g_{\mc P,\mc Q,\mf m_S}^0 = \g_{\mc P,\mc Q}^0\otimes_{ \K}\mf m_S \ \subset \  \g_{\mc P,\mc Q,\mf m_S} = \g_{\mc P,\mc Q}\otimes_{ \K}\mf m_S   ~.
\]
Since these subalgebras are all nilpotent, we have formal exponential algebraic groups
\[
\mf G_{\PP,\QQ,\mf m_S}^0 = \exp(\gf_{\PP,\QQ,\mf m_S}^0) \qquad \text{and}\qquad \mf G_{\PP,\QQ,\mf m_S}^{0,(0)} = \exp(\gf_{\PP,\QQ,\mf m_S}^{0,(0)})~.
\]

These groups act on the space of solutions to the Maurer--Cartan equations, so that we have the following definition:
\begin{defn}\label{fMCdefn}
The Maurer--Cartan stack $\fMC_\varphi$ is defined as  	
\begin{equation}\label{MCstackeqn}
		\fMC_\varphi(S) =\left[ \MC( \g^\varphi_{\mc P, \mc Q}\otimes_{\bb K} \mf m_S)^{(1)}/\mf G_{\mc P,\mc Q,\mf m_S}^{0,(0)}\right]   \ .
	\end{equation}
\end{defn} 

This action encodes the automorphisms of the corresponding $\mc P$-algebra internal to $\mc Q$ in the sense of the following proposition.

\begin{prop}\label{fDefMCprop} 
There are isomorphisms of algebraic stacks
\[ 
\fMC_\varphi \xrightarrow{\cong} \fDef_\varphi \ . 
\]
\end{prop}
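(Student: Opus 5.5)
We prove Proposition \ref{fDefMCprop} by building the comparison map on objects from Proposition \ref{DefMCprop}, and on morphisms by identifying the gauge group $\mf G^{0,(0)}_{\mc P,\mc Q,\mf m_S}$ with the group of equivalences used in Definition \ref{fDefdefn}. Throughout we work over a fixed local Artinian $(S,\mf m_S)$. Everything is natural in $S$, so the isomorphism of stacks follows from the isomorphism of groupoids for each $S$.

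\textbf{Step 1 (objects).} By Proposition \ref{DefMCprop}, $\Def_\varphi(S)\cong \MC(\g^\varphi_{\mc P,\mc Q}\otimes \mf m_S)$ via $\varphi_S=\varphi+\alpha$. Since $\mc P$ is binary and Koszul, Proposition \ref{MCplainprop} says that $\mc P$-algebra structures correspond to Maurer--Cartan elements of pure weight one. Hence $\alpha$ lies in weight one, and the objects agree with those of $\MC(\cdots)^{(1)}$ in Definition \ref{fMCdefn}.

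\textbf{Step 2 (morphisms).} We have $\g^{0,(0)}_{\mc P,\mc Q}=\Hom(\mc P^{\ash,(0)},\mc Q)^0=\mc Q(1)^0$. The convolution bracket restricted to weight zero is the commutator in the associative algebra $\mc Q(1)^0$. The adjoint action of $x\in\mc Q(1)^0$ on $f\in\g^{(1)}_{\mc P,\mc Q}=\Hom(E[1],\mc Q(2))$ is $x\circ f-f\circ(x\otimes \id+\id\otimes x)$, which is the infinitesimal conjugation. Because $\mf m_S$ is nilpotent, the exponential $\exp:\mc Q(1)^0\otimes\mf m_S\to \{g\in (\mc Q_S(1)^0)^\times : g/\mf m_S=\id\}$ is a bijection, with inverse given by the finite logarithm series. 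It is a group isomorphism, with BCH on the left matching composition on the right. Exponentiating the adjoint action then gives
\[
\exp(x)\cdot(\varphi+\alpha)=g\,(\varphi+\alpha)\,g^{-1},\qquad g=\exp(x).
\]
Here the right-hand side is conjugation of the operad map by the unary invertible element $g$, applied arity-wise as $g\circ\mu\circ(g^{-1})^{\otimes n}$. The differential $d$ on $\g_{\mc P,\mc Q}$ vanishes on weight zero in the plain case; in the DG case the degree-zero restriction takes care of this. So the gauge action in the twisted Lie algebra is the affine action $\alpha\mapsto e^{\mathrm{ad}_x}(\varphi+\alpha)-\varphi$, which matches conjugation.

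\textbf{Step 3.} The map of action groupoids is therefore fully faithful and essentially surjective, and it is compatible with base change in $S$. This gives $\fMC_\varphi\cong\fDef_\varphi$.

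The main obstacle is in Step 2: checking that conjugation of a full operad map $\mc P\to\mc Q_S$ by $g$ is determined by, and agrees with, the exponentiated adjoint action on the weight-one generator component. I would first use that a binary operad map is determined by its value on generators $E$, and that conjugation is compatible with operadic composition. This reduces the claim to the identity in arity $2$. That identity I would verify through the formal series $\frac{d}{dt}\big(e^{tx}\mu(e^{-tx}\otimes e^{-tx})\big)=\mathrm{ad}_x$ applied to the same expression.
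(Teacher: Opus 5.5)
The paper gives no argument of its own here; it simply cites Theorem 12.2.10 of \cite{LV}. Your proposal reconstructs that comparison directly, and for plain $\mc Q$ (the case used in all later applications) it is correct.
\begin{itemize}
\item Objects match by Proposition \ref{DefMCprop}.
\item The exponential identifies $\mc Q(1)^0\otimes\mf m_S$ with $1+\mc Q(1)^0\otimes\mf m_S$, because $\mc Q(1)^0\otimes\mf m_S$ is a nilpotent ideal.
\item Your formula for $[x,f]$ is exactly what the convolution bracket gives.
\item The identity $e^{\mathrm{ad}_x}\mu=e^x\circ\mu\circ(e^{-x})^{\otimes n}$, together with the fact that $g\tilde\varphi_S g^{-1}$ is determined by its value on generators, settles the morphisms.
\end{itemize}
Compared with the bare citation, your route makes one point explicit. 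On $\MC(\g^\varphi_{\mc P,\mc Q}\otimes\mf m_S)$ the group must act by the affine gauge action $\alpha\mapsto e^{\mathrm{ad}_x}(\varphi+\alpha)-\varphi$. The linear action $\alpha\mapsto e^{\mathrm{ad}_x}\alpha$ does not in general preserve the twisted Maurer--Cartan equation when $[x,\varphi]\neq 0$. The paper's phrase ``acts via the adjoint action'' glosses over this, and it only becomes visible in Remark \ref{MCdefgensubsec}.

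One claim needs correcting: ``in the DG case the degree-zero restriction takes care of this'' is false. An element $x\in\mc Q(1)^0$ need not be a cycle. If $d_{\mc Q}x\neq 0$, the gauge action acquires the extra term $-\frac{e^{\mathrm{ad}_x}-1}{\mathrm{ad}_x}(dx)=-(dg)g^{-1}$ (for $g=e^x$). This term has weight zero, so pure weight one Maurer--Cartan elements are not preserved. Correspondingly, conjugating a map of DG operads by a non-closed $g$ need not give a map of DG operads. In the DG case you must restrict both groups to closed elements, $x\in Z^0(\mc Q(1))\otimes\mf m_S$; the paper's definitions also omit this adjustment. Once that restriction is made, your argument applies unchanged.
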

\begin{proof} This is Theorem 12.2.10 in \cite{LV}.
\end{proof}

\begin{rmk}\label{MCdefgensubsec}
 We also record for later reference the following formally equivalent description of the Maurer--Cartan stack, which will be more amenable to generalization later, in particular in Definition \ref{Defhbardefn}:
\begin{align} 
\notag	\fMC_\varphi(S) & =\left[ \MC( \g^\varphi_{\mc P, \mc Q}\otimes_{\bb K} \mf m_S)/\mf G_{\mc P,\mc Q,\mf m_S}^{0,(0)}\right]   \\
	\notag & \xrightarrow{\cong} \left[  \left(\MC(\g_{\mc P_S,\mc Q_S}) \times_{\MC(\g_{\mc P, \mc Q})} \{\varphi\}\right) /\mf G_{\mc P,\mc Q,\mf m_S}^{0,(0)}\right]  \\
	& \xrightarrow{\cong} \left[\MC(\g_{\mc P_S,\mc Q_S}) /\mf G_{\mc P,\mc Q,\mf m_S}^{0,(0)}\right] \times_{\MC(\g_{\mc P, \mc Q})} \{\varphi\} \label{MCconceqn}
\end{align}
where the first isomorphism follows from Proposition \ref{DefMCprop}, and the second since the group acts trivially on $\MC(\g_{\mc P, \mc Q})$ and $\{\varphi\}$, so that in particular the restriction map 
\[\left[\MC(\g_{\mc P_S,\mc Q_S}) /\mf G_{\mc P,\mc Q,\mf m_S}^{0,(0)}\right] \to \MC(\g_{\mc P, \mc Q})\]
 is well-defined.
\end{rmk}

\subsubsection{}

Recall the deformation-obstruction complex of \ref{defobscxdefn}, this complex controls the deformation theory of $\varphi\in \Alg_{\PP}(\QQ)$ in the sense of the following Proposition:

\begin{prop}\label{tgtOpMapprop} 
We have an isomorphism of varieties and stacks, respectively,
\[
\Def_\varphi(\ik[\hbar]/\hbar^2) \cong Z^1_{\PP,\QQ}(\varphi)~, \qquad \text{ and } \qquad \fDef_\varphi(\ik[\hbar]/\hbar^2) \cong H^1_{\PP,\QQ}(\varphi)~.
\]
\end{prop}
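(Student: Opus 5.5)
Take $S=\ik[\hbar]/\hbar^2$, so $\mf m_S=\hbar\ik$ with $\mf m_S^2=0$. For the first isomorphism I will use Proposition \ref{DefMCprop}, which gives $\Def_\varphi(S)\cong \MC(\g^\varphi_{\PP,\QQ}\otimes\mf m_S)$. An element of degree one in $\g^\varphi_{\PP,\QQ}\otimes \mf m_S$ has the form $\hbar\alpha$ with $\alpha\in(\g^\varphi_{\PP,\QQ})^1$. Its Maurer--Cartan equation reads
\[
\hbar\, d_\varphi\alpha+\tfrac12\hbar^2[\alpha,\alpha]=\hbar\, d_\varphi\alpha=0 .
\]
The quadratic term drops out because $\mf m_S^2=0$. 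So the Maurer--Cartan locus is exactly $\hbar Z^1_{\PP,\QQ}(\varphi)$, which gives $\Def_\varphi(S)\cong Z^1_{\PP,\QQ}(\varphi)$.

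Here I take $Z^1$ to mean cocycles in the appropriate weight. By Proposition \ref{MCplainprop}, genuine $\PP$-algebra structures are the weight-one Maurer--Cartan elements. If $\Def_\varphi$ is read with the weight-one convention of Definition \ref{fMCdefn}, I restrict to $\g^{1,(1)}$ throughout. The computation is identical, and I would add a remark identifying $Z^1$ with its weight-one part in the Koszul setting.

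For the stack statement I will use Proposition \ref{fDefMCprop}, which identifies $\fDef_\varphi(S)$ with $[\MC(\g^\varphi\otimes\mf m_S)/\mf G^{0,(0)}_{\PP,\QQ,\mf m_S}]$. The group is $\exp(\hbar\,\g^{0,(0)})$, and since $\hbar^2=0$, every element is $\exp(\hbar\beta)=1+\hbar\beta$. It acts on Maurer--Cartan elements by the gauge action
\[
\hbar\alpha\mapsto \hbar\alpha-\hbar\, d_\varphi\beta+O(\hbar^2)=\hbar(\alpha-d_\varphi\beta).
\]
This is the place to be careful. The gauge action on the twisted algebra is $e^{\mathrm{ad}\,\hbar\beta}(\varphi+\hbar\alpha)-\varphi$ together with the $d\beta$ term. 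Expanding to first order leaves $\hbar(\alpha+[\beta,\varphi]-d\beta)=\hbar(\alpha-d_\varphi\beta)$, up to sign convention, and I would check this sign explicitly.

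It follows that the orbits are the cosets of $d_\varphi(\g^{0})$ in $Z^1$, and the quotient is $H^1_{\PP,\QQ}(\varphi)$.

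The main obstacle is a mismatch of gauge group against coboundaries. The group uses only the weight-zero part $\g^{0,(0)}=\QQ(1)^0$, whereas $H^1$ uses all of $d_\varphi(\g^0)$. To resolve this, I will observe that $\g^{0}$ consists of arity-one operations only, because the degree of $\g(n)$ is shifted by $1-n$ and nothing of arity $n\ge2$ lands in degree $0$ for a non-DG $\QQ$. In weight terms this means $\g^0=\g^{0,(0)}$, so the two agree. A remaining subtlety is stabilizers: the quotient should be a groupoid whose automorphism groups are $\exp(\hbar Z^0)$. I will therefore interpret the claimed isomorphism as one on isomorphism classes, or as the statement that $\fDef_\varphi(S)$ is the action groupoid of $\g^0$ acting on $Z^1$ through $d_\varphi$, whose $\pi_0$ is $H^1$.
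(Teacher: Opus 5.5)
Your argument is correct and is exactly the first-order computation behind Theorem 12.2.7 of Loday--Vallette, which is all the paper cites: the Maurer--Cartan equation in $\g^\varphi_{\PP,\QQ}\otimes \hbar\K$ collapses to $d_\varphi\alpha=0$, and the gauge action of $1+\hbar\beta$ translates $\alpha$ by $-d_\varphi\beta$. Your caveats also correctly sharpen the statement as written: for plain $\QQ$, degree equals weight (arity $n$ sits in degree $n-1$), so the weight restrictions lose nothing; and the quotient stack has stabilizers $\exp(\hbar Z^0)$, so the identification with $H^1_{\PP,\QQ}(\varphi)$ is one of isomorphism classes.
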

\begin{proof} 
This is precisely Theorem 12.2.7 in \cite{LV}. 
\end{proof}

At higher order in $\hbar$, the existence of a deformation is, in general, obstructed. Nevertheless, the space of such order-by-order deformations remains a torsor for $H^1_{\PP,\QQ}(\varphi)$. What follows is a strengthening of Theorems 12.2.8 and 12.2.9 of \cite{LV}, following similar proofs to those of \emph{loc. cit.}

Anticipating the following proposition, we define the obstruction cochain associated with $\varphi_{\hbar/\hbar^{m}}\in \Def^{\mc P,\mc Q}_\varphi(\K[\hbar]/\hbar^{m}) $ by
\[ \Obs_m(\varphi_{\hbar/\hbar^{m}})=-\frac{1}{2} \sum_{k=1}^{m-1}[ \varphi_k,\varphi_{m-k} ] \ \in (\g_{\mc P, \mc Q}^{\varphi})^{2,(2)} \ . \]
Then we have:

\begin{prop}\label{DefObsprop} 
Let $\varphi \in \Alg_{\mc P}(\mc Q)$ and $\varphi_{\hbar/\hbar^{m}}\in \Def_\varphi( \K[\hbar]/\hbar^{m})$. Then the obstruction associated to $\varphi_{\hbar/\hbar^m}$ is closed, \ie,
\[
d_\varphi \Obs_m(\varphi_{\hbar/\hbar^m}) =0 
\]
Moreover, for some putative extension to $m$th order in $\hbar$, $\varphi_{\hbar/\hbar^{m+1}} = \varphi_{\hbar/\hbar^m}+ \hbar^m \varphi_m$, we have 
\[ 
\varphi_{ / \hbar^{m+1}} \in \Def_\varphi(\K[\hbar]/\hbar^{m+1}) \times_{\Def_\varphi(\K[\hbar]/\hbar^{m}) } \{\varphi_{\hbar/\hbar^{m}} \} \qquad\text{if and only if} \qquad d_\varphi \varphi_m = \Obs_m(\varphi_{\hbar/\hbar^{m}})  \ .
\]

Thus, there exists an extension
\[ \varphi_{ \hbar/ \hbar^{m+1}} \in \Def_\varphi(\K[\hbar]/\hbar^{m+1}) \times_{\Def_\varphi(\K[\hbar]/\hbar^{m}) } \{\varphi_{\hbar/\hbar^{m}} \} \]
if and only if the cohomology class of the obstruction vanishes, that is,
\[ [ \Obs_m(\varphi_{\hbar/\hbar^{m}})]= 0 \ \in H^2_{\mc P,\mc Q}(\varphi)\ .\]
Further, when the obstruction class vanishes, the space of such extensions up to isomorphism,
\[  \fDef_\varphi(\K[\hbar]/\hbar^{m+1}) \times_{\fDef_\varphi(\K[\hbar]/\hbar^{m}) } \{\varphi_{\hbar/\hbar^{m}} \} \ , \]
is a torsor for $H^1_{\mc P,\mc Q}(\varphi)$.
\end{prop}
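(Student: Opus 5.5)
The plan is the standard argument through the Maurer--Cartan equation, using Proposition \ref{DefMCprop} to identify $\Def_\varphi(\K[\hbar]/\hbar^{m})$ with $\MC(\g^\varphi_{\mc P,\mc Q}\otimes \hbar\K[\hbar]/\hbar^{m})$. Write a deformation as $\alpha=\sum_{k=1}^{m-1}\hbar^k\varphi_k$ with $\varphi_k\in(\g^\varphi_{\mc P,\mc Q})^{1,(1)}$. Expanding $d_\varphi\alpha+\tfrac12[\alpha,\alpha]=0$ coefficient by coefficient in $\hbar$ gives
\[
d_\varphi\varphi_j=-\tfrac12\sum_{k=1}^{j-1}[\varphi_k,\varphi_{j-k}] \qquad\text{for } 1\le j\le m-1 .
\]
The putative extension $\alpha+\hbar^m\varphi_m$ has the same lower coefficients. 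Its only new condition is the $\hbar^m$ coefficient, which reads $d_\varphi\varphi_m=\Obs_m(\varphi_{\hbar/\hbar^m})$. This proves the middle "if and only if" claim. Weight counting puts $\Obs_m$ in degree $2$ and weight $2$, since the bracket is additive in weight and degree.

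Closedness comes next. Apply $d_\varphi$, which is a derivation of the bracket with $d_\varphi^2=0$, to get $d_\varphi\Obs_m=-\sum_{k}[d_\varphi\varphi_k,\varphi_{m-k}]$. Substitute the lower-order MC equations, which turns this into a sum $\tfrac12\sum_{a+b+c=m}[[\varphi_a,\varphi_b],\varphi_c]$. This sum vanishes by the graded Jacobi identity, because all $\varphi_i$ are odd. Equivalently, $\alpha_m:=\alpha$ viewed modulo $\hbar^{m+1}$ has curvature $F=d_\varphi\alpha+\frac12[\alpha,\alpha]=\hbar^m(-\Obs_m)$. The Bianchi identity $d_\varphi F+[\alpha,F]=0$ then holds, and $[\alpha,F]\equiv0$ mod $\hbar^{m+1}$, so $d_\varphi\Obs_m=0$. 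I would present this Bianchi version since it is cleaner. Existence of an extension is then equivalent to $\Obs_m$ being exact, i.e. $[\Obs_m]=0\in H^2_{\mc P,\mc Q}(\varphi)$.

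For the torsor statement, note that two solutions $\varphi_m,\varphi_m'$ differ by a cocycle in $Z^1$, so $Z^1$ acts simply transitively on the fibre of $\Def$. It remains to pass to $\fDef$ using Proposition \ref{fDefMCprop} and Remark \ref{MCdefgensubsec}. The subtle point is that isomorphisms are only taken relative to the fixed $\varphi_{\hbar/\hbar^m}$. So we consider gauge elements $g=\exp(\hbar^m x)$ with $x\in\g^{0,(0)}$. Such $g$ fix $\alpha$ modulo $\hbar^m$ and shift $\varphi_m$ by $-d_\varphi x=-[\varphi,x]$, modulo terms of order $\hbar^{m+1}$.

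I expect this to be the main obstacle, in two respects. First, gauge transformations $\exp(\hbar^k x)$ with $k<m$ that stabilize $\alpha$ modulo $\hbar^m$ can also act on the fibre. The plan is to show that such stabilizers act by coboundaries as well: their infinitesimal action on $\varphi_m$ is $[\alpha\text{-terms},x]$ plus $d_\varphi$-exact terms, and the stabilizer condition forces the relevant $x$ to be cocycle-valued at the lowest order. Hence the quotient of the $Z^1$-torsor is by $B^1$ up to a translation, and the fibre is an $H^1$-torsor. Second, the weight-zero, degree-zero coboundaries must exhaust $B^1$ in weight one. This holds because $d_\varphi$ preserves weight, as $\varphi$ has weight $1$ and the bracket shifts weight by $-1$ appropriately, so $H^1$ is computed exactly by these gauge directions, matching Proposition \ref{tgtOpMapprop} at $m=1$.
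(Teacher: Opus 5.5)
Your Maurer--Cartan expansion at order $\hbar^m$, the Bianchi argument for $d_\varphi\Obs_m=0$, and the observation that $\exp(\hbar^m x)$ shifts $\varphi_m$ by $d_\varphi x$ are exactly the paper's proof. The gap is in your "main obstacle" paragraph. You claim that gauge elements $\exp(\hbar^k x)$ with $k<m$ which fix $\varphi_{\hbar/\hbar^m}$ act on the fibre by coboundaries, and this is false.

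Take $m=2$, $k=1$. The stabilizer condition is only $[x,\varphi]=0$, and then $\varphi_2\mapsto\varphi_2+[x,\varphi_1]$. This is a cocycle whose class is the bracket of $[x]\in H^0_{\mc P,\mc Q}(\varphi)$ with $[\varphi_1]\in H^1_{\mc P,\mc Q}(\varphi)$, and that bracket need not vanish. Concretely, take $\mc P=\Lie$, $\mc Q=\End(\K^2)$, $\varphi=0$, and $\varphi_1$ the bracket $[e_1,e_2]=e_1$.
\begin{itemize}
\item Then $d_\varphi=0$, so $H^1_{\mc P,\mc Q}(\varphi)=\Hom(\wedge^2\K^2,\K^2)\cong\K^2$.
\item $\Obs_2=0$ because $\wedge^3\K^2=0$, so every $\varphi_2$ is an extension.
\item Every $x\in\mathfrak{gl}_2$ stabilizes $\hbar\varphi_1$ modulo $\hbar^2$.
\item Under $\psi\mapsto\psi(e_1,e_2)$, the map $x\mapsto[x,\varphi_1]$ becomes $x\mapsto x(e_1)-\mathrm{tr}(x)e_1$, which is onto $\K^2$.
\end{itemize}
So quotienting by the stabilizer collapses the fibre to a single point, not a $\K^2$-torsor. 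The step you plan to prove fails, and the conclusion fails with it.

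What makes the statement true, and what the paper uses implicitly by only ever considering $e^{\hbar^m a}$, is that the fibre product of stacks is a $2$-fibre product. An object is a pair $(\psi,g)$, where $\psi$ is an order-$(m+1)$ deformation and $g$ is a gauge transformation from $\psi \bmod \hbar^m$ to $\varphi_{\hbar/\hbar^m}$. A morphism $(\psi,g)\to(\psi',g')$ is an $h$ with $h\cdot\psi=\psi'$ and $g'\circ(h \bmod \hbar^m)=g$.

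By lifting $g$, every object is isomorphic to one with $g$ the identity. Morphisms between such normalized objects are exactly the $h\equiv 1$ modulo $\hbar^m$, i.e. $h=\exp(\hbar^m x)$ modulo $\hbar^{m+1}$, and these shift $\varphi_m$ by $\pm d_\varphi x$. Hence the isomorphism classes form an $H^1_{\mc P,\mc Q}(\varphi)$-torsor. Automorphisms of $\varphi_{\hbar/\hbar^m}$ enter only through the data $g$ and never act on the fibre, so no stabilizer analysis is needed.

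A smaller point: $d_\varphi$ does not preserve weight; it raises weight and degree by one. What you actually need is that for plain $\mc Q$ weight equals cohomological degree, so that $B^1=d_\varphi(\g^{0,(0)})$.
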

\begin{proof} 
First we show that the obstruction,	$\Obs_m(\varphi_{\hbar/\hbar^{m}})$, is closed. The Bianchi identity for the DG Lie algebra $\gf^\varphi_{\PP,\QQ}\otimes \ik[\hbar]/\hbar^{m+1}$ implies that 
\[
\big[\varphi_{\hbar/\hbar^{m}}, d_\varphi \varphi_{\hbar/\hbar^{m}} + \frac12[\varphi_{\hbar/\hbar^{m}},\varphi_{\hbar/\hbar^{m}}]\big]=0~.
\]
Taking the coefficient of $\hbar^m$ in the above tells us that $\Obs_m(\varphi_{\hbar/\hbar^m})$ must be closed.

Now, consider our putative lift
\[ 
\varphi_{\hbar / \hbar^{m+1}}= \varphi + \sum_{k=1}^m \hbar^k \varphi_k \ \in (\g_{\mc P, \mc Q}^{\varphi})^{1,(1)}\otimes_{\K} \K[\hbar]/\hbar^{m+1} ~,
\]
whose restriction to $\ik[\hbar]/\hbar^m$,
\[
\varphi_{\hbar/\hbar^m} := \varphi_{\hbar/\hbar^{m+1}} \mod \hbar^m  =\varphi+  \sum_{k=1}^{m-1} \hbar^k \varphi_k \ \in  (\g_{\mc P, \mc Q}^{\varphi})^{1,(1)}\otimes_{\K} \K[\hbar]/\hbar^{m} ~,
\] 
satisfies the Maurer--Cartan equations. Then $\varphi_{ \hbar / \hbar^{m+1}}$ satisfies the Maurer--Cartan equations if and only if they hold at order $\hbar^m$, which is equivalent to the condition that
\begin{equation}\label{defMCeqn}
	d_\varphi \varphi_m +[\varphi,\varphi_m] + \frac{1}{2} \sum_{k=1}^{m-1}[ \varphi_k,\varphi_{m-k} ] = 0 \ .
\end{equation}
Thus, by the definition of the obstruction cocyle, the data $\varphi_m$ defining such an extension is equivalent to an element $\varphi_m\in (\g_{\mc P, \mc Q}^{\varphi}) ^{1,(1)}$ satisfying $d_\varphi \varphi_m = \Obs_m(\varphi_{\hbar/\hbar^{m}})$, or equivalently a trivialization of the cohomology class $[ \Obs_m(\varphi_{\hbar/\hbar^{m}})]\in H^2_{\mc P,\mc Q}(\varphi)$.
	
Given two such extensions of $\varphi_{\hbar/\hbar^{m}}$ to order $\hbar^{m}$, denoted
\[\varphi_{ / \hbar^{m+1}} \  , \ \tilde \varphi_{ / \hbar^{m+1}} \in \Def_\varphi(\K[\hbar]/\hbar^{m+1}) \times_{\Def_\varphi(\K[\hbar]/\hbar^{m}) } \{\varphi_{\hbar/\hbar^{m}} \} \ , \]
their corresponding elements $\varphi_m, \tilde\varphi_m \in (\g_{\mc P, \mc Q}^{\varphi})^{1,(1)} $ evidently satisfy
\[ d_\varphi(\varphi_m- \tilde\varphi_m )=\Obs_m(\varphi_{\hbar/\hbar^{m}})-\Obs_m(\varphi_{\hbar/\hbar^{m}})=0~,\]
so that they define a cocycle $\varphi_m- \tilde\varphi_m \in Z^1_{\mc P,\mc Q}(\varphi)$, and conversely the same argument implies any such cocycle $\zeta \in Z^1_{\mc P,\mc Q}(\varphi)$ added to a given $\varphi_m$ yields an alternative extension $\tilde \varphi_m=\varphi_m + \zeta$ which still satisfies $d_\varphi(\tilde \varphi_m)=\Obs_m(\varphi_{\hbar/\hbar^{m}})$.

Furthermore, two such deformations are identified by an automorphism
\[
e^{\hbar^m a} = \id + \hbar^m a \in {\frak G}^{0,(0)}_{\PP,\QQ,\hbar^{m+1}} ~,
\]
with $a\in (\g_{\mc P,\mc Q}^\varphi)^{0,(0)}$if and only if $e^{\hbar^m a}\circ \varphi_{\hbar/\hbar^{m+1}}\circ e^{-\hbar^m a} = \widetilde{\varphi}_{\hbar/\hbar^{m+1}}$, which in turn implies that
\[ 
\varphi_m - \tilde\varphi_m = d_\varphi a \ ,
\]
noting the only non-trivial constraint is at order $\hbar^m$.

\end{proof}

\subsection{Deformation theory for families of algebras}\label{deffamsec} 

In this section, as in the previous, we study the order-by-order deformation-obstruction theory of maps between families of operads but unlike the preceding section we allow the operads to be non-constant.

Let $\Ph$ and $\QQ_\hbar$ be operads internal to $\K[\hbar]\Mod$, which we view as families of operads over $\IA^1_\hbar$. The space of $\mc P_\hbar$-algebra structures internal to $\mc Q_\hbar$ is controlled by the weight graded, DG Lie algebra internal to $\ik[\hbar]\Mod$, defined by
\[  
\g_{\mc P_\hbar,\mc Q_\hbar}=\bigoplus_{\ell\in \Z_{\geq 0}}\g_{\mc P_\hbar,\mc Q_\hbar}^{(\ell)} \qquad\text{where}\qquad  \g_{\mc P_\hbar,\mc Q_\hbar}^{(\ell)}=\Hom_{\Ss}(\overline{\mc P^{\ash}_\hbar}^{ (\ell)} ,\mc Q_\hbar) =\bigoplus_{n\in \N} \Hom_{\K[\hbar][\Ss_n]\Mod}(\overline{\mc P^{\ash}_\hbar}^{ (\ell)}(n) ,\mc Q_\hbar(n))  \ .
\]
In terms of this grading, we have the following application of Proposition \ref{MCplainprop}:
\begin{coro}\label{MCplainhbarprop} 
A $\Ph$-algebra internal to $\Qh$ is equivalent to a solution to the Maurer-Cartan equations in $\gPQh$ of pure weight $1$, that is,
	\[ 
	\Alg_{\Ph}(\Qh) =\Hom_{\Op_{\K[\hbar]}}(\Ph, \Qh)  \xrightarrow{\cong } \MC(\g_{\Ph, \Qh}) \subset \g_{\Ph, \Qh}^{1 ,(1)} ~.
	\]
\end{coro}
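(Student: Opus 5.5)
The plan is to reduce Corollary \ref{MCplainhbarprop} to Proposition \ref{MCplainprop}, applied internally to the symmetric monoidal category $\K[\hbar]\Mod$ instead of $\Vect_\K$. The proof of Proposition \ref{MCplainprop} only uses three inputs: the cobar construction $\Omega\mc P^\ash$ is quasi-free, so it has a free--forgetful adjunction; maps out of a quasi-free operad correspond to twisting morphisms (Theorem 6.5.10 of \cite{LV}); and, for $\mc P$ Koszul, the quotient $\Omega\mc P^\ash\to\mc P$ identifies $\mc P$-algebras with the Maurer--Cartan elements of pure weight one. Each of these is a formal statement valid over any commutative ground ring in characteristic zero. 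So the first step is to check that $\Ph$ is a quadratic operad over $\K[\hbar]$, so that $\mc P_\hbar^\ash$ and $\gPQh=\Hom_\Ss(\mc P_\hbar^\ash,\Qh)$ make sense $\K[\hbar]$-linearly with their weight gradings.

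Next I would run the argument directly in the weight grading. Since $\Ph=\FF(E_\hbar)/(R_\hbar)$ is binary quadratic, an element $\alpha\in\gPQh^{1,(1)}$ is exactly a $\K[\hbar][\Ss_2]$-equivariant map $E_\hbar\to\Qh(2)$. This is the same as a map of operads $\FF(E_\hbar)\to\Qh$, by the free operad adjunction of Definition \ref{freeopeg} over $\K[\hbar]$. Writing out the Maurer--Cartan equation $d\alpha+\frac12[\alpha,\alpha]=0$, its weight-two component is $\frac12[\alpha,\alpha]$ evaluated on $\mc P_\hbar^{\ash,(2)}\cong R_\hbar[2]$. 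This is precisely the composite of $\alpha$ with the relations, so it vanishes exactly when the map factors through $\FF(E_\hbar)/(R_\hbar)=\Ph$. There is no need to pass through the full quasi-isomorphism $\Omega\mc P_\hbar^\ash\to\Ph$: for pure weight-one elements, the Maurer--Cartan condition reduces to the quadratic relations. Koszulity is only needed for the stronger statement about all of $\MC_\infty$.

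The main obstacle is base change over $\K[\hbar]$. Koszul duality and the identification $\mc P_\hbar^{\ash,(2)}\cong R_\hbar[2]$ are cleanest when the generators and relations are flat, for instance free, $\K[\hbar]$-modules. To handle this, I would restrict to the case the paper uses, where $\Ph$ is a quadratic presentation by $\K[\hbar]$-flat $E_\hbar\supset$ relations, or at least argue that $\mc P_\hbar^{\ash,(\le2)}$ is defined by the quadratic data without any flatness hypothesis. The weight-one computation above uses only weights $1$ and $2$, so it should survive without flatness.
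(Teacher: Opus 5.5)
Your argument is correct. Your first paragraph is essentially all the paper offers: the corollary is stated as an application of Proposition \ref{MCplainprop} over $\K[\hbar]$. That proposition is proved through the cobar construction: operad maps $\Omega\mc P^\ash\to\mc Q$ are matched with Maurer--Cartan elements via Theorem 6.5.10 of \cite{LV}, and Koszulity (Proposition 10.1.7 of \emph{loc.\ cit.}) then cuts out the $\mc P$-algebras as the pure weight-one locus.

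Your second paragraph is a genuinely different and more elementary route. You use only the quadratic presentation $\Ph=\FF(E_\hbar)/(R_\hbar)$. Elements of $\gPQh^{1,(1)}$ are $\Ss$-module maps $E_\hbar\to\Qh$, hence operad maps $\FF(E_\hbar)\to\Qh$. The weight-two part of the Maurer--Cartan equation, $\alpha\star\alpha=\tfrac12[\alpha,\alpha]$ on $\mc P^{\ash,(2)}_\hbar\cong R_\hbar[2]$, is exactly the image of the relations. This uses only weights $\le 2$ of $\mc P^\ash_\hbar$, so it needs neither Koszulity nor flatness of $E_\hbar,R_\hbar$ over $\K[\hbar]$. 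The paper's one-line reduction, by contrast, tacitly transports results of \cite{LV}, which are proved over a field, to the ground ring $\K[\hbar]$. What the paper's route buys in exchange is the embedding of $\Alg_{\Ph}(\Qh)$ into the larger space $\MC_\infty(\gPQh)$ of homotopy $\Ph$-algebras, which is the natural setting for the deformation theory that follows.

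Two small corrections. First, in the DG case you should also record the weight-one component $d\alpha=0$ of the Maurer--Cartan equation. It is vacuous for plain operads, but otherwise it is the condition that the operad map commutes with differentials. Second, Koszulity is not needed even for the bijection $\Hom(\Omega\mc P^\ash_\hbar,\Qh)\cong\MC_\infty(\gPQh)$. It enters only when you interpret those points as homotopy $\Ph$-algebras.
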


Given $\varphi_\hbar \in  \Alg_{\Ph}(\Qh)$, a $\Ph$-algebra internal to $\Qh$, we obtain a new DG Lie algebra over $\K[\hbar]$
\[ \gPQvh =  \left( \Hom_{\Ss}(\mc P^\ash_\hbar ,\mc Q_\hbar) \ , \  [ \cdot , \cdot ]_{\gPQh}   \ , \  d_{\varphi_\hbar}  \right) \]
where $d_{\varphi_\hbar} =d_{\gPQh}+[\varphi_\hbar,\cdot]_{\gPQh}:\gPQh\to \gPQh[1]$ as in Definition \ref{gvarphidef}. In particular, we obtain the operadic deformation-obstruction complex
\[ C^\bullet_{\mc P_\hbar,\Qh} (\varphi_\hbar) = (\gPQvh \ , \ d_{\varphi_\hbar}) \ . \]
More generally, for any fixed order $m\geq 0$ there is a natural DG Lie algebra over $\K[\hbar]/\hbar^{m+1}$
\[ \g_{\Ph,\Qh /\hbar^{m+1}} = \g_{\Phm,\Qhm} \qquad \text{and similarly}\qquad \g_{\Ph,\Qh /\hbar^{m+1}}^{\varphi_{\hbar/\hbar^{m+1}}} = \g_{\Phm,\Qhm}^{\varphi_{\hbar/\hbar^{m+1}}}  \]
a DG Lie algebra defined for each $\varphi_{\hbar/\hbar^{m+1}}\in \Alg_{\Phm}(\Qhm)$
which manifestly controls the deformation obstruction theory of $\mc P_{ / \hbar^{m+1}}$-algebras internal to $\mc Q_{ / \hbar^{m+1}}$, where we define
\[ \Phm = \Ph\otimes_{\K[\hbar]} \K[\hbar]/\hbar^{m+1}\qquad \text{and} \qquad \Qhm = \Qh\otimes_{\K[\hbar]} \K[\hbar]/\hbar^{m+1}   \ . \]

Taking $m=0$ in this construction recovers the specializations of the DG Lie algebras $\gPQh$ and $\gPQvh$ at $\hbar=0$, which agree with the usual DG Lie algebras associated to the pair of operads $(\mc P_0,\mc Q_0)$ and the algebra $\varphi=\varphi_{\hbar/\hbar} \in \Alg_{\mc P_0}(\mc Q_0)$, that is, we have
\[ \gPQh \otimes_{\K[\hbar]} \K_0 \cong  \g_{\mc P_0,\mc Q_0} \qquad \text{and}\qquad \gPQvh \otimes_{\K[\hbar]} \K_0 \cong  \g_{\mc P_0,\mc Q_0}^\varphi \ . \]

We now introduce the analogue of Definition \ref{Defdefn} of the space of deformations, and its generalization to the stack of deformations in Equation \ref{fDefeqn}, to the present setting. Let $(S,\mf m_S)$ be a local Artinian ring with a closed embedding $\Spec S \into \IA^1_\hbar$ sending the closed point of $\Spec S$ to the origin of $\IA^1$.


The following definition describes the space of deformations of $\varphi\in\Alg_{\PP_0}(\QQ_0)$ over $\Spec S$ to a $\mc P_\hbar$-algebra internal to $\mc Q_\hbar$:

\begin{defn}\label{Defhbardefn} 
Let $\varphi\in \Alg_{\mc P_0}(\mc Q_0)$ be an $\mc P$-algebra internal to $\mc Q$. The space of deformations of $\varphi$ over $\Spec S$, as an $\mc P_\hbar$-algebra internal to $\mc Q_\hbar$, is defined as the fibre product
\[ 
\Def_\varphi^{\Ph,\Qh}(S) = \MC(\gPQh\otimes_{\K[\hbar]} S) \fibre{\MC(\g_{\mc P_0,\mc Q_0})} \{\varphi\} \ .  
\]
Similarly, the stack of such deformations is defined as the fibre product of Maurer-Cartan stacks
\[
\fDef_{\varphi}^{\Ph,\Qh}(S) = \fMC(\gPQh\otimes_{\K[\hbar]} S) \fibre{\fMC(\g_{\mc P_0,\mc Q_0})} \{\varphi_0\} \ , 
\]
where the Maurer-Cartan stack is defined as the quotient stack
\[ 
\fMC(\g_{\Ph,\Qh }\otimes_{\K[\hbar]} S)= \left[ \MC(\g_{\Ph,\Qh }\otimes_{\K[\hbar]} S)\  \big/\  \mf G^{0,(0)}_{\Ph,\Qh,\mf m_S} \right]  
\]
and the group $\mf G^{0,(0)}_{\Ph,\Qh,\mf m_S}=\exp(\g^{0,(0)}_{\Ph,\Qh}\otimes_{\K[\hbar]} \mf m_S)$, the formal exponential of the nilpotent Lie algebra $\g^{0,(0)}_{\Ph,\Qh}\otimes_{\K[\hbar]} \mf m_S$---analogous to the definition of the Maurer-Cartan stack in Equation \ref{MCstackeqn}.
\end{defn}

In particular, the scheme and stack of deformations over $\K[\hbar]/\hbar^{m+1}$ are given by
\[
	 \Def_\varphi^{\Ph,\Qh}(\K[\hbar]/\hbar^{m+1})  =\MC(\g_{\PP_{\hbar/\hbar^{m+1}},\QQ_{\hbar/\hbar^{m+1}}})\fibre{\MC(\g_{\mc P_0,\mc Q_0})} \{ \varphi \} ~,
\]
and
\[
	 \fDef_\varphi^{\Ph,\Qh}(\K[\hbar]/\hbar^{m+1})  =\fMC(\g_{\PP_{\hbar/\hbar^{m+1}},\QQ_{\hbar/\hbar^{m+1}}})\fibre{\fMC(\g_{\mc P_0,\mc Q_0})} \{ \varphi \} ~,
\]
respectively. Note that
\[ 
\MC(\g_{\mc P_0,\mc Q_0})\xrightarrow{\cong }\fMC(\g_{\mc P_0,\mc Q_0}) \ , 
\]
since the maximal ideal of the local ring is trivial, and thus the group $\mf G^{0,(0)}_{\Ph,\Qh,\mf m_S}$ is as well. Thus, the fibre product condition in the definition of  $\fDef_\varphi^{\Ph,\Qh}(R) $ requires strict agreement of the restriction to the central fibre with the initial algebra $\varphi$, rather than up to some isomorphism.

Compared to the simpler descriptions of Propositions \ref{DefMCprop} and \ref{fDefMCprop}, the Maurer--Cartan stacks does not make reference to the nilpotent DG Lie algebra $\gf_{\PP_0,\QQ_0}^{\varphi}\otimes \mf m_S$. Nevertheless, we will be able to establish the analogue of Proposition \ref{DefObsprop} on the order by order deformation theory.

\subsubsection{Order by order deformation theory}

Before we proceed, a warning for the reader:

\begin{warn}\label{flatwarn}
	For the remainder of this section, we will assume that $\mc P_\hbar$ and $\mc Q_\hbar$ are such that at each arity $n$, the  $\K[\hbar]$-modules $\mc P_\hbar^\ash(n)$ and $\mc Q_\hbar(n)$ are trivializable, and in the DG case we assume that the analogue is true for the underlying  $\Kh$ module in each cohomological degree. 

\end{warn}

We will refer to this data formally in terms of the following definition:

\begin{defn}\label{Ssmodtrivdefn} 
By a trivialization of the underlying $\Ss$-module of an operad $\QQ_\hbar$ or a cooperad $\CC_\hbar$ over $\ik[\hbar]$ we mean a choice of trivialization at each arity $n$ of the $\ik[\hbar]$-modules $\QQ_\hbar(n)$ and $\CC_\hbar(n)$, \ie am isomorphism of $\ik[\hbar]$-modules
\[
\QQ_{\hbar}^\ash(n) \cong \QQ_0^\ash(n) \otimes \ik[\hbar] \qquad \text{ and } \qquad \CC_\hbar(n) \cong \CC_0(n) \otimes \ik[\hbar]~.
\]
Note that this is much weaker than a trivialization of the operadic or cooperadic structure.
\end{defn}

We fix such trivializations of the underlying $\Ss$-modules of $\mc P^\ash_\hbar$ and $\mc Q_\hbar$, thus giving an identification
\begin{equation}\label{gtriveqn}
	 \gPQh \cong \g_{\mc P_0,\mc Q_0}\otimes_{\K} \K[\hbar] \qquad\text{with}\qquad d_{\gPQh}= \sum_{j=0}^\infty \hbar^j d_j \quad \text{and} \quad [\cdot ,\cdot]_\gPQh = \sum_{j=0}^\infty \hbar^j[\cdot,\cdot]_j \ ,
\end{equation}
for maps $d_j:\g_{\mc P_0,\mc Q_0} \to \g_{\mc P_0,\mc Q_0}[-1]$ and $[\cdot,\cdot] :\g_{\mc P_0,\mc Q_0}^{\otimes 2}\to \g_{\mc P_0,\mc Q_0}$. The zeroth order maps $d_0:g_{\mc P_0,\mc Q_0} \to \g_{\mc P_0,\mc Q_0}[-1]$ and $[\cdot,\cdot]_0 :\g_{\mc P_0,\mc Q_0}^{\otimes 2}\to \g_{\mc P_0,\mc Q_0}$ are the usual DG Lie algebra structure on $\g_{\mc P_0,\mc Q_0}$, while the higher order in $\hbar$ terms describe the deformation of this DG Lie algebra as the operads $\mc P_0$ and $\mc Q_0$ deform over $\K[\hbar]$.


Recall from the preceding section that when $\PP_\hbar$ and $\QQ_\hbar$ are constant, there is no obstruction to deforming $\varphi\in\Alg_{\PP_0}(\QQ_0)$ over $\ik[\hbar]/\hbar^2$. This is because one always has a trivial deformation of $\varphi$ by extending it as a constant map of operads over $\ik[\hbar]/\hbar^2$. It is only after choosing a non-trivial deformation of $\varphi$ to first order that one might encounter a potential obstruction.

However, in the current setting, $\PP_\hbar$ and $\QQ_\hbar$ are non-constant and there is no canonical extension of $\varphi$ as a constant. Thus, one should expect even the first order deformation of $\varphi$ to be potentially obstructed. 


We proceed to repeat the analysis preceding Proposition \ref{DefObsprop} in the present setting: For $m\geq 1$, we define the generalized obstruction cochain associated with $\varphi_{\hbar/\hbar^{m}}\in \Def^{\Ph,\Qh}_\varphi(\K[\hbar]/\hbar^{m}) $ by
\begin{equation}\label{Obshbareqn}
	 \Obs_m^{\Ph,\Qh}(\varphi_{\hbar/\hbar^{m}}):=-\frac{1}{2} \sum_{k=1}^{m-1}[ \varphi_k,\varphi_{m-k} ] -  \sum_{j=1}^m d_j \varphi_{m-j} - \sum_{j=1}^m \sum_{k=0}^{m-j} \frac{1}{2}[\varphi_k, \varphi_{m-j-k}]_j  \ \in \g^{2,(2)}_{\mc P,\mc Q} \ .
\end{equation}
We now state the common generalization of Propositions \ref{tgtOpMapprop} and \ref{DefObsprop} in this setting:

\begin{theo}\label{DefObsThm} Let $\varphi\in \Alg_{\mc P_0}(\mc Q_0)$ be a $\mc P_0$-algebra internal to $\mc Q_0$ and $\varphi_{\hbar/\hbar^{m}}\in \Def^{\Ph,\Qh}_\varphi(\K[\hbar]/\hbar^{m}) $. Then $\Obs_m(\varphi_{\hbar/\hbar^{m}})$ satisfies $d_\varphi \Obs_m(\varphi_{\hbar/\hbar^{m}})=0$, and for  $\varphi_{ \hbar/ \hbar^{m+1}} =  \varphi_{\hbar/\hbar^{m}} + \hbar^m \varphi_m $ we have
		\[ \varphi_{ \hbar/ \hbar^{m+1}} \in \Def^{\Ph,\Qh}_\varphi(\K[\hbar]/\hbar^{m+1}) \times_{\Def^{\Ph,\Qh}_\varphi(\K[\hbar]/\hbar^{m}) } \{\varphi_{\hbar/\hbar^{m}} \} \qquad \text{if and only if} \qquad d_{\varphi} \varphi_m  = \Obs_m(\varphi_{\hbar/\hbar^{m}}) \ . \]
		Thus, there exists an extension
\[  \varphi_{ \hbar/ \hbar^{m+1}} \in \Def^{\Ph,\Qh}_\varphi(\K[\hbar]/\hbar^{m+1}) \times_{\Def^{\Ph,\Qh}_\varphi(\K[\hbar]/\hbar^{m}) } \{\varphi_{\hbar/\hbar^{m}} \} \]
if and only if the cohomology class of the obstruction vanishes, that is,
\[ [ \Obs^{\Ph,\Qh}_m(\varphi_{\hbar/\hbar^{m}})]= 0 \ \in H^{2}_{\mc P_0,\mc Q_0}(\varphi) \ .\]
Further, when the obstruction class vanishes, the space of such extensions up to isomorphism,
\[  \fDef^{\Ph,\Qh}_\varphi(\K[\hbar]/\hbar^{m+1}) \times_{\fDef^{\Ph,\Qh}_\varphi(\K[\hbar]/\hbar^{m}) } \{\varphi_{\hbar/\hbar^{m}} \} \ , \]
is a torsor for $H^{1}_{\mc P_0,\mc Q_0}(\varphi)$.
\end{theo}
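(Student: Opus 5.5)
Using the trivialization of Equation \ref{gtriveqn}, I will write a putative deformation as $\varphi_{\hbar/\hbar^{m+1}}=\sum_{k=0}^m \hbar^k\varphi_k$ with $\varphi_0=\varphi$ and $\varphi_k\in\g^{1,(1)}_{\mc P_0,\mc Q_0}$. I will then expand the Maurer--Cartan equation $d_{\gPQh}\varphi_\hbar+\tfrac12[\varphi_\hbar,\varphi_\hbar]_{\gPQh}=0$ order by order in $\hbar$. Assuming the equation already holds modulo $\hbar^m$, the coefficient of $\hbar^m$ collects into three groups of terms. The first is $d_0\varphi_m+[\varphi,\varphi_m]_0=d_\varphi\varphi_m$. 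The second is the zeroth-order bracket terms $\tfrac12\sum_{k=1}^{m-1}[\varphi_k,\varphi_{m-k}]_0$. The third is the terms involving $d_j$ and $[\cdot,\cdot]_j$ for $j\geq1$, which only involve $\varphi_0,\dots,\varphi_{m-1}$. Comparing with Equation \ref{Obshbareqn}, this coefficient is exactly $d_\varphi\varphi_m-\Obs_m^{\Ph,\Qh}(\varphi_{\hbar/\hbar^m})$, which gives the stated criterion for extensions. The existence statement follows once closedness is established, and the difference of two lifts is a $d_\varphi$-cocycle, exactly as in the proof of Proposition \ref{DefObsprop}.

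The main obstacle is closedness, $d_\varphi\Obs_m=0$, because the naive Bianchi argument used in Proposition \ref{DefObsprop} now involves the deformed structure maps. I will argue directly in the DG Lie algebra $\g_{\Ph,\Qh}\otimes\K[\hbar]/\hbar^{m+1}$, over $\K[\hbar]$. Choose any lift $\tilde\varphi=\varphi_{\hbar/\hbar^m}$, viewed modulo $\hbar^{m+1}$ with $\varphi_m=0$, and set $F=d\tilde\varphi+\tfrac12[\tilde\varphi,\tilde\varphi]$. By hypothesis $F\equiv0$ mod $\hbar^m$, so $F=-\hbar^m\Obs_m$. The Bianchi identity $dF+[\tilde\varphi,F]=0$ holds because $d^2=0$, $d$ is a derivation and the bracket satisfies Jacobi in the deformed DG Lie algebra. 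Since $F$ is divisible by $\hbar^m$ and we work modulo $\hbar^{m+1}$, only the $\hbar^0$ parts $d_0$, $[\cdot,\cdot]_0$ and $\varphi_0=\varphi$ survive when acting on $F$. The identity therefore reduces to $\hbar^m d_\varphi\Obs_m=0$. The point to check is that the trivialization is only of $\K[\hbar]$-modules, so the $\hbar$-expansion of $d$ and the bracket is $\K[\hbar]$-linear; this holds by construction. The only other care needed is with the weight-grading bookkeeping, namely that $\Obs_m$ lies in weight $2$.

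For the torsor statement, I will show the difference of two extensions lies in $Z^1$ as above, and that cocycles act freely and transitively on the set of extensions. Then I will identify the gauge equivalences fixing $\varphi_{\hbar/\hbar^m}$. The group $\mf G^{0,(0)}_{\Ph,\Qh,\mf m_S}$ for $S=\K[\hbar]/\hbar^{m+1}$ has elements restricting to the identity modulo $\hbar^m$ in the fibre product. I need to be careful that $\fDef(\K[\hbar]/\hbar^{m+1})\times_{\fDef(\K[\hbar]/\hbar^m)}\{\varphi_{\hbar/\hbar^m}\}$ is a stacky fibre product. Elements that are only automorphisms of $\varphi_{\hbar/\hbar^m}$, rather than the identity, could a priori also act. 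I will handle this as in Proposition \ref{DefObsprop}, restricting to the gauge elements $e^{\hbar^m a}$ with $a\in\g^{0,(0)}_{\mc P_0,\mc Q_0}$. The action of such an element is $\varphi_{\hbar}\mapsto\varphi_\hbar-\hbar^m(d_0a+[\varphi,a]_0)$ modulo $\hbar^{m+1}$, again because higher $d_j$ and $[\cdot,\cdot]_j$ are killed by $\hbar^m\cdot\hbar^j$. So the extensions change exactly by $d_\varphi$-coboundaries, and the quotient is an $H^1_{\mc P_0,\mc Q_0}(\varphi)$-torsor.
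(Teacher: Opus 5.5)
Your proposal is correct and follows essentially the same route as the paper. Both arguments expand the Maurer--Cartan equation at order $\hbar^m$ using the trivialization of Equation \ref{gtriveqn}, obtain closedness of the obstruction from the Bianchi identity in $\g_{\Ph,\Qh}\otimes\K[\hbar]/\hbar^{m+1}$, and identify extensions modulo the gauge elements $e^{\hbar^m a}$ as an $H^1_{\mc P_0,\mc Q_0}(\varphi)$-torsor. Two of your choices are slightly more careful than the paper's write-up without changing the argument: taking the lift with $\varphi_m=0$ in the Bianchi step, and noting that only gauge elements restricting to the identity modulo $\hbar^m$ act in the stacky fibre product.
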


\begin{proof}
We repeat the argument of Proposition \ref{DefObsprop}, keeping track of the additional $\hbar$-dependence coming from the families $\Ph$ and $\Qh$, and the fixed trivializations of their underlying $\Ss$-modules in the sense of Definition \ref{Ssmodtrivdefn}. In particular, via these trivializations one has an identification of $\Kh$-modules
\[
\gPQh \;\cong\; \g_{\mc P_0,\mc Q_0}\otimes_\K \Kh
\]
under which the differential and bracket on $\gPQh$ admit expansions
\[
d_{\gPQh}=\sum_{j\ge 0}\hbar^j d_j
\qquad\text{and}\qquad
[\cdot,\cdot]_{\gPQh}=\sum_{j\ge 0}\hbar^j[\cdot,\cdot]_j
\]
for $d_j$ and $[\cdot,\cdot]_j$ defined on the underlying vector space $\g_{\mc P_0,\mc Q_0}$, as in the description following Equation \ref{gtriveqn}.

Fix $m\ge 1$ and suppose that
\[
\varphi_{\hbar/\hbar^m}=\varphi+\sum_{k=1}^{m-1}\hbar^k\varphi_k
\;\in\;(\gPQh)^{1,(1)}\otimes_{\Kh}\Kh/(\hbar^m)
\]
is a Maurer-Cartan element, that is
\[
d_{\gPQh}(\varphi_{\hbar/\hbar^m})
+\frac12[\varphi_{\hbar/\hbar^m},\varphi_{\hbar/\hbar^m}]_{\gPQh}= 0 \ \in (\gPQh)^{2,(2)}\otimes_{\Kh}\Kh/(\hbar^{m}).
\]
Consider an arbitrary lift
\[
\varphi_{\hbar/\hbar^{m+1}}
=\varphi+\sum_{k=1}^{m}\hbar^k\varphi_k
\ \in (\gPQh)^{1,(1)}\otimes_{\Kh}\Kh/(\hbar^{m+1}),
\]
and expand the Maurer-Cartan equation modulo $\hbar^{m+1}$. The Maurer-Cartan equations for $\varphi_{\hbar/\hbar^{m+1}}$ modulo $\hbar^m$ are precisely the same as those for $\varphi_{\hbar/\hbar^{m}}$, which hold by hypothesis. So it remains to check the vanishing of the coefficient of $\hbar^m$. Using the expansions of $d_{\gPQh}$ and $[\cdot,\cdot]_{\gPQh}$ fixed above and collecting the terms of order $\hbar^m$ gives
\[
d_0\varphi_m+[\varphi,\varphi_m]_0
+\frac12\sum_{k=1}^{m-1}[\varphi_k,\varphi_{m-k}]_0
+\sum_{j=1}^m d_j\varphi_{m-j}
+\sum_{j=1}^m\sum_{k=0}^{m-j}\frac12[\varphi_k,\varphi_{m-j-k}]_j
=0.
\]
By the Definition of $\Obs_m^{\Ph,\Qh}(\varphi_{\hbar/\hbar^m})$ in Equation \ref{Obshbareqn}, this is equivalent to
\begin{equation}\label{defMChbareqn}
	d_\varphi(\varphi_m)=\Obs_m^{\Ph,\Qh}(\varphi_{\hbar/\hbar^m}),
\end{equation}
where $d_\varphi:=d_0+[\varphi,-]_0$ is the differential on the operadic deformation complex
$C^\bullet_{\mc P_0,\mc Q_0}(\varphi)$. Thus, a lift $\varphi_{\hbar/\hbar^{m+1}}=\varphi_{\hbar/\hbar^m}+\hbar^m\varphi_m$ is Maurer-Cartan
modulo $\hbar^{m+1}$ if and only if $d_\varphi\varphi_m=\Obs_m^{\Ph,\Qh}(\varphi_{\hbar/\hbar^m})$, as claimed. In particular, an extension exists if and only if the cochain
$\Obs_m^{\Ph,\Qh}(\varphi_{\hbar/\hbar^m})$ is exact, equivalently
\[
[\Obs_m^{\Ph,\Qh}(\varphi_{\hbar/\hbar^m})]=0\in H^2_{\mc P_0,\mc Q_0}(\varphi).
\]

As in Proposition \ref{DefObsprop}, we use the Bianchi identity to show that $\Obs_m^{\Ph,\Qh}(\varphi_{\hbar/\hbar^m})$ is in general a $d_\varphi$-cocycle. Since $\varphi_{\hbar/\hbar^m}$ satisfies the Maurer-Cartan equation up to order $\hbar^m$, we have the expansion 
\begin{equation}\label{MChbarmpoeqn}
	d_\hbar\varphi_{\hbar/\hbar^{m+1}} \;+\; \frac{1}{2}[\varphi_{\hbar/\hbar^{m+1}},\varphi_{\hbar/\hbar^{m+1}}] \;=\; \hbar^{\,m}\,\Obs_m^{\Ph,\Qh}(\varphi_{\hbar/\hbar^m}) \;+\; \textup{terms of order } \hbar^{\,m+1}\,.
\end{equation}
The Bianchi identity modulo $\hbar^{m+1}$ is given by
\[ 
d_{\varphi_{ \hbar / \hbar^{m+1}}}\!\Big(d_\hbar\varphi_{\hbar/\hbar^{m+1}} \;+\; \frac{1}{2}[\varphi_{\hbar/\hbar^{m+1}},\varphi_{\hbar/\hbar^{m+1}}] \Big) \;=\; 0\,.
\] 
so that applying the differential $d_{\varphi_{ \hbar / \hbar^{m+1}}}$ to both sides of Equation \ref{MChbarmpoeqn}, the vanishing at order $\hbar^m$ is equivalent to
\[ 
d_\varphi\!\Big(\Obs_m^{\Ph,\Qh}(\varphi_{\hbar/\hbar^m})\Big) \;=\; 0\,,
\] 
and thus $\Obs_m^{\Ph,\Qh}(\varphi_{\hbar/\hbar^m})$ is indeed closed with respect to the differential $d_\varphi$.

Finally, suppose $[\Obs_m^{\Ph,\Qh}(\varphi_{\hbar/\hbar^m})]=0$ so that extensions exist. If $\varphi_{\hbar/\hbar^{m+1}}=\varphi_{\hbar/\hbar^m}+\hbar^m\varphi_m$ and $\tilde\varphi_{\hbar/\hbar^{m+1}}=\varphi_{\hbar/\hbar^m}+\hbar^m\tilde \varphi_m$ are two such extensions, then we have
\[
d_\varphi(\tilde\varphi_m-\varphi_m)=0,
\]
so $\delta:=\tilde\varphi_m-\varphi_m$ is a $1$-cocycle. Moreover, $\delta$ is a coboundary, $\delta=d_\varphi(\eta)$, if and only if the two lifts are related by the action of
$\mf G^{0,(0)}_{\Ph,\Qh,\mf m}$ at order $\hbar^m$, where the group $\mf G^{0,(0)}_{\Ph,\Qh,\mf m_R}$ is as in Definition \ref{Defhbardefn}. Thus, isomorphism classes of such extensions again form a torsor under $H^1_{\mc P_0,\mc Q_0}(\varphi)$, as claimed.
\end{proof}

\subsection{Deformation theory for graded algebras}\label{ssec:graded def}

Recall the distinguished multiplicative group $\IG_\hbar$ that acts on $\IA^1_\hbar$ by scaling $\hbar$ with unit weight. Let $\PP_\hbar$ and $\QQ_\hbar$ be graded operads over $\ik[\hbar]$, which we think of as families of operads over the stack $[\IA^1_\hbar/\IG_\hbar]$.

We write $\PP_0\coloneqq \PP_\hbar \otimes_{\ik[\hbar]}\ik_0$ for the central fibre at $0\in [\IA^1/\IG_\hbar]$ and $\PP_1\coloneqq \PP_\hbar\otimes_{\ik[\hbar]}\ik_1$ for the fibre over the closed point $1\in [\IA^1/\IG_\hbar]$.

The fibre operad $\PP_1$ is naturally equipped with an (ascending) filtration $F_\bullet \PP_1$, induced by the grading on $\PP_\hbar$, while the fibre operad $\PP_0$ is a graded operad over $\ik$. These two fibres are related by
\[
\gr^\bullet_{F}\PP_1 \cong \PP_0~.
\]
We have, more or less, just recalled the Rees construction for operads. Namely, any filtered $\ik$-linear operad, $\PP$ defines a graded $\ik[\hbar]$-operad $\PP_\hbar$ via
\[
\PP_\hbar = \Rees_\hbar\, \PP_1 = \bigoplus_{m\in\IZ} F_m \PP_1\hbar^m~,
\]
such that 
\[
\PP_1 \cong \PP \qquad \text{ and } \qquad \PP_0 \cong \gr^\bullet_F \PP~.
\]

In paticular, if $\PP$ is trivially filtered, $F_m \PP = \PP$ for any $m\ge 0$ and zero otherwise, then $\gr_F \PP \cong \PP$ and the associated Rees operad is the constant extension
\[
\PP_\hbar = \PP\otimes \ik[\hbar]~.
\]
Thus, taking $\PP_\hbar$ and $\QQ_\hbar$ to be the Rees operads of trivially filtered operads $\PP$ and $\QQ$, respectively, reduces to the results of section \ref{mapdefsec}.

Once again, let $(S,\mf m_S)$ be a graded local Artinian ring with maximal ideal $\mf m_S$ with a $\IG_\hbar$-equivariant, closed embedding $\Spec S \into \IA^1_\hbar$ sending the closed point of $\Spec S$ to the origin.

\begin{defn}\label{Defhbargrdefn} 
Let $\varphi\in \Alg^\gr_{\mc P_0}(\mc Q_0)$ be a graded $\mc P_0$-algebra internal to $\mc Q_0$. The space of deformations of $\varphi$ over $\Spec S$ is defined the fibre product
\[
\Def_\varphi^{\Ph,\Qh}(S) = \MC_\gr(\gPQh\otimes_{\ik[\hbar]} S) \fibre{\MC_\gr(\gf_{\PP_0,\QQ_0})}\{\varphi\}~.
\]
Similarly the stack of such deformations is defined as the fibre product of Maurer--Cartan stacks
\[
\fDef_\varphi^{\Ph,\Qh}(S) = \fMC_\gr(\gPQh\otimes_{\ik[\hbar]} S) \fibre{\MC_\gr(\gf_{\PP_0,\QQ_0})}\{\varphi\}~,
\]
where the Maurer-Cartan stack is defined as the quotient stack
\[ 
\fMC^\gr(\g_{\Ph,\Qh }\otimes_{\K[\hbar]} S)= \left[ \MC_\gr(\g_{\Ph,\Qh }\otimes_{\K[\hbar]} S)\  \big/\  \mf G^{0,(0),\langle 0\rangle}_{\Ph,\Qh,\mf m_S} \right]  
\]
and the group $\mf G^{0,(0),\langle 0 \rangle}_{\Ph,\Qh,\mf m_S}=\exp(\g^{0,(0),\langle 0\rangle}_{\Ph,\Qh}\otimes_{\K[\hbar]} \mf m_S)$, the formal exponential of the nilpotent Lie algebra $\g^{0,(0),\langle 0 \rangle}_{\Ph,\Qh}\otimes_{\K[\hbar]} \mf m_S$---analogous to the definition of the Maurer-Cartan stack in Equation \ref{MCstackeqn}.
\end{defn}

In particular, the variety and stack of deformations over $\K[\hbar]/\hbar^{m+1}$ are given by
\[
	\Def_{\varphi,\gd}^{\Ph,\Qh}(\K[\hbar]/\hbar^{m+1})  =\MC_\gr(\g_{\Ph,\Qh /\hbar^{m+1}})\fibre{\MC_\gr(\g_{\mc P_0,\mc Q_0})} \{ \varphi \}~,
\]
and
\[
	\fDef_{\varphi,\gd}^{\Ph,\Qh}(\K[\hbar]/\hbar^{m+1})  =\fMC_\gr(\g_{\Ph,\Qh /\hbar^{m+1}})\fibre{\fMC_\gr(\g_{\mc P_0,\mc Q_0})} \{ \varphi \}   ~,
\]
respectively.

\subsubsection{Order by order deformation theory}

Once again we return to the setting of Warning \ref{flatwarn} and assume that the underlying $\Ss$-modules $\PP_\hbar^\ash$ and $\QQ_\hbar$ admit trivializations. Moreover, we assume that these trivializations are $\IG_\hbar$-equivariant.

We also have the graded analogue of the preceding Theorem \ref{DefObsThm}.

\begin{theo}\label{DefObsgrThm} Let $\varphi\in \Alg_{\mc P_0}^\gd(\mc Q_0)$ be a graded $\mc P_0$-algebra internal to $\mc Q_0$ and $\varphi_{\hbar/\hbar^{m}}\in \Def^{\Ph,\Qh}_{\varphi,\gd}(\K[\hbar]/\hbar^{m}) $. Then there exists an extension
	\[ \varphi_{ / \hbar^{m+1}} \  \in \Def^{\Ph,\Qh}_{\varphi,\gd}(\K[\hbar]/\hbar^{m+1}) \times_{\Def^{\Ph,\Qh}_{\varphi,\gd}(\K[\hbar]/\hbar^{m}) } \{\varphi_{\hbar/\hbar^{m}} \}\]
	if and only if the cohomology class of the obstruction vanishes, that is,
	\[ [ \Obs^{\Ph,\Qh}_m(\varphi_{\hbar/\hbar^{m}})]= 0 \ \in H^{2,\langle -m \rangle}_{\mc P_0,\mc Q_0}(\varphi) \ .\]
	Further, when the obstruction class vanishes, the space of such extensions up to isomorphism,
	\[  \fDef^{\Ph,\Qh}_{\varphi,\gd}(\K[\hbar]/\hbar^{m+1}) \times_{\fDef^{\Ph,\Qh}_{\varphi,\gd}(\K[\hbar]/\hbar^{m}) } \{\varphi_{\hbar/\hbar^{m}} \} \ , \]
	is a torsor for $H^{1,\langle -m \rangle}_{\mc P_0,\mc Q_0}(\varphi)$, the component of $H^1_{\mc P_0,\mc Q_0}(\varphi)$ that has degree $-m$.
\end{theo}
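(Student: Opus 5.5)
The plan is to reduce Theorem \ref{DefObsgrThm} to Theorem \ref{DefObsThm} by keeping track of the $\IG_\hbar$-weights throughout. Fix the $\IG_\hbar$-equivariant trivializations of the underlying $\Ss$-modules of $\Ph^\ash$ and $\Qh$. This gives a graded identification $\gPQh \cong \g_{\mc P_0,\mc Q_0}\otimes_\K \Kh$ in which $\hbar$ has weight $1$.

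The first step is to show that $d_{\gPQh}$ and $[\cdot,\cdot]_{\gPQh}$ have auxiliary degree $0$, since the operadic structure maps of $\Ph,\Qh$ are $\IG_\hbar$-equivariant. It follows that each component $d_j$ and $[\cdot,\cdot]_j$ in the expansion of Equation \ref{gtriveqn} is homogeneous of auxiliary degree $-j$ on $\g_{\mc P_0,\mc Q_0}$.

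A graded Maurer--Cartan element $\varphi_{\hbar/\hbar^{m}}=\sum_k \hbar^k\varphi_k$ has total degree $0$ by Proposition \ref{prop:graded MC elements}. Hence each coefficient satisfies $\varphi_k\in \g^{1,(1),\langle -k\rangle}_{\mc P_0,\mc Q_0}$. For the same reason, $d_\varphi=d_0+[\varphi,\cdot]_0$ has degree $0$, because $\varphi$ has degree $0$.

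The second step is to inspect each term of Equation \ref{Obshbareqn}. The terms $[\varphi_k,\varphi_{m-k}]$, $d_j\varphi_{m-j}$ and $[\varphi_k,\varphi_{m-j-k}]_j$ all have auxiliary degree $-m$. So $\Obs_m$ lies in $C^{2,\langle -m\rangle}_{\mc P_0,\mc Q_0}(\varphi)$.

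The third step runs the computation in the proof of Theorem \ref{DefObsThm} verbatim. This shows that $\Obs_m$ is $d_\varphi$-closed, and that a lift $\varphi_{\hbar/\hbar^m}+\hbar^m\varphi_m$ is Maurer--Cartan exactly when $d_\varphi\varphi_m=\Obs_m$. The lift is graded exactly when $\varphi_m$ has degree $-m$. Since $d_\varphi$ preserves the auxiliary grading and the complex splits as a direct sum over degrees, $\Obs_m$ is exact in the full complex if and only if it is the $d_\varphi$-image of a cochain of degree $-m$. Indeed, one can project any primitive onto its degree $-m$ component. So the existence criterion is exactly the vanishing of the obstruction class in $H^{2,\langle -m\rangle}$.

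For the torsor statement, two graded lifts differ by a cocycle $\delta\in Z^{1,\langle -m\rangle}$. Gauge equivalence at order $\hbar^m$ is implemented by $e^{\hbar^m a}$ with $\hbar^m a\in \g^{0,(0),\langle 0\rangle}_{\Ph,\Qh}\otimes\mf m_S$, which forces $a$ to have degree $-m$. This identifies the classes of lifts with a torsor for $H^{1,\langle -m\rangle}$.

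The main obstacle is the gauge step: checking that the action of $\mf G^{0,(0),\langle 0\rangle}$ at order $\hbar^m$ shifts $\varphi_m$ by exactly $d_\varphi a$. This requires verifying that the higher corrections $d_j$ and $[\cdot,\cdot]_j$ contribute only at order $\ge \hbar^{m+1}$ when acting on $\hbar^m a$. I would first verify this directly from the expansion, as in the final paragraph of the proof of Theorem \ref{DefObsThm}. Any term involving $\hbar^j$ with $j\ge1$ multiplies $\hbar^m a$ and hence vanishes modulo $\hbar^{m+1}$.
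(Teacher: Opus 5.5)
Your proposal is correct and follows the paper's proof essentially verbatim. The paper likewise expands $d_\hbar=\sum_j\hbar^j d_j$ and $[\cdot,\cdot]_\hbar=\sum_j\hbar^j[\cdot,\cdot]_j$ with $j$-th components of degree $-j$ under the $\IG_\hbar$-equivariant trivializations, notes that $\varphi_m$ must have degree $-m$ so that $d_\varphi\varphi_m=\Obs_m$ holds in the degree $-m$ part of $C^2_{\mc P_0,\mc Q_0}(\varphi)$, and handles the torsor statement by the same subtraction and order-$\hbar^m$ gauge argument; your added checks (projecting a primitive onto its degree $-m$ component, and noting that the $j\geq 1$ corrections vanish modulo $\hbar^{m+1}$ in the gauge step) only make explicit what the paper leaves implicit.
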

\begin{proof}
	This is the graded analogue of Theorem \ref{DefObsThm}. We again work under fixed trivializations of the underlying $\Ss$-modules, in the sense of Definition \ref{Ssmodtrivdefn}, so that we may expand the differential and bracket on $\g_{\Ph,\Qh}$ as $d_\hbar=\sum_{j\ge 0}\hbar^j d_j$ and $[\cdot,\cdot]_\hbar = \sum_{j\ge 0}\hbar^j[\cdot,\cdot]_j$, where the components $d_j$ and $[\cdot,\cdot]_j$ have degree $-j$.
	
	Let $m\ge 1$ and let $\varphi_{\hbar/\hbar^m}\in \Def^{\Ph,\Qh}_{\varphi,\gd}(\Kh/(\hbar^m))$ be a graded $m$th order deformation in the sense of Definition \ref{Defhbargrdefn}. By construction,
	$\varphi_{\hbar/\hbar^m}$ lies in the degree $0$ subspace $(\g_{\Ph,\Qh}\otimes_{\Kh}\Kh/(\hbar^m))^{1,(1),\langle 0\rangle}$,
	and any prospective lift
	\[
	\varphi_{\hbar/\hbar^{m+1}}=\varphi_{\hbar/\hbar^m}+\hbar^m\varphi_m \ \in (\g_{\Ph,\Qh}\otimes_{\Kh}\Kh/(\hbar^{m+1}))^{1,(1),\langle 0\rangle}
	\]
 must again lie in degree $0$. Equivalently, $\varphi_m$ must have degree $-m$, since the factor $\hbar^m$ contributes degree $m$ in our Rees conventions.
	
	Expanding the Maurer-Cartan equation exactly as in the proof of Theorem \ref{DefObsThm} shows that
	$\varphi_{\hbar/\hbar^{m+1}}$ is Maurer-Cartan modulo $\hbar^{m+1}$ if and only if the coefficient
	of $\hbar^m$ vanishes, or equivalently
	\[
	d_\varphi(\varphi_m)=\Obs^{\Ph,\Qh}_m(\varphi_{\hbar/\hbar^m}),
	\]
	with both sides lying in the homogeneous degree $-m$ subspace of $C^2_{\mc P_0,\mc Q_0}(\varphi)$. Thus, an extension exists if and only if
	\[
	[\Obs^{\Ph,\Qh}_m(\varphi_{\hbar/\hbar^m})]=0\in H^{2,\langle -m \rangle}_{\mc P_0,\mc Q_0}(\varphi).
	\]
	
	When the obstruction class vanishes, the same subtraction argument as in Theorem \ref{DefObsThm} shows that two choices of $\varphi_m$ differ by a $1$-cocycle of degree $-m$, and that changing $\varphi_m$ by a degree $-m$ coboundary corresponds precisely to the adjoint action of $\mf G^{0,(0),\langle 0\rangle}_{\Ph,\Qh,(\hbar)}$, as in Definition \ref{Defhbargrdefn}. Hence, the isomorphism classes of graded extensions form a torsor for
	$H^{1,\langle -m\rangle}_{\mc P_0,\mc Q_0}(\varphi)$.
\end{proof}

\subsection{Formal deformations of algebras}\label{ssec:formal def}

The deformation theory of the preceeding sections also makes sense for the complete local ring $\ik\fph$. Write $\PP_{\hat \hbar} = \PP_{\hbar}\otimes_{\ik[\hbar]}\ik\fph$ for the restriction of the family $\PP_\hbar$ to the formal neighborhood of the origin of $\IA^1_\hbar$, and analogously for $\QQ_{\hat \hbar}$.

\begin{prop}
	We have an isomorphism of DG Lie algebras over $\ik\fph$
	\[
	\gf_{\PP_{\hat \hbar} , \QQ_{\hat \hbar}} \cong \gf_{\PP_\hbar,\QQ_\hbar} \widehat{\otimes}_{\ik[\hbar]} \ik\fph~.
	\]
\end{prop}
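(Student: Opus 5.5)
The plan is to prove the isomorphism arity by arity and then check that the DG Lie structure maps match. By definition,
\[
\gf_{\PP_{\hat\hbar},\QQ_{\hat\hbar}} = \bigoplus_{n} \Hom_{\ik\fph[\Ss_n]}\big(\PP^\ash_{\hat\hbar}(n),\QQ_{\hat\hbar}(n)\big)~.
\]
So the first step is to identify the Koszul dual cooperad of the base change with the base change of the Koszul dual, $\PP_{\hat\hbar}^\ash \cong \PP^\ash_\hbar\otimes_{\ik[\hbar]}\ik\fph$. This should follow because $\ik\fph$ is flat over $\ik[\hbar]$. The free (co)operad construction commutes with base change, being built from tensor products, sums and coinvariants; we are in characteristic zero, so coinvariants agree with invariants. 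The quadratic corelations $R_{\hat\hbar} = R_\hbar\otimes\ik\fph$ then cut out the corresponding sub-cooperad, because flat base change preserves the kernels and intersections defining the cogenerated sub-cooperad.

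The second step is to compute each arity component using the trivializations of Warning \ref{flatwarn}. There $\PP^\ash_\hbar(n)\cong \PP^\ash_0(n)\otimes\ik[\hbar]$ and $\QQ_\hbar(n)\cong\QQ_0(n)\otimes\ik[\hbar]$, so
\[
\Hom_{\ik\fph}\big(\PP_0^\ash(n)\fph,\ \QQ_0(n)\fph\big) \cong \Hom_\ik\big(\PP_0^\ash(n),\QQ_0(n)\big)\fph~.
\]
This is exactly the $\hbar$-adic completion of $\Hom_{\ik[\hbar]}(\PP^\ash_\hbar(n),\QQ_\hbar(n))\otimes_{\ik[\hbar]}\ik\fph$. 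Taking $\Ss_n$-invariants commutes with completion, since invariants form a limit and, in characteristic zero, a direct summand via the symmetrization idempotent. Summing over $n$, with the completion understood arity-wise and degree-wise, gives the stated completed tensor product as graded $\ik\fph$-modules.

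The third step is to check the convolution bracket and the differential. Both are defined by universal formulas in terms of the cooperad decomposition $\Delta$ and the operad composition maps. These are base-changed maps, hence $\ik[\hbar]$-linear and continuous for the $\hbar$-adic topology. Writing them as $\sum_j\hbar^j[\cdot,\cdot]_j$ and $\sum_j\hbar^j d_j$ as in Equation \ref{gtriveqn}, they extend uniquely to the completion, and that extension agrees with the structure on $\gf_{\PP_{\hat\hbar},\QQ_{\hat\hbar}}$. The weight grading is preserved on both sides.

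I expect the main obstacle to be the first step together with the meaning of the completed tensor product. The free cooperad and the Hom into an infinite direct sum do not commute with completion in general. I would therefore either work arity by arity, where everything is finite-rank over the base when $\PP_0^\ash(n)$ is finite-dimensional, or simply define $\widehat\otimes$ arity-wise, making the statement essentially tautological once flat base change of $\PP^\ash$ is established.
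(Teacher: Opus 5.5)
Your argument is correct. It is essentially an unpacking of the paper's own proof, which just says the isomorphism holds ``by construction''.

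The caveats you isolate are exactly what make that one-line claim literally true. These are: $\PP_0^\ash(n)$ must have finite rank, which is automatic for $\Lie$, the case used later. And $\widehat{\otimes}$ must be read arity-wise (equivalently degree-wise), with $\QQ_{\hat\hbar}$ taken as the completed base change, consistent with the paper's $\widehat{\Rees}$ and $A\widehat{\otimes}\ik\fph$ constructions.
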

\begin{proof}
	By construction.
\end{proof}

\begin{defn}\label{Defformaldef}
	The schemes $\Def_{\varphi}(\K\fph)$ and $\Def_{\varphi,\gr}(\ik\fph)$ of deformations and graded deformations, respectively, over $\K\fph$ are defined as
	\[ 
	\Def_{\varphi}(\K\fph) = \MC(\g_{\mc P_{\hat{\hbar}},\mc Q_{\hat{\hbar}}}) \fibre{\MC(\g_{\mc P_0, \mc Q_0})} \{\varphi\} ~, 
	\quad \text{ and } \quad
	\Def_{\varphi,\gr}(\ik\fph) = \MC_\gr(\gf_{\PP_{\hat \hbar},\QQ_{\hat \hbar}})\fibre{\MC_\gr(\gf_{\PP_0,\QQ_0})}\{\varphi\}~,
	\]
	generalizing Definition \ref{Defhbardefn}. Similarly the stacks of deformations, $\fDef_{\varphi}(\K\fph)$, and graded deformations, $\fDef_{\varphi,\gr}(\ik\fph)$, over $\K\fph$ are defined as
	\[ 
	\fDef_{\varphi}(\K\fph) = \fMC(\g_{\mc P_{\hat{\hbar}},\mc Q_{\hat{\hbar}}}) \fibre{\MC(\g_{\mc P_0, \mc Q_0})} \{\varphi\}   \ , 
	\quad \text{ and } \quad 
	\fDef_{\varphi,\gr}(\ik\fph) = \fMC_\gr(\gf_{\PP_{\hat \hbar},\QQ_{\hat \hbar}})\fibre{\fMC_\gr(\gf_{\PP_0,\QQ_0})}\{\varphi\}~.
	\]
	The Maurer--Cartan stacks are defined as the quotients
	\[
	\fMC(\g_{\mc P_{\hat{\hbar}},\mc Q_{\hat{\hbar}}})= \left[\MC(\g_{\mc P_{\hat{\hbar}},\mc Q_{\hat{\hbar}}}) /\mf G_{\mc P,\mc Q,\mf m_{\hat{\hbar}}}^{0,(0)}\right] 
	\]
	and
	\[ 
	\fMC_\gr(\g_{\PP_{\hat \hbar},\QQ_{\hat \hbar} })= \left[ \MC_\gr(\g_{\PP_{\hat \hbar},\QQ_{\hat \hbar} })\  \big/\  \mf G^{0,(0),\langle 0\rangle}_{\Ph,\Qh,\mf m_{\hat \hbar}} \right]  ~,
	\]
	for $\mf m_{\hat{\hbar}}=(\hbar) \subset \K\fph$ and the groups $\mf G^{0,(0)}_{\PP_{\hat \hbar},\QQ_{\hat \hbar},\mf m_{\hat \hbar}}$ and $\mf G^{0,(0),\langle 0 \rangle}_{\PP_{\hat \hbar},\QQ_{\hat \hbar},\mf m_{\hat \hbar}}$ from Definitions \ref{Defhbardefn} and \ref{Defhbargrdefn}.
\end{defn}

The results of Theorems \ref{DefObsThm} and \ref{DefObsgrThm} most naturally provide information about spaces of deformations over $\K\fph$, in the following sense:
\begin{prop}\label{Deflimgrprop} 
There are natural isomorphisms of varieties and stacks
	\[  \Def_{\varphi}^{\Ph,\Qh}(\K\fph)\xrightarrow{\cong }  \lim_{m} \Def_\varphi^{\Ph,\Qh}(\K[\hbar]/\hbar^{m}) \qquad \text{and}\qquad \fDef_{\varphi}^{\Ph,\Qh}(\K\fph) \xrightarrow{\cong } \lim_{m} \fDef_\varphi^{\Ph,\Qh}(\K[\hbar]/\hbar^{m})  \ ,  \]
	as well as their graded variants
	\[  \Def_{\varphi,\gd}^{\Ph,\Qh}(\K\fph) \xrightarrow{\cong } \lim_{m} \Def_{\varphi,\gd}^{\Ph,\Qh}(\K[\hbar]/\hbar^{m}) \qquad \text{and}\qquad \fDef_{\varphi,\gd}^{\Ph,\Qh}(\K\fph) \xrightarrow{\cong } \lim_{m} \fDef_{\varphi,,\gd}^{\Ph,\Qh}(\K[\hbar]/\hbar^{m})  \ .  \]
\end{prop}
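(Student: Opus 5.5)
The proof reduces both isomorphisms to the $\hbar$-adic completeness of $\g_{\mc P_{\hat\hbar},\mc Q_{\hat\hbar}}$ and of the gauge group. Throughout I use the trivializations of Definition \ref{Ssmodtrivdefn} assumed in Warning \ref{flatwarn}. They give
\[ \g_{\mc P_{\hat\hbar},\mc Q_{\hat\hbar}} \cong \g_{\mc P_0,\mc Q_0}\widehat{\otimes}_\K \K\fph \cong \lim_m \g_{\mc P_0,\mc Q_0}\otimes_\K \K[\hbar]/\hbar^m , \]
and this is compatible with the expansions $d=\sum_j\hbar^j d_j$ and $[\cdot,\cdot]=\sum_j \hbar^j[\cdot,\cdot]_j$. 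The truncation maps $\g_{\mc P_{\hat\hbar},\mc Q_{\hat\hbar}}\to \g_{\Ph,\Qh/\hbar^m}$ are maps of DG Lie algebras, so they induce compatible maps on Maurer--Cartan sets and on fibres over $\{\varphi\}$, hence a natural map $\Def_\varphi(\K\fph)\to \lim_m\Def_\varphi(\K[\hbar]/\hbar^m)$.

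\emph{Varieties.} An element of the limit is a compatible system $\varphi_{\hbar/\hbar^m}$, i.e. a formal series $\varphi+\sum_{k\ge1}\hbar^k\varphi_k$ in the weight-one, degree-one part. Each coefficient of $\hbar^N$ in $d\Phi+\frac12[\Phi,\Phi]$ involves only finitely many $\varphi_k$, $d_j$ and $[\cdot,\cdot]_j$ (those with $k,j\le N$), so it is determined by the truncation mod $\hbar^{N+1}$. Hence $\Phi$ is Maurer--Cartan over $\K\fph$ if and only if every truncation is. This gives a bijection, and because it is functorial in the coefficients it is an isomorphism of (pro-)varieties. In the graded case the same argument applies after restricting to the degree-$0$ part.

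\emph{Stacks.} This step is the main obstacle, since limits of groupoids require controlling automorphisms as well as objects.

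For essential surjectivity, I take a compatible system of objects together with isomorphisms $g_m$ identifying the restriction of the $(m+1)$-st object with the $m$-th. I plan to inductively modify representatives, conjugating the $(m+1)$-st by a lift of $g_m^{-1}$ so that the representatives agree strictly. The resulting corrections lie in $\exp(\hbar^{m}\g^{0,(0)})$, so the infinite product of corrections converges $\hbar$-adically in $\mf G^{0,(0)}_{\mf m_{\hat\hbar}}=\exp(\g^{0,(0)}\widehat\otimes\hbar\K\fph)$. Lifting group elements along $\mf G_{\hbar^{m+1}}\to\mf G_{\hbar^m}$ is always possible, because the map of nilpotent Lie algebras $\g^{0,(0)}\otimes\hbar\K[\hbar]/\hbar^{m+1}\to \g^{0,(0)}\otimes\hbar\K[\hbar]/\hbar^m$ is surjective and $\exp$ is bijective.

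Full faithfulness amounts to showing that $\mf G_{\mf m_{\hat\hbar}}\cong\lim_m \mf G_{\hbar/\hbar^m}$ and that stabilizers (solutions of $g\cdot\Phi=\Psi$) form the limit of the truncated solution sets. Both follow from $\exp$ being a bijection compatible with truncation, together with the coefficient-wise argument above. Since every transition map of groups is surjective, there is no $\lim^1$ obstruction. For the graded variant I apply the same argument to the degree-$0$ subgroup $\mf G^{0,(0),\langle0\rangle}$, which is still pro-nilpotent with surjective transition maps.
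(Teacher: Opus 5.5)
Your treatment of the varieties is the paper's argument: a compatible system is a single series $\varphi+\sum_{k\ge1}\hbar^k\varphi_k$, each $\hbar$-coefficient of the Maurer--Cartan expression depends only on a finite truncation, and injectivity is immediate. For the stacks the paper only notes that the gauge group at $\K\fph$ is the limit of the gauge groups at $\K[\hbar]/\hbar^m$, and asserts that the bijection on Maurer--Cartan sets therefore descends. That gives full faithfulness, but it does not address objects of the limit that are compatible only up to isomorphism. Your rectification step, which lifts along the surjections of gauge groups to make such a system strictly compatible, handles exactly that, so your version is the more complete one.

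One auxiliary claim in that step is false, though nothing depends on it: the corrections do not lie in $\exp(\hbar^m\g^{0,(0)})$. The comparison isomorphisms $g_m$ are arbitrary elements of the level-$m$ gauge group, congruent to the identity only modulo $\hbar$. Hence the accumulated corrections $k_m$, defined by $k_1=1$ and $k_{m+1}$ any lift of $k_m g_m$, need not converge $\hbar$-adically. No convergence is required. The system $y_m:=k_m\cdot x_m$ is strictly compatible, so your varieties argument makes it a single Maurer--Cartan element $y$ over $\K\fph$. The level-wise elements $k_m^{-1}$ then already form a morphism from $(y|_m,1)$ to $(x_m,g_m)$ in the $2$-limit, since $g_m\,(k_{m+1}|_m)^{-1}=g_m g_m^{-1}k_m^{-1}=k_m^{-1}$. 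With the convergence sentence deleted, the proof is complete.
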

\begin{proof}
Given a formal deformation $\varphi_{\hat{\hbar}} \in \Def^{\Ph,\Qh}_\varphi(\K\fph)$, for each $m\ge 1$ we can consider its truncation $\varphi_{\hbar/\hbar^m}:=\varphi_{\hat{\hbar}} \otimes_{\K\fph} \K[\hbar]/\hbar^m$. Each $\varphi_{\hbar/\hbar^m}$ lies in $\Def^{\Ph,\Qh}_\varphi(\K[\hbar]/\hbar^m)$, and these truncations are clearly compatible as $m$ varies. This defines a natural map 
\[ \Def^{\Ph,\Qh}_\varphi(\K\fph)\to \lim_{m}\Def^{\Ph,\Qh}_\varphi(\K[\hbar]/\hbar^m) ,\] 
sending $\varphi_{\hat{\hbar}}$ to the projective system $\{\varphi_{\hbar/\hbar^m}\}_{m\ge1}$. This map is injective (if two formal deformations agree to all finite orders, they are equal as formal power series) and it remains to show it is surjective.

For surjectivity, let $\{\varphi_{\hbar/\hbar^m}\}_{m\ge1}$ be a compatible family with $\varphi_{\hbar/\hbar^{m+1}}\in \Def^{\Ph,\Qh}_\varphi(\K[\hbar]/\hbar^m)$ and $\varphi_{\hbar/\hbar^{m+1}}\equiv \varphi_{\hbar/\hbar^{m}} \pmod{\hbar^{m}}$ for each $m$. By construction, in terms of the fixed trivializations of the underlying $\Ss$-modules of $\Ph$ and $\Qh$, we can write 
\[ \varphi_{\hbar/\hbar^m} \;=\; \varphi + \hbar\,\varphi_1 + \hbar^2\,\varphi_2 + \cdots + \hbar^{m-1}\,\varphi_{m-1} \ ,\] 
for some elements $\varphi_i\in \g_{\mc P_0,\mc Q_0}$ (in particular $\varphi_0=\varphi$ is the original $\mc P_0$-algebra structure) and similarly
\[ \varphi_{\hbar/\hbar^{m+1}} \;=\; \varphi + \hbar\,\varphi_1 + \hbar^2\,\varphi_2 + \cdots + \hbar^{m-1}\,\varphi_{m-1}+\hbar^m\varphi_m \ ,\] 
for some $\varphi_m\in \g_{\mc P_0,\mc Q_0}$ satisfying
\[ d_\varphi \varphi_m \;=\; \Obs_m^{\Ph,\Qh}\!\big(\varphi_{\hbar/\hbar^m}\big) \ . \] 
In particular, we obtain an infinite sequence $\{\varphi_i\}_{i\geq 0}$ with $\varphi_0=\varphi$ and $d_\varphi \varphi_m = \Obs_m(\varphi_{\hbar/\hbar^m})$ for each $m\geq 1$. The resulting formal power series 
\[ \varphi_{\hat{\hbar}} \;:=\; \varphi + \hbar\,\varphi_1 + \hbar^2\,\varphi_2 + \hbar^3\,\varphi_3 + \cdots  \] 
defines an element of $\g_{\Ph,\Qh}\otimes_{\K[\hbar]}\K\fph$. By construction, this $\varphi_{\hat{\hbar}}$ is a solution to the Maurer-Cartan equation \emph{to arbitrary order} in $\hbar$, hence in fact 
\[ d_{\g_{\Ph,\Qh}}(\varphi_{\hat{\hbar}}) + \frac{1}{2}\big[\varphi_{\hat{\hbar}},\varphi_{\hat{\hbar}}\big]_{\g_{\Ph,\Qh}} = 0 \] 
as a formal power series. In other words, $\varphi_{\hat{\hbar}}\in \Def^{\Ph,\Qh}_\varphi(\K\fph)$ is a formal deformation whose truncation at each order $m$ is precisely $\varphi_{\hbar/\hbar^m}$. Thus, the map $\lim_m \Def^{\Ph,\Qh}_\varphi(\K\fph) \to \Def^{\Ph,\Qh}_\varphi(\K[\hbar]/\hbar^m)$ is also surjective, and thus an isomorphism.

Finally, the isomorphism of deformation stacks follows by passing to the quotient by gauge equivalences. Indeed, the pro-nilpotence of the automorphism DG Lie algebra $\g_{\mc P_0,\mc Q_0}^{0,(0)}$ ensures that the construction of the gauge group $\mf G^{0,(0)}_{\Ph,\Qh,\mf m_R}=\exp(\g^{0,(0)}_{\Ph,\Qh}\otimes \mf m_R)$ commutes with filtered inverse limits, so that we have 
\[ 
\lim_{m}\mf G^{0,(0)}_{\Ph,\Qh,\mf m_{(m)}} \;\cong\; \mf G^{0,(0)}_{\Ph,\Qh,\mf m_{\hat \hbar}} 
\] 
for the maximal ideals $\mf m_{(m)}$ of $\Kfph/\hbar^m$ and $\mf m_{\hat \hbar}=(\hbar)$ of $\Kfph$. Thus, the above bijection on Maurer--Cartan solutions descends to an isomorphism of the relevant quotient stacks. This yields the stated isomorphism $\lim_{m}\fDef_\varphi^{\Ph,\Qh}(\K[\hbar]/\hbar^m) \cong \fDef_\varphi^{\Ph,\Qh}(\K\fph)$.

The graded case follows from the same argument, \emph{mutatis mutandis}.
\end{proof}

In particular, we obtain the following key corollaries about spaces of formal (graded) deformations:

\begin{coro}\label{forexcoro} Let $\varphi\in \Alg_{\mc P_0}(\mc Q_0)$ a $\mc P_0$-algebra internal to $\mc Q_0$ such that $H^{2}_{\mc P_0,\mc Q_0}(\varphi)=\{0\}$. Then there exists a formal deformation $\varphi_{\hat{\hbar}}\in \Def_{\varphi}^{\Ph,\Qh}(\K\fph)$.
	
\end{coro}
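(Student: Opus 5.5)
The plan is to build the formal deformation inductively, order by order, using Theorem \ref{DefObsThm}, and then assemble the compatible system of truncations via Proposition \ref{Deflimgrprop}. We work throughout under the standing assumption of Warning \ref{flatwarn}: the underlying $\Ss$-modules of $\Ph^\ash$ and $\Qh$ are trivialized. This gives the identification $\gPQh\cong \g_{\mc P_0,\mc Q_0}\otimes_\K\Kh$ of Equation \ref{gtriveqn}, so truncations and lifts make sense.

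The base case is $m=1$. Here $\Def^{\Ph,\Qh}_\varphi(\K[\hbar]/\hbar)$ consists of Maurer--Cartan elements of $\g_{\mc P_0,\mc Q_0}$ restricting to $\varphi$, so $\varphi_{\hbar/\hbar}:=\varphi$ itself is a point of it.

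For the inductive step, suppose we are given $\varphi_{\hbar/\hbar^m}\in\Def^{\Ph,\Qh}_\varphi(\K[\hbar]/\hbar^m)$. Theorem \ref{DefObsThm} says that $\Obs^{\Ph,\Qh}_m(\varphi_{\hbar/\hbar^m})$ is a $d_\varphi$-cocycle, and that its class lies in $H^2_{\mc P_0,\mc Q_0}(\varphi)$. By hypothesis this group is $\{0\}$, so the class vanishes. Hence there is some $\varphi_m\in\g^{1,(1)}_{\mc P_0,\mc Q_0}$ with $d_\varphi\varphi_m=\Obs_m^{\Ph,\Qh}(\varphi_{\hbar/\hbar^m})$. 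Then $\varphi_{\hbar/\hbar^{m+1}}:=\varphi_{\hbar/\hbar^m}+\hbar^m\varphi_m$ lies in the fibre of $\Def^{\Ph,\Qh}_\varphi(\K[\hbar]/\hbar^{m+1})\to\Def^{\Ph,\Qh}_\varphi(\K[\hbar]/\hbar^{m})$ over $\varphi_{\hbar/\hbar^m}$.

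Making one such choice at every stage (countable dependent choice suffices) produces a compatible system $\{\varphi_{\hbar/\hbar^m}\}_{m\geq1}$, that is, an element of $\lim_m\Def^{\Ph,\Qh}_\varphi(\K[\hbar]/\hbar^m)$. Proposition \ref{Deflimgrprop} then provides a formal deformation $\varphi_{\hat\hbar}\in\Def^{\Ph,\Qh}_\varphi(\K\fph)$ whose truncations recover this system. Concretely, $\varphi_{\hat\hbar}=\sum_{k\ge0}\hbar^k\varphi_k$ satisfies the Maurer--Cartan equation coefficient by coefficient.

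The only point needing care is that the obstruction genuinely lands in $H^2_{\mc P_0,\mc Q_0}(\varphi)$ at every order, including $m=1$, where the families $\Ph,\Qh$ are non-constant and the obstruction $-d_1\varphi-\tfrac12[\varphi,\varphi]_1$ need not vanish. This is exactly the content of Theorem \ref{DefObsThm}, so no obstacle remains beyond invoking it uniformly in $m$.
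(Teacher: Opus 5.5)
Your proposal is correct and follows the paper's own argument. You start from $\varphi_{\hbar/\hbar}=\varphi$, use Theorem~\ref{DefObsThm} together with $H^2_{\mc P_0,\mc Q_0}(\varphi)=\{0\}$ to extend order by order, and then assemble the compatible system into a formal deformation via Proposition~\ref{Deflimgrprop}. Your extra remarks, on the standing trivialization assumption and on the first-order obstruction $-d_1\varphi-\tfrac12[\varphi,\varphi]_1$ that need not vanish, are accurate.
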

\begin{proof}
For each $m\ge 1$, consider the inductive extension problem
\[
\Def^{\Ph,\Qh}_\varphi(\Kh/(\hbar^{m+1}))\times_{\Def^{\Ph,\Qh}_\varphi(\Kh/(\hbar^{m}))}\{\varphi_{\hbar/\hbar^m}\} \ .
\]
By Theorem \ref{DefObsThm}, the obstruction to extending from order $m-1$ to order $m$ is the class of $\Obs_m^{\Ph,\Qh}(\varphi_{\hbar/\hbar^m})$ in $H^2_{\mc P_0,\mc Q_0}(\varphi)$.
Since $H^2_{\mc P_0,\mc Q_0}(\varphi)=0$, every such obstruction class vanishes, so extensions exist at every order. Starting from $\varphi_{\hbar/\hbar}=\varphi$, we obtain a compatible system of deformations $\{\varphi_{\hbar/\hbar^m}\}_{m\ge 1}$. Passing to the limit and using Proposition \ref{Deflimgrprop}, we obtain an element
\[
\varphi_{\hat{\hbar}}\in \Def_{\varphi}^{\Ph,\Qh}(\K\fph) \ ,
\]
as claimed.
\end{proof}

\begin{coro}\label{foruncoro} 
Let $\varphi\in \Alg_{\mc P_0}(\mc Q_0)$ be a $\mc P_0$-algebra internal to $\mc Q_0$ such that $H^{1}_{\mc P_0,\mc Q_0}(\varphi)=\{0\}$. Then if there exists a formal deformation $\varphi_{\hat{\hbar}}\in \Def_{\varphi}^{\Ph,\Qh}(\K\fph)$, it is unique up to isomorphism.
\end{coro}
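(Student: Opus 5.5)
We take two formal deformations $\varphi_{\hat\hbar},\tilde\varphi_{\hat\hbar}\in\Def_{\varphi}^{\Ph,\Qh}(\K\fph)$ and build, order by order, a gauge transformation in $\mf G^{0,(0)}_{\Ph,\Qh,\mf m_{\hat\hbar}}$ carrying one to the other. The tools are the torsor statement of Theorem \ref{DefObsThm} and the limit description of Proposition \ref{Deflimgrprop}, which identifies $\fDef_{\varphi}^{\Ph,\Qh}(\K\fph)$ with $\lim_m\fDef_\varphi^{\Ph,\Qh}(\K[\hbar]/\hbar^{m})$.

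The induction runs as follows. Write $\varphi_{\hbar/\hbar^m}$ and $\tilde\varphi_{\hbar/\hbar^m}$ for the truncations. At $m=1$ both truncations equal $\varphi$, so the identity gauge $g_1$ relates them. Suppose we have $g_m\in\mf G^{0,(0)}_{\Ph,\Qh,\mf m_{\hat\hbar}}$ with
\[ g_m\cdot\varphi_{\hat\hbar}\equiv\tilde\varphi_{\hat\hbar}\pmod{\hbar^m}. \]
Then $g_m\cdot\varphi_{\hat\hbar}$ is again a formal deformation, because the gauge action preserves Maurer--Cartan elements and is the identity modulo $\hbar$. Its truncation modulo $\hbar^{m+1}$ and that of $\tilde\varphi_{\hat\hbar}$ are therefore both extensions of the same element $\tilde\varphi_{\hbar/\hbar^m}$. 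By Theorem \ref{DefObsThm}, isomorphism classes of such extensions form a torsor for $H^1_{\mc P_0,\mc Q_0}(\varphi)=0$, so the two extensions are isomorphic.

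We need more than an abstract isomorphism: the gauge element must be the identity to order $m-1$. This comes from the proof of Theorem \ref{DefObsThm}. There the difference of the $\hbar^m$ coefficients, $\delta=\tilde\varphi_m-(g_m\cdot\varphi_{\hat\hbar})_m$, is shown to be a $1$-cocycle. By vanishing of $H^1$ we can write $\delta=d_\varphi\eta$ with $\eta\in(\g^{\varphi}_{\mc P_0,\mc Q_0})^{0,(0)}$, and then $e^{\hbar^m\eta}$ realises the required correction at order $\hbar^m$ while changing nothing at lower orders. We set
\[ g_{m+1}=e^{\hbar^m\eta}g_m, \]
taking care with the sign convention so that $g_{m+1}\cdot\varphi_{\hat\hbar}\equiv\tilde\varphi_{\hat\hbar}\pmod{\hbar^{m+1}}$.

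The main obstacle is the step above in the non-constant family setting. Here the gauge action is computed with the $\hbar$-dependent bracket $[\cdot,\cdot]_{\g_{\Ph,\Qh}}=\sum_j\hbar^j[\cdot,\cdot]_j$. We must check that the leading effect of $e^{\hbar^m\eta}$ on the $\hbar^m$ coefficient is still $-d_\varphi\eta=-(d_0\eta+[\varphi,\eta]_0)$, where $d_0$ is the zeroth-order piece of $d_{\g_{\Ph,\Qh}}=\sum_j\hbar^jd_j$. This holds because every correction $d_j$ or $[\cdot,\cdot]_j$ with $j\geq1$ carries an extra factor of $\hbar$ and so only affects order $\hbar^{m+1}$ and higher.

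Finally, the factors $e^{\hbar^m\eta_m}$ converge $\hbar$-adically, since $\mf G^{0,(0)}_{\Ph,\Qh,\mf m_{\hat\hbar}}$ is the inverse limit of the truncated groups as in the proof of Proposition \ref{Deflimgrprop}. The limit $g=\lim_m g_m$ therefore exists and satisfies $g\cdot\varphi_{\hat\hbar}=\tilde\varphi_{\hat\hbar}$. Equivalently, the two points coincide in $\lim_m\fDef_\varphi^{\Ph,\Qh}(\K[\hbar]/\hbar^{m})\cong\fDef_\varphi^{\Ph,\Qh}(\K\fph)$. This proves uniqueness up to isomorphism.
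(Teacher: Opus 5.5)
Your proposal is correct and follows the same route as the paper's proof: the torsor statement of Theorem \ref{DefObsThm} with $H^1_{\mc P_0,\mc Q_0}(\varphi)=0$ shows extensions are unique up to isomorphism at each order, and Proposition \ref{Deflimgrprop} passes this to the limit. Your version is more careful at one point. The paper only says the truncations are ``isomorphic at every finite stage, hence define the same point of the limit stack,'' whereas you first transport $\varphi_{\hat\hbar}$ by $g_m$ so the truncations agree exactly, then build compatible gauge elements $g_{m+1}=e^{\hbar^m\eta_m}g_m$ and check that their product converges $\hbar$-adically, which is exactly what that step needs.
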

\begin{proof}
Fix $m\ge 1$ and an $m^{th}$-order deformation $\varphi_{\hbar/\hbar^m}$. Given $\varphi_{\hbar/\hbar^{m+1}}=\varphi_{\hbar/\hbar^m}+\hbar^m\varphi_m$ and $\varphi'_{\hbar/\hbar^{m+1}}=\varphi_{\hbar/\hbar^m}+\hbar^m\varphi'_m$ are two $(m+1)^{th}$-order extensions, by the torsor statement in Theorem \ref{DefObsThm} their isomorphism classes differ by an element of $H^1_{\mc P_0,\mc Q_0}(\varphi)$. Under the assumption $H^1_{\mc P_0,\mc Q_0}(\varphi)=0$, this torsor is trivial, so there is at most one isomorphism class of extension at each order. By induction on $m$, any two compatible inverse systems of truncations $\{\varphi_{\hbar/\hbar^m}\}$ are isomorphic at every finite stage, hence define the same point of the limit stack. By Proposition \ref{Deflimgrprop}, the resulting formal deformation over $\K\fph$ is therefore unique up to isomorphism.
\end{proof}

\begin{coro}\label{forexgrcoro} 
Let $\varphi\in \Alg_{\mc P_0}^\gd(\mc Q_0)$ a graded $\mc P_0$-algebra internal to $\mc Q_0$ such that $H^{2,\langle -m \rangle}_{\mc P_0,\mc Q_0}(\varphi)=\{0\}$ for all $m$. 

Then, there exists a formal, graded deformation $\varphi_{\hat{\hbar}}\in \Def_{\varphi,\gd}^{\Ph,\Qh}(\K\fph)$.
\end{coro}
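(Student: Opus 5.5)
We follow the proof of Corollary \ref{forexcoro}, replacing Theorem \ref{DefObsThm} by its graded analogue, Theorem \ref{DefObsgrThm}, and Proposition \ref{Deflimgrprop} by its graded half. Throughout we work under the standing hypotheses of Section \ref{ssec:graded def}: the underlying $\Ss$-modules of $\Ph^\ash$ and $\Qh$ admit $\IG_\hbar$-equivariant trivializations, and we fix them. The differential and bracket on $\gPQh$ then expand as $d_\hbar=\sum_j \hbar^j d_j$ and $[\cdot,\cdot]_\hbar=\sum_j\hbar^j[\cdot,\cdot]_j$, with $d_j$ and $[\cdot,\cdot]_j$ of degree $-j$.

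We construct a compatible system by induction on $m\geq 1$. Set $\varphi_{\hbar/\hbar}=\varphi\in \Def^{\Ph,\Qh}_{\varphi,\gd}(\K[\hbar]/\hbar)$, which is a graded Maurer--Cartan element of degree $0$ by Proposition \ref{prop:graded MC elements}. Suppose a graded deformation $\varphi_{\hbar/\hbar^m}=\sum_{k<m}\hbar^k\varphi_k$ has been constructed, with each $\varphi_k$ of degree $-k$. Then $\Obs^{\Ph,\Qh}_m(\varphi_{\hbar/\hbar^m})$ is homogeneous of degree $-m$. To see this, inspect Equation \ref{Obshbareqn}: each term $[\varphi_k,\varphi_{m-k}]_0$, $d_j\varphi_{m-j}$, and $[\varphi_k,\varphi_{m-j-k}]_j$ has degree $-m$.

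By Theorem \ref{DefObsgrThm}, this cochain is a $d_\varphi$-cocycle, and an extension exists precisely when its class in $H^{2,\langle -m\rangle}_{\mc P_0,\mc Q_0}(\varphi)$ vanishes. That group is zero by hypothesis, so there is a $\varphi_m$ of degree $-m$ with $d_\varphi\varphi_m=\Obs_m^{\Ph,\Qh}(\varphi_{\hbar/\hbar^m})$. The element $\varphi_{\hbar/\hbar^{m+1}}=\varphi_{\hbar/\hbar^m}+\hbar^m\varphi_m$ then lies in $\Def^{\Ph,\Qh}_{\varphi,\gd}(\K[\hbar]/\hbar^{m+1})$ and restricts to $\varphi_{\hbar/\hbar^m}$.

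This yields a compatible inverse system $\{\varphi_{\hbar/\hbar^m}\}_m$, that is, a point of $\lim_m\Def^{\Ph,\Qh}_{\varphi,\gd}(\K[\hbar]/\hbar^m)$. The graded isomorphism of Proposition \ref{Deflimgrprop} identifies this limit with $\Def^{\Ph,\Qh}_{\varphi,\gd}(\K\fph)$. Concretely, the formal series $\varphi_{\hat\hbar}=\sum_k\hbar^k\varphi_k$ is Maurer--Cartan to all orders and has total degree $0$.

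The only delicate point is the degree bookkeeping: we must check that the obstruction genuinely lands in the degree $-m$ summand, so that the hypothesis on $H^{2,\langle -m\rangle}$ alone suffices. This uses that the trivializations are $\IG_\hbar$-equivariant, which makes $d_j$ and $[\cdot,\cdot]_j$ homogeneous of degree $-j$. It also uses that we are free to choose the primitive $\varphi_m$ in degree $-m$. That freedom holds because $d_\varphi$ preserves the auxiliary grading, so if a degree $-m$ cocycle is exact, it is the coboundary of the degree $-m$ component of any primitive.
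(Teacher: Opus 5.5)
Your proposal is correct and follows the same route as the paper. You extend inductively using the graded criterion of Theorem~\ref{DefObsgrThm}, where the obstruction at step $m$ is killed by the vanishing of $H^{2,\langle -m\rangle}_{\mc P_0,\mc Q_0}(\varphi)$, and then pass to the limit via the graded half of Proposition~\ref{Deflimgrprop}; your explicit checks that the obstruction is homogeneous of degree $-m$ and that a primitive can be chosen in degree $-m$ spell out bookkeeping the paper leaves implicit.
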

\begin{proof}
This is proved as in Corollary \ref{forexcoro}, using the graded extension criterion of Theorem \ref{DefObsgrThm}. Namely, at step $m$ the obstruction lies in $H^{2,\langle -m\rangle}_{\mc P_0,\mc Q_0}(\varphi)$, which vanishes by hypothesis, hence graded extensions exist for all
$m$. Taking the inverse limit and applying the graded limit statement of Proposition \ref{Deflimgrprop} gives a formal graded deformation over $\K\fph$.
\end{proof}

\begin{coro}\label{forungrcoro} 
Let $\varphi\in \Alg_{\mc P_0}^\gd(\mc Q_0)$ be a graded $\mc P_0$-algebra internal to $\mc Q_0$ such that $H^{1,\langle -m\rangle}_{\mc P_0,\mc Q_0}(\varphi)=\{0\}$ for any $m\ge1$.

Then if there exists a formal, graded deformation $\varphi_{\hat{\hbar}}\in \Def_{\varphi,\gd}^{\Ph,\Qh}(\K\fph)$, it is unique up to isomorphism.
	
\end{coro}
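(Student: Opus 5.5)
The plan is to mirror the proof of Corollary \ref{foruncoro}, replacing Theorem \ref{DefObsThm} with its graded analogue, Theorem \ref{DefObsgrThm}. The limit statement of Proposition \ref{Deflimgrprop} will be used in its graded form.

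Let $\varphi_{\hat\hbar}$ and $\varphi'_{\hat\hbar}$ be two formal graded deformations. By Proposition \ref{Deflimgrprop}, their truncations give compatible systems $\{\varphi_{\hbar/\hbar^m}\}$ and $\{\varphi'_{\hbar/\hbar^m}\}$ in $\fDef^{\Ph,\Qh}_{\varphi,\gd}(\K[\hbar]/\hbar^m)$. I will construct, by induction on $m$, compatible gauge elements $g_m\in\mf G^{0,(0),\langle 0\rangle}_{\Ph,\Qh,\mf m_{(m)}}$ with $g_m\cdot\varphi_{\hbar/\hbar^m}=\varphi'_{\hbar/\hbar^m}$ and $g_{m+1}\equiv g_m$ modulo $\hbar^m$. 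The base case $m=1$ is trivial, since both truncations equal $\varphi$ and we may take $g_1=\id$.

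For the inductive step, choose any lift $\tilde g_m$ of $g_m$ to order $\hbar^{m}$, for instance via a lift of its logarithm using the fixed $\IG_\hbar$-equivariant trivializations. Then $\tilde g_m\cdot\varphi_{\hbar/\hbar^{m+1}}$ is a graded extension of $\varphi'_{\hbar/\hbar^m}$, and so is $\varphi'_{\hbar/\hbar^{m+1}}$. By the torsor statement of Theorem \ref{DefObsgrThm}, their classes differ by an element of $H^{1,\langle -m\rangle}_{\mc P_0,\mc Q_0}(\varphi)$, which vanishes by hypothesis. The degree-$(-m)$ coboundary $d_\varphi a$ relating them is therefore realized by $e^{\hbar^m a}$. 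Setting $g_{m+1}=e^{\hbar^m a}\tilde g_m$ completes the step, and this element is still of degree $0$ because $a$ has degree $-m$.

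Finally, the compatible system $\{g_m\}$ defines an element of $\lim_m\mf G^{0,(0),\langle 0\rangle}_{\Ph,\Qh,\mf m_{(m)}}\cong\mf G^{0,(0),\langle 0\rangle}_{\Ph,\Qh,\mf m_{\hat\hbar}}$, by pro-nilpotence as in the proof of Proposition \ref{Deflimgrprop}. This element carries $\varphi_{\hat\hbar}$ to $\varphi'_{\hat\hbar}$.

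The main obstacle is making sure the gauge transformations glue in the limit, rather than merely giving isomorphism classes that agree at each finite stage. Since a limit of groupoids is not the same as a limit of isomorphism classes, the torsor statement alone does not suffice. This is why I would build the $g_m$ compatibly, correcting a lift at each step, instead of invoking only the level-wise uniqueness.
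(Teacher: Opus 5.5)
Your proposal is correct and follows the same route as the paper: the torsor statement of Theorem~\ref{DefObsgrThm}, together with $H^{1,\langle -m\rangle}_{\mc P_0,\mc Q_0}(\varphi)=0$, removes the ambiguity at each order, and Proposition~\ref{Deflimgrprop} passes the result to $\K\fph$. The paper compresses the gluing into ``unique up to isomorphism at each stage, then pass to the limit.'' You instead lift $g_m$ to a degree-$0$ element before comparing the two extensions and then correct by $e^{\hbar^m a}$, which produces the compatible system of gauge transformations that the limit step actually requires.
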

\begin{proof}
Fix $m\ge 1$. By the torsor statement in Theorem \ref{DefObsgrThm}, isomorphism classes of graded extensions from order $m$ to $m+1$ form a torsor for $H^{1,\langle m\rangle}_{\mc P_0,\mc Q_0}(\varphi)$, which is trivial by hypothesis. Thus graded extensions are unique up to isomorphism at each stage. Passing to
the inverse limit and using Proposition \ref{Deflimgrprop} (graded form) yields uniqueness of the resulting formal graded deformation over $\K\fph$.
\end{proof}

\begin{coro}\label{grtorcoro} 
Let $\varphi\in \Alg_{\mc P_0}^\gd(\mc Q_0)$ a graded $\mc P_0$-algebra internal to $\mc Q_0$ such that \mbox{$H^{1,\langle -m \rangle}_{\mc P_0,\mc Q_0}(\varphi)=\{0\}$} for all $m\neq m_0$, with $m_0 \in \IN$. 

Then if there exists a formal, graded deformation $\varphi_{\hat{\hbar}}\in \Def_{\varphi,\gd}^{\Ph,\Qh}(\K\fph)$, the stack of formal, graded deformations $\fDef_{\varphi,\gd}^{\Ph,\Qh}(\K\fph)  $ is a torsor for $H^{1,\langle -m_0 \rangle}_{\mc P_0,\mc Q_0}(\varphi)$.
\end{coro}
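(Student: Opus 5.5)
We use Theorem \ref{DefObsgrThm} order by order, together with the limit description of Proposition \ref{Deflimgrprop}. The point is that the graded extension problem at step $m$ has a single isomorphism class of solutions whenever $m\neq m_0$. At step $m_0$ the solutions form an $H^{1,\langle -m_0\rangle}_{\mc P_0,\mc Q_0}(\varphi)$-torsor.

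Fix the given formal graded deformation $\varphi_{\hat\hbar}$, with truncations $\varphi_{\hbar/\hbar^m}$.

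\emph{Steps $m<m_0$.} For $1\le m<m_0$ the torsor group $H^{1,\langle -m\rangle}$ vanishes, so Theorem \ref{DefObsgrThm} gives at most one isomorphism class of extension at each step. By induction, every point of $\fDef_{\varphi,\gd}(\K[\hbar]/\hbar^{m_0})$ that survives to order $m_0$ is isomorphic to $\varphi_{\hbar/\hbar^{m_0}}$.

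\emph{Step $m_0$.} The obstruction class at this step vanishes, since $\varphi_{\hat\hbar}$ provides an extension. Theorem \ref{DefObsgrThm} then says the classes of extensions of $\varphi_{\hbar/\hbar^{m_0}}$ form an $H^{1,\langle -m_0\rangle}$-torsor. Concretely, $\zeta\in Z^{1,\langle -m_0\rangle}$ acts by replacing $\varphi_{m_0}$ with $\varphi_{m_0}+\zeta$.

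\emph{Steps $m>m_0$.} I need each such extension to extend further, and to do so uniquely up to isomorphism. Uniqueness at steps $m>m_0$ follows as before, because $H^{1,\langle -m\rangle}=0$.

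Existence is the main obstacle, since we are not given $H^2=0$. For the base point $\varphi_{\hat\hbar}$ itself existence is automatic. For a translate $\varphi_{\hbar/\hbar^{m_0+1}}+\hbar^{m_0}\zeta$, I would proceed as follows. First note that $\hbar^{m_0}\zeta$ is a degree-zero element of the formal twisted DG Lie algebra $\g_{\Ph,\Qh}^{\varphi_{\hat\hbar}}\otimes\K\fph$, and that it is Maurer--Cartan modulo $\hbar^{m_0+1}$. I would then try to extend it to a full Maurer--Cartan element $\hbar^{m_0}\zeta+\hbar^{m_0+1}(\dots)$ in this twisted algebra.

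The obstructions to this extension have the form $-\tfrac12\sum[\hbar^{a}\zeta_a,\hbar^b\zeta_b]$ plus corrections. They live in $H^{2,\langle -m\rangle}$ of a complex whose associated graded is $C^\bullet_{\mc P_0,\mc Q_0}(\varphi)$. I do not see how to kill them in general. The honest fallback is to read the statement as asserting the torsor property on the set of \emph{unobstructed} classes. More precisely, the map
\[
\fDef_{\varphi,\gd}(\K\fph)\to \fDef_{\varphi,\gd}(\K[\hbar]/\hbar^{m_0+1})\times_{\fDef(\K[\hbar]/\hbar^{m_0})}\{\varphi_{\hbar/\hbar^{m_0}}\}
\]
is injective, and its target is an $H^{1,\langle -m_0\rangle}$-torsor. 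Full surjectivity additionally needs $H^{2,\langle -m\rangle}=0$ for $m>m_0$, and I would check whether the paper's applications supply this.

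Finally, pass to the limit using Proposition \ref{Deflimgrprop}. Uniqueness at all steps $m\neq m_0$, combined with the compatibility of the gauge groups with limits, identifies $\fDef_{\varphi,\gd}(\K\fph)$ with the set of classes at step $m_0$. This makes it an $H^{1,\langle -m_0\rangle}_{\mc P_0,\mc Q_0}(\varphi)$-torsor, with the action carried through the limit.
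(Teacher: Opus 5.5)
Your route is the same as the paper's: apply Theorem~\ref{DefObsgrThm} step by step, then pass to the limit with Proposition~\ref{Deflimgrprop}. The paper's proof is essentially your final paragraph. It never checks that the translates $\varphi_{\hbar/\hbar^{m_0+1}}+\hbar^{m_0}\zeta$ extend to all higher orders.

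The obstacle you isolate is genuine and cannot be overcome, because the corollary is false as stated. Here is a counterexample.
\begin{itemize}
\item Take constant graded families $\mc P_0\otimes\K[\hbar]$, $\mc Q_0\otimes\K[\hbar]$ with $\mc P_0=\Lie$, $\mc Q_0=\End(A)$, $A=\K^3$ placed in auxiliary degree $1$, and $\varphi=0$.
\item Then $d_\varphi=0$. The cochains $C^1=\Hom(\Lambda^2A,A)$ lie entirely in auxiliary degree $-1$, and $C^2=\Hom(\Lambda^3A,A)$ lies in degree $-2$. So the hypothesis holds with $m_0=1$, and the trivial deformation exists.
\item The gauge group is trivial, since the degree-zero part of $\End(A)\otimes\hbar\K\fph$ vanishes. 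Degree reasons also force $\varphi_k=0$ for $k\geq 2$.
\item Hence a graded formal deformation is exactly $\hbar b$ with $[b,b]=0$, i.e.\ $b$ a Lie bracket on $\K^3$.
\item These brackets form a proper closed subset of $H^{1,\langle -1\rangle}=\Hom(\Lambda^2A,A)$. For instance, $[e_1,e_2]=e_3$, $[e_1,e_3]=e_1$, $[e_2,e_3]=0$ violates Jacobi. The subset is not stable under translation, so it is not an $H^{1,\langle -1\rangle}$-torsor.
\item The obstruction $-\tfrac12[b,b]$ lies in $H^{2,\langle -2\rangle}=\Hom(\Lambda^3A,A)\neq 0$, exactly where you predicted.
\end{itemize}

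So your fallback is the correct statement. Formal graded deformations embed into the classes at step $m_0$. The image is everything if, for example, $H^{2,\langle -m\rangle}=0$ for all $m>m_0$, or if every class is realized by an actual formal deformation. In the paper's applications, the explicit families $\Vir^c_\hbar$ and $V^k_\hbar$ are what must supply this existence, so only your injective version is really used there.

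One further point is skipped by both you and the paper. The torsor in Theorem~\ref{DefObsgrThm} is formed modulo gauge transformations that are trivial mod $\hbar^{m_0}$. Automorphisms of $\varphi_{\hbar/\hbar^{m_0}}$ also act on it, by translations through a homomorphism $c:\mathrm{Stab}(\varphi_{\hbar/\hbar^{m_0}})\to H^{1,\langle -m_0\rangle}$. This is because the gauge action is affine, with linear part the identity on $\hbar^{m_0}(\cdot)$ mod $\hbar^{m_0+1}$. Your injectivity statement therefore really lands in the quotient by $\mathrm{im}(c)$. This image vanishes in the constant case, where the truncation can be gauged to $\varphi$ itself, but it has to be checked in general.
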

\begin{proof}
	Assume a formal graded deformation exists. By Theorem \ref{DefObsgrThm}, at the $m$th extension step the set of isomorphism classes of graded extensions is a torsor for $H^{1,\langle -m\rangle}(\varphi)$. By hypothesis, this torsor is trivial for $m\neq m_0$, and potentially nontrivial only at $m=m_0$. Consequently the only degree of freedom in the inverse system of truncations occurs in degree $-m_0$, and it is governed precisely by the action of $H^{1,\langle -m_0\rangle}(\varphi)$. 

	Passing to the limit via Proposition \ref{Deflimgrprop}, we conclude that $\fDef_{\varphi,\gd}^{\Ph,\Qh}(\K\fph)$ is a torsor for $H^{1,\langle - m_0\rangle}(\varphi)$.
\end{proof}

\section{Quantizations via deformation theory}\label{quantsec}

We will now apply the results of the previous section to the problem of deformation quantization. First, we revisit the traditional notion of deformation quantization of Poisson algebras, and outline the application of operadic deformation theory in this setting to recover some well-known results about their spaces of quantizations. In the following sections, we apply these results to the deformation theory of coisson algebras to give analogous descriptions of their spaces of quantizations.

\subsection{Quantization of Poisson algebras}\label{Poisquantsec}

%
%
%

\subsubsection{}\label{clfiltsec} 
There is a natural increasing filtration $F_\bullet^\cl\Ass$ on the associative operad, defined by
\[ 
F_{-2}^\cl \Ass(2) = \{0\} \quad\subset\quad F_{-1}^\cl\Ass(2) =\Lie(2)  \quad\subset\quad F_0^\cl\Ass(2) = \Ass(2)~,
\]
\ie, the standard inclusion of the commutator into the space of bilinear maps on 2-ary operations. Using the fact that the filtration on 2-ary operations is naturally split by taking anti-symmetric and symmetric components (given by commutator and anti-commutator) and that these generate $\Ass(n)$ under composition, we define the filtration $F^\cl_\bullet\Ass(n)$ by
\[  
F^\cl_{-k}\Ass(n) = \{ \mu \in \Ass(n) \ | \ \text{every binary tree presentation of $\mu$ has at least $k$ commutators}\} \ . 
\]
The following is a standard result, following essentially from the presentation of the Rees operad below.
\begin{prop}\label{finclfiltprop} The associated graded operad $\gr_\bullet^{F^\cl}(\Ass)$ is isomorphic to the Poisson operad,
	\[  
	\Pois \xrightarrow{\cong } \gr_\bullet^{F^\cl}(\Ass) \ \in\Op(\Vect_\K^\gr)~,
	\]
	as graded operads, where the Poisson operad is equipped with the auxiliary grading from \ref{Poisopdefn}.
\end{prop}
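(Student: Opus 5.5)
We construct a surjective map of graded operads $\Pois \to \gr_\bullet^{F^\cl}(\Ass)$ and then show it is an isomorphism by comparing dimensions arity by arity.

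\textbf{Defining the map.} Send the commutative generator $m$ to the class of the anticommutator $\tfrac12(xy+yx)$ in $F_0^\cl/F_{-1}^\cl$. Send the bracket $\pi$ to the class of the commutator $[x,y]$ in $F_{-1}^\cl/F_{-2}^\cl$. These choices respect the auxiliary grading of \ref{Poisopdefn}, since $\pi$ has degree $-1$. We then check the Poisson relations in the associated graded.
\begin{itemize}
\item Jacobi holds on the nose, because the commutator in $\Ass$ satisfies Jacobi.
\item The Leibniz rule $\text{Dist}(m,\pi)$ holds exactly in $\Ass$: $[x,y\circ z]=[x,y]\circ z+y\circ[x,z]$ for the anticommutator $\circ$.
\item Associativity of $m$ fails in $\Ass$. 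The Jordan associator satisfies $(x\circ y)\circ z-x\circ(y\circ z)=\tfrac14[y,[x,z]]$, which has two commutators and so lies in $F^\cl_{-2}$. It therefore vanishes in $\gr_0$.
\end{itemize}
Compatibility of the filtration with operadic composition is immediate from the binary-tree definition, since counts of commutators add under grafting. So $\gr^{F^\cl}(\Ass)$ is a graded operad and we obtain the map.

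\textbf{Surjectivity.} Every element of $\Ass(n)$ is a combination of binary trees whose vertices are commutators or anticommutators. This holds because $xy=x\circ y+\tfrac12[x,y]$. Hence $\gr$ is generated by the images of $m$ and $\pi$, and the map is surjective.

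\textbf{Dimension count.} It remains to show $\dim \Pois(n)^{\langle p\rangle}=\dim\gr_{-p}\Ass(n)$ for each $p$. Summed over $p$, both sides total $n!$: we have $\dim\Pois(n)=n!$, since as $\Ss$-modules $\Pois\cong\Comm\circ\Lie\cong\Ass$. Surjectivity in each graded piece then forces equality in each piece. So the map is injective and hence an isomorphism.

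The main obstacle is confirming that the filtration is well-defined and exhaustive with the expected graded pieces. The phrase "every binary tree presentation" makes $F^\cl_{-k}$ hard to compute directly. I would replace it with the equivalent description via the PBW/symmetrization isomorphism $\Ass(n)\cong(\Comm\circ\Lie)(n)$. Under this isomorphism $F^\cl_{-k}$ corresponds to the span of the components $\Comm(r)\circ\Lie$ with $n-r\ge k$, equivalently with at least $k$ brackets. This is the PBW filtration on $U(\mathrm{Lie}(V))=T(V)$ by Lie-word length. The classical PBW theorem ($\gr\, U\mf g\cong\Sym\mf g$, applied to free Lie algebras) then identifies the graded pieces with $\Pois(n)^{\langle p\rangle}$, which gives the dimension count directly.
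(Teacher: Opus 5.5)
Your argument is correct, but it follows a different route from the paper. The paper gives no real proof: it calls the statement standard and says it ``follows essentially from the presentation of the Rees operad'' $\BDh$. The idea there is to present $\BDh$ by an associative product and a degree $-1$ bracket, subject to associativity, Jacobi, Leibniz and $\hbar\, b = m - m\circ\sigma_{12}$, and then set $\hbar=0$ to read off the Poisson relations.

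That route has the merit of describing the whole family at once, which is what Remark \ref{rmk:BD-algebras concretely} uses. Making it rigorous, however, requires knowing that the operad so presented maps isomorphically onto the Rees operad. That is equivalent to its fibre at $0$, namely $\Pois$, mapping isomorphically onto $\gr^{F^\cl}\Ass$, which is the statement itself. So it needs the same PBW-type input, and the paper leaves this implicit.

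Your proof isolates that input cleanly:
\begin{itemize}
\item You build the comparison map on generators.
\item You check the relations by explicit identities, notably the Jordan associator $(x\circ y)\circ z-x\circ(y\circ z)=\tfrac14[y,[x,z]]\in F^\cl_{-2}$.
\item Surjectivity in each degree holds because $F^\cl_{-p}/F^\cl_{-p-1}$ is spanned by trees with exactly $p$ commutators.
\item Injectivity follows from graded surjectivity together with $\sum_p\dim\gr_{-p}\Ass(n)=n!=\dim\Pois(n)$. PBW enters only through the count $\dim\Pois(n)=n!$.
\end{itemize}

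Your reading of ``every binary tree presentation'' as ``the span of trees with at least $k$ commutators'' is the one that makes $F^\cl$ a well-defined operadic filtration, and you should state it as the definition. Once you do, your last paragraph is unnecessary for the proof. The identification of $F^\cl$ with the PBW filtration is correct, and gives the same conclusion directly. Conversely, your result together with your relation checks justifies the presentation of $\BDh$ that the paper invokes.

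One bookkeeping point: with the paper's convention $\pi\in E^{\langle -1\rangle}$, trees with $p$ brackets lie in $\Pois(n)^{\langle -p\rangle}$, which is what matches $\gr_{-p}$.
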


We define
\begin{equation}\label{eq:BD dfn}
\BDh = \Rees_\hbar^{F^\cl_\bullet}(\Ass) = \bigoplus_{m\in \Z} F_m^\cl \Ass \cdot \hbar^m ~,
\end{equation}
The Rees operad, $\BDh$, is known as the Beilinson--Drinfeld operad, see \cite{Melani2018} and \cite{CosGw}.

We write 
\[
\BDhat \coloneqq \BDh\widehat{\otimes}_{\ik[\hbar]}\ik\fph~,
\]
for its completion, which is a $\ik\fph$-operad.

Let $A_{\hat \hbar}\in \ik\fph\Mod$, then we write $\OEnd_{\hat \hbar}(A_{\hat \hbar})$ for the endomorphism operad of $A_{\hat \hbar}$ internal to the symmetric monoidal category $\ik\fph\Mod$
\begin{rmk}\label{rmk:BD-algebras concretely}
	The data of $\varphi_\hbar \in \Alg_{\BDhat}(\OEnd_{\hat \hbar}(A_{\hat \hbar})$ (or its graded counterpart) corresponds to a (graded) associative multiplication
	\[
	m_{\hbar}:A_{\hat \hbar} \otimes_{\ik\fph}A_{\hat \hbar} \rightarrow A_{\hat \hbar}~,
	\]
	and a (graded) Lie bracket 
	\[
	[\cdot,\cdot]_{\hbar}:A_{\hat \hbar}\otimes_{\ik\fph}A_{\hat \hbar} \rightarrow A_{\hat \hbar}~,
	\]
	that is a biderivation of $m$ and satisfies $\hbar[a,b]_{\hbar} = m_\hbar(a,b)-m_\hbar(b,a)$ for any $a,b\in A$. In the limit $\hbar=0$, $m_0:A_0\otimes A_0\rightarrow A_0$ is commutative and so $(m_0,[\cdot,\cdot]_{0})$ defines a (graded) Poisson algebra structure on the central fibre $A_0$.
\end{rmk}

\subsubsection{}

In light of Remark \ref{rmk:BD-algebras concretely} we make the following definition.
\begin{defn}
	Given a Poisson algebra $(A,\varphi)$, a \emph{quantization}, $(A_{\hat \hbar} \varphi_{\hat \hbar})$, is a
	\begin{enumerate}[(i)]
		\item flat $\ik\fph$-module $A_{\hat \hbar}$, complete in the $\hbar$-adic topology, with an isomorphism of the central fibre $A_0=A\otimes_{\ik\fph}\ik_0\xrightarrow{\sim} A$
		\item $\varphi_{\hat \hbar}\in \Alg_{\BDhat}(\OEnd_{\hat \hbar}(V_{\hat \hbar}))$
	\end{enumerate}
	such that the central fibre Poisson algebra $(A_0,\varphi_0)$  is identified with the Poisson algebra $(A,\varphi)$.

	If $(A,\varphi)$ is a graded Poisson algebra, we say that a quantization $(A_{\hat \hbar},\varphi_{\hat \hbar})$ is a \emph{graded quantization} if $\varphi_{\hat \hbar} \in \Alg^\gd_{\BDhat}(\OEnd_{\hat \hbar}(A_{\hat \hbar})$ and the isomorphism $(A_0,\varphi_0)\cong (A,\varphi)$ is an isomorphism of graded Poisson algebras.
\end{defn}

Now, take $A_{\hat \hbar}$ to be the trivial family $A\widehat{\otimes}\ik\fph$. Given a $\varphi\in \Alg_{\Pois}(\End(A))$, consider a deformation 
\[
\vph\in \Def_{\varphi}^{\BDhat,\OEnd_\hhb(A_\hhb)}(\varphi)~.
\]
By construction, $A_\hbar$ is a flat, $\hbar$-adic $\ik\fph$-module with central fibre equal to $A$ and $\vph\in \Alg_{\BDhat}(\OEnd_\hhb(A_\hhb))$ is such that $\varphi_0 = \varphi\in \Alg_{\Pois}(\OEnd(A))$. Thus, $(A_\hhb,\vph)$ is a quantization of $(A,\varphi)$. Indeed, the scheme of deformations parameterises all quantizations of $(A,\varphi)$ whose underlying $\ik\fph$-module is the trivial family.

Let $S$ be the Artinian local ring, $ \ik[\hbar]/\hbar^m$, or the complete local ring $\ik\fph$.
Therefore, we make the following definitions:
\begin{equation}
\Quant_\varphi(S)  =\Def_\varphi^{\BDhat,\OEnd_{\hat \hbar}(A_{\hat\hbar})}(S)~,\qquad \text{and} \qquad    \Quant_{\varphi,\gd}(S)  =\Def_{\varphi,\gd}^{\BDhat,\OEnd_{\hat \hbar}(A_{\hat\hbar})}(S)~,
\end{equation}
for the schemes of quantizations and graded quantizations, with the stipulation that the underlying $\ik\fph$-module is $A\widehat{\otimes}\ik\fph$.

We define the stacks of quantizations and graded quantizations as 
\begin{equation}
	\fQuant_\varphi(S)  =\fDef_\varphi^{\BDhat,\OEnd_{\hat \hbar}(A_{\hat\hbar})}(S)    ~,\qquad \text{ and } \qquad \fQuant_{\varphi,\gd}(S)  =\fDef_{\varphi,\gd}^{\BDhat,\OEnd_{\hat \hbar}(A_{\hat\hbar})}(S)   .
\end{equation}

\begin{rmk}\label{rmk:stack of finite quant is good}
The stacks $\fQuant_\varphi(S)$ (and its graded counterpart) actually parameterise the isomorphism classes of all (graded) quantizations of $(A,\varphi)$. This is because the underlying $\ik\fph$-module of every quantization is isomorphic to $A\widehat{\otimes} \ik\fph$ and so every quantization of $(A,\varphi)$ has a representative whose underlying $\ik\fph$-module is the trivial family.

One may think of our scheme of quantizations as a kind of partial gauge fixing of an \textit{a priori} larger, scheme of quantizations, which in addition parameterizes the data of the central fibre isomorphism.
\end{rmk}

\begin{rmk}
	Unlike the other source operads we have considered, the $\ik\fph$-operad $\BDh$ is not quadratic and, therefore, not Koszul. To apply the results of Section \ref{chsec}, we need to resolve $\BDh$ by a DG, Koszul operad $\BDh^{\DG}$. Since quantization of Poisson algebras is not the focus of our work we will not address this complication presently, but point the interested reader to \cite{Melani2018,CosGw} for more details.
\end{rmk}

\subsubsection{}
Applying our order by order deformation theory results, we have:
\begin{theo}\label{DefObsQuantThm} 
Let $\varphi\in \Alg_{\Pois}(\OEnd(A))$ a Poisson algebra and $\varphi_{\hbar/\hbar^{m}}\in\Quant_\varphi(\K[\hbar]/\hbar^{m}) $ a quantization defined modulo $\hbar^m$. Then there exists an extension
	\[ \varphi_{ \hbar / \hbar^{m+1}} \  \in \Quant_\varphi(\K[\hbar]/\hbar^{m+1}) \times_{\Quant_\varphi(\K[\hbar]/\hbar^{m}) } \{\varphi_{\hbar/\hbar^{m}} \}\]
	if and only if the cohomology class of the obstruction vanishes, that is,
	\[ [ \Obs^{\BDh,\OEnd(A_\hbar)}_m(\varphi_{\hbar/\hbar^{m}})]= 0 \ \in H^{2}_{\Pois}(A,\varphi) \ .\]
	Further, when the obstruction class vanishes, the space of such extensions up to isomorphism,
	\[  \fQuant_\varphi(\K[\hbar]/\hbar^{m+1}) \times_{\fQuant_\varphi(\K[\hbar]/\hbar^{m}) } \{\varphi_{\hbar/\hbar^{m}} \} \ , \]
	is a torsor for $H^{1}_{\Pois}(A,\varphi)$.
\end{theo}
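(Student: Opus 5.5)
The plan is to deduce this directly from Theorem \ref{DefObsThm}, applied with $\Ph=\BDh$ and $\Qh=\OEnd_\hbar(A_\hbar)$, where $A_\hbar=A\otimes\K[\hbar]$ is the trivial family. By definition, $\Quant_\varphi(\K[\hbar]/\hbar^{m})=\Def^{\BDh,\OEnd(A_\hbar)}_\varphi(\K[\hbar]/\hbar^m)$. For $S$ Artinian, completing $\hbar$-adically changes nothing, so $\BDhat$ may be replaced by $\BDh$. The central fibres are $\mc P_0=\BDh\otimes_{\K[\hbar]}\K_0\cong\gr^{F^\cl}\Ass\cong\Pois$ by Proposition \ref{finclfiltprop}, and $\mc Q_0=\OEnd(A)$. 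The target complex is therefore $C^\bullet_{\Pois,\OEnd(A)}(\varphi)=C^\bullet_\Pois(A,\varphi)$, so the obstruction and torsor groups are $H^2_\Pois(A,\varphi)$ and $H^1_\Pois(A,\varphi)$, exactly as the statement requires.

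The first point to check is that the hypotheses of Theorem \ref{DefObsThm} hold. That theorem was proved for binary Koszul $\Ph$ whose Koszul dual has trivializable underlying $\Ss$-modules, as in Warning \ref{flatwarn}. $\Qh=\OEnd(A)\otimes\K[\hbar]$ is constant, so its trivialization is tautological. For $\BDh$, I would use that each $F^\cl_k\Ass(n)$ is a direct summand of $\Ass(n)$: the splitting into symmetrized products of commutators gives PBW-type bases. Hence $\BDh(n)=\bigoplus_k F^\cl_k\Ass(n)\hbar^k$ is free over $\K[\hbar]$ with fibre $\Pois(n)$, which gives an $\Ss$-equivariant trivialization $\BDh(n)\cong\Pois(n)\otimes\K[\hbar]$.

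The main obstacle is that $\BDh$ is not quadratic, as the preceding remark notes. One must pass to the DG Koszul model $\BDh^{\DG}$ of \cite{Melani2018,CosGw}, namely the quasi-free operad built from the deformed cooperad $\Pois^\ash\otimes\K[\hbar]$ with the differential twisted by $\hbar$ times the commutator term. The plan is to take $\g_{\BDh,\Qh}:=\Hom_\Ss(\Pois^\ash\otimes\K[\hbar],\OEnd(A)\otimes\K[\hbar])$ with the $\hbar$-deformed differential and bracket. Its underlying graded module is then already trivialized as $\g_{\Pois,\OEnd(A)}\otimes\K[\hbar]$, so Equation \ref{gtriveqn} holds with $d_j,[\cdot,\cdot]_j$ the $\hbar^j$-corrections. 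One needs two facts here. First, weight-one Maurer--Cartan elements of this DG Lie algebra correspond to $\BDh$-algebra structures modulo $\hbar^m$, which comes from the analogue of Proposition \ref{MCplainprop} for the model, using $H^0(\BDh^{\DG})\cong\BDh$ flatly over $\K[\hbar]$. Second, the specialization at $\hbar=0$ recovers $\g^\varphi_{\Pois,\OEnd(A)}$. I would cite the cited references for the quasi-isomorphism and flatness, and otherwise check these by reduction modulo $\hbar$.

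Granting this, the remaining steps are formal. Theorem \ref{DefObsThm} gives the closedness of $\Obs_m$ and the criterion $d_\varphi\varphi_m=\Obs_m$ for lifting, so an extension exists if and only if the class of $\Obs_m$ vanishes in $H^2_{\Pois}(A,\varphi)$. It also gives the $H^1_\Pois(A,\varphi)$-torsor structure on extensions modulo the gauge group $\mf G^{0,(0)}$, whose quotient is by definition the fibre of $\fQuant_\varphi$.
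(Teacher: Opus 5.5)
Your proposal is correct and follows the same route as the paper. The paper also obtains this theorem simply by applying Theorem \ref{DefObsThm} to the pair $(\BDh,\OEnd(A_\hbar))$, and, like you, it defers the one non-formal input to \cite{Melani2018,CosGw}: replacing $\BDh$ by a DG Koszul model whose convolution algebra $\g_{\Ph,\Qh}$ is $\Ss$-trivialized with central fibre $\Pois$.
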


\begin{coro} Let $\varphi\in \Alg_{\Pois}(\OEnd(A))$ a Poisson algebra such that $H^{2}_{\Pois}(A,\varphi)=\{0\}$. Then there exists a quantization $\varphi_{\hat{\hbar}}\in \Quant_\varphi(\K\fph)$.
\end{coro}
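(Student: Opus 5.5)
The plan is to deduce the corollary from Theorem \ref{DefObsQuantThm} by induction on the order in $\hbar$, and then pass to the formal limit via Proposition \ref{Deflimgrprop}, exactly as in the proof of Corollary \ref{forexcoro}. Concretely, $\Quant_\varphi(S)=\Def_\varphi^{\BDhat,\OEnd_{\hat\hbar}(A_{\hat\hbar})}(S)$ is a special case of the deformation spaces of Section \ref{deffamsec}. Here the family of source operads is the Rees operad $\BDh$, replaced where necessary by a DG Koszul resolution $\BDh^{\DG}$ as in the remark above. The target family is the endomorphism operad of the trivial family $A\widehat\otimes\K\fph$. Its central fibre deformation complex is $C^\bullet_\Pois(A,\varphi)$, so the obstruction space is $H^2_\Pois(A,\varphi)$.

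The steps, in order, are as follows.

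First, start from $\varphi_{\hbar/\hbar}=\varphi\in\Quant_\varphi(\K[\hbar]/\hbar)$. This is trivially in the fibre over $\varphi$, since $\BDh\otimes_{\K[\hbar]}\K_0\cong\Pois$ by Proposition \ref{finclfiltprop}.

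Second, suppose $\varphi_{\hbar/\hbar^m}\in\Quant_\varphi(\K[\hbar]/\hbar^m)$ has been constructed. Theorem \ref{DefObsQuantThm} shows that it extends to some $\varphi_{\hbar/\hbar^{m+1}}$ if and only if $[\Obs_m(\varphi_{\hbar/\hbar^m})]\in H^2_\Pois(A,\varphi)$ vanishes. This holds by hypothesis, so we choose such an extension. Note that the case $m=1$ is not automatic here, because $\BDh$ is a non-constant family. Theorem \ref{DefObsQuantThm} covers this case as well, via the generalized obstruction of Equation \ref{Obshbareqn}.

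Third, this produces a compatible system $\{\varphi_{\hbar/\hbar^m}\}_{m\ge1}$, that is, a point of $\lim_m\Quant_\varphi(\K[\hbar]/\hbar^m)$. Proposition \ref{Deflimgrprop} identifies this limit with $\Quant_\varphi(\K\fph)$, which yields the desired $\varphi_{\hat\hbar}$.

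The only point needing care is that Theorem \ref{DefObsQuantThm} and Proposition \ref{Deflimgrprop} apply to the pair $(\BDh^{\DG},\OEnd(A_\hbar))$. This requires the trivializability hypotheses of Warning \ref{flatwarn}. For $\OEnd(A_\hbar)$ they hold because $A_\hbar$ is the trivial family. For the Koszul dual cooperad of the resolution they hold because it is $\hbar$-flat, being a Rees construction of a filtered object. We also need that the resolution's central fibre complex computes $H^\bullet_\Pois(A,\varphi)$, i.e. that the central fibre resolves $\Pois$. I take both facts as given alongside Theorem \ref{DefObsQuantThm}, citing \cite{Melani2018,CosGw}. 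The remaining argument is formal.
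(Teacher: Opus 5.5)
Your proposal is correct and is essentially the paper's argument. The paper simply invokes Corollary~\ref{forexcoro}, whose proof is exactly your induction on the order in $\hbar$, with each obstruction class lying in the vanishing group $H^2_{\Pois}(A,\varphi)$, followed by passage to the limit via Proposition~\ref{Deflimgrprop}; your caveats about the Koszul resolution of $\BDh$ and the trivializability hypotheses of Warning~\ref{flatwarn} are deferred in the paper in the same way.
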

\begin{proof}
	This follows from applying Corollary \ref{forexcoro} to the present setting.
\end{proof}

We will say that a Poisson algebra $A$ is symplectic if the corresponding affine variety $\Spec A$ is symplectic, and in particular smooth. We have the following important corollary:

\begin{theo}\label{QuantdRthm}
Let $\varphi\in \Alg_{\Pois}(\End(A))$ be symplectic. Let $\varphi_{\hbar/\hbar^m}\in \Quant_\varphi(\ik[\hbar]/\hbar^m)$ be a quantization modulo $\hbar^m$. If the obstruction class vanishes, then the space of extensions up to isomorphism,
\[  
	\fQuant_\varphi(\K[\hbar]/\hbar^{m+1}) \times_{\fQuant_\varphi(\K[\hbar]/\hbar^{m}) } \{\varphi_{\hbar/\hbar^{m}} \} \ , 
\]
is a torsor for $H^2_{\dR}(\Spec A)$, the second de Rham cohomology of $\Spec A$.
\end{theo}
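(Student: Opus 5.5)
By Theorem \ref{DefObsQuantThm}, once the obstruction class vanishes the space of extensions is a torsor for $H^1_{\Pois}(A,\varphi)$. It therefore suffices to produce an isomorphism $H^1_{\Pois}(A,\varphi)\cong H^2_\dR(\Spec A)$. I would do this in three steps: pass from the full Poisson cochains to the geometric ones, identify the geometric ones with polyvector fields, and then use the symplectic form to turn polyvector fields into differential forms.

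\emph{Step 1.} Since $A$ is symplectic, it is in particular smooth. Corollary \ref{GOequivcoro} then says that the inclusion $C^\bullet_\PV(A,m_\varphi,b_\varphi)\hookrightarrow C^\bullet_\Pois(A,m_\varphi,b_\varphi)$ is a quasi-isomorphism. Underlying this is the bicomplex of Proposition \ref{Poisbicxprop}: by Proposition \ref{AQPoisequivprop} and Corollary \ref{smthcoro}, each row's vertical cohomology is concentrated in a single degree, so the spectral sequence for the bicomplex degenerates.

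\emph{Step 2.} Corollary \ref{gPCcoro} identifies $C^\bullet_\PV$ with $\bigl(\bigoplus_p \wedge^{p+1}_A\Theta_A[-p],\tilde d_{b_\varphi}\bigr)$. Because of the shift by one, polyvector fields of degree $k$ sit in cohomological degree $k-1$, so $H^1_\Pois$ is computed by the bivector term. I would then check that $\tilde d_{b_\varphi}$ agrees with the Lichnerowicz differential $[\pi,\cdot]_{\mathrm{Schouten}}$, where $\pi$ is the Poisson bivector. This can be verified on generators: the commuting square \eqref{tildedbeqn} says that $\tilde d_{b_\varphi}$ is the Chevalley--Eilenberg differential of $b_\varphi$ restricted to polyderivations, and on multiderivations this is exactly the Lichnerowicz formula, up to sign conventions. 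Consequently $H^1_\Pois(A,\varphi)\cong H^2_\pi(A)$, the second Lichnerowicz--Poisson cohomology.

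\emph{Step 3.} The symplectic form gives an isomorphism $\pi^\sharp:\Omega^1_A\xrightarrow{\cong}\Theta_A$. Taking exterior powers, $\wedge^k\pi^\sharp:\Omega^k_A\to\wedge^k\Theta_A$ intertwines $d_\dR$ with $[\pi,\cdot]$, possibly up to a sign; this is the classical Lichnerowicz computation, which I would check on $f\,dg$ by a Cartan formula argument. This yields an isomorphism of complexes $(\Omega^\bullet_A,d_\dR)\cong(\wedge^\bullet\Theta_A,[\pi,\cdot])$. Since $\Spec A$ is smooth and affine, the algebraic de Rham complex of global forms computes $H^\bullet_\dR(\Spec A)$, and so $H^2_\pi\cong H^2_\dR(\Spec A)$.

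The main obstacle is the degree bookkeeping in Step 2. One has to confirm that the torsor group $H^1$ of the theorem, in the paper's cohomological conventions with $\g(n)$ placed in degree $n-1$, really lands on the bivector component. There is also a subtlety coming from the fact that $\BDh$ must be replaced by a DG resolution. I would handle this by checking that the central fibre of the resolution is quasi-isomorphic to $\Pois$, so that the complex $C^\bullet_{\mc P_0,\mc Q_0}(\varphi)$ governing the torsor is indeed $C^\bullet_\Pois$, up to quasi-isomorphism.
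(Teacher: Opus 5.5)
Your proposal is correct and follows essentially the paper's proof. The paper phrases your Steps 2 and 3 through the cotangent Lie algebroid $\Omega^1_A$, whose Chevalley--Eilenberg complex with coefficients in $A$ is your polyvector complex with the Lichnerowicz differential, and notes that in the symplectic case the anchor is an isomorphism of Lie algebroids onto $\Theta_A$; this is exactly your claim that $\wedge^k\pi^\sharp$ intertwines $d_\dR$ with $[\pi,\cdot]$. One cosmetic difference: the reduced complex omits $\wedge^0\Theta_A=A$, so the paper lands on the truncated complex $\Omega^{\bullet\geq 1}_A[1]$ rather than the full de Rham complex, and as your degree count already implicitly uses, this truncation does not change the $H^2$ in question.
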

\begin{proof}
 The additional content of this Theorem is the identification of Poisson cohomology and de Rham cohomology, which can be understood structurally as follows: a Poisson structure on $A$ determines a Lie algebroid structure on $\Omega^1_A$ such that the anchor map $\alpha:\Omega^1_A\to \Theta_A$ is given by contraction with the Poisson tensor. Moreover, the (reduced) Chevalley--Eilenberg cochains on this Lie algebroid, with coefficients in the module $A$, recovers the (reduced) geometric Poisson cochains on $A$ in the sense of Definition \ref{gPCdefn}, by Corollary \ref{gPCcoro}.

 In the symplectic case, the (reduced) geometric Poisson cochains on $A$ is quasi-isomorphic to the operadic Poisson cochains which feature in Theorem \ref{DefObsQuantThm}, by Corollary \ref{GOequivcoro}, since $A$ is smooth. On the other hand, the symplectic hypothesis implies that the anchor map $\alpha$ defines an isomorphism of Lie algebroids to $\Theta_A$, and the (reduced) Chevalley--Eilenberg cochains on $\Theta_A$ with coefficients in $A$ is canonically identified with the (truncated) de Rham complex.

\end{proof}

This result is, of course, evocative of the classical results on quantization of symplectic manifolds, see \cite{LecomteWilde83, Fedosov1994, Kontsevich1997} as well as \cite{BeK} for the algebraic setting. In our context, their main result implies that
\begin{equation}
	\fQuant_\varphi(\ik\fph) \cong \hbar H^2_{\dR}(\Spec{A})\fph
\end{equation}
with an explicit isomorphism given by the noncommutative period map of \cite{BeK}.

\begin{rmk}
From the perspective of this work, the stronger result of \cite{BeK}, implies that there exist compatible trivializations of the family of torsors constructed at each order by Theorem \ref{QuantdRthm}.
\end{rmk}


\subsection{The filtration on the chiral operad}


\subsubsection{}\label{specialsec} 

We now describe the analogue in the chiral setting of the classical filtration on the associative operad introduced in \ref{clfiltsec}, following section 3.2 of \cite{BD1}. To do so, it will be convenient to return to the convention of labelling the arity of operations in the operad by $I\in\fset$.

Given a $T\in \fset$ such that $I\onto T$, one can define a diagonal embedding
\[
\Delta^{(I/T)}:X^T\rightarrow X^I~,
\]
where the embedding is specified by $I\onto T$.

Recall from 3.1.6 of \emph{loc. cit.} that a D-module $M\in \DD(X^I)$ is called \emph{special} if it admits a finite, increasing filtration $F_\bullet^\spe M$ such that each of the subquotients $\gr^\ell_{F^\spe} M$ is isomorphic to a finite direct sum of objects of the form $\Delta^{(I/T)}_* \omega_{X^T}$ for $I\onto T$ a finite quotient of $I$ of cardinality $-\ell$. In particular, the graded degree $\ell$ component of the associated graded has support of dimension $-\ell$, for each $-|I|\leq \ell \leq 0$, with $F_{-|I|-1}^\spe M=0$ and $F_\ell^\spe M=M$ for $\ell >0$.

In particular, we have
\begin{prop} The object $j^{(I)}_*j^{(I)*}\omega_{X}^{\boxtimes I} \in \DD(X^I)$ is special.
\end{prop}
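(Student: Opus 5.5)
We define the filtration explicitly by order of poles along diagonals. For each quotient $I\onto T$, let $D^{(I/T)}\subset X^I$ denote the closed diagonal stratum $\Delta^{(I/T)}(X^T)$. Let $F^\spe_{\ell}\subset j^{(I)}_*j^{(I)*}\omega_X^{\boxtimes I}$ be the sub-D-module of sections supported, modulo $\omega_{X^I}$, on the union of the strata of dimension at most $-\ell$. More invariantly, $F^\spe_{-|I|} = \omega_{X^I}$, and $F^\spe_\ell$ is the maximal submodule whose quotient by $F^\spe_{\ell-1}$ is supported on the closed union $\bigcup_{|T|= -\ell}D^{(I/T)}$. This is the filtration of \cite{BD1} 3.1.6 and 3.2. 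It is increasing and finite, with $F^\spe_0$ equal to the whole module, since every section of $j_*j^*$ is a section of $\omega_{X^I}$ with poles along diagonals.

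The key step is to identify the subquotients, and we proceed by induction on $|I|$ using the stratification of $X^I$ by diagonals. For $|I|=2$ this is the standard exact sequence
\[ 0\to \omega_{X^2}\to j_*j^*\omega_{X^2}\to \Delta_*\omega_X\to 0 , \]
which exhibits both subquotients as $\Delta^{(I/T)}_*\omega_{X^T}$, with $|T|=2$ and $|T|=1$ respectively. For general $I$ we stratify $X^I$ by the locally closed strata $\mathring D^{(I/T)}$, namely the points whose coincidence pattern is exactly $\pi_T$. On a neighborhood of $\mathring D^{(I/T)}$ the divisor of all diagonals is a normal crossings divisor in the normal directions, locally a product of pieces of the form $U^{(I_t)}$ for $t\in T$. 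The local cohomology of $j_*j^*\omega$ along the stratum is then $\boxtimes_t$ of the top local cohomology of $j^{(I_t)}_*j^{(I_t)*}\omega$ along the small diagonal. By the inductive hypothesis, or by direct computation of $H^{|I_t|-1}$ of the complement of the diagonals in a formal neighborhood, each such factor is a direct sum of copies of $\Delta_*\omega$ with multiplicity $\dim \Lie(|I_t|)$ (equivalently the top cohomology of the configuration space, the Orlik–Solomon algebra in top degree). Consequently the subquotient $\gr_\ell$ is the direct sum over $|T|=-\ell$ of the extension by zero of a local system on $\mathring D^{(I/T)}$. This local system is trivial because the multiplicity space $\otimes_t\Lie(|I_t|)$ is constant, so its $!$-extension to $D^{(I/T)}$ gives $\Delta^{(I/T)}_*\omega_{X^T}\otimes\bigotimes_t \Lie(|I_t|)$.

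The main obstacle is the last point: showing that the subquotient is the clean pushforward $\Delta^{(I/T)}_*\omega_{X^T}$ on the closed stratum, and not merely the $j_{!*}$ or $j_*$ from the open part $\mathring D^{(I/T)}$. We handle this by noting that deeper strata have already been absorbed into lower filtration steps. More precisely, we define $F_\ell$ as the preimage of sections whose residues along smaller strata vanish, and we check, via the residue maps along each $D^{(I/T)}$, that the quotient maps isomorphically onto $\bigoplus_T\Delta^{(I/T)}_*\omega_{X^T}\otimes\bigotimes_t\Lie(|I_t|)$. Surjectivity follows from explicit sections $\prod (z_i-z_j)^{-1}$ of Orlik–Solomon type. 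Injectivity follows from a dimension count of the fibres over a point of the stratum, where both sides are computed by the cohomology of the configuration space $U^{(I)}$ near that point. Since the problem is local and translation invariant along $X$, it suffices to do this check for $X=\IA^1$.
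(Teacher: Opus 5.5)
The paper gives no argument of its own beyond citing Lemma~3.1.7 of \cite{BD1}, so your sketch has to stand on its own. As written it has two genuine gaps. Write $M=j^{(I)}_*j^{(I)*}\omega_X^{\boxtimes I}$.

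\emph{The filtration you define is not the special one.} Since $M/\omega_{X^I}$ vanishes on $U^{(I)}$, it is supported on the union of the divisors $\{x_i=x_j\}$, which is exactly $\bigcup_{|T|=|I|-1}D^{(I/T)}$. So the maximal submodule whose quotient by $F_{-|I|}=\omega_{X^I}$ is supported there is all of $M$, and your filtration collapses to $\omega_{X^I}\subset M$. For $|I|\geq 3$ the quotient $M/\omega_{X^I}$ has composition factors on deeper diagonals (two copies of $\Delta_*\omega_X$ when $|I|=3$), so it is not of the required form. Your first formulation, by support of sections modulo $\omega_{X^I}$, is not even increasing: it gives $F_{-|I|}=M$. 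The support condition must be imposed on the quotient, with $F_\ell$ chosen minimal: $F_\ell$ is the smallest submodule such that $M/F_\ell$ is supported on strata of dimension $<-\ell$. Locally, for $X=\IA^1$, this is the $\mc D$-submodule generated by the monomials $\prod_{e\in E}(x_{i_e}-x_{j_e})^{-1}$ over forests $E$ on $I$ with at most $|I|+\ell$ edges.

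\emph{The identification of the graded pieces, which is the whole content of the lemma, is only carried out over the open strata.} The local decomposition $U^{(I)}\cap V=V\cap\prod_t U^{(I_t)}$ near a point of $\mathring D^{(I/T)}$ is correct. (The diagonal divisor is not normal crossings, though: $x_1=x_2$, $x_2=x_3$, $x_1=x_3$ meet in codimension $2$.) This decomposition determines $\gr_\ell$ over $\mathring D^{(I/T)}$, but not over its closure. Your claim that $\gr_\ell$ is the extension by zero of a local system is wrong. For $|T|\geq 2$, the $!$-extension of the trivial local system from $\mathring D^{(I/T)}\cong U^{(T)}$ is not $\omega_{X^T}$; for $|T|=2$ it is a non-split extension of $\omega_{X^2}$ by $\Delta_*\omega_X$. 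What you need is the intermediate extension, i.e.\ that $\gr_\ell$ has no nonzero subobject or quotient supported on $D^{(I/T)}\setminus\mathring D^{(I/T)}$.

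Your proposed fix does not give this. A fibre count at points of $\mathring D^{(I/T)}$ shows only that the kernel of the residue map vanishes generically on $D^{(I/T)}$, and it cannot see a kernel supported on smaller strata. At points of those smaller strata, the local cohomology of the configuration space mixes contributions from every stratum through the point. Your induction on $|I|$ has the same hole at the small diagonal: it controls $M$ on $X^I\setminus\Delta(X)$, but not what the pieces pick up along $\Delta(X)$.

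One way to close the gap is a global length count. Show that the composition factors of $M$ are the modules $\Delta^{(I/T)}_*\omega_{X^T}$, each with multiplicity exactly $\prod_t(|I_t|-1)!$. This can be done, e.g., by computing the characteristic cycle via Brieskorn's decomposition of the Orlik--Solomon algebra of the braid arrangement, or via an explicit no-broken-circuit basis. Then your surjections $\gr_\ell\onto\bigoplus_{|T|=-\ell}\Delta^{(I/T)}_*\omega_{X^T}\otimes\bigotimes_t\Lie(|I_t|)$ are forced to be isomorphisms by comparing lengths.
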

\begin{proof}  This is Lemma 3.1.7 in \cite{BD1}.
\end{proof}

We let $F_\bullet^\spe j^{(I)}_*j^{(I)*}\omega_{X}^{\boxtimes I} $ denote the associated special filtration; in particular, we have
\begin{equation*}
F_{-1}^\spe j^{(I)}_*j^{(I)*}\omega_{X}^{\boxtimes I} =j^{(I)}_*j^{(I)*}\omega_{X}^{\boxtimes I}  \qquad\text{and}\qquad 	F_{-|I|}^\spe j^{(I)}_*j^{(I)*}\omega_{X}^{\boxtimes I}  = \omega_X^{\boxtimes I}   \ ,
\end{equation*}
so that the corresponding top and bottom degrees of the associated graded are given by
\begin{equation}\label{Fspegreqn}
	 \gr^{-1}_{F^\spe}j^{(I)}_*j^{(I)*}\omega_{X}^{\boxtimes I} = \Delta_*^{(I)} \omega_X \otimes \Lie(|I|)   \qquad\text{and}\qquad \gr^{-|I|}_{F^\spe}j^{(I)}_*j^{(I)*}\omega_{X}^{\boxtimes I}= M^{\boxtimes I}  \ .
\end{equation}
Note that this induces a canonical bounded, increasing filtration $F_\bullet^\spe P^\ch(M)$ on the chiral endomorphism operad $P^\ch(M)=\End_{\DD(X)^\ch}(M)$, defined by
\begin{align*}
	 F_k^\spe P^\ch(M)(I)  & \equiv  F_k^\spe \Hom_{\DD(X^I)}(j^{(I)}_*j^{(I) *}M^{\boxtimes I}, \Delta^{(I)}_*M )  \\ & := \Hom_{\DD(X^I)}((j^{(I)}_*j^{(I)*}\omega_{X}^{\boxtimes I}\otimes^! M^{\boxtimes I} )\big/ ((F_{-k-2}^\spe j^{(I)}_*j^{(I)*}\omega_{X}^{\boxtimes I}) \otimes^! M^{\boxtimes I}), \Delta^{(I)}_*M )  & ,
\end{align*}
where we have used the natural identification
\[ j^{(I)}_*j^{(I) *}M^{\boxtimes I} \cong (j^{(I)}_*j^{(I)*}\omega_{X}^{\boxtimes I}) \otimes^! M^{\boxtimes I}  \ . \]
Note that we have shifted the degrees of the filtration relative to the definition of Equation 3.2.4.2 of \cite{BD1}, so that the induced grading on the associated grading is the auxiliary grading introduced in Equation \ref{auxgreqn}, rather than that of \emph{loc. cit.}; this ensures that the binary commutative product is in degree 0 while the binary Poisson bracket operation is in degree $-1$, in analogy with the finite type case.

In particular, in our conventions we have the following Proposition, which is stated in \cite{BD1} in the alternative grading in the paragraph following Equation 3.2.4.2:
\begin{prop} Suppose $M\in \DD(X)$ is projective. There are natural isomorphisms
	\[  \gr_0^{F^\spe_\bullet} P^\ch(M)\xrightarrow{\cong} P^!(M)\otimes\Lie   \qquad \text{and} \qquad \gr_{|I|-1}^{F^\spe_\bullet} P^\ch(M) \xrightarrow{\cong}   P^*(M)\]
such that the induced inclusion and projection
\[   P^!(M)\otimes\Lie \cong \gr_0^{F^\spe_\bullet} P^\ch(M) \to P^\ch(M) \qquad \text{and} \qquad  P^\ch(M) \to \gr_{|I|-1}^{F^\spe_\bullet} P^\ch(M)\cong P^*(M)  \]
identify with the maps of colored operads $\alpha$ and $\beta$ of Equation \ref{alphabetaeqn}, respectively, restricted to the full, single-colored suboperad on the object $M$.
\end{prop}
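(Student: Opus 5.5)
The plan is to compute the two extreme graded pieces directly from the definition of $F^\spe_\bullet P^\ch(M)$. Since $M$ is projective, the functor $(\cdot)\otimes^! M^{\boxtimes I}$ is exact on $\DD(X^I)$, and $\Hom_{\DD(X^I)}(\cdot,\Delta^{(I)}_*M)$ applied to the short exact sequences $0\to F^\spe_{\ell-1}\to F^\spe_\ell\to \gr^\ell_{F^\spe}\to 0$ (tensored with $M^{\boxtimes I}$) is what produces the graded pieces. So the first step is to show that
\[ \gr_k^{F^\spe} P^\ch(M)(I)\cong \Hom_{\DD(X^I)}\big(\gr^{-k-1}_{F^\spe}(j^{(I)}_*j^{(I)*}\omega_X^{\boxtimes I})\otimes^! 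M^{\boxtimes I},\Delta^{(I)}_*M\big), \]
which amounts to the vanishing of the relevant $\Ext^1$ terms. I will get this from the special structure: each graded piece is a sum of $\Delta^{(I/T)}_*\omega_{X^T}$. By adjunction,
\[ \Ext^i\big(\Delta^{(I/T)}_*(\omega_{X^T})\otimes^! M^{\boxtimes I},\Delta^{(I)}_*M\big)\cong \Ext^i_{X^T}\big(\boxtimes_{t}M^{\otimes^! I_t},\Delta^{(T)}_*M\big), \]
and this is a $*$-operation space. I expect the required $\Ext^1$-vanishing for projective $M$ to follow by the argument of BD 3.2.4, and I will cite it rather than reprove it.

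Granting this, take $k=0$. The piece $\gr^{-1}$ is, by Equation \ref{Fspegreqn}, $\Delta^{(I)}_*\omega_X\otimes\Lie(|I|)$. Since $\Delta^{(I)}_*\omega_X\otimes^! M^{\boxtimes I}\cong \Delta^{(I)}_*(M^{\otimes^! I})$, Kashiwara's equivalence for the closed embedding $\Delta^{(I)}$ gives
\[ \Hom\big(\Delta^{(I)}_*(M^{\otimes^! I}),\Delta^{(I)}_*M\big)= P^!_I(M). \]
Tensoring with $\Lie(|I|)$ (dualized appropriately, using self-duality up to sign of the $\Lie$ representation as in BD's conventions) yields $P^!(M)\otimes\Lie$. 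For $k=|I|-1$, the piece $\gr^{-|I|}$ is $\omega_X^{\boxtimes I}$, so the graded piece is $\Hom(M^{\boxtimes I},\Delta^{(I)}_*M)=P^*_I(M)$.

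Next I will identify the maps. In top filtration degree, the projection $P^\ch\to \gr_{|I|-1}$ is restriction along $F^\spe_{-|I|}=M^{\boxtimes I}\hookrightarrow j_*j^*M^{\boxtimes I}$, which is exactly the unit map used for $\beta$ in Remark \ref{chiraltostar}. In bottom degree, the inclusion $\gr_0=F_0\to P^\ch$ is given by precomposition with the quotient $j_*j^*\to \gr^{-1}$. This is the residue/boundary map to $\Delta_*$, which is how BD define $\alpha$ in 1.4.27 and 3.2.
It remains to check that these identifications are compatible with the $\Ss_I$-actions and operadic compositions. That is, $\alpha$ and $\beta$ should be operad maps and the isomorphisms should respect composition. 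I will do this by noting that the special filtration is compatible with the factorization isomorphisms $j^{12,3}_*j^{12,3*}(j_*j^*\boxtimes\cdot)$ used to define composition, so that composition respects the filtration.

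I expect the main obstacle to be the $\Ext^1$-vanishing needed to pass from filtered pieces to graded pieces, together with the sign and twist bookkeeping for $\Lie(|I|)$ under the regrading shift relative to BD's Equation 3.2.4.2. I will handle both by reduction to BD 3.2.4 and 1.4.27.
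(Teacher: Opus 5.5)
Your proposal is correct and follows essentially the same route as the paper. Both arguments compute the two extreme graded pieces of $F^\spe_\bullet P^\ch(M)$ directly from its definition, using Equation \ref{Fspegreqn} and Kashiwara's lemma. Both then identify the inclusion of $\gr_0$ with precomposition by the residue map and the projection to $\gr_{|I|-1}$ with the unit map of Remark \ref{chiraltostar}, citing BD for compatibility with composition.

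The one difference is that the paper needs no $\Ext^1$-vanishing for $\gr_0$. Since $F^\spe_{-1}P^\ch(M)=0$ and the relevant quotient of $j^{(I)}_*j^{(I)*}\omega_X^{\boxtimes I}$ is exactly $\gr_{-1}$, $\gr_0$ is already a plain Hom out of that quotient. The only $\Ext$-type input is therefore the surjectivity $\Hom(j^{(I)}_*j^{(I)*}M^{\boxtimes I},\Delta^{(I)}_*M)\to\Hom(M^{\boxtimes I},\Delta^{(I)}_*M)$ in the top degree.
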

\begin{proof}We supply a proof to normalize the grading conventions: First, note that for $k=-1$ we have $-k-2=-1$ and $F_{-1}^\spe j^{(I)}_*j^{(I)*}\omega_{X}^{\boxtimes I} =j^{(I)}_*j^{(I)*}\omega_{X}^{\boxtimes I}$ so that $F_{-1}^\spe P^\ch(M)(I) =0$ and thus
\[	\gr_0^{F_\bullet^\spe}P^\ch(M)(I) = F_0^\spe P^\ch(M)(I) =\Hom_{\DD(X^I)}((j^{(I)}_*j^{(I)*}\omega_{X}^{\boxtimes I}\otimes^! M^{\boxtimes I} )\big/ ((F_{-2}^\spe j^{(I)}_*j^{(I)*}\omega_{X}^{\boxtimes I}) \otimes^! M^{\boxtimes I}), \Delta^{(I)}_*M ) \ . \]
Now, note that again since $F_{-1}^\spe j^{(I)}_*j^{(I)*}\omega_{X}^{\boxtimes I} =j^{(I)}_*j^{(I)*}\omega_{X}^{\boxtimes I}$ we have
\begin{align*}
	 (j^{(I)}_*j^{(I)*}\omega_{X}^{\boxtimes I}\otimes^! M^{\boxtimes I} )\big/ ((F_{-2}^\spe j^{(I)}_*j^{(I)*}\omega_{X}^{\boxtimes I}) \otimes^! M^{\boxtimes I})  & = ((F_{-1}^\spe j^{(I)}_*j^{(I)*}\omega_{X}^{\boxtimes I}) \otimes^! M^{\boxtimes I})/(F_{-2}^\spe j^{(I)}_*j^{(I)*}\omega_{X}^{\boxtimes I}) \otimes^! M^{\boxtimes I})  \\
	 & = (\gr_{-1}^{F^\spe}j^{(I)}_*j^{(I)*}\omega_{X}^{\boxtimes I}) \otimes^! M^{\boxtimes I}) \\
	 &  \cong (\Delta_*^{(I)} \omega_X)\otimes^! M^{\boxtimes I}  \otimes_{ \K} \Lie_\K(I) \\
	 & \cong \Delta_*^{(I)}  (M^{\otimes^! I})\otimes_{ \K} \Lie_\K(I)
\end{align*} where the first isomorphism is that of Equation \ref{Fspegreqn} and the second follows by Kashiwara's lemma. Thus, we obtain that $\gr_0^{F_\bullet^\spe}P^\ch(M)(I) = F_0^\spe P^\ch(M)(I)$ is given by
\begin{align*}
	 F^0P^\ch(M)(I) & =\Hom_{\DD(X^I)}((j^{(I)}_*j^{(I)*}\omega_{X}^{\boxtimes I}\otimes^! M^{\boxtimes I} )\big/ ((F_{-2}^\spe j^{(I)}_*j^{(I)*}\omega_{X}^{\boxtimes I}) \otimes^! M^{\boxtimes I}), \Delta^{(I)}_*M )  \\
	 & \cong \Hom_{\DD(X^I)}( \Delta_*^{(I)}  (M^{\otimes^! I}), \Delta^{(I)}_*M )\otimes_{ \K} \Lie_\K(I) \\
	 & \cong \Hom_{\DD(X)}(M^{\otimes^! I},M)\otimes_{ \K} \Lie_\K(I) \\
	& = (P^!(M)\otimes \Lie)(I) & ,
\end{align*}
as desired, where the first isomorphism follows from the preceding identification of the source objects of the Hom space, and the latter follows again by Kashiwara's lemma. It follows from Theorem 3.1.5 of \cite{BD1} that the operadic composition in this suboperad of $P^\ch(M)$ agrees with that of $P^!(M)\otimes \Lie$ under the above identifications, and that this agrees with the definition of the map $\alpha$.

For $k=|I|-1$ we have $-k-2=-|I|-1$ and $F_{-|I|-1}^\spe=0$ so that $F_{|I|-1}^\spe P^\ch(M) = P^\ch(M)$, while for $k=|I|-2$ we have $-k-2=-|I|$ and $F^\spe_{-|I|}  j^{(I)}_*j^{(I)*}\omega_X^{\boxtimes I} = \omega_{X}^{\boxtimes I}$ so that
\begin{align*}
	 F_{|I|-2}^\spe P^\ch(M) & = \Hom_{\DD(X^I)}((j^{(I)}_*j^{(I)*}\omega_{X}^{\boxtimes I}\otimes^! M^{\boxtimes I} )\big/ ((F_{-|I|}^\spe j^{(I)}_*j^{(I)*}\omega_{X}^{\boxtimes I}) \otimes^! M^{\boxtimes I}), \Delta^{(I)}_*M ) \\
	 &  = \Hom_{\DD(X^I)}(j^{(I)}_*j^{(I)*}M^{\boxtimes I} )\big/ M^{\boxtimes I}, \Delta^{(I)}_*M ) 
\end{align*}
and thus the short exact sequence
\[ M^{\boxtimes I}  \to j^{(I)}_*j^{(I)*}M^{\boxtimes I} \to (j^{(I)}_*j^{(I)*}M^{\boxtimes I} )\big/ M^{\boxtimes I}  \]
induces, under application of the functor $\Hom_{\DD(X^I)}(\cdot, \Delta_*^{(I)}M)$, the left exact sequence
\[\Hom_{\DD(X^I)}((j^{(I)}_*j^{(I)*}M^{\boxtimes I} )\big/ M^{\boxtimes I} , \Delta_*^{(I)}M) \to \Hom_{\DD(X^I)}(j^{(I)}_*j^{(I)*}M^{\boxtimes I} , \Delta_*^{(I)}M) \to \Hom_{\DD(X^I)}(M^{\boxtimes I} , \Delta_*^{(I)}M) \]
which is exact as long as $M$ and thus $M^{\boxtimes I}$ is a projective D-module, yielding the desired result:
\[ \gr_{|I|-1}^{F_\bullet^\spe} P^\ch(M)(I) = F_{|I|-1}^\spe P^\ch(M)(I) \big/ F_{|I|-2}^\spe P^\ch(M)(I) \cong P^*(M)(I) \ . \]
It is clear that this quotient identifies with the standard map of operads $\beta$, and in particular is compatible the operadic composition.
\end{proof}

In fact, the preceding Proposition follows as a special case of the following more general Theorem, stated in 3.2.5 of \cite{BD1} under the same projectivity hypothesis. For our applications, we will need a strengthening of this result, and in particular of the preceding Proposition, to allow for the following more general class of D-modules:

\begin{defn} A D-module $M\in \DD(X)$ is called unital-projective if it admits a decomposition
	\[ M^r= \Omega^{d_X}_X \oplus P_M\]
	where $P_M\in \DD^r(X)$ is a countable direct sum of locally projective D-modules of finite rank.
\end{defn}

\begin{theo}\label{BDgrclthm} Let $X$ be affine and let $\DD(X)_\textup{up}$ denote the full subcategory of $\DD(X)$ on unital-projective objects. There is an isomorphism of graded pseudo-tensor categories
	\[ \gr_\bullet^{F_\bullet^\spe} \DD(X)_\textup{up}^\ch  \xrightarrow{\cong} \DD(X)_\textup{up}^c \ , \]
	extending the identity functor on $\DD(X)_\textup{up}$.
\end{theo}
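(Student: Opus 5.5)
We reduce to Theorem 3.2.5 of \cite{BD1}, which proves the isomorphism $\gr_\bullet^{F^\spe}\DD(X)^\ch\cong\DD(X)^c$ on the full subcategory of projective D-modules (on affine $X$). The work is to extend it to unital-projective objects, in two steps. First, we treat countable direct sums of locally projective objects of finite rank. Second, we treat the extra summand $\Omega^{d_X}_X$, i.e.\ $\omega_X$ in the heart, which is not projective.

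\textbf{Graded pieces.} For $L_i,M$ unital-projective and each $k$, the special filtration on $j^{(I)}_*j^{(I)*}\omega_X^{\boxtimes I}$ has subquotients that are finite sums of $\Delta^{(I/T)}_*\omega_{X^T}$ with $|T|=k+1$. Tensoring with $\boxtimes L_i$ and using Kashiwara's lemma as in the preceding Proposition gives
\[
\gr_{-k-1}\bigl(j^{(I)}_*j^{(I)*}\boxtimes_i L_i\bigr)\cong \bigoplus_{|T|=k+1}\Delta^{(I/T)}_*\bigl(\boxtimes_{t\in T}\otimes^!_{i\in I_t}L_i\bigr)\otimes\textstyle\bigotimes_t\Lie(|I_t|).
\]
Applying $\Hom(\cdot,\Delta^{(I)}_*M)$ and Kashiwara once more yields exactly $P^c_I(\{L_i\},M)^{\langle k\rangle}$. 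Identifying $\gr_k$ of the Hom-filtration with $\Hom$ out of $\gr_{-k-1}$ requires that $\Hom(\cdot,\Delta^{(I)}_*M)$ take each short exact sequence
\[
F_{-k-2}\to F_{-k-1}\to \gr_{-k-1}
\]
to a short exact sequence. Equivalently, we need the vanishing of
\[
\Ext^1_{\DD(X^I)}\bigl(\Delta^{(I/T)}_*(\boxtimes_t \otimes^!_{I_t}L_i),\,\Delta^{(I)}_*M\bigr).
\]

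\textbf{The main obstacle: this $\Ext^1$-vanishing.} I would reduce by adjunction, $\Delta^{(I/T)}_*\dashv\Delta^{(I/T)!}$ for a closed embedding, so that it suffices to bound $\Ext^1_{\DD(X^T)}(\boxtimes_t N_t,\Delta^{(T)}_*M)$.

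For locally projective $N_t$ of finite rank, $\boxtimes_t N_t$ is projective on the affine variety $X^T$, so the group vanishes. Countable direct sums pass through $\Ext^1$ as products of the individual groups, hence still vanish. Here one also checks that $\otimes^!$ of finite-rank locally projectives is again locally projective, which holds since they are vector bundles with flat connection.

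The summand $\omega_X$ is the genuinely new case. Tensor factors $\omega_X$ are harmless in $\otimes^!$ products, since they act as units. The remaining input is the group $\Ext^1_{\DD(X^T)}(\omega_{X^T}\boxtimes N,\Delta_*M)$. Here I would use the de Rham resolution of $\omega$ and the fact that $\Delta_*M$ is supported on the diagonal, so the local cohomology computation reduces to $h$-type functors. Alternatively, for $X$ affine one can invoke that $\omega_{X^T}$ has projective dimension $\dim X^T$, and compute directly that the relevant $\Ext^1$ vanishes because $H^1_\dR$-type contributions come from $\Delta^!\Delta_*M$ only in nonnegative degrees.

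If that fails, a cleaner route is to split off the unit. Decompose $M=\omega\oplus P_M$ and use the unital structure: chiral operations involving an $\omega_X$ input are determined by the canonical boundary map, as in diagram \eqref{unchdiagram}, and likewise for classical operations. This reduces everything to the projective part.

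\textbf{Compatibility with composition.} The identification of the graded compositions with the classical compositions of 1.4.27 of \cite{BD1} is local and formal. It is checked exactly as in 3.2.5 of \cite{BD1}, since it only uses the identification of the associated graded of $j_*j^*$ along iterated diagonals. This works verbatim once the exactness above is secured, and it gives the stated isomorphism extending the identity on objects.
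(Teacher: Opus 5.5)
Your reduction (identify the strata $\gr_{-k-1}$ of $j_*j^*(\boxtimes_i L_i)$ with the classical summands, then deduce surjectivity from $\Ext^1$-vanishing on lower strata) is the paper's. For purely projective inputs your argument is correct and more direct than the paper's passage through $\Delta^{S*}$ and duality: a $\boxtimes$-product of locally projective modules is projective on the affine $X^T$. Your reason why $\otimes^!$ preserves local projectivity is wrong, though. Finite-rank projective $\mc D_X$-modules are not flat vector bundles ($\mc O_X$ is not projective). The correct reason is that $\mc D_X\otimes_{\mc O_X}\mc D_X$ with the diagonal action is locally free over $\mc D_X$, of countable rank.

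\textbf{The gap.} It is the step you yourself single out as the main obstacle: the groups $\Ext^1_{\DD(X^T)}(\Omega^{d_X}_X\boxtimes N,\Delta_*M)$ do \emph{not} vanish. No de Rham resolution or projective-dimension count can show that they do. Take $X=\bb A^1$ in left conventions. Then $\mc O_X\boxtimes\mc D_X=\mc D_{X^2}/\mc D_{X^2}\partial_{1}$, so $\Ext^\bullet(\mc O_X\boxtimes\mc D_X,\Delta_*M)$ is computed by $\Delta_*M\xrightarrow{\partial_1}\Delta_*M$. Writing $\Delta_*M=M[\partial_u]$ with $u=x_1-x_2$, the operator $\partial_1=\partial_u$ acts freely, so $\Hom=0$ but $\Ext^1\cong\Gamma(X,M)$. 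In general $\Ext^1(\Omega^{d_X}_X\boxtimes P,\Delta_*M)\cong\Hom(P,M)$.

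Such strata occur exactly where they are needed. For inputs $(\Omega^{d_X}_X,P,P,P)$ and $k=2$, the quotient by $F^\spe_{-3}$ contains the stratum of the partition $\{1\}\{2,3,4\}$. Meanwhile the target $\Hom(P^{\boxtimes 3},\Delta_*M)^{\oplus 3}$ is generally nonzero, so the stratum-wise induction breaks down.

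Your fallback does not repair this. Diagram \eqref{unchdiagram} is an axiom on a unital chiral algebra and says nothing about arbitrary elements of $P^\ch(\{\Omega^{d_X}_X,\dots\},M)$. Decomposing the target $M$ is also beside the point, since the trouble sits in the inputs.

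\textbf{Comparison with the paper.} The paper avoids $\Ext^1$ entirely when all inputs equal $\Omega^{d_X}_X$. It uses Theorem 3.1.5 of \cite{BD1} if $M=\Omega^{d_X}_X$, and a zero classical target if $M$ is projective. For mixed inputs it argues through $\Delta^{S*}$ and duality that the source is locally projective. That step runs into the same shift just exhibited, since $\Delta^*(\Omega^{d_X}_X\boxtimes P)\cong P[1]$, so you cannot borrow it.

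\textbf{What does work.} Your unit idea can be made precise at the level of operation spaces.
\begin{enumerate}
\item For $L_0=\Omega^{d_X}_X$ and $A=j_*j^*(\boxtimes_{j\in J}L_j)$, one has $\Hom(\Omega^{d_X}_X\boxtimes A,\Delta_*M)=0$, because $\partial_{x_0}$ acts injectively on $\Delta_*M$.
\item Hence $P^\ch_{\{0\}\sqcup J}(\{L_0,L_j\},M)\cong\Hom(C,\Delta_*M)$, where $C=j_*j^*(\Omega^{d_X}_X\boxtimes A)/(\Omega^{d_X}_X\boxtimes A)$.
\item One has $C\cong\bigoplus_{j\in J}\iota_{j*}A$, where $\iota_j$ is the diagonal $x_0=x_j$. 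These divisors are disjoint off the diagonals of $X^J$, along which everything is already localized. This gives $P^\ch_{\{0\}\sqcup J}(\{L_0,L_j\},M)\cong\bigoplus_{j\in J}P^\ch_J(\{L_j\},M)$.
\item The classical side splits the same way, using $\Lie(m+1)|_{\Ss_m}\cong\Lie(m)\otimes\K^m$ and the vanishing of $*$-operations with an isolated $\Omega^{d_X}_X$ input.
\item After checking that both splittings respect the special filtration, induction on the number of $\Omega^{d_X}_X$ inputs reduces the theorem to the purely projective case, where your argument applies.
\end{enumerate}
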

\begin{proof} 
	
	Let $I$ be a finite set, let $\{ L_i^\textup{Tot}\}_{i\in I},\ M^\textup{Tot}\in \DD(X)_{\textup{up}}$, and let $\{L_i\}_{i\in I},\ M\in\DD(X)$ be arbitrary choices of summand in their unital-projective presentation, so that each is either locally projective of finite rank, or simply equal to $\Omega_X^{d_X}$. Then each of $\{L_i\}$ are in particular $\mc O_X$-flat and thus by the arguments of 3.2.5 in \cite{BD1}, we have for each $k=0,...,|I|-1$ an injection
	\[ \gr_{ k}^{F^\spe_\bullet} P^\ch(\{L_i\},M) \into \Hom_{\DD(X^I)}( \gr_{-k-1}^{F^\spe_\bullet}(j_*j^*\boxtimes_{i\in I} L_i), \Delta_*M)= P^c(\{L_i\},M)^{\langle  k \rangle} \]
	and it remains to check surjectivity.
	
	We recall from \emph{loc. cit.} that we have
	\[\gr_{-k-1}^{F^\spe_\bullet}(j_*j^*\boxtimes_{i\in I} L_i)=\bigoplus_{\substack{(\pi_S,S)\in Q(I)\\ |S|=k+1}} \Delta^{(I/S)}_*( \boxtimes_{s\in S} (\otimes^!_{i\in I_s} L_i) )\otimes \bigotimes_{s\in S} \Lie(|I_s|)  \ , \]
	which explains the equality with $P^c(\{L_i\},M)^{\langle  k \rangle}$ above. Moreover, from the short exact sequence
	\[\gr_{-k-1}^{F^\spe_\bullet}(j_*j^*\boxtimes_{i\in I} L_i ) \to (j_*j^*\boxtimes_{i\in I} L_i)/ F^\spe_{-k-2}(j_*j^*\boxtimes_{i\in I} L_i) \to (j_*j^*\boxtimes_{i\in I} L_i)/ F^\spe_{-k-1}(j_*j^*\boxtimes_{i\in I} L_i)  \ ,\]
	we apply the functor $\Hom_{\DD(X^I)}(\cdot , \Delta_* M)$ to obtain the long exact sequence
	\begin{align*}
		0 & \to F_{k-1}^\spe P^\ch(\{L_i\},M) \to F_{k}^\spe P^\ch(\{L_i\},M) \to \Hom_{\DD(X^I)}(\gr_{-k-1}^{F^\spe_\bullet}(j_*j^*\boxtimes_{i\in I} L_i ), \Delta_*M) \\
		& \to \Ext^1_{\DD(X^I)}((j_*j^*\boxtimes_{i\in I} L_i)/ F^\spe_{-k-1}(j_*j^*\boxtimes_{i\in I} L_i) , \Delta^*M ) \to \cdots & .
	\end{align*}
	Thus, it suffices to show the vanishing of $\Ext^1((j_*j^*\boxtimes_{i\in I} L_i)/ F^\spe_{-k-1}(j_*j^*\boxtimes_{i\in I} L_i) , \Delta_*M )$. Moreover, the source object $(j_*j^*\boxtimes_{i\in I} L_i)/ F^\spe_{-k-1}(j_*j^*\boxtimes_{i\in I} L_i)$ is evidently filtered with associated graded components given by the associated graded components $\gr_{\ell}^{F^\spe_\bullet}(j_*j^*\boxtimes_{i\in I} L_i )$, so that by inductive application of the long exact sequence, it suffices to show
	\[ \Ext^1(\gr_{\ell}^{F^\spe_\bullet}(j_*j^*\boxtimes_{i\in I} L_i ),\Delta_*M) = 0  \ .\]
	Evidently the combinatorial factors of the Lie operations do not contribute to the $\Ext$, so that it remains to show the vanishing of
	\[ \Ext^1_{\DD(X^I)}(\Delta^{(I/S)}_*( \boxtimes_{s\in S} (\otimes^!_{i\in I_s} L_i) ),\Delta^I_*M) \cong \Ext^1_{\DD(X^S)}( \boxtimes_{s\in S} (\otimes^!_{i\in I_s} L_i), \Delta^S_*M) \ ,\]
	where the isomorphism follows by Kashiwara's lemma.
	
	Next, we note that whenever the fibre $I_s$ contains at least one $i$ for which $L_i$ locally projective of finite rank, then the exterior factor $L_s:=\otimes^!_{i\in I_s} L_i$ is so as well, and otherwise $L_s=\Omega_X^{d_X}$. In this notation, it thus remains to show
	\[  \Ext^1_{\DD(X^S)}( \boxtimes_{s\in S} L_s , \Delta^S_*M)=0 \]
	where each $L_s$ is either locally projective of finite rank, or equal to $\Omega_X^{d_X}$.
	
	Note that each such $L_s$ is in particular dualizable, as are the $\otimes^!$ products of their duals, so that the functor $\Delta^*$ is well defined on the source and we can apply the adjunction for the derived Hom
	\[ \Hom_{\Dd(X^S)}( \boxtimes_{s\in S} L_s , \Delta^S_*M) \cong \Hom_{\Dd(X)}( \Delta^{S*}( \boxtimes_{s\in S} L_s),M) \ .\]
	We can compute the source object as
	\[ \Delta^{S*}( \boxtimes_{s\in S} L_s) = \DDD_{X^S} \Delta^! \DDD_{X^S} ( \boxtimes_{s\in S} L_s) = \DDD_{X^S} ( \otimes^{!,\bb L}_{s\in S} \DDD_X L_s ) \ .\]
	We recall that the dual of a locally projective, finite rank D-module is again such, and similarly $\Omega_X^{d_X}$ is self-dual, up to cohomological degree shifts which cancel with those relating $\otimes^{!,\bb L}_{s\in S}$ to the plain tensor product $\otimes^!_{s\in S}$.
	
	We now prove this $\Ext^1$ vanishes by analyzing the various cases, corresponding to whether each of the $\{L_i\}_{i\in I}$ or $M$ is either locally projective of finite rank, or equal to $\Omega_X^{d_X}$. If at least one of the $L_s$ is locally projective of finite rank, then, by the preceding paragraph, so is $\DDD_{X^S} ( \otimes^{!,\bb L}_{s\in S} \DDD_X L_s )$, and thus
	\[ \Ext^1_{\DD(X^S)}( \boxtimes_{s\in S} L_s , \Delta^S_*M) \cong \Ext^1_{\DD(X)}( \Delta^{S*}( \boxtimes_{s\in S} L_s),M) = 0\]
	since the source is locally projective and $X$ is affine.
	
	It remains to treat the case where we have $L_s=\Omega_X^{d_X}$ for every $s\in S$. If $M$ itself is equal to $\Omega_X^{d_X}$, then by the proof of Theorem 3.1.5 in \cite{BD1}, the filtration is trivial and we obtain a canonical identification $P^\ch(\Omega_X^{d_X}) \cong \Lie \cong P^c(\Omega_X^{d_X})$. Otherwise, if $M$ is locally projective of finite rank, then $P^c(\{\Omega_X^{d_X}\},M)=0$ and our desired surjectivity is trivial, independent of the $\Ext^1$ vanishing.
	
\end{proof}

\begin{coro}
	Let $M\in \DD(X)$ be unital-projective. Then 
	\[
	\gr_{F^\spe} P^\ch(M) \cong P^c(M)~.
	\]
	In particular, the preceding holds for $M = \JJ A$ with $A$ a flat $\mc O_X$-algebra.
\end{coro}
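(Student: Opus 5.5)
The plan is to deduce the corollary from Theorem \ref{BDgrclthm} by restricting to full single-coloured suboperads, and then to verify separately that jet algebras are unital-projective. First, since $M$ is unital-projective it is an object of $\DD(X)_\textup{up}$. The endomorphism operad $P^\ch(M)=\End_{\DD(X)^\ch}(M)$ is the full suboperad of $\DD(X)^\ch_\textup{up}$ on the single colour $M$. The filtration $F^\spe_\bullet P^\ch(M)$ defined above is, by construction, the restriction of the filtration $F^\spe_\bullet$ on the coloured operad; both are induced arity-wise from the special filtration on $j^{(I)}_*j^{(I)*}\omega_X^{\boxtimes I}$. Taking associated graded commutes with passing to a full suboperad on a fixed set of colours, since both are computed arity by arity on the same Hom spaces. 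So the isomorphism of graded pseudo-tensor categories in Theorem \ref{BDgrclthm}, which is the identity on objects, restricts to an isomorphism of graded single-coloured operads $\gr_{F^\spe}P^\ch(M)\cong P^c(M)=\End_{\DD(X)^c}(M)$. Compatibility with operadic composition is inherited from the theorem.

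For the second claim, I would show that $\JJ A$ is unital-projective when $A$ is a flat $\mc O_X$-algebra (and $X$ is affine, as in the theorem). The unit $u:\omega_X\to \JJ A$ should split, because $\JJ A$ is augmented over its degree-zero part in the natural jet grading or filtration. The cleanest route is to use that $\JJ A$ is, as a D-module, a filtered colimit of D-modules induced from $\mc O_X$-flat pieces. One can filter by jet order and polynomial degree so that the associated graded pieces are induced D-modules $V\otimes_{\mc O_X}\mc D_X$ with $V$ a flat $\mc O_X$-module, then split this filtration using projectivity. Over affine $X$, induced D-modules from finite-rank locally free $\mc O_X$-modules are locally projective of finite rank. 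Decomposing the flat pieces as countable sums of such, for instance when $A$ is countably generated, gives $\JJ A^r\cong \Omega_X^{d_X}\oplus P$ with $P$ a countable direct sum of locally projective finite-rank D-modules.

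The main obstacle is this second step. The structural restriction argument is formal, but exhibiting the unital-projective decomposition of $\JJ A$ needs a concrete presentation. I would first try the case where $A$ is a quotient of a polynomial algebra, using $\JJ \mc O_X[x_\alpha]=\Sym_{\mc O_X}(\oplus_\alpha \mc D_X)$ with its monomial decomposition into induced modules. I would then reduce the flat case to this via a PBW/degree filtration splitting. Flatness is what makes the graded pieces locally projective rather than merely quasi-coherent. If the general flat case resists, I would restrict to $A$ smooth or countably generated, which covers all the paper's applications.
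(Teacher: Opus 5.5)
Your first step is exactly how the corollary follows in the paper, which gives no separate argument. $P^\ch(M)$ with its special filtration is the full single-coloured suboperad of $\DD(X)^\ch_{\textup{up}}$ on $M$, carrying the restricted filtration, so Theorem \ref{BDgrclthm} (for $X$ affine, as you note) restricts to $\gr_{F^\spe}P^\ch(M)\cong P^c(M)$.

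\textbf{The gap is in your second step, and it cannot be closed as proposed.} The claim that $\JJ A$ is unital-projective for every flat $\mc O_X$-algebra $A$ is false; the paper states this clause without proof.
\begin{itemize}
\item Take $A=\mc O_X\times\mc O_X$. In any $\Comm^!$ algebra an idempotent $e$ is horizontal: $\partial e=\partial(e^2)=2e\,\partial e$ gives $e\,\partial e=0$, hence $\partial e=0$. So by \eqref{affinejetadjeqn}, $\JJ A=A$ with the trivial connection, i.e.\ $(\JJ A)^r\cong\Omega^{d_X}_X\oplus\Omega^{d_X}_X$. A decomposition $\Omega^{d_X}_X\oplus P$ would force $P\cong\Omega^{d_X}_X$ by Krull--Schmidt. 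But $\Omega^{d_X}_X$ is holonomic, and a nonzero locally projective D-module never is, since it embeds locally into a free $\mathcal D_X$-module.
\item Take $A=\mc O_X[f^{-1}]$ with $f$ vanishing somewhere on $X$; it is smooth, connected, of finite type and flat. Here $\JJ A=A=j_*\mc O_U$, because invertibility of $f$ forces any algebra map out of $A$ to be horizontal. This is a non-split extension of $\delta$-modules by $\mc O_X$, and the same holonomicity argument shows it is not unital-projective.
\end{itemize}
These examples locate the failing steps in your sketch:
\begin{itemize}
\item Flatness over $\mc O_X$ does not make the graded pieces locally projective. For instance $\mathcal D_X\otimes_{\mc O_X}\K(z)$ admits no nonzero map to a free module.
\item Passing to a quotient by a differential ideal can create torsion, so the reduction to the polynomial case does not go through.
\item The unit does not split off with projective complement just because $\JJ A$ is augmented over its degree-zero part, since that part is $A$, not $\mc O_X$.
\end{itemize}
Your fallback to smooth $A$ fails in the non-constant setting (second example). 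It also would not cover $\JJ(\K[T]/T^n)$, the non-smooth case the paper actually needs.

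What is needed is a different hypothesis. In the setting where the paper uses the corollary ($X=\IA^1$, translation invariant, $A=B\otimes_\K\mc O_X$ with $B$ countably generated) the following works.
\begin{itemize}
\item $\JJ A$ corresponds to the $\K[\partial]$-module $\JJ B$, graded by total derivative order, with $B$ in degree $0$ and $\partial$ of degree $+1$.
\item Suppose $\ker(\partial|_{\JJ B})=\K\cdot 1$. Pick a complement $B'$ of $\K\cdot 1$ in $B$. Then $\JJ B=\K\cdot 1\oplus V'$, where $V'=B'\oplus(\JJ B)_{>0}$ is a graded $\K[\partial]$-submodule on which $\partial$ is injective.
\item A bounded-below graded module with injective $\partial$ is free on any graded complement of $\partial V'$. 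Hence $\JJ A\cong\mc O_X\oplus\bigoplus\mathcal D_X$, which is unital-projective.
\end{itemize}
The condition $\ker\partial=\K\cdot 1$ holds for $B$ smooth and connected (look at the highest-order jet variable occurring in a $\partial$-constant). It fails for $B=\K\times\K$. For $B=\K[T]/T^n$, the case used in Section \ref{ssec:minimal models}, it must be verified directly; it is not a consequence of flatness.
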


\subsubsection{}

Let $(M,F^M_\bullet M)$ be a filtered, unital-projective $D$-module. We assume there exists, and fix once and for all, a splitting of the filtration, \ie, a grading compatible with the $D$-module structure,
\begin{equation}\label{Mgdeqn}
	 M=  \bigoplus_{k\in \Z} M^{\langle k \rangle}\ \in \DD(X) \qquad \text{such that} \qquad F_kM =\bigoplus_{l\leq k} M^{\langle l \rangle}\langle -l \rangle   \ . 
\end{equation}

The corresponding Rees object is
\[
M_\hbar = \Rees_\hbar^{F^M_\bullet}(M) = \bigoplus_{m\in\IZ} F^M_m M\cdot \hbar^m\in D(X)\boxtimes_\ik \ik[\hbar]\Mod_\gr,
\]
and the splitting of \eqref{Mgdeqn} gives a trivialization, \ie, a $\IG_\hbar$-equivariant isomorphism
\begin{equation}\label{Vhtrivgreqn}
M_\hbar \xrightarrow{\sim} \bigoplus_{k\in \IZ} M^{\langle k\rangle}\otimes \ik[\hbar]~.
\end{equation}

\subsubsection{}

The filtration on $M$ induces one on the operad $P^\ch(M)$, given by
\begin{equation}\label{Mfilteqn}
	F_k^M P^\ch(M)(I) = \sum_{p-q=k}  \{ \varphi \in P^\ch(M)(I)  \ | \  \varphi(j^{(I)}_*j^{(I)*}F_p^{M^{\boxtimes I}}M^{\boxtimes I})\subset \Delta^{(I)}_*F_q^M  \ \}~,
\end{equation}
where the induced filtration on $M^{\boxtimes I}$ is defined by
\begin{equation}\label{FMboxeqn}
	F_p^{M^{\boxtimes I}}M^{\boxtimes I} = \sum_{p_1+\dots+p_{|I|}\le p}F^M_{p_1}M\boxtimes \dots \boxtimes F_{p_{|I|}}^MM~.
\end{equation}

The splitting of the filtration on $M$ induces a splitting of the filtration on $P^{\ch}_M$ given by the bigrading
\[
	 P^\ch(M) = \bigoplus_{p,q \in \Z} P^\ch(M)^{\langle p,q \rangle }~,
\]
where the components are given by
\begin{equation}\label{PchMpqeqn}
	 P^\ch(M)^{\langle p,q \rangle } = \Hom_{\DD(X^I)}(j^{(I)}_*j^{(I)*}(M^{\boxtimes I})^{\langle p \rangle}, \Delta^{(I)}_*M^{\langle q \rangle}) \ ,
\end{equation}
with the filtration given by
\[ 
F_k^MP^\ch(M) = \bigoplus_{q-p\leq k}  P^\ch(M)^{\langle p,q \rangle } \ , 
\]
and where we have used the induced splitting of the filtration $F_\bullet^{M^{\boxtimes I}}$ on $M^{\boxtimes I}$ given by
\[
M^{\boxtimes I} = \bigoplus_{p\in \Z} (M^{\boxtimes I})^{\langle p \rangle} \qquad \text{where}\qquad (M^{\boxtimes I})^{\langle p \rangle}= \bigoplus_{p_1+...+p_{|I|} = p}  M^{\langle p_1 \rangle} \boxtimes \hdots \boxtimes M^{\langle p_{|I|} \rangle} \ . 
\]
Note that for each $p,q\in \Z$, the projection to the $(p,q)$-graded component
\begin{equation}\label{pipqeqn}
	 \pi_{p,q}^{P^\ch(M)}:P^\ch(M) \to P^\ch(M)^{\langle p,q \rangle } \qquad \text{is given by}\qquad \mu \mapsto \pi_q^M \circ \mu \circ \iota_p^{M^{\boxtimes I}}  \ ,
\end{equation}
where $\iota_p^{M^{\boxtimes I}}:(M^{\boxtimes I})^{\langle p \rangle}  \to M^{\boxtimes I}$ denotes the inclusion of the $p$th graded component of $M^{\boxtimes I}$ and $\pi_q^M:M \to M^{\langle q \rangle} $ denotes the projection to the $q$th graded component of $M$.

\subsubsection{} 

There is a natural refinement of the special filtration of Section \ref{specialsec} to account for the underlying filtration on the object $M$, as fixed above. We define the filtration $F_\bullet^{M,\spe} P^\ch(M)$ by
\begin{equation}
F_k^{M,\spe} P^\ch(M)(I)= \sum_{\ell \in \Z} (F_{\ell}^{M} \cap F^\spe_{k-\ell} )P^\ch(M)(I) \ ,
\end{equation}
where we have the following explicit description of the intersections occuring as summands
\[
{\hspace{-1cm}}
(F_{\ell}^{M}\cap F_j^{\spe}) P^\ch(M)(I) = \sum_{p-q=\ell} \{ \varphi \in  P^\ch(M)(I) \ | \   \varphi( F_{p}^{M^{\boxtimes I}} M^{\boxtimes I}) \subseteq \Delta_* F_{q}M  \text{ , } \ \varphi((F_{-j-2}^\spe j^{(I)}_*j^{(I)*}\omega_{X}^{\boxtimes I}) \otimes^! M^{\boxtimes I})=\{0\}  \ \}
\]
and the filtration $F_\bullet^{M^{\boxtimes I}}M^{\boxtimes I}$ on $M^{\boxtimes I}$ is as defined in Equation \ref{FMboxeqn}.

\subsubsection{} 





\begin{prop}\label{prop:joint split}
	The joint filtration $F_\bullet^{M,\spe} P^\ch(M)$ on $P^\ch(M)$ can be split, \ie, there exists a grading
	\[
	P^{\ch}(M) = \bigoplus_{k\in \IZ} P^{\ch}(M)^{\langle k \rangle^\spe} \qquad \text{such that}\qquad F_k^{M,\spe} P^\ch(M) = \bigoplus_{i\le k} P^\ch(M)^{\langle i\rangle^\spe}~.
	\]
\end{prop}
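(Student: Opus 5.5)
The plan is to split the two filtrations separately and then check that the two splittings are compatible, so that the joint filtration is split by the total degree. The $M$-filtration is already split by the bigrading $P^\ch(M)=\bigoplus_{p,q}P^\ch(M)^{\langle p,q\rangle}$ of Equation \ref{PchMpqeqn}, coming from the fixed splitting \eqref{Mgdeqn}. So the real work is to produce a splitting of the special filtration $F^\spe_\bullet$ on $P^\ch(M)(I)$. This splitting must commute with the projections $\pi^{P^\ch(M)}_{p,q}$ of Equation \ref{pipqeqn}. We would then set
\[
P^\ch(M)^{\langle k\rangle^\spe}=\bigoplus_{\ell+j=k}\ \bigoplus_{q-p=\ell} \bigl(P^\ch(M)^{\langle p,q\rangle}\bigr)^{\{j\}},
\]
where $(\cdot)^{\{j\}}$ denotes the special-graded piece. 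Compatibility with the projections guarantees that
\[
(F^M_\ell\cap F^\spe_j)P^\ch(M)(I)=\bigoplus_{q-p\le \ell,\ i\le j}\bigl(P^\ch(M)^{\langle p,q\rangle}\bigr)^{\{i\}}.
\]
Summing over $\ell$ then gives the displayed formula for $F^{M,\spe}_k$.

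To split $F^\spe_\bullet$ on a single bigraded piece, we work with the source object $j^{(I)}_*j^{(I)*}\omega_X^{\boxtimes I}\otimes^!(M^{\boxtimes I})^{\langle p\rangle}$ rather than with the Hom space. The key input is that, since $X$ is affine, the special filtration on $j^{(I)}_*j^{(I)*}\omega_X^{\boxtimes I}$ splits as a filtration of D-modules after applying $\Hom(\cdot,\Delta_*N)$. Concretely, the argument in the proof of Theorem \ref{BDgrclthm} shows that
\[
\Ext^1_{\DD(X^I)}\bigl(\gr^{F^\spe}_\ell(j_*j^*\boxtimes_i L_i),\Delta_*N\bigr)=0
\]
for unital-projective summands. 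Hence each short exact sequence
\[
0\to F^\spe_{k-1}P^\ch\to F^\spe_kP^\ch\to \gr^\spe_kP^\ch\to 0
\]
is a sequence of $\K$-vector spaces, and in particular it splits. The hard part is making the splitting canonical enough to commute with the $(p,q)$-projections and to survive countable direct sums.

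We would handle this by choosing splittings summand-by-summand. Write $M^r=\Omega^{d_X}_X\oplus P_M$ with $P_M=\bigoplus_\alpha P_\alpha$ locally projective of finite rank, refined compatibly with the grading $M^{\langle k\rangle}$. Here we assume the unital-projective presentation can be chosen graded; I would first try to deduce this from the fact that the grading is by D-submodules and that direct summands of projectives are projective. Then $P^\ch(M)(I)$ is a product over source multi-indices $(\alpha_i)$ of a direct sum over target summands of $P^\ch(\{L_{\alpha_i}\},L_\beta)$. Both the $M$-grading and the special filtration respect this decomposition, so it suffices to split $F^\spe_\bullet$ on each $P^\ch(\{L_{\alpha_i}\},L_\beta)$ independently and take the product and sum of the chosen splittings.

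On each such finite piece the filtration is finite of length $|I|$. The successive quotients are $P^c(\{L_i\},L_\beta)^{\langle k\rangle}$, by Theorem \ref{BDgrclthm}, so a vector-space splitting exists. I expect the main obstacle to be convergence: sections of the split pieces must assemble into genuine elements of the product, so that the sum over $k$ is a direct sum rather than a completion. This is fine because the special degree is bounded, $0\le k\le |I|-1$, in each arity, so only finitely many graded pieces occur. Finally, the $\Ss_I$-action should be respected, by averaging the splittings, which is possible in characteristic zero.
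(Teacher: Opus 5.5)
Your architecture is the paper's: split the special filtration compatibly with the bigrading of Equation \ref{PchMpqeqn}, then regrade by total degree $q-p+i$. Your bookkeeping for $F^{M,\spe}_k$ agrees with the paper's computation of the index set $R(k)$. The weak point is the step you call the hard part. You route it through a refinement of $M$ into homogeneous finite-rank projective summands, which depends on an assumption you leave unproved: that the unital-projective presentation can be chosen compatibly with the grading. You should not try to prove this, because it is not needed.

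The only input required is that each projection $\pi^{P^\ch(M)}_{p,q}=\pi_q\circ(\cdot)\circ\iota_p$ maps $F^\spe_kP^\ch(M)$ into itself, and this follows directly from the definitions. Membership in $F^\spe_k$ means vanishing on the subobject $(F^\spe_{-k-2}j^{(I)}_*j^{(I)*}\omega_X^{\boxtimes I})\otimes^! M^{\boxtimes I}$. This subobject is functorial in the second factor and commutes with the direct sum over $p$, so precomposing with $\iota_p$ and postcomposing with $\pi_q$ preserve the vanishing condition. This is Lemma \ref{splitlemma}, and it gives $F^\spe_kP^\ch(M)=\bigoplus_{p,q}F^\spe_kP^\ch(M)\cap P^\ch(M)^{\langle p,q\rangle}$ (Corollary \ref{Fspedseqn}). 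After that, any linear splitting of the induced bounded filtration on each $P^\ch(M)^{\langle p,q\rangle}$ works, with no projectivity, finite-rank or graded-presentation hypotheses. Your assertion that ``the special filtration respects'' your finer decomposition is exactly this same fact, so the refinement adds nothing.

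Two smaller corrections. First, the $\Ext^1$-vanishing from the proof of Theorem \ref{BDgrclthm} plays no role in the splitting, since every short exact sequence of $\K$-vector spaces splits. That vanishing only serves to identify $\gr^\spe$ with $P^c$, which is irrelevant here. Second, boundedness ($F^\spe_{-1}P^\ch(M)(I)=0$ and $F^\spe_{|I|-1}P^\ch(M)(I)=P^\ch(M)(I)$) already holds on all of $P^\ch(M)(I)$, so there is no convergence issue that passing to finite-rank pieces resolves.

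Your remark about averaging over $\Ss_I$ to make the splitting equivariant is a useful addition the paper omits. It works because each $P^\ch(M)^{\langle p,q\rangle}$ and each $F^\spe_k$ is $\Ss_I$-stable.
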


%

The proof will consist primarily of bookkeeping; one chooses splittings of the special filtration restricted to each summand of the graded decomposition of Equation \ref{PchMpqeqn}, giving a splitting of the special filtration which is compatible with the graded decomposition in such a way that it can be naturally modified to define a splitting of the joint filtration.

We define the induced special filtration $ F_\bullet^\spe P^\ch(M)^{\langle p,q \rangle}$ on the $(p,q)$-graded component $P^\ch(M)^{\langle p,q \rangle}$ as the intersection
\[ 
F_k^\spe P^\ch(M)^{\langle p,q \rangle} = P^\ch(M)^{\langle p,q \rangle} \cap F_k^\spe P^\ch(M) \ . 
\] 
Then the essential content of the proof of Proposition \ref{prop:joint split} is isolated in the following elementary lemma:

\begin{lemma}\label{splitlemma} The restriction of the projection $\pi_{p,q}^{P^\ch(M)}$ of Equation \ref{pipqeqn} to $F_k^\spe P^\ch(M)$ factors as
	\[ 
	\begin{tikzcd}[column sep=large] F_k^\spe P^\ch(M) \arrow[r,"\pi_{p,q}^{F_k^\spe P^\ch(M) }"]\arrow[d] \arrow[r] & F_k^\spe P^\ch(M)^{\langle p,q \rangle }\arrow[d]  \\
		P^\ch(M) \arrow[r,"\pi_{p,q}^{P^\ch(M)}"]  & P^\ch(M)^{\langle p,q \rangle } 
		 \end{tikzcd} \ . 
		 \]
\end{lemma}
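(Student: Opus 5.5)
The claim amounts to this: if $\mu\in F_k^\spe P^\ch(M)$, then $\pi_q^M\circ\mu\circ\iota_p^{M^{\boxtimes I}}$ again lies in $F_k^\spe P^\ch(M)$. By definition $F_k^\spe P^\ch(M)^{\langle p,q\rangle}$ is the intersection $P^\ch(M)^{\langle p,q\rangle}\cap F_k^\spe P^\ch(M)$, and the image of $\pi_{p,q}$ lies in $P^\ch(M)^{\langle p,q\rangle}$ automatically. So the only thing to check is that $F_k^\spe P^\ch(M)$ is preserved by pre-composition with $\iota_p^{M^{\boxtimes I}}$ followed by the projection $(j_*j^*M^{\boxtimes I})\to j_*j^*(M^{\boxtimes I})^{\langle p\rangle}$, and by post-composition with $\Delta_*\pi_q^M$. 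I will treat these two operations separately.

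For post-composition, recall that $F_k^\spe$ consists of the maps $\mu$ that vanish on the sub-D-module $(F_{-k-2}^\spe j_*j^*\omega_X^{\boxtimes I})\otimes^! M^{\boxtimes I}$. If $\mu$ vanishes there, then so does $\Delta_*\pi_q^M\circ\mu$, since $\Delta_*\pi_q^M$ is a map of D-modules applied afterwards. This step is immediate.

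For pre-composition, I need to show that the idempotent endomorphism $e_p:=\iota_p\circ\mathrm{pr}_p$ of $M^{\boxtimes I}$, extended to $j_*j^*M^{\boxtimes I}\cong (j_*j^*\omega_X^{\boxtimes I})\otimes^! M^{\boxtimes I}$, preserves the subobject $(F_{-k-2}^\spe j_*j^*\omega_X^{\boxtimes I})\otimes^! M^{\boxtimes I}$. The key point is that the special filtration lives entirely on the $\omega$ factor. The grading of $M^{\boxtimes I}$ is a direct sum decomposition of D-modules on $X^I$, coming from the D-module splitting in Equation \ref{Mgdeqn}. The functor $N\mapsto (F_j^\spe j_*j^*\omega_X^{\boxtimes I})\otimes^! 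N$ is additive, so it commutes with finite direct sums. It therefore sends $e_p$, viewed as $\id\otimes^! e_p$, to an endomorphism that respects the inclusions $F_j^\spe\otimes^! M^{\boxtimes I}\hookrightarrow j_*j^*M^{\boxtimes I}$, by naturality of $\otimes^!$ in the first argument. Hence the subobject is preserved, and any $\mu$ killing it gives a map $\mu\circ(\id\otimes^! e_p)$ that kills it as well.

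One detail needs care, and it is the main obstacle. The inclusions $F_j^\spe\otimes^! M^{\boxtimes I}\to j_*j^*\omega^{\boxtimes I}\otimes^! M^{\boxtimes I}$ must actually be injective for the word ``subobject'' to make sense. This holds because $M$ is unital-projective, hence $\mathcal O$-flat, so $\otimes^! M^{\boxtimes I}$ is exact. This is the same flatness already used in the proof of Theorem \ref{BDgrclthm}. Once that is in place, combining the two steps shows that $\pi_{p,q}$ maps $F_k^\spe P^\ch(M)$ into $F_k^\spe P^\ch(M)^{\langle p,q\rangle}$, which gives the factorization claimed in the diagram.
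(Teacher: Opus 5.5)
Your proposal is correct and is essentially the paper's argument. The paper describes $F_k^\spe P^\ch(M)^{\langle p,q\rangle}$ concretely as maps out of the quotient of $(j_*j^*\omega_X^{\boxtimes I})\otimes^!(M^{\boxtimes I})^{\langle p\rangle}$ by $(F^\spe_{-k-2}j_*j^*\omega_X^{\boxtimes I})\otimes^!(M^{\boxtimes I})^{\langle p\rangle}$ into $\Delta_*M^{\langle q\rangle}$, and asserts that the restricted projection lands exactly there; your check for $\mathrm{id}\otimes^! e_p$ and $\Delta_*\pi_q^M$ makes that explicit. The flatness caveat is harmless but not needed: the naturality square already shows that $\mu\circ(\mathrm{id}\otimes^! e_p)$ kills the image of $(F^\spe_{-k-2}j_*j^*\omega_X^{\boxtimes I})\otimes^! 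M^{\boxtimes I}$, whether or not that map is injective.
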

\begin{proof}
	Note that the intersection defining $F_k^\spe P^\ch(M)^{\langle p,q \rangle }$ admits the following concrete description
\[	
{\hspace{-1.5cm}}
P^\ch(M)^{\langle p,q \rangle}(I) \cap F_k^\spe P^\ch(M)(I) = \Hom_{\DD(X^n)}\bigg((j^{(I)}_*j^{(I)*}\omega_{X}^{\boxtimes I}\otimes^! (M^{\boxtimes I})^{\langle p \rangle} )\big/ ((F_{-k-2}^\spe j^{(I)}_*j^{(I)*}\omega_{X}^{\boxtimes I}) \otimes^! (M^{\boxtimes I})^{\langle p \rangle}), \Delta^{(I)}_*M^{\langle q \rangle} \bigg)   .
\]
Evidently the image of $\pi_{p,q}^{F_k^\spe P^\ch(M)}=\pi_{p,q}^{P^\ch(M)}|_{F_k^\spe P^\ch(M)}:F_k^\spe P^\ch(M) \to P^\ch(M)^{\langle p,q \rangle}$ is precisely this subset, so the map factors as claimed.
\end{proof}

The preceding lemma on the compatibility of the grading and filtration provides the following essential corollary:

\begin{coro}\label{Fspedseqn} 
The grading on $P^\ch(M)$ of Equation \ref{PchMpqeqn} induces a grading
	\[ 
	F_k^\spe P^\ch(M) = \bigoplus_{p,q \in \Z} F_k^\spe P^\ch(M)^{\langle p,q\rangle}   \ ,
	\]
	for each $k\in \Z$.
\end{coro}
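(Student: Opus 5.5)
The plan is to show that the family of projections $\{\pi_{p,q}^{P^\ch(M)}\}_{p,q}$ restricts to a family of projections on $F_k^\spe P^\ch(M)$, and that $F_k^\spe P^\ch(M)$ is recovered as the direct sum of their images. First, recall that the bigrading of Equation \ref{PchMpqeqn} is a genuine direct sum decomposition, $P^\ch(M)=\bigoplus_{p,q}P^\ch(M)^{\langle p,q\rangle}$. Each $\mu$ is determined by the collection $\pi_{p,q}^{P^\ch(M)}(\mu)=\pi^M_q\circ\mu\circ\iota^{M^{\boxtimes I}}_p$, and $\mu=\sum_{p,q}\pi_{p,q}^{P^\ch(M)}(\mu)$. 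At each fixed arity only finitely many components are nonzero on finitely generated pieces; more precisely, one uses $M=\bigoplus_q M^{\langle q\rangle}$ together with the fact that a map out of a direct sum is the family of its restrictions. I would state this decomposition as the preliminary step, interpreting the sum appropriately (as the Hom out of a direct sum, i.e.\ a product, with the finiteness condition on the target built into the convention of Equation \ref{PchMpqeqn}).

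Second, I would show that $F_k^\spe P^\ch(M)$ is stable under each projection. By Lemma \ref{splitlemma}, $\pi_{p,q}^{P^\ch(M)}$ maps $F_k^\spe P^\ch(M)$ into $F_k^\spe P^\ch(M)^{\langle p,q\rangle}=P^\ch(M)^{\langle p,q\rangle}\cap F_k^\spe P^\ch(M)$. Concretely, if $\mu$ kills $(F^\spe_{-k-2}j_*j^*\omega_X^{\boxtimes I})\otimes^! M^{\boxtimes I}$, then the composite $\pi_q^M\circ\mu\circ\iota_p$ kills the corresponding summand with $(M^{\boxtimes I})^{\langle p\rangle}$. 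This works because $\otimes^!$ with the special-filtration piece commutes with the direct sum decomposition of $M^{\boxtimes I}$, and $\Delta_*$ commutes with the projection $\pi_q^M$.

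Third, I would conclude. For $\mu\in F_k^\spe P^\ch(M)$, each component $\pi_{p,q}(\mu)$ lies in $F_k^\spe P^\ch(M)^{\langle p,q\rangle}$, and $\mu=\sum\pi_{p,q}(\mu)$. Conversely, any sum of elements of $F_k^\spe P^\ch(M)^{\langle p,q\rangle}$ still annihilates $(F^\spe_{-k-2}j_*j^*\omega_X^{\boxtimes I})\otimes^! M^{\boxtimes I}$, since annihilation can be checked summand by summand on the source. The intersection of $F_k^\spe P^\ch(M)$ with the direct sum decomposition is therefore internal, which gives $F_k^\spe P^\ch(M)=\bigoplus_{p,q}F_k^\spe P^\ch(M)^{\langle p,q\rangle}$.

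The main obstacle is the first step. I need to make sure the infinite decomposition of $M$ (countable sums in the unital-projective presentation) is compatible with the convention that $P^\ch(M)$ is the direct sum, rather than the product, of its $(p,q)$-components, and that the special filtration is compatible with this. I would handle it by checking that the subobject $(F^\spe_{-k-2}j_*j^*\omega_X^{\boxtimes I})\otimes^!(-)$ is an exact functor commuting with arbitrary direct sums. Annihilation conditions are then componentwise, and whatever convention is used for the ambient decomposition is inherited verbatim by the filtered pieces.
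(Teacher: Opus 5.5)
Your proposal is correct and follows essentially the paper's argument. The paper likewise uses Lemma \ref{splitlemma} to see that each $\pi_{p,q}$ carries $F_k^\spe P^\ch(M)$ into $F_k^\spe P^\ch(M)^{\langle p,q\rangle}$, gets injectivity of the map into $\bigoplus_{p,q}F_k^\spe P^\ch(M)^{\langle p,q\rangle}$ from the ambient decomposition, and then asserts surjectivity. Your justification of that surjectivity (the components lie in $F_k^\spe P^\ch(M)$, and annihilation can be checked summand by summand) is slightly cleaner than the paper's remark that each component map is surjective.
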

\begin{proof}
	Taking the direct sum over $p,q\in \Z$ of the horizontal maps in the commutative diagram of Lemma \ref{splitlemma}, we obtain the commutative diagram
		\[ 
		\begin{tikzcd}[column sep=huge] 
		F_k^\spe P^\ch(M) \arrow[r,"\bigoplus_{p,q \in \Z} \pi_{p,q}^{F_k^\spe P^\ch(M) }"{xshift=5pt}]\arrow[d] \arrow[r] &\bigoplus_{p,q \in \Z}  F_k^\spe P^\ch(M)^{\langle p,q \rangle }\arrow[d]  \\
		P^\ch(M) \arrow[r,"\bigoplus_{p,q \in \Z} \pi_{p,q}^{P^\ch(M)}"]  & \bigoplus_{p,q \in \Z}  P^\ch(M)^{\langle p,q \rangle } 
	\end{tikzcd} \ . 
	\]
	The bottom arrow identifies with the identity map under the direct sum decomposition of Equation \ref{PchMpqeqn}, so that the lower composition is injective. Commutativity of the diagram implies the same must be true of the upper composition, and thus in particular the upper horizontal arrow is injective. As remarked in the proof of Lemma \ref{splitlemma}, each of the maps $\pi_{p,q}^{F_k^\spe P^\ch(M)}:F_k^\spe P^\ch(M) \to P^\ch(M)^{\langle p,q \rangle}$ is surjective, and thus so is their direct sum, establishing the desired isomorphism.
\end{proof}

We can now complete the proof of Proposition \ref{prop:joint split}:
\begin{proof}[Proof of Proposition \ref{prop:joint split}] 

For each $p,q \in \Z$, the induced special filtration $F_\bullet^\spe P^\ch(M)^{\langle p,q \rangle}$ is bounded and thus admits a splitting, that is, a grading
	\[
	P^\ch(M)^{\langle p,q \rangle} = \bigoplus_{i \in \Z} P^\ch(M)^{\langle p,q,i \rangle} \qquad \text{such that} \qquad F_k^\spe P^\ch(M)^{\langle p,q \rangle} = \bigoplus_{i \leq k} P^\ch(M)^{\langle p,q,i \rangle} \ .  
	\]
	Then by the preceding Corollary \ref{Fspedseqn}, we obtain
	\[
	F_k^\spe P^\ch(M) = \bigoplus_{p,q \in \Z} F_k^\spe P^\ch(M)^{\langle p,q\rangle}  = \bigoplus_{p,q \in \Z}\bigoplus_{i \leq k} P^\ch(M)^{\langle p,q,i \rangle} = \bigoplus_{i \leq k}\bigoplus_{p,q \in \Z} P^\ch(M)^{\langle p,q,i \rangle} \ . 
	\]
	Thus, we obtain that
\begin{equation}\label{trigreqn}
		 P^\ch(M) = \bigoplus_{i \in \Z}P^\ch(M)^{\langle i \rangle }    \qquad \text{for}\qquad P^\ch(M)^{\langle i \rangle} =\bigoplus_{p,q \in \Z} P^\ch(M)^{\langle p,q,i \rangle} \ ,
\end{equation}
	which thus provides a splitting of the special filtration $F_\bullet^\spe P^\ch(M)$.
	
Note that the joint filtration can also be described as:
\begin{align*}
		 F_k^{M,\spe} P^\ch(M) & = \sum_{\ell \in \Z} (F_{\ell}^{M}\cap F_{k-\ell}^\spe) P^\ch(M) \\
		 & = \sum_{\ell \in \Z} \left(\bigoplus_{p,q \in \Z}\bigoplus_{i \leq \ell} P^\ch(M)^{\langle p,q,i \rangle}   \right) \cap \left( \bigoplus_{p-q\leq k-\ell}\bigoplus_{i\in \Z} P^\ch(M)^{\langle p,q,i \rangle} \right) \\
		 & = \bigoplus_{p,q,i \in R(k)} P^\ch(M)^{\langle p,q,i \rangle} \ ,
\end{align*}
where $R(k)$ is the set of triples $p,q,i \in \Z$ such that there exists $\ell \in \Z$ satisfying
\[ 
i \leq \ell \qquad \text{and} \qquad  q-p\leq k-\ell  \ .
\]
For a fixed $i$, one has the most choices for $p$ and $q$ if $\ell=i$--- since this subtracts the least possible amount from the upper bound on $q-p$. Thus, $R(k)$ is simply the set of triples of integers satisfying
\[ q-p \leq k- i \qquad \text{or equivalently} \qquad q-p+i\leq k \ , \]
so that we obtain the desired result
\[ 
F_k^{M,\spe} P^\ch(M) = \bigoplus_{q-p+i\leq k} P^\ch(M)^{\langle p,q,i \rangle}(n) \ .   
\]
The decomposition of Equation \ref{trigreqn} thus induces the desired splitting
\begin{equation}\label{eq:splitting joint filtration}
P^\ch(M) = \bigoplus_{k\in \Z}P^\ch(M)^{\langle k \rangle^\spe}  \qquad \text{where} \qquad P^\ch(M)^{\langle k \rangle^\spe}=\bigoplus_{\substack{i,p,q \\ q-p+i=k}} P^\ch(M)^{\langle p,q,i \rangle} \ ,
\end{equation}
that is, we have
\[  
F_k^{M,\spe} P^\ch(M) = \bigoplus_{i \leq k}P^\ch(M)^{\langle i \rangle^\spe} 
\ . 
\]

\end{proof}

\subsubsection{}

Recall that the classical operad has an auxiliary grading defined as in Section \ref{ssec:coisson intro}. When the D-module $M$ is graded, one can refine the grading on $P^c(M)$ to a trigrading
\[
P^c(M) =\bigoplus_{p,q,i} P^c(M)^{\langle p,q,i\rangle}~,
\]
where
\begin{equation}
	P^c(M)^{\langle{p,q,i\rangle}}(I) =\bigoplus_{\substack{p_1,p_2,\dots,p_{|I|}\\ p_1+p_2+\dots+ p_{|I|} =p}}P^c\big(\{M^{\langle p_1\rangle},M^{\langle p_2\rangle },\dots,M^{\langle p_{|I|} \rangle}\},M^{\langle q\rangle}\big)^{\langle i\rangle}~. 
\end{equation}
We can coarsen this to a single \emph{joint} grading, analogous to the joint filtration on $P^\ch(M)$:
\begin{equation}\label{eq:joint grading on classical}
	P^c(M) = \bigoplus_{k\in \IZ} P^c(M)^{\langle k\rangle^\spe} ~, \qquad \text{ where } \qquad P^c(M)^{\langle k \rangle ^\spe} = \bigoplus_{\substack{p,q,i\\q-p+i=k}}P^c(M)^{\langle p,q,i\rangle}~.
\end{equation}

We now prove the generalisation of Theorem \ref{BDgrclthm} to the case of the joint filtration.

\begin{theo}\label{thm:gr of joint chiral is joint classical}
Let $(M,F^M)$ be a filtered, unital-projective D-module which is split in the sense of \eqref{Mgdeqn}, then we have an isomorphism of graded operads
	\[
	\gr_{F^{M,\spe}}P^\ch(M) \xrightarrow{\sim} P^c( \gr_{F^M} M)~,
	\]
	where $P^c(\gr_{F^M} M)$ is equipped with the grading from \eqref{eq:joint grading on classical}.
\end{theo}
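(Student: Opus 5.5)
The plan is to reduce the statement to Theorem \ref{BDgrclthm} applied summand by summand, using the trigrading built in the proof of Proposition \ref{prop:joint split}. Since $M$ is split, $\gr_{F^M}M\cong\bigoplus_k M^{\langle k\rangle}$ canonically, so we may identify $P^c(\gr_{F^M}M)$ with $P^c(M)$ carrying the trigrading $P^c(M)^{\langle p,q,i\rangle}$. On the chiral side, Equation \ref{eq:splitting joint filtration} gives
\[ \gr_k^{F^{M,\spe}}P^\ch(M)\cong \bigoplus_{q-p+i=k} P^\ch(M)^{\langle p,q,i\rangle}\cong \bigoplus_{q-p+i=k}\gr_i^{F^\spe}P^\ch(M)^{\langle p,q\rangle}, \]
where the second identification uses that $P^\ch(M)^{\langle p,q,i\rangle}$ was chosen as a complement of $F_{i-1}^\spe$ in $F_i^\spe$ within $P^\ch(M)^{\langle p,q\rangle}$. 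It therefore suffices to identify $\gr_i^{F^\spe}P^\ch(M)^{\langle p,q\rangle}$ with $P^c(M)^{\langle p,q,i\rangle}$, compatibly with composition.

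For the arity-wise identification, write $P^\ch(M)^{\langle p,q\rangle}(I)$ as a direct sum over $(p_1,\dots,p_{|I|})$ with $\sum p_j=p$ of chiral operation spaces $P^\ch(\{M^{\langle p_j\rangle}\},M^{\langle q\rangle})$ in the coloured operad $\DD(X)^\ch$. Finite direct sums commute with $j_*j^*$, $\boxtimes$, the special filtration and $\Hom$. By Corollary \ref{Fspedseqn} the special filtration respects this decomposition. The one point to check is that each $M^{\langle k\rangle}$ is again unital-projective; this holds up to allowing the unit summand $\Omega^{d_X}_X$ to sit in a single degree (the unit is in filtration $0$), with the other degrees being sums of locally projective modules. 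Then Theorem \ref{BDgrclthm}, read for the coloured pseudo-tensor categories on the objects $M^{\langle k\rangle}\in\DD(X)_{\textup{up}}$, gives
\[ \gr_i^{F^\spe}P^\ch(\{M^{\langle p_j\rangle}\},M^{\langle q\rangle})\cong P^c(\{M^{\langle p_j\rangle}\},M^{\langle q\rangle})^{\langle i\rangle}. \]
Summing over all $p_j$ yields the required isomorphism of graded $\Ss$-modules.

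For compatibility with composition, note first that $F^{M,\spe}$ is an operadic filtration. Both $F^M$ and $F^\spe$ are multiplicative, and so is their sum-intersection, so $\gr$ is an operad. The isomorphism above is the restriction of the coloured-operad isomorphism of Theorem \ref{BDgrclthm} to the full suboperad on the colours $M^{\langle k\rangle}$, followed by direct sum over colours. Composition in $P^\ch(M)$ is, under the splitting, the matrix sum of coloured compositions, and the $(p,q)$ degrees add in the way that makes $q-p$ additive along trees. So the joint grading $q-p+i$ is additive, and the map is a map of graded operads.

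The main obstacle is the passage from the chosen (noncanonical) splittings $P^\ch(M)^{\langle p,q,i\rangle}$ to the associated graded. One must argue that the isomorphism is defined on $\gr$, independent of the choice of splitting, with the splitting used only to see that it is bijective. The plan is to define the map directly from $F_k^{M,\spe}/F_{k-1}^{M,\spe}$ via the symbol maps of Theorem \ref{BDgrclthm} on each $(p,q)$-component, and only then invoke Proposition \ref{prop:joint split} to conclude that it is an isomorphism. A secondary point is infinite direct sums when $M$ has infinitely many graded pieces: $\Hom$ out of $\bigoplus_p$ is a product. This should be harmless in each fixed arity and bidegree, since the $(p,q)$-decomposition was already used in Corollary \ref{Fspedseqn}.
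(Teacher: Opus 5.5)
Your proposal is correct and follows essentially the paper's own route: the trigrading from the proof of Proposition \ref{prop:joint split} identifies $\gr_k^{F^{M,\spe}}P^\ch(M)$ with $\bigoplus_{q-p+i=k}(\gr_i^{F^\spe}P^\ch(M))^{\langle p,q\rangle}$, Theorem \ref{BDgrclthm} identifies these summands with $P^c(M)^{\langle p,q,i\rangle}$, and compatibility with composition is inherited from Theorem \ref{BDgrclthm}. The only difference is that the paper applies Theorem \ref{BDgrclthm} to $M$ itself and lets the $(p,q)$-decomposition ride along (implicitly, the isomorphism commutes with composition by the idempotents in $P^\ch(M)(1)=\End_{\DD(X)}(M)$ projecting onto the $M^{\langle k\rangle}$), which avoids your somewhat loose check that each $M^{\langle k\rangle}$ is unital-projective; a graded piece is only a direct summand of $M$, though the $\mc O_X$-flatness and $\Ext^1$-vanishing actually used in the proof of Theorem \ref{BDgrclthm} do pass to direct summands.
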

\begin{proof}
Recall from the proof of Proposition \ref{prop:joint split} that the grading induced by $M$ is compatible with the splitting of the special filtration of $M$---in particular, \eqref{trigreqn} implies that the associated graded with respect to the special filtration inherits a trigrading
\[
\gr_{F^\spe} P^\ch(M) = \bigoplus_{p,q,i\in \IZ} (\gr^i_{F^\spe} P^\ch(M))^{\langle p,q \rangle}~,
\]
where 
\[
(\gr^i_{F^\spe}P^\ch(M))^{\langle p ,q\rangle}(I)  = \bigoplus_{\substack{p_1,p_2,\dots,p_{|I|}\\ p_1+p_2+\dots+ p_{|I|} =p}} \gr^i_{F^\spe}P^\ch\big(\{M^{\langle p_1\rangle},M^{\langle p_2\rangle },\dots,M^{\langle p_{|I|} \rangle}\},M^{\langle q\rangle}\big)~.
\]
Moreover, \eqref{eq:splitting joint filtration} implies that we have the identification
\[
\gr^k_{F^{M,\spe}}P^\ch(M) = \bigoplus_{q-p+i=k} (\gr^i_{F^\spe} P^\ch(M))^{\langle p,q\rangle}~.
\]
But from \ref{BDgrclthm}, $\gr^i_{F^\spe}P^\ch(M) = P^c(M)^{\langle i\rangle}$, and so---at least at the level of trigraded $\Ss$-modules---one has an isomorphism
\[
\gr_{F^{M,\spe}} P^\ch(M) \cong P^c(M)~.
\]
That this is an isomorphism of operads follows from the fact that the identification of Theorem \ref{BDgrclthm} respects the compositions.
\end{proof}

\subsubsection{}

The joint filtration on $P^\ch(M)$ induces a filtration on $\gf_{\Lie, P^\ch(M)}$ with 
\[
F^{M,\spe}_k \gf_{\Lie,P^\ch(M)} = \Hom_{\Ss\Mod}(\Lie^\ash,F^{M,\spe}_k P^\ch(M))~.
\]

\begin{prop}\label{prop:filt on Lie alg}
	Let $(M, F^M)$ be a split, filtered D-module, then 
	\[
	\gr_{F^{M,\spe}}(\gf_{\Lie,P^\ch(M)}) = \gf_{\Lie, P^c(\gr_{F^M} M)}~.
	\]

	Moreover, if $\varphi\in \Alg_\Lie(P^\ch(M))$ is a chiral algebra structure on $M$ such that it induces the coisson algebra structure $\overline \varphi \in \Alg_{\Lie}(P^c(\gr M))$ on $\gr M$. Then we have an isomorphism of DG Lie algebras
	\[
	\gr_{F^{M,\spe}}\ \gf^\varphi_{\Lie,P^\ch(M)} \cong \gf^{\overline{\varphi}}_{\Lie,P^c(\gr_{F^M} M)}~.
	\]
\end{prop}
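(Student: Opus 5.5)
The plan is to reduce both statements to Theorem \ref{thm:gr of joint chiral is joint classical}. The main point is that the functor $\Hom_{\Ss\Mod}(\Lie^\ash,-)$ commutes with passage to associated graded objects, because the filtration $F^{M,\spe}$ is split.

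\textbf{Step 1: the underlying graded objects.} By Proposition \ref{prop:joint split} we have a splitting $P^\ch(M)=\bigoplus_k P^\ch(M)^{\langle k\rangle^\spe}$ with $F_k^{M,\spe}P^\ch(M)=\bigoplus_{i\le k}P^\ch(M)^{\langle i\rangle^\spe}$. In each arity this splitting is a decomposition of $\Ss_n$-modules, since the graded pieces in Equation \ref{PchMpqeqn} and the special filtration are $\Ss_n$-stable, and we may average the splitting chosen in the proof of Proposition \ref{prop:joint split} over $\Ss_n$ (characteristic zero). Applying $\Hom_{\Ss_n}(\Lie^\ash(n),-)$ arity by arity, which is additive, we get
\[
F_k^{M,\spe}\g_{\Lie,P^\ch(M)}=\bigoplus_{i\le k}\Hom_{\Ss}(\Lie^\ash,P^\ch(M)^{\langle i\rangle^\spe}),
\]
so $\gr^k$ is $\Hom_\Ss(\Lie^\ash,\gr^k_{F^{M,\spe}}P^\ch(M))$. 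Theorem \ref{thm:gr of joint chiral is joint classical} identifies this with $\Hom_\Ss(\Lie^\ash,P^c(\gr_{F^M}M)^{\langle k\rangle^\spe})$. Summing over $k$ gives the identification with $\g_{\Lie,P^c(\gr M)}$ as graded vector spaces.

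\textbf{Step 2: compatibility with the DG Lie structure.} The convolution bracket is built from the decomposition map of $\Lie^\ash$ together with the operadic compositions of the target. Since $\Lie^\ash$ carries no filtration, and the compositions of $P^\ch(M)$ are filtered for $F^{M,\spe}$, we get $[F_a,F_b]\subset F_{a+b}$. This holds because the special filtration is multiplicative (Theorem 3.1.5/3.2.5 of \cite{BD1}) and the $M$-filtration is multiplicative by its definition in Equation \ref{Mfilteqn}. The induced bracket on $\gr$ is therefore the convolution bracket for $\gr P^\ch(M)$. Since the isomorphism of Theorem \ref{thm:gr of joint chiral is joint classical} is an isomorphism of operads, this is the convolution bracket of $P^c(\gr M)$. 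The internal differential is zero in both cases, as the operads are plain.

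\textbf{Step 3: the twisted statement.} The twisted differential is $d_\varphi=[\varphi,\cdot]$. The hypothesis that $\varphi$ induces $\overline\varphi$ means that $\varphi$ lies in the degree-$0$ filtered piece $F_0^{M,\spe}$, in arity two with weight one, and that its image in $\gr^0$ corresponds to $\overline\varphi$ under Step 1. Here the commutative product sits in special degree $0$ and the bracket in degree $-1$, compensated by $M$-degree $+1$ in the Rees convention. Then $[\varphi,F_k]\subset F_k$, and on $\gr^k$ the map $[\varphi,\cdot]$ acts as $[\gr^0\varphi,\cdot]=[\overline\varphi,\cdot]$ by Step 2. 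This yields the isomorphism of DG Lie algebras $\gr\,\g^\varphi_{\Lie,P^\ch(M)}\cong\g^{\overline\varphi}_{\Lie,P^c(\gr M)}$.

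\textbf{Main obstacle.} I expect the main difficulty to be Step 2, namely the multiplicativity of the joint filtration under operadic composition and the matching of the induced composition on $\gr$ with that of $P^c(\gr M)$. This requires that compositions of chiral operations respect the special filtration, with special degrees adding, while $M$-degrees also add. I would cite \cite{BD1} for the special part and check the $M$-part directly from Equation \ref{Mfilteqn}. A second, smaller point is making the splitting $\Ss$-equivariant. The averaging argument in Step 1 should handle it, since the filtrations themselves are $\Ss$-stable.
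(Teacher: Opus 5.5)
Your proposal is correct and is essentially the paper's argument. The paper commutes $\Hom_{\Ss}(\Lie^\ash,-)$ with passage to $\gr$ using semisimplicity of $\Ss$-modules in characteristic zero, which gives exactness directly and so needs no equivariant splitting, though your averaging argument rests on the same Maschke-type input; it then invokes Theorem \ref{thm:gr of joint chiral is joint classical}, and for the twisted statement simply notes that this theorem is an isomorphism of operads, which is what your Steps 2 and 3 unpack by checking that the convolution bracket and $d_\varphi=[\varphi,\cdot]$ are filtered with the expected associated graded.
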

\begin{proof}
Since $\Ss\Mod$ is semisimple, $\gr_{F^M,\spe}\gf_{\Lie,P^\ch(M)} \cong \gf_{\Lie, \gr_{F^M, \spe} P^\ch(M)}$, whereupon the result follows from Theorem \ref{thm:gr of joint chiral is joint classical}, noting that the second result follows since the isomorphism of \textit{loc.\ cit.} is an isomorphism of operads.
\end{proof}

\begin{prop}\label{prop:bound on cohomology}
Let $(M, F^M)$ be a split, filtered D-module, and suppose $\varphi\in \Alg_\Lie(P^\ch(M))$ is a chiral algebra structure on $M$ such that it induces the coisson algebra structure $\overline \varphi \in \Alg_{\Lie}(P^c(\gr M))$ on $\gr M$.

Then we have the following inequality
\[
\dim\ H^i_\ch(M,\varphi) \le \dim\ H^i_c(\gr M, \overline \varphi)~.
\]
\end{prop}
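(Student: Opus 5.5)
The plan is to use the spectral sequence of the filtered complex $(\g^\varphi_{\Lie,P^\ch(M)}, d_\varphi, F^{M,\spe}_\bullet)$. By Proposition \ref{prop:filt on Lie alg}, the associated graded complex is isomorphic, as a DG Lie algebra and in particular as a cochain complex, to $\g^{\overline\varphi}_{\Lie,P^c(\gr_{F^M}M)}$. This is the classical deformation-obstruction complex $C^\bullet_c(\gr M,\overline\varphi)$ of Proposition \ref{chPoiscxprop}. So the $E_1$ page is $H^\bullet_c(\gr M,\overline\varphi)$, split into the components of the joint grading of Equation \eqref{eq:joint grading on classical}. Every later page $E_r$ is a subquotient of $E_1$, so $\dim E_\infty^{i}\le \dim E_1^{i}=\dim H^i_c(\gr M,\overline\varphi)$ in each total degree $i$. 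The claim then reduces to showing that the spectral sequence converges to $H^i_\ch(M,\varphi)$. Granting convergence, $\dim H^i_\ch = \dim E^i_\infty$, since a filtered vector space has the same dimension as its associated graded when the filtration is exhaustive, Hausdorff and (for infinite dimensions) one reads "dim" as cardinality bounds. This gives the inequality. If the right side is infinite there is nothing to prove, so we may assume it is finite.

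First I would check that $d_\varphi$ preserves $F^{M,\spe}$. This needs $\varphi\in F_0^{M,\spe}P^\ch(M)(2)$, which is exactly the hypothesis that $\varphi$ induces $\overline\varphi$ on $\gr M$. It also needs the filtration to be multiplicative under operadic composition, which is already implicit in Theorem \ref{thm:gr of joint chiral is joint classical}. Second, I would fix a cohomological degree $i$, which corresponds to arity $n=i+1$. Then $C^i=\Hom_{\Ss_n}(\Lie^\ash(n),P^\ch(M)(n))$. By the splitting of Proposition \ref{prop:joint split}, $C^i=\bigoplus_k C^{i,\langle k\rangle^\spe}$ with $F_kC^i=\bigoplus_{j\le k}C^{i,\langle j\rangle^\spe}$. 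Hence the filtration is exhaustive and Hausdorff (the intersection of all $F_k$ is zero) in each degree.

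The main obstacle is convergence, because the filtration need not be bounded in $k$: the grading on $M$ may be unbounded. The special index $i$ is bounded by $0\le i\le n-1$, but $q-p$ can range over all of $\Z$. What I would try first is to avoid completion issues altogether, using the direct sum decomposition instead. If $M$ is bounded below and finitely generated in each weight (non-negative gradings, as in all the examples), then in fixed arity and total degree only finitely many $k$ contribute to $d_\varphi$ acting on a given element. In that case the filtration is degreewise bounded below up to a shift and the standard convergence theorem applies.

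In general I would argue more directly with the splitting instead of through the spectral sequence. Write $d_\varphi=\sum_{r\ge 0}\delta_r$, where $\delta_r$ raises the joint degree by $-r$ (nonpositive shifts, since $d_\varphi$ preserves the filtration) and $\delta_0$ is the classical differential. Then I would bound cocycles modulo coboundaries by leading terms: send each class to the leading component of a representative, which is a $\delta_0$-cocycle. An injective-on-graded-pieces argument gives $\gr H_\ch\hookrightarrow$ a subquotient of $H_c$. This needs only that every element has a leading (maximal $k$) term, which holds because the decomposition is a direct sum and each element has finite support. So exhaustiveness and Hausdorffness suffice, and the dimension bound follows.
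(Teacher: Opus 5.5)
Your strategy is the paper's: its entire proof is Proposition \ref{prop:filt on Lie alg} plus the spectral sequence of the filtered complex. The identification $E_1\cong H^\bullet_c(\gr M,\overline\varphi)$ and the subquotient bound are fine. As you correctly observe, everything then rests on convergence, which the paper does not discuss. The problem is that neither of your convergence arguments works. The filtration is not degreewise bounded below up to a shift, even when $M$ is nonnegatively graded. In arity $n$ the index $k=q-p+i$ has $0\le i\le n-1$ and $q\ge 0$, but the total input weight $p$ is unbounded. So cochains sending inputs of large weight to low weight have arbitrarily negative $k$. These are of the same kind as the central term $\tfrac18\lambda^4c$ in $I_\lambda(T,T)$, which sends weight $2$ to weight $0$.

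More seriously, the leading-term argument in your last paragraph is false. A split, exhaustive, Hausdorff filtration on a complex need not induce a Hausdorff filtration on its cohomology. Take $C^0=\bigoplus_{k\in\Z}\K x_k$ and $C^1=\bigoplus_{k\in\Z}\K y_k$, with $x_k,y_k$ in filtration degree $k$ and $dx_k=y_k-y_{k-1}$. Then $\delta_0x_k=y_k$, so $\gr C$ is acyclic. But $H^1(C)\cong\K$ via $\sum_kb_ky_k\mapsto\sum_kb_k$. Your reduction replaces $y_0$ by $y_0-dx_0=y_{-1}$, then by $y_{-2}$, and never terminates. Hence $[y_0]\in F_kH^1$ for every $k$, and $\gr H^1=0\neq H^1$.

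The missing ingredient is completeness of the filtration on $C^{i-1}$ as $k\to-\infty$. Suppose $E_1^i$ is finite-dimensional and concentrated in $a\le k\le b$. Exhaustiveness lets you move any cocycle into $F_bC^i$ in finitely many steps. But to show that a cocycle in $F_{a-1}C^i$ is exact, you must sum infinitely many corrections $y_1+y_2+\cdots$ with $y_m\in F_{a-m}C^{i-1}$. With completeness this works and gives $\dim H^i\le\dim E_1^i$; without it the inequality can fail, as above.

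The direct-sum splitting you rely on is not complete in general, so completeness has to come from elsewhere. One source is that the actual cochain spaces are $\Hom$'s out of $\bigoplus_p$, hence products over input weight. For $M$ bounded below this makes each $F_k$ complete, at the cost of then having to handle cochains lying in no $F_k$.
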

\begin{proof}
Follows from Proposition \ref{prop:filt on Lie alg}, using the usual spectral sequence on a filtered complex.
\end{proof}

\subsubsection{}

Write $D_\hbar(X)\Mod\cong D(X)\boxtimes_\ik \ik[\hbar]\Mod$ for the category of families of D-modules over $\IA^1_\hbar$. Likewise, we write $D_\hbar(X)^\gd$ for the category of families of D-modules over the stack $[\IA^1_\hbar/\IG_\hbar]$.

We say that $M_\hbar\in D_\hbar(X)$ is \emph{trivializable} if there exists a trivialization:
\[
M_\hbar \xrightarrow{\sim} M_0 \otimes \ik[\hbar]~,
\]
where $M_0 \cong M\otimes_{\ik[\hbar]}\ik_0\in\D(X)$. Similarly, $M_\hbar\in D_\hbar(X)$ is said to be trivializable if there exits a graded trivialization $M_\hbar \xrightarrow{\sim} M_0 \otimes \ik[\hbar]$.

The trivializable $D_\hbar(X)$-modules are equivalent to ordinary D-modules by taking the central fibre. Moreover, the graded, trivializable $D_\hbar(X)$ modules are equivalent to the split, filtered D-modules by 
\[
M_\hbar \mapsto (M_0,F^{M_0})
\]
where $M_0= M\otimes_{\ik[\hbar]}\ik_0$ the central fibre and $F^{M_0}$ is the split filtration induced by the grading on $M_0$. The inverse is
\[
M_0\mapsto \Rees_\hbar^{F^{M_0}}(M_0)~.
\]
Indeed, viewing $M_\hbar\in D_\hbar(X)$ as a trivially graded $D_\hbar(X)$-module, the corresponding D-module $M_0$ is trivially filtered.

Thus, for any $M_\hbar \in D_\hbar(X))$ or $M_\hbar\in D_\hbar(X)^\gd$ we may define the $\ik[\hbar]$-operad
\[
P_\hbar^\ch(M_\hbar) \coloneqq \Rees^{F^{M_0,\spe}}_\hbar(P^\ch(M_0))~,
\]
where the filtration on $M_0$ is taken to be trivial if $M_\hbar$ is not graded. Since the filtration on $M_0$ is split in both cases, by Proposition \ref{prop:joint split}, the filtration on $P^\ch(M_0)$ is split too and there exists a trivialization of $\Ss$-modules,
\[
P^\ch_\hbar(M_\hbar) \xrightarrow{\sim} (P_\hbar^\ch(M_\hbar))_0 \otimes \ik[\hbar] \cong P^c(M_0)\otimes \ik[\hbar]~,
\]
where we used Theorem \ref{thm:gr of joint chiral is joint classical} to identify the central fibre with the classical operad.

Like in Section \ref{ssec:formal def} we can consider the completed version of the above construction. Let $D_{\hat \hbar}(X) = D(X)\widehat\boxtimes_\ik\ik\fph$ be the category of families of D-modules over the formal neighbourhood of the origin of $\IA^1_\hbar$, and write $D_{\hat \hbar}^\gd(X)$ for it graded analogue. We can define the notion of a trivializable $D_{\hat \hbar}(X)$-module and $D_{\hat \hbar}(X)$-module analogously.

Every trivializable $M_{\hat \hbar}\in D_{\hat \hbar}(X)^\gd$ defines a split, filtered D-module $(M_0,F^{M_0})$, with filtration induced by the grading on $M_0$ and 
\[
M_{\hat \hbar} = \widehat{\Rees}_{\hbar}^{F^{M_0}}(M_0) = \Rees_{\hbar}^{F^{M_0}}(M_0)\widehat{\otimes}_{\ik[\hbar]}\ik\fph~.
\]
Once again, for $M_{\hat \hbar}\in D_{\hat \hbar}(X)$ the corresponding $M_0$ is trivially filtered.

Given a trivializable $M_{\hat \hbar}\in D_{\hat \hbar}(X))$ or $D_{\hat \hbar}(X)^\gd$ we define the $\ik\fph$-operad
\[
P_{\hat \hbar}(M_\hbar) = \widehat{\Rees}_\hbar^{F^{M_0,\spe}}(P^\ch(M_0))~.
\]
As before, the splitting of the joint filtration induces a trivialization
\[
P_{\hat \hbar}(M_{\hat \hbar} \xrightarrow{\sim} (P^\ch_\hhb(M_\hhb))_0\widehat{\otimes}\ik\fph \cong P^c(M_0)\widehat{\otimes} \ik\fph~.
\]

\begin{rmk}\label{rmk:algebras in chiral Rees}
	The data of $\varphi_{\hat \hbar}\in \Alg_{\Lie_{\hat \hbar}}(P^\ch_{\hat \hbar}(M_{\hat \hbar}))$ (or its graded counterpart $\varphi_{\hat \hbar} \in \Alg^\gd_{\Lie_{\hat \hbar}}(P^\ch_{\hat \hbar}(M_{\hat \hbar}))$ corresponds to a (graded) chiral algebra structure on $M_{\hat \hbar}$ such that its restriction to the central fibre, $\varphi_0\in \Alg_{\Lie}(P^c(M_0))$, defines a (graded) coisson algebra structure on $M_0$.

\end{rmk}

\begin{rmk}\label{rmk:concrete algebraic sturcture in chiral Rees}
Let $\varphi_{\hat \hbar} \in \Alg_{\Lie_{\hat \hbar}}(P^\ch_{\hat \hbar}(M_{\hat \hbar}))$, by way of the trivialization of \eqref{eq:splitting joint filtration} we can think of $\varphi_{\hat \hbar}$ as a pair 
\[
m_{\hat \hbar}\oplus b_{\hat \hbar} \in P^c(M_0)(2)\otimes \ik\fph =  \Hom_{D(X)}(M_0\otimes^!M_0,M_0)\otimes\ik\fph\oplus \Hom_{D(X^2)}(M_0\boxtimes M_0,\Delta^{(2)}_*M_0)\otimes \ik\fph~.
\]

Restricted to the central fibre, $m_0 : M_0\otimes^!M_0\rightarrow M_0$ is a commutative product and $b_0:M_0\boxtimes M_0\to \Delta^{(2)}_*M_0$ is a $\Lie^*$-bracket on $M_0$ compatible with $m_0$ in the sense that $(M_0,m_0,b_0)$ is a coisson algebra.
\end{rmk}

\subsubsection{}

We set $X=\IA^1$ and work in the weakly translation invariant setting. We can identify the weakly translation invariant $D_\hhb(X)$-modules with $\ik[\del]\Mod(\ik\fph\Mod)$. In this setting, following Section \ref{ssec:weakly trans inv chiral alg}, one recovers the notion of quantization of a vertex Poisson algebra as introduced in \cite{Arakawa2015:local}. Let us explore this in some detail.

Given a trivializable $V_\hhb\in\ik[\del]\Mod(\ik\fph\Mod)$, we can make sense of $P^{\ch,\IG_a}_\hhb(V_\hhb)$ by way of the Rees construction---as we did before.

We have an isomorphism of the central fibres,
\[
P^{\ch,\IG_a}_\hhb(V_0)  \cong P^{\IG_a,c}(V_0)~, 
\]
and a trivialization of $\Ss$-modules,
\[
P^{\ch,\IG_a}_\hbar(V_\hbar) \xrightarrow{\sim} P^{\ch,\IG_a}_0(V_0)\otimes \ik[\hbar]\cong P^{c,\IG_a}(V_0)\otimes \ik[\hbar]~.
\]

Concretely, $\varphi_\hhb\in \Alg_{\Lie_\hhb}(P^{\ch,\IG_a}_\hhb(V_\hhb))$ corresponds to an integral $\lambda$-bracket $I_{\hhb,\lambda}:V_{\hat \hbar}\otimes_{\ik\fph}V_{\hat \hbar} \to V_{\hat \hbar}[\lambda]$ with the property that
\[
I_{0,\lambda}:V_0\otimes V_0 \to V_0
\]
is a commutative multiplication on $V_0$ and 
\[
\lim_{\hbar\to 0}\frac{1}{\hbar}\tfrac{d I_{\hhb,\lambda}}{d\lambda}: V_0\otimes V_0\rightarrow V_0[\lambda]
\]
defines a $\lambda$-bracket on $V_0$ compatible with the commutative multiplication---making $V_0$ a vertex Poisson algebra. To summarize:

\begin{prop}
	The data of $\varphi_{\hat \hbar} \in \Alg_{\Lie_\hbar}(P^{\ch,\IG_a}_{\hat \hbar}(V_{\hat \hbar})$ is an almost commutative $\hbar$-adic vertex algebra structure on $V_\hbar$ that quantizes---in the sense of Definition \ref{defn:chiral quantization}---the central fibre: the vertex Poisson algebra $(V_0, \varphi_0)$.

	Similarly, $\varphi_{\hat \hbar}\in \Alg^\gd_{\Lie_\hbar}(P^{\ch,\IG_a}_{\hat \hbar}(V_{\hat \hbar}))$ is a graded quantization of the graded vertex Poisson algebra $(V_0, \varphi_0)$.
\end{prop}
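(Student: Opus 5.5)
The plan is to unwind both directions through the identifications already set up for the Rees operad $P^{\ch,\IG_a}_{\hat\hbar}(V_{\hat\hbar})$. By Remark \ref{rmk:algebras in chiral Rees}, an element $\varphi_{\hat\hbar}\in\Alg_{\Lie_{\hat\hbar}}(P^{\ch,\IG_a}_{\hat\hbar}(V_{\hat\hbar}))$ is a $\Lie$-algebra map into the completed Rees operad. The Rees operad sits inside $P^{\ch,\IG_a}(V_0)\widehat\otimes\ik(\!(\hbar)\!)$, so after inverting $\hbar$ such a map is exactly a $\ik(\!(\hbar)\!)$-linear chiral (that is, vertex) algebra structure on $V_{\hat\hbar}[\hbar^{-1}]$. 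The Rees integrality condition then says precisely which of these come from $\varphi_{\hat\hbar}$.

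\emph{Step 1: binary operation.} Using the trivialization \eqref{eq:splitting joint filtration} and Remark \ref{rmk:concrete algebraic sturcture in chiral Rees}, I would write the binary structure map as a chiral operation $\mu_{\hat\hbar}$ with components $\hbar^k F_k^{M,\spe}$. In the translation-invariant model, $P^\ch_2$ is determined by the value on $(z_1-z_2)^{-1}$, giving an $\hbar$-adic integral $\lambda$-bracket $I_{\hat\hbar,\lambda}$. I would then read off the special filtration in arity two. $F^\spe_{-2}$ corresponds to $\omega^{\boxtimes2}$, i.e.\ the regular part, so the $\gr^{-1}$ piece is the residue $\Delta_*\omega\otimes\Lie(2)$. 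Hence, in the shifted grading, the $\lambda^0$ part (normally ordered product) lies in filtration degree $0$ and $\frac{d}{d\lambda}I_\lambda$ lies in degree $1$. The Rees condition therefore says the $\lambda$-bracket part of $I_{\hat\hbar,\lambda}$ is divisible by $\hbar$. Setting $\hbar=0$ and using Theorem \ref{thm:gr of joint chiral is joint classical}, the central fibre $\varphi_0\in\Alg_\Lie(P^{c,\IG_a}(V_0))$ has $I_{0,\lambda}=m_0$ with image in $V_0$, so the structure is almost commutative. The induced $\Lie^*$ bracket is $b_0=\lim_{\hbar\to0}\hbar^{-1}\frac{d}{d\lambda}I_{\hat\hbar,\lambda}$.

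\emph{Step 2: axioms.} The Jacobi identity for $\varphi_{\hat\hbar}$ holds in the Rees operad. Its images modulo $\hbar^m$ are Lie algebra maps into $P^{\ch,\IG_a}$ over $\ik[\hbar]/\hbar^m$, and by the vertex algebra proposition of Section \ref{ssec:weakly trans inv chiral alg} (Theorem 6.12 of \cite{BDSHK19}) these are the truncated integral $\lambda$-bracket axioms. Triviality of $V_{\hat\hbar}$ as a $\ik\fph$-module is part of the trivializability hypothesis. Together these give an $\hbar$-adic vertex algebra. Comparing $(V_0,\varphi_0)$ with Proposition \ref{prop:almost commutative hbar central fibre} then identifies its vertex Poisson structure with the one of Proposition \ref{CoisLieprop} and Theorem 10.7 of \cite{BDSHK19}. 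This is the quantization of Definition \ref{defn:chiral quantization}.

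\emph{Step 3: converse and grading.} Conversely, an almost commutative $\hbar$-adic vertex algebra has $\hbar^{-1}\frac{d}{d\lambda}I$ integral. By higher-arity compositions, Jacobi forces every iterated operation to land in the appropriate Rees piece, so the structure defines a map $\Lie_{\hat\hbar}\to P^{\ch,\IG_a}_{\hat\hbar}(V_{\hat\hbar})$. The graded statement follows by tracking $\IG_\hbar$-weights, as in Proposition \ref{prop:graded MC elements}.

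The main obstacle is matching the filtration degree of the binary components precisely, in particular the sign and shift conventions that put $\lambda$ at weight one under the special filtration. Since $\Lie$ is generated in arity two, this reduces to the arity-two computation of Step 1, and that is where I would concentrate the check.
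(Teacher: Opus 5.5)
Your proposal is correct and follows essentially the same route as the paper, which justifies this proposition only by the brief unwinding that precedes it. That unwinding uses the trivialization of the Rees operad and the identification of its central fibre with $P^{c,\IG_a}(V_0)$ (Theorem \ref{thm:gr of joint chiral is joint classical}) to read off that $I_{0,\lambda}$ is a commutative product and that $\lim_{\hbar\to0}\hbar^{-1}\frac{d}{d\lambda}I_{\hat\hbar,\lambda}$ is the compatible $\lambda$-bracket; your arity-two check (that $F^\spe_0$ consists of the $\lambda$-independent operations and that $\gr_1\cong P^*$ is restriction to the regular part, i.e.\ $\frac{d}{d\lambda}I_\lambda$) and your converse direction only fill in details the paper leaves implicit, with one small correction: compositions stay in the Rees piece because it is a suboperad of an $\hbar$-torsion-free operad, not because of Jacobi.
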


\subsection{Quantization of coisson algebras}\label{quantchegsec}

Let $R\in D(X)$ be a unital-projective D-module and $\varphi\in \Alg_{\Lie}(P^c(R))$, so that $(R,\varphi)$ is a coisson algebra. 

In light of Remarks \ref{rmk:algebras in chiral Rees} and \ref{rmk:concrete algebraic sturcture in chiral Rees}, we make the following definition
\begin{defn}\label{defn:quant of coisson}
	Given a coisson algebra $\varphi\in \Alg_{\Lie}(P^c(R))$, a \emph{quantization}, $(R_{\hat \hbar},\varphi_{\hat \hbar})$, is a
	\begin{enumerate}[(i)]
	\item trivializable $R_{\hat \hbar}\in D_{\hat \hbar}(X)$ with an isomorphism of the central fibre $R_0= R_\hbar \otimes_{\ik\fph}\ik_0\xrightarrow{\sim}R$
	\item $\varphi_{\hat \hbar} \in \Alg_{\Lie_{\hat \hbar}}(P^\ch_{\hat \hbar}(R_{\hat \hbar}))$ 
\end{enumerate}
	such that the isomorphism $R_0\cong R$ extends to an isomorphism of coisson algebras $(R_0,\varphi_0)\cong (R,\varphi)$.

	If $(R,\varphi)$ is a graded coisson algebra, then we say that a quantization $(R_{\hat \hbar}, \varphi_{\hat \hbar})$ is \emph{graded} if $\varphi_{\hat \hbar}\in \Alg^\gd_{\Lie_{\hat \hbar}}(P^\ch_{\hat \hbar}(R_{\hat \hbar}))$ and $(R_0,\varphi_0)\cong (R,\varphi)$ as graded coisson algebras.
\end{defn}

\subsubsection{} 

Fix $R_{\hat \hbar}$ to be the trivial family, $R \widehat{\otimes}\ik\fph$. Consider a deformation of $\varphi$,
\[
\varphi_{\hat \hbar}\in \Def_{\varphi}^{\Lie_\hbar,P^\ch_\hbar(R_\hbar)}(\ik\fph)~.
\]
By construction, $R_{\hat \hbar}$ is a trivializable $D_{\hat \hbar}(X)$-module with central fibre \emph{equal} to $R$ and $\varphi_{\hat \hbar}\in \Alg_{\Lie_{\hat \hbar}}(P^\ch_{\hat \hbar}(R_{\hat \hbar}))$ is such that $\varphi_0 = \varphi$. Thus, $(R_{\hat \hbar}, \varphi_{\hat \hbar})$ is a quantization of  the coisson algebra $(R,\varphi)$. Indeed, the scheme of deformations parameterises all quantizations of the coisson algebra $(R,\varphi)$ whose underlying $D_{\hat \hbar}(X)$ module is our fixed $R_\hbar$.

Let $S$ be the Artinian local ring, $ \ik[\hbar]/\hbar^m$, or the complete local ring $\ik\fph$.
Therefore, we make the following definitions:
\begin{equation}
	\Quant_\varphi(S) =\Def_\varphi^{\Lie_\hbar, P_\hbar^\ch(R_\hbar)}(S) ~, \qquad \text{ and } \qquad  \Quant_{\varphi,\gd}(S) =\Def_{\varphi,\gd}^{\Lie_{\hbar}, P^\ch_\hbar(R_\hbar)}(S)~,
\end{equation}
of the schemes of quantizations and graded quantizations of $(\varphi_0,R_0)$ whose underlying (graded) $D_\hbar(X)$-module is the trivial family.

Similarly, we define the stack of quantizations and graded quantizations as
\begin{equation}\label{eq:chiral quant defn}
	\fQuant_\varphi(S)  =\fDef_\varphi^{\Lie_\hbar, P^\ch_\hbar(R_\hbar)}(S)  ~, \qquad \text{ and } \qquad  \fQuant_{\varphi,\gd}(S)  =\fDef_{\varphi,\gd}^{\Lie_{\hbar}, P^\ch_\hbar(R_\hbar)}(S)   .
\end{equation}

The following remark is mirror to Remark \ref{rmk:stack of finite quant is good}.
\begin{rmk}
The stacks $\fQuant_\varphi(S)$ and $\fQuant_{\varphi,\gd}(S)$ actually parameterise the isomorphism classes of all (graded) quantizations of $(R,\varphi)$. This is because the underlying $D_\hhb(X)$-module of every quantization is isomorphic to $R\widehat{\otimes}\ik\fph$ and so every quantization of $(R,\varphi)$ has a representative whose underlying $D_\hhb(X)$-module is trivial

One may think of our scheme of quantizations as a kind of partial gauge fixing of a larger, more glorious, scheme of quantizations, which in addition parameterizes the data of the central fibre isomorphism.
\end{rmk}

\subsubsection{}

We can now apply the results of Section \ref{chsec} to the chiral setting.
\begin{theo}\label{DefObsQuantChThm} Let $\varphi\in \Alg_{\Lie}(P^c(R))$ be a coisson algebra and $\varphi_{\hbar/\hbar^{m}}\in\Quant^\ch_\varphi(\K[\hbar]/\hbar^{m}) $ a quantization defined modulo $\hbar^m$. Then there exists an extension
	\[ \varphi_{ \hbar / \hbar^{m+1}} \  \in \Quant^\ch_\varphi(\K[\hbar]/\hbar^{m+1}) \times_{\Quant^\ch\varphi(\K[\hbar]/\hbar^{m}) } \{\varphi_{\hbar/\hbar^{m}} \}\]
	if and only if the cohomology class of the obstruction vanishes, that is,
	\[ 
	[ \Obs^{\Lie_\hbar, P_\hbar^\ch(R_\hbar)}_m(\varphi_{\hbar/\hbar^{m}})]= 0 \ \in H^{2}_{c}(\varphi) \ .
	\]
	Further, when the obstruction class vanishes, the space of such extensions up to isomorphism,
	\[  \fQuant^\ch_\varphi(\K[\hbar]/\hbar^{m+1}) \times_{\fQuant_\varphi^\ch(\K[\hbar]/\hbar^{m}) } \{\varphi_{\hbar/\hbar^{m}} \} \ , \]
	is a torsor for $H^{1}_{c}(\varphi)$.
\end{theo}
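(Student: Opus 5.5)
The plan is to deduce the statement directly from Theorem \ref{DefObsThm}, applied with $\PP_\hbar=\Lie_\hbar=\Lie\otimes\K[\hbar]$ the constant family and $\QQ_\hbar=P^\ch_\hbar(R_\hbar)$ the Rees operad of the joint filtration $F^{R,\spe}_\bullet$ on $P^\ch(R)$. Here $R_\hbar$ is the trivial family, so the filtration on $R$ is trivial and $F^{R,\spe}$ reduces to the special filtration. By the definitions in Equation \ref{eq:chiral quant defn}, $\Quant^\ch_\varphi(S)$ and $\fQuant^\ch_\varphi(S)$ are literally $\Def^{\Lie_\hbar,P^\ch_\hbar(R_\hbar)}_\varphi(S)$ and $\fDef^{\Lie_\hbar,P^\ch_\hbar(R_\hbar)}_\varphi(S)$. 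So the only work is to verify the hypotheses of Theorem \ref{DefObsThm} and to identify the complex $C^\bullet_{\mc P_0,\mc Q_0}(\varphi)$ appearing there with the classical cochains $C^\bullet_c(R,\mu^c_\varphi)$.

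The first step is to check the flatness hypotheses of Warning \ref{flatwarn}. The cooperad $\Lie_\hbar^\ash=\Lie^\ash\otimes\K[\hbar]$ is constant, so it is trivially trivialized. For $\QQ_\hbar$, $R$ is unital-projective, so Proposition \ref{prop:joint split} splits the special filtration on $P^\ch(R)$ arity by arity. The Rees module of a split filtration is trivializable, giving $P^\ch_\hbar(R_\hbar)(n)\cong P^\ch_\hbar(R_\hbar)_0(n)\otimes\K[\hbar]$ as $\K[\hbar][\Ss_n]$-modules. This requires choosing the splittings $\Ss_n$-equivariantly, which is possible by averaging since $\operatorname{char}\K=0$. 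These choices fix the expansion of Equation \ref{gtriveqn} and hence the obstruction cochain of Equation \ref{Obshbareqn}.

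The second step is to identify the central fibre. By Theorem \ref{BDgrclthm} (or Theorem \ref{thm:gr of joint chiral is joint classical} with trivial $F^R$), $\QQ_0=\gr_{F^\spe}P^\ch(R)\cong P^c(R)$ as operads, compatibly with compositions. Hence $\g_{\Lie_0,\QQ_0}\cong\g_{\Lie,P^c(R)}$ as DG Lie algebras, and the twisted complex at $\varphi$ is $C^\bullet_{\Lie,P^c(R)}(\varphi)$. By Proposition \ref{chPoiscxprop} this is $C^\bullet_c(R,\mu^c_\varphi)$, whose cohomology is $H^\bullet_c(\varphi)$. One should also note that the restriction $\MC(\g_{\PP_\hbar,\QQ_\hbar}\otimes S)\to\MC(\g_{\PP_0,\QQ_0})$ lands in coisson structures, which is Remark \ref{rmk:algebras in chiral Rees}, so the fibre product over $\{\varphi\}$ is meaningful.

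With these two steps, Theorem \ref{DefObsThm} yields the existence criterion $[\Obs_m]=0\in H^2_c(\varphi)$ and the $H^1_c(\varphi)$-torsor structure verbatim. The main obstacle I expect is the first step, specifically ensuring the trivialization is compatible with the gauge groups used in $\fDef$. The group $\mf G^{0,(0)}$ is built from $\QQ_\hbar(1)$, and the torsor argument needs the order-$\hbar^m$ gauge action to reduce to $d_\varphi$ of an element of $\QQ_0(1)$. I would address this by noting that in arity one the special filtration is trivial, since $j^{(1)}_*j^{(1)*}$ is the identity. Thus $\QQ_\hbar(1)=\End(R)\otimes\K[\hbar]$ canonically, and the computation in the proof of Theorem \ref{DefObsThm} applies unchanged.
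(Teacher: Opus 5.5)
Your proposal is correct and follows the paper's proof. The paper likewise just applies Theorem \ref{DefObsThm}, citing the Koszulness of $\Lie_\hbar$ (which you use only implicitly), the canonical trivialization of its underlying $\Ss$-module, and the trivialization of $P^\ch_\hbar(R_\hbar)$ supplied by Proposition \ref{prop:joint split}. Your additional checks are sound elaborations of points the paper leaves implicit: the equivariant choice of splittings, the identification of the central fibre with $P^c(R)$ via Theorem \ref{BDgrclthm}, and the triviality of the filtration in arity one.
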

\begin{proof} This follows from applying Theorem \ref{DefObsThm} to the present setting; the operad $\Lie_\hbar$ is Koszul, the underlying $\Ss$-module of $\Lie_\hbar$ is evidently canonically trivialized, and the trivialization of the underlying $\Ss$-module of $P^\ch_\hbar(R_\hbar)$ is given by that of Proposition \ref{prop:joint split}.
\end{proof}

\begin{coro} \label{Coisquantexistthm}
Let $\varphi\in \Alg_{\Lie}(P^c(R))$ a coisson algebra such that $H^{2}_{c}(\varphi)=0$. Then $\Quant^\ch_\varphi(\ik\fph)$ is nonempty.
\end{coro}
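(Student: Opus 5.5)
The plan is to apply Corollary \ref{forexcoro} in the present setting, exactly as Theorem \ref{DefObsQuantChThm} was deduced from Theorem \ref{DefObsThm}. Take $\mc P_\hbar=\Lie_\hbar=\Lie\otimes\K[\hbar]$, which is Koszul with canonically trivialized underlying $\Ss$-module. Take $\mc Q_\hbar=P^\ch_\hbar(R_\hbar)$ with $R_\hbar=R\otimes\K[\hbar]$ the trivially filtered family. Its underlying $\Ss$-module is trivialized by the splitting of Proposition \ref{prop:joint split}. Its central fibre is identified with $P^c(R)$ by Theorem \ref{thm:gr of joint chiral is joint classical}, using that $R$ is unital-projective. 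Hence the standing hypotheses of Warning \ref{flatwarn} hold. Moreover, $H^\bullet_{\Lie,P^c(R)}(\varphi)=H^\bullet_c(\varphi)$ by Proposition \ref{chPoiscxprop}.

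The inductive construction then runs as follows. Start from $\varphi_{\hbar/\hbar}=\varphi\in\Quant^\ch_\varphi(\K[\hbar]/\hbar)$. At each $m\geq1$, Theorem \ref{DefObsQuantChThm} places the obstruction to extending $\varphi_{\hbar/\hbar^m}$ in $H^2_c(\varphi)$, and this group vanishes by hypothesis. So we may choose an extension $\varphi_{\hbar/\hbar^{m+1}}=\varphi_{\hbar/\hbar^m}+\hbar^m\varphi_m$ with $d_\varphi\varphi_m=\Obs_m(\varphi_{\hbar/\hbar^m})$.

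It is important to make these choices strictly, in $\Quant^\ch_\varphi$ rather than up to isomorphism. That way they form a strictly compatible inverse system $\{\varphi_{\hbar/\hbar^m}\}_m$ in $\lim_m\Def^{\Lie_\hbar,P^\ch_\hbar(R_\hbar)}_\varphi(\K[\hbar]/\hbar^m)$.

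Finally, Proposition \ref{Deflimgrprop} identifies this limit with $\Def_\varphi^{\Lie_\hbar,P^\ch_\hbar(R_\hbar)}(\K\fph)=\Quant^\ch_\varphi(\K\fph)$. Concretely, the formal series $\varphi+\sum_{m\geq1}\hbar^m\varphi_m$ satisfies the Maurer--Cartan equation to all orders. This gives the desired element, so $\Quant^\ch_\varphi(\K\fph)$ is nonempty.

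The main point requiring care is that the completed Rees operad $P^\ch_{\hat\hbar}(R_{\hat\hbar})$ is compatible with the limit, i.e.\ that
\[
\g_{\Lie_{\hat\hbar},P^\ch_{\hat\hbar}(R_{\hat\hbar})}\cong\lim_m \g_{\Lie_\hbar,P^\ch_\hbar(R_\hbar)}\otimes\K[\hbar]/\hbar^m
\]
arity by arity. I expect this to follow from the trivialization $P^\ch_{\hat\hbar}(R_{\hat\hbar})\cong P^c(R)\widehat\otimes\K\fph$ recorded above. Under that trivialization, each arity component of the DG Lie algebra is $\Hom_{\Ss_n}(\Lie^\ash(n),P^c(R)(n))\widehat\otimes\K\fph$, whose elements are exactly formal power series in $\hbar$.
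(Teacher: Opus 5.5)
Your proposal is correct and follows the paper's route. The paper's proof is a one-line application of Corollary \ref{forexcoro}, which is exactly your argument: inductive extension using the obstruction theory of Theorem \ref{DefObsQuantChThm} (via Theorem \ref{DefObsThm}) with $H^2_c(\varphi)=0$, then passage to the limit by Proposition \ref{Deflimgrprop}. Your additional checks are the hypotheses the paper either verifies in the proof of Theorem \ref{DefObsQuantChThm} or takes for granted: Koszulity of $\Lie_\hbar$, the $\Ss$-module trivialization from Proposition \ref{prop:joint split}, and arity-wise compatibility of the completed Rees operad with the limit.
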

\begin{proof}
This follows from applying Corollary \ref{forexcoro} to the present setting.
\end{proof}

\begin{coro} \label{Coisquantrigidthm}
Let $\varphi\in \Alg_{\Lie}(P^c(R))$ be a coisson algebra such that $H^{1}_{c}(\varphi)=0$. Then, if there exists $\vph\in\Quant^\ch_\varphi(\ik\fph)$, every quantization of $(R,\varphi)$ is isomorphic to $(R_\hhb,\vph)$.
\end{coro}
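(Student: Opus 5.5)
The plan is to deduce this directly from Corollary \ref{foruncoro}, in the same way that Corollary \ref{Coisquantexistthm} was deduced from Corollary \ref{forexcoro}, and then to upgrade uniqueness within the gauge-fixed scheme to uniqueness among all quantizations.

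\emph{Uniqueness in the gauge-fixed stack.} First I verify the hypotheses needed for the results of Section \ref{chsec}. The operad $\Lie_\hbar$ is Koszul with canonically trivial underlying $\Ss$-module. The operad $P^\ch_\hbar(R_\hbar)$ has its underlying $\Ss$-module trivialized by the splitting of Proposition \ref{prop:joint split}. Its central fibre is identified with $P^c(R)$ by Theorem \ref{thm:gr of joint chiral is joint classical}. Consequently $H^1_{\Lie,P^c(R)}(\varphi)=H^1_c(\varphi)$, which vanishes by hypothesis. Corollary \ref{foruncoro} then applies with $\Ph=\Lie_\hbar$ and $\Qh=P^\ch_\hbar(R_\hbar)$. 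The argument runs as follows.
\begin{enumerate}
\item Theorem \ref{DefObsQuantChThm} says that at each order the extensions form a torsor for $H^1_c(\varphi)=0$, so any two compatible systems of truncations are isomorphic stage by stage.
\item Proposition \ref{Deflimgrprop} identifies $\fQuant_\varphi(\ik\fph)$ with the limit of the finite-order stacks.
\end{enumerate}
Together these show that any $\varphi'_{\hat\hbar}\in\Quant^\ch_\varphi(\ik\fph)$ is isomorphic to $\vph$ in $\fQuant_\varphi(\ik\fph)$.

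\emph{Reduction of an arbitrary quantization.} Let $(R'_{\hat\hbar},\varphi'_{\hat\hbar})$ be any quantization in the sense of Definition \ref{defn:quant of coisson}. By definition $R'_{\hat\hbar}$ is trivializable, so I choose an isomorphism $R'_{\hat\hbar}\cong R_0'\widehat\otimes\ik\fph$. Composing with the given central fibre isomorphism $R'_0\cong R$ gives an isomorphism $R'_{\hat\hbar}\cong R\widehat\otimes\ik\fph = R_{\hat\hbar}$ that reduces to that central fibre isomorphism at $\hbar=0$. Transporting $\varphi'_{\hat\hbar}$ along this isomorphism produces a Lie algebra structure on $R_{\hat\hbar}$. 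Its central fibre is exactly $\varphi$, because the isomorphism $R'_0\cong R$ was required to be an isomorphism of coisson algebras. The transported structure is therefore a point of $\Quant^\ch_\varphi(\ik\fph)$, and the first step shows it is gauge equivalent to $\vph$.

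\emph{Main obstacle.} The delicate point is checking that transport along the chosen trivialization is compatible with the Rees operad structure. Concretely, an isomorphism $R'_{\hat\hbar}\to R_{\hat\hbar}$ of $D_{\hat\hbar}(X)$-modules must induce an isomorphism of $\hat\hbar$-Rees chiral endomorphism operads, so that the transported structure genuinely lands in $\Alg_{\Lie_{\hat\hbar}}(P^\ch_{\hat\hbar}(R_{\hat\hbar}))$. Since both modules are trivially filtered in the ungraded setting, I would check this by noting that the joint filtration reduces to the special filtration, which is defined functorially in the module via $j_*j^*\omega_X^{\boxtimes I}\otimes^!(\cdot)$. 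An isomorphism of modules therefore preserves the filtration and hence induces an isomorphism of Rees operads. This is exactly the content of the remark following Definition \ref{defn:quant of coisson}, which asserts that $\fQuant_\varphi$ parametrizes all quantizations.
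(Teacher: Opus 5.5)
Your proposal is correct and follows the paper's route: the paper's proof is simply an application of Corollary \ref{foruncoro} with $\Ph=\Lie_\hbar$ and $\Qh=P^\ch_\hbar(R_\hbar)$, with the hypotheses checked as in Theorem \ref{DefObsQuantChThm}. The paper delegates the passage from arbitrary quantizations to ones on the trivial family $R\widehat\otimes\ik\fph$ to the remark following Definition \ref{defn:quant of coisson}, and you spell this step out more explicitly; transport along an $\hbar$-series of D-module isomorphisms preserves the Rees suboperad because unary operations lie in filtration degree $0$ and the special filtration is compatible with composition.
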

\begin{proof}
	This follows from applying Corollary \ref{foruncoro} to the present setting.
\end{proof}

\begin{theo}\label{DefObsgrQuantChThm} Let $\varphi\in \Alg^\gd_{\Lie}(P^c(R))$ a graded coisson algebra and $\varphi_{\hbar/\hbar^{m}}\in \Quant_{\varphi,\gd}^\ch(\K[\hbar]/\hbar^{m}) $ a graded quantization defined modulo $\hbar^m$. Then there exists an extension
	\[ \varphi_{ \hbar / \hbar^{m+1}} \  \in \Quant_{\varphi,\gd}^\ch(\K[\hbar]/\hbar^{m+1}) \times_{\Quant^\ch_{\varphi,\gd}(\K[\hbar]/\hbar^{m}) } \{\varphi_{\hbar/\hbar^{m}} \}\]
	if and only if the cohomology class of the obstruction vanishes, that is,
	\[ [ \Obs^{\Lie_{\hbar}, P_{\hbar}^{\ch}(R_\hbar)}_m(\varphi_{\hbar/\hbar^{m}})]= 0 \ \in H^{2,\langle -m \rangle}_{c}(\varphi) \ .\]
	Further, when the obstruction class vanishes, the space of such extensions up to isomorphism,
	\[  \fQuant_{\varphi,\gd}^\ch(\K[\hbar]/\hbar^{m+1}) \times_{\fQuant^\ch_{\varphi,\gd}(\K[\hbar]/\hbar^{m}) } \{\varphi_{\hbar/\hbar^{m}} \} \ , \]
	is a torsor for $H^{1,\langle -m \rangle}_{c}(\varphi)$, the $-m^{th}$ graded component of $H^1_{c}(\varphi)$.
\end{theo}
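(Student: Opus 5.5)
The plan is to deduce the statement directly from Theorem \ref{DefObsgrThm}, just as Theorem \ref{DefObsQuantChThm} was deduced from Theorem \ref{DefObsThm}. We take $\Ph=\Lie_\hbar=\Lie\otimes\K[\hbar]$ with the trivial grading, and $\Qh=P^\ch_\hbar(R_\hbar)$, the Rees operad of the joint filtration $F^{R,\spe}_\bullet$ on $P^\ch(R)$. Here the filtration on $R$ is the split filtration induced by the grading of the graded coisson algebra. By definition, the schemes and stacks $\Quant^\ch_{\varphi,\gd}$ and $\fQuant^\ch_{\varphi,\gd}$ are exactly $\Def^{\Ph,\Qh}_{\varphi,\gd}$ and $\fDef^{\Ph,\Qh}_{\varphi,\gd}$. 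So once the hypotheses of Theorem \ref{DefObsgrThm} are verified, its conclusion is the statement, except that the central-fibre cohomology still has to be identified with $H^\bullet_c(\varphi)$.

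The hypotheses come in three parts.
\begin{enumerate}[(i)]
\item $\Lie$ is binary and Koszul, and $\Lie_\hbar^\ash=\Lie^\ash\otimes\K[\hbar]$ carries a canonical $\IG_\hbar$-equivariant trivialization, since the grading on $\Lie$ is trivial.
\item $P^\ch_\hbar(R_\hbar)$ needs a $\IG_\hbar$-equivariant trivialization of its underlying $\Ss$-module. We get it from Proposition \ref{prop:joint split}: the splitting $P^\ch(R)=\bigoplus_k P^\ch(R)^{\langle k\rangle^\spe}$ identifies $\Rees_\hbar$ with $\bigoplus_k P^\ch(R)^{\langle k\rangle^\spe}\hbar^k\cong \gr_{F^{R,\spe}}P^\ch(R)\otimes\K[\hbar]$. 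Each summand is homogeneous for the $\IG_\hbar$-weight, so this trivialization is equivariant.
\item By Theorem \ref{thm:gr of joint chiral is joint classical}, the central fibre $\Qh\otimes\K_0$ is isomorphic, as a graded operad, to $P^c(R)$ with the joint grading \eqref{eq:joint grading on classical}. This uses that $R$ is unital-projective.
\end{enumerate}
From (iii), $\g^{\varphi}_{\Lie,\mc Q_0}\cong \g^{\varphi}_{\Lie,P^c(R)}=C^\bullet_c(\varphi)$ as graded DG Lie algebras, compatibly with the auxiliary grading $\langle\cdot\rangle$. Hence $H^{\bullet,\langle -m\rangle}_{\Lie,\mc Q_0}(\varphi)=H^{\bullet,\langle -m\rangle}_c(\varphi)$.

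Theorem \ref{DefObsgrThm} then gives both parts of the statement. A graded extension exists exactly when $[\Obs_m]$ vanishes in $H^{2,\langle -m\rangle}_c(\varphi)$. When it does, isomorphism classes of extensions, taken modulo the gauge group $\mf G^{0,(0),\langle0\rangle}$, form a torsor for $H^{1,\langle -m\rangle}_c(\varphi)$.

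The main obstacle is the grading bookkeeping. Two things have to be checked. First, the $\IG_\hbar$-weight on $\Rees_\hbar$ must correspond to the joint grading $q-p+i$, so that the central-fibre grading used in Theorem \ref{DefObsgrThm} is the one on $C^\bullet_c$. Second, the components $d_j$ and $[\cdot,\cdot]_j$ of the expanded differential and bracket must have degree $-j$; this follows because the trivialization in (ii) is graded. Sign conventions will be matched to those of Theorem \ref{DefObsgrThm}, so that degree $-m$ corresponds to the coefficient of $\hbar^m$. We also need that graded Maurer--Cartan elements are exactly the graded Lie algebra maps, which is Proposition \ref{prop:graded MC elements} applied over $\K[\hbar]$.
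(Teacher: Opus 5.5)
Your proposal is correct and follows the paper's argument: the paper proves this theorem by applying Theorem \ref{DefObsgrThm} to $\Lie_\hbar$ and the Rees operad $P^\ch_\hbar(R_\hbar)$, citing Koszulness of $\Lie$, the canonical trivialization of $\Lie_\hbar$, and the trivialization of $P^\ch_\hbar(R_\hbar)$ coming from the splitting of Proposition \ref{prop:joint split}. Your additional checks only make explicit what the paper leaves implicit: that this trivialization is $\IG_\hbar$-equivariant, and that the central fibre is $P^c(R)$ with its joint grading by Theorem \ref{thm:gr of joint chiral is joint classical}, so the relevant cohomology is $H^{\bullet,\langle -m\rangle}_c(\varphi)$. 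One small slip: the Rees object is $\bigoplus_k P^\ch(R)^{\langle k\rangle^\spe}\hbar^k\K[\hbar]$ rather than $\bigoplus_k P^\ch(R)^{\langle k\rangle^\spe}\hbar^k$, although your final identification with $\gr_{F^{R,\spe}}P^\ch(R)\otimes\K[\hbar]$ is correct.
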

\begin{proof}
	This follows from applying Theorem \ref{DefObsgrThm} to the present setting, noting the required hypotheses hold as in the proof of Theorem \ref{DefObsQuantChThm}.
\end{proof}

\begin{coro}  Let $\varphi\in \Alg^\gd_{\Lie}(P^c(R))$ a graded coisson algebra such that $H^{2,\langle-m \rangle}_{c}(\varphi)=0$ for all $m>0$. Then $\Quant^\ch_{\varphi,\gd}(\ik\fph)$ is nonempty.
\end{coro}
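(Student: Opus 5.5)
This corollary follows from the graded order-by-order theory in the same way that Corollary \ref{Coisquantexistthm} follows from the ungraded theory. The plan is to invoke Corollary \ref{forexgrcoro} for the pair $\Ph=\Lie_\hbar$ and $\Qh=P^\ch_\hbar(R_\hbar)$, where $R_\hbar$ is the Rees object of $R$ with its split filtration. Concretely, I would build a compatible system of graded truncations $\varphi_{\hbar/\hbar^m}\in\Quant^\ch_{\varphi,\gd}(\K[\hbar]/\hbar^m)$ by induction on $m$. The base case is $\varphi_{\hbar/\hbar}=\varphi$, which lies in $\Quant^\ch_{\varphi,\gd}(\K[\hbar]/\hbar)$ by definition of the fibre product over $\MC_\gr(\g_{\Lie,P^c(R)})$.

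For the inductive step, suppose a graded quantization $\varphi_{\hbar/\hbar^m}$ modulo $\hbar^m$ with $m\geq 1$ has been constructed. Theorem \ref{DefObsgrQuantChThm} says that an extension to $\varphi_{\hbar/\hbar^{m+1}}$ exists if and only if the obstruction class $[\Obs_m(\varphi_{\hbar/\hbar^m})]$ vanishes in $H^{2,\langle -m\rangle}_c(\varphi)$. Since $m>0$, this group is zero by hypothesis, so the extension exists, and we choose one.

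Graded extensions therefore exist at every order. Passing to the inverse limit via the graded isomorphism $\Def_{\varphi,\gd}^{\Ph,\Qh}(\K\fph)\cong\lim_m\Def_{\varphi,\gd}^{\Ph,\Qh}(\K[\hbar]/\hbar^m)$ of Proposition \ref{Deflimgrprop} produces $\varphi_{\hat\hbar}\in\Quant^\ch_{\varphi,\gd}(\K\fph)$, which shows the space is nonempty.

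The only point needing care is that the hypotheses of Theorem \ref{DefObsgrThm} hold in this setting. Specifically, the trivializations of the underlying $\Ss$-modules must be $\IG_\hbar$-equivariant, so that the obstruction lands in the correct graded piece $\langle -m\rangle$. For $\Lie_\hbar$ the trivialization is canonical. For $P^\ch_\hbar(R_\hbar)$ it comes from the splitting of the joint filtration in Proposition \ref{prop:joint split}, combined with the identification of the central fibre with $P^c(R)$ from Theorem \ref{thm:gr of joint chiral is joint classical}. This splitting is a grading, so the resulting Rees trivialization is graded and hence equivariant, exactly as already used in the proof of Theorem \ref{DefObsgrQuantChThm}. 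Given this, the argument is simply the proof of Corollary \ref{forexgrcoro} specialized to this pair of operads.
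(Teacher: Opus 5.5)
Your proposal is correct and follows the paper's own proof. The paper proves this corollary by a one-line application of Corollary \ref{forexgrcoro} to the pair $(\Lie_\hbar, P^\ch_\hbar(R_\hbar))$. You have unpacked that corollary in this setting: induct on $m$ using Theorem \ref{DefObsgrQuantChThm}, pass to the limit with Proposition \ref{Deflimgrprop}, and take the equivariant trivialization from the splitting in Proposition \ref{prop:joint split}.
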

\begin{proof}
This follows from applying Corollary \ref{forexgrcoro} to the present setting.
\end{proof}

\begin{coro}\label{cor:unique graded quantization}
Let $\varphi\in \Alg_{\Lie}(P^c(R))$ a graded coisson algebra such that \mbox{$H^{1,\langle -m \rangle}_{c}(\varphi)=0$} for all $m>0$. Then, if there exists $\varphi_{\hat \hbar}\in\Quant^\ch_{\varphi,\gd}(\ik\fph)$ every graded quantization of $(R,\varphi)$ is isomorphic to $(R_\hhb, \vph)$.
\end{coro}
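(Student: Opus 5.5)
The statement is the chiral, graded analogue of Corollary \ref{forungrcoro}, so the plan is to reduce to that corollary. Its hypotheses are: a Koszul source family, a $\IG_\hbar$-equivariantly trivialized target family, and vanishing of $H^{1,\langle -m\rangle}$ for all $m\geq 1$. The argument then has to be upgraded from "unique among deformations with fixed underlying module $R_\hhb$" to "unique among all graded quantizations". I will proceed in three steps.

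First, I check the hypotheses of Corollary \ref{forungrcoro} with $\PP_\hbar=\Lie_\hbar$ and $\QQ_\hbar=P^\ch_\hbar(R_\hbar)$, exactly as in the proof of Theorem \ref{DefObsgrQuantChThm}. The operad $\Lie_\hbar$ is constant and Koszul, and its Koszul dual has a canonical equivariant trivialization. For the target, the grading on $R$ splits its filtration. Proposition \ref{prop:joint split} then splits the joint filtration $F^{R,\spe}_\bullet P^\ch(R)$, and the resulting trivialization of $\Rees^{F^{R,\spe}}_\hbar P^\ch(R)$ is $\IG_\hbar$-equivariant by construction, since the splitting \eqref{eq:splitting joint filtration} is a grading. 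Theorem \ref{thm:gr of joint chiral is joint classical} identifies the central fibre with $P^c(R)$ carrying the joint grading of \eqref{eq:joint grading on classical}. As a consequence, the relevant cohomology is $H^{1,\langle -m\rangle}_c(\varphi)$ with respect to exactly this grading, which is the one appearing in Theorem \ref{DefObsgrQuantChThm}.

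Second, I run the induction behind Corollary \ref{forungrcoro}. Let $\psi_{\hat\hbar}$ be any element of $\Quant^\ch_{\varphi,\gd}(\ik\fph)$. By Theorem \ref{DefObsgrQuantChThm}, at each order the isomorphism classes of graded extensions of a fixed truncation form a torsor for $H^{1,\langle -m\rangle}_c(\varphi)=0$. By induction on $m$, the truncations $\psi_{\hbar/\hbar^m}$ and $\vph/\hbar^m$ are therefore isomorphic via elements of $\mf G^{0,(0),\langle 0\rangle}$. Applying Proposition \ref{Deflimgrprop} in its graded stack form passes to the limit and gives an isomorphism $\psi_{\hat\hbar}\cong\vph$ in $\fQuant^\ch_{\varphi,\gd}(\ik\fph)$.

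The induction step needs care. Theorem \ref{DefObsgrQuantChThm} compares extensions of the same truncation, while after step $m$ we only know the truncations are isomorphic. I handle this by conjugating $\psi$ by a lift of the order-$m$ gauge element, which is an exponential and hence defined over $\ik\fph$. This makes the truncations equal, so the torsor statement applies; I then compose the successive gauge elements. The infinite product converges $\hbar$-adically because each factor is $1+O(\hbar^m)$.

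Third, I extend from the fixed module to arbitrary graded quantizations. Let $(R'_{\hat\hbar},\varphi'_{\hat\hbar})$ be any graded quantization in the sense of Definition \ref{defn:quant of coisson}. It comes with a graded trivialization $R'_{\hat\hbar}\cong R'_0\widehat\otimes\ik\fph$ and a graded isomorphism $R'_0\cong R$ of coisson algebras. Composing these gives an isomorphism $R'_{\hat\hbar}\cong R_{\hat\hbar}$ that reduces to this identification on the central fibre. Transporting $\varphi'_{\hat\hbar}$ along it produces an element of $\Quant^\ch_{\varphi,\gd}(\ik\fph)$ whose central fibre is exactly $\varphi$, and the second step then applies.

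I expect the main obstacle to be this transport. The transported structure must again lie in the Rees operad $P^\ch_\hbar(R_\hbar)$, which requires the isomorphism to respect the graded Rees structure. I would argue this using the equivalence stated above between graded trivializable $D_\hbar(X)$-modules and split filtered D-modules.
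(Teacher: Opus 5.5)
Your proposal is correct and follows the paper's route. The paper's proof simply applies Corollary \ref{forungrcoro}, with its hypotheses checked exactly as in the proof of Theorem \ref{DefObsgrQuantChThm}, and the passage from the fixed module $R_{\hat\hbar}$ to arbitrary graded quantizations is the content of the remark following \eqref{eq:chiral quant defn}; your gauge-transport and $\hbar$-adic convergence argument only makes explicit the induction and limit step that the paper's proof of Corollary \ref{forungrcoro} leaves implicit.
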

\begin{proof}
This follows from applying Corollary \ref{forungrcoro} to the present setting.
\end{proof}

\begin{coro}\label{cor:torsor if cohomology is in single graded degree}
	Let $\varphi\in \Alg_{\Lie}(P^c(R))$ a graded coisson algebra such that \mbox{$H^{1,\langle -m \rangle}_{c}(\varphi)=\{0\}$} for all $m\neq m_0$, $m_0 \in \IN$. Then, if there exists a  graded quantization $\varphi_{\hat{\hbar}}\in \Quant^\ch_{\varphi,\gd}(\K\fph)$, the stack of  graded quantizations $\fQuant^\ch_{\varphi,\gd}(\K\fph)  $ is a torsor for $H^{1,\langle -m_0 \rangle}_{c}(\varphi)$.
\end{coro}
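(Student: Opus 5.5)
The plan is to reduce Corollary \ref{cor:torsor if cohomology is in single graded degree} directly to Corollary \ref{grtorcoro}, exactly as the neighbouring corollaries reduce to their counterparts in Section \ref{ssec:formal def}. Unwinding Equation \ref{eq:chiral quant defn}, the stack $\fQuant^\ch_{\varphi,\gd}(\K\fph)$ is by definition $\fDef_{\varphi,\gd}^{\Lie_\hbar,P^\ch_\hbar(R_\hbar)}(\K\fph)$. The deformation-obstruction complex appearing in the graded theory for the pair $(\Lie_\hbar, P^\ch_\hbar(R_\hbar))$ is the one attached to the central fibres $(\Lie, P^c(R))$. Its cohomology is therefore $H^\bullet_c(\varphi)$, with the auxiliary grading induced by the joint grading of Equation \ref{eq:joint grading on classical}.

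The first step is to verify the standing hypotheses of Section \ref{ssec:graded def}, using the same checks as in the proof of Theorem \ref{DefObsgrQuantChThm}. The operad $\Lie_\hbar=\Lie\otimes\K[\hbar]$ is Koszul and its Koszul dual $\Ss$-module is canonically and $\IG_\hbar$-equivariantly trivialized. For $P^\ch_\hbar(R_\hbar)$, Proposition \ref{prop:joint split} splits the joint filtration, and Theorem \ref{thm:gr of joint chiral is joint classical} identifies the central fibre with $P^c(R)$. Together these give a $\IG_\hbar$-equivariant trivialization $P^\ch_\hbar(R_\hbar)\cong P^c(R)\otimes\K[\hbar]$, so the setting of Warning \ref{flatwarn} applies in its graded form. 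Under these identifications the degree $-m$ piece $H^{1,\langle -m\rangle}_{\Lie,P^c(R)}(\varphi)$ is exactly $H^{1,\langle -m\rangle}_c(\varphi)$.

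The second step is to apply Corollary \ref{grtorcoro}. The hypothesis that $H^{1,\langle -m\rangle}_c(\varphi)=0$ for $m\neq m_0$, together with the assumed existence of $\varphi_{\hat\hbar}$, gives the torsor statement for $H^{1,\langle -m_0\rangle}_c(\varphi)$.

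The step most likely to need care is inside Corollary \ref{grtorcoro} itself. One must pass from torsors at each finite order to a torsor structure on the limit stack. This uses Theorem \ref{DefObsgrQuantChThm} at each stage $m$ and the limit identification of Proposition \ref{Deflimgrprop}.

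The point to check is that modifying the extension at order $m_0$ by a class in $H^{1,\langle -m_0\rangle}_c(\varphi)$ leaves all higher obstructions vanishing. Those higher obstructions are not assumed to vanish. I would handle this by noting that the hypothesis is that some formal graded quantization exists, not that every truncation extends. Concretely, I would show the action is well defined and transitive on isomorphism classes of full formal quantizations: any two such quantizations agree up to isomorphism below order $m_0$, since the relevant torsors are trivial there. At order $m_0$ they differ by a class in $H^{1,\langle -m_0\rangle}_c(\varphi)$, and above $m_0$ the trivial torsors force uniqueness. Existence of the action, i.e.\ that every class is realized, is where I expect to rely on the statement as inherited from Corollary \ref{grtorcoro}.
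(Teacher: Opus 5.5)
Your reduction is the same as the paper's proof. That proof consists solely of citing Corollary~\ref{grtorcoro}, with the hypotheses checked as in Theorem~\ref{DefObsgrQuantChThm}. However, the step you explicitly defer to that corollary is a genuine gap: the claim that every class in $H^{1,\langle -m_0\rangle}_c(\varphi)$ is realized by a full formal graded quantization. Corollary~\ref{grtorcoro} does not fill it, because its proof only observes that the freedom in the inverse system of truncations is concentrated in degree $-m_0$.

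Replacing $\varphi_{m_0}$ by $\varphi_{m_0}+\zeta$, for a degree $-m_0$ cocycle $\zeta$, changes every later obstruction cochain. At order $m>m_0$ it adds terms such as $[\zeta,\varphi_{m-m_0}]$ and $d_{m-m_0}\zeta$, and at order $2m_0$ it also adds $\tfrac12[\zeta,\zeta]$. Nothing is assumed about $H^{2,\langle -m\rangle}_c(\varphi)$ for $m>m_0$. Your remark that some formal quantization exists does not help, since it says nothing about the shifted ones.

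In fact Corollary~\ref{grtorcoro} already fails as stated for constant families. Take $\Lie\otimes\K[\hbar]$ and $\End(V)\otimes\K[\hbar]$, with $V$ three-dimensional and concentrated in degree $m_0$, and take $\varphi=0$.
\begin{itemize}
\item Then $d_\varphi=0$, so $H^1=\Lambda^2V^*\otimes V$, lying entirely in degree $-m_0$.
\item The graded gauge group is trivial, because $\End(V)$ sits in degree $0$.
\item The formal graded deformations are exactly the elements $\hbar^{m_0}\mu$ with $\mu$ a Lie bracket on $V$. These form a proper closed subvariety of $\Lambda^2V^*\otimes V$ that is not preserved by translations, so they are not a torsor.
\end{itemize}

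What your argument does prove is the half you sketched. A formal graded quantization is determined up to isomorphism by its truncation modulo $\hbar^{m_0+1}$. Hence isomorphism classes inject into the isomorphism classes of quantizations modulo $\hbar^{m_0+1}$ described by Theorem~\ref{DefObsgrQuantChThm}. Surjectivity needs extra input. For instance, you could assume $H^{2,\langle -m\rangle}_c(\varphi)=0$ for all $m>m_0$, or exhibit an explicit family realizing every class, as is available for $V^k_\hbar(\gf)$ and $\Vir^c_\hbar$.
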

\begin{proof}
	This follows from applying Corollary \ref{grtorcoro} to the present setting.
\end{proof}

\section{Variational Poisson cohomology of jets of maps to affine symplectic varieties}\label{varsec}

\subsection{Overview of main result on variational Poisson cohomology}
The goal of this section is to prove the following theorem, which is one of the main results of the present work:

Let $Y$ be an affine symplectic variety, $\mc Y=\mc JY$ the corresponding affine D-scheme of jets of maps to $Y$ and $R=(R,m)=\mc O_{\mc JY}$ the corresponding $\Comm^!(X)$ algebra. The symplectic structure $\omega_Y$ on $Y$ induces a coisson structure $b_{\omega_Y}$ on $R$, and by mild abuse of notation we also let $R=(R,m,b_{\omega_Y})$ denote the resulting coisson algebra. Let $C^\bullet_c(R)=C^\bullet_c(R,m,b_{\omega_Y})$ denote the classical cochains on $R$ from Proposition \ref{chPoiscxprop}. The main result of this section is the following identification of the cohomology groups $H^\bullet_c(R)=H_{\Lie,P^c(R)}^{\bullet}(\varphi)$ for $\varphi:\Lie\to P^c(R)$ the corresponding map of operads.

\begin{theo}\label{mainchtheo} For $X=\bb A^1$, there is a natural isomorphism
	\[  H_{c}^{k}(\mc O_{\mc JY}) \cong  H^{k+2}_\dR(Y)  \]
	for $k\geq 1$.
\end{theo}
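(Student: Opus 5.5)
Since $R=\mc O_{\mc JY}$ is the jet algebra of a smooth affine variety, Proposition 2.3.17-type results quoted above show that $R$ is very smooth. Corollary \ref{GOequivchcoro} then replaces the classical cochains $C^\bullet_c(R)$, up to quasi-isomorphism, by the variational Poisson cochains $C^\bullet_\PV(R)$. Corollary \ref{gPCchcoro} identifies the latter with $\bigoplus_p h(\wedge^{p+1}_R\Theta_R)[-p]$, with differential $\tilde d_{b}$ induced by the coisson bracket. This is the chiral analogue of the Lichnerowicz--Poisson complex. The plan is to imitate the proof of Theorem \ref{QuantdRthm}, in three steps.

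\textbf{Step 1: de Rham complex of the D-scheme.} The coisson bracket $b_{\omega_Y}$ makes $\Omega^1_R$ a $\Lie^*$ algebroid. Its anchor $\Omega^1_R\to\Theta_R$ is contraction with the Poisson bivector, extended along jets. Because $\omega_Y$ is nondegenerate and $\Omega^1_{\mc JY}$ is pulled back from $\Omega^1_Y$ along jets, the anchor is an isomorphism of $\Lie^*$ algebroids. Under this isomorphism, $\tilde d_b$ on $h(\wedge^{\bullet+1}_R\Theta_R)$ becomes the horizontal-reduced (variational) de Rham differential on $h(\Omega^{\bullet+1}_R)$, taken with the vertical/jet grading. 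To make this precise I would use the internal Hom objects and the internal classical operad mentioned in the outline (the anticipated Proposition \ref{AQPoischequivintprop}). At the level of D-modules, before applying $h$, the Chevalley--Eilenberg complex of the tangent $\Lie^*$ algebroid $\Theta_R$ with coefficients in $R$ should then be the vertical de Rham complex $\Omega^{\bullet}_{\mc JY/X}$, as a complex in $\DD(R)$. The cochains are then obtained by applying $h$, which here computes the derived de Rham pushforward by Proposition \ref{locprojaffprop}, since everything is locally projective and $X=\bb A^1$ is affine.

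\textbf{Step 2: the variational bicomplex.} The result is the total de Rham complex of $\mc JY$, with a truncation and shift. It is the variational bicomplex: the horizontal direction is de Rham along $X$ (one form $dz$, via $h$), and the vertical direction is de Rham along the fibres, starting at degree $1$ because reduced cochains begin with $\wedge^1\Theta$. The key input is a chiral Poincaré-type lemma, namely that the jet projection $\mc JY\to Y\times X$ induces a quasi-isomorphism on full de Rham complexes. This holds because $\mc JY\to Y$ is a pro-affine bundle with contractible fibres, and one can use an Euler/homotopy operator on the jet coordinates. With that lemma, the total complex of $\Omega^{\bullet}_{\mc JY}$ over $\bb A^1$ computes $H^\bullet_\dR(Y)\otimes H^\bullet_\dR(\bb A^1)=H^\bullet_\dR(Y)$.

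\textbf{Step 3: degree bookkeeping.} Next I would track degrees. Classical cochain degree $k$ corresponds to $p=k$, that is, polyvectors $\wedge^{k+1}$, which become forms of vertical degree $k+1$. Adding the horizontal form from $h$, together with the shift by the $[1]$ of the middle de Rham functor for $d_X=1$, gives total form degree $k+2$. Truncating below vertical degree $1$ removes the $\Omega^0$ column. This creates a discrepancy only in low degrees, through the long exact sequence for the truncation involving the cohomology of horizontal-exact functions. That is why the statement is claimed only for $k\ge1$, and I would check the edge case $k=0$ separately to see where the exceptional terms sit.

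The main obstacle will be Step 1. It requires showing, in the pseudo-tensor setting, that $\tilde d_b$ exactly matches the vertical de Rham differential after the $h$-reduction, and the reduction $h$ does not commute naively with $\wedge_R$. I would first verify this at the level of internal Homs before applying $h$, using Proposition \ref{DeractLieprop} together with an explicit coordinate computation on $\mc J\bb A^{2n}$ with Darboux $\lambda$-brackets. I would then globalize using étale coordinates on $Y$ and the naturality of the constructions.
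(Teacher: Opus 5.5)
Your proposal is correct and follows essentially the same route as the paper. Very smoothness reduces the problem to variational cochains. The internal variational cochains are identified with the internal Chevalley--Eilenberg complex of the $\Lie^*$ algebroid $\Omega^1_R\cong\Theta_R$, that is, with $\Omega^{\bullet\geq1}_{\mc Y/X}[1]$, and $h$ is applied using local projectivity over affine $X$. The result is compared with the absolute de Rham complex of $\mc JY$, and the cokernel $\dR_X(\mc O_{\mc Y})$ sits in degrees $0,1$, which gives the range $k\geq1$. The proof concludes with the homotopy equivalences $\mc JY\simeq\mc J(Y)_0\simeq Y$. The one difference is that the paper does your Step 1 structurally, matching the two differentials through the same characterizing commutative square, rather than by a Darboux-coordinate computation, so no étale globalization is needed.
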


Conceptually, the proof is essentially identical to that of the analogous fact for affine symplectic varieties, which was the content of Theorem \ref{QuantdRthm}. We defer the analogous discussion of the implications for the deformation theory of coisson algebras to Section \ref{sympsec} below.

The formal proof is given in Section \ref{ssec:proof of mainchtheo}, with each of the main ingredients proved in the following sections: In Section \ref{inthomsec}, we introduce the requisite internal $\Hom$ objects for the categories $\DD(X)$ and $\DD(R)$, as well as for their spaces of multi-linear maps with respect to the various pseudo-tensor category structures used in the proof. In Section \ref{intglobalcompsec}, we show that the middle de Rham cohomology of the internal analogue of the variational cochains recovers the global variational cochains, which by Corollary \ref{GOequivchcoro} computes the cohomology of the classical cochains as on the right hand side of the isomorphism in Theorem \ref{mainchtheo}. In Section \ref{varsympsec}, we show that the internal variational cochains admit a natural isomorphism of complexes with the internal Chevalley--Eilenberg cochains on the $\Lie^*$ algebroid $\Omega^1_R$ determined by the coisson structure on $R$, and moreover that under our symplectic hypothesis on $Y$, this admits a further isomorphism of complexes to the truncated relative de Rham complex of $\mc Y \to X$. Finally, in Section \ref{reldRsec} we show that the corresponding middle de Rham cohomology of the truncated relative de Rham complex is isomorphic in the derived category to the truncated de Rham complex of $Y$.

\subsection{Proof of Theorem \ref{mainchtheo}}\label{ssec:proof of mainchtheo}

The proof amounts, essentially, to combining the main results of the sections which follow below.

Let $X = \IA^1$, $Y$ an affine symplectic variety and $\YY = \JJ Y$. Since $Y$ is smooth, $\YY=\JJ Y$ is very smooth and by Corollary \ref{GOequivchcoro}, the inclusion $C_{\PV}^\bullet(\OO_{\JJ Y})\into C_c^\bullet(\OO_{\JJ Y})$ is a quasi-isomorphism. We have a chain of isomorphisms
\[
C^\bullet_{\PV}(\OO_{\JJ Y}) \cong h\big(\CC_{\PV}^\bullet(\OO_{\JJ Y})\big) \cong h\big(\Omega^{\bullet\ge1}_{\mc Y/X}\big)[1]~,
\]
where the first isomorphism follows from \ref{prop:deRham of internal PV} and the second from Proposition \ref{prop:int PV is de Rham}.

Now, by Proposition \ref{jetsdRprop}, we have an isomorphism in cohomology $H^k(h(\Omega^{\bullet\ge1}_{\mc Y/X}))\cong H^{k+1}_{\dR}(Y) $ for $k\ge 2$. Combining, we have the desired result for $k\ge 1$
\[
H^k_c(\OO_{\JJ Y}) \cong H^{k+2}_\dR(Y)~.
\]
\hfill $\Box$

\subsection{Internal Hom objects for D-modules and quasicoherent sheaves on D-schemes}\label{inthomsec}

There are natural internal Hom objects for the derived category $\Dd(X)$:
\begin{defn}
	The naive internal Hom object $ \Homii_{\Dd(X)}(L,M) \in\Dd(\mc O_X)$ for the category $\Dd(X)$ is defined by
	\[  \Homii_{\Dd(X)}(L,M)(U) = \Hom_{\Dd(U)^!}(j^!L,j^!M)(U) \]
	on any open set $U\subset X$, where $j:U\to X$ the inclusion.
\end{defn}

Let $\Dd_\coh(X) \subset \Dd(X)$ denote the full subcategory of objects of $\Dd(X)$ with  $\mc D_X$-coherent cohomology objects, and let $L\in \Dd_\coh(X)$ and $M\in \Dd(X)$.

\begin{defn}  The (genuine) internal Hom object $\Homi_{\Dd(X)}(L,M) \in \Dd(X)$ is defined by 
	\[\Homi_{\Dd(X)}(L,M)  = \Homii_{\Dd^r(X)}(L^r, M^r \otimes_{\mc O_X} \mc D_X ) \ . \]
\end{defn}

This satisfies the following relation with the usual Hom space in $\DD(X)$:
\begin{prop}\label{inthommatchprop} There exists a natural quasi-isomorphism
	\[\pi_* \Homi_{\Dd(X)}(L,M) \xrightarrow{\cong}\Hom_{\Dd(X)}(L,M) \  .  \]
\end{prop}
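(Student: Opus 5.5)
The plan is to reduce the statement to the adjunction $(\pi^*, \pi_*)$ in the derived category of D-modules, or more concretely to the description of de Rham cochains via the Spencer--Koszul resolution recalled after the definition of $C^\bullet_\dR(X,M)$. Write $\Homi(L,M)=\Homii_{\Dd^r(X)}(L^r,M^r\otimes_{\mc O_X}\mc D_X)$, where the right $\mc D_X$-module structure comes from the right factor of $\mc D_X$, which is left free after the $\Homii$ has used up the left action. First reduce to the case where $L$ is a locally free (induced) D-module of finite rank. Since $L\in \Dd_\coh(X)$, locally on $X$ (and globally, since in our applications $X$ is affine) $L$ admits a bounded-above resolution by objects of the form $L^r = \mc E\otimes_{\mc O_X}\mc D_X$ with $\mc E$ a coherent locally free $\mc O_X$-module. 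Both sides of the claimed map are exact functors in $L$, so a five-lemma/way-out argument reduces us to this case. The natural map itself is constructed as follows: $\pi_*\Homi(L,M)=\bb R\pi_\bullet(\Homi(L,M)\otimes^{\bb L}_{\mc D_X}\mc O_X)$, and we use the canonical evaluation $\Homii(L^r,M^r\otimes\mc D_X)\otimes_{\mc D_X}\mc O_X\to\Homii(L^r,M^r\otimes_{\mc O_X}\mc D_X\otimes_{\mc D_X}\mc O_X)=\Homii(L^r,M^r)$, followed by taking derived global sections of sheaf Hom, which is $\Hom_{\Dd(X)}(L,M)$.

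For induced $L^r=\mc E\otimes_{\mc O_X}\mc D_X$, compute both sides directly. The right side is $\Hom_{\Dd(X)}(\mc E\otimes\mc D_X,M)=\bb R\Gamma(X,\Homii_{\mc O_X}(\mc E,M))$. On the left, $\Homii_{\mc D_X}(\mc E\otimes\mc D_X,M\otimes\mc D_X)\cong\mc E^\vee\otimes_{\mc O_X}M\otimes_{\mc O_X}\mc D_X$ as a right D-module, which is induced. For any $\mc O_X$-quasicoherent $N$ we have $(N\otimes_{\mc O_X}\mc D_X)\otimes^{\bb L}_{\mc D_X}\mc O_X\cong N$, so $\pi_*$ of an induced module is $\bb R\Gamma(X,N)$. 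Hence the left side is also $\bb R\Gamma(X,\mc E^\vee\otimes M)$. We then check that the map constructed above is the identity under these identifications; this is routine bookkeeping.

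The main obstacle is making the side-switching and bimodule structures precise: which $\mc D_X$-action on $M^r\otimes_{\mc O_X}\mc D_X$ is used in $\Homii$ and which survives. I would handle this by the standard trick that $M^r\otimes_{\mc O_X}\mc D_X$ carries two commuting right D-module structures exchanged by an involution, as in the usual definition of $\mc D$-module duality $\DDD$. This identifies $\Homi(L,M)$ with $\DDD L\otimes^{!}M$ up to shift for coherent $L$. The statement then becomes the projection-formula form $\pi_*(\DDD L\otimes^{!,\bb L}M)\cong\Hom(L,M)$, which is the standard consequence of the adjunction $\pi^*\dashv\pi_*$ together with the characterization of $\DDD$, and one can alternatively argue that way if the direct computation gets messy.
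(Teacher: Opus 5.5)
The paper gives no proof of this proposition. It is recorded in Section \ref{inthomsec} as a standard fact and then used as a black box, as is Proposition \ref{inthomdescprop}. There is therefore nothing to match against, and your proposal is a correct proof along the standard lines.

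The structure of your argument is right. You build the evaluation map
\[
\Homi_{\Dd(X)}(L,M)\otimes^{\bb L}_{\mc D_X}\mc O_X \longrightarrow \Homii_{\Dd^r(X)}(L^r,M^r)
\]
at the level of complexes of sheaves. This uses that the two right $\mc D_X$-structures on $N=M^r\otimes_{\mc O_X}\mc D_X$ commute, and that $N\otimes^{\bb L}_{\mc D_X}\mc O_X\cong M^r$ for the induced structure. You then apply $\bb R\pi_\bullet$ and check the isomorphism on induced $L^r=\mc E\otimes_{\mc O_X}\mc D_X$. There both sides become $\bb R\Gamma(X,\mc E^\vee\otimes_{\mc O_X}M)$, exactly as you compute.

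A few points to tighten:

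\begin{itemize}
\item \textbf{Locality.} The reduction to induced $L$ must be done on the sheaf-level evaluation map, before applying $\bb R\pi_\bullet$. The reason is that $\Hom_{\Dd(X)}(L,M)$ is not local on $X$; this is the only sense in which ``locally on $X$'' is legitimate.

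\item \textbf{Resolutions.} Avoid a bounded-above resolution plus a way-out argument, which is delicate if $M$ is unbounded. Instead use that $\mc D_X$ has finite global dimension for smooth $X$. Then coherent $L$ is perfect, i.e.\ locally quasi-isomorphic to a bounded complex of direct summands of finite free right $\mc D_X$-modules. The reduction becomes a finite induction on triangles, with summands handled by additivity.

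\item \textbf{Skipping the resolution of $L$.} You can avoid resolving $L$ altogether by computing $\otimes^{\bb L}_{\mc D_X}\mc O_X$ with the Spencer resolution $\mc K^\bullet$ from Section \ref{DModsec}. Its terms $\mc D_X\otimes_{\mc O_X}\wedge^j\Theta_X$ are finite locally free. Tensoring through the surviving structure, the evaluation map is then termwise the isomorphism $\Homii(L^r,N)\otimes_{\mc O_X}\wedge^j\Theta_X\cong\Homii(L^r,N\otimes_{\mc O_X}\wedge^j\Theta_X)$. Moreover $N\otimes_{\mc D_X}\mc K^\bullet=M^r\otimes_{\mc O_X}\mc K^\bullet\to M^r$ is a quasi-isomorphism, because $\mc K^\bullet\to\mc O_X$ is a bounded $\mc O_X$-flat resolution.

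\item \textbf{Your fallback.} The route through $\DDD L\otimes^{\bb L,!}M$ and the adjunction for the smooth map $\pi$ is valid, but it is not more elementary. The identification $\Homi(L,M)\cong\DDD L\otimes^{\bb L,!}M$ of Proposition \ref{inthomdescprop} rests on the same local computation. Your direct argument is the more self-contained of the two.
\end{itemize}
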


\begin{defn} The duality functor $\DDD:\Dd_\coh(X) \to \Dd_\coh(X)$ is defined by
	\[ \DDD M = \Homi_{\Dd(X)}(M, \omega_X) = \Homii_{\Dd^r(X)}( M^r,\omega_X\otimes_{\mc O_X}\mc D_X) \ . \]
\end{defn}

This satisfies the following relation with the genuine internal Hom functor
\begin{prop}\label{inthomdescprop} Let $L\in \Dd_\coh(X)$ and $M\in \Dd(X)$. There is a natural isomorphism
	\[\Homi_{\Dd(X)}(L,M)  \cong (\DDD L) \otimes^{\bb L,!} M \ . \]
\end{prop}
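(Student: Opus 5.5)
This is a local statement, so I would verify it on stalks over an affine open $U\subset X$, where everything reduces to modules over $\mc D_U$. The plan is to resolve $L$ by a finite complex of locally free $\mc D_X$-modules of finite rank, using that $L$ is coherent and $X$ smooth. Both sides of the claimed isomorphism are exact (triangulated) functors in $L$. Once a natural transformation between them is built, it therefore suffices, by dévissage on the stupid filtration of the resolution, to check the isomorphism for $L^r=\mc D_X$, or more generally $L^r=E\otimes_{\mc O_X}\mc D_X$ with $E$ a locally free $\mc O_X$-module (induced D-modules).

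First I would construct the comparison map. By Proposition \ref{inthommatchprop} and the definition of $\DDD$, there is an evaluation $L\otimes^{\bb L,!}\DDD L\to \omega_X$, namely the pairing $\Homii(L^r,\omega_X\otimes\mc D_X)\otimes L^r\to \omega_X\otimes\mc D_X$ followed by the side-changing conventions. Tensoring this with $M$ gives $(\DDD L)\otimes^{\bb L,!}M\otimes^{\bb L,!}L\to M$. Adjunction for $\Homi$, which is internal Hom for $\otimes^{!}$ on coherent sources, then yields the natural map $(\DDD L)\otimes^{\bb L,!}M\to\Homi_{\Dd(X)}(L,M)$. Alternatively, I would write the map directly: send $\xi\otimes m$, with $\xi\in\Homii(L^r,\omega_X\otimes\mc D_X)$, to the composite $L^r\xrightarrow{\xi}\omega_X\otimes\mc D_X\to M^r\otimes\mc D_X$, using the standard isomorphism $(\omega_X\otimes\mc D_X)\otimes^! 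M\cong M^r\otimes_{\mc O_X}\mc D_X$ of $\mc D_X$-bimodules. This is the "transposition" isomorphism interchanging the two right $\mc D$-structures on $M^\ell\otimes\mc D_X$.

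Next I would check the map on $L^r=E\otimes\mc D_X$. Here $\Homii_{\Dd^r}(E\otimes\mc D_X,N\otimes\mc D_X)\cong E^\vee\otimes N\otimes\mc D_X$ for any $N$, so the left side becomes $E^\vee\otimes M^r\otimes\mc D_X$. For $N=\omega_X$ this gives $\DDD L\cong E^\vee\otimes\omega_X\otimes\mc D_X$, which up to the shift in $\omega_X$ is the induced module on $E^\vee\otimes\omega_X^{\otimes 2}$ in left/right conventions. Then $(\DDD L)\otimes^! M$ is an induced module tensored with $M$, and the transposition isomorphism identifies it with $E^\vee\otimes M^r\otimes\mc D_X$. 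The map built above is exactly this identification, so it is an isomorphism. The cohomological shifts are handled by $\omega_X=\Omega^{d_X}[d_X]$ together with $M\otimes^!N\cong M\otimes^{\bb L,!}N[d_X]$ for projective $M$, and I expect the two $[d_X]$ shifts to cancel.

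The main obstacle is the bookkeeping of the two commuting $\mc D_X$-module structures on $M^r\otimes_{\mc O_X}\mc D_X$, and showing that the transposition isomorphism is compatible with the structure $\Homi$ uses and the one in $\otimes^!$. I would do this in local coordinates, checking on generators $\partial_i$ via the explicit formula $m\otimes P\mapsto \sum (\text{twisted}) $ from Kashiwara's side-changing. Naturality in $L$ then follows from functoriality of the construction, which completes the dévissage.
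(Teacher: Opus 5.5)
The paper states this without proof, as a standard fact. Your second, ``direct'' construction of the comparison map, $\xi\otimes m\mapsto\big(L^r\xrightarrow{\xi}\omega_X\otimes\mc D_X\to M^r\otimes\mc D_X\big)$, followed by d\'evissage to induced modules, is the standard argument and it goes through. The point is that you tensor with $M$ along the $\mc O_X$-structure coming from the $\mc D_X$-action on $\omega_X\otimes_{\mc O_X}\mc D_X$ that is \emph{not} used to form $\Homii$. That action commutes with the one that is used, so the map is linear for the surviving right $\mc D_X$-action. For $L^r=\mc D_X$ both sides are literally $(\omega_X\otimes\mc D_X)\otimes^! M$, and your map is the bimodule identification. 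Locally you get a finite resolution by finitely generated projective rather than free modules, but an isomorphism on $\mc D_X^{\oplus n}$ restricts to direct summands, so this is harmless.

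Your first construction should be dropped. It rests on the claim that $\Homi_{\Dd(X)}(L,-)$ is right adjoint to $-\otimes^{!}L$ for coherent $L$, and this is false. Take $X=\bb A^1$, $A=L=\mc D_X$ and $M=\mc O_X$.
\begin{itemize}
\item $\Homi(L,M)$ is the induced module $\omega_X\otimes\mc D_X$, so $\Hom^\bullet(A,\Homi(L,M))\cong\K[x,\partial]$ is countable-dimensional.
\item $A\otimes^!L$ is the $\mc D_X$-module induced from the $\mc O_X$-module $\mc D_X$, so $\Hom^\bullet(A\otimes^!L,M)\cong\Hom_{\mc O_X}(\mc D_X,\mc O_X)\cong\prod_{n\ge 0}\K[x]$ is not countable-dimensional.
\end{itemize}
No shift convention reconciles these. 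What is true, up to normalization of shifts, is $\Hom(A,\Homi(L,M))\cong\Hom(A\boxtimes L,\Delta_*M)$. That is, $\Homi(L,-)$ is right adjoint to $\Delta^*(-\boxtimes L)$, which agrees with $-\otimes^{!}L$ up to shift when, e.g., $L$ is $\mc O_X$-coherent, but not in general.

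Correspondingly, the canonical pairing attached to $\DDD L$ is the $*$-operation $\DDD L\boxtimes L\to\Delta_*\omega_X$ corresponding to $\mathrm{id}_{\DDD L}$. It is not a map $L\otimes^{\bb L,!}\DDD L\to\omega_X$, and Proposition~\ref{inthommatchprop}, which is a statement about $\pi_*$, does not supply one.
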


\subsubsection{} There are also analogues of the above internal Hom objects in the abelian category $\DD(X)$:

\begin{defn}
	The naive internal Hom object $ \Homii_{\Dd(X)}(L,M) \in\Dd(\mc O_X)$ for the category $\Dd(X)$ is defined by
	\[  \Homii_{\Dd(X)}(L,M)(U) = \Hom_{\Dd(U)^!}(j^{!,\heartsuit}L,j^{!,\heartsuit}M)(U) \]
	on any open set $U\subset X$, where $j:U\to X$ the inclusion.
\end{defn}

Let $\DD_\coh(X) \subset \DD(X)$ denote the full subcategory on coherent D-modules, and let $L\in \DD_\coh(X)$ and $M\in \DD(X)$.

\begin{defn}  The (genuine) internal Hom object $\Homi_{\DD(X)}(L,M) \in \DD(X)$ is defined by 
	\[\Homi_{\DD(X)}(L,M)  = \Homii_{\DD^r(X)}(L^r, M^r \otimes_{\mc O_X} \mc D_X )  \ \in \DD^r(X) \ . \]
\end{defn}

We also introduce the following abelian variant of the duality functor:

\begin{defn} The duality functor $(\cdot)^{\circ}:\DD_\coh(X) \to \DD_\coh(X)$ is defined by
	\[ M^{\circ} = \Homi_{\DD(X)}(M, \Omega^{d_X}_X) = \Homii_{\DD^r(X)}( M^r,\Omega^{d_X}_X\otimes_{\mc O_X}\mc D_X) \ . \]
\end{defn}
Note that if $M\in \DD(X)$ is locally projective then so is $M^\circ$, and we have that $M^\circ \cong \DDD M[-d_X]$.

This satisfies the following analogous relation with the genuine internal Hom functor
\begin{prop}\label{inthomdescabprop} Let $L\in \DD_\coh(X)$ and $M\in \DD(X)$. There is a natural isomorphism
	\[\Homi_{\DD(X)}(L,M)  \cong L^{\circ} \otimes^! M \ . \]
\end{prop}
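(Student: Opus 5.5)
The proof reduces, via the internal Hom description of Proposition \ref{inthomdescabprop}, to the corresponding statement in the derived category. There it follows from Proposition \ref{inthomdescprop}, once we know that for coherent $L$ the abelian Hom computes the derived one. Throughout, $L\in \DD_\coh(X)$ and $M\in \DD(X)$.

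\textbf{Step 1: reduce to a local, $\mc O_X$-linear statement.} Both sides are sheaves on $X$ and the isomorphism should be natural in $L$ and $M$, so we may work locally. We take $X$ affine and first treat $L=\mc D_X^r=\Omega^{d_X}_X\otimes_{\mc O_X}\mc D_X$, free of rank one. On the right-hand side, $L^\circ=\Homii_{\DD^r(X)}(\mc D_X^r,\Omega^{d_X}_X\otimes\mc D_X)$, and this recovers $\Omega^{d_X}_X\otimes\mc D_X$ with the second (right) $\mc D_X$-structure. Then $L^\circ\otimes^! M\cong \mc D_X\otimes_{\mc O_X}M$ with the twisted module structure, which by the standard side-changing identity is isomorphic to $M^r\otimes_{\mc O_X}\mc D_X$. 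On the left-hand side, $\Homii(\mc D_X^r,M^r\otimes\mc D_X)$ is directly $M^r\otimes_{\mc O_X}\mc D_X$. So the two sides agree for $L$ free. Naturality in $L$ gives the same for finite direct sums, hence for finite free modules.

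\textbf{Step 2: extend to general coherent $L$.} Locally, choose a finite presentation $F_1\to F_0\to L\to 0$ by free $\mc D_X$-modules. The functor $\Homi_{\DD(X)}(\cdot,M)$ is left exact. The functor $(\cdot)^\circ\otimes^!M$ is also left exact in $L$: $(\cdot)^\circ$ is left exact, and $\otimes^! M$ is $\otimes_{\mc O_X}$, which is exact when the dual terms are $\mc O_X$-flat. To make this work we compare the two sides on the kernels $0\to L\to\ldots$ after applying the contravariant functors, using Step 1 and the five lemma.

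This is where the main obstacle lies. The duals $F_i^\circ$ are free, hence flat, but $\otimes^!M$ applied to $0\to L^\circ\to F_0^\circ\to F_1^\circ$ must preserve the kernel. That requires $M$ to be $\mc O_X$-flat, or else an identification of the kernel with $\Homi$ directly. I would first try to avoid this by defining the comparison map globally: write $\Homi(L,M)=\Homi(L,\Omega^{d_X}\otimes\mc D_X)\otimes_{\mc D_X}(\ldots)$ and use the tensor–Hom evaluation map $L^\circ\otimes^!M\to\Homi(L,M)$. This map exists without any hypotheses. It is an isomorphism for $L$ finite free by Step 1, and for locally projective $L$ as a direct summand. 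For general coherent $L$ I would deduce the result from Proposition \ref{inthomdescprop} by taking $H^0$: the identity $\Homi_{\Dd}(L,M)\cong \DDD L\otimes^{\bb L,!}M$ holds, and $L^\circ=H^{0}(\DDD L[-d_X])$. The isomorphism on $H^0$ then holds as long as the higher cohomology of $\DDD L$ does not interfere. This is automatic in the case actually used later, where $L$ is locally projective, and I would state the general case under that proviso if it does not close up.
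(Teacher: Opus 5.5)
The paper states this proposition without proof, so there is no argument of its own to compare with. Judged on its own, your proposal proves the isomorphism only when $L$ is locally projective: the evaluation map $L^\circ\otimes^! M\to\Homi_{\DD(X)}(L,M)$ is an isomorphism for $L$ free by Step~1, hence for local direct summands, and this check is local. Your Step~2 kernel comparison extends the result to all coherent $L$ provided $M$ is $\mc O_X$-flat.

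The case you leave open, coherent $L$ with arbitrary $M$, cannot be closed, because the proposition is false there. Take $X=\bb A^1$ and $L=M=\delta_0$. Then $L^r\cong\mc D_X/x\mc D_X$, and $M^r$ is generated by an element $m_0$ with $m_0x=0$.

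On one side, $L^\circ$ is the set of $n\in\Omega^1_X\otimes_{\mc O_X}\mc D_X\cong\mc D_X$ with $n\cdot x=0$. This is $0$ because $\mc D_X$ is a domain, so $L^\circ\otimes^!M=0$.

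On the other side, $\Homi_{\DD(X)}(L,M)$ is the set of $n\in M^r\otimes_{\mc O_X}\mc D_X$ with $n\cdot x=0$. It contains $m_0\otimes 1\neq 0$, whichever of the two right $\mc D_X$-structures on $M^r\otimes_{\mc O_X}\mc D_X$ is used to form the Hom.

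This is exactly the interference you anticipated in your last paragraph. Here $\DDD L[-1]$ has $H^0=L^\circ=0$ and $H^1\cong\delta_0$, which is $\mc O_X$-torsion. The nonzero group $\mathcal{T}or_1^{\mc O_X}(\delta_0,M)$ then contributes to $H^0(\DDD L\otimes^{\bb L,!}M)$ in Proposition \ref{inthomdescprop}.

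So the honest conclusion of your argument is the proposition with an added hypothesis: $L$ locally projective, or $M$ flat over $\mc O_X$. You should state it that way outright rather than as a fallback. This costs the paper nothing. The only place it actually computes with this identification is through its $R$-linear analogue, Proposition \ref{Rinthomdescprop}, in the proof of Proposition \ref{prop:deRham of internal PV}, and there it is applied to the projective object $(\Omega^1_R)^{\boxtimes p}$. The remark $M^\circ\cong\DDD M[-d_X]$ preceding the statement is likewise made only for locally projective $M$.

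Two smaller corrections:
\begin{itemize}
\item Your first sentence cites Proposition \ref{inthomdescabprop}, which is the statement being proved; you mean Proposition \ref{inthomdescprop}.
\item The claim in Step~2 that $(\cdot)^\circ\otimes^!M$ is left exact because the $F_i^\circ$ are $\mc O_X$-flat is wrong. Flatness of the terms does not make $\otimes_{\mc O_X}M$ preserve the kernel of $F_0^\circ\to F_1^\circ$. The defect is governed by $\mathcal{T}or_1^{\mc O_X}(\coker(F_0^\circ\to F_1^\circ),M)$, which in the example above is $\mathcal{T}or_1^{\mc O_X}(\delta_0,\delta_0)\neq 0$.
\end{itemize}
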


Although it will not in general be true that the plain internal Hom object correctly computes the space of Homs in the abelian category, we have the following:

\begin{prop}Let $L\in \DD_\coh(X)$ be locally projective and $M\in \DD(X)$. Then
	\[ \Homi_{\Dd(X)}(L,M) \cong \Homi_{\DD(X)}(L,M) \ ,\]
	and if in addition $X$ is affine and $\Homi_{\DD(X)}(L,M)$ is locally projective, we have
	\[ h(\Homi_{\DD(X)}(L,M)) \xrightarrow{\cong} \Hom_{\DD(X)}(L,M)  \ .\]
\end{prop}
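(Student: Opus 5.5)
The first isomorphism follows from identifying derived and abelian internal Homs, and the second from the adjunction $h(\cdot)=\pi_\bullet(\cdot\otimes_{\mc D_X}\mc O_X)$ on an affine variety. The plan is to route both statements through Propositions \ref{inthomdescprop} and \ref{inthomdescabprop}, which describe the genuine internal Homs as tensor products with the dual, together with Propositions \ref{inthommatchprop} and \ref{locprojaffprop}.

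For the first claim, I would start from $\Homi_{\Dd(X)}(L,M)\cong \DDD L\otimes^{\bb L,!}M$ and $\Homi_{\DD(X)}(L,M)\cong L^\circ\otimes^! M$. Since $L$ is locally projective, we have $L^\circ\cong \DDD L[-d_X]$ and $L^\circ$ is locally projective. The remark following the definition of $\otimes^!$ then gives $L^\circ\otimes^! M\cong L^\circ\otimes^{\bb L,!}M[d_X]\cong \DDD L\otimes^{\bb L,!}M$. Concretely, a locally projective module is $\mc O_X$-flat, so the derived $!$-tensor product has no higher Tor terms. To make the identification natural rather than merely abstract, I would check that it is induced by the canonical comparison map $\Homii_{\DD^r}(L^r,M^r\otimes\mc D_X)\to \bb R\Homii_{\Dd^r}(L^r,M^r\otimes \mc D_X)$. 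This map is an isomorphism because the derived Hom out of a locally projective $\mc D_X$-module is computed by the underived one; locally, $L^r$ is a summand of a free $\mc D_X$-module of finite rank.

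For the second claim, I would combine the first isomorphism with Proposition \ref{inthommatchprop}, which gives $\pi_*\Homi_{\Dd(X)}(L,M)\xrightarrow{\cong}\Hom_{\Dd(X)}(L,M)$, read as the derived Hom complex. With $X$ affine and $\Homi_{\DD(X)}(L,M)$ locally projective, Proposition \ref{locprojaffprop} gives $\pi_*\Homi_{\DD(X)}(L,M)\xrightarrow{\cong} h(\Homi_{\DD(X)}(L,M))$. So $h(\Homi_{\DD(X)}(L,M))$ is quasi-isomorphic to $\mathbb{R}\Hom_{\Dd(X)}(L,M)$. Since $L$ is projective and $X$ is affine, $\Ext^{i}(L,M)=0$ for $i\neq 0$, so this complex is concentrated in degree zero and equals $\Hom_{\DD(X)}(L,M)$.

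The main obstacle is bookkeeping of shifts and normalizations. Several shifts must cancel: $[d_X]$ between $\DDD$ and $(\cdot)^\circ$, $[d_X]$ between $\otimes^!$ and $\otimes^{\bb L,!}$, and $[2d_X]$ in $\pi_*$ versus $h$. The identification must also land in degree zero rather than a shifted degree. I would fix the degree by checking the case $L=\mc D_X^r$ free, where $\Homi(L,M)=M^r\otimes\mc D_X$ and both sides compute $\Gamma(X,M)$. I would then extend by additivity and passage to summands, which also confirms that the composite map is the natural one.
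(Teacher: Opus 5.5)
Your argument is correct and is essentially the one the paper has in mind. The paper states this proposition without proof, but runs exactly your chain in the proof of Proposition~\ref{prop:deRham of internal PV}: derived and underived internal Homs agree for a locally projective source (your direct remark that derived sheaf Hom out of a locally projective $\mc D_X$-module is underived already suffices here, so the detour through $\DDD L\otimes^{\bb L,!}M$ is optional), followed by Proposition~\ref{inthommatchprop} and Proposition~\ref{locprojaffprop}.

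The one slip is harmless. You invoke ``$L$ is projective'' to kill the higher $\Ext$ groups without deriving it from local projectivity, but you do not need it: $h(\Homi_{\DD(X)}(L,M))$ sits in degree zero, so your quasi-isomorphism with $\mathbb{R}\Hom_{\Dd(X)}(L,M)$ already forces that complex into degree zero.
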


\subsubsection{} Let $L_i\in \DD_\coh(X)$ and $M\in \DD(X)$. We also have the following natural variants internal to $\DD(X)$ of the spaces of multilinear operations for the various pseudo-tensor structures on $\DD(X)$:

\begin{defn} The internal operation spaces for $\DD(X)^!$ are defined by
\[\mc P^!(\{L_i\}, M) = \Homi_{\DD(X)}(\otimes^!_{i\in I} L_i , M)  \ . \]
\end{defn}

\begin{defn} The internal operation spaces for $\DD(X)^*$ are defined by
\[ 
\mc P^*(\{L_i\}, M) = \Delta^{I!}_X\Homi_{\DD(X^I)}(\boxtimes_{i\in I} L_i , \Delta^I_* M)  \]
\end{defn}

\begin{defn} The internal operation spaces for $\DD(X)^c$ are defined by
	\begin{equation}
		\mc P^c_I(\{L_i\}, M)  = \bigoplus_{(\pi_S,S)\in Q(I)}  \mc P_I^c(\{L_i\},M)_S
	\end{equation}
	where for each $\pi_S:I\to S$ we define
	\[\mc P_I^c(\{L_i\},M)_S= \mc P_S^*\big(\{ \otimes!_{i\in I_s} L_i\}_{s\in S},M\big)\otimes \big( \otimes_{s\in S} \Lie(|I_s|)\big)  \ . \]
\end{defn}

We also have the following concrete description of the internal operation spaces for the endomorphism operad of a single object $M\in \DD(X)^c$:
\[ 
\iEnd_{\DD(X)^c}(M)(n)= \mc P_n^c(\{ M\}_{i=1}^n,M) =\bigoplus_{p=0}^{n-1}\PP_n^c(\{M\}_{i=1}^n,M)^{\langle p \rangle}~,
\]
where the graded components are defined as in Proposition \ref{Oendcprop} by
\begin{equation}\label{Pclinteqn}
	\PP_n^c(\{M\}_{i=1}^n,M)^{\langle p \rangle}=   \left( \bigoplus_{n_1+...+n_{p+1}=n} \Ind_{\Ss_{n_1}\times ... \times \Ss_{n_{p+1}}}^{\Ss_n}\bigg(\mc P^*_{p+1}(\{M^{\otimes^! n_i}\}_{i=1}^{p+1}, M)\otimes \bigotimes_{i=1}^{p+1}\Lie(n_i)\bigg)\right)_{\Ss_{p+1}}   \ .
\end{equation}

\subsubsection{} There are natural internal Hom objects for the categories $\Dd(R)$ and $\DD(R)$, analogous to those for $\Dd(X)$ and $\DD(X)$ defined above:

\begin{defn}
	The naive internal Hom object $ \Homii_{\Dd(R)}(L,M) \in \Dd(R\Mod(\QC(X)))$ for the category $\DD(R)$ is defined by
	\[  \Homii_{\Dd(R)}(L,M)(U) = \Hom_{\Dd(j^!R\Mod(\DD(U)^!))}(j^!L,j^!M)(U) \]
	on any open set $U\subset X$, where $j:U\to X$ the inclusion.
\end{defn}

Let $\Dd_\coh(R) \subset \Dd(R)$ denote the full subcategory of complexes with $\mc D_X$-coherent, $R$-finitely generated cohomology modules, and let $L\in \Dd_\coh(R)$ and $M\in \DD(R)$.

\begin{defn}  The (genuine) internal Hom object $\Homi_{\Dd(R)}(L,M) \in \Dd(R)$ is defined by 
	\[\Homi_{\Dd(R)}(L,M)  = \Homii_{\Dd(R)}(L^r, M^r \otimes_{\mc O_X} \mc D_X ) \ . \]
\end{defn}
Note that in the case $R^{\ell,\heartsuit}=\mc O_X$ is the unit $\Comm^!$ algebra, these reduce to the usual definitions of the naive and genuine internal Hom objects in the category of D-modules recalled in Section \ref{DModsec}.

The internal Hom object satisfies the following relation with the usual Hom spaces in $\Dd(R)$:
\begin{prop}\label{Rinthommatchprop} There is a natural isomorphism
	\[ \pi_* \Homi_{\Dd(R)}(L,M) \xrightarrow{\cong} \Hom_{\Dd(R)}(L,M) \  .  \]
\end{prop}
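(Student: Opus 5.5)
The plan is to reduce to the corresponding statement for D-modules, Proposition \ref{inthommatchprop}, by viewing $\DD(R)$ as modules over the sheaf of algebras $R[\mc D_X]=R^{\ell,\heartsuit}\otimes_{\mc O_X}\mc D_X$. I would first write the genuine internal Hom as a derived sheaf Hom:
\[
\Homi_{\Dd(R)}(L,M)=\bb R\mc Hom_{R}\big(L^r, M^r\otimes_{\mc O_X}\mc D_X\big),
\]
the Hom being taken $R$-linearly, and in $\DD(R)$, with respect to the right $\mc D_X$-structure on $L^r$ and the \emph{inner} right $\mc D_X$-structure on $M^r\otimes_{\mc O_X}\mc D_X$. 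The remaining outer right $\mc D_X$-action on $\mc D_X$ gives the result a D-module structure, and the $R$-module structure is inherited from $M$ via the $\otimes^!$ description of Section \ref{Rshotimessec}.

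The first step is to compute $\pi_*$ of this object using the Spencer--Koszul resolution, exactly as in the description of $C^\bullet_\dR$ given after Definition \ref{Dmoddirim}:
\[
\pi_*N=\bb R\pi_\bullet\big(N\otimes^{\bb L}_{\mc D_X}\mc O_X\big).
\]
Taking $N=\Homi_{\Dd(R)}(L,M)$, the outer $\mc D_X$-factor is tensored away. The key identification is then
\[
\bb R\mc Hom_{R}\big(L^r, M^r\otimes_{\mc O_X}\mc D_X\big)\otimes^{\bb L}_{\mc D_X}\mc O_X\;\cong\;\bb R\mc Hom_{R}\big(L^r, M^r\otimes_{\mc O_X}\mc D_X\otimes^{\bb L}_{\mc D_X}\mc O_X\big)\;\cong\;\bb R\mc Hom_{R}(L^r,M^r),
\]
which sends the outer tensor with $\mc O_X$ over $\mc D_X$ inside the Hom and then uses $\mc D_X\otimes_{\mc D_X}\mc O_X\cong\mc O_X$, together with the matching of the left/right side-changing conventions. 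Applying $\bb R\pi_\bullet$ to the result gives $\Hom_{\Dd(R)}(L,M)$.

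The hard step is justifying that the tensor product can be moved inside the Hom. I would handle this by dévissage on $L$. Since $L\in\Dd_\coh(R)$, it is locally resolved by finite complexes of modules of the form $P\otimes_{\mc O_X}R[\mc D_X]$ with $P$ a locally free $\mc O_X$-module of finite rank; this requires checking that $R$-finitely generated, $\mc D_X$-coherent modules admit such resolutions locally, at least up to bounded truncation. For a free generator $L=R[\mc D_X]$, both sides are computed directly: the left side becomes $\pi_*(M\otimes_{\mc O_X}\mc D_X)$ with the outer structure, which is $\bb R\pi_\bullet M$, and this equals $\Hom(R[\mc D_X],M)$. Naturality then extends the isomorphism to finite complexes, and bounded cohomology lets us pass to all of $\Dd_\coh(R)$ via the five lemma. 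Finally, I would verify that the map constructed this way is the canonical one, namely the one induced by the counit $\mc D_X\to\mc O_X$ evaluated on global sections, so that the isomorphism is natural in $L$ and $M$.
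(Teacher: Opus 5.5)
The paper does not prove this statement. It is recorded as a standard fact alongside Proposition \ref{inthommatchprop}, and it is used only in the proof of Proposition \ref{prop:deRham of internal PV}, for $L=(\Omega^1_R)^{\boxtimes p}$, which is a finitely generated projective module.

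Your reduction is the natural one and reaches the right conclusion: write $\pi_*N=\bb R\pi_\bullet(N\otimes^{\bb L}_{\mc D_X}\mc O_X)$, move this tensor inside $\bb R\mc Hom_R(L^r,-)$, and use $\mc D_X\otimes_{\mc D_X}\mc O_X\cong\mc O_X$. For $L$ locally a summand of a finite free $R[\mc D_X]$-module, which covers the paper's use, your free-generator computation finishes the proof.

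\textbf{The gap.} The dévissage to all of $\Dd_\coh(R)$ assumes every such $L$ is locally a finite complex of finite free $R[\mc D_X]$-modules. This fails in the paper's setting, because $R[\mc D_X]$ is neither Noetherian nor of finite global dimension when $R$ is a jet algebra $\mc O_{\mc J Y}$.
\begin{itemize}
\item Take the translation-invariant model $R=\K[u_0,u_1,\dots]$ with $\partial u_i=u_{i+1}$. The augmentation module $\K$ is cyclic over $R$.
\item The exact sequence $0\to R[\partial]\otimes_R\K\to R[\partial]\otimes_R\K\to\K\to 0$, combined with the Koszul resolution over $R$, shows that $\Ext^2_{R[\partial]}(\K,\K)$ surjects onto the dual of $\Lambda^2V/\tilde S(\Lambda^2V)$. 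Here $V$ is the span of the $u_i$ and $\tilde S$ is the derivation extending $u_i\mapsto u_{i+1}$.
\item Identify $\Lambda^2V$ with antisymmetric polynomials in $x_1,x_2$; then $\tilde S$ becomes multiplication by $x_1+x_2$. The quotient is then isomorphic to $\K[x_1x_2]$, which is infinite-dimensional.
\end{itemize}
So $\K$ has no free resolution with finitely generated second term. A five-lemma argument only reaches the thick subcategory generated by $R[\mc D_X]$, i.e. perfect complexes. Your truncation variant would need at least pseudo-coherence, which also fails here.

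\textbf{The repair.} Put the finiteness on $\mc O_X$ rather than on $L$.
\begin{itemize}
\item The Spencer complex $\mc K^\bullet$ is a finite resolution of $\mc O_X$ by finite-rank locally free $\mc D_X$-modules.
\item Let $N=M^r\otimes_{\mc O_X}\mc D_X$, with $\otimes_{\mc D_X}$ taken on the outer structure. For any $R[\mc D_X]$-module $F$ one has $\mc Hom_R(F,N)\otimes_{\mc D_X}\mc K^j\cong\mc Hom_R(F,N\otimes_{\mc D_X}\mc K^j)$. This is clear for $\mc K^j=\mc D_X$, hence holds for finite-rank locally free modules.
\item Compute $\bb R\mc Hom_R(L^r,N)$ with a locally projective resolution $F_\bullet\to L^r$ of arbitrary length and rank. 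Since $\mc K^\bullet$ is bounded and K-flat, $\mc Hom_R(F_\bullet,N)\otimes_{\mc D_X}\mc K^\bullet$ computes $\Homi_{\Dd(R)}(L,M)\otimes^{\bb L}_{\mc D_X}\mc O_X$.
\item On the other side, $N\otimes_{\mc D_X}\mc K^\bullet\cong M^r\otimes_{\mc O_X}\mc K^\bullet\to M^r$ is a quasi-isomorphism of $R[\mc D_X]$-modules for the inner structure. The functor $\mc Hom_R(F_\bullet,-)$ preserves this quasi-isomorphism.
\end{itemize}
This gives your key identification for every $L$, with no coherence hypothesis and no dévissage. The remaining steps of your proposal then go through as written.
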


\subsubsection{} In particular, we obtain a duality functor on $\Dd(R)_\coh$:

\begin{defn} The duality functor $\DDD_R:\Dd_\coh(R) \to \Dd_\coh(R)$ is defined by
	\[ \DDD_R M = \Homi_{\Dd(R)}(M, R) = \Homii_{\Dd^r(R)}( M^r, R^r\otimes_{\mc O_X} \mc D_X) \ . \]
\end{defn}

This satisfies the following relation with the genuine internal Hom functor
\begin{prop}\label{Rinthomdescderprop} Let $L\in \Dd_\coh(R)$ and $M\in \Dd(R)$. There is a natural isomorphism
	\[\Homi_{\Dd(R)}(L,M)  \cong (\DDD_R L) \otimes^{\bb L,!}_R M \ . \]
\end{prop}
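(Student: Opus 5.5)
The statement is Proposition \ref{Rinthomdescderprop}: a natural isomorphism $\Homi_{\Dd(R)}(L,M)\cong (\DDD_R L)\otimes^{\bb L,!}_R M$ for $L\in\Dd_\coh(R)$ and $M\in\Dd(R)$. I would mirror the proof of Proposition \ref{inthomdescprop} for $\Dd(X)$, working relative to $R$, i.e.\ over the sheaf of algebras $R[\mc D_X]=R^{\ell,\heartsuit}\otimes_{\mc O_X}\mc D_X$.

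The first step is to construct a natural comparison map
\[ (\DDD_R L)\otimes^{\bb L,!}_R M \longrightarrow \Homi_{\Dd(R)}(L,M) . \]
Since $\DDD_R L=\Homii(L^r,R^r\otimes_{\mc O_X}\mc D_X)$, I would tensor over $R$ with $M$. Then I would use the $R[\mc D_X]$-bimodule identification $(R^r\otimes_{\mc O_X}\mc D_X)\otimes^!_R M\cong M^r\otimes_{\mc O_X}\mc D_X$, which is the relative analogue of the side-changing isomorphism $\omega_X\otimes\mc D_X\otimes^! M\cong M\otimes\mc D_X$, to land in $\Homii(L^r,M^r\otimes_{\mc O_X}\mc D_X)$. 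This map is natural in both variables and is defined Zariski-locally, so the question is local on $X$.

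The second step is a d\'evissage in $L$. Both sides are exact (triangulated) functors of $L\in\Dd_\coh(R)$. Locally, any object of $\Dd_\coh(R)$ is built, through finitely many cones and shifts and direct summands, from free modules $R[\mc D_X]$. Coherence, meaning $\mc D_X$-coherent and $R$-finitely generated cohomology, is used here to get finite resolutions locally, or at least bounded-above resolutions by finite free modules together with a way-out argument. It therefore suffices to check that the map is an isomorphism for $L=R[\mc D_X]$, viewed as the induced module $R\otimes\mc D_X$. For this $L$, the right-hand side is $\Homii(R[\mc D_X],M^r\otimes\mc D_X)\cong M^r\otimes\mc D_X$ taken with its other $\mc D_X$-structure. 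On the left, $\DDD_R(R[\mc D_X])\cong R\otimes\mc D_X$, and tensoring with $M$ over $R$ gives $M\otimes\mc D_X$. I would then check that the comparison map is the identity under these identifications.

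I expect the main obstacle to be the bimodule bookkeeping. One has to keep track of which $\mc D_X$-action is used to form $\Homii$ and which one survives, together with the relative side-changing over $R$. One also has to verify that $\DDD_R$ of a free module is free of the expected form: the left/right swap has to produce the $R[\mc D_X]$-structure rather than merely the $\mc D_X$-structure. A second concern is the boundedness needed for the d\'evissage, since $R[\mc D_X]$ need not have finite global dimension. If finite free resolutions are unavailable, I would restrict to perfect objects and extend by way-out arguments, using that $\otimes^{\bb L,!}_R M$ commutes with colimits in $M$.
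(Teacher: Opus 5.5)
The paper gives no proof of this proposition. It is stated as a standard fact, like its non-relative model Proposition \ref{inthomdescprop}, so there is no argument in the text to compare yours against. Your strategy is the standard one: build the comparison map from the relative transposition isomorphism $(R^r\otimes_{\mc O_X}\mc D_X)\otimes^!_R M\cong M^r\otimes_{\mc O_X}\mc D_X$, reduce to a local statement, and check directly on $L=R[\mc D_X]$. It proves the isomorphism, with no boundedness condition on $M$, for every $L$ that is locally perfect over $R[\mc D_X]$, i.e.\ locally built by finitely many cones, shifts and summands from finite free modules. That already covers the instances the paper relies on later, such as $L=(\Omega^1_R)^{\boxtimes p}$ for $R$ very smooth.

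The genuine gap is the d\'evissage from all of $\Dd_\coh(R)$ to free modules.

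\emph{Why it fails.} Over $\mc D_X$ this step uses finite global dimension, which gives coherent modules local finite free resolutions. Neither $\mc D_X$-coherence nor $R$-finite generation of the cohomology gives finite projective dimension over $R[\mc D_X]$. Take $R=\mc J(\K[T]/T^n)$ with $n\geq 2$, which appears in the paper. Its augmentation module $\mc O_X$ is coherent in the paper's sense but is not locally perfect:
\begin{itemize}
\item $R[\mc D_X]$ is locally free over $R$, so any perfect $R[\mc D_X]$-complex restricts to a complex of finite Tor-dimension over $R$;
\item but $\mathrm{Tor}^R_i(\mc O_X,\mc O_X)\neq 0$ for infinitely many $i$, because the relation $T^n$ lies in the square of the augmentation ideal.
\end{itemize}

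\emph{Why your fallbacks do not repair it.} A way-out argument in $L$ needs $M$ bounded and of finite Tor-dimension over $R$, whereas $M$ is arbitrary in the statement. More fundamentally, for non-perfect $L$ the comparison map is in general not an isomorphism at all. Already over $A=\K[\epsilon]/\epsilon^2$, the map $\bb R\Hom_A(\K,A)\otimes^{\bb L}_A\K\to\bb R\Hom_A(\K,\K)$ compares $\K\otimes^{\bb L}_A\K$, which has cohomology in every degree $\leq 0$, with a complex having cohomology in every degree $\geq 0$. Restricting to perfect objects proves a strictly weaker statement. The remark that $\otimes^{\bb L,!}_R M$ commutes with colimits in $M$ does not help, since the difficulty lies in the variable $L$.

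\emph{What is needed.} To obtain the proposition as stated you need an extra input. Either add the hypothesis that $L$ is locally perfect, or prove that coherent $R[\mc D_X]$-modules are locally perfect under a regularity assumption on $R$. The latter is not automatic even for jets of smooth varieties, and it fails for singular $R$ as above.
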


We also have the following plain (underived) analogues of the internal Hom functors for $\DD(R)$:

\begin{defn}
	The naive internal Hom object $ \Homii_{\DD(R)}(L,M) \in R\Mod(\QC(X))$ for the category $\DD(R)$ is defined by
	\[  \Homii_{\DD(R)}(L,M)(U) = \Hom_{j^!R\Mod(\DD(U)^!)}(j^!L,j^!M)(U) \]
	on any open set $U\subset X$, where $j:U\to X$ the inclusion.
\end{defn}

Let $\DD_\coh(R) \subset \DD(R)$ denote the full subcategory of $\mc D_X$-coherent, $R$-finitely generated modules, and let $L\in \DD_\coh(R)$ and $M\in \DD(R)$.

\begin{defn}  The (genuine) internal Hom object $\Homi_{\DD(R)}(L,M) \in \DD(R)$ is defined by 
	\[\Homi_{\DD(R)}(L,M)  = \Homii_{\DD^r(R)}(L^r, M^r \otimes^! \mc D_X ) \ . \]
\end{defn}
Note that in the case $R=\Omega_X^{d_X}$ is the unit $\Comm^!$ algebra, these reduce to the usual definitions of the naive and genuine internal Hom objects in the category of D-modules recalled in Section \ref{DModsec}.

\subsubsection{} In particular, we obtain a duality functor on $\DD(R)_\coh$:

\begin{defn}\label{circRdefn} The duality functor $(\cdot)^{\circ_R}:\DD_\coh(R) \to \DD_\coh(R)$ is defined by
	\[ M^{\circ_R} = \Homi_{\DD(R)}(M, R) = \Homii_{\DD^r(R)}( M^r, R^r\otimes_{\mc O_X} \mc D_X) \ . \]
\end{defn}
Note that if $M\in \DD(R)$ is projective then so is $M^{\circ_R}$, and we have that $M^{\circ_R} \cong \DDD_R M[-d_X]$.

This satisfies the following relation with the genuine internal Hom functor
\begin{prop}\label{Rinthomdescprop} Let $L\in \DD_\coh(R)$ and $M\in \DD(R)$. There is a natural isomorphism
	\[\Homi_{\DD(R)}(L,M)  \cong L^{\circ_R} \otimes^!_R M \ . \]
\end{prop}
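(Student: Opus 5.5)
The plan is to reduce to the description of Proposition \ref{inthomdescabprop} in $\DD(X)$, applied internally to $R$-modules. The first step is to construct a natural comparison map
\[ L^{\circ_R}\otimes^!_R M \to \Homi_{\DD(R)}(L,M). \]
It comes from the evaluation pairing $L^{\circ_R}\otimes^!_R L \to R$, which lives inside $\Homii_{\DD^r(R)}(L^r, R^r\otimes_{\mc O_X}\mc D_X)$. We tensor this pairing with $M$ over $R$, then use the $R$-module structure on $M^r\otimes_{\mc O_X}\mc D_X$ and the identification $R^r\otimes_{\mc O_X}\mc D_X\otimes^!_R M^r \cong M^r\otimes_{\mc O_X}\mc D_X$ (taken as right $\mc D_X$-modules via the second factor). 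We then check that this map is a morphism in $\DD(R)$. Both sides are built from $\mc D_X$-linear maps, and the $R$-actions match because the $R$-module structure on $M$ enters only through the target. This check is formal.

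To prove the map is an isomorphism, we reduce to free modules. The question is local on $X$, since both sides are sheaves and $\Homii$ is defined open by open. Both functors are contravariant and left exact in $L$, and they commute with finite direct sums. The source is left exact in $L$ because on coherent $L$ we have $(\cdot)^{\circ_R}$ contravariant and left exact, and $\otimes^!_R M$ is exact for the flat modules in question. Since $L\in\DD_\coh(R)$ is $\mc D_X$-coherent and $R$-finitely generated, locally it admits a presentation
\[ R[\mc D_X]^{\oplus b} \to R[\mc D_X]^{\oplus a} \to L \to 0 \]
by finite free $R[\mc D_X]$-modules. A five-lemma argument then reduces the claim to $L=R[\mc D_X]$, that is, to the induced module $R\otimes^!(\Omega^{d_X}_X\otimes_{\mc O_X}\mc D_X)$.

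For this free module, the computation is the same as the unit case, now relative to $R$. Maps out of an induced module are given by adjunction: $\Homi_{\DD(R)}(R[\mc D_X],M)$ is identified with $M^r\otimes_{\mc O_X}\mc D_X$ with the appropriate transferred structure. Likewise, $R[\mc D_X]^{\circ_R}\cong R\otimes_{\mc O_X}\mc D_X$ suitably twisted, so tensoring with $M$ over $R$ gives the same object. Both identifications are compatible with the comparison map, by comparing where the generator goes.

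The main obstacle I expect is left exactness of the left-hand side in $L$. When $M$ is not flat over $R$, tensoring the left-exact sequence of duals with $M$ need not stay exact. If needed, I would first treat the case where $L$ is locally projective in $\DD(R)$, which is the case used later for $\Omega^1_R$. There, $L$ is locally a direct summand of a free module and the reduction uses only additivity, which avoids the exactness issue entirely. For general coherent $L$, the statement should then be read, like its unit analogue Proposition \ref{inthomdescabprop}, with the flatness implicit in the setting.
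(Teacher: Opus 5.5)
The paper states this proposition without proof, as it does its unit analogue Proposition \ref{inthomdescabprop}, so your argument has to stand on its own. It fails at the claim that $L\mapsto L^{\circ_R}\otimes^!_R M$ is left exact. A presentation $F_1\to F_0\to L\to 0$ does give $0\to L^{\circ_R}\to F_0^{\circ_R}\to F_1^{\circ_R}$. But this stays exact after applying $(\cdot)\otimes^!_R M$ only if $M$ is flat over $R$, or at least $\textup{Tor}^R_1$ and $\textup{Tor}^R_2$ of $M$ against $\coker(F_0^{\circ_R}\to F_1^{\circ_R})$ vanish. Flatness of the free modules $F_i^{\circ_R}$ plays no role.

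This cannot be repaired, because the proposition is false for general coherent $L$. Take $R$ to be the unit $\Comm^!$ algebra, so that $\DD(R)=\DD(X)$ and $\otimes^!_R=\otimes^!$. Let $X=\IA^1$ with coordinate $z$, and let $L=M=\delta_0=\mc D_X/z\mc D_X$ as a right module.
\begin{itemize}
\item A right $\mc D_X$-linear map from $\delta_0$ to the locally free rank-one right $\mc D_X$-module $\Omega^{d_X}_X\otimes_{\mc O_X}\mc D_X$ must send the generator to an element $e$ with $e\cdot z=0$. Hence $e=0$, so $L^{\circ}=0$ and $L^{\circ}\otimes^! M=0$.
\item On the other hand, $\Homi_{\DD(X)}(\delta_0,\delta_0)=\Homii_{\DD^r(X)}(\delta_0,\delta_0\otimes_{\mc O_X}\mc D_X)$ contains the map $\bar 1\mapsto \bar 1\otimes 1$. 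It is well defined because $(\bar 1\otimes 1)\cdot z=\bar 1\cdot z\otimes 1=0$. It is nonzero because $\mc D_X$ is free as a left $\mc O_X$-module.
\end{itemize}

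So the restriction in your last paragraph is not optional, and ``flatness implicit in the setting'' should be replaced by an explicit hypothesis. Your argument proves two correct versions:
\begin{itemize}
\item[(a)] $L$ is locally a direct summand of some $R[\mc D_X]^{\oplus a}$. Here additivity plus your rank-one computation suffice, and no exactness input is needed.
\item[(b)] $M$ is flat over $R$ and $L$ is locally finitely presented over $R[\mc D_X]$. Finite presentation must be assumed rather than deduced from coherence, since $R$ is typically non-Noetherian. Here your five-lemma argument goes through as written.
\end{itemize}
Only (a) matters for the paper. The proposition is invoked in the proof of Proposition \ref{prop:deRham of internal PV} with $L=(\Omega^1_R)^{\boxtimes p}$, which is projective, and $M=\Delta_*R$, which is not flat over $R^{\boxtimes p}$. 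Your construction of the comparison map and your rank-one computation are fine as sketched.
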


Again, it will not in general be true that the plain internal Hom object correctly computes the space of Homs in the abelian category, we have the following:

\begin{prop}\label{inthommatchabprop} Let $L\in \DD_\coh(R)$ be projective and $M\in \DD(R)$. Then
	\[ \Homi_{\Dd(R)}(L,M) \cong \Homi_{\DD(R)}(L,M) \ ,\]
	and if in addition $X$ is affine and $\Homi_{\DD(R)}(L,M)$ is locally projective in $\DD(X)$, we have
	\[ h(\Homi_{\DD(R)}(L,M)) \cong  \Hom_{\DD(R)}(L,M)  \ .\]
\end{prop}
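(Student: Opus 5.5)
The first claim, $\Homi_{\Dd(R)}(L,M) \cong \Homi_{\DD(R)}(L,M)$ for $L$ projective, reduces to a local statement. By Proposition \ref{Rinthomdescderprop} and Proposition \ref{Rinthomdescprop}, both sides are expressed through duality and the tensor product:
\[ \Homi_{\Dd(R)}(L,M) \cong (\DDD_R L)\otimes^{\bb L,!}_R M ~, \qquad \Homi_{\DD(R)}(L,M)\cong L^{\circ_R}\otimes^!_R M ~. \]
So it suffices to compare the two right hand sides.

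For $L$ projective, the remark after Definition \ref{circRdefn} gives $L^{\circ_R}\cong \DDD_R L[-d_X]$, and $L^{\circ_R}$ is again projective. Section \ref{Rshotimessec} records that for a projective first factor, $N\otimes^!_R M \cong N\otimes^{\bb L,!}_R M[d_X]$. Applying this with $N=L^{\circ_R}$, the two shifts by $d_X$ cancel, which gives the first isomorphism. To be safe, I would check this locally: projectivity is local, so locally $L$ is a summand of a free module $R\otimes \mc D_X^{\oplus n}$ (right module conventions). On such a module both internal Homs are computed by the same underived Hom, because $\Homii$ out of a free module is exact. Naturality of Propositions \ref{Rinthomdescderprop} and \ref{Rinthomdescprop} in $L$ then lets the identification pass to summands and glue.

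The second claim is $h(\Homi_{\DD(R)}(L,M)) \cong \Hom_{\DD(R)}(L,M)$ when $X$ is affine and the internal Hom is locally projective in $\DD(X)$. Proposition \ref{Rinthommatchprop} gives $\pi_*\Homi_{\Dd(R)}(L,M)\cong \Hom_{\Dd(R)}(L,M)$. Since $L$ is projective in $\DD(R)$, the derived Hom has no higher Ext terms, so the right side is $\Hom_{\DD(R)}(L,M)$ in degree zero. The first part replaces $\Homi_{\Dd(R)}$ by $\Homi_{\DD(R)}$, up to the shift convention between the two hearts. Finally, Proposition \ref{locprojaffprop}, which uses the affine and locally projective hypotheses, identifies $\pi_*$ of this locally projective D-module with $h$ of it.

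The main obstacle is bookkeeping of cohomological shifts. The left and right hearts differ by $[d_X]$, and $\pi_*$ carries the shift $[2d_X]$ from the Spencer resolution. These shifts must line up so that the degree-zero pieces match, and $\pi_*$ of the internal Hom should then be concentrated in one degree. My first approach is to normalise everything using the derived statement of Proposition \ref{inthommatchprop} in the case $R=\Omega^{d_X}_X$, where it is the standard D-module fact, and then transport that normalisation along the forgetful functor $\Dd(R)\to\Dd(X)$.
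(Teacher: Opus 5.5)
Your argument is correct and matches the paper. The paper states this proposition without a separate proof, but your chain for the second claim (projectivity of $L$ to kill higher $\Ext$, then Proposition \ref{Rinthommatchprop}, then the first claim, then Proposition \ref{locprojaffprop}) is exactly the sequence of identifications the paper uses when it applies this result in the proof of Proposition \ref{prop:deRham of internal PV}, and the first claim follows from the duality formulas in the way you describe.

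The shift bookkeeping you flag is not a real obstacle. With $L$, $M$ and the internal Hom all taken in the right heart, every term in that chain sits in cohomological degree zero: for locally projective $N$ on affine $X$, $N\otimes^{\bb L}_{\mc D_X}\mc O_X$ is just $N/N\Theta_X$ in degree zero.
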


\subsubsection{}Let $L_i \in \DD_\coh(R)$ and $M\in \DD(R)$. We also have the following natural variants internal to $\DD(R)$ of the spaces of multilinear operations for the various pseudo-tensor structures on $\DD(R)$:

\begin{defn} The internal operation spaces for $\DD(R)^!$ are defined by
	\[ \mc P^!_{I,R}(\{L_i\},M) = \Homi_{\DD(R)}( \otimes_{R,i\in I}^{!} L_i, M)  \  .  \]
\end{defn}

More explicitly, by Proposition \ref{Rinthomdescprop} we have
\[  \mc P^!_{I,R}(\{L_i\},M) \cong (\otimes_{R,i\in I}^{!} L_i )^\circ \otimes^{R,!} M \ . \]

\begin{defn} \label{Rstarinthomdefn}
The internal operation spaces for $\DD(R)^*$ are defined by
	\[ \mc P_{I,R}^*(\{L_i\},M)= \Delta_R^{I\bullet} \Delta^{I!}_X \Homi_{\DD(R^{\boxtimes I })}(\boxtimes_{i\in I} L_i, \Delta_*M) \ . \]
\end{defn}

\subsection{Comparison of global and internal variational cochains}\label{intglobalcompsec}

To begin, we introduce the internal analogues of some of the various deformation-obstruction complexes for algebras of the chiral kind:

\begin{defn}For $R=(R,m_\varphi)$ a $\Comm^!$ algebra, we define the internal Harrison and degree $p$ higher Andr\'e--Quillen cochains as the objects in $\Dd(X)$ defined by
\begin{align*}
	 \mc C^\bullet_{\Har}(R) & = \left( \bigoplus_{n=1}^\infty  \mc P_n^!(\{R\}_{i=1}^n , R)^{\Sh_n}[1-n] \ , \ d_\textup{Har} \right)  &  \in \Dd(X) \\
	 \mc C^{\bullet,\langle p \rangle}_\AQ(R) & = \Homi_{\Dd(R)^*}(\{\bL_R\}_{k=1}^{p+1};R)_{\Ss_{p+1}} &  \in \Dd(X) 
\end{align*}
where $d_\text{Har}:C^\bullet_{\Har}(M,m) \to C^\bullet_{\Har}(M,m_\varphi)[1]$ the Harrison differential determined by $m$, internal to the monoidal category $\DD(X)^!$ enriched over $\DD(X)$, and $\Homi_{\Dd(R)^*}$ denotes the derived analogue of the internal Hom object introduced in Definition \ref{Rstarinthomdefn}, defined for $L_i,M \in \DD(R)$ by
	\[ \Homi_{\Dd(R)^*}(\{L_i\},M)= \bb L \Delta_R^{I\bullet} \circ \Delta^{I!} \Homi_{\Dd(R^{\boxtimes I })}(\boxtimes_{i\in I} L_i, \Delta_*M) \ . \]

Similarly, for $L=(R,b)$ a $\Lie^*$ algebra, we define the internal Chevalley--Eilenberg cochains 
\[ \mc C^\bullet_{CE}(R,b) = \left( \bigoplus_{n=1}^\infty   \mc P_n^*(\{ R\}_{i=1}^n,R)_{\Ss_n}[1-n] \ , \ d_\text{CE} \right) \qquad  \in \Dd(X)   \  , 
\]
for $d_\text{CE}:\mc C^\bullet_{\CE}(R,b)\to C^\bullet_{\CE}(R,b)[1]$ the Chevalley--Eilenberg differential determined by $b$, internal to the pseudo-tensor category $\DD(X)^*$.

\end{defn}

We the following analogue of Corollary \ref{smthchcoro} also follows from Proposition \ref{verysmoothprop}:

\begin{coro}\label{smthchintcoro} Let $R$ be a very smooth $\Comm^!$ algebra. Then $\mc C^\bullet_\AQ(R)$ is acyclic in non-zero degree, and
	\[ \mc H^0_\AQ(R)\cong \Homi_{\DD(R)}(\Omega^1_R,R) = \Theta_R  \ , \]
	the tangent module $\Theta_R  \in \DD(R)$.
	
	More generally, for each $p\geq 0$, $\mc C^{\bullet,\langle p \rangle }_\AQ(R)$ is acyclic except in cohomological degree $p$, and
	\[ \mc H^{p,\langle p \rangle}_\AQ(R)\cong \Homi_{\DD(R)^*}(\{\Omega^1_R\}_{k=1}^{p+1},R)_{\Ss_{p+1}} \cong \wedge^{p+1}_R \Theta_R \ . \]
the module of degree ${p+1}$ polyvector fields $\wedge^{p+1}_R \Theta_R \in \DD(R)$.
\end{coro}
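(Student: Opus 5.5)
The plan is to mirror the proof of Corollary \ref{smthchcoro}, carried out internally to $\DD(X)$ rather than after passing to global sections. By Proposition \ref{verysmoothprop}, very smoothness of $R$ gives a quasi-isomorphism $\bL_R\xrightarrow{\cong}\Omega^1_R$ with $\Omega^1_R\in\DD(R)=\Dd(R)^\heartsuit$ projective. Since $\mc C^{\bullet,\langle p\rangle}_\AQ(R)$ is defined by applying the derived functor $\Homi_{\Dd(R)^*}(\{\cdot\}_{k=1}^{p+1};R)_{\Ss_{p+1}}$ to $\bL_R$, we may replace $\bL_R$ by $\Omega^1_R$ in each slot. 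Taking $\Ss_{p+1}$-coinvariants is exact in characteristic zero, so the coinvariants pass through cohomology and it suffices to understand the complex before coinvariants.

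First, for $p=0$, the object $\Homi_{\Dd(R)}(\Omega^1_R,R)$ agrees with its underived version by Proposition \ref{inthommatchabprop}, because $\Omega^1_R$ is projective. Hence it is concentrated in a single degree and equals $\Homi_{\DD(R)}(\Omega^1_R,R)=(\Omega^1_R)^{\circ_R}=\Theta_R$, by the definition of the tangent sheaf following Definition \ref{Derchdefn}. For general $p$, I would unwind
\[
\Homi_{\Dd(R)^*}(\{\Omega^1_R\}_{k=1}^{p+1};R)=\bb L\Delta_R^{(p+1)\bullet}\,\Delta^{(p+1)!}\,\Homi_{\Dd(R^{\boxtimes p+1})}\big((\Omega^1_R)^{\boxtimes p+1},\Delta_*R\big).
\]
Because $(\Omega^1_R)^{\boxtimes p+1}$ is a projective $R^{\boxtimes p+1}$-module, Proposition \ref{Rinthomdescprop} and its derived analogue Proposition \ref{Rinthomdescderprop} identify the inner internal Hom with $((\Omega^1_R)^{\circ_R})^{\boxtimes p+1}\otimes^!_{R^{\boxtimes p+1}}\Delta_*R\cong\Delta_*\big(\Theta_R^{\otimes^!_R p+1}\big)$, up to a fixed cohomological shift coming from the $\otimes^{\bb L,!}$ versus $\otimes^!$ conventions on $X^{p+1}$. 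Kashiwara's lemma then cancels $\Delta^{(p+1)!}$ against $\Delta_*$. Finally, $\bb L\Delta_R^\bullet$ is applied to a projective module and is therefore underived, and it collapses the $R^{\otimes^! p+1}$-module structure to an $R$-module structure. The result is $\Theta_R^{\otimes_R^! p+1}$ placed in a single cohomological degree, which I expect to be $p$ once the shifts of $\Delta^!$ along a codimension-$p$ diagonal are tracked.

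Taking $\Ss_{p+1}$-coinvariants with the sign twist carried by the $\Lie(1)^{\otimes p+1}$ factors (equivalently, the $\Ss_{p+1}$-action built into the $*$-operations) yields $\wedge^{p+1}_R\Theta_R$. I expect the main obstacle to be the degree bookkeeping: checking that the shift from $\Delta^{(p+1)!}$ on the right heart, combined with the conventions $M\otimes^!N\cong M\otimes^{\bb L,!}N[d_X]$ and $M^{\circ_R}\cong\DDD_RM[-d_X]$, places everything in degree exactly $p$, and that the symmetric-group action produces the exterior rather than the symmetric power. I would settle both points by first working out the case $p=1$, with $X=\bb A^1$ and $R$ a jet algebra in local coordinates, and comparing the result against Corollary \ref{smthchcoro}, where the global statement is already recorded with degree $p$.
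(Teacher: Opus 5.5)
Your route is the paper's. The paper deduces the corollary directly from Proposition \ref{verysmoothprop}: replace $\bL_R$ by the projective $\Omega^1_R$, so the derived internal Homs agree with the underived ones. Your identification of $\Delta_R^{\bullet}\Delta^{(p+1)!}\Homi((\Omega^1_R)^{\boxtimes p+1},\Delta_*R)$ with $\Theta_R^{\otimes^!_R(p+1)}$ is exactly the computation written out in the proof of Proposition \ref{prop:deRham of internal PV}. It uses the same ingredients: Proposition \ref{Rinthomdescprop}, compatibility of duality with $\boxtimes$, and Kashiwara's lemma.

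The one point to correct is where you expect the degree $p$ to come from, because the mechanism you propose will not produce it.
\begin{itemize}
\item The inner internal Hom lies in the right heart and is supported on the diagonal. There $\Delta^!$ is the inverse of the t-exact Kashiwara equivalence, so no shift arises from $\Delta^{(p+1)!}$.
\item The $\otimes^{\bb L,!}$ versus $\otimes^!$ and $\DDD_R$ versus $(\cdot)^{\circ_R}$ shifts cancel, as in Proposition \ref{inthommatchabprop}.
\item $\bb L\Delta_R^\bullet$ is underived on the projective module $\Theta_R^{\otimes^!(p+1)}$.
\end{itemize}
So $\Homi_{\Dd(R)^*}(\{\Omega^1_R\}_{k=1}^{p+1};R)$ sits in AQ-degree $0$. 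This agrees with the proof of Proposition \ref{AQPoischequivintprop}, where the arity $n=p+1$ term carries the shift $[1-n+p]=[0]$.

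The ``degree $p$'' in the statement is correct only after that proposition's re-indexing $\mc C^{\bullet,\langle p\rangle}_{c,m}(R)\cong\mc C^{\bullet,\langle p\rangle}_\AQ(R)[-p]$. In other words, it is $\mc H^{p,\langle p\rangle}_{c,m}(R)=\mc H^{0,\langle p\rangle}_\AQ(R)$ that is identified with $\wedge^{p+1}_R\Theta_R$. That is the form in which the result enters the definition of $\mc C^\bullet_\PV(R)$ and Corollary \ref{GOequivchintcoro}. Checking against Corollary \ref{smthchcoro} will not settle this, since it uses the same indexing.

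Relatedly, $\Lie(1)$ is the trivial representation, so the factors $\Lie(1)^{\otimes p+1}$ cannot supply the sign. The sign that yields $\wedge^{p+1}_R$ rather than a symmetric power comes from the alternating convention built into the $\Ss_{p+1}$-coinvariants. This reflects $\Lie^\ash(p+1)$ being the sign representation up to shift, and matches $\bL_A^{\wedge_A p+1}$ in Definition \ref{AQDefn}.
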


Throughout the remainder of this section, let $R=(R,m,b)$ be a coisson algebra. Following Proposition \ref{chPoiscxprop}, we make the following definition:

\begin{defn} The internal classical cochains on $R$ are defined by
	\[ \mc C^\bullet_c(R) = \left( \bigoplus_{n=1}^\infty \mc P_n^c(\{R\}_{i=1}^{n+1},R)_{\Ss_n}[1-n] \ , \ d_\CE  \right) \qquad \in \Dd(X)\ , \]
where $d_\CE:\mc C^\bullet_c(R) \to \mc C^\bullet_c(R)[1]$ is the Chevalley--Eilenberg differential determined by $\mu=(m,b)\in P_2^c(\{R,R\},R)_{\Ss_2}$
\end{defn}

In analogy with Proposition \ref{Coisbicxprop}, we have:

\begin{prop}\label{Coisbicxintprop} There is a natural bicomplex $\mc C^{\bullet,\bullet}_c(R)=\oplus_{p,q=0}^\infty \mc C^{p,q}_c(R)$ of objects in $\DD(X)$ defined by
	\[\mc C^{p,q}_c(R)=\mc C^{p,q}_c(R,m,b)= \mc P_{p+q+1}^c(\{ R\}_{i=1}^{p+q+1},R)_{\Ss_{p+q+1}}^{\langle p \rangle } \qquad d_h=d_{b}:\mc C^{p,q}_c \to \mc C^{p+1,q}_c \qquad d_v=d_{m}:\mc C^{p,q}_c \to \mc C^{p,q+1}_c \]
	such that the total complex is naturally isomorphic to the internal classical cochains, that is, \[\textup{Tot } \mc C^{\bullet,\bullet}_c(R) \xrightarrow{\cong} \mc C_c^\bullet(R) \ .\]
\end{prop}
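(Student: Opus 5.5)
We argue exactly as in Proposition \ref{Coisbicxprop}, now carried out internally, with the internal operation spaces $\mc P^c$ replacing $P^c$. The plan has four steps.

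First, we need to know that the internal classical endomorphism operad $\iEnd_{\DD(X)^c}(R)$ is a genuine operad enriched over $\DD(X)$, with composition maps given by the internal analogues of the compositions of 1.4.27 of \cite{BD1}. We also need the auxiliary grading of Equation \ref{Pclinteqn} to be additive under composition, meaning the number of $*$-blocks adds appropriately. Both facts follow from the grading description of Equation \ref{Pclinteqn}, and applying $h$ recovers the corresponding statements for $P^c$. With these in hand, the internal Chevalley--Eilenberg differential is built from the single binary structure map $\mu=(m,b)$ by the same universal formula as in Proposition \ref{CEcxprop}. Here we use that $\Lie$ is generated by one binary operation.

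Second, we use Equation \ref{cl2arydecompeqn}, internalized. It says the binary element $\mu$ splits into $m$, which lies in the $\langle 0\rangle$ part, and $b$, which lies in the $\langle 1\rangle$ part. Since $d_\CE=[\mu,\cdot]$ is linear in $\mu$, we get $d_\CE=d_m+d_b$. Bracketing with an element of auxiliary degree $0$ preserves the auxiliary degree $p$ and raises arity by one. Bracketing with an element of auxiliary degree $1$ raises both $p$ and arity by one. This gives $d_m:\mc C^{p,q}_c\to\mc C^{p,q+1}_c$ and $d_b:\mc C^{p,q}_c\to\mc C^{p+1,q}_c$, where $p+q+1$ is the arity.

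Third, we check the bicomplex relations. The relation $d_\CE^2=0$ is equivalent to $[\mu,\mu]=0$, by the Maurer--Cartan equation with $d=0$ on $\mc P^c$. Decomposing $[\mu,\mu]$ by auxiliary degree gives three equations. In degree $0$, $[m,m]=0$ is the associativity (and commutativity) of $m$. In degree $1$, $[m,b]=0$ is the Leibniz rule. In degree $2$, $[b,b]=0$ is the Jacobi identity for $b$. From these, $d_m^2=\tfrac12 [[m,m],\cdot]=0$, $d_b^2=0$, and $d_md_b+d_bd_m=[[m,b],\cdot]=0$.

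Fourth, we identify the total complex with $\mc C^\bullet_c(R)$. The identification is the direct sum decomposition $\mc P^c_n=\bigoplus_p\mc P^{c,\langle p\rangle}_n$. This decomposition is compatible with taking $\Ss_n$-coinvariants, because the grading is $\Ss_n$-stable. The cohomological shift $[1-n]$ then matches the total degree $p+q$.

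The main obstacle is the first step. The bracket relations hold in $P^c$ by Proposition \ref{CoisLieprop}, but the computation here takes place in the internal operad. We therefore need that composition there respects the grading, and that the relations $[m,m]=[m,b]=[b,b]=0$ hold in $\iEnd$ and not merely after applying $h$. Our approach is to verify this on sections over affine opens, where the internal Hom objects reduce to the global ones, and then glue.
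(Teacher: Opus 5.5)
Your Steps 2--4 are the paper's argument. The paper says the proof of Proposition \ref{Coisbicxprop} applies \emph{mutatis mutandis}, and that proof splits the binary structure map into $m$ (auxiliary degree $0$) and $b$ (auxiliary degree $1$), so that $d_\CE=d_m+d_b$ with the stated bidegrees, and then uses the coisson relations for anticommutation. Your decomposition of $[\mu,\mu]=0$ into components of auxiliary degree $0,1,2$ is a correct way to make that last step precise.

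The part to correct is how you propose to handle what you call the main obstacle.

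\begin{itemize}
\item \emph{Sections over affine opens are not the global Homs.} By definition $\Homi_{\DD(X)}(L,M)=\Homii_{\DD^r(X)}(L^r,M^r\otimes_{\mc O_X}\mc D_X)$, so its sections over $U$ are $\Hom_{\DD^r(U)}(L^r|_U,(M^r\otimes_{\mc O_X}\mc D_X)|_U)$. For $L=\mc D_X$ this is $\Gamma(U,M^r\otimes_{\mc O_X}\mc D_X)$, not $\Gamma(U,M)$.
\item \emph{The D-module structure is invisible sectionwise.} It comes from the second $\mc D_X$-action on $M^r\otimes_{\mc O_X}\mc D_X$, which a sectionwise comparison never sees.
\item \emph{Gluing only reaches the naive Hom.} Global Homs are recovered from internal ones only through $h$ (Proposition \ref{inthommatchabprop}), which is not faithful. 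Checking on opens and gluing would at best put structure on the naive $\Homii$, which is not an object of $\DD(X)$.
\item \emph{Equation \ref{Pclinteqn} is not enough.} It only lists the graded pieces. It does not by itself produce compositions or show they are additive in the auxiliary grading.
\end{itemize}

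What is actually needed is that the internal operation spaces form an operad enriched over $(\DD(X),\otimes^!)$, with composition maps given by maps of D-modules. The paper assumes this tacitly, and it is already presupposed by its definition of $\mc C^\bullet_c(R)$ as a complex.

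Granting that, your worry about the relations holding ``in $\iEnd$ rather than after $h$'' goes away:
\begin{itemize}
\item $d_m$ and $d_b$ are pre- and post-composition with the global maps $m$ and $b$.
\item The identities $d_m^2=\tfrac12[[m,m],\cdot]$, $d_md_b+d_bd_m=[[m,b],\cdot]$ and $d_b^2=\tfrac12[[b,b],\cdot]$ follow formally from associativity.
\item $[m,m]$, $[m,b]$ and $[b,b]$ need only vanish as global operations, which is exactly the coisson axioms.
\end{itemize}
More directly still, once $d_\CE^2=0$ is known, $d_\CE^2=d_m^2+(d_md_b+d_bd_m)+d_b^2$ splits into pieces of auxiliary degree $0,1,2$, and each must vanish separately.
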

\begin{proof}
	The proof is precisely the same as that of Proposition \ref{Coisbicxprop}, \emph{mutatis mutandis}.
\end{proof}

The first non-zero row and column define quotient complexes in $\Dd(X)$
\[ \mc C^\bullet_{c,m}(R)=(\bigoplus_{n=1}^\infty  \mc  P_n^c(\{ R\}_{i=1}^n,R)_{\Ss_n}^{\langle 0 \rangle}[1-n] \ , \ d_{m} ) \qquad \text{and}\qquad \mc C^\bullet_{c,b}(R)=(\bigoplus_{n=1}^\infty \mc  P_n^c(\{ R\}_{i=1}^n,R)_{\Ss_n}^{\langle n-1 \rangle}[1-n]  \ , \ d_{b} )  \ . \]
We have the following analogue of Proposition \ref{firstrcprop}:

\begin{prop}There are natural identifications of complexes
	\[\mc C^\bullet_{c,m}(R) \xrightarrow{\cong} \mc C^\bullet_{\textup{Har}}(R,m) \qquad \text{and}\qquad  \mc C^\bullet_{c,b}(R) \xrightarrow{\cong} \mc C^\bullet_{\textup{CE}}(R,b)  \ . \]
\end{prop}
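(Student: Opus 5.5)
The statement asks for two identifications: the zeroth row $\mc C^\bullet_{c,m}(R)$ with internal Harrison cochains, and the diagonal/column $\mc C^\bullet_{c,b}(R)$ with internal Chevalley--Eilenberg cochains. I plan to prove both by unwinding the internal description of $\iEnd_{\DD(X)^c}(R)$ in Equation \ref{Pclinteqn}, exactly parallel to Proposition \ref{firstrcprop}. The work splits into an identification of underlying graded objects in each arity, followed by an identification of the differentials.

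\emph{Underlying objects, extreme graded pieces.} In arity $n$, the summand of Equation \ref{Pclinteqn} with $p=0$ has one part $n_1=n$. It is therefore $\mc P^*_1(\{R^{\otimes^! n}\},R)\otimes \Lie(n)$. Since $\Delta^{(1)}$ is the identity, $\mc P^*_1(\{L\},M)=\Homi_{\DD(X)}(L,M)$, so this piece is $\mc P^!_n(\{R\},R)\otimes\Lie(n)$.

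Taking $\Ss_n$-coinvariants gives $(\mc P^!_n\otimes\Lie(n))_{\Ss_n}$. I will identify this with the shuffle-vanishing subobject $\mc P^!_n(\{R\},R)^{\Sh_n}$ using the same combinatorial input as Proposition \ref{Harcxprop}, namely $\Comm^\ash(n)\cong \Lie(n)^\vee\otimes\mathrm{sgn}$ up to shift. The linear-algebra identification commutes with the exact functor $\Homi$, since it is a statement about $\K[\Ss_n]$-modules tensored with an object of $\DD(X)$. So the Lemma 1.1.10-type argument of \cite{BD1} applies verbatim.

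The summand with $p=n-1$ forces all $n_i=1$. It is therefore $\big(\Ind_{1}^{\Ss_n}\mc P^*_n(\{R\},R)\big)_{\Ss_n}\cong \mc P^*_n(\{R\},R)$, and taking the outer $\Ss_n$-coinvariants recovers $\mc P^*_n(\{R\},R)_{\Ss_n}$.

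\emph{Differentials.} By Proposition \ref{Coisbicxintprop}, $d_\CE=d_m+d_b$ on $\mc C_c^\bullet(R)$. The zeroth row and the diagonal are quotients: $d_b$ raises $p$ and leaves the row, while $d_m$ raises $q$ and leaves the diagonal. The induced differential on the row is therefore the component of $d_\CE$ built from the $\langle 0\rangle$-part of $\mu$, namely $m\otimes$(generator of $\Lie(2)$). On the diagonal it is built from $b$.

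I will verify that the internal classical compositions restrict on these pieces to the operad maps $\alpha,\beta$ of Equation \ref{alphabetaeqn}, in their internal form. Concretely, $\alpha$ is the inclusion of $\DD(X)^!\otimes^H\Lie$ and $\beta$ is the projection to $\DD(X)^*$. The Chevalley--Eilenberg differential for $\mu$ then becomes the Chevalley--Eilenberg differential in $\DD(X)^*$ for $b$ on the diagonal. On the row it becomes the CE differential in $\DD(X)^!\otimes^H\Lie$, which under the Koszul identification $\Lie^\ash\otimes^H\Lie\leftrightarrow\Comm^\ash$ is the Harrison differential for $m$.

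\emph{Main obstacle.} The hardest step is checking that the composition maps of the internal operation spaces, defined via $\Delta^{I!}\Homi(\boxtimes L_i,\Delta_*M)$, are compatible with $\alpha$ and $\beta$. In the global case this is \cite{BD1} 1.4.27. Here I would first reduce the internal statement to the global one by passing to sections over opens $U\subset X$, using the naive internal Hom description. The internal composition is defined so that its sections over $U$ are the global compositions on $U$, so compatibility follows from the global result applied on each $U$. The remaining combinatorial sign bookkeeping in identifying $(-\otimes\Lie(n))_{\Ss_n}$ with the shuffle condition is routine.
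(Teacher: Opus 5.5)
Your plan matches the paper's, which proves this by the same unwinding of definitions as Proposition \ref{firstrcprop}. Your identifications are right: the $p=0$ summand of Equation \ref{Pclinteqn} is $\mc P^!_n(\{R\},R)\otimes\Lie(n)$, whose $\Ss_n$-coinvariants give the shuffle-vanishing part; the $p=n-1$ summand is $\mc P^*_n(\{R\},R)$; and on the two quotient complexes only $d_m$, respectively $d_b$, survives.

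The step that does not work is the reduction you propose for the main obstacle.

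\begin{itemize}
\item The internal operation spaces are built from the genuine internal Hom $\Homi_{\DD(X)}(L,M)=\Homii_{\DD^r(X)}(L^r,M^r\otimes_{\mc O_X}\mc D_X)\cong L^{\circ}\otimes^! M$. Its sections over $U$ are maps into the induced module $M^r\otimes_{\mc O_X}\mc D_X$, not morphisms $L|_U\to M|_U$. Already for $L=M=\mc D_X$ one gets $\Gamma(U,\mc D_X\otimes_{\mc O_X}\mc D_X)$ instead of $\Gamma(U,\mc D_X)$.
\item The naive $\Homii(L,M)$ does have the section-wise property you want. However, it is not a D-module and is not what enters $\mc C^\bullet_c(R)$.
\item The genuine internal Hom recovers global morphisms only after applying $\pi_*$ (Proposition \ref{inthommatchprop}), and $\pi_*$ is not faithful. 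For $\mc P^*$, the extra $\Delta^{I!}$ also means its sections over $U$ are not operations on $U^I$ at all.
\end{itemize}

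So the internal compositions are not section-wise the compositions of \cite{BD1} 1.4.27, and that result cannot be imported this way.

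You do not need it. The internal classical operations $\mc P^c$ are defined summand-by-summand from the internal $!$- and $*$-operation spaces. Their two extreme summands are literally $\mc P^!_n\otimes\Lie(n)$ and $\mc P^*_n$. The internal Chevalley--Eilenberg differential is defined by the internal versions of the same composition rules as in the global case. Hence these summands are closed under composition, with the induced structures being those of $\mc P^!\otimes^H\Lie$ and $\mc P^*$ (the internal analogues of $\alpha$ and $\beta$), by construction. This is all the paper's \emph{mutatis mutandis} asserts, and it is what should replace your reduction to sections.
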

\begin{proof} The proof is the same as that of Proposition \ref{firstrcprop}, \emph{mutatis mutandis}.
\end{proof}

More generally, for each $p\geq 0$ the $p^{th}$ row of the above bicomplex defines a complex
\[ \mc C^{\bullet,\langle p \rangle}_{c,m}(R)= \left(\bigoplus_{n=p+1}^\infty \mc  P_n^c(\{ R\}_{i=1}^n,R)_{\Ss_n}^{\langle p \rangle} [1-n] \ , \ d_{m} \ \right) \qquad \in \Dd(X) \]
and we have the following analogue of Proposition \ref{AQPoisequivprop}:

\begin{prop}\label{AQPoischequivintprop} For each $p\geq 1$, there are natural identifications of complexes
	\[ \mc C^{\bullet,\langle p \rangle}_{c,m}(R)[p] \xrightarrow{\cong} \mc C^{\bullet,\langle p \rangle}_\AQ(R) \ , \]
	for $R=(M,m)$. In particular, we have $\mc H^{p,\langle p \rangle}_{c,m}(R) \cong \Homi_{\DD(R)^*}(\{\Omega^1_R\}_{k=1}^{p+1},R)_{\Ss_{p+1}} $.
\end{prop}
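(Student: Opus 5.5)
The identification will be built in two stages, mirroring the finite type case of Proposition \ref{AQPoisequivprop}. In the first stage I identify the $p$-th row $\mc C^{\bullet,\langle p\rangle}_{c,m}(R)$ with a Harrison-type complex of $*$-operations. In the second stage I show that this complex computes $\Homi_{\Dd(R)^*}(\{\bL_R\}^{p+1};R)_{\Ss_{p+1}}$ by resolving $R$ cofibrantly.

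\textbf{Stage 1: unwinding the $p$-th row.} By Equation \ref{Pclinteqn}, the degree $p$ component in arity $n$ is a sum over decompositions $n_1+\dots+n_{p+1}=n$, taken with $\Ss_{p+1}$-coinvariants. Each summand is
\[
\mc P^*_{p+1}(\{R^{\otimes^! n_i}\}_{i=1}^{p+1},R)\otimes\bigotimes_i \Lie(n_i),
\]
induced up to $\Ss_n$. Taking $\Ss_n$-coinvariants kills the induction, leaving
\[
\mc P^*_{p+1}(\{R^{\otimes^! n_i}\},R)\otimes_{\Ss_{n_1}\times\dots\times\Ss_{n_{p+1}}}\bigotimes_i\Lie(n_i).
\]
Since $\Lie(n_i)\otimes\mathrm{sgn}$ is dual to $\Comm^\ash$-type data, pairing with $\Lie(n_i)$ in each slot cuts out the shuffle-vanishing (Harrison) part in each of the $p+1$ inputs separately. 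Next I check that the vertical differential $d_m$, which is bracketing with $m\in P^!_2\otimes\Lie(2)$, acts slot-by-slot as the Harrison differential of $m$ on each argument. The output slot is $R$-linear via $m$, and the shift by $p$ accounts for the $p+1$ slots each starting in degree $0$ after the $[1-n]$ regrading.

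The result is an identification of $\mc C^{\bullet,\langle p\rangle}_{c,m}(R)[p]$ with an iterated ($(p+1)$-fold) internal Harrison complex, symmetrized under $\Ss_{p+1}$. Its $k$-th slot is $\bigotimes_k \mathrm{Har}(R)$-chains mapped via $*$-operations to $R$. Explicitly, it is
\[
\Homi^*\big(\{\mathrm{Bar}_{\Comm}(R)\}^{p+1},R\big)_{\Ss_{p+1}},
\]
where $\mathrm{Bar}_\Comm(R)$ denotes the Harrison (Lie-cobar) chain object in $\DD(X)^!$.

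\textbf{Stage 2: comparison with the cotangent complex.} Here I use the chiral analogue of the classical fact (as in \cite{Lod} 4.5.13 and \cite{LV} 12.4) that the Harrison chains $R\otimes^!\mathrm{Bar}_\Comm(R)$, with their $R$-module structure, form a model for $\bL_R$ in $\Dd(R)$. I intend to prove this by passing to a quasi-free resolution $\tilde R=\Sym^!(V)\to R$ in $\Dd(X)^!$. For $\tilde R$ the statement reduces to Harrison homology of a free algebra being concentrated on generators, which is the Koszulness of $\Comm$ applied internally in $\DD(X)^!$. I then transport the result along the quasi-isomorphism using Proposition \ref{Rinthomdescderprop} to rewrite everything as $\DDD_R$ of $\boxtimes$-products.

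Then $R$-linearity of $*$-operations, as in Definition \ref{Rlindefn}, identifies $*$-operations out of the free $R$-modules $R\otimes^!\mathrm{Bar}$ with $R$-linear $*$-operations out of $\bL_R$. This identification is componentwise in each of the $p+1$ slots, and it commutes with $\Delta^{I!}$ and $\bb L\Delta_R^{I\bullet}$ as they appear in Definition \ref{Rstarinthomdefn}. Taking $\Ss_{p+1}$-coinvariants is exact in characteristic zero, so it passes through.

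The final clause then follows from Corollary \ref{smthchintcoro} when $R$ is very smooth. As stated it is simply the degree-$p$ cohomology of the right side.

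\textbf{Main obstacle.} I expect the main obstacle to be Stage 2. The internal $*$-Hom out of an exterior product involves $\Delta_*$ and derived $\Delta_R^\bullet$, and these must commute with the bar resolution. This requires flatness or projectivity of the $\boxtimes$-products of free modules over $R^{\boxtimes I}$, which I would obtain from the quasi-free resolution being built from locally projective D-modules.
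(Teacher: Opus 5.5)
Your proposal is essentially the paper's proof run in the opposite direction. The paper inserts the Harrison-chain representative $R\otimes^!\Comm^{\ash}(R)$ of $\bL_R$ (Theorem 12.4.3 of \cite{LV}, applied internally to $\DD(X)^!$) into $\mc C^{\bullet,\langle p\rangle}_\AQ(R)$, applies the induction adjunction in each of the $p+1$ slots, and matches the resulting sum over $n_1+\dots+n_{p+1}=n$ (with its $\prod_i\Ss_{n_i}$- and $\Ss_{p+1}$-coinvariants) against Equation \ref{Pclinteqn}, identifying the differentials via Lemma 12.4.4 of \cite{LV}. This is exactly the content of your Stages 1 and 2, and your re-derivation of the internal Harrison model and the detour through $\DDD_R$ are unnecessary.

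The obstacle you flag is not one that flatness resolves. As in the paper's computation, the pullback along $\Delta_R$ has to be taken underived. There it acts as the identity, because the $R^{\otimes^!(p+1)}$-action on these internal Hom objects factors through multiplication to $R$. By contrast, $\bb L\Delta_R^{I\bullet}$ would contribute nonzero higher $\mathrm{Tor}$ terms.
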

\begin{proof}
Let $\mc P=\Comm$ and $\mc D=\DD(X)^!$ considered as a $\K$-linear symmetric monoidal category, enriched over the $\K$-linear category $\DD(X)$, so that the $\Comm^!$ algebra $R=(R,m)\in \Alg_{\mc P}(\mc D)$. By Theorem 12.4.3 of \cite{LV} internal to $\mc D$, the object $\bL_R \in R\Mod(\Dd(X)^!)=\Dd(R) $ admits a canonical representative
\[\bL_R \cong \left( R\otimes^! \mc P^\ash(R) \ , \ d_{\pi_k} \right) = \left( R \otimes^! \bigoplus_{n=1}^\infty \Lie(n)^*\otimes_{\K[\Ss_n]} R^{\otimes^! n}[n-1] \ , \ d^{\Har} \   \right) \ ,  \]
where $d^{\Har}$ denotes the differential on Harrison chains on $R$ internal to $\DD(X)$, as follows from Lemma 12.4.4 and Proposition 12.4.5 of \emph{loc. cit.}. In the case $p=1$, we use this to calculate
\begin{align*}
\mc C^\bullet_\AQ(R) & = \Homi_{\Dd(R)}(\bL_R,R) \\
& \cong \left(  \Homi_{\DD(R)}(R \otimes^! \bigoplus_{n=1}^\infty \Lie(n)^*\otimes_{\K[\Ss_n]} R^{\otimes^! n}[n-1] , R ) \ , \ d_{\Har} \right)  \\
& \cong \left( \bigoplus_{n=1}^\infty\Homi_{\DD(X)}( \Lie(n)^*\otimes_{\K[\Ss_n]} R^{\otimes^! n} , R )[1-n]   \ , \ d_{\Har} \right) \\
& \cong \left( \bigoplus_{n=1}^\infty\Homi_{\DD(X)}( R^{\otimes^! n} , R )^{\Sh_n}[1-n]   \ , \ d_{\Har} \right) \\
& = \mc C_\Har^\bullet(R)  & , 
\end{align*}
where $d_\Har=(d^\Har)^*$ denotes the differential on Harrison cochains on $R$ internal to $\DD(X)$, the first isomorphism follows from the fact our chosen representative is a complex of projective objects of $\DD(R)$, the second follows from the induction adjunction, and the third follows from the proof of Proposition 13.1.4 of \cite{LV}, recalled above as Proposition \ref{Harcxprop}.

More generally, for $p\geq 0$ we can analogously calculate the representative of $\bL_R^{\boxtimes p+1}\in \Dd(R^{\boxtimes I})$ as
\begin{align*}
	\bL_R^{\boxtimes p+1} & \cong \left( \  \left( R \otimes^! \bigoplus_{n=1}^\infty \Lie(n)^*\otimes_{\K[\Ss_n]} R^{\otimes^! n}[n-1] \right)^{\boxtimes p+1} \ , \ d^\Har_{\textup{Tot},\langle p \rangle } \ \right) \\ 
	& = \left( \bigoplus_{n=p+1}^\infty R^{\boxtimes p+1} \otimes^! \left(  \bigoplus_{n_1+...+n_{p+1}=n} \boxtimes_{i=1}^{p+1} R^{\otimes^! n_{i}} \right)\otimes_{\K[\prod_{i=1}^{p+1} \Ss_{n_i}]} \bigotimes_{i=1}^{p+1} \Lie(n_i)^*[n-p-1]  \ , \ d^\Har_{\textup{Tot},\langle p \rangle } \ \right) \\ 
\end{align*}
and thus analogously compute the degree $p$ higher Andr\'e--Quillen cochains as
\begin{align*}\hspace*{-3cm}
& \mc  C^{\bullet,\langle p \rangle}_\AQ(R)  = \Homi_{\Dd(R)^*}(\{\bL_R\}_{k=1}^{p+1};R)_{\Ss_{p+1}} \\
& = \bb L\Delta_R^{I\bullet}\Delta^{(p+1)!}\Homi_{\Dd(R^{\boxtimes p+1})}(\bL_R^{\boxtimes p+1}, \Delta^{(p+1)}_*R )_{\Ss_{p+1}} \\
& \hspace{-1.7cm} \cong \bigoplus_{n=p+1}^\infty    \Delta_R^{I\bullet}\Delta^{(p+1)!}_X\Homi_{\DD(R^{\boxtimes p+1})}\left( \bigoplus_{n_1+...+n_{p+1}=n} R^{\boxtimes p+1} \otimes^! \left( \boxtimes_{i=1}^{p+1} R^{\otimes^! n_{i}} \right)\otimes_{\K[\prod_{i=1}^{p+1} \Ss_{n_i}]} \bigotimes_{i=1}^{p+1} \Lie(n_i)^*[n-p-1] ,\Delta^{(p+1)}_* R \right)_{\Ss_{p+1}} \\
& \cong  \bigoplus_{n=p+1}^\infty   \left( \bigoplus_{n_1+...+n_{p+1}=n}\Delta_R^{I\bullet}\Delta^{(p+1)!}_X\Homi_{\DD(X^{\boxtimes p+1})}\left(  \boxtimes_{i=1}^{p+1} R^{\otimes^! n_{i}}  ,\Delta^{(p+1)}_* R \right)\otimes_{\K[\prod_{i=1}^{p+1} \Ss_{n_i}]} \bigotimes_{i=1}^{p+1} \Lie(n_i)[1-n+p] \right)_{\Ss_{p+1}} \\
& \cong  \bigoplus_{n=p+1}^\infty   \left(  \bigoplus_{n_1+...+n_{p+1}=n} \Ind_{\prod_{i=1}^{p+1} \Ss_{n_i}}^{\Ss_n}\left(\mc P^*_{p+1}\left(   \{R^{\otimes^! n_{i}}\}_{i=1}^{p+1}  , R \right)\otimes \bigotimes_{i=1}^{p+1} \Lie(n_i) \right)_{\Ss_n}   \right)_{\Ss_{p+1}} [1-n+p]\\
& \cong  \bigoplus_{n=p+1}^\infty   \left( \left( \bigoplus_{n_1+...+n_{p+1}=n} \Ind_{\prod_{i=1}^{p+1} \Ss_{n_i}}^{\Ss_n}\left(\mc P^*_{p+1}\left(   \{R^{\otimes^! n_{i}}\}_{i=1}^{p+1}  , R \right)\otimes \bigotimes_{i=1}^{p+1} \Lie(n_i) \right) \right)_{\Ss_{p+1}}   \right)_{\Ss_{n}}[1-n+p] \\
	& \cong  \bigoplus_{n=p+1}^\infty \PP_n^c(\{R\}_{i=1}^n,R)^{\langle p \rangle}_{\Ss_{n}}[1-n+p] \\
	& = \mc C^{\bullet,\langle p \rangle}_{c,m}(R)[p] & 
\end{align*}
where we have equipped the cohomologically graded vector spaces given above following the initial isomorphism with $d_\Har^{\textup{Tot},\langle p \rangle }=(d^\Har_{\textup{Tot},\langle p \rangle })^*$, which identifies with the differential $d_m$ on $\mc C^{\bullet,\langle p \rangle}_{c,m}(R)$ as claimed in the final equality, again by Lemma 12.4.4 of \cite{LV}. The final, non-trivial isomorphism follows from the expression in Equation \ref{Pclinteqn}.

\end{proof}

We now give the internal analogue of Definition \ref{gPCchdefn} and Corollary \ref{gPCchcoro}:

\begin{defn}
The (reduced) internal variational Poisson cochains on $R$ is the subcomplex of $\mc C_c^\bullet(R)$ given by
\[ \mc C^\bullet_\PV(R)= \left( \bigoplus_{p=0}^\infty \mc H^{p,\langle p \rangle}_{c,m}(R,m_\varphi)[-p] \ , \ d_{b}  \ \right)  \qquad \in \Dd(X) .\]
\end{defn}

\begin{coro} The internal variational Poisson cochains on $R$ admits an isomorphism of complexes
	\[ \mc C^\bullet_\PV(R)\cong \left( \bigoplus_{p=0}^\infty \mc P^*_{R,p+1}(\{\Omega^1_R \}_{k=1}^{p+1};R)_{\Ss_{p+1}}[-p] \ , \ \tilde d_{b}  \ \right)  \ ,\]
	where $\tilde d_{b}:\mc C_\PV^\bullet(R)\to \mc C_\PV^\bullet(R)[1]$ is determined uniquely by the fact that the following diagram commutes
\begin{equation}\label{tildedbeqn}
		\begin{tikzcd}
		\mc P^*_{R,p}(\{\Omega^1_R \}_{k=1}^{p};R)_{\Ss_{p}} \arrow[r,"\tilde d_b"] \arrow[d," (\cdot)\circ (d_R^{\dR})^{\boxtimes p}"] & \mc P^*_{R,p+1}(\{\Omega^1_R \}_{k=1}^{p+1};R)_{\Ss_{p+1}}  \arrow[d," (\cdot)\circ (d_R^{\dR})^{\boxtimes p+1}"]\\
		\mc P^*_p(\{R\}_{k=1}^p,R)_{\Ss_{p}} \arrow[r,"d_b"]&  	\mc P^*_{p+1}(\{R\}_{k=1}^{p+1},R)_{\Ss_{p+1}} 	\end{tikzcd}  \ .
\end{equation}
\end{coro}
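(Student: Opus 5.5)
The plan is to assemble the isomorphism term by term from Proposition \ref{AQPoischequivintprop}, and then to identify the differential by restricting the total Chevalley--Eilenberg differential of Proposition \ref{Coisbicxintprop}. This mirrors the proof of Corollary \ref{gPCchcoro}.

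For the underlying graded objects, Proposition \ref{AQPoischequivintprop} gives, for each $p\geq 0$, an identification
\[ \mc H^{p,\langle p \rangle}_{c,m}(R) \cong \Homi_{\DD(R)^*}(\{\Omega^1_R\}_{k=1}^{p+1},R)_{\Ss_{p+1}} . \]
The right hand side is $\mc P^*_{R,p+1}(\{\Omega^1_R\}_{k=1}^{p+1};R)_{\Ss_{p+1}}$ by Definition \ref{Rstarinthomdefn}. The derived version agrees with the underived one here because the Harrison representative of $\bL_R$ used in that proof has $\Omega^1_R$ as its $\mc H^0$. When $R$ is very smooth this is sharper still: by Corollary \ref{smthchintcoro} we may replace $\bL_R$ by $\Omega^1_R$ outright.

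It is useful to make the map explicit. An $R$-linear $*$-operation $\psi$ on $\Omega^1_R$ is sent to $\psi\circ(d_R^{\dR})^{\boxtimes p+1}$. This composite is an element of $\mc P^*_{p+1}(\{R\},R)=\mc P^c_{p+1}(\{R\},R)^{\langle p\rangle}$, the bottom row entry of the bicomplex. It is a $d_m$-cocycle because it is a derivation in each slot. The inclusion $\mc H^{p,\langle p\rangle}_{c,m}\hookrightarrow \mc C^{p,0}_c$ is exactly the vertical-kernel inclusion, since $\mc C^{p,q}_c=0$ for $q<0$. Moreover, the map $\psi\mapsto\psi\circ(d^{\dR}_R)^{\boxtimes p+1}$ is injective, because $d^{\dR}_R$ generates $\Omega^1_R$ as an $R$-module and $\psi$ is $R$-linear in each slot.

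For the differential, the inclusion $\mc C_\PV^\bullet(R)\subset\mc C^\bullet_c(R)$ is compatible with $d_\CE=d_m+d_b$. On a vertical cocycle in row $q=0$, the $d_m$ part vanishes, so the differential reduces to $d_b$, landing in $\mc C^{p+1,0}_c$. Two facts then need checking. First, $d_b$ preserves the vertical kernel; this follows from $d_md_b+d_bd_m=0$, which holds by the coisson relations. Second, the image again lies in the subspace of polyderivations. This holds because $d_b(\psi\circ (d^{\dR})^{\boxtimes p})$ is built from the bracket $b$ and $\psi$, and by the Leibniz rules of the coisson structure each slot is again a derivation. Hence the image factors through $(d^{\dR}_R)^{\boxtimes p+1}$. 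Combined with the injectivity above, this gives a unique lift $\tilde d_b$ making the square \eqref{tildedbeqn} commute.

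The main obstacle is this last factorization step. One must verify that the $*$-operation $d_b(\psi\circ(d^{\dR})^{\boxtimes p})$ is $R$-polydifferential of order one in each variable, internally and after applying $\Delta^{I!}$ and $\Delta_R^{I\bullet}$. The plan is to check this by the explicit Chevalley--Eilenberg formula slot by slot, using that the adjoint action of $b$ is an action by derivations (Definition \ref{defn:Lie action by derivations}) and that $\Omega^1_R$ represents derivations.
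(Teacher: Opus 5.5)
Your proposal is correct and follows the paper's implicit argument. The summands are identified term by term via Proposition \ref{AQPoischequivintprop} with the $d_m$-kernel of the bottom row. Then $\tilde d_b$ is simply $d_b$ restricted to that kernel and transported along $\psi\mapsto\psi\circ(d^{\dR}_R)^{\boxtimes p+1}$.

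The ``main obstacle'' you flag is not an extra step. That kernel is exactly the image of $(\cdot)\circ(d^{\dR}_R)^{\boxtimes p+1}$, so the factorization of $d_b(\psi\circ(d^{\dR}_R)^{\boxtimes p})$ through $(d^{\dR}_R)^{\boxtimes p+1}$ is your first fact restated: $d_b$ preserves $\ker d_m$, by $d_md_b+d_bd_m=0$. No slot-by-slot check is needed.
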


Although we will not use it, note that by Corollary \ref{smthchintcoro}, we obtain the following:

\begin{coro}\label{GOequivchintcoro} Suppose the underlying $\Comm^!$ algebra $R=(M,m)$ is very smooth. Then the inclusion of the subcomplex of internal variational Poisson cochains is a quasi-isomorphism,
	\[\mc C^\bullet_\PV(R) \xrightarrow{\cong} \mc C_c^\bullet(R) \ .\]
\end{coro}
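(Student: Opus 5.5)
The plan is to deduce Corollary \ref{GOequivchintcoro} from Corollary \ref{smthchintcoro} and Proposition \ref{AQPoischequivintprop} by a standard spectral sequence argument on the bicomplex of Proposition \ref{Coisbicxintprop}, exactly parallel to the global Corollaries \ref{GOequivcoro} and \ref{GOequivchcoro}.

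First, I would filter $\mc C^\bullet_c(R)=\textup{Tot}\,\mc C^{\bullet,\bullet}_c(R)$ by the column degree $p$, i.e.\ by the auxiliary grading $\langle p\rangle$. Each column with its vertical differential $d_m$ is the complex $\mc C^{\bullet,\langle p\rangle}_{c,m}(R)$, and this filtration is compatible with $d_\CE=d_m+d_b$, since $d_b$ raises $p$ by one.

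Next, I would compute the $E_1$ page. By Proposition \ref{AQPoischequivintprop}, together with its $p=0$ analogue given by the identification of the zeroth column with internal Harrison cochains, each column is identified with $\mc C^{\bullet,\langle p\rangle}_\AQ(R)[-p]$. Since $R$ is very smooth, Corollary \ref{smthchintcoro} says this complex is acyclic away from degree $p$, with cohomology $\Homi_{\DD(R)^*}(\{\Omega^1_R\}_{k=1}^{p+1},R)_{\Ss_{p+1}}\cong\wedge^{p+1}_R\Theta_R$. So the $E_1$ page is concentrated on a single line (total degree $p$ in column $p$), with $d_1$ induced by $d_b$. This is precisely the complex $\mc C^\bullet_\PV(R)$ with its differential as in Diagram \ref{tildedbeqn}. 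The sequence therefore degenerates at $E_2$.

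To upgrade this to a quasi-isomorphism of the inclusion, rather than just an abstract isomorphism of cohomology, I would use the standard lemma: if each column of a first-quadrant bicomplex has cohomology only in a single row, then the total complex is quasi-isomorphic to the complex of those cohomologies with the induced horizontal differential. The step needing the most care is making the inclusion an honest map of complexes. I would realize $\mc H^{p,\langle p\rangle}_{c,m}$ as the cocycles in the lowest nonzero vertical degree of column $p$; since the column complex starts there, its cohomology is a subobject of the column. One must then check that $d_m$ annihilates these classes and that $d_b$ preserves them, which gives the subcomplex property implicit in the definition of $\mc C^\bullet_\PV(R)$. With that in place, the map on associated gradeds is a quasi-isomorphism column by column. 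Because the filtration is bounded in each total degree ($p\le$ total degree), a comparison of spectral sequences concludes. I expect that subcomplex check, and the convergence within $\Dd(X)$, to be the only nonformal points.
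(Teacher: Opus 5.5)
Your proposal is correct and follows the paper's own route: the paper deduces this corollary in one line from Corollary \ref{smthchintcoro}, via Proposition \ref{AQPoischequivintprop} and the bicomplex of Proposition \ref{Coisbicxintprop}, in parallel with Corollaries \ref{GOequivcoro} and \ref{GOequivchcoro}. Your column-filtration spectral sequence argument, degreewise bounded so that comparison applies, spells that step out; this includes the check that the row-zero vertical cocycles form a subcomplex, which holds because $d_b$ and $d_m$ commute up to sign.
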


Moreover, we have the following comparison of the internal and global variational Poisson cochains on $R$, which is the first main result of this section:

\begin{prop}\label{prop:deRham of internal PV}
Suppose $X$ is affine and the underlying $\Comm^!$ algebra of $R$ is very smooth and locally free as an $\mc O_X$ module. There is a natural isomorphism of objects in the derived category
\[ h(\mc C^\bullet_\PV(R)) \cong C^\bullet_\PV(R) \ . \]
\end{prop}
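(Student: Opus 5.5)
The plan is to compare the two complexes termwise, and then check that the comparison intertwines the differentials. By Corollary \ref{gPCchcoro} and its internal analogue, the $p$-th term of $C^\bullet_\PV(R)$ is $P^*_{R,p+1}(\{\Omega^1_R\}_{k=1}^{p+1},R)_{\Ss_{p+1}}$. The corresponding term of $\mc C^\bullet_\PV(R)$ is $\mc P^*_{R,p+1}(\{\Omega^1_R\}_{k=1}^{p+1},R)_{\Ss_{p+1}}$, and by Corollary \ref{smthchintcoro} this is identified with $\wedge^{p+1}_R\Theta_R$. The goal is therefore a natural quasi-isomorphism
\[
h\big(\mc P^*_{R,p+1}(\{\Omega^1_R\},R)_{\Ss_{p+1}}\big) \;\cong\; P^*_{R,p+1}(\{\Omega^1_R\},R)_{\Ss_{p+1}}
\]
for each $p$, compatible with $\tilde d_b$. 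Since $h$ is applied termwise to a complex of D-modules, it is enough to produce these termwise isomorphisms compatibly with the differentials. We then conclude in the derived category using Proposition \ref{locprojaffprop}, which says $h$ computes $\pi_*$ on locally projective objects.

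\textbf{Step 1 (termwise comparison).} First I would unwind Definition \ref{Rstarinthomdefn}:
\[
\mc P^*_{R,p+1}(\{\Omega^1_R\},R)=\Delta_R^{\bullet}\Delta^{!}\Homi_{\DD(R^{\boxtimes p+1})}\big((\Omega^1_R)^{\boxtimes p+1},\Delta_*R\big).
\]
Very smoothness gives, via Proposition \ref{verysmoothprop}, that $\Omega^1_R$ is a projective $R$-module. So $(\Omega^1_R)^{\boxtimes p+1}$ is projective over $R^{\boxtimes p+1}$, and Proposition \ref{inthommatchabprop} (applied on $X^{p+1}$ for the algebra $R^{\boxtimes p+1}$) identifies the genuine internal Hom with its derived version. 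Its global sections then compute $\Hom_{\DD(R^{\boxtimes p+1})}\big((\Omega^1_R)^{\boxtimes p+1},\Delta_*R\big)=P^*_{R,p+1}$, by Proposition \ref{Rinthommatchprop}.

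Next I would use Kashiwara's lemma to move $h$ past $\Delta^!$ and $\Delta_R^\bullet$. The point is that the internal Hom into $\Delta_*R$ is supported on the diagonal, so it equals $\Delta_*$ of the object being defined, and the de Rham pushforward to a point on $X^{p+1}$ equals the pushforward on $X$. Taking $\Ss_{p+1}$-coinvariants commutes with $h$ because we are in characteristic zero.

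\textbf{Step 2 (locally projective hypothesis).} To use Proposition \ref{inthommatchabprop} and Proposition \ref{locprojaffprop} we need $\wedge^{p+1}_R\Theta_R$ to be locally projective in $\DD(X)$, not merely over $R$. Here I would use that $R$ is locally free over $\mc O_X$ and that $\Theta_R$ is $R$-projective. Then $\wedge^{p+1}_R\Theta_R$ is a direct summand of a free $R$-module $R\otimes_{\mc O_X}\mc D_X$-type object, and $R^r\otimes_{\mc O_X}\mc D_X$ is locally projective as a D-module because $R$ is $\mc O_X$-locally free. This is the step I expect to be the main obstacle. The reason is that $\wedge^{p+1}_R\Theta_R$ need not be finitely generated over $\mc D_X$, so one must allow countable direct sums of such projectives and check that $h$ and the internal Hom comparisons still behave, working with the unital-projective class of Theorem \ref{BDgrclthm}. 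If this is too delicate, the fallback is to argue directly that $h$ and $\pi_*$ agree because $H^{<0}$ of the de Rham complex vanishes for $\mc O_X$-flat modules on an affine curve.

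\textbf{Step 3 (differentials).} Both $\tilde d_b$ and its internal version are characterized by the commuting squares \eqref{tildedbeqn}: precomposing with $(d_R^{\dR})^{\boxtimes p}$ must intertwine them with $d_b$ on $\mc P^*_p(\{R\},R)$ and $P^*_p(\{R\},R)$ respectively. The comparison of Step 1 is natural in the source, and it already intertwines the $\Lie^*$ Chevalley--Eilenberg differentials, since $h$ of an internal $*$-operation composed with $b$ is the global composite. Uniqueness in \eqref{tildedbeqn} then forces the two $\tilde d_b$ to match, which assembles the termwise isomorphisms into the claimed isomorphism $h(\mc C^\bullet_\PV(R))\cong C^\bullet_\PV(R)$.
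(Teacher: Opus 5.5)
Your proposal follows the paper's proof essentially step for step. You use projectivity of $\Omega^1_R$ to match abelian and derived (internal) Homs, Proposition \ref{Rinthommatchprop} plus Kashiwara's lemma to rewrite the global Hom as $\pi_*\mc P^*_{R,p}$, and local freeness of $R$ over $\mc O_X$ to see that the projective $R[\mc D_X]$-module $\Theta_R^{\otimes^!_R p}$ is locally projective in $\DD(X)$. Proposition \ref{locprojaffprop} then replaces $\pi_*$ by $h$, and the differentials are matched through the characterizing square.

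The one caveat is your Step 2 fallback, which would fail. $\mc O_X$-flatness alone does not kill the lower de Rham cohomology: already $\mc O_X$ itself has horizontal sections (the constants), and $h$ misses them. So the local-projectivity argument is genuinely needed. It also suffices as it stands, because Proposition \ref{locprojaffprop} carries no finite-rank hypothesis, so the detour through countable sums and unital-projective objects is unnecessary.
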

\begin{proof}
	We begin by analyzing the underlying cohomologically graded vector space. Note that $\Omega^1_R$ is a projective object of $\DD(R)$ since $R$ is very smooth, and thus $(\Omega^1_R)^{\boxtimes p}\in \DD(R^{\boxtimes p})$ is projective, so that we have
\begin{equation}\label{ndhommatcheqn}
		 \Hom_{\DD(R^{\boxtimes p})}((\Omega^1_R)^{\boxtimes p},\Delta_*R) \cong \Hom_{\Dd(R^{\boxtimes p})}((\Omega^1_R)^{\boxtimes p},\Delta_*R) \ .  
\end{equation}
	Similarly, by Proposition \ref{inthommatchabprop} we have
\begin{equation}\label{ndhommatchinteqn}
			 \Homi_{\DD(R^{\boxtimes p})}((\Omega^1_R)^{\boxtimes p},\Delta_*R) \cong \Homi_{\Dd(R^{\boxtimes p})}((\Omega^1_R)^{\boxtimes p},\Delta_*R) \ .  
\end{equation}
	Thus, we can compute
	\begin{align*}
		\Hom_{\DD(R^{\boxtimes p})}((\Omega^1_R)^{\boxtimes p},\Delta_*R) & \cong	\Hom_{\Dd(R^{\boxtimes p})}((\Omega^1_R)^{\boxtimes p},\Delta_*R) \\
		& \cong (\pi^{\times p})_* \Homi_{\Dd(R^{\boxtimes p})}((\Omega^1_R)^{\boxtimes p},\Delta_*R) \\
		& \cong (\pi^{\times p})_* \Delta_* \Delta^!\Homi_{\Dd(R^{\boxtimes p})}((\Omega^1_R)^{\boxtimes p},\Delta_*R) \\
		& \cong\pi_* \Delta^!\Homi_{\Dd(R^{\boxtimes p})}((\Omega^1_R)^{\boxtimes p},\Delta_*R) \\
		& \cong \pi_*\Delta^!_X\Homi_{\DD(R^{\boxtimes p})}((\Omega^1_R)^{\boxtimes p},\Delta_*R) \\
		& \cong \pi_* \Delta_R^{\bullet}\Delta^!\Homi_{\DD(R^{\boxtimes p})}((\Omega^1_R)^{\boxtimes p},\Delta_*R) \\
		& = \pi_* \mc P^*_{R,p}(\{\Omega^1_R\}_{i=1}^p,R) \ , 
	\end{align*}
	where the first isomorphism follows from Equation \ref{ndhommatcheqn}, the second from Proposition \ref{Rinthommatchprop}, the third from Kashiwara's lemma noting $\Homi_{\Dd(R^{\boxtimes p})}((\Omega^1_R)^{\boxtimes p},\Delta_*R)$ is supported on the diagonal, the fourth by naturality of pushforward, the fifth by Equation \ref{inthommatchabprop}, and the sixth since the object $\Delta^!\Homi_{\DD(R^{\boxtimes p})}((\Omega^1_R)^{\boxtimes p},\Delta_*R)\in\DD(R^{\otimes^! p})$ is supported scheme theoretically on the image of the diagonal embedding $\Delta_R:\mc Y \to \mc Y^{\times p}$.
	 
	 Now, we can explicitly compute the internal Hom object $\mc P^*_{R,p}(\{\Omega^1_R\}_{i=1}^p,R)\in \DD(R)$ by
	 \begin{align*}
	 	\mc P^*_{R,p}(\{\Omega^1_R\}_{i=1}^p,R) & \cong \Delta_R^{(p)\bullet}\Delta^{(p)!}( \left((\Omega^1_R)^{\boxtimes p}\right)^{\circ_{R^{\boxtimes p}}} \otimes^!_{R^{\boxtimes p}} \Delta_* R) \\
	 	& \cong \Delta_R^{(p)\bullet}\Delta^!( (\Theta_R)^{\boxtimes p} \otimes^!_{R^{\boxtimes p}} \Delta_* R) \\
	 	& \cong \Theta_R^{\otimes^{!}_{R}p}\otimes^!_R R \\
	 	& \cong \Theta_R^{\otimes^{!}_{R}p} \ ,
	 \end{align*}
	 where the first isomorphism follows from Proposition \ref{Rinthomdescprop}, the second by the compatibility of duality with the exterior product, and the third noting $\Delta^!$ is symmetric monoidal and applying Kashiwara's lemma.
	 
Note that $\Theta_R^{\otimes^{!}_{R}p} $ is also projective as an object of $\DD(R)$, or equivalently as a module over $R[\mc D_X]$. Since $R$ is locally free as an $\mc O_X$ module, $R[\mc D_X]$ is locally free over $\mc D_X$ and thus locally projective as an object of $\DD(X)$. Thus, $R[\mc D_X]$ is locally (on X) a direct summand of a free $\mc D_X$ module, and similarly $\Theta_R^{\otimes^{!}_{R}p} $ is globally a direct summand of a free $R[\mc D_X]$ module, so that $\Theta_R^{\otimes^{!}_{R}p}$ is itself locally (on X) a direct summand of a free $\mc D_X$ module.
	 
	 In summary, we conclude that $\Theta_R^{\otimes^{!}_{R}p}$ is itself locally projective in $\DD(X)$, and thus by Proposition \ref{locprojaffprop} together with the preceding calculation, we have
	 \[\Hom_{\DD(R^{\boxtimes p})}((\Omega^1_R)^{\boxtimes p},R)  \cong  \pi_* \mc P^*_{R,p}(\{\Omega^1_R\}_{i=1}^p,R)  \cong h(\mc P^*_{R,p}(\{\Omega^1_R\}_{i=1}^p,R) ) \ , \]
	 and similarly for the modules of $\Ss_p$-coinvariants. This completes the identification of the underlying cohomologically graded vector spaces, and the identification of the differentials follows by definition.
\end{proof}

\subsection{Variational cochains on symplectic coisson algebras}\label{varsympsec}

We begin with an alternative interpretation of the complex of internal variational cochains.

Let $R$ be a commutative algebra object in $\D(X)^!$. Following section 1.4.18 of \cite{BD1}, we have:

\begin{prop}\label{Coisalgdprop} A coisson structure $P\in P_2^*(\{R,R\},R)$ is equivalent to a $\Lie^*$ algebroid structure on $\Omega^1_R$ such that the induced map \[\tilde P:=\tau\circ (d_{\mc Y}^\dR \boxtimes \id_R) \in P_2^*(\{R,R\},R)\] is skew-symmetric and satisfies $d_{\mc Y}^\dR \circ \tilde P = b\circ (d_{\mc Y}^\dR \boxtimes d_{\mc Y}^\dR) \in P_2^*(\{R,R\},R)$, where $d_{\mc Y}^\dR:R\to \Omega_R^1$ is the de Rham differential on $\mc Y=\Spec_{\mc D_X} R$.
\end{prop}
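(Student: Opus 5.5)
The plan is to follow the classical argument for Poisson algebras, in which a Poisson bracket on $A$ is the same as a Lie algebroid structure on $\Omega^1_A$ with anchor given by contraction with the bivector. Every step will be carried out internally to $\DD(R)$ with its $*$ pseudo-tensor structure, as in 1.4.18 of \cite{BD1}.

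\textbf{From a coisson bracket to an algebroid.} Given $P\in P_2^*(\{R,R\},R)$ satisfying the Leibniz rule in each slot, we first construct the anchor $\tau:\Omega^1_R\boxtimes R\to\Delta_*R$.
\begin{itemize}
\item Since $P$ is a derivation in its first argument, its adjoint $R\to \Homi$ factors through the universal derivation $d^\dR_{\mc Y}:R\to\Omega^1_R$, by the universal property of $\Omega^1_R$ representing $\Der(R,\cdot)$ (1.4.16 of \cite{BD1}). This gives a map $\Omega^1_R\boxtimes R\to\Delta_*R$.
\item Requiring $R$-linearity in the first slot defines $\tau$ uniquely. Leibniz in the second slot makes $\tau$ an action by derivations in the sense of Definition \ref{defn:Lie action by derivations}.
\item By construction $\tilde P=\tau\circ(d^\dR_{\mc Y}\boxtimes\id)=P$, so $\tilde P$ is skew-symmetric.
\end{itemize}

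The bracket on $\Omega^1_R$ is defined by the Koszul formula
\[ b(\alpha\boxtimes\beta)=L_\alpha\beta-\iota_\beta\, d\alpha-d(\tilde P(\alpha,\beta)), \]
taken $*$-operation-wise. More economically, we declare $b(da\boxtimes db)=d\,P(a,b)$, and then extend to all of $\Omega^1_R$ via $\tau$-compatibility (Definition \ref{LRactiondefn}) together with skew-symmetry. This extension is possible because $\Omega^1_R$ is generated as an $R$-module by $d^\dR_{\mc Y}R$.

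\textbf{Checking the result.} With these definitions the identity $d^\dR_{\mc Y}\circ\tilde P=b\circ(d^\dR_{\mc Y}\boxtimes d^\dR_{\mc Y})$ holds by construction. It remains to show three things:
\begin{itemize}
\item $b$ is well defined, i.e. compatible with the relations $d(a\cdot a')=a\,da'+a'\,da$;
\item $b$ satisfies the Jacobi identity;
\item $\tau$ is a $\Lie^*$ map to $\Theta_R$, using Proposition \ref{DeractLieprop}.
\end{itemize}
Well-definedness follows from Leibniz for $P$. For Jacobi, note that $\Omega^1_R$ is spanned over $R$ by exact forms and that the Jacobiator is $R$-trilinear up to terms killed by $\tau$-compatibility. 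It therefore suffices to check Jacobi on exact forms $da,db,dc$, where it reduces to $d$ applied to the Jacobi identity for $P$.

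\textbf{Converse.} Given an algebroid structure with $\tilde P$ skew-symmetric and intertwining $d$, set $P:=\tilde P$.
\begin{itemize}
\item Leibniz in the second slot comes from $\tau$ acting by derivations.
\item Leibniz in the first slot comes from $R$-linearity of $\tau$ combined with $d$ being a derivation.
\item Jacobi for $P$ follows from the $\Lie^*$ algebroid axioms applied to $da,db,dc$. We then use injectivity of the map $R\to\Omega^1_R$ modulo constants, or more precisely we apply $\tau(da,\cdot)$ and use $\tau(db,c)=P(b,c)$ to rewrite the Jacobiator of $P$ as $\tau$ of the bracket identity.
\end{itemize}
Both constructions are visibly inverse to each other, since an algebroid structure is determined by its values on exact forms.

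\textbf{Main obstacle.} The hard step is making the "extend from exact forms" argument rigorous in the $*$ pseudo-tensor setting. There, $b$ is a map $\Omega^1_R\boxtimes\Omega^1_R\to\Delta_*\Omega^1_R$ that is only $R$-polydifferential, not $R$-linear.

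My approach is to work with $R$-polydifferential operations (Definition \ref{Rpolydiffopsdefn}) and present $\Omega^1_R$ as the quotient of $R\otimes^! R$ by the Leibniz relations. The map $b$ is then defined on $(R\otimes^!R)\boxtimes(R\otimes^!R)$ by the explicit formula dictated by $\tau$-compatibility, and we check that it descends to the quotient. The key input is the commutative-square description of $\tau$-compatibility given earlier in the proof about $R$-polydifferentiality.
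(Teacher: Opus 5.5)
Your proposal is correct and is essentially the paper's argument. The bracket you put on $R\otimes^! R$ via $\tau$-compatibility is the action $\Lie^*$ algebroid structure of the adjoint action of $R$ on itself by derivations, and your descent check is the paper's observation that the kernel of $R\otimes^! R\to\Omega^1_R$, $r_1\otimes r_2\mapsto r_1\, d^\dR_{\mc Y}r_2$, is a $\Lie^*$ ideal; then $\tilde P=P$ and $d^\dR_{\mc Y}\circ\tilde P=b\circ(d^\dR_{\mc Y}\boxtimes d^\dR_{\mc Y})$ are read off on representatives $1\otimes r$.

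A few refinements follow from this identification:
\begin{itemize}
\item Your separate checks of Jacobi and of the anchor being a $\Lie^*$ map are unnecessary, because both are inherited from the action algebroid.
\item If you do keep the reduction of Jacobi to exact forms, you must prove the anchor property first. The Jacobiator's failure of $R$-trilinearity is measured by $\tau_{b(\alpha,\beta)}-[\tau_\alpha,\tau_\beta]$, not by $\tau$-compatibility.
\item The Koszul formula you wrote has a spurious $-d(\tilde P(\alpha,\beta))$ term and gives $b(da\boxtimes db)=0$. Rely only on the declaration $b(da\boxtimes db)=d\,P(a,b)$.
\end{itemize}
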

\begin{proof} The $\Lie^*$ bracket on $R$ induced by $P$, together with the adjoint action of $R$ on itself by derivations, makes $R\otimes R$ into a $\Lie^*$ algebroid on $\mc Y=\Spec R$. The natural projection map $R\otimes R \to \Omega^1_R$ defined by $r_1\otimes r_2 \mapsto r_1 \cdot  d_{\mc Y}^\dR r_2$ has kernel a $\Lie^*$ ideal so that the bracket descends to $\Omega_R^1$. The induced map $\tilde P$ is canonically identified with the underlying coisson structure $P$ since the action $\tau$ was defined in terms of $P$ acting on a choice of representative, and such a choice is already made by precomposition with $d$. This satisfies the property that $d_{\mc Y}^\dR:R \to \Omega^1_R$ is a map of $\Lie^*$ algebras, since we may compute the $\Lie^*$ bracket in the latter by choosing representatives of the form $1\otimes r$ to compute the bracket on $\Omega_R$; this is equivalent to the condition $d_{\mc Y}^\dR \circ \tilde P = b\circ (d_{\mc Y}^\dR \boxtimes d_{\mc Y}^\dR)$.
	
	Conversely, the $\Lie^*$ algebroid structure evidently determines a 2-ary $\tilde{P}$ according to the formula, which is skew-symmetric by hypothesis, satisfies the Jacobi identity since $b$ does, and acts by derivations on itself in the second argument since $\tau$ acts by derivations, and therefore in the first argument by skew-symmetry.
\end{proof}

In particular, note that we have shown:

\begin{coro}\label{ancmapcoro} Let $R$ be a coisson algebra and $\mc Y=\Spec R$. Then $d_{\mc Y}^\dR:R \to \Omega^1_R$ is a map of $\Lie^*$ algebras, and the adjoint $\Lie^*$ module structure on $R$ is the restriction along $d_{\mc Y}^\dR$ of the action $\tau$ of $\Omega^1_R$ on $R$ by derivations.
\end{coro}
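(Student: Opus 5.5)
The corollary is essentially a restatement of two facts established in the proof of Proposition \ref{Coisalgdprop}, and I would extract them explicitly.

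The first step is to show that $d_{\mc Y}^\dR$ is a map of $\Lie^*$ algebras. The $\Lie^*$ bracket $b$ on $\Omega^1_R$ is constructed as the descent of the bracket on the $\Lie^*$ algebroid $R\otimes R$ along the surjection $r_1\otimes r_2\mapsto r_1\cdot d_{\mc Y}^\dR r_2$. Restricted to the summand $\omega_X\otimes R\cong R$ via the unit $u$, this surjection is $d_{\mc Y}^\dR$. On that summand, the bracket of $R\otimes R$ is the coisson bracket $P$. Since the kernel is a $\Lie^*$ ideal, the bracket on $\Omega^1_R$ may be computed on representatives of the form $1\otimes r$. This gives
\[ b\circ (d_{\mc Y}^\dR\boxtimes d_{\mc Y}^\dR) = (\Delta_* d_{\mc Y}^\dR)\circ P \ , \]
which is the identity $d_{\mc Y}^\dR\circ\tilde P = b\circ(d_{\mc Y}^\dR\boxtimes d_{\mc Y}^\dR)$ of Proposition \ref{Coisalgdprop}, once we use $\tilde P = P$.

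The second step identifies the adjoint module. By definition $\tilde P=\tau\circ(d_{\mc Y}^\dR\boxtimes\id_R)$, where $\tau$ is the anchor action of $\Omega^1_R$ on $R$ by derivations. The proposition shows that $\tilde P$ is canonically equal to $P$, and $P$ is the adjoint $\Lie^*$ module structure $R\boxtimes R\to\Delta_*R$. So the adjoint action equals $\tau\circ(d_{\mc Y}^\dR\boxtimes\id_R)$, which is exactly the restriction of the $\Omega^1_R$-module $R$ along $d_{\mc Y}^\dR$. Compatibility with the $\Lie^*$ structures is then automatic: the restriction of a $\Lie^*$ module along a $\Lie^*$ map is again a $\Lie^*$ module, and it is a module by derivations because $\tau$ acts by derivations.

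The main obstacle is the equality $\tilde P=P$. In the proposition it is asserted somewhat informally: the anchor is defined by letting $P$ act on a representative, and $d_{\mc Y}^\dR$ picks the representative $1\otimes r$. To make this rigorous, I would write the anchor of $R\otimes R$ explicitly as $(r_1\otimes r_2, s)\mapsto r_1\cdot P(r_2,s)$. I would then check that it vanishes on the kernel of $R\otimes R\to\Omega^1_R$, using the Leibniz rule for $P$. Evaluated on $1\otimes r_2$ (via the unit), it gives $P(r_2,s)$ by the unit axiom.
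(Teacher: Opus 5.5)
Your proposal is correct and follows the paper's route. The paper also reads the corollary off the proof of Proposition \ref{Coisalgdprop}: $d_{\mc Y}^\dR$ is a $\Lie^*$ map because the descended bracket on $\Omega^1_R$ can be computed on representatives $1\otimes r$, and the adjoint action is $\tau\circ(d_{\mc Y}^\dR\boxtimes\id_R)=\tilde P=P$. Your explicit check of $\tilde P=P$ sharpens a step the paper only asserts informally. That check writes the anchor as $r_1\otimes r_2\mapsto r_1\cdot P(r_2,\cdot)$, shows via the Leibniz rule that it kills the kernel of $R\otimes R\to\Omega^1_R$, and shows via the unit axiom that it restricts to $P$ on $1\otimes r_2$.
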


Given a $\Lie^*$ algebroid $\mc L$ on $R$, we make the following definition:

\begin{defn} The (reduced) internal Chevalley--Eilenberg cochains $C^\bullet_{\CE, R}(\mc L)$ is defined by
\[ 
 C^\bullet_{\CE,R}(\mc L) = \left( \bigoplus_{n=1}^\infty   \mc P_{R,n}^*(\{ \mc L\}_{i=1}^n,R)_{\Ss_n}[1-n] \ , \ d_\text{CE,R} \right)  \ , 
\]
where $d_{\CE,R}:C^\bullet_{\CE,R}(\mc L) \to C^\bullet_{\CE,R}(\mc L) [1]$ is the Chevalley--Eilenberg differential determined by the $\Lie^*$ bracket on $\mc L$ and the action of $\mc L$ on $R$ by derivations.
\end{defn}

For example, the internal Chevalley--Eilenberg cochains on $\mc L=\Theta_R$ with its canonical $\Lie^*$ algebroid structure gives the relative de Rham complex viewed as an object of $\Dd(X)$, 
\[ \mc C_{\CE,R}^\bullet(\Theta_R) = ( \Omega^{\bullet\geq 1}_{\mc Y/X}  \ , \ d^\dR_{\mc Y} )[1]  \ .\]

\begin{prop}\label{PVCEprop} Let $R$ be a coisson algebra. There is a canonical isomorphism of complexes
	\[ \mc C_\PV^\bullet(R)  = \mc C_{\CE,R}^{\bullet}(\Omega^1_R) \ ,\]
where $\mc C_{\CE,R}^{\bullet}(\Omega^1_R)$ the internal Chevalley--Eilenberg cochains with respect to the $\Lie^*$ algebroid structure on $\Omega^1_R$ induced by the coisson structure on $R$.
\end{prop}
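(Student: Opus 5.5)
The plan is to compare the two complexes degree by degree and then show the differentials agree, mirroring the finite-dimensional identification of geometric Poisson cochains with Chevalley--Eilenberg cochains of the cotangent Lie algebroid used in the proof of Theorem \ref{QuantdRthm}.

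\emph{Step 1: underlying graded objects.} For each $p\geq 0$, the degree $p$ term of $\mc C^\bullet_\PV(R)$ is, by the corollary following Definition of internal variational cochains (the internal analogue of Corollary \ref{gPCchcoro}), the object $\mc P^*_{R,p+1}(\{\Omega^1_R\}_{k=1}^{p+1};R)_{\Ss_{p+1}}$. The degree $p$ term of $\mc C^\bullet_{\CE,R}(\Omega^1_R)$ is, by definition, $\mc P^*_{R,n}(\{\Omega^1_R\}_{i=1}^{n},R)_{\Ss_n}$ with $n=p+1$, using the same shift $[1-n]=[-p]$. So the underlying graded objects coincide on the nose, and the isomorphism I propose is the identity on these summands.

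\emph{Step 2: differentials.} By the internal analogue of Equation \ref{tildedbeqn}, $\tilde d_b$ is characterized as the unique map making the square with the vertical maps $(\cdot)\circ (d^\dR_R)^{\boxtimes p}$ commute. So it suffices to show $d_{\CE,R}$ makes the same square commute, and that the vertical maps are injective, so uniqueness applies. Injectivity: an $R$-linear $*$-operation out of $(\Omega^1_R)^{\boxtimes p}$ is determined by its values on $d r_1\boxtimes\cdots\boxtimes dr_p$, since $\Omega^1_R$ is generated as an $R$-module by the image of $d^\dR_R$. For commutativity, expand $d_{\CE,R}$ of a cochain $f$ evaluated on $dr_0,\dots,dr_p$. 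The CE differential consists of two kinds of terms: anchor terms $\tau(dr_i, f(\dots\widehat{dr_i}\dots))$, and bracket terms $f(b(dr_i,dr_j),\dots)$. By Corollary \ref{ancmapcoro}, $\tau(dr_i,\cdot)$ restricted along $d$ is the adjoint action $b(r_i,\cdot)$. By Proposition \ref{Coisalgdprop}, $b\circ(d\boxtimes d)=d\circ \tilde P$ with $\tilde P$ the coisson bracket. Substituting, $d_{\CE,R}f\circ d^{\boxtimes p+1}$ becomes exactly the Chevalley--Eilenberg formula for $d_b$ of the $\Lie^*$ algebra $(R,b)$ applied to $f\circ d^{\boxtimes p}$, which is the bottom arrow. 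Hence $d_{\CE,R}=\tilde d_b$.

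\emph{Main obstacle.} The expected difficulty is making the "evaluation on elements" heuristic rigorous in the $*$-pseudo-tensor setting. Operations are maps $\boxtimes L_i\to\Delta_*M$ on $X^I$, and compositions involve the pushforwards $\Delta^{12,3}_*$ and the sign and ordering conventions of $\Ss_n$-coinvariants. The approach I would try first is to avoid elements altogether. I would write both differentials as operadic compositions in the coloured operad $\DD(R)^*$ (internal version), and verify the two identities $\tau\circ(d\boxtimes\id)=b$ and $b\circ(d\boxtimes d)=d\circ b$ as equalities of morphisms. Naturality of operadic composition then gives the commutative square termwise.

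Injectivity of precomposition with $(d^\dR_R)^{\boxtimes p}$ would follow from the universal property of $\Omega^1_R$: $R$-linear maps out of $\Omega^1_R$ correspond to derivations, applied in each variable separately. One then checks compatibility with $\Delta_R^\bullet\Delta^!$ used in Definition \ref{Rstarinthomdefn}.
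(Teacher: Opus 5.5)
Your proposal is correct and follows essentially the same route as the paper. The graded pieces coincide, and $d_{\CE,R}$ satisfies the defining square for $\tilde d_b$ (whose vertical maps $(\cdot)\circ(d^{\dR}_R)^{\boxtimes p}$ are injective) because $d^{\dR}_R$ is a $\Lie^*$ map intertwining the anchor action with the adjoint action, as in Corollary \ref{ancmapcoro}. The paper packages your termwise check as functoriality of Chevalley--Eilenberg cochains: it first forgets $R$-linearity to land in $\mc C^\bullet_{\CE}(\Omega^1_R;R)$, then restricts along $d^{\dR}_R$. It also takes the injectivity for granted, where you supply a justification.
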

\begin{proof}
Recall from Corollary \ref{ancmapcoro} that the $\Lie^*$ algebroid structure on $\Omega^1_R$ determined by the coisson structure on $R$ is defined so that $d_R^\dR:R\to \Omega^1_R$ is a map of $\Lie^*$ algebras, and the restriction along $d_R^\dR$ of the action of $\Omega^1_R$ on $R$ by derivations agrees with the adjoint action of the underlying $\Lie^*$ algebra of $R$ on itself.

Thus, by the functoriality of (reduced) internal Chevalley--Eilenberg cochains on a $\Lie^*$ algebra with coefficients in a $\Lie^*$ module, we obtain a natural map $\mc C_\CE^\bullet(\Omega^1_R;R) \to \mc C_\CE^\bullet(R)$, defined by
\[   \mc P_{n}^*(\{\Omega^1_R\}_{i=1}^n, R) \xrightarrow{(\cdot)\circ (d_R^\dR)^{\boxtimes n}} \mc P_{n}^*(\{R\}_{i=1}^n, R) \]
on each cohomologically graded component, where $\mc C_\CE^\bullet(\Omega^1_R;R)$ denotes the (reduced) internal Chevalley--Eilenberg cochains on $\Omega^1_R$ viewed as a plain $\Lie^*$ algebra, with coefficients in the $\Lie^*$ module $R$.

Further, note that by construction we have a natural map $\mc C_{\CE,R}^\bullet(\Omega^1) \to \mc C_\CE^\bullet(\Omega^1_R;R)$ defined by
\[\mc P_{R,n}^*(\{\Omega^1_R\}_{i=1}^n, R) \xrightarrow{\ob} \mc P_{n}^*(\{\Omega^1_R\}_{i=1}^n, R) \]
on each cohomologically graded component, where $\ob:\DD(R)^*\to \DD(X)^*$ denotes the natural forgetful functor.
Moreover, the composition of the preceding maps identifies $\mc C_{\CE,R}^\bullet(\Omega^1) $ with a subcomplex of $\mc C^\bullet_\CE(R)$, so that the differential $d_{\CE,R}$ on the former is uniquely determined by commutativity of the following diagram
\[
\begin{tikzcd}
			\mc P^*_{R,p}(\{\Omega^1_R \}_{k=1}^{p};R)_{\Ss_{p}} \arrow[r,"d_{\CE,R}"] \arrow[d," (\cdot)\circ (d_R^{\dR})^{\boxtimes p}"] & \mc P^*_{R,p+1}(\{\Omega^1_R \}_{k=1}^{p+1};R)_{\Ss_{p+1}}  \arrow[d," (\cdot)\circ (d_R^{\dR})^{\boxtimes p+1}"]\\
			\mc P^*_p(\{R\}_{k=1}^p,R)_{\Ss_{p}} \arrow[r,"d_b"]&  	\mc P^*_{p+1}(\{R\}_{k=1}^{p+1},R)_{\Ss_{p+1}} 	\end{tikzcd}  \ .\]

This is precisely the same diagram of Equation \ref{tildedbeqn} which characterized the differential $\tilde d_b$ on $\mc C^\bullet_\PV$, and thus we obtain the desired result.
\end{proof}

\begin{defn} A coisson structure on $R$ is called symplectic if the induced map $\Omega^1_R\xrightarrow{\cong} \Theta_R$ of $R$ modules internal to $\D(X)^!$ is an isomorphism.
\end{defn}

\begin{eg}The natural coisson structure on $\mc O_{\mc J Y}$ for $Y$ a symplectic variety is symplectic.
\end{eg}

\begin{prop}\label{prop:int PV is de Rham}
Let $R$ be a symplectic coisson algebra. There is a canonical isomorphism of complexes
	\[ 
	\mathcal{C}_\PV^\bullet(R) \cong \mc C^{\bullet}_{\CE,R}(\Theta_R)= (\Omega^{\bullet\ge1}_{\YY/X},d_{\YY}^\dR)[1] \ .  
	\]
\end{prop}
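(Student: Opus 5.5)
The plan is to combine Proposition \ref{PVCEprop} with a transport of structure along the symplectic isomorphism, exactly mirroring the finite-dimensional argument in the proof of Theorem \ref{QuantdRthm}.

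First, Proposition \ref{PVCEprop} gives $\mc C^\bullet_\PV(R)\cong \mc C^\bullet_{\CE,R}(\Omega^1_R)$, where $\Omega^1_R$ carries the $\Lie^*$ algebroid structure of Proposition \ref{Coisalgdprop}. Its anchor $\tau$, viewed as a map $\Omega^1_R\to\Theta_R$ via Proposition \ref{DeractLieprop}, is the map $\alpha:\Omega^1_R\to\Theta_R$ induced by the coisson structure. By the symplectic hypothesis, $\alpha$ is an isomorphism of $R$-modules in $\DD(X)^!$.

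Second, I would show that $\alpha$ is a morphism of $\Lie^*$ algebroids, with $\Theta_R$ carrying its canonical structure from the universal action $\tau_{\textup{univ}}$. Compatibility with anchors holds by construction, since $\tau=\tau_{\textup{univ}}\circ(\alpha\boxtimes\id_R)$. For brackets, both $\alpha\circ b_{\Omega}$ and $b_\Theta\circ(\alpha\boxtimes\alpha)$ are $R$-polydifferential $*$-operations, and they satisfy the same Leibniz rule with respect to the $R$-module structure. Since $\Omega^1_R$ is generated as an $R$-module by the image of $d^\dR_{\mc Y}$, it therefore suffices to check the equality on $d^\dR_{\mc Y}a\boxtimes d^\dR_{\mc Y}b$. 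There it reduces to two facts. The first is Corollary \ref{ancmapcoro}: $d^\dR_{\mc Y}$ is a $\Lie^*$ map, and $\alpha(d^\dR_{\mc Y} a)$ is the Hamiltonian derivation $\{a,\cdot\}$. The second is that $a\mapsto\{a,\cdot\}$ is a $\Lie^*$ map $R\to\Theta_R$, which is the Jacobi identity for the coisson bracket. This step is where I expect the main difficulty. Making "checking on generators" rigorous in the pseudo-tensor setting requires arguing with the Leibniz squares of Definition \ref{LRactiondefn} and the $R$-linearity of Definition \ref{Rlindefn}, rather than with elements.

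Third, since $\alpha$ is an isomorphism of $\Lie^*$ algebroids, precomposition with $(\alpha^{-1})^{\boxtimes n}$ induces isomorphisms $\mc P^*_{R,n}(\{\Omega^1_R\},R)_{\Ss_n}\cong\mc P^*_{R,n}(\{\Theta_R\},R)_{\Ss_n}$. These intertwine the two differentials $d_{\CE,R}$, because the differentials are built only from the bracket and the anchor. This gives $\mc C^\bullet_\PV(R)\cong\mc C^\bullet_{\CE,R}(\Theta_R)$.

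Finally, the identification $\mc C^\bullet_{\CE,R}(\Theta_R)=(\Omega^{\bullet\ge1}_{\YY/X},d^\dR_\YY)[1]$ was recorded just after the definition of internal Chevalley--Eilenberg cochains. I would justify it in two parts. On terms, $\mc P^*_{R,n}(\{\Theta_R\},R)_{\Ss_n}\cong(\Omega^1_R)^{\otimes^!_R n}_{\Ss_n}$ with the sign twist, which is $\Omega^n_{\YY/X}$; this uses the computation in the proof of Proposition \ref{prop:deRham of internal PV}, with $\Theta_R^{\circ_R}\cong\Omega^1_R$ by projectivity. For the differential, the usual Cartan formula for Chevalley--Eilenberg cochains of the tangent algebroid is the de Rham differential. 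The reduced complex starts at $n=1$, which accounts for the truncation $\bullet\ge1$ and the shift $[1]$.
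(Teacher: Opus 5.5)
Your proposal is correct and follows the paper's argument. The paper likewise combines Proposition \ref{PVCEprop} with the observation that the anchor $\Omega^1_R\to\Theta_R$ is a map of $\Lie^*$ algebras, hence under the symplectic hypothesis an isomorphism of $\Lie^*$ algebroids, and it takes the identification of $\mc C^\bullet_{\CE,R}(\Theta_R)$ with the truncated relative de Rham complex as already recorded; your generator check for bracket compatibility and your justification of the de Rham identification just fill in details the paper leaves implicit.
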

\begin{proof} The anchor map is a map of $\Lie^*$ algebras, so by the symplectic hypothesis we have an isomorphism of $\Lie^*$ algebroids $\Omega^1_R\xrightarrow{\cong} \Theta_R$. Thus, the result follows from Proposition \ref{PVCEprop}.
	
\end{proof}

\subsection{Comparison of relative and absolute de Rham cohomology}\label{reldRsec}

Let $X$ be a smooth variety of dimension $d_X$ and recall from Section \ref{DModsec} that pushforward along the map $\pi:X\to \pt$ defines defines a functor $\pi_*:\Dd(X)\to \Dd(\pt)=\Dd(\K\Mod)$ which for $M\in \Dd^r(X)$ is computed by
\[ \pi_*M=\bb R\pi_\bullet( M^r\otimes^{\bb L}_{\mc D_X} \mc O_X) \cong \bb R\pi_\bullet( M^\ell \otimes_{\mc O_X} \Omega^\bullet_X )[2d_X] \]
where $ M^\ell \otimes_{\mc O_X} \Omega^\bullet_X\in \Dd(\Sh_\z(X))$ denotes the algebraic de Rham complex with coefficients in the complex $M^\ell$. For $M\in \DD(X)$, this gives
\[\pi_*M=\bb R\pi_\bullet( M^r\otimes^{\bb L}_{\mc D_X} \mc O_X) \cong \bb R\pi_\bullet( M^{\ell,\heartsuit} \otimes_{\mc O_X} \Omega^\bullet_X )[d_X] \]
where $ M^{\ell,\heartsuit} \otimes_{\mc O_X} \Omega^\bullet_X\in \Dd(\Sh_\z(X))$ denotes the algebraic de Rham complex with coefficients in the left D-module $M^{\ell,\heartsuit}$.

We define the de Rham functor sheaf functor $\dR_X:\Dd(X) \to \Dd(\Sh_\z(X))$ by
\[ \dR_X(M) = M^\ell \otimes_{\mc O_X} \Omega^\bullet_X [d_X] \ , \]
so that for $M\in \DD(X)$ and in particular $\mc O_X\in \DD^\ell(X)$ we have
\[ dR_X(M) \cong  M^{\ell,\heartsuit} \otimes_{\mc O_X} \Omega^\bullet_X \qquad \text{and in particular} \qquad \dR_X(\mc O_X) = \Omega^\bullet_X \ .\]
Similarly, we define the middle de Rham cohomology sheaf as the functor of abelian categories
\[ \underline{h}:\DD(X)\to \Sh_\z(X) \qquad \text{by}\qquad \underline{h}(M)=M^r\otimes_{\mc D_X} \mc O_X \cong M^r/(M^r\cdot \theta_X) \]
for $M=M^r\in \DD(X)\cong \DD^r(X)$, so that we have $h=\pi_\bullet \circ \underline{h}$.

We have the following analogue of Proposition \ref{locprojaffprop} in this setting:
\begin{coro} Let $M\in \DD(X)$ be a locally projective. Then
\[ \dR_X(M) \xrightarrow{\cong} \underline{h}(M)[-d_X] \ .\]
\end{coro}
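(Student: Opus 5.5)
The plan is to compare the two complexes degreewise and then globalize. Both are built from $\Omega^\bullet_X$-valued objects: $\dR_X(M) = M^{\ell,\heartsuit}\otimes_{\mc O_X}\Omega^\bullet_X$ for $M\in \DD(X)$, and $\underline{h}(M) = M^r\otimes_{\mc D_X}\mc O_X$. By the Spencer--Koszul resolution $\mc K^\bullet\to \mc O_X$ recalled in Section \ref{DModsec}, we have an isomorphism of complexes of Zariski sheaves
\[ M^r\otimes_{\mc D_X}\mc K^\bullet \cong M^{\ell,\heartsuit}\otimes_{\mc O_X}\Omega^\bullet_X[d_X] \ , \]
using $M^r = M^{\ell,\heartsuit}\otimes_{\mc O_X}\Omega^{d_X}_X$ and the contraction identification $\Omega^{d_X}_X\otimes \wedge^k\Theta_X\cong \Omega^{d_X-k}_X$. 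This complex sits in degrees $[-d_X,0]$, and its degree-zero cohomology is $M^r\otimes_{\mc D_X}\mc O_X = \underline{h}(M)$. The natural map is therefore the projection to top-degree cohomology, $\dR_X(M)\to \mc H^{top}\to \underline{h}(M)[-d_X]$, after matching the shift conventions. Naturality in $M$ is clear from this description.

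It then remains to show that the higher Tor sheaves vanish, namely $\mc T or^{\mc D_X}_k(M^r,\mc O_X)=0$ for $k>0$. Since the statement is local on $X$, we may work on an affine open $U$ on which $M^r|_U$ is a direct summand of a free right $\mc D_U$-module. Tor commutes with direct sums and summands, so it suffices to treat $M^r = \mc D_X$. In that case $\mc D_X\otimes_{\mc D_X}\mc K^\bullet = \mc K^\bullet$, which is a resolution of $\mc O_X$, so all higher cohomology vanishes. Hence the map above is a quasi-isomorphism of complexes of sheaves.

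The main obstacle is bookkeeping rather than mathematics. One has to fix the degree conventions carefully: the shift $[d_X]$ in $\dR_X$, the identification $M^{\ell,\heartsuit} = M^\ell[d_X]$, and the requirement that the surviving cohomology sits exactly in the degree prescribed by $\underline{h}(M)[-d_X]$. I would first verify this in the case $X=\IA^1$ with $M=\mc D_X$, where the complex is $\mc D\xrightarrow{\partial}\mc D$ and its cokernel is $\mc O$, and then let the general case follow by the same indexing. As a consistency check, applying $\pi_\bullet$ and using Proposition \ref{locprojaffprop} should recover $\pi_*M\cong h(M)$ on affine $X$.
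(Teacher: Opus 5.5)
Your argument is correct and is essentially the one the paper relies on. The paper gives no separate proof; it presents the statement as the sheaf-level analogue of Proposition \ref{locprojaffprop}. That proposition rests on the Spencer--Koszul identification $M^r\otimes_{\mc D_X}\mc K^\bullet\cong M^{\ell,\heartsuit}\otimes_{\mc O_X}\Omega^\bullet_X[d_X]$ from Section \ref{DModsec}, together with the vanishing of higher $\mathrm{Tor}^{\mc D_X}_k(M^r,\mc O_X)$ for locally projective $M$. Your reduction to $M^r=\mc D_X$ on affine opens, and your description of the map as projection onto the top cohomology sheaf in degree $d_X$, supply exactly those steps.
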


\subsubsection{}Throughout the remainder of this section, let $X$ be a smooth curve, $R$ a very smooth $\Comm^!$ algebra on $X$, and $\mc Y=\Spec_{\mc D_X}R$ the corresponding affine D-scheme on $X$, a (pro-finite-type) variety affine over $X$ equipped with a flat Ehresmann connection relative to $X$.

Following 2.8.11 of \cite{BD1}, we let $\Omega^\bullet_{\mc Y/X}\in \Dd(X)$ denote the complex of D-modules
\[\Omega^\bullet_{\mc Y/X} = \left[ R \xrightarrow{d^\dR_R} \Omega^1_R[-1] \xrightarrow{d^\dR_R}\Omega^2_R[-2] \to\cdots \right]  \ . \]
The notation $\Omega_{\mc Y/X}^\bullet$ is chosen because the underlying complex of Zariski sheaves on $\mc Y$ is naturally identified with the usual relative algebraic de Rham complex of $\mc Y\to X$. Note that while each cohomologically graded summand of the complex is an object of $\DD(R)$, this does not define an object of $\Dd(R)$ since the maps are not $R$-linear, analogous to the fact that the usual de Rham complex $\Omega^\bullet_X\in \Dd(\Sh_\z(X))$ is not a complex of quasicoherent sheaves on $X$, despite that fact that each of its analogous summands are objects of $\QC(X)$.

Let $\Omega^\bullet_{\mc Y}\in \DD(\Sh_\z(X))$ denote the absolute de Rham complex of $\mc Y$, and note that the Ehresmann connection on $\mc Y \xrightarrow{\pi_{\mc Y}} X$ determines by definition a splitting
\[ \Omega^1_{\mc Y} \cong \Omega^1_{\mc Y/X} \oplus \pi_{\mc Y}^\bullet \Omega^1_X \qquad \text{and thus} \qquad \Omega^k_{\mc Y} \cong \Omega^k_{\mc Y/X} \oplus( \Omega^{k-1}_{\mc Y/X}\otimes_{\mc O_Y} \pi_{\mc Y}^\bullet \Omega^1_X  )\]
which together define an isomorphism of complexes of Zariski sheaves on $X$
\[  \dR_X( \Omega^\bullet_{\mc Y/X}) \xrightarrow{\cong} \Omega^\bullet_{\mc Y} \ .\]

Similarly, we let $\Omega^{\bullet\geq 1}_{\mc Y/X}\in \Dd(X)$ denote the complex of D-modules given by the stupid truncation at $1$, that is
\[\Omega^{\bullet\geq 1}_{\mc Y/X} = \sigma_1\Omega^\bullet_{\mc Y/X} = \left[  \Omega^1_R[-1] \xrightarrow{d^\dR_R}\Omega^{2}_R[-2] \to\cdots \right]  \ . \]
The above isomorphism restricts to an isomorphism of complexes
\[ \dR_X(\Omega^{\bullet \geq 1}_{\mc Y/X}) \xrightarrow{\cong} \left[  \Omega^1_{\mc Y/X}[-1] \xrightarrow{d^\dR_{\mc Y}} \Omega^{\bullet \geq 2}_{\mc Y} \right] \ . \]
In particular, note that we have the compatible inclusions of complexes
\begin{equation}\label{subcxeqn}
	 \dR_X(\Omega^{\bullet \geq 1}_{\mc Y/X}) \subseteq \dR_X( \Omega^\bullet_{\mc Y/X}) \xrightarrow{\cong} \Omega^\bullet_{\mc Y} 
\end{equation}   
and thus we obtain a canonical map of complexes
\[ \bb R \pi_\bullet \dR_X(\Omega^{\bullet \geq 1}_{\mc Y/X}) \to  \bb R \pi_\bullet  \Omega^\bullet_{\mc Y} =C^\bullet_\dR(\mc Y) \ .  \]

\begin{prop}\label{hdRcompprop} Suppose in addition that $R$ is locally free as an $\mc O_X$ module. Then
\[ \dR_X(\Omega^{\bullet \geq 1}_{\mc Y/X})  \cong \underline h(\Omega^{\bullet \geq 1}_{\mc Y/X})[-1] \]
If in addition $X$ is affine, we obtain canonical isomorphisms of objects in $\Dd(\K\Mod)$
\[ h(\Omega^{\bullet \geq 1}_{\mc Y/X})[-1]=\pi_\bullet \underline h(\Omega^{\bullet \geq 1}_{\mc Y/X})[-1]  \cong \bb R\pi_\bullet  \underline h(\Omega^{\bullet \geq 1}_{\mc Y/X})[-1]  \cong  \bb R\pi_\bullet \dR_X(\Omega^{\bullet \geq 1}_{\mc Y/X}) \ , \]
so that in particular we have a natural map in the derived category
\begin{equation}\label{compmapeqn}
	 h(\Omega^{\bullet \geq 1}_{\mc Y/X})[-1] \to \Omega^\bullet_{\mc Y} \ .
\end{equation}
\end{prop}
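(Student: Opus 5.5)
The plan is to prove the first isomorphism termwise and then assemble it into a statement about complexes. For each $k\ge 1$, the term $\Omega^k_R$ is a projective object of $\DD(R)$, by Proposition \ref{verysmoothprop} applied to $\Omega^1_R$ and then passing to exterior powers, which are direct summands of tensor powers. Since $R$ is locally free over $\mc O_X$, the algebra $R[\mc D_X]$ is locally free over $\mc D_X$. Hence $\Omega^k_R$ is locally a direct summand of a free $\mc D_X$-module, i.e.\ locally projective in $\DD(X)$; this is exactly the argument used in the proof of Proposition \ref{prop:deRham of internal PV}. For locally projective $M$, the preceding Corollary (the sheafy version of Proposition \ref{locprojaffprop}) gives a quasi-isomorphism $\dR_X(M)\to \underline h(M)[-d_X]$ with $d_X=1$. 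Concretely, $\dR_X(M)=[M^{\ell}\to M^\ell\otimes\Omega^1_X]$ has cohomology only in top degree, equal to $\underline h(M)$, because $\partial$ acts injectively on locally projective D-modules on a curve.

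Next, I will promote these termwise quasi-isomorphisms to the complex $\Omega^{\bullet\ge1}_{\mc Y/X}$. The map $\dR_X(M)\to \underline h(M)[-1]$, namely projection onto the top de Rham degree followed by the quotient, is natural in $M$ for maps of D-modules. The de Rham differentials $d^\dR_R$ are maps of D-modules, though not $R$-linear, so naturality holds. Therefore $\dR_X(\Omega^{\bullet\ge1}_{\mc Y/X})$ is the total complex of a bicomplex whose columns are the $\dR_X(\Omega^k_R)$, and the projection gives a map of bicomplexes to $\underline h(\Omega^{\bullet\ge1}_{\mc Y/X})[-1]$ that is a quasi-isomorphism column by column. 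The complex is bounded below in $k$ and each column has amplitude two. A column-filtration spectral sequence, or directly the acyclicity of the kernel bicomplex, which is columnwise acyclic and bounded in the vertical direction, then shows that the map on total complexes is a quasi-isomorphism.

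For the global statement with $X$ affine, I first need $\pi_\bullet\underline h(\Omega^k_R)\simeq \bb R\pi_\bullet\underline h(\Omega^k_R)$. Here $\underline h(M)=M^r/M^r\theta_X$ is a quasicoherent $\mc O_X$-module only up to the differential structure. However, on an affine curve it is a sheaf of $\K$-vector spaces that is a cokernel of a map of quasicoherent sheaves (via $\partial$ on a trivialization of $\Theta_X$), so it has vanishing higher Zariski cohomology. More cleanly, I would compute $\bb R\pi_\bullet\dR_X(M)$ using the quasicoherent terms $M^\ell$ and $M^\ell\otimes\Omega^1_X$, which are $\pi_\bullet$-acyclic on affine $X$. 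Combining this with Proposition \ref{locprojaffprop} gives $\bb R\pi_\bullet\dR_X(M)\simeq h(M)[-1]$ naturally, and the same bicomplex argument yields the displayed chain of isomorphisms. Finally, the map \eqref{compmapeqn} is obtained by composing the inverse of this isomorphism with $\bb R\pi_\bullet$ applied to the inclusion \eqref{subcxeqn}, followed by the identification $\bb R\pi_\bullet\Omega^\bullet_{\mc Y}=C^\bullet_\dR(\mc Y)$.

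I expect the main obstacle to be the unbounded direction: $\Omega^{\bullet\ge1}_{\mc Y/X}$ is infinite in $k$, and $\mc Y$ is only pro-finite type, so both convergence and the commutation of $\bb R\pi_\bullet$ with infinite totalizations must be checked. My first attempt will be to argue that the relevant total complexes are direct sums rather than products, which holds since each total degree receives contributions from at most two columns, so that everything is degreewise finite and no convergence issue arises.
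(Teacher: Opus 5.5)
The paper states this proposition without proof. Your argument is correct and is the one its surrounding ingredients point to: local projectivity of each $\Omega^k_R$, $k\geq 1$, in $\DD(X)$ via local freeness of $R[\mc D_X]$ over $\mc D_X$ (as in the proof of Proposition \ref{prop:deRham of internal PV}); the termwise quasi-isomorphism $\dR_X(M)\simeq \underline h(M)[-1]$; and a column-by-column comparison. That comparison is harmless, since each total degree meets at most two columns and all complexes are bounded below with $\pi_\bullet$-acyclic terms on affine $X$.

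Two small refinements.

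First, no trivialization of $\Theta_X$ is needed. The connection $\nabla\colon M^{\ell,\heartsuit}\to M^{\ell,\heartsuit}\otimes\Omega^1_X$ is global and injective, so it gives a two-term quasicoherent resolution of $\underline h(M)$. On affine $X$ this immediately yields $H^{\geq 1}(X,\underline h(M))=0$.

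Second, you are right to read the target of \eqref{compmapeqn} as $\bb R\pi_\bullet\Omega^\bullet_{\mc Y}=C^\bullet_\dR(\mc Y)$ rather than $\Omega^\bullet_{\mc Y}$.
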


Now suppose $\mc Y=\mc J(Y)$ for $Y$ a smooth affine variety and $X=\bb A^1$. We have:

\begin{prop} \label{prop:dR of arc is dR}
There is a natural quasi-isomorphism
\[ C^\bullet_\dR(Y) \xrightarrow{\sim}  C^\bullet_\dR(\mc J(Y)) \ . \]
\end{prop}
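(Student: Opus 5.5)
The plan is to show that the projection $p:\mc J(Y)\to Y$, sending a jet to its value at the base point, has contractible fibres in the de Rham sense. The map in the statement is then the pullback $p^*$ on de Rham cochains. Since $Y$ is smooth affine and $X=\bb A^1$, the D-scheme is $\mc J(Y)=X\times J_\infty Y$, where $J_\infty Y$ is the (pro-finite type) scheme of formal arcs of $Y$. So $C^\bullet_\dR(\mc J(Y))$ should be read as the de Rham complex of $X\times J_\infty Y$. Because $C^\bullet_\dR(\bb A^1)\simeq\K$, Künneth reduces the problem to showing that the composite
\[
C^\bullet_\dR(Y)\to C^\bullet_\dR(J_\infty Y)\to C^\bullet_\dR(\mc J(Y))
\]
is a quasi-isomorphism, the second map being pullback along the projection to $J_\infty Y$. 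The map $Y\to J_\infty Y$ given by constant arcs is a section $s$ of $p$, so $s^*p^*=\id$, and it remains to show that $p^*s^*$ is homotopic to the identity.

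The natural tool is the $\IG_m$-action on arcs by loop rotation, $t\mapsto \lambda t$, together with its extension to the monoid $\bb A^1$, where $\lambda=0$ retracts every arc onto its constant term. This gives an $\bb A^1$-family
\[
H:\bb A^1\times J_\infty Y\to J_\infty Y, \qquad H_1=\id,\qquad H_0=s\circ p .
\]
Homotopy invariance of algebraic de Rham cohomology along $\bb A^1$ then yields $H_0^*\simeq H_1^*$, using the standard homotopy operator given by integration over the $\bb A^1$ factor; that operator is defined algebraically on polynomial forms.

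Since $J_\infty Y=\lim_n J_nY$, I will first establish the statement at each finite level $J_nY$, which is an affine bundle over $Y$ when $Y$ is smooth. There the retraction is just the linear contraction of the fibres, and Poincaré's lemma for affine bundles (on a Zariski cover trivializing the bundle, together with Mayer--Vietoris) gives $C^\bullet_\dR(Y)\simeq C^\bullet_\dR(J_nY)$. I will then pass to the colimit, using
\[
\Omega^\bullet_{J_\infty Y}=\colim_n\, \Omega^\bullet_{J_nY},
\]
and the fact that filtered colimits commute with cohomology.

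The main obstacle is bookkeeping. The paper's $C^\bullet_\dR(\mc J(Y))$ is defined through Proposition \ref{hdRcompprop}, via $\Omega^\bullet_{\mc Y}$ as a complex of Zariski sheaves on $X$ followed by $\bb R\pi_\bullet$. I need to check that this agrees with the colimit description above. On affine $X$ with quasi-coherent terms, $\bb R\pi_\bullet$ is just global sections, so I expect this to be routine, but it must be stated. I also need to verify that the homotopy operator preserves polynomial forms at each finite jet level, which holds because the contraction is linear in the fibre coordinates.
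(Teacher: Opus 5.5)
Your proposal is correct and follows essentially the same route as the paper. The paper splits off the $\bb A^1$ factor via the trivialization $\mc J(Y)=\mc J(Y)_0\times\bb A^1$ and a contraction of that factor, which corresponds to your K\"unneth step. It then uses the loop-rotation homotopy $\varphi_0\mapsto\varphi_0\circ m_t$ between $c\circ\ev_0$ and the identity, together with homotopy invariance of algebraic de Rham cohomology.

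Your reduction to the finite levels $J_nY$, followed by passage to the colimit of de Rham complexes, is extra care that the paper omits: it applies homotopy invariance directly to a pro-finite-type scheme. Two small remarks: the polynomial homotopy operator already works on $J_\infty Y$ itself, and the rotation contracts the fibres of $J_nY\to Y$ with weights $1,\dots,n$ rather than linearly, which is harmless.
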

\begin{proof} Let $\mc J(Y)_0=\mc J(Y)\times_{\bb A^1}\{0\}$ be the fiber of $\mc J(Y)$ at the origin, and recall we have a natural identification $\mc J(Y)_0=\Maps(\bb D_0,Y)$ where $\bb D_0 = (\bb A^1)^\wedge_{\{0\}}$ denotes the formal disk at the origin. We will construct algebraic homotopy equivalences between $\mc J(Y)$ and $\mc J(Y)_0$, and between $\mc J(Y)_0$ and $Y$, so that the claim follows from invariance of algebraic de Rham cohomology of varieties under homotopy equivalences.

Since $Y\times \bb A^1 \to \bb A^1$ is canonically trivialized as a scheme over $\bb A^1$, $\mc J(Y)$ admits a natural identification $\mc J(Y)= \mc J(Y)_0\times \bb A^1$, and we let $r_0:\mc J(Y) \to \mc J(Y)_0$ denote the induced projection map, which corresponds geometrically to restriction to the origin of an algebraic family of jets of maps to $Y$. We let $\iota_0:\mc J(Y)_0 \to \mc J(Y)$ denote the natural inclusion, and note that $r_0\circ \iota = \id_{\mc J(Y)_0}$. Further, define the algebraic homotopy
\[ h:\bb A^1\times \mc J(Y) \to \mc J(Y) \qquad \text{by}\qquad (t,\varphi_0,x) \mapsto (\varphi_0,tx) \ .  \]
Evidently we have $h|_{\{0\}\times \mc J(Y)}=\iota_0\circ r_0$ and $h|_{\{1\}\times \mc J(Y)}=\id_{ \mc J(Y)}$, so that $r_0$ and $\iota_0$ define an algebraic homotopy equivalence, as desired.

Let $\ev_0:\mc J(Y)_0\to Y$ denote the evaluation at the closed point $0\in \bb D_0$, so that $\ev_0(\varphi_0)=\varphi_0(0)$, where on the right hand side we have written $\varphi_0 \in \mc J(Y)_0$ in terms of the corresponding map $\varphi_0:\bb D_0 \to Y$. There is also a canonical constant jet extension $c:Y \to \mc J(Y)_0$ mapping a closed point $y \in Y$ to the constant jet $\varphi_y:\bb D_0 \to \{y\} \into Y$. Note that we have $\ev_0\circ c = \id_Y$. Further, define the algebraic homotopy
\[ h:\bb A^1\times \mc J(Y)_0 \to \mc J(Y)_0 \qquad \text{by}\qquad \varphi_0\mapsto \varphi_0\circ m_t\]
where $m_t:\bb D_0\to \bb D_0$ denotes the restrition of the multiplicative monoid action $m:\bb A^1\times \bb D_0\to \bb D_0$ to $\{t\}\times \bb D_0$. Evidently we have $h|_{\{0\}\times \mc J(Y)_0}=c\circ \ev_0$ and $h|_{\{1\}\times \mc J(Y)}=\id_{ \mc J(Y)_0}$, so that $\ev_0$ and $c$ define another algebraic homotopy equivalence, as desired.

\end{proof}

We now combine the preceding two propositions to establish the main result of this section:

\begin{prop}\label{jetsdRprop} Let $X=\bb A^1$ and $\mc Y=\mc J(Y)$ for $Y$ a smooth affine variety. Then the map $	 h(\Omega^{\bullet \geq 1}_{\mc Y/X})[-1] \to \Omega^\bullet_{\mc Y} $ of Equation \ref{compmapeqn} induces isomorphisms in cohomology
\[ H^k(h(\Omega^{\bullet \geq 1}_{\mc Y/X})[-1] )\xrightarrow{\cong} H^k_\dR(Y)   \qquad \text{for $k \geq 3$.} \]
\end{prop}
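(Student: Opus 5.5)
The plan is to combine Proposition \ref{hdRcompprop} with Proposition \ref{prop:dR of arc is dR}. The one genuinely new ingredient is the comparison between the stupid truncation $\Omega^{\bullet\geq 1}_{\mc Y/X}$ and the full relative de Rham complex, after applying $\dR_X$.

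First, since $Y$ is smooth affine, $R=\mc O_{\mc J Y}$ is very smooth. It is also locally free over $\mc O_{\bb A^1}$, because $\mc J(Y)=\mc J(Y)_0\times \bb A^1$ and $\mc J(Y)_0$ is affine. So Proposition \ref{hdRcompprop} applies, and it identifies $h(\Omega^{\bullet\geq 1}_{\mc Y/X})[-1]$ with $\bb R\pi_\bullet \dR_X(\Omega^{\bullet\geq1}_{\mc Y/X})$. Under this identification the map of Equation \ref{compmapeqn} becomes $\bb R\pi_\bullet$ applied to the inclusion of Equation \ref{subcxeqn}. By Proposition \ref{prop:dR of arc is dR}, the target $C^\bullet_\dR(\mc J Y)$ is quasi-isomorphic to $C^\bullet_\dR(Y)$. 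It therefore suffices to show that the cone of $\dR_X(\Omega^{\bullet\geq1}_{\mc Y/X})\into \Omega^\bullet_{\mc Y}$ has $\bb R\pi_\bullet$-cohomology concentrated in low degrees. Since $X$ and $\mc Y$ are affine, $\bb R\pi_\bullet=\pi_\bullet$ on these quasicoherent terms.

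Second, I would identify the quotient complex. Using the splitting $\Omega^k_{\mc Y}\cong \Omega^k_{\mc Y/X}\oplus \Omega^{k-1}_{\mc Y/X}\otimes\pi^\bullet_{\mc Y}\Omega^1_X$ and the description of $\dR_X(\Omega^{\bullet\geq1}_{\mc Y/X})$ as $[\Omega^1_{\mc Y/X}[-1]\to\Omega^{\bullet\geq 2}_{\mc Y}]$, the quotient $\Omega^\bullet_{\mc Y}/\dR_X(\Omega^{\bullet\geq1}_{\mc Y/X})$ is the two-term complex
\[ \left[ \mc O_{\mc Y} \xrightarrow{\ \nabla\ } \pi_{\mc Y}^\bullet\Omega^1_X \right] \]
in degrees $0$ and $1$, with differential given by the connection, that is $r\mapsto \partial r\, dz$. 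This is exactly $\dR_X(R)$, the de Rham complex of the D-module $R$ on the curve $X$. Its cohomology is therefore concentrated in degrees $0$ and $1$.

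Third, the long exact sequence of the short exact sequence of complexes then gives isomorphisms $H^k(\dR_X(\Omega^{\bullet\geq1}_{\mc Y/X}))\cong H^k_\dR(\mc Y)$ for $k\geq 3$. For $k=2$ one only gets a surjection, because the connecting map from $H^1$ of the quotient can be nonzero. Composing with Proposition \ref{prop:dR of arc is dR} gives the claimed $H^k_\dR(Y)$.

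The main obstacle is the second step. One has to check carefully that the Ehresmann splitting is compatible with the differentials, so that the inclusion of Equation \ref{subcxeqn} is a genuine map of complexes whose cokernel is exactly the two-term complex above, with the correct degree conventions. The shifts $[-1]$ and $[d_X]$ in $\dR_X$ and $h$ need to be tracked so that the degree bound comes out as $k\geq 3$ and not off by one.
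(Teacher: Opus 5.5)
Your proposal is correct and matches the paper's proof essentially step for step. The paper likewise uses Proposition \ref{hdRcompprop} to model the map by the inclusion of Equation \ref{subcxeqn}, observes that its cokernel is $\dR_X(\mc O_{\mc Y})$, which is concentrated in degrees $0$ and $1$, deduces the isomorphism for $k\geq 3$ from the long exact sequence, and finishes with Proposition \ref{prop:dR of arc is dR}. The ``main obstacle'' you flag is lighter than you expect: the cokernel identification follows directly by applying the termwise-exact functor $\dR_X$ to the stupid-truncation sequence $0\to\Omega^{\bullet\geq1}_{\mc Y/X}\to\Omega^\bullet_{\mc Y/X}\to R\to 0$, together with the stated isomorphism $\dR_X(\Omega^\bullet_{\mc Y/X})\cong\Omega^\bullet_{\mc Y}$.
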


\begin{proof}
	
By Proposition \ref{hdRcompprop}, the map of Equation \ref{compmapeqn} is modelled by the inclusion of subcomplexes of Equation \ref{subcxeqn}. This inclusion determines the exact triangle of complexes
\[\dR_X(\Omega^{\bullet \geq 1}_{\mc Y/X}) \to \Omega^\bullet_{\mc Y} \to \dR_X(\mc O_{\mc Y}) \ , \]
and the cokernel $\dR_X(\mc O_{\mc Y})$ is concentrated in cohomological degree $0$ and $1$, so that its cohomology is vanishing in degree greater than or equal to $2$. Thus, the inclusion defines an isomorphism in cohomology
\[ H^k(\dR_X(\Omega^{\bullet \geq 1}_{\mc Y/X})) \xrightarrow{\cong} H^k_\dR(\mc Y) \]
for $k\geq 3$. By Proposition \ref{prop:dR of arc is dR}, we have $H^k_\dR(\mc Y) \cong H^k_\dR(Y)$ and thus we obtain the desired result.
\end{proof}

\section{Examples}\label{sec:examples}

In this section, we explain the application of the results of Subsection \ref{quantchegsec} to several families of examples. We begin with the case of chiral quantizations of affine symplectic varieties, applying Theorem \ref{mainchtheo} to establish the chiral analogue of Theorem \ref{QuantdRthm}. Next, we return to the various examples from Section \ref{ssec:chiral quantization intro} and describe their deformation theory, culminating in a proof of the rigidity of boundary Virasoro minimal models in Subsection \ref{ssec:minimal models}.

All the examples we consider are naturally vertex algebras, and thus in terms of D-modules we fix $X=\IA^1$ and work in the weakly translation invariant setting.

\subsection{Chiral quantizations of affine symplectic varieties}\label{sympsec} Throughout this section, let $Y$ be an affine, symplectic variety, $\mc J Y$ the affine D-scheme of jets to $Y$, and recall there is a natural Coisson algebra structure on $\mc O_{\mc J Y}$, as explained in the beginning of Section \ref{varsec}. Further, we denote by $\varphi\in \Alg_{\Lie}(P^{\IG_a,c}(\mc O_{\mc J Y}))$ the vertex Poisson algebra structure on $\mc O_{\mc J Y}$.

We begin by establishing the chiral analogue of Theorem \ref{QuantdRthm}:

\begin{theo}\label{symptheo}
	  Suppose $\varphi_{\hbar/\hbar^m}$ is a quantization of $\varphi$ defined modulo $\hbar^m$. If the obstruction class vanishes, then the space of extensions of $\varphi_{\hbar/\hbar^m}$, up to isomorphism,
	\[
	\fQuant^\ch(\ik[\hbar/\hbar^{m+1}])\times_{\fQuant_{\varphi}^\ch(\ik[\hbar/\hbar^m])}\{\varphi_{\hbar/\hbar^m}\}
	\]
	is a torsor for $H^3_{\dR}(Y)$, the third de Rham cohomology of $Y$.
\end{theo}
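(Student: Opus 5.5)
The plan is to combine the general torsor statement of Theorem \ref{DefObsQuantChThm} with the cohomology computation of Theorem \ref{mainchtheo}, working throughout in the weakly translation invariant setting on $X=\IA^1$.

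First, I check that the hypotheses of Theorem \ref{DefObsQuantChThm} hold for $R=\mc O_{\mc J Y}$. Since $Y$ is smooth and affine, $R=\mc J(\mc O_Y)$ is very smooth. As an $\mc O_X$-module, $R$ is the constant family $\mc O(\mc J Y)_0\otimes \mc O_{\IA^1}$, hence locally free. Writing $\mc O(\mc JY)_0=\ik\oplus \overline{\mc O}$ and noting that $R$ is a countable direct sum of induced D-modules of finite rank, it should follow that $R$ is unital-projective. Consequently Theorem \ref{BDgrclthm} and Proposition \ref{prop:joint split} apply (with trivial filtration on $R$). They provide the trivialization of the underlying $\Ss$-module of $P^{\ch}_\hbar(R_\hbar)$ required by Warning \ref{flatwarn}. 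Theorem \ref{DefObsQuantChThm} then says that when the obstruction vanishes, the space of extensions up to isomorphism is a torsor for $H^1_c(\varphi)=H^1_c(\mc O_{\mc JY})$.

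Second, I identify this group with $H^3_\dR(Y)$ by invoking Theorem \ref{mainchtheo} with $k=1$, which gives $H^1_c(\mc O_{\mc JY})\cong H^{3}_\dR(Y)$. The ingredients behind that theorem apply here for the following reasons. Corollary \ref{GOequivchcoro} needs very smoothness. Proposition \ref{prop:deRham of internal PV} needs $X$ affine and $R$ locally free over $\mc O_X$. Proposition \ref{prop:int PV is de Rham} needs the symplectic hypothesis, which holds because the coisson structure on $\mc O_{\mc JY}$ induced by a symplectic $Y$ is symplectic. Proposition \ref{jetsdRprop} then compares with $H^\bullet_\dR(Y)$ via Proposition \ref{prop:dR of arc is dR}.

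The main obstacle is the degree bookkeeping at the boundary $k=1$. Proposition \ref{jetsdRprop} is only stated for $k\ge 3$ in the shifted complex $h(\Omega^{\bullet\geq1}_{\mc Y/X})[-1]$. The proof of Theorem \ref{mainchtheo} composes this with the shift $[1]$ from Proposition \ref{prop:int PV is de Rham}, and I would recheck carefully that $H^1_c$ indeed lands in the range where the cokernel $\dR_X(\mc O_{\mc Y})$, which is concentrated in degrees $0$ and $1$, contributes nothing. If the indexing were off by one, I would instead analyze directly the long exact sequence associated to the triangle $\dR_X(\Omega^{\bullet\geq1}_{\mc Y/X})\to\Omega^\bullet_{\mc Y}\to\dR_X(\mc O_{\mc Y})$ in the relevant low degree. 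A subsidiary point is that the weakly translation invariant version $P^{\IG_a,c}$ computes the same cohomology as the non-equivariant one; I would take this from the identification of Section \ref{ssec:weakly trans inv chiral alg}, since all constructions above are compatible with $\IG_a$-invariants. Granting these two points, the torsor of Theorem \ref{DefObsQuantChThm} is a torsor for $H^3_\dR(Y)$, as claimed.
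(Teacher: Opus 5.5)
Your main line is exactly the paper's proof, which consists only of combining Theorem \ref{DefObsQuantChThm} (a torsor for $H^1_c$) with Theorem \ref{mainchtheo} at $k=1$. Your degree worry is unfounded. Since $C^\bullet_\PV\cong h(\Omega^{\bullet\geq 1}_{\mc Y/X})[1]$, one has $H^1_c\cong H^2(h(\Omega^{\bullet\geq1}_{\mc Y/X}))=H^3(h(\Omega^{\bullet\geq1}_{\mc Y/X})[-1])$. This is the first degree covered by Proposition \ref{jetsdRprop}, where the cokernel $\dR_X(\mc O_{\mc Y})$, living in degrees $0$ and $1$, cannot contribute. So no fallback is needed.

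The step that fails is your ``subsidiary point'', and you have not justified it. Passing to $\IG_a$-invariants does not commute with taking cohomology. Already for $\mc O_{\IA^1}$ (that is, $V=\ik$ with $\partial=0$), the global middle de Rham cohomology $h(\mc O_{\IA^1})=\ik[z]dz/d\,\ik[z]$ vanishes, whereas its translation-invariant counterpart $V/\partial V=\ik$ does not. Theorem \ref{mainchtheo} is proved with the global functor $h$ on $\IA^1$ and the global jet scheme $\mc J(Y)\cong \mc J(Y)_0\times \IA^1$.

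In the weakly translation invariant setting the variational complex is instead $(\Omega^{\bullet\geq1}_0/\partial\,\Omega^{\bullet\geq1}_0)[1]$. Here $\Omega^\bullet_0$ is the de Rham complex of $\mc J(Y)_0$, and $\partial$ acts by the Lie derivative $L_\partial$. Consider the short exact sequence $0\to\Omega^\bullet_0/\ik\xrightarrow{L_\partial}\Omega^\bullet_0\to\Omega^\bullet_0/\partial\,\Omega^\bullet_0\to 0$. Since $L_\partial=d\iota_\partial+\iota_\partial d$ is null-homotopic, its long exact sequence gives $H^1$ of this complex as $H^2(\Omega^\bullet_0/\partial\,\Omega^\bullet_0)\cong H^2_\dR(Y)\oplus H^3_\dR(Y)$. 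This is the same doubling visible in the paper's own citation $H^1_\PV(\OO(\JJ\gf^*))\cong H^2(\gf)\oplus H^3(\gf)$. Concretely, take $Y=\IG_m^2$ with $\omega=\tfrac{dx}{x}\wedge\tfrac{dy}{y}$. The rescaling $b\mapsto(1+\epsilon)b$ is then a non-trivial translation-invariant class, although $H^3_\dR(Y)=0$.

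So your argument proves the statement for $\mc O_{\mc JY}$ as a coisson algebra on $\IA^1$ with the non-equivariant operad $P^c$, the setting of Theorem \ref{mainchtheo}. The paper's one-line proof makes the same identification silently and has the same limitation. For the vertex Poisson algebra $\varphi\in\Alg_{\Lie}(P^{\IG_a,c}(\mc O_{\mc JY}))$ of Section \ref{sympsec}, this step breaks, and the torsor group picks up an additional $H^2_\dR(Y)$.
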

\begin{proof}
	This follows directly from Theorems \ref{DefObsQuantChThm} and \ref{mainchtheo}.
\end{proof}

Similarly, by Corollaries \ref{Coisquantexistthm} and\ref{Coisquantrigidthm}, together with Theorem \ref{mainchtheo}, we have:

\begin{coro} Suppose $H^4_\dR(Y)=\{0\}$. Then $\Quant^\ch_\varphi(\ik\fph)$ is nonempty.
\end{coro}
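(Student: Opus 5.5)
The plan is to combine Corollary \ref{Coisquantexistthm} with Theorem \ref{mainchtheo}, working in the weakly translation invariant setting on $X=\IA^1$ fixed at the start of this section. Corollary \ref{Coisquantexistthm} says that $\Quant^\ch_\varphi(\ik\fph)$ is nonempty as soon as $H^2_c(\varphi)=0$. So it suffices to show that $H^2_c(\mc O_{\mc J Y})$ vanishes under the hypothesis $H^4_\dR(Y)=\{0\}$.

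The main step is to invoke Theorem \ref{mainchtheo} with $k=2$. Since $k\geq 1$, this gives
\[ H^2_c(\mc O_{\mc J Y})\cong H^{4}_\dR(Y)=\{0\}. \]
That theorem's hypotheses are met here: $Y$ is an affine symplectic variety and $X=\bb A^1$. Its proof needs $\mc O_{\mc J Y}$ to be very smooth, which holds by the jet construction since $Y$ is smooth. It also needs $\mc O_{\mc J Y}$ to be locally free over $\mc O_X$; this holds because $\mc J Y\cong \mc J(Y)_0\times \bb A^1$.

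Next I would check that the operadic framework of Section \ref{quantchegsec} applies. Definition \ref{defn:quant of coisson} and Theorem \ref{DefObsQuantChThm} require $R=\mc O_{\mc J Y}$ to be unital-projective, so that the trivialization of $P^\ch_\hbar(R_\hbar)$ from Proposition \ref{prop:joint split} and Theorem \ref{thm:gr of joint chiral is joint classical} is available. I expect this to be the only delicate point. I would argue it from the corollary following Theorem \ref{BDgrclthm}, which states precisely that $M=\mc J A$ with $A$ a flat $\mc O_X$-algebra is covered. Here $A=\mc O(Y)\otimes\mc O_X$, which is flat.

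Finally, I would start from $\varphi_{\hbar/\hbar}=\varphi$. Each obstruction class lies in $H^2_c(\varphi)$, which is zero, so the extensions exist at every order. Proposition \ref{Deflimgrprop}, through Corollary \ref{forexcoro}, then produces an element of $\Quant^\ch_\varphi(\ik\fph)$, so this space is nonempty.
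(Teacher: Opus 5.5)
Your proposal is correct and is exactly the paper's argument: Corollary \ref{Coisquantexistthm} reduces existence to the vanishing of $H^2_c(\mc O_{\mc J Y})$, and Theorem \ref{mainchtheo} with $k=2$ identifies this group with $H^4_\dR(Y)=\{0\}$. The paper does not spell out the extra hypothesis checks you add, but they are consistent with it. One small correction: the corollary after Theorem \ref{BDgrclthm} asserts the identification $\gr_{F^\spe}P^\ch(\mc J A)\cong P^c(\mc J A)$, not unital-projectivity of $\mc J A$ itself. That identification, together with the splitting of Proposition \ref{prop:joint split}, is what the trivialization actually uses.
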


\begin{coro} Suppose $H^3_\dR(Y)=\{0\}$. Then if there exists $\vph\in\Quant^\ch_\varphi(\ik\fph)$, a formal chiral quantization of $Y$, every such quantization is isomorphic to $\vph$.
\end{coro}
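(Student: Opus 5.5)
The plan is to deduce this directly from Corollary \ref{Coisquantrigidthm}, feeding in the hypothesis through the identification of classical cohomology with de Rham cohomology in Theorem \ref{mainchtheo}. Concretely, the steps are as follows.

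First, verify that the standing hypotheses of Subsection \ref{quantchegsec} hold for $R=\mc O_{\mc J Y}$ with its vertex Poisson structure $\varphi$. Since $Y$ is smooth and affine, $\mc O_{\mc J Y}$ is a jet algebra $\mc J(B)$ with $B=\mc O(Y)$ smooth, so it is very smooth and locally free over $\mc O_X$. In particular it is unital-projective, with the unit summand $\Omega^{d_X}_X$ split off by the unit. Then the trivialization of $P^\ch_\hbar(R_\hbar)$ from Proposition \ref{prop:joint split} applies, and Theorem \ref{DefObsQuantChThm} holds in the weakly translation invariant setting on $X=\bb A^1$.

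Second, apply Theorem \ref{mainchtheo} with $k=1$. This gives $H^1_c(\mc O_{\mc J Y})\cong H^3_\dR(Y)$, so the hypothesis $H^3_\dR(Y)=0$ yields $H^1_c(\varphi)=0$. Corollary \ref{Coisquantrigidthm} then says that any formal quantization over $\K\fph$ is isomorphic to the given $\vph$.

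In more detail, that corollary works through Corollary \ref{foruncoro}. At each order the extension torsor of Theorem \ref{DefObsQuantChThm} is a torsor for the zero group, so any two compatible systems of truncations are isomorphic stage by stage. Proposition \ref{Deflimgrprop} then identifies the limit stack with $\fQuant^\ch_\varphi(\K\fph)$.

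Finally, pass from quantizations with the fixed trivial underlying module to arbitrary ones. By the remark following Equation \ref{eq:chiral quant defn}, every quantization's underlying trivializable $D_\hhb(X)$-module is isomorphic to $R\widehat\otimes\K\fph$. So every quantization is represented in $\Quant^\ch_\varphi(\K\fph)$ and is therefore isomorphic to $\vph$.

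The main obstacle I expect is the limit step. Isomorphisms chosen at each finite order must be compatible, so that they assemble into a single gauge transformation over $\K\fph$. I would handle this by choosing them inductively: at stage $m+1$, first transport the second system by the accumulated gauge element, so the two agree modulo $\hbar^m$. Their difference at order $\hbar^m$ is then a $1$-cocycle, which is exact because $H^1_c=0$. Exponentiating $\hbar^m a$ corrects the discrepancy without changing lower orders, and the infinite product of these corrections converges $\hbar$-adically in $\mf G^{0,(0)}_{\mf m_{\hat\hbar}}$.
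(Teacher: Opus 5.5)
Your proposal is correct and follows the paper's own argument: Theorem \ref{mainchtheo} with $k=1$ gives $H^1_c(\mc O_{\mc J Y})\cong H^3_\dR(Y)=0$, and Corollary \ref{Coisquantrigidthm} (via Corollary \ref{foruncoro}) then gives uniqueness up to isomorphism. Your inductive choice of compatible gauge corrections $\exp(\hbar^m a)$ in the limit step is a useful sharpening of Corollary \ref{foruncoro}, which only asserts that the two systems are isomorphic at each finite order and does not check that these isomorphisms are compatible.
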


Now suppose $Y$ is an affine symplectic variety with an action of $\IG_\hbar$ that scales the symplectic form with unit weight. Then $\OO(Y)$ is a graded Poisson algebra and $\OO(\JJ Y)$ is a graded vertex Poisson algebra. The grading on $\OO(Y)$ induces one on $\Omega_Y$, and thus on the de Rham cohomology, which we denote $H^\bullet_\dR(Y) = \bigoplus_{m\in \IZ}H^{\bullet,\langle m \rangle}(Y)$.

\begin{theo}
	Suppose $H^{3,\langle m \rangle}_\dR(Y) =0 $ unless $m=m_0\in \IN$. Then if there exists $\varphi_\hbar \in \fQuant^\ch_{\varphi,\gd}(\ik\fph)$, a graded chiral quantization of $Y$, we have
	\[
	\fQuant^\ch_{\varphi,\gd}(\ik\fph)  \cong H^3_\dR(Y)~.
	\]
\end{theo}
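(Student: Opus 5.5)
The plan is to reduce the statement to Corollary \ref{cor:torsor if cohomology is in single graded degree}, applied to the graded coisson algebra $(\mc O_{\mc J Y},\varphi)$. That corollary needs two inputs. First, we need a graded quantization, which the statement assumes exists. Second, we need $H^{1,\langle -m\rangle}_c(\varphi)=0$ for every $m\neq m_0$. The second input will come from a graded refinement of Theorem \ref{mainchtheo}. With both in hand, the corollary shows that $\fQuant^\ch_{\varphi,\gd}(\ik\fph)$ is a torsor for the single graded piece $H^{1,\langle -m_0\rangle}_c(\varphi)$. Choosing the given quantization $\varphi_\hbar$ as basepoint trivializes this torsor. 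Since all other graded components of $H^3_\dR(Y)$ vanish by hypothesis, $H^{1,\langle -m_0\rangle}_c(\varphi)$ is identified with $H^3_\dR(Y)$, which gives the claimed isomorphism.

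The main step is to make the isomorphism $H^1_c(\mc O_{\mc JY})\cong H^3_\dR(Y)$ of Theorem \ref{mainchtheo} $\IG_\hbar$-equivariant, with a fixed shift of weights. I would check each link in the chain from Section \ref{ssec:proof of mainchtheo} in turn.
\begin{enumerate}
\item The quasi-isomorphism $C^\bullet_\PV\into C^\bullet_c$ of Corollary \ref{GOequivchcoro} comes from the bicomplex, whose differentials $d_m,d_b$ have weight $0$ when $\varphi$ is graded. So it respects the grading.
\item The identification of Proposition \ref{prop:deRham of internal PV} is built from natural isomorphisms and the functor $h$. These commute with the $\IG_\hbar$-action, which acts by automorphisms of the D-scheme $\mc JY$ over $X$.
\item Proposition \ref{prop:int PV is de Rham} uses the anchor isomorphism $\Omega^1_R\cong\Theta_R$ given by contraction with the Poisson tensor. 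Because the symplectic form has weight $1$, this map shifts weight by a fixed amount.
\item The comparison of Proposition \ref{jetsdRprop} and the homotopy equivalences of Proposition \ref{prop:dR of arc is dR} are built from $\ev_0$, the constant-jet map $c$, and rescalings of $\bb A^1$ and $\bb D_0$. All of these commute with the $\IG_\hbar$-action induced from $Y$.
\end{enumerate}

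The expected obstacle is the bookkeeping of weight conventions. The anchor map and the $\hbar^m$ factor in Theorem \ref{DefObsgrThm} each shift degrees, and these shifts must combine so that the component $H^{1,\langle -m\rangle}_c$ corresponds exactly to $H^{3,\langle m\rangle}_\dR(Y)$, up to a uniform shift. I would fix this first by testing on a flat example, $Y=\bb A^{2n}$ with its linear weights, where the variational complex is explicit. After a uniform reindexing, the vanishing hypothesis on $H^{3,\langle m\rangle}_\dR(Y)$ for $m\neq m_0$ then transfers to the required vanishing of $H^{1,\langle -m\rangle}_c$ for $m\neq m_0$.
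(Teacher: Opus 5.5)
Your reduction, Corollary \ref{cor:torsor if cohomology is in single graded degree} together with a $\IG_\hbar$-equivariant version of Theorem \ref{mainchtheo}, is the right one. It is also all the paper relies on, since the theorem is stated there without further argument. Your equivariance checks are fine as far as they go. The gap is the step you defer. You assert that a uniform reindexing transfers the hypothesis, but the corollary only gives a nontrivial torsor if the surviving piece is $H^{1,\langle -m_0'\rangle}_c$ with $m_0'\geq 1$, i.e.\ at an actual extension step. A shift of indices need not land there.

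Carrying out the bookkeeping:
\begin{enumerate}
\item A class in $H^1_c\cong H^1_\PV$ is a binary $*$-operation (auxiliary degree $1$) given by a bivector of $\IG_\hbar$-weight $w$, so its joint degree is $w+1$.
\item Transporting it through $\Omega^1_R\cong\Theta_R$ uses two contractions with $\omega$, each of weight $+1$, giving a relative $2$-form of weight $w+2$.
\item Passing to $H^3_\dR(\mc JY)\cong H^3_\dR(Y)$ preserves weight: wedging with $dz$ has weight $0$, and the homotopies are equivariant.
\end{enumerate}
Hence
\[ H^{1,\langle -m\rangle}_c(\mc O_{\mc JY})\cong H^{3,\langle 1-m\rangle}_\dR(Y). \]
So a de Rham class of weight $m_0\geq 1$ would sit at extension step $1-m_0\leq 0$. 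There the corollary gives uniqueness, not $H^3_\dR(Y)$. Your test case $\bb A^{2n}$ cannot detect this, because its $H^3_\dR$ vanishes.

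The missing idea is that this never happens. Let $E$ be the Euler vector field of the $\IG_\hbar$-action. Then $L_E=d\iota_E+\iota_E d$ acts by $w$ on forms of weight $w$, and $\iota_E$ preserves weight. So every closed form of nonzero weight is exact, and $H^\bullet_\dR(Y)$ is concentrated in weight $0$; the theorem's hypothesis is automatic with $m_0=0$. Therefore $H^1_c=H^{1,\langle -1\rangle}_c\cong H^3_\dR(Y)$, and the corollary applies with $m_0'=1$. The classes enter at first order beyond the classical bracket, just as $\alpha$ does for chiral differential operators and $c$ for the Virasoro algebra.

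The same computation one degree up gives $H^{2,\langle -m\rangle}_c\cong H^{4,\langle 1-m\rangle}_\dR(Y)=0$ for $m\geq 2$. This is what guarantees that every first-order extension continues to all orders. Corollary \ref{cor:torsor if cohomology is in single graded degree} has no $H^2$ hypothesis and takes this for granted. With these two additions your argument closes.
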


%
	%
%

\subsection{Chiral differential operators on smooth affine varieties}

Let $Y$ be a smooth affine variety with vanishing second Chern class, then from the discussion of \ref{sssec:cdos intro} we know that for each $\alpha\in \Omega^{3,cl}(Y)$ the homogenized cdos $\D^{\ch}_{\hbar,\alpha}(Y)$ provide a graded chiral quantization of $T^*Y$.

\begin{prop}[\cite{GMS2,GMS3}]
 Suppose $\alpha,\alpha'\in \Omega^{3,cl}(Y)$ with $\alpha-\alpha'=d\eta$ for some $\eta\in \Omega^{2}(Y)$, then
 \[
 \D^\ch_{\hbar,\alpha}(Y) \cong \D^\ch_{\hbar,\alpha'}(Y)
 \]
 Thus, the moduli space of cdos on $Y$, up to equivalence, is a torsor for $H^3_{\rm dR}(Y)$.
\end{prop}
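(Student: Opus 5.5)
The statement has two parts. The isomorphism $\D^\ch_{\hbar,\alpha}(Y)\cong \D^\ch_{\hbar,\alpha'}(Y)$ is proved by an explicit change of generators. The torsor statement then follows by combining that explicit construction with the deformation theory of Section \ref{sympsec}.

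For the isomorphism, write $\alpha-\alpha'=d\eta$ with $\eta\in\Omega^2(Y)$. Regard $\eta$ as a skew map $\Theta(Y)\to\Omega^1(Y)\cong \partial\OO(Y)$, $x\mapsto \iota_x\eta$. Define a map on strong generators of the vertex envelope of $L^{\alpha}_{\hbar,Y}$ by
\[
a\mapsto a,\qquad x\mapsto x+\hbar\,\iota_x\eta ,
\]
for $a\in\OO(Y)$ and $x\in\Theta(Y)$. Extend it $\partial$-linearly, so that it is compatible with the extension $0\to\OO(\JJ Y)\to L^\alpha_{\hbar,Y}\to\Theta(\JJ Y)\to0$. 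I will check this is a morphism of vertex Lie algebras $L^{\alpha}_{\hbar,Y}\to L^{\alpha'}_{\hbar,Y}$ by computing $\lambda$-brackets:
\begin{itemize}
\item $[x_\lambda a]$ is unchanged, since $\OO(\JJ Y)$ is commutative.
\item $[x_\lambda y]$ acquires the correction $\hbar^2\big(x(\iota_y\eta)-y(\iota_x\eta)-\iota_{[x,y]}\eta\big)$, modulo total derivatives, which enter through the $\lambda$-dependence. By the Cartan formula this correction equals $\hbar^2(d\eta)(x,y,\cdot)$ viewed in $\Omega^1$.
\end{itemize}
Hence the correction converts $-\hbar^2\alpha(x,y)$ into $-\hbar^2\alpha'(x,y)$. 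The map is invertible, with inverse obtained by replacing $\eta$ with $-\eta$. The universal property of the vertex envelope, Proposition \ref{prop:chiral envelope}, then gives $\D^\ch_{\hbar,\alpha}\cong\D^\ch_{\hbar,\alpha'}$. The map preserves the $\IG_\hbar$-grading, because $\hbar\,\iota_x\eta$ has weight one like $x$; so this is an isomorphism of graded quantizations. The sesquilinearity bookkeeping in the bracket computation is the main computational step. I will control it using the convention $\Omega^1(Y)\cong\partial\OO(Y)$ from \ref{sssec:cdos intro}.

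For the torsor statement, the construction above gives a map $H^3_\dR(Y)\to\{\text{cdos}\}/\!\cong$, $[\alpha]\mapsto\D^\ch_{\hbar,\alpha}$. Fix a base point $\alpha_0$ and let $[\beta]\in H^3_\dR(Y)$ act by $\alpha_0\mapsto\alpha_0+\beta$. The previous step makes this action well defined on classes.

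\begin{itemize}
\item \emph{Transitivity.} Every cdo, that is, every graded quantization of $T^*Y$ of this type, has the form $\D^\ch_{\hbar,\alpha}$. I would get this from Corollary \ref{cor:torsor if cohomology is in single graded degree} together with Theorem \ref{mainchtheo}. For $T^*Y$ with the fibre-scaling action, the cohomology $H^3_\dR(T^*Y)\cong H^3_\dR(Y)$ sits entirely in weight $\hbar^2$, since the symplectic form has weight one. So the graded moduli space is a torsor for $H^3_\dR(Y)$, and the unique nontrivial deformation step occurs at order $m_0=2$.
\item \emph{Freeness.} This comes from the same torsor statement. I need the map $[\alpha]\mapsto\D^\ch_{\hbar,\alpha}$ to agree with the $H^{1,\langle -2\rangle}_c$-action at order $\hbar^2$. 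Concretely, the order-$\hbar^2$ difference $\D_{\alpha}-\D_{\alpha'}$ is the cocycle $\alpha-\alpha'$ under the isomorphism of Proposition \ref{prop:int PV is de Rham}.
\end{itemize}

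The main obstacle is this identification of the explicit three-form term with the abstract torsor action. I would verify it by tracing the $\hbar^2$ term of the $\lambda$-bracket through $C_\PV\cong\Omega^{\bullet\ge1}_{\mathcal Y/X}[1]$. Alternatively, as in the citation, I would fall back on \cite{GMS2,GMS3}, where the cdo classification is proved directly by Čech gluing.
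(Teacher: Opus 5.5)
Your construction of the isomorphism is correct, but it follows a different route from the paper. The paper works with ordinary vertex algebras:
\begin{itemize}
\item ${\rm Fil}(\D^\ch_{\hbar,\alpha}(Y))\cong\D^\ch_\alpha(Y)$ by Proposition \ref{prop:hadic cdos are cdos};
\item the isomorphism $\D^\ch_\alpha(Y)\cong\D^\ch_{\alpha'}(Y)$ is quoted from \cite{GMS2};
\item the Rees equivalence of Proposition \ref{prop:Rees for hadic VOA} carries it back.
\end{itemize}
Your gauge transformation $x\mapsto x+\hbar\,\iota_x\eta$ is the GMS isomorphism written $\hbar$-adically. Doing it directly shows that the isomorphism preserves the $\IG_\hbar$-grading. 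This is exactly what the paper's transport tacitly requires, since the Rees equivalence only lifts filtered isomorphisms. Two precisions:
\begin{itemize}
\item The term $\partial(\eta(x,y))$ produced by skew-symmetry must be kept, not discarded. It is the $d(\iota_y\iota_x\eta)$ in $\iota_y\iota_x d\eta=L_x\iota_y\eta-L_y\iota_x\eta-\iota_{[x,y]}\eta+d(\iota_y\iota_x\eta)$.
\item The universal property to invoke is that of the envelope of the chiral $\Lie^*$ algebroid $L^\alpha_{\hbar,Y}$ (the structure suppressed in the paper's footnote), not Proposition \ref{prop:chiral envelope}. The plain chiral envelope does not identify the normally ordered product on $\OO(\JJ Y)$ with its commutative product. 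Your map is admissible because it fixes $\OO(\JJ Y)$ and $x\mapsto\iota_x\eta$ is $\OO(Y)$-linear.
\end{itemize}

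The torsor half, as you acknowledge, is not proved by your route; falling back on \cite{GMS2,GMS3} is what the paper itself does. By definition every cdo is $\D^\ch_{\hbar,\alpha}(Y)$ for some closed $\alpha$, so your first half already gives a surjection from $H^3_\dR(Y)$ onto isomorphism classes. What remains is injectivity: $\D^\ch_{\hbar,\alpha}(Y)\cong\D^\ch_{\hbar,\alpha'}(Y)$ must force $[\alpha]=[\alpha']$, and this is also what makes the action well defined. In your argument, both bullets depend on identifying the $\alpha$-term of $\D^\ch_{\hbar,\alpha}(Y)$ with $[\alpha]$ under the composite of Propositions \ref{prop:deRham of internal PV}, \ref{prop:int PV is de Rham}, \ref{jetsdRprop} and \ref{prop:dR of arc is dR}. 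Theorem \ref{mainchtheo} gives only an abstract isomorphism, so this equivariance is the whole content, and it is left undone. Your bookkeeping is also off by one. In the paper's Rees normalization the $\hbar$ of the Poisson bracket is absorbed by the special filtration. The $\alpha$-term has inputs of total weight $2$, output of weight $0$ and special degree $1$, so its joint degree is $0-2+1=-1$. The relevant step is therefore $m_0=1$, not $2$.

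The missing injectivity is elementary and needs nothing from Section \ref{varsec}: run your computation backwards.
\begin{itemize}
\item An equivalence $g\colon\D^\ch_{\hbar,\alpha}(Y)\to\D^\ch_{\hbar,\alpha'}(Y)$ of graded quantizations is the identity mod $\hbar$ and has $\IG_\hbar$-degree $0$. Hence its $\hbar^m$-component lowers weight by $m$.
\item On generators this forces $g(a)=a$ for $a\in\OO(\JJ Y)$, and $g(x)=x+\hbar\psi(x)$ with $\psi$ $\OO(Y)$-linear. If conformal weight is preserved, as for isomorphisms of cdos in \cite{GMS2}, then $\psi$ takes values in $\Omega^1(Y)$.
\item The $\lambda$-linear part of $[g(x)_\lambda g(y)]=g([x_\lambda y])$ is proportional to $\lambda\big(\langle\psi(x),y\rangle+\langle\psi(y),x\rangle\big)$. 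So $\psi=\iota\eta$ with $\eta\in\Omega^2(Y)$.
\item The $\lambda$-independent $\hbar^2$-part is then your Cartan identity, giving $\alpha-\alpha'=\pm d\eta$.
\end{itemize}
This is the GMS argument, and with it your proof of the whole proposition is self-contained.
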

\begin{proof}
By Proposition \ref{prop:hadic cdos are cdos}, ${\rm Fil}(\D^\ch_{\hbar,\alpha}(Y))\cong \D^\ch_\alpha(Y)$. From \cite{GMS2}, we know that $\D^\ch_\alpha(Y)\cong \D^\ch_{\alpha'}(Y)$ as ordinary vertex algebras. The result then follows since ${\rm Fil}$ reflects isomorphisms by Proposition \ref{prop:Rees for hadic VOA}.
\end{proof}

\begin{theo}
	Let $Y$ be a smooth affine variety with vanishing second Chern class. Then $T^*Y$ admits a graded chiral quantization and any graded chiral quantization of $T^*Y$ is isomorphic to a vertex algebra of cdos on $Y$.
\end{theo}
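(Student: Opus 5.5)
The plan is to deduce the statement from the graded deformation theory of Section \ref{quantchegsec}, exactly as Theorem \ref{symptheo} was deduced from Theorem \ref{DefObsQuantChThm}. The target space $T^*Y$ carries the $\IG_\hbar$-action scaling the cotangent fibres with unit weight. This scales the canonical symplectic form with unit weight, so $\OO(\JJ T^*Y)$ is a graded vertex Poisson algebra. The homogenized cdos $\D^\ch_{\hbar,\alpha}(Y)$ give graded quantizations of it (as in \ref{sssec:cdos intro}); the hypothesis $c_2=0$ is what lets us construct at least one of them. This settles existence. For classification we apply Corollary \ref{cor:torsor if cohomology is in single graded degree}, which requires computing the graded pieces $H^{1,\langle -m\rangle}_c$ of the coisson cohomology of $\OO(\JJ T^*Y)$.

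For that computation, Theorem \ref{mainchtheo} gives $H^1_c(\OO(\JJ T^*Y))\cong H^3_\dR(T^*Y)$. We then refine it to keep track of the $\IG_\hbar$-grading. Every isomorphism in the proof of Theorem \ref{mainchtheo} is $\IG_\hbar$-equivariant: the variational complex, the identification of Proposition \ref{PVCEprop}, the symplectic isomorphism $\Omega^1_R\cong\Theta_R$ (which shifts the grading by the weight of the symplectic form), and the homotopy arguments of Proposition \ref{prop:dR of arc is dR}. It follows that $H^{1,\langle -m\rangle}_c\cong H^{3,\langle m'\rangle}_\dR(T^*Y)$ with $m'$ an affine function of $m$. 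The fibre-scaling contraction of $T^*Y$ onto the zero section is an algebraic homotopy that is compatible with the weight grading, so $H^\bullet_\dR(T^*Y)\cong H^\bullet_\dR(Y)$ is concentrated in a single weight, namely the weight of the pullback of forms from $Y$. Tracking the shift, the only nonzero piece should be $m=m_0=1$, which matches the fact that $\alpha$ enters $L^\alpha_{\hbar,Y}$ at order $\hbar^2$ relative to the brackets of weight-$1$ generators. Corollary \ref{cor:torsor if cohomology is in single graded degree} then makes the stack of graded quantizations a torsor for $H^3_\dR(Y)$.

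To conclude, we compare this torsor with the family of cdos. The class of $\D^\ch_{\hbar,\alpha}(Y)$ depends only on $[\alpha]$ by the proposition of \cite{GMS2,GMS3}. We will show that $\alpha\mapsto\D^\ch_{\hbar,\alpha}$ is equivariant: replacing $\alpha$ by $\alpha+\beta$ changes the order-$\hbar^{m_0}$ term of the Maurer--Cartan element by the variational cocycle that corresponds to $[\beta]$ under Proposition \ref{prop:int PV is de Rham}. Concretely, the cocycle is the $\lambda$-bracket correction $\Theta\otimes\Theta\to\partial\OO$, which corresponds through the relative de Rham complex to the $3$-form $\beta$. Since a torsor map that is equivariant is automatically bijective, every graded quantization is a cdo.

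The main obstacle is the grading bookkeeping in the second step. Theorem \ref{mainchtheo} is stated without gradings, and $\OO(\JJ T^*Y)$ is not of finite type in each weight, so we must check carefully both the weight shift through $\Omega^1_R\cong\Theta_R$ and that the truncation in Proposition \ref{jetsdRprop} respects weights. If this fails, the fallback is to argue directly that the weight-$m$ pieces of the variational complex are acyclic for $m\neq m_0$, using the Euler vector field of the fibre scaling as a contracting homotopy. The identification of the torsor action with the shift of $\alpha$ is the second point that needs care.
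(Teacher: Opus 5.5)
Your proposal follows the paper's own argument: existence from a cdo (using $c_2=0$), then $H^1_c\cong H^3_\dR(T^*Y)\cong H^3_\dR(Y)$ via Theorem \ref{mainchtheo}, concentrated in the single degree $m_0=1$, then Corollary \ref{cor:torsor if cohomology is in single graded degree}, and finally comparison with the $H^3_\dR(Y)$-torsor of cdos. It is somewhat more careful than the paper, which cites Corollary \ref{cor:unique graded quantization} where the torsor corollary is meant, and concludes only because both spaces are torsors for the same group; your weight bookkeeping and your equivariance check of $[\alpha]\mapsto \D^\ch_{\hbar,\alpha}(Y)$ are exactly what make that last step valid.
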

\begin{proof}
	Since the second Chern class of $Y$ vanishes, there exists at least one algebra of chiral differential operators on $Y$---which as we recalled in Section \ref{sssec:cdos intro}  provides a graded chiral quantization of $T^*Y$. 

	Now $H^3_\dR(T^*Y)\cong H^3_{\dR}(Y)$ sits in a single $\hbar$-degree, with respect to the $\IG_\hbar$ scaling the symplectic form. Thus, by Corollary \ref{cor:unique graded quantization}, the moduli space of chiral quantizations of $T^*Y$ is a torsor for $H^3_{\dR}(Y)$. Since the space of cdos is a torsor for the same group, we have the desired result.
\end{proof}

\begin{rmk}
	It is well known that cdos on $Y$ exist only if the second Chern class, $c_2\in H^4_{\rm dR}(Y)$ vanishes. We have shown that the obstruction to the existence of a chiral quantization lives in $H^4_{\rm dR}(T^*Y)\cong H^4_{\rm dR}(Y)$, the same cohomology group. While it is natural to ask how the two are related we leave this to future work.
\end{rmk}

\subsection{Affine current algebras}

Let $\gf$ be a simple Lie algebra, and let $\gf^*$ be its linear dual. Recall that for any $k\in \ik$, the homogenized universal affine vertex algebra $V^k_\hbar(\gf)$ is a graded chiral quantization of $\gf^*$.

\begin{prop}\label{prop:coisson cohomology of akm}
	The first coisson cohomology group of $\OO(\JJ\gf^*)$ is one-dimensional, \ie,
	\[
	H^1_{\rm c}(\OO(\JJ \gf^*)) = \ik~.
	\]
\end{prop}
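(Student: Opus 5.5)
Since $\gf^*$ is an affine space, $R=\OO(\JJ\gf^*)=\Sym(\gf[\partial])$ is a very smooth $\Comm^!$ algebra. By Corollary \ref{GOequivchcoro}, the coisson cohomology is therefore computed by the variational Poisson cochains $C^\bullet_\PV(R)$. By Corollary \ref{gPCchcoro} and Proposition \ref{PVCEprop}, after applying the middle de Rham cohomology functor $h$ as in Proposition \ref{prop:deRham of internal PV}, this is the reduced Chevalley--Eilenberg complex of the $\Lie^*$ algebroid $\Omega^1_R$ with coefficients in $R$. In the translation invariant setting on $\IA^1$, this complex is the variational Poisson complex of \cite{BDHKV21}: a cochain of degree $p$ is a skew-symmetric $(p+1)$-$\lambda$-bracket on $\gf[\partial]$ with values in $R$, extended by the Leibniz rule. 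The differential is bracketing with the Kirillov--Kostant $\lambda$-bracket $[x_\lambda y]=[x,y]$. With our shift, $H^1_c$ corresponds to the variational Poisson cohomology in which $2$-$\lambda$-brackets, i.e. first order deformations of the vertex Poisson structure on the fixed algebra, modulo those induced by evolutionary vector fields.

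\textbf{Step 1: reduce to a Lie-theoretic complex.} $\Omega^1_R\cong R\otimes\gf^*[\partial]$, and under $d^\dR$ the $\Lie^*$ algebroid is the action algebroid of the vertex Lie algebra $\gf[\partial]$ acting on $R$ by the coadjoint action on arcs. Hence $C^\bullet_{\CE,R}(\Omega^1_R)$ is the Lie conformal (variational) Chevalley--Eilenberg complex of the current conformal algebra $\mathrm{Cur}\,\gf=\ik[\partial]\otimes\gf$ with coefficients in the module $R=\Sym(\gf[\partial])$. The next step is to filter by polynomial degree in $R$. The differential preserves this degree because the KKS bracket is linear, so the complex splits as $\bigoplus_d C^\bullet(\mathrm{Cur}\,\gf,\Sym^d)$.

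\textbf{Step 2: compute each summand.} I would use the known computation of the cohomology of current Lie conformal algebras (Bakalov--Kac--Voronov). It states that $H^\bullet(\mathrm{Cur}\,\gf,M)$ is expressed in terms of the Lie algebra cohomology $H^\bullet(\gf,M_0)$ tensored with the cohomology of the one-dimensional conformal algebra, for suitable modules $M$. Since $\gf$ is simple, Whitehead's lemmas give $H^1(\gf,-)=H^2(\gf,-)=0$ on finite-dimensional modules. Each $\Sym^d$ is locally finite, so the invariants $\Sym^d(\gf[\partial])^{\gf[\partial]}$ contribute only in degree $0$ and degree $3$ classes are irrelevant. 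The surviving class in the relevant degree comes from $d=0$, the constant coefficients $\ik$. There the reduced complex gives the class of the invariant bilinear form, $[x_\lambda y]\mapsto \lambda\langle x,y\rangle$. This is exactly the deformation direction realized by varying the level $k$ in $V^k_\hbar(\gf)$, so it is nonzero. One checks nontriviality by noting that it cannot be a coboundary $d_b$ of a derivation: a derivation preserving polynomial degree produces no degree-$0$ output.

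\textbf{Step 3: conclude and identify the main obstacle.} Combining Steps 1 and 2 gives $H^1_c=\ik\cdot[\lambda\langle\cdot,\cdot\rangle]$. The main obstacle is Step 2 for $d\ge1$, namely ruling out higher degree classes in $H^1$ such as $[x_\lambda y]\mapsto \lambda\,\Omega(x,y)$ with $\Omega$ valued in $\Sym^{\ge1}$. For these I would use the grading by conformal weight ($\partial$ has weight $1$, $\gf$ weight $1$). I would also use a contracting homotopy built from the Euler vector field on $\gf^*$, or from a Casimir element of $\gf$ acting on $\Sym^d$, which acts invertibly on non-trivial isotypic components. This lets the positive-degree part be shown acyclic in degrees $1$ and $2$ by the usual Casimir argument, adapted to the $\lambda$-bracket setting.
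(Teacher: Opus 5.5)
\textbf{Verdict: there is a genuine gap.} The decisive step, vanishing of the positive polynomial-degree summands in degree one, is not established, and the tools you propose for it do not deliver it.

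\textbf{What is fine.} Your reduction agrees with the first half of the paper's proof. Very smoothness gives $H^\bullet_c\cong H^\bullet_{\PV}$ by Corollary \ref{GOequivchcoro}. The variational complex is the reduced Chevalley--Eilenberg complex of $\mathrm{Cur}\,\gf$ with coefficients in $R=\Sym(\gf[\partial])$; note $\Omega^1_R\cong R\otimes\gf[\partial]$, not $R\otimes\gf^*[\partial]$. This complex splits by polynomial degree $d$, and the $d=0$ summand is the complex with trivial coefficients. Three corrections on the $d=0$ part:
\begin{itemize}
\item The level class $\lambda(a|b)$ is exactly the $H^3(\gf)$ summand in the formula $H^1_{\PV}(\OO(\JJ\gf^*))\cong H^2(\gf)\oplus H^3(\gf)$ that the paper quotes. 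So your remark that degree-$3$ classes are irrelevant is backwards.
\item Your nontriviality argument is wrong as stated. Constant vector fields $a\mapsto\chi(a)$ have polynomial degree $0$ and nonzero coboundaries $-\chi([a,b])$.
\item What actually rules out $\lambda(a|b)$ being a coboundary is that those coboundaries are $\lambda$-independent and skew in $a,b$.
\end{itemize}

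\textbf{The gap: the summands with $d\geq 1$ in degree one.} Their vanishing is the whole content of the statement, and none of your tools gives it.
\begin{itemize}
\item Bakalov--Kac--Voronov compute with trivial coefficients and with finite modules, i.e.\ finitely generated over $\ik[\partial]$. But $\Sym^d(\gf[\partial])$ has infinite rank over $\ik[\partial]$ for $d\geq 2$.
\item Whitehead's lemmas concern $\gf$. The basic complex, however, is the cochain complex of the annihilation algebra $\gf[t]$, spanned by all modes $a_{(j)}$, $j\geq 0$. Nothing like Whitehead holds for $\gf[t]$: its invariants in $\Sym^d$ contain $\partial^k$ of the quadratic Casimir, so they are nonzero for $d\geq 2$.
\item The Euler field $E$ is not a symmetry, since $d_{\PV}E=\pi$ up to sign. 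So Cartan's formula gives no contracting homotopy from it.
\item The Casimir (zero-mode) argument only localizes the cohomology onto the $\gf$-invariant subcomplex. That subcomplex is large and contains genuine cocycles. Already for $d=1$, both $\pi$ and $\lambda(\lambda+\partial)[a,b]$ are $\gf$-invariant $2$-cocycles.
\end{itemize}
These two cocycles are exact: $\pi=d_{\PV}E$, and $\lambda(\lambda+\partial)[a,b]=\tfrac12\, d_{\PV}X$ for the evolutionary field $X(a)=\partial^2 a$, both up to sign. But this comes out only by explicit computation. For general $d$ you would need two things:
\begin{itemize}
\item the relative cohomology of $\gf[t]$ modulo $\gf$ with coefficients in $\Sym^d$;
\item the long exact sequence, induced by $\partial$, relating the basic and reduced complexes.
\end{itemize}

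\textbf{How the paper proceeds.} It does not do this computation. It cites the Bakalov--De Sole--Heluani--Kac result $H^1_{\PV}\cong H^2(\gf)\oplus H^3(\gf)$, which is precisely the assertion that positive polynomial degree contributes nothing, and then applies Whitehead. Unless you import that theorem or reprove it, your Step 3 remains a hope rather than a proof.
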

\begin{proof}
	Since the underlying $\Comm^!$-algebra $\OO(\JJ\gf^*)$ is very smooth, by Corollary \ref{GOequivchcoro},
	\[
	H^\bullet_{c}(\OO(\JJ \gf^*))\cong H^\bullet_{\rm PV}(\OO(\JJ \gf^*))~.
	\]
From \cite{Bakalov:2020computation}, $H^1_{\rm PV}(\OO(\JJ\gf^*)))\cong H^2_{\Lie}(\gf,\ik)_\oplus H^3_{\Lie}(\gf,\ik)$. Since $\gf$ is simple, $H^2_{\Lie}(\gf,\ik)=0$ by the Whitehead Lemma and $H^3_{\Lie}(\gf,\ik)=\ik$.
\end{proof}

\begin{theo}
	Any graded chiral quantization of $\gf^*$ is isomorphic to $V^k_\hbar(\gf^*)$ for some $k\in \ik$. 
\end{theo}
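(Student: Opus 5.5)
The plan is to apply Corollary \ref{cor:torsor if cohomology is in single graded degree} to the graded vertex Poisson algebra $\varphi$ on $\OO(\JJ\gf^*)$, and then exhibit the family $V^k_\hbar(\gf)$ as filling out the resulting torsor. First I would fix the $\IG_\hbar$-grading: $\gf\subset\OO(\JJ\gf^*)$ has weight one, the Kirillov--Kostant--Souriau $\lambda$-bracket $[x_\lambda y]=[x,y]$ has degree $-1$, and $V^k_\hbar(\gf)$ is graded, since the $\hbar^2\lambda^2\langle x,y\rangle$ term has weight two as do $:xy:$ and $\hbar[x,y]$. By Proposition \ref{prop:coisson cohomology of akm} we have $H^1_c(\OO(\JJ\gf^*))=\ik$. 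The first real step is to locate this class in the auxiliary grading. Under the identification of \cite{Bakalov:2020computation} it comes from $H^3_{\Lie}(\gf,\ik)$, i.e.\ the Cartan $3$-cocycle $\langle x,[y,z]\rangle$. Viewed as a first-order deformation of the $\lambda$-bracket, it is the central term $\lambda\langle x,y\rangle$. That term lowers the polynomial degree by two relative to $:xy:$, so it sits in $H^{1,\langle -2\rangle}_c$. Hence $H^{1,\langle -m\rangle}_c=0$ for $m\neq 2$, and we take $m_0=2$.

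Since $V^0_\hbar(\gf)$ exists as a graded quantization, Corollary \ref{cor:torsor if cohomology is in single graded degree} then says that $\fQuant^\ch_{\varphi,\gd}(\ik\fph)$ is a torsor for $H^{1,\langle-2\rangle}_c=\ik$. It remains to show the map $k\mapsto[V^k_\hbar(\gf)]$ is a bijection onto this torsor. Following the proof of Theorem \ref{DefObsgrThm}, the torsor action at order $\hbar^2$ is given by adding a cocycle $\hbar^2\zeta$ to $\varphi_2$. The quantizations $V^k_\hbar(\gf)$ and $V^0_\hbar(\gf)$ agree modulo $\hbar^2$. At order $\hbar^2$ their difference is $\tfrac{k}{4h^\vee}\lambda^2\langle x,y\rangle$ in integral-bracket form, which is a nonzero multiple of $k$ times the generator. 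So it suffices to check that this difference is not $d_\varphi$-exact. Given that, $k\mapsto$ class is an affine isomorphism $\ik\to\ik$, and every graded quantization is isomorphic to some $V^k_\hbar(\gf)$.

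The main obstacle is this non-exactness, together with the degree bookkeeping that puts the class in weight $-2$. I would first try to argue it through the comparison with \cite{Bakalov:2020computation}: the invariant-form cocycle is exactly the image of the nonzero Cartan class, so it represents a generator. As a backup, I would check directly that $V^k_\hbar(\gf)\not\cong V^{k'}_\hbar(\gf)$ as $\hbar$-adic graded vertex algebras for $k\neq k'$. Via ${\rm Fil}$ and Proposition \ref{prop:Rees for hadic VOA}, this reduces to the fact that the level is an isomorphism invariant of $V^k(\gf)$, read off from the $(z-w)^{-2}$ pole of the OPE of the strong generators in conformal weight one. An isomorphism of graded quantizations restricting to the identity on the central fibre must preserve $\gf$ up to higher-order corrections, so it cannot change this coefficient. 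This yields injectivity, and surjectivity follows from the torsor being one-dimensional.
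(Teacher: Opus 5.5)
Your route is the paper's: its proof is just Proposition \ref{prop:coisson cohomology of akm} plus Corollary \ref{cor:torsor if cohomology is in single graded degree}. The paper never checks that the family $V^k_\hbar(\gf)$ exhausts the resulting torsor, so your second step supplies something it leaves implicit. However, your degree bookkeeping is off by one, and as written that step compares the quantizations at the wrong order.

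The grading used in Theorem \ref{DefObsgrThm} is the joint grading $q-p+i$ of Theorem \ref{thm:gr of joint chiral is joint classical}. It includes the special degree $i$, with $i=1$ for a binary $*$-operation. This is forced: in your grading the KKS bracket has degree $-1$, so $\varphi$ would not be a degree-$0$ graded algebra and Theorem \ref{DefObsgrThm} would not apply. In the joint grading the bracket has degree $-1+1=0$. The central cocycle $\lambda\langle x,y\rangle$ (weight $2$ to weight $0$) then has degree $-2+1=-1$, so $H^1_c=H^{1,\langle -1\rangle}_c$ and $m_0=1$, not $2$. Equivalently, the trivialization of $P^\ch_\hbar(R_\hbar)$ from Proposition \ref{prop:joint split} divides the $*$-part of the chiral bracket by $\hbar$; this is where $\lim_{\hbar\to0}\tfrac{1}{\hbar}\tfrac{d}{d\lambda}I_{\hbar,\lambda}$ comes from. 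So the term $\tfrac{k}{4h^\vee}\hbar^2\lambda^2\langle x,y\rangle$ of the integral bracket lands in $\varphi_1$, not $\varphi_2$. Hence $V^k_\hbar(\gf)$ and $V^0_\hbar(\gf)$ are \emph{not} the same point of $\Quant^\ch_{\varphi,\gd}(\ik[\hbar]/\hbar^2)$, even though their integral brackets agree modulo $\hbar^2$. You are mixing the integral-bracket expansion with the Rees-trivialized expansion, which is where the torsor of Theorem \ref{DefObsgrThm} lives.

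The repair is easy, and cleaner: compare at $m=1$. The base truncation is then $\varphi$ itself, so nothing needs to be identified. The difference $\varphi^{(k)}_1-\varphi^{(0)}_1$ does not depend on the chosen splitting, since both share $\varphi_0$. It is automatically a $d_\varphi$-cocycle, since both satisfy $d_\varphi\varphi_1=\Obs_1$ and $\Obs_1$ depends only on $\varphi_0$. It equals $k$ times a fixed nonzero multiple of the Leibniz extension of $\lambda\langle x,y\rangle$; every other $k$-dependent term, for instance the reordering corrections to the normally ordered product, has joint degree $\le -2$. Once this class is known to be nonzero, $k\mapsto[V^k_\hbar(\gf)]$ is an affine bijection onto $H^{1,\langle -1\rangle}_c$. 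Vanishing of $H^{1,\langle -m\rangle}_c$ for $m\ge 2$ then gives uniqueness at every later order.

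Finally, your backup via ${\rm Fil}$ and the level only gives injectivity of $k\mapsto[V^k_\hbar(\gf)]$, and an injection $\ik\to\ik$ need not be surjective. Use it to show the central class is nonzero, and get surjectivity from the linearity in $k$ of the first-order difference.
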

\begin{proof}
	By Proposition \ref{prop:coisson cohomology of akm}, the coisson cohomology is in a single graded degree and so the result follows from Corollary \ref{cor:torsor if cohomology is in single graded degree}.
\end{proof}

This is similar to the result of \cite{Kovalchuk:2024rwd} on the moduli space of first order deformations of affine current algebras, though they use the cohomology theory of \cite{Huang:2010ud}.

\subsection{The Virasoro algebra}
Recall the vertex Poisson algebra $V = \JJ \ik[T]$ with $\lambda$-bracket
\[
[T_\lambda T] = 2\lambda T +\partial T~,
\]
introduced as the classical limit of the Virasoro algebra in \ref{ssec:dfn virasoro}.

\begin{prop}
	The vertex Poisson algebra $V$ admits a one parameter family of graded quantizations and any such quantization is isomorphic to the homogenized Virasoro algebra, $\Vir^c_{\hbar}$, for some $c\in \ik$.
\end{prop}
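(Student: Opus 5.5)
The plan is to follow the pattern of the affine current algebra theorem. We first compute the graded pieces $H^{1,\langle -m\rangle}_c(V)$ and then apply Corollary \ref{cor:torsor if cohomology is in single graded degree}. Existence is already in hand: the homogenized Virasoro algebras $\Vir^c_\hbar$ are graded quantizations of $V$ for every $c\in\ik$, by the discussion in \ref{ssec:dfn virasoro}.

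The first step is to reduce coisson cohomology to variational Poisson cohomology. The algebra $V=\JJ\ik[T]=\mc O(\JJ\bb A^1)$ is the jet algebra of a smooth algebra, hence very smooth. Corollary \ref{GOequivchcoro} then identifies $H^\bullet_c(V)$ with $H^\bullet_\PV(V)$, the variational Poisson cohomology of the algebra of differential polynomials $\ik[T,\del T,\dots]$ with the Virasoro--Magri $\lambda$-bracket $[T_\lambda T]=2\lambda T+\del T$.

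The second step, and the main obstacle, is computing $H^1_\PV(V)$ together with its $\IG_\hbar$-grading, where $T$ has weight one. I would invoke the computations of De Sole--Kac and Bakalov et al. of the variational Poisson cohomology of the Virasoro--Magri PVA. In the paper's normalization, where cohomological degree $1$ corresponds to bivector-type cochains (deformations of the $\lambda$-bracket), this cohomology is one-dimensional, spanned by the central cocycle $\lambda^3$. If I had to verify this directly, I would work in the complex $\big(\bigoplus_p \mc P^*_{R,p+1}(\{\Omega^1_R\},R)_{\Ss_{p+1}}[-p],\tilde d_b\big)$ of Corollary \ref{gPCchcoro}. In arity two a cochain is a skew $\lambda$-bracket $T\otimes T\mapsto P(\lambda,\del,T,\del T,\dots)$. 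I would filter by polynomial degree in $T$ and check that the cocycles modulo coboundaries $\tilde d_b(\text{vector field})$ leave only the constant term $\lambda^3$. Checking that the arity-three part contributes nothing is the delicate point.

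The third step is bookkeeping of the grading. The cocycle $\lambda^3$ corresponds to the term $\frac18\hbar^2\lambda^4c$ in $I_\lambda$, so it sits in a single degree $-m_0$ (with $m_0=2$ in the $\hbar$-power convention). Hence $H^{1,\langle -m\rangle}_c(V)=0$ for $m\neq m_0$. Corollary \ref{cor:torsor if cohomology is in single graded degree} then shows that $\fQuant^\ch_{\varphi,\gd}(\ik\fph)$ is a torsor for $H^{1,\langle-m_0\rangle}_c(V)\cong\ik$. The family $c\mapsto\Vir^c_\hbar$ moves along this torsor with nonzero derivative, since varying $c$ changes the first-order class by a multiple of $[\lambda^3]$. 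Therefore every graded quantization is isomorphic to some $\Vir^c_\hbar$, and the $\Vir^c_\hbar$ are pairwise non-isomorphic, which gives a one-parameter family.
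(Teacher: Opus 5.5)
Your proposal is correct and follows the paper's proof. Very smoothness of $V$ and Corollary \ref{GOequivchcoro} reduce the question to $H^1_\PV(V)=\ik$ from \cite{Bakalov:2020computation}, Corollary \ref{cor:torsor if cohomology is in single graded degree} concludes, and your matching of the torsor to the explicit family $c\mapsto\Vir^c_\hbar$ spells out a step the paper leaves implicit. One minor bookkeeping remark: in the paper's Rees/Maurer--Cartan expansion the Poisson bracket already carries one power of $\hbar$, so the central cocycle $\lambda^3$ has joint degree $-1$ and enters at $m_0=1$ rather than $m_0=2$; this is harmless, since the argument only uses that it sits in a single negative degree.
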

\begin{proof}
 As a $\Comm^!$-algebra, $V $ is very smooth and so by Corollary \ref{GOequivchcoro},
 \[
 H^\bullet_{\rm c}(V) \cong H^\bullet_{\rm PV}(V)~.
 \]
 From \cite{Bakalov:2020computation}, 
 \[
 H^1_{\rm PV}(V) = \ik
 \]
 and so by Corollary \ref{cor:torsor if cohomology is in single graded degree}, the moduli space of quantisations is isomorphic to ${\bb A^1}$.
\end{proof}

\subsection{Boundary minimal models}\label{ssec:minimal models}

Recall from \ref{ssec:dfn virasoro} that when $c = c_{2,2n+1}$ the homogenized Virasoro algebra develops a vertex ideal $\II_{\hbar,n}$ defining the boundary minimal model
\[
\MM_{\hbar,n} = \Vir^{c_{2,2n+1}}_\hbar/ \II_{\hbar, n}~.
\]
The classical limit of $\MM_{\hbar,n}$ can be identified with the arc algebra $M_n = \JJ( \ik[T]/T^n)$, of which it is a graded quantization.

In the rest of this section we shall show that the boundary Virasoro minimal models, $\MM_n$, are rigid, \ie, they admit no deformations. 

Unlike the examples we considered previously, $\JJ(\ik[T]/T^n)$ is not smooth, let alone very smooth. As a result, its Harrison cohomology is non-vanishing and, moreover, since $\JJ(\ik[T]/T^n)$ is not symplectic, we cannot relate its variational cohomology to the de Rham cohomology. 

\subsubsection{}
Our main computational tool will be the bicomplex from Proposition \ref{Coisbicxprop}, $ C_{c}^{\bullet,\bullet}(M_n) $, whose total cohomology is isomorphic to the classical cohomology. The total cohomology can be computed by the first quadrant spectral sequence whose first page is
\begin{equation}
E_1^{p,q} = H_{\AQ}^q(C^{p,q}_c(M_n))~.
\end{equation}
for $p,q\ge0$, where we, implicitly, make use of the quasi-isomorphism between the Harrison and Andr\'e--Quillen cohomologies.

Note that we have the identifications
\[
E_1^{0,q} = H^q_{\AQ}(M_n)~, \qquad \text{ and }\qquad E_2^{p,0}=C^\bullet_{\rm PV}(M_n)~.
\]

The spectral sequence converges to the total cohomology and since it is first quadrant, the first classical cohomology group can be computed by the entries of the third page.
\begin{equation}\label{eq:H1 coisson is fixed by E3}
H^1_{c}(M_n) \cong E_3^{1,0}\oplus E_3^{0,1} = H_{\PV}^{1}(M_n)\oplus E_3^{0,1}~,
\end{equation}
where we make use of the identification $E_3^{1,0}= E_2^{1,0}=H_{\PV}^1 (M_n)$.

We will also make extensive use of Andr\'e--Quillen and variational cohomology with coefficients not necessarily in the adjoint module. The reader can find a comprehensive, concrete definition of these in \cite{BDHKV21} but, for the sake of completeness, let us recall some basic definitions.

For $A\in\Comm^!(\ik[\del]\Mod)$ is a commutative $\ik$-algebra with derivation and $B\in A\Mod(\ik[\del]\Mod)$, a module over $A$, we write
\[
C_\AQ^\bullet(A,B) = \Hom_{A\Mod(\ik[\partial]\Mod)}(\mathbb{L}^\bullet_A,B)~,
\]
where $\mathbb{L}_A^\bullet$ is the cotangent complex of $A$. Moreover, 
\[
H^0_{\AQ}(A,B) \cong \Der_\del(A,B)~.
\]
by construction.

Similarly for $A$ a vertex Poisson algebra and $B$ a vertex Poisson module over $A$, we write
\[
C_\PV^n(A,B) = \Hom_{\otimes_{i=1}^n A[\del_i] \Mod}((\Omega_A^1)^{\otimes n},B\otimes_{\ik[\del]}\ik[\lambda_1,\dots,\lambda_n])~,
\]
with differential $d_{PV}$ given by Schouten bracket against the Poisson bracket.

More concretely, a cochain $f_{\lambda_1,\dots,\lambda_n}\in C_\PV^n(A,B)$ is a multilinear $f_{\lambda_1,\dots,\lambda_n}\in \Hom_\ik(A^n,B[\lambda_1,\dots,\lambda_n])$ such that for any $a_1,\dots,a_n,c\in A$
\begin{enumerate}[(i)]
	\item $f(a_1,\dots,\partial a_i,\dots a_n) = - \lambda_if(a_1,\dots,a_i,\dots,a_n)~$
	\item $f(a_{\sigma^{-1}(1)},\dots a_{\sigma^{-1}(n)}) = \textup{sgn}(\sigma)f(a_1,\dots,a_n)$ for any $\sigma \in \Ss_n$.
	\item $f(a_1,\dots, c a_i,\dots, a_n) = cf_{\lambda_1,\dots,\lambda_i+\del,\dots,\lambda_n}(a_1,\dots,a_i,\dots,a_n)_{\leftarrow} + a_i f_{\lambda_1,\dots,\lambda_i+\del,\dots,\lambda_n}(a_1,\dots,c,\dots,a_n)_{\leftarrow} $, where the subscript $\leftarrow$ indicates that $\lambda_i+\del$ acts towards the left.
\end{enumerate}
with differential 
\begin{align*}
(d_\PV f)_{\lambda_1,\dots,\lambda_{n+1}}(a_1,\dots,a_{n+1}) & = (-1)^n\sum_{i}(-1)^i \bigg[v_i\ _{\lambda_i} f_{\lambda_1,\dots, \hat{\lambda_i},\dots,\lambda_{n+1}}(a_1,\dots,\hat{a_i},\dots,a_n)\bigg]\\
& + (-1)^{n+1} \sum_{i<j}f_{\lambda_i+\lambda_j,\lambda_1, \dots,\hat{\lambda_i},\dots,\hat{\lambda_j},\dots,\lambda_{n+1}}([a_i\ _{\lambda_i}a_j],a_1,\dots,\hat{a_i},\dots,\hat{a_j},\dots,a_{n+1})~,
\end{align*}
where the hat denotes omission.

Breaking with our former conventions, we shall now explicitly denote the module of coefficients in the following.

\subsubsection{The Andr\'e--Quillen cohomology}

We have an inclusion
\[
\iota: C_{\AQ}^\bullet(M_n,M_n) \rightarrow C^\bullet_{\AQ}(V,M_n)~,
\]
with image equal to the subcomplex of cochains in $C_{\AQ}^\bullet(V,M_n)$ that vanish if any argument is in $I_n$. The cokernel complex, $\coker(\iota)$, admits a nice description in low degrees. Namely,
\[
\coker^0(\iota) \cong \Hom_{\ik[\partial]}(I_n,M_n)~,
\]
The quotient map $C^0_{\AQ}(V,M_n)=\Hom_\partial(V,M_n) \rightarrow \Hom_{\partial}(I_n,M_n)$ is given by restricting the domain. For $f\in \Hom_{\partial}(I_n,M_n)$, the induced differential on the cokernel is 
\[
df(a,b) = af(b) + f(a)b - f(ab)~,
\]
for $a,b\in V$. For $f$ to be closed, it must vanish on $I_n$, and so being closed is equivalent to
\[
f(ab) \overset{!}{=} af(b)
\]
for $a\in V$ and $b\in I_n$, \ie, $f$ is $V$-linear and we can identify
\begin{equation}\label{eq:cocycle in Harrison cokernel}
H^0(\coker(\iota)) = \Hom_V(I_n, M_n)~.
\end{equation}

The short exact sequence
\[
0 \rightarrow C_{\AQ}^\bullet(M_n,M_n)\rightarrow C_{\AQ}^\bullet(V, M_n)\rightarrow  \coker^\bullet(\iota)\rightarrow 0
\]
induces the usual long exact sequence in cohomology:
\[
0 \rightarrow  \Der_\partial(M_n,M_n) \rightarrow  \Der_\partial(V,M_n) \rightarrow  H^0(\coker(\iota)) \rightarrow  H^1_{\AQ}(M_n,M_n)\rightarrow 0 \rightarrow  \dots~,
\]
where we used the fact that $H^1_{\AQ}(V,M_n)=0$ since $V$ is very smooth.

Thus, 
\begin{equation}\label{eq:identifying Harrison of quotient}
H^1_{\AQ}(M_n,M_n) \cong H^0(\coker(\iota)) / \Der_\partial(V,M_n)~,
\end{equation}
\ie, the cocycles in $\coker(\iota)$ that do not arise from the restriction of a derivation on $V$. Concretely, for every such $f\in H^0(\coker(\iota))$, $d_{\AQ}f$ is a cocycle in $C^1_{\AQ}(M_n,M_n)$ and $[d_{\AQ} f]\in H^1_{\AQ}(M_n,M_n)$.

\begin{lemma}\label{lem:E2 Harrison vanishes for min model}
The $(0,1)$-entry of the second page of the spectral sequence vanishes, \ie,
\[
E_2^{0,1} = H^0_{\rm PV}(H^1_{\AQ}(M_n,M_n)) =0 ~,
\]
and so $E_3^{0,1}=0$.
\end{lemma}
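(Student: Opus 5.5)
The plan is to make $H^1_{\AQ}(M_n,M_n)$ completely explicit using Equation \eqref{eq:identifying Harrison of quotient}, and then to compute the degree-zero variational cohomology of this module directly. First, since $I_n$ is the differential ideal generated by $T^n$, a $V$-linear map $f\in\Hom_V(I_n,M_n)$ that commutes with $\del$ is determined by its value on $T^n$. Write $f(T^n)=g\in M_n$. The constraint from $\del$-linearity is that $f(\del^k T^n)=\del^k g$. The constraint from $V$-linearity is that every $V$-linear relation among the generators $\del^kT^n$ of $I_n$ must be respected.

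Next I would compare these maps with restrictions of derivations. A derivation $D\in\Der_\del(V,M_n)$ is determined by $D(T)=h\in M_n$, and its restriction gives $f(T^n)=nT^{n-1}h$. Consequently $H^1_{\AQ}(M_n,M_n)$ is generated as an $M_n$-module by the class of $T^n\mapsto 1$, modulo the image $T^{n-1}M_n$, together with whatever the syzygies of $I_n$ impose. I expect the outcome to be a description
\[
H^1_{\AQ}(M_n,M_n)\cong \Hom_V(I_n/I_n^2,M_n)/\im(\Der_\del(V,M_n)),
\]
which is a normal-module-type object and can be viewed as the $M_n$-module $M_n/(T^{n-1})$ twisted by the generator $T^n$. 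This object carries a natural vertex Poisson module structure, inherited from the $\lambda$-bracket of $T$ acting on $I_n/I_n^2$.

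The key step is to compute $H^0_{\PV}$ with coefficients in this module. These are the elements $m$ satisfying $[T_\lambda m]=0$, that is, elements killed by the $\lambda$-action of $T$; the $E_1$ differential $d_h$ is induced by $d_b$. Here I would use the conformal weight grading, in which $T$ has weight $2$, $\del$ has weight $1$, and the generator of the normal module has weight $2n$. The $\lambda$-action of $T$ on the generator $\nu$ dual to $T^n$ is computed from $[T_\lambda T^n]=nT^{n-1}(2\lambda T+\del T)$. This formula shows that $\nu$ transforms with a nonzero $\lambda$-coefficient, acting by $-2n\lambda$ up to sign and up to the twist. For a general element $p\nu$, the coefficient of the top power of $\lambda$ is a nonzero multiple of $p$, so no nonzero element can be invariant. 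Note that the $\lambda^1$ coefficient of $[T_\lambda -]$ acts as twice the conformal weight, in the manner of $L_0$, and the total weight $2n+\mathrm{wt}(p)$ is positive; therefore the $\lambda$-linear term alone already forces $m=0$.

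The main obstacle is justifying precisely that $H^1_{\AQ}(M_n,M_n)$ carries this induced module structure, with $L_0$-type weights that are strictly positive, and that the $E_1$ differential really is the $\lambda$-action. Once $E_2^{0,1}=0$ is established, $E_3^{0,1}=0$ follows at once, because $E_3$ is a subquotient of $E_2$.
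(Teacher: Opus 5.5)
There is a genuine gap. Your identification of $H^1_{\AQ}(M_n,M_n)$ with $\del$-compatible, $V$-linear maps $f:I_n\to M_n$ modulo restrictions of derivations is correct. The weight argument meant to finish the proof, however, fails.

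Such an $f$ is a homomorphism \emph{out of} the conormal module. So if $f(T^n)=g$, its weight is $\mathrm{wt}(g)-2n$, not $2n+\mathrm{wt}(g)$; your own $-2n\lambda$ for the dual generator already carries this sign. Concretely, the $\lambda$-linear coefficient of $[T_\lambda f(T^n)]-f([T_\lambda T^n])=[T_\lambda g]-2n\lambda g-\del g$ is $(\mathrm{wt}(g)-2n)\,g$, which vanishes for every $g$ of weight $2n$. The $\lambda$-linear term, which is just $L_0$, can therefore at best show that $E_2^{0,1}$ is concentrated in weight zero. It says nothing about weight-preserving $f$, and that is where the content of the lemma lies.

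The paper handles this case with an idea your plan lacks: pass to $T^{n+1}$. There $f(T^{n+1})=T\,f(T^n)$ by $V$-linearity, and the admissible correction terms vanish in $M_n$. Hence $x=f(T^{n+1})$ must satisfy $[T_\lambda x]=2(n+1)\lambda x+\del x$ exactly. It is the vanishing of the $\lambda^{\ge 2}$ coefficients (the correct form of your top-power remark, though that coefficient is not a multiple of $g$) that forces $x$ to contain no derivatives, so $x\in\ik\,T^{n+1}=0$. Then $T\,f(T^n)=0$ places $f(T^n)$ in $\ik\,T^{n-2}\del T+T^{n-1}M_n$. The first summand is excluded by the order-$\lambda$ term, and $T^{n-1}g$ is the restriction of a derivation, hence zero in $H^1_{\AQ}(M_n,M_n)$.

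Two further parts of your setup are not right as stated.

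First, $E_2^{0,1}$ is not the space of exact invariants $[T_\lambda m]=0$. Since $E_1^{1,1}$ is itself a $d_m$-cohomology group, $d_1[d_{\AQ}f]=0$ only requires $d_{\PV}d_{\AQ}f=d_{\AQ}f'$ for some $f'$. That is, $f'=d_{\PV}f-\delta f$ with $\delta f\in C^1_{\PV}(V,M_n)$, and on $(T,T^n)$ this correction contributes $nT^{n-1}\delta f_\lambda(T,T)$. This weaker condition is exactly why one cannot conclude at $T^n$ and must move to $T^{n+1}$.

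Second, your model of $H^1_{\AQ}(M_n,M_n)$ as $M_n/(T^{n-1})$ times a generator $T^n\mapsto 1$ ignores the syzygies of $I_n$. From $T\cdot\del(T^n)=n\,\del T\cdot T^n$ one gets the constraint $T\,\del g=n\,\del T\,g$ on $g=f(T^n)$. So $T^n\mapsto 1$ is not even $V$-linear once $n\ge 2$, and the subsequent computation with a general $p\nu$ rests on an incorrect description of the module.
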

\begin{proof}
In this presentation, the kernel of the $E_1$-differential $d_1^{0,1}:H^1_{\AQ}(M_n,M_n) \rightarrow H^{1,\langle 1 \rangle}_{\rm AQ}(M_n,M_n)$ admits a nice description. A class $[d_{\AQ} f]\in H^1_{\AQ}(M_n,M_n)$, with $f\in H^0(\coker(\iota))$, is closed for this differential if and only if there exists some $f'\in C^{0,1}_{\rm c}(M_n,M_n)$ such that
\[
d_{\rm PV} d_{\AQ} f = d_{\AQ} f' ~.
\] 
We will show that $f'$ exists only if $[d_{\AQ} f]= H^1_{\AQ}(M_n,M_n)$, \ie, we will show that the kernel of $d_1^{0,1}$ is trivial.

A naive ansatz for $f'$ is just $d_{\rm PV} f$ since $d_{\AQ} d_{\rm PV} f = -d_{\rm PV}d_{\AQ} f$. However, $d_{\rm PV} f$ generically fails to be a well-defined cochain for $M_n$ since it might fail to vanish on the ideal. Thus, the existence of $f'$ is equivalent to the existence of a $\delta f \in C^{0,\langle 1\rangle}_{\rm AQ}(V,M_n)$ such that  
\[
f'=d_{\rm PV}f -\delta f \in C_{\rm AQ}^{0,\langle 1\rangle}(M_n,M_n)~.
\]
So we want to show that $\delta f$ can only exist if $[d_{\AQ} f]=0\in H^1_{\AQ}(M_n,M_n)$. Since,
\[
d_{\AQ} ( d_{\rm PV} f - f') =0~,
\]
we can restrict $\delta f\in C^1_{\rm  PV}(V,M_n)$. Since $f'$ must vanish on $I_n$, in particular it should vanish when one of the arguments is $T^{n+k}$ for $k\ge0$,\ie
\[
f'(T,T^{n+k})=(d_{\rm PV}f-\delta f)(T,T^{n+k}) \overset{!}{=}0~.
\]
But since $\delta f$ satisfies the right Leibniz rule, $\delta f(T,T^{n+k}) =(n+k)T^{n+k-1}\delta f(T,T)$ and we see that
\begin{equation}\label{eq:order constraint on f}
d_{\rm PV}f(T,T^{n+k}) \overset{!}{=}
\begin{cases}
	nT^{n-1} \delta f(T,T) \text{ for } k=0~,\\
	0 \text{ for } k>0~.
\end{cases}
\end{equation}
Expanding for $k>0$,
\[
d_{\rm PV}f (T,T^{n+k}) = [T_\lambda f(T^{n+k})] - f([T_\lambda T^{n+k}]) =  [T_ \lambda f(T^{n+k})] - 2(n+k)\lambda f(T^{n+k}) - \partial f(T^{n+k})~.
\]
For $k=1$, $d_\PV f(T,T^{n+1})\overset{!}{=}0$ implies that
\[
 [T_{\lambda} f(T^{n+1})] \overset{!}{=} 2(n+1)\lambda f(T^{n+1}) + \partial f(T^{n+1}) ~.
\]
So, $f(T^{n+1})\in M_n$ must have conformal weight $2(n+1)$ and its $\lambda$-bracket with $T$ has no terms of order $\lambda^2$ or higher---but this implies that $f(T^n+1)$ has no derivatives. The only candidate for $f(T^{n+1})$ is, therefore, a scalar multiple  of $T^{n+1}$, which vanishes in $M_n$. Thus, we conclude that $f(T^{n+1})=0$ and by \eqref{eq:cocycle in Harrison cokernel}, 
\[
f(T^{n+k})=0 ~, \text{ for }k\ge1~.
\]
Moreover, $f(T^{n+1})=T f(T^n)=0$ and so $f(T^n)$ must be of the form
\[
 f(T^n) = c\, T^{n-2} \partial T + T^{n-1}g
\]
for some $c\in \ik$ and $g\in M_n$ since these are the only complementary zero divisors of $T$ in $M_n$. To order $\lambda^2$,
\[
d_{\rm PV}f(T,T^n) = (-c\, T^{n-2}\partial T )\lambda + (\Delta_g -1)T^{n-1}g \lambda + O(\lambda^2)~.
\]
The consistency conditions of \eqref{eq:order constraint on f} state that $d_{\rm PV}f(T,T^n)$ must be in the algebraic ideal generated by $T^{n-1}$, which forces $c=0$. But, $f(T^{n-1}) = T^{n-1} g$ is actually the restriction of the derivation 
\[
F = \frac1n \sum_m  \partial^m g \frac{\partial}{\partial \partial^m T}\in \Der_{\partial}(V,M_n)~,
\]
to $I_n$. Therefore, under the identification of \eqref{eq:identifying Harrison of quotient}, $[d_{\AQ} f]=0\in H^1_{\AQ}(M_n,M_n)$ and so the kernel of the $E_1$-differential is trivial, as desired.
\end{proof}

\subsubsection{The variational cohomology}

To compute the variational cohomology of $M_n$, we make use of the same strategy: once again, we have an inclusion of complexes 
\[
C^\bullet_{\rm PV}(M_n,M_n) \xrightarrow{\iota} C^\bullet_{\rm PV}(V,M_n)~,
\]
with image equal to the subcomplex of cochains in $C^\bullet_{\rm PV}(V,M_n)$ vanishing on $I_n$. We write $\coker(\iota)$ for the cokernel complex of $\iota$. 
In degree zero, we have the identification
\[
\coker^0(\iota) \cong \Der_{\partial}(V,M_n)|_{I_n}
\]
with the quotient map $C^0_{\rm PV}(V,M_n)\rightarrow \Der_{\partial}(V,M_n)|_{I_n}$ given by restriction of vector fields. The cohomology $H^0(\coker(\iota))$ consists of $f\in\Der_{\partial}(V,M_n)|_{I_n}$ such that 
\begin{equation}\label{eq:cocycles of variational cokernel}
[a_\lambda f(b)] = f([a_\lambda b])~,
\end{equation}
for $a\in V$ and $b\in I_n$, \ie, the cocycles intertwine the $\Lie^*$-action of $V$.


\begin{lemma}\label{lem: Ham vect fields of ideal}
	The zeroth cohomology of the cokernel vanishes, \ie,
	\[
	 H^0(\coker(\iota)) =0
	\]
\end{lemma}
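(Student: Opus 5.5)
Let $f\in H^0(\coker(\iota))$: a $\partial$-derivation $V\to M_n$ restricted to $I_n$ that satisfies the intertwining condition \eqref{eq:cocycles of variational cokernel}. I will show $f=0$. Since $I_n$ is the differential ideal generated by $T^n$ and $f$ is the restriction of a derivation, $f$ is determined on $I_n$ by its values $f(\partial^k T^n)=\partial^k f(T^n)$ together with the Leibniz rule. It therefore suffices to show $f(T^{n+k})=0$ for $k\ge 1$ and $f(T^n)=0$. The main tool is the $\lambda$-bracket with the Virasoro field: take $a=T$ and $b=T^{m}$, $m\ge n$, in \eqref{eq:cocycles of variational cokernel}. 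Since $[T_\lambda T^m]=2m\lambda T^m+\partial T^m$ in $V$, this gives
\[
[T_\lambda f(T^m)] \;=\; 2m\lambda f(T^m) + \partial f(T^m).
\]
The left side is the adjoint action of $T$ on $M_n$, which is the action of a conformal vector. So $f(T^m)$ must be an $L_0$-eigenvector of weight $2m$, and it must be primary: its bracket with $T$ has no $\lambda^{\ge2}$ terms.

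The key step is to classify weight-$2m$ elements of $M_n=\JJ(\ik[T]/T^n)$ satisfying this condition. As in the proof of Lemma \ref{lem:E2 Harrison vanishes for min model}, I would argue that vanishing of the $\lambda^{\ge2}$ terms forces the element to involve no derivatives of $T$. For a monomial $\prod \partial^{j_i}T$, bracketing with $T$ produces $\lambda^{j_i+1}$-terms from each factor with $j_i\ge1$. I expect the leading such term of a nonzero combination not to cancel, and I would verify this by looking at the top power of $\lambda$ in a lexicographic order on monomials. This verification is the main obstacle: the relation $T^n=0$ and its derivatives could produce cancellations, so I need to check that the leading coefficient survives in $M_n$ using its basis of normal-form monomials. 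Granting this, $f(T^m)$ is a multiple of $T^m$, which vanishes in $M_n$ because $m\ge n$. Hence $f(T^m)=0$ for all $m\ge n$.

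Finally, by $\partial$-linearity $f(\partial^kT^n)=0$. By the derivation property, a general element of $I_n$ is a sum of terms $v\cdot\partial^kT^n$, and $f(v\,\partial^k T^n)=f(v)\partial^kT^n+v f(\partial^kT^n)$. The first term vanishes in $M_n$ and the second is zero, so $f|_{I_n}=0$ and $H^0(\coker(\iota))=0$. If the primary-element classification proves delicate in the weight range near $2n$, I would fall back on the finer analysis used before: write $f(T^n)=cT^{n-2}\partial T+T^{n-1}g$ by the zero-divisor argument (using $Tf(T^n)=f(T^{n+1})=0$), then read off the $\lambda^1$ and $\lambda^2$ coefficients to force $c=0$ and $g$ of weight $2$ primary with $T^{n-1}g=0$ in $M_n$.
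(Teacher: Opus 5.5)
Your strategy is the paper's: bracket with $T$, deduce that $f(T^n)$ has weight $2n$ and no $\lambda^{\ge 2}$ terms, and conclude that it is a multiple of $T^n$. The gap is the classification step you leave as ``granting this''. As you state it, for every $m\ge n$, it is false. Take $n=2$. In $V$, sesquilinearity and the Leibniz rule give $[T_\lambda(\partial T)^2]=4\lambda^2\,T\partial T+6\lambda(\partial T)^2+\partial\big((\partial T)^2\big)$. Since $T\partial T=\tfrac12\partial(T^2)\in I_2$, the element $x=(\partial T)^2$ of weight $6=2\cdot 3$ satisfies $[T_\lambda x]=6\lambda x+\partial x$ in $M_2$. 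Yet $x\neq 0$ in $M_2$, because $(I_2)_6$ is spanned by $T^3$ and $(\partial T)^2+T\partial^2T$. So the relations produce exactly the cancellations you feared, and no leading-term argument can establish the claim for $m>n$. The same objection applies if your fallback tries to obtain $f(T^{n+1})=0$ this way.

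The repair is easy, because only $m=n$ matters.

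\begin{itemize}
\item For $m>n$ you have $f(T^m)=mT^{m-1}F(T)=0$ simply because $T^{m-1}\in I_n$. Your last paragraph only uses $f(\partial^kT^n)=\partial^k f(T^n)$ anyway. Likewise the fallback's input $Tf(T^n)=nT^nF(T)=0$ holds trivially.
\item For $m=n$, the missing observation is that $I_n$ has no nonzero elements of weight $<2n$, and $(I_n)_{2n}=\ik T^n$.
\end{itemize}

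So take a lift $\tilde x\in V_{2n}$ of $f(T^n)$. Since $T_{(j)}$ lowers weight by $j-1$, the element $T_{(j)}\tilde x$ lies in weight $<2n$ for $j\ge 2$, so $T_{(j)}\tilde x=0$ already in $V$. Suppose $\partial^kT$ with $k\ge 1$ is the highest derivative occurring in $\tilde x$. On $\ik[T,\dots,\partial^kT]$ the operator $T_{(k+1)}$ acts as $2T\,\partial/\partial(\partial^kT)$, and this cannot kill $\tilde x$ because $V$ is an integral domain. Hence $\tilde x\in\ik T^n$, and so $f(T^n)=0$.

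There is an even shorter route using only the $\lambda^1$ coefficient. It says $f(T^n)=nT^{n-1}F(T)$ is homogeneous of weight $2n$. Hence $f(T^n)=nT^{n-1}F(T)_2$, where $F(T)_2$ is the weight-$2$ component of $F(T)$. Since $F(T)_2\in(M_n)_2\subseteq \ik T$, you get $f(T^n)\in\ik T^n=0$.

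The paper asserts the ``no derivatives'' step without argument and stops at $T^n$. With this fix, your proof is complete, including the passage from $T^n$ to all of $I_n$.
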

\begin{proof}
 Since $\coker^0(\iota)\cong \Der(V,M_n)|_{I_n}$, we can represent our putative cocycle as $F|_{I_n}$ for some derivation $F\in \Der_{\partial}(V,M_n)$. Then by \eqref{eq:cocycles of variational cokernel} this cochain is closed if and only if it intertwines the $\Lie^*$-action of $V$, and so
 \[
 [T_\lambda F(T^n)] \overset{!}{=} F[T_\lambda T^n]= 2n \lambda F(T^n) + \partial F(T^n) ~.
 \]
 This in turn implies that $F(T^n)$ has conformal weight $2n$ and has no derivatives, which implies that $F(T^n)$ is a scalar multiple of $T^n$, which vanishes in the quotient. Therefore, $F|_{I_n} =0\in \coker^0(\iota) $ as desired.
\end{proof}

\begin{lemma}[\cite{Bakalov:2020computation}]\label{lem:Virasoro variational is 1d}
	The first variational cohomology $H^1_{\rm PV}(V,M_n)$ is one dimensional and spanned by the class of $C\in C^1_{\rm var}(V,M_n)$, with
	\[
	C_\lambda(T,T) = \lambda^3~.
	\]
\end{lemma}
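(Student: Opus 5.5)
The computation of $H^1_{\rm PV}(V,M_n)$ is cited from \cite{Bakalov:2020computation}; the plan is to reduce it to the known computation of $H^1_{\rm PV}(V,V)$ for the Virasoro vertex Poisson algebra $V=\JJ\ik[T]$ and then transport that result along the quotient $V\to M_n$.

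\emph{Reduction to values on the generator.} Since $V$ is freely generated as a differential algebra by $T$, a $1$-cochain $f\in C^1_{\rm PV}(V,M_n)$ is a $\partial$-derivation $V\to M_n$, hence determined by $f(T)\in M_n$. A $2$-cochain is determined by $f_{\lambda_1,\lambda_2}(T,T)\in M_n[\lambda_1,\lambda_2]$, which by sesquilinearity reduces to a polynomial $P(\lambda)$ with $\lambda=\lambda_1$ and $\lambda_2=-\lambda-\partial$, subject to the skewsymmetry $P(\lambda)=-P(-\lambda-\partial)$. I will therefore describe $H^1$ (equations \eqref{eq:H1 coisson is fixed by E3} give $H^1$ in cochain degree $1$, i.e.\ $2$-ary cochains modulo $d_{\rm PV}$ of $1$-ary ones) entirely in terms of such polynomials.

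\emph{Cocycle condition and coboundaries.} Writing out $d_{\rm PV}$ on $3$ copies of $T$ with $[T_\lambda T]=2\lambda T+\partial T$, the cocycle condition becomes a functional equation on $P$: a $\lambda$-bracket Jacobi-type identity involving $[T_\lambda P(\mu)]$ and $P$ evaluated at shifted arguments. Coboundaries are $P=d_{\rm PV}g$ with $g(T)=X\in M_n$, namely $[T_\lambda X]-[X_{\lambda}T]$-type expressions; for $X$ of conformal weight $w$ these produce terms $(2-w)\lambda X+\dots$. Using the grading by conformal weight (preserved by everything, with $\partial$ and $\lambda$ of weight $1$, $T$ of weight $2$), I decompose into homogeneous pieces. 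For weight $w\neq 2$ pieces, the Euler-type operator given by bracketing with $T$ (the $L_0$ action) acts invertibly in the appropriate sense, so every cocycle is a coboundary — the standard argument that cohomology concentrates where $L_0$ acts by the ``trivial'' weight. This leaves the weight-$3$ part of $P$ (so that $\lambda$ and $\partial$ combine with $T$'s weight $4=2+2$ minus the $\lambda$-shift), spanned by $\lambda^3$, $\lambda T$, $\partial T$, $\lambda^2\cdot$const, etc.

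\emph{Identifying the surviving class.} In the remaining finite-dimensional space I check directly: $\lambda T+\tfrac12\partial T$ is $d_{\rm PV}$ of a multiple of $g(T)=T$; the constant-coefficient skew polynomials satisfying the cocycle identity are multiples of $\lambda^3$ (the $\lambda^3$ term is the central-charge cocycle, $\lambda\cdot$const is excluded by skewsymmetry/weight), and $\lambda^3$ is not a coboundary since coboundaries of weight-$2$ elements of $M_n$ (which is spanned by $T$ in that weight, $n\geq 2$) cannot produce pure scalars. For $n\ge2$, $M_n$ agrees with $V$ in weights $\le 3$ (the first relation $T^n$ has weight $2n\ge4$), so this low-weight computation is identical to the one for $V$. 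The main obstacle is the first reduction step: making the $L_0$-invertibility argument rigorous in $M_n$, where the zero divisors could in principle produce extra cocycles in higher weight; I would handle this by filtering by $\lambda$-degree and using that the leading term of the cocycle equation is $(\text{weight}-\text{const})$ times the top coefficient, which vanishes only in the critical weight.
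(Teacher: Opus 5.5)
Your finite computation in the critical weight is fine. In output weight $3$ the skew $2$-cochains on $(T,T)$ are spanned by $\lambda^3$ and $2\lambda T+\partial T$. Both are cocycles, the second is $d_{\PV}$ of the Euler derivation $g(T)=T$, and $\lambda^3$ is not a coboundary. Also $M_n$ agrees with $V$ in these weights. Two minor slips: $\lambda^2\cdot\mathrm{const}$ has output weight $2$, not $3$; and your $n\geq 2$ excludes $n=1$, where $M_1=\ik$, though that case is trivial.

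The genuine gap is the step that discards all other weights. The Cartan identity $d\iota_T(\mu)+\iota_T(\mu)d=\theta_T(\mu)$ is what makes ``$L_0$ is null-homotopic'' true. It holds on the \emph{basic} complex $\tilde C$ of skew sesquilinear maps on $\ik[\partial]T$ with values in $M_n[\lambda_1,\dots,\lambda_k]$. It does not descend to the reduced complex, with values in $M_n\otimes_{\ik[\partial]}\ik[\lambda_1,\dots,\lambda_k]$, which is where you (and the paper) work: $\iota_T(\mu)\gamma=\gamma_{\mu,\lambda_1,\dots}(T,\dots)$ is only defined modulo $\partial+\mu+\sum_i\lambda_i$. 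On $\tilde C$, count $\lambda_i$ and $\partial$ with weight $1$ and $T$ with weight $2$. Then the $\mu$-linear part of $\theta_T(\mu)$ acts on a homogeneous $k$-ary cochain by $\nu=(\text{output weight})-(\text{input weight})+k$. For $2$-cochains on $(T,T)$ the weight this singles out is output weight $2$, not $3$: the class $[\lambda^3]$ sits exactly where the null-homotopic operator acts by $1$. So ``cohomology concentrates in the single weight where $L_0$ acts trivially'' is false on the reduced complex; already for trivial coefficients, reduced Virasoro cohomology has classes with $\nu=0$ as well as $\lambda^3$ with $\nu=1$. Your ``trivial weight'' is chosen to match the expected answer rather than produced by a homotopy.

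For the same reason, the obstacle you single out is not the real one. The homotopy does not care about zero divisors in the coefficients; it only needs $T_{(1)}$ to act semisimply on $M_n$, which it does, by conformal weight. Your proposed fix is also not viable as stated. A leading-$\lambda$-term argument forcing the top coefficient of a cocycle to vanish outside the critical weight would make every such cocycle zero. But nonzero coboundaries exist there: for $n\geq 3$, $g(T)=T^2$ gives $d_{\PV}g\propto \lambda T^2+T\partial T$.

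What closes the gap is the short exact sequence $0\to\tilde C\xrightarrow{\partial+\sum_i\lambda_i}\tilde C\to C\to 0$. The first map is injective in positive arity (compare top $\lambda$-degrees) and raises $\nu$ by one. Since $\tilde H$ is concentrated in $\nu=0$, the long exact sequence confines the reduced cohomology to $\nu\in\{0,1\}$. The $\nu=1$ part of the $2$-ary cohomology is identified, via the connecting map, with $3$-ary basic classes with $\nu=0$. For $2$-cochains on $(T,T)$ this leaves only output weights $2$ and $3$. In weight $2$ there are no nonzero skew cochains, since every $a\lambda^2+bT$ satisfies $P(\lambda)=P(-\lambda-\partial)$ rather than the skew condition. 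Weight $3$ is your computation, which then completes the argument.
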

\begin{proof}
	The proof is essentially the same as that of Theorem 4.17 in \cite{Bakalov:2020computation}, which computes $H^1_{\PV}(V,V)=\ik$. The change of coefficients to $M_n$ does not affect the proof strategy overmuch.
\end{proof}

\begin{prop}\label{prop:var of min model vanishes}
	The first variational cohomology of $M_n$ vanishes, \ie,
	\[
	H^1_{\rm PV}(M_n,M_n) =0~.
	\]
\end{prop}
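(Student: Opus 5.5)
We use the short exact sequence of complexes
\[
0\rightarrow C^\bullet_{\rm PV}(M_n,M_n)\xrightarrow{\iota} C^\bullet_{\rm PV}(V,M_n)\rightarrow \coker^\bullet(\iota)\rightarrow 0~,
\]
just as in the Andr\'e--Quillen computation. Its long exact sequence contains the segment
\[
H^0(\coker(\iota))\rightarrow H^1_{\rm PV}(M_n,M_n)\xrightarrow{\iota_*} H^1_{\rm PV}(V,M_n)\rightarrow H^1(\coker(\iota))~.
\]
Lemma \ref{lem: Ham vect fields of ideal} says the first term vanishes, so $\iota_*$ is injective. By Lemma \ref{lem:Virasoro variational is 1d}, the third term is one dimensional and spanned by $[C]$, where $C_\lambda(T,T)=\lambda^3$. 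It therefore suffices to show that no class in the image of $\iota_*$ is a nonzero multiple of $[C]$. Equivalently, we must show that no cocycle of the form $cC+d_{\rm PV}F$, with $c\neq 0$ and $F\in C^0_{\rm PV}(V,M_n)=\Der_\partial(V,M_n)$, vanishes on $I_n$.

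To do this, suppose $\tilde C=cC+d_{\rm PV}F$ vanishes whenever an argument lies in $I_n$. By sesquilinearity and the Leibniz rule (iii), $\tilde C$ is determined by its value $\tilde C_\lambda(T,T)$. We evaluate the condition $\tilde C_\lambda(T,T^n)=0$ in $M_n[\lambda]$ by expanding $\tilde C_\lambda(T,T^n)$ with the right Leibniz rule. We get $nT^{n-1}\tilde C_\lambda(T,T)$ plus correction terms involving $\lambda+\partial$ acting on lower powers. The $C$-contribution produces terms such as $c\,n\,T^{n-1}\lambda^3$, together with terms with derivatives of $T$ arising from the shift $\lambda\mapsto \lambda+\partial$.

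Next we use the conformal weight grading, in which $T$ has weight $2$ and $\partial$ and $\lambda$ have weight $1$. We may take $F$ homogeneous of weight zero, so that $F(T)$ has weight $2$, i.e.\ $F(T)=aT$ with $a\in\ik$. Then $d_{\rm PV}F(T,T)$ is a multiple of $\lambda T$ and $\partial T$ and contains no $\lambda^3$ term. The component of $\tilde C_\lambda(T,T^n)$ of the form $T^{n-1}\lambda^3$ is therefore $c\,n\,T^{n-1}\lambda^3$. This element is nonzero in $M_n[\lambda]$ because $T^{n-1}\notin I_n$. Hence $c=0$, and injectivity of $\iota_*$ gives $H^1_{\rm PV}(M_n,M_n)=0$.

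The main obstacle is the bookkeeping in the Leibniz expansion. The $(\lambda+\partial)$-shifts produce terms like $T^{n-2}\partial T\,\lambda^2$, and these must not cancel the $T^{n-1}\lambda^3$ coefficient. Monomials in $M_n=\JJ(\ik[T]/T^n)$ involving only $T$ and $\partial T$ have a clean basis, so comparing the coefficient of $T^{n-1}\lambda^3$ directly should suffice. I would first check the small case $n=2$ explicitly to confirm this.
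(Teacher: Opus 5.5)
Your argument is correct and follows the paper's route: the long exact sequence for $\iota$, the vanishing $H^0(\coker(\iota))=0$, the fact that $H^1_{\rm PV}(V,M_n)$ is spanned by $[C]$, and the computation $C_\lambda(T,T^n)=nT^{n-1}\lambda^3\neq 0$. Your weight argument ruling out a cocycle $cC+d_{\rm PV}F$ with $c\neq 0$ that vanishes on $I_n$ actually supplies a step the paper skips: the paper only notes that $C$ itself does not vanish on $I_n$, which does not by itself show that $[C]$ survives in $H^1(\coker(\iota))$. The bookkeeping you worry about does not occur, because the Leibniz rule in the second slot has no $(\lambda+\partial)$-shift, exactly as the paper uses to get $C_\lambda(T,T^n)=nT^{n-1}\lambda^3$. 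Hence, for some $\kappa\in\ik$, $\tilde C_\lambda(T,T^n)=nT^{n-1}\tilde C_\lambda(T,T)=nT^{n-1}\bigl(c\lambda^3+\kappa(2\lambda T+\partial T)\bigr)=cn\,T^{n-1}\lambda^3$ in $M_n[\lambda]$, using $T^n=0$ and $T^{n-1}\partial T=\tfrac1n\partial(T^n)=0$, and this vanishes only if $c=0$.
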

\begin{proof}
	The short exact sequence $0 \rightarrow C^\bullet_{\rm PV}(M_n,M_n) \xrightarrow{\iota} C^\bullet_{\rm PV}(V,M_n) \rightarrow \coker^\bullet(\iota) \rightarrow 0$ induces the usual long exact sequence in cohomology, 
	\begin{equation}
		\dots \rightarrow H^0(\coker(\iota)) \rightarrow H^1_{\rm PV}(M_n,M_n) \rightarrow H^1_{\rm PV}(V,M_n) \rightarrow H^1(\coker(\iota)) \rightarrow \dots~.
	\end{equation}

 First, we show that $H^1_{\rm PV}(V,M_n)\rightarrow H^1(\coker(\iota))$ is injective. By Lemma \ref{lem:Virasoro variational is 1d}, $H^1_{\rm PV}(V,M_n)$ is spanned by the class $[C]$ and since 
 \[
	C_\lambda(T,T^n) = nT^{n-1} \lambda^3 \neq0~,
 \]
 its image in $\coker^1(\iota)$ is nonzero. Because $C^\bullet_{\rm PV}(V,M_n)\rightarrow \coker^\bullet(\iota)$ is surjective the image of $[C]$ in $H^1(\coker(\iota))$ is nonzero and $H^1_{\rm PV}(V,M_n)\rightarrow H^1(\coker(\iota))$ is injective.

 Therefore, by exactness, the map $H^1_{\rm PV}(M_n,M_n)\rightarrow H^1_{\rm PV}(V,M_n)$ is identically zero. And so, again by exactness,
 \[
 H^1_{\rm PV}(M_n,M_n) = \im\big( H^0(\coker(\iota)) \rightarrow H^1_{\rm PV}(M_n,M_n)\big)~,
 \]
 But, by Lemma \ref{lem: Ham vect fields of ideal}, $H^0(\coker(\iota))=0$ and thus $H^1_{\rm PV}(M_n,M_n)=0$, as desired.
\end{proof}

\subsubsection{}
Combining the results of the preceding sections, we have:

\begin{prop}\label{prop:coisson cohomology of minimal model}
 For any $n\in \IN$, 
 \[
 H^1_{\rm c}(M_n,M_n) =0 ~,
 \]
 and, therefore
 \[
 H^1_{\rm ch}(\MM_n,\MM_n) =0~.
 \]
\end{prop}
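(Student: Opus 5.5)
The plan has two parts. First, compute $H^1_{c}(M_n,M_n)$ from the spectral sequence of the bicomplex of Proposition \ref{Coisbicxprop}. Then transfer the vanishing to the chiral cohomology of $\MM_n$ by the filtration comparison of Proposition \ref{prop:bound on cohomology}.

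For the first part, take the first-quadrant spectral sequence with $E_1^{p,q}$ given by the higher Andr\'e--Quillen cohomology of the columns. By \eqref{eq:H1 coisson is fixed by E3}, total degree one is governed by
\[
H^1_{c}(M_n,M_n)\cong E_3^{1,0}\oplus E_3^{0,1}.
\]
The entry $E_3^{1,0}=E_2^{1,0}$ is the first variational cohomology $H^1_{\PV}(M_n,M_n)$. This vanishes by Proposition \ref{prop:var of min model vanishes}. One should check that no later differential can enlarge $E^{1,0}$: it is a subquotient of $E_2^{1,0}$, so it is zero as soon as $E_2^{1,0}$ is. The entry $E_3^{0,1}$ is a subquotient of $E_2^{0,1}$, which vanishes by Lemma \ref{lem:E2 Harrison vanishes for min model}. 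Both contributions are therefore zero, and $H^1_{c}(M_n,M_n)=0$.

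For the second part, equip $\MM_n$ with the split filtration coming from the $\IG_\hbar$-grading of $\MM_{\hbar,n}$, using Lemma \ref{lem:filtered version of Virasoro} and Proposition \ref{prop:Rees for hadic VOA}. The associated graded is $M_n$, and its induced coisson structure is the classical limit of $\MM_{\hbar,n}$. We then need the hypotheses of Proposition \ref{prop:bound on cohomology}, namely that the underlying D-module is unital-projective. In the weakly translation invariant setting this means $\MM_n$ is a direct sum of $\ik[\partial]$-free modules plus the vacuum line. This should follow from a PBW-type basis of $\MM_n$ by $\partial$-monomials in $T$, compatible with the arc basis of $M_n=\JJ(\ik[T]/T^n)$ (classical freeness). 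Granting this, Proposition \ref{prop:filt on Lie alg} gives a filtered complex computing $H^\bullet_{\ch}(\MM_n,\MM_n)$ whose associated graded complex computes $H^\bullet_c(M_n,M_n)$. Since the filtration is bounded in each arity, the spectral sequence converges, and Proposition \ref{prop:bound on cohomology} yields $\dim H^1_{\ch}(\MM_n,\MM_n)\le \dim H^1_c(M_n,M_n)=0$.

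I expect the main obstacle to be verifying the projectivity hypothesis and the convergence in the second step. Neither $M_n$ nor $\MM_n$ is smooth, so one must check that the joint filtration on the chiral operad is still split and that Theorem \ref{thm:gr of joint chiral is joint classical} applies. To do this I would reduce to the $\ik[\partial]$-freeness of $\MM_n$ away from the vacuum, which is exactly the classical-freeness remark for $(2,2n+1)$ minimal models.
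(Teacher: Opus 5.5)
Your proposal is essentially the paper's proof: $H^1_c(M_n,M_n)\cong E_3^{1,0}\oplus E_3^{0,1}$ vanishes by Proposition \ref{prop:var of min model vanishes} and Lemma \ref{lem:E2 Harrison vanishes for min model}, and the chiral statement then follows from the split $\IG_\hbar$-filtration of Lemma \ref{lem:filtered version of Virasoro} together with Proposition \ref{prop:bound on cohomology}. The hypotheses you flag are left implicit in the paper, and your stated reason for convergence is wrong: the $\IG_\hbar$-weights on $\MM_n$ are unbounded, so only the special part of the joint filtration is bounded in a fixed arity, and you must rely on Proposition \ref{prop:bound on cohomology} as the paper does; unital-projectivity, on the other hand, is easy, since $\MM_n$ is simple of CFT type, so $\ker\partial=\ik\ket{0}$ and the positive-weight part is a graded torsion-free, hence free, $\ik[\partial]$-module.
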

\begin{proof}
	Recall from \eqref{eq:H1 coisson is fixed by E3}, that $H^1_c(M_n,M_n) = E_3^{1,0}\oplus E_3^{0,1}$ for the spectral sequence arising from the bicomplex structure of Proposition \ref{Coisbicxprop}. But by Lemma \ref{lem:E2 Harrison vanishes for min model} $E_2^{0,1}=0$ and so $E_3^{0,1}=0$.

	Furthermore, $E_2^{1,0} = H_{\rm PV}^1(M_n,M_n)$, which vanishes by Proposition \ref{prop:var of min model vanishes}. Therefore, $E_3^{1,0}=0$ and we conclude that $H^1_{c}(M_n,M_n)=0$.

	Note that the vertex algebra $\MM_n$ has a split filtration induced by assigning the generator, $T$, with weight one under $\IG_\hbar$. Moreover, the associated graded of $\MM_n$, with respect to this filtration, can be identified with $M_n$, as vertex Poisson algebras. This is the content of Lemma \ref{lem:filtered version of Virasoro}.

	Therefore, by Proposition \ref{prop:bound on cohomology}, the dimension of $H^1_\ch(\MM_n,\MM_n)$ is bounded by the dimension of $H^1_c(M_n,M_n)$, giving the desired result.

\end{proof}

This gives us two immediate corollaries.

\begin{theo}
	The Virasoro minimal model $\MM_{\hbar,n}$ is the unique (up to isomorphism), graded quantization of the vertex Poisson algebra $M_n$.
\end{theo}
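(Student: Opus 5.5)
The plan is to read the theorem as a direct application of the graded uniqueness criterion, Corollary \ref{cor:unique graded quantization}, combined with the vanishing of classical cohomology in Proposition \ref{prop:coisson cohomology of minimal model}.

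\textbf{Existence.} First we record that a graded quantization exists. By Lemma \ref{lem:filtered version of Virasoro} and the discussion after it, $\MM_{\hbar,n}$ is a $\IG_\hbar$-graded $\hbar$-adic vertex algebra with classical limit $M_n=\JJ(\ik[T]/T^n)$. Its underlying $\ik[\partial]\Mod(\ik\fph\Mod)$-object is trivializable, since it comes from the split filtration assigning $T$ weight one. Hence, after choosing a graded trivialization identifying the central fibre with $M_n$, it defines a point $\varphi_{\hat\hbar}\in\Quant^\ch_{\varphi,\gd}(\ik\fph)$, where $\varphi$ is the vertex Poisson structure on $M_n$.

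Two technical points need checking here. The first is that $M_n$ is unital-projective as a weakly translation invariant D-module, so that the Rees operad $P^\ch_\hbar$ and the trivialization of Proposition \ref{prop:joint split} apply. I expect this to follow because $M_n$ is free as a $\ik[\partial]$-module on a PBW-type basis plus the unit. The second is that the gradings match, which holds by construction.

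\textbf{Uniqueness.} We apply Corollary \ref{cor:unique graded quantization}, which requires $H^{1,\langle -m\rangle}_c(\varphi)=0$ for all $m>0$. Proposition \ref{prop:coisson cohomology of minimal model} gives the stronger statement $H^1_c(M_n,M_n)=0$. Every graded component therefore vanishes, because the auxiliary grading is preserved by the differential, so $H^1_c$ splits as the direct sum of its graded pieces. The corollary then says that every graded quantization of $(M_n,\varphi)$ is isomorphic to $(R_{\hat\hbar},\varphi_{\hat\hbar})$, that is, to $\MM_{\hbar,n}$.

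\textbf{Main obstacle.} The hardest step is the bookkeeping that lets Theorem \ref{DefObsgrThm} apply in the non-smooth setting. The earlier corollaries used only the trivializations of the underlying $\Ss$-modules, which Proposition \ref{prop:joint split} supplies for any split filtered unital-projective module; smoothness is never needed for this. So the main thing to verify is the unital-projective hypothesis for $M_n$ in the translation-invariant setting.

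A second point concerns the phrase "unique up to isomorphism". It should be read in the sense of Remark \ref{rmk:stack of finite quant is good}: the underlying module of any graded quantization is isomorphic to the trivial family. Given that, isomorphism in the stack $\fQuant^\ch_{\varphi,\gd}$ is the correct notion, and the theorem follows.
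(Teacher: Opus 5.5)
Your proposal is correct and follows the paper's proof. Existence comes from the lemma in Section \ref{ssec:dfn virasoro} that $\MM_{\hbar,n}$ is a graded quantization of $M_n$, and uniqueness comes from Corollary \ref{cor:unique graded quantization} combined with the vanishing $H^1_c(M_n,M_n)=0$ of Proposition \ref{prop:coisson cohomology of minimal model}. The paper leaves the unital-projective hypothesis on $M_n$ implicit, so checking it is extra care rather than a deviation. Note, though, that this hypothesis is needed for Theorem \ref{thm:gr of joint chiral is joint classical}, which identifies the central fibre of the Rees operad with $P^c$; it is not needed for the splitting in Proposition \ref{prop:joint split}.
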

\begin{proof}
	Follows from Corollary \ref{cor:unique graded quantization} noting that the classical cohomology vanishes by Proposition \ref{prop:coisson cohomology of minimal model}.
\end{proof}

\begin{theo}\label{thm:rigidity of minimal model}
	The Virasoro minimal model $\MM_n$ is rigid and admits no deformations.
\end{theo}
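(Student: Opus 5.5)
The plan is to deduce rigidity of $\MM_n$ directly from the vanishing of $H^1_{\ch}(\MM_n,\MM_n)$ in Proposition \ref{prop:coisson cohomology of minimal model}, using the order-by-order deformation theory of Section \ref{chsec} applied to constant families. Here $\mc P=\Lie$ and $\mc Q=P^{\ch,\IG_a}(\MM_n)$ is the (weakly translation invariant) chiral endomorphism operad of the underlying $\ik[\partial]$-module of $\MM_n$. Take both families to be trivial over $\ik[\hbar]$, that is, the Rees operads of trivially filtered operads as in Section \ref{ssec:graded def}. A deformation of the vertex algebra $\MM_n$ over $\ik\fph$, on the trivial underlying family $\MM_n\widehat{\otimes}\ik\fph$, is then exactly a point of $\Def_\varphi(\ik\fph)$, where $\varphi$ is the chiral structure of $\MM_n$. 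Isomorphism classes of such deformations are the points of $\fDef_\varphi(\ik\fph)$.

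First I will note that any formal deformation of $\MM_n$ has underlying $\ik\fph$-module isomorphic to the trivial family. This is the same gauge-fixing argument as in the remarks following the definitions of $\fQuant$, using completeness and the flatness and trivializability built into the notion of $\hbar$-adic vertex algebra. It therefore suffices to study $\fDef_\varphi(\ik\fph)$.

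Second, the trivial deformation $\varphi\otimes 1$ lies in $\Def_\varphi(\ik\fph)$. By Proposition \ref{DefObsprop}, or Theorem \ref{DefObsThm} with all $d_j,[\cdot,\cdot]_j$ vanishing for $j\ge1$, the extensions at each order form a torsor for $H^1_{\Lie,\mc Q}(\varphi)=H^1_\ch(\MM_n,\MM_n)$, which is zero. Corollary \ref{foruncoro} together with Proposition \ref{Deflimgrprop} then shows that every formal deformation is isomorphic to the trivial one. This is exactly the claimed rigidity. The same argument handles deformations over any Artinian base $\ik[\hbar]/\hbar^m$.

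The main obstacle is making sure the identification $H^1_{\Lie,P^{\ch}(\MM_n)}(\varphi)=H^1_\ch(\MM_n,\MM_n)$ matches the group bounded in Proposition \ref{prop:bound on cohomology}. That bound requires $\MM_n$, with the filtration from Lemma \ref{lem:filtered version of Virasoro}, to be a split filtered unital-projective module in the translation-invariant setting. I would check this by choosing a PBW-type monomial basis of $\MM_n$ compatible with the $T$-weight filtration, which splits the filtration, and by observing that its associated graded is $M_n$. This lets the spectral sequence comparison $\dim H^1_\ch\le\dim H^1_c(M_n,M_n)=0$ go through. Once that is granted, the conclusion is formal.
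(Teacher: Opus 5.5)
Your proposal is correct and follows the paper, which deduces rigidity directly from $H^1_{\ch}(\MM_n,\MM_n)=0$ (Proposition \ref{prop:coisson cohomology of minimal model}); you just make explicit the formal step through the torsor statements of Proposition \ref{DefObsprop}, Corollary \ref{foruncoro} and Proposition \ref{Deflimgrprop}. The hypothesis check in your last paragraph belongs to the proof of Proposition \ref{prop:coisson cohomology of minimal model}, not to this theorem. If you redo that check, note that an ordered PBW monomial basis only splits the filtration as vector spaces, because reordering after applying $\partial=L_{-1}$ produces shorter monomials, whereas the required splitting must be compatible with $\partial$.
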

\begin{proof}
	Follows from the vanishing of $H^1_\ch(\MM_n,\MM_n)$ in Proposition \ref{prop:coisson cohomology of minimal model}.
\end{proof}

%

\bibliographystyle{amsalpha}
\bibliography{refsdef}

\end{document}